\definecolor{dark-red}{rgb}{0.5,0.15,0.15}
\definecolor{dark-blue}{rgb}{0.15,0.15,0.6}
\definecolor{dark-green}{rgb}{0.15,0.6,0.15}
\definecolor{navy}{HTML}{03005e}
\numberwithin{equation}{section}% makes equat numb contain the section
\newtheorem{ThmAlpha}{Theorem}
\newtheorem{CorAlpha}[ThmAlpha]{Corollary}
\newtheorem{theorem}[equation]{Theorem}
\newtheorem*{theorem*}{Theorem}
\newtheorem*{MainThm*}{Main Theorem}
\newtheorem{proposition}[equation]{Proposition}
\newtheorem{lemma}[equation]{Lemma}
\newtheorem{corollary}[equation]{Corollary}
\newtheorem*{question*}{Question}
\theoremstyle{definition}
\newtheorem{definition}[equation]{Definition}
\newtheorem*{definition*}{Definition} 
\newtheorem{construction}[equation]{Construction}
\newtheorem{observation}[equation]{Observation}
\newtheorem{notation}[equation]{Notation}
\newtheorem{recollection}[equation]{Recollection}
\theoremstyle{remark}
\newtheorem{example}[equation]{Example}
\newtheorem*{example*}{Example}
\newtheorem{warning}[equation]{Warning}
\newtheorem{remark}[equation]{Remark}
\newtheorem*{remark*}{Remark}
\tikzset{
    labelrotatebelow/.style={anchor=north, rotate=90, inner sep=1.0mm}
}
\tikzset{
    labelrotateabove/.style={anchor=south, rotate=90, inner sep=1.0mm}
}
\renewcommand{\emptyset}{\varnothing}
\newcommand{\CAlg}{\operatorname{CAlg}}
\newcommand{\End}{\operatorname{End}}
\newcommand{\Ext}{\operatorname{Ext}}
\newcommand{\Fun}{\operatorname{Fun}}
\newcommand{\Hom}{\operatorname{Hom}}
\newcommand{\Ho}{\mathrm{ho}\hspace{1pt}}
\newcommand{\Idl}{\operatorname{Idl}}
\newcommand{\Ind}{\operatorname{Ind}}
\newcommand{\LTop}{\operatorname{LTop}} 
\newcommand{\RTop}{\operatorname{RTop}}
\newcommand{\Map}{\operatorname{Map}}
\newcommand{\Mod}{\operatorname{Mod}}
\newcommand{\Perf}{\operatorname{Perf}}
\newcommand{\Pro}{\operatorname{Pro}}
\newcommand{\QCoh}{\operatorname{QCoh}}
\newcommand{\Rad}{\operatorname{Rad}}
\newcommand{\Shv}{\operatorname{Shv}}
\newcommand{\Spc}{\operatorname{Spc}}
\newcommand{\Spec}{\operatorname{Spec}}
\newcommand{\Str}{\operatorname{Str}}
\newcommand{\Tot}{\operatorname{Tot}}
\newcommand{\ad}{\operatorname{ad}}
\newcommand{\cofib}{\operatorname{cofib}}
\newcommand{\colim}{\operatorname{colim}}
\newcommand{\disc}{\operatorname{disc}}
\newcommand{\ev}{\operatorname{ev}}
\newcommand{\fib}{\operatorname{fib}}
\newcommand{\id}{\operatorname{id}}
\newcommand{\lex}{\operatorname{lex}} 
\newcommand{\loc}{\operatorname{loc}}
\newcommand{\op}{\operatorname{op}}
\newcommand{\rig}{\operatorname{rig}}
\newcommand{\st}{\operatorname{st}}
\newcommand{\Ab}{\mathrm{Ab}}
\newcommand{\Bal}{\mathrm{Zar}}
\newcommand{\Catperf}{\Cat^{\textnormal{perf}}}
\newcommand{\Catex}{\Cat^{\textnormal{ex}}}
\newcommand{\Cat}{\mathrm{Cat}_{\infty}}
\newcommand{\Catbig}{\widehat{\mm{Cat}}_{\infty}}
\newcommand{\Fin}{\mathrm{Fin}}
\newcommand{\G}{\mathcal{G}}
\newcommand{\GBal}{\G_{\Bal}}
\newcommand{\GZar}{\G_{\mathrm{cZar}}}
\newcommand{\PrLst}{\mathrm{Pr^{L}_{st}}}
\newcommand{\PrLstomega}{\mathrm{Pr^{L}_{\omega, \st}}}
\newcommand{\PrL}{\mathrm{Pr^{L}}}
\newcommand{\PrLomega}{\mathrm{Pr^{L}_{\omega}}}
\newcommand{\Sp}{\mathrm{Sp}}
\newcommand{\Zar}{\mathrm{Zar}}
\newcommand{\Dir}{\mathrm{Dir}}
\newcommand{\GDir}{\G_{\Dir}}
\newcommand{\unit}{\mathbbm{1}}
\newcommand{\twoCAlg}{\mathrm{2CAlg}}
\newcommand{\twoCAlgrig}{\twoCAlg^{\mathrm{rig}}}
\newcommand{\twoModbig}{\widehat{\Mod}{}}
\newcommand{\twoModcg}{\twoModbig^{\mm{cg}}}
\newcommand{\Idem}{\mm{Idem}}
\newcommand{\recollement}[5]{%
  \xymatrix{{#1}
    \ar[r]|-{#2}
    & #3 \ar[r]|-{#4} \ar@<1ex>[l]^-{{#2}_!} \ar@<-1ex>[l]_-{{#2}^*}
    & #5, \ar@<1ex>[l]^-{{#4}!} \ar@<-1ex>[l]_-{{#4}^*}
  }
}
\newcommand{\noloc}{%
  \nobreak\mspace{6mu plus 1mu}{:}
  \nonscript\mkern-\thinmuskip
  \mathpunct{}\mspace{2mu}
}
\newcommand{\Yo}{\text{\usefont{U}{min}{m}{n}\symbol{'110}}}
\DeclareFontFamily{U}{min}{}
\DeclareFontShape{U}{min}{m}{n}{<-> dmjhira}{}
\newcommand{\cat}[1]{\mathcal{#1}}
\newcommand{\bE}{\mathbb{E}}
\newcommand{\cG}{\mathcal{G}}
\newcommand{\cO}{\mathcal{O}}
\newcommand{\cS}{\mathcal{S}}
\newcommand{\cX}{\mathcal{X}}
\newcommand{\cY}{\mathcal{Y}}
\definecolor{DefColor}{rgb}{0.6,0.15,0.25}
\definecolor{DefColorA}{HTML}{156315}
\definecolor{DefColorB}{HTML}{8a0314}
\definecolor{DefColorC}{HTML}{016989}
\definecolor{DefColorD}{HTML}{3a0289}
\newcommand{\mdef}[1]{\textcolor{DefColorA}{#1}}
\newcommand{\tdef}[1]{\mdef{\emph{#1}}}
\newcommand{\bb}[1]{\mathbb{#1}}
\newcommand{\cc}[1]{\EuScript{#1}}
\newcommand{\mm}[1]{\mathrm{#1}}
\DeclareMathOperator{\PShv}{\cc{P}}
\DeclareMathOperator{\simarrow}{\stackrel{\sim\hspace{.2ex}}{\smash{\longrightarrow}\rule{0pt}{0.4ex}}}
\begin{document}

\title[Higher Zariski Geometry]{Higher Zariski Geometry}

\author[Aoki]{Ko Aoki}
\author[Barthel]{Tobias Barthel}
\author[Chedalavada]{Anish Chedalavada}
\author[Schlank]{Tomer Schlank}
\author[Stevenson]{Greg Stevenson}
\date{\today}

%%%
% this is a hack to remove the indentation in the address information
%%%
\makeatletter
\patchcmd{\@setaddresses}{\indent}{\noindent}{}{}
\patchcmd{\@setaddresses}{\indent}{\noindent}{}{}
\patchcmd{\@setaddresses}{\indent}{\noindent}{}{}
\patchcmd{\@setaddresses}{\indent}{\noindent}{}{}
\makeatother

\address{Ko Aoki, Max Planck Institute for Mathematics, Vivatsgasse 7, 53111 Bonn, Germany}
\email{aoki@mpim-bonn.mpg.de}
\urladdr{}

\address{Tobias Barthel, Max Planck Institute for Mathematics, Vivatsgasse 7, 53111 Bonn, Germany}
\email{tbarthel@mpim-bonn.mpg.de}
\urladdr{https://sites.google.com/view/tobiasbarthel/}

\address{Anish Chedalavada, Johns Hopkins University, 404 Krieger Hall, Baltimore, MD 21218, United States}
\email{achedal1@jh.edu}
\urladdr{https://aragogh.github.io}

\address{Tomer Schlank}
\email{}
\urladdr{}

\address{Greg Stevenson, Aarhus University, Department of Mathematics, Ny Munkegade 118, bldg.\ 1530, 8000 Aarhus C, Denmark}
\email{greg@math.au.dk}
\urladdr{}

\begin{abstract}
    We revisit the classical constructions of tensor-triangular geometry in the setting of stably symmetric monoidal idempotent-complete $\infty$-categories, henceforth referred to as 2-rings. In this setting, we produce a Zariski topology, a Zariski spectrum, a category of locally 2-ringed spaces (more generally $\infty$-topoi), and an affine spectrum-global sections adjunction, based on the framework of ``$\infty$-topoi with geometric structure'' as developed by Lurie in \cite{LurieDAG5}. Using work of Kock and Pitsch, we compute that the underlying space of the Zariski spectrum of a 2-ring recovers the Balmer spectrum of its homotopy category. These constructions mirror the analogous structures in the classical Zariski geometry of commutative rings (and commutative ring spectra), and we also demonstrate additional compatibility between classical Zariski and higher Zariski geometry. For rigid 2-rings, we show that the descent results of Balmer and Favi admit coherent enhancements. As a corollary, we obtain that the Zariski spectrum fully faithfully embeds rigid 2-rings into locally 2-ringed $\infty$-topoi. In an appendix, we prove a ``stalk-locality principle'' for the telescope conjecture in the rigid setting, extending earlier work of Hrbek.
\end{abstract}

\subjclass[2020]{14A20, 18F99, 18G80, 55P42, 55U35}
  \maketitle
  \thispagestyle{empty}

\vspace{-5pt}
 \begin{figure}[h]
   \includegraphics[scale=0.3]{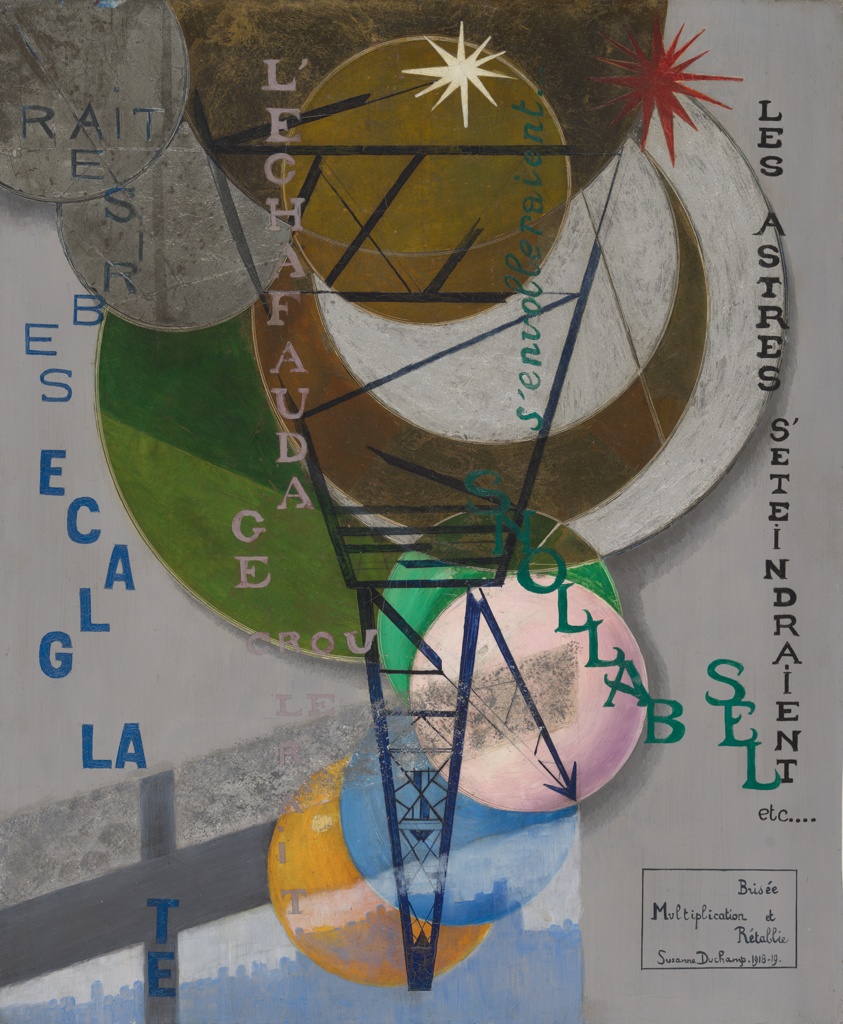}
   \caption[Cover Caption]{\emph{Broken and Restored Multiplication,} Suzanne Duchamp, 1918--19\footnotemark}
 \end{figure}
\footnotetext{{\url{https://www.artic.edu/artworks/134050/broken-and-restored-multiplication} (visited on 07/17/2025)}}

\newpage

\setcounter{tocdepth}{1}
%\tableofcontents

 \section{Introduction}\label{sec:introduction}

\subsection{Overview} 

The Zariski--Grothendieck approach to algebraic geometry studies a given commutative ring $R$ by representing it as the global sections of a sheaf of rings on a topological space $\Spec R$. This topological space is known as the Zariski spectrum, and the natural sheaf it carries is referred to as its structure sheaf. It is first observed by Joyal in \cite{JoyalZariski} that the topological space $\Spec R$ and its structure sheaf are essentially determined by the poset of localizations $R \to R[S^{-1}]$ for multiplicatively closed subsets $S\subseteq R$. The Zariski spectrum of a commutative ring along with its sheaf of rings is an example of a \emph{locally ringed space}, i.e., a pair $(X,\cO_X)$ consisting of a topological space together with a sheaf $\cO_X$ of rings on $X$ whose stalks are local rings. The assignment of a commutative ring to this pair leads to the following basic structural result in algebraic geometry.

\begin{theorem*}[Fundamental Adjunction of Zariski Geometry]
  There is an adjunction of the form
  \begin{equation}\label{eq:zarfundadj}
        \Spec \colon \CAlg \rightleftarrows \{\mm{Locally\ Ringed\ Spaces}\}^{\op} \noloc \Gamma
      \end{equation}
where $\Spec$ assigns to a commutative ring its Zariski spectrum equipped with its structure sheaf, and $\Gamma$ sends a locally ringed space to its ring of global sections. Furthermore, the left adjoint $\Spec$ is fully faithful, and in particular the map from any commutative ring $R$ to the global sections of the structure sheaf on $\Spec R$ is an equivalence.
\end{theorem*}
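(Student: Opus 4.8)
The plan is to carry out the classical construction, organized around the hom-set bijection rather than a separately built unit and counit. First I would produce the functor $\Spec$: for a commutative ring $R$ the underlying space of $\Spec R$ has the prime ideals of $R$ as points, with the Zariski topology, for which the sets $D(f) = \{\mathfrak{p} : f \notin \mathfrak{p}\}$ form a basis closed under finite intersection since $D(f) \cap D(g) = D(fg)$. The structure sheaf $\cO_{\Spec R}$ is the sheaf extending the assignment $D(f) \mapsto R[f^{-1}]$ on this basis, with restriction along $D(g) \subseteq D(f)$ the canonical localization (well-defined since $D(g) \subseteq D(f)$ forces $g^n \in (f)$ for some $n$, whence $f$ is a unit in $R[g^{-1}]$). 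Its stalk at $\mathfrak{p}$ computes as the localization $R_{\mathfrak{p}}$, a local ring, so $(\Spec R, \cO_{\Spec R})$ is a locally ringed space; a ring homomorphism $R \to R'$ induces a morphism of locally ringed spaces via contraction of primes together with the evident compatible localizations, functorially in $R$. The functor $\Gamma$, which will be the right adjoint, is the global-sections functor $(X, \cO_X) \mapsto \cO_X(X)$, which plainly lands in $\CAlg$ and is functorial.

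The heart of the argument is a bijection, natural in $R$ and $(X,\cO_X)$,
\[
  \Hom_{\mm{LRS}}\bigl((X,\cO_X),\,(\Spec R,\cO_{\Spec R})\bigr) \;\cong\; \Hom_{\CAlg}\bigl(R,\,\Gamma(X,\cO_X)\bigr),
\]
which, under $\Hom_{\mm{LRS}^{\op}}(\Spec R,-) = \Hom_{\mm{LRS}}(-,\Spec R)$, is exactly the claimed adjunction. From a morphism of locally ringed spaces $(f,f^{\#})$ one produces $\varphi$ by applying $\Gamma$ to $f^{\#}$ and precomposing with the unit $R \to \Gamma(\Spec R,\cO_{\Spec R})$ discussed below. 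Conversely, given $\varphi\colon R \to \cO_X(X)$, set $f(x)$ equal to the preimage of the maximal ideal $\mathfrak{m}_x \subseteq \cO_{X,x}$ under $R \to \cO_X(X) \to \cO_{X,x}$; this $f$ is continuous because $f^{-1}(D(g))$ is the open locus where the image of $g$ is a unit in the stalk, and the comorphism is then forced, since on a basic open $D(g)$ the composite $R \to \cO_X(X) \to \cO_X\bigl(f^{-1}(D(g))\bigr)$ carries $g$ to a unit and hence factors uniquely through $R[g^{-1}] = \cO_{\Spec R}(D(g))$, with the induced stalk maps automatically local. Checking that these assignments are mutually inverse and natural is a routine unwinding; the single conceptually essential point is that the locality hypothesis on morphisms of locally ringed spaces is exactly what rigidifies the data so that $f$ can be recovered from $f^{\#}$ and the localization identities hold — without it the left-hand side would be strictly larger.

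It remains to pin down the unit $\eta_R\colon R \to \Gamma(\Spec R,\cO_{\Spec R})$, whose being an isomorphism for all $R$ is both the final sentence of the statement and, since $\Spec$ is a left adjoint, equivalent to the full faithfulness of $\Spec$. This reduces to the assertion that the basis presheaf $D(f) \mapsto R[f^{-1}]$ already satisfies the sheaf axiom on the basis of basic opens — so that its extension to an honest sheaf agrees with it there — namely that for every $f \in R$ and every cover $D(f) = \bigcup_i D(f_i)$, which may be taken finite since $D(f) \cong \Spec R[f^{-1}]$ is quasi-compact, the diagram
\[
  R[f^{-1}] \longrightarrow \prod_i R[f_i^{-1}] \rightrightarrows \prod_{i,j} R[(f_if_j)^{-1}]
\]
is an equalizer; in particular, taking $D(1) = \Spec R$ gives $\Gamma(\Spec R,\cO_{\Spec R}) = R$. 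I expect this commutative-algebra lemma to be the main obstacle: it rests on the dictionary that $D(f_1),\dots,D(f_n)$ cover $D(f)$ precisely when $f^N \in (f_1,\dots,f_n)$ for some $N$ (the case of general $f$ reducing to $f = 1$ by localizing), and proving the separation and gluing axioms then comes down to a partition-of-unity manipulation with the $f_i$. This is the one place where the topology of the spectrum and the ideal theory of $R$ are genuinely intertwined; everything else is formal category theory or elementary point-set topology.
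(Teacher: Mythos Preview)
Your argument is a correct and complete outline of the classical proof, essentially as one finds it in EGA~I or Hartshorne~II.2: construct $\Spec$ and the structure sheaf on the basis of principal opens, establish the hom-set bijection by recovering the continuous map from the comorphism via locality, and verify full faithfulness by the partition-of-unity equalizer lemma. There are no gaps.

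However, the paper does not supply its own proof of this statement. The theorem is stated in the introduction as a classical fact motivating the paper's categorified analogue, not as a result to be established. To the extent the paper recovers it at all, it is in \S3 as a special case of Lurie's general machinery of geometries: one writes down the classical Zariski geometry $\GZar$ on $\CAlg^{\omega}$ (\cref{def:classicalzariski}), and then the adjunction $\Spec \dashv \Gamma$ drops out of the abstract relative-spectrum adjunction $\Spec^{\cG}_{\cG_{\disc}} \dashv \Gamma_{\cG}$ of \cref{lem:globalsections}, with the identification of $\Spec R$ as the expected locally ringed space quoted from Lurie's \cite[Theorem~2.40]{lurieDerivedAlgebraicGeometry2011a}. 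So the paper's route is: set up an axiomatic notion of admissibility structure, prove once that every such structure yields a spectrum adjunction, and then plug in the Zariski data. Your route is the direct bare-hands construction specific to commutative rings. The paper's approach buys uniformity---the same argument simultaneously yields the Dirac, \'etale, and higher Zariski spectra---at the cost of heavy $\infty$-categorical prerequisites; your approach is elementary and self-contained but does not generalize without redoing the work in each new context.
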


The main goal of this paper is to develop a categorification of Zariski geometry called \emph{higher Zariski geometry} and to relate it to the subject of tensor-triangular geometry. To motivate our choice of setting, recall that Thomason \cite{Thomason1997} shows that any quasicompact quasiseparated scheme $X$ (e.g., an affine scheme) can be reconstructed from its perfect derived category $\Perf_{X}$. Here, it is crucial to consider $\Perf_{X}$ as a triangulated category together with its tensor structure: While a theorem of Bondal--Orlov \cite{BondalOrlov2001} allows the reconstruction from the triangulated structure alone under restrictive hypotheses on $X$, this is in general impossible. Furthermore, as we will explain in more detail below, a satisfying theory of descent requires working in a homotopically enriched context. In fact, viewing $\Perf_{X}$ as a \emph{stably symmetric monoidal} $\infty$-category, i.e., a stable $\infty$-category equipped with a biexact symmetric monoidal structure $\otimes$, the subject of generalized Tannakian-style reconstruction theorems of suitable schemes or geometric stacks $X$ from $\Perf_{X}$ has a rich history, as for example in the works of Lurie \cite{lurieSpectralAlgebraicGeometry}, To\"en \cite{Toen2007}, and Ben-Zvi--Francis--Nadler \cite{ben-zviIntegralTransformsDrinfeld2010a}. 

With that motivation in mind, we refer to an idempotent-complete stably symmetric monoidal $\infty$-category as a \emph{(commutative) 2-ring} and we write $\twoCAlg$ to denote the (large) $\infty$-category of (small) 2-rings. $\twoCAlg$ is a rich theatre of study, extending the scope of algebraic geometry significantly. As mentioned above, any quasicompact quasicompact scheme $X$ may be viewed as a commutative 2-ring through the passage to its derived category of perfect complexes $\Perf_{X}$, but there are many more examples: the category of (equivariant or motivic) spectra in homotopy theory, derived or stable categories in representation theory, the Kasparov category of separable $C^{\ast}$-algebras in non-commutative geometry, or Fukaya categories in symplectic geometry via Kontsevich's homological mirror symmetry conjecture \cite{KontsevichICM1994}. The next two theorems constitute the main results of this paper; for more precise formulations, we refer to \cref{thm:2zariski_geometry}, \cref{cor:rigidspec_fullyfaithful}, and \cref{thm:2zariski_balmerspectrum}.

\begin{theorem*}[Fundamental Adjunction of Higher Zariski Geometry]
    There is an $\infty$-category of locally $2$-ringed spaces consisting of pairs $(\cX, \cO)$ where $\cX$ is a space and $\cO$ is a sheaf of $2$-rings on $\cX$ satisfying a locality condition. This comes equipped with an adjunction
        \begin{equation}\label{eq:hzarfundadj}
            \Spec \colon \twoCAlg \rightleftarrows \{\mm{Locally\ 2\text{-}Ringed\ Spaces}\}^{\op}  \noloc  \Gamma,
          \end{equation}
    where $\Gamma$ takes a locally $2$-ringed space to its $2$-ring of global sections. Furthermore, the left adjoint is fully faithful on the full subcategory of $\twoCAlg$ consisting of rigid $2$-rings\footnote{that is, those $\cc{K} \in \twoCAlg$ such that every object of $\cc{K}$ is dualizable, see \cref{def:rigid}}. In particular, any rigid $2$-ring $\cc{K}$ is canonically equivalent to the global sections of the structure sheaf on $\Spec \cc{K}$.
\end{theorem*}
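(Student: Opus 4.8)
The plan is to obtain both halves of the theorem from Lurie's formalism of $\infty$-topoi with geometric structure \cite{LurieDAG5}, instantiated on a Zariski geometry built from the localization theory of $2$-rings. The first step is to organize the admissible (``open'') localizations of a $2$-ring into a \emph{geometry} $\G_{\Zar}$ in the sense of \cite{LurieDAG5}: a small idempotent-complete $\infty$-category with finite limits, together with a class of admissible morphisms --- the categorified open immersions, i.e.\ the finite Verdier/smashing localizations cutting out a Thomason-closed complement --- and the Grothendieck topology generated by the finite jointly surjective families of such. Granting that the $\G_{\Zar}$-structures on the point are exactly $2$-rings and that the underlying space of $\Spec$ is the Balmer spectrum of the homotopy category --- the content of \cref{thm:2zariski_geometry} and \cref{thm:2zariski_balmerspectrum}, the latter via the computation of Kock--Pitsch --- the general machinery of \cite{LurieDAG5} then produces: the $\infty$-category $\LTop(\G_{\Zar})$ of $\G_{\Zar}$-structured $\infty$-topoi with \emph{local} morphisms; its full subcategory of $0$-localic objects, which are precisely the locally $2$-ringed spaces because the Balmer spectrum is sober and hence spatial; the spectrum functor $\Spec$ and the global-sections functor $\Gamma$; and the adjunction \eqref{eq:hzarfundadj} between them. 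Thus the existence of the adjunction, and the identification of its target with locally $2$-ringed spaces, is a formal consequence once the geometry $\G_{\Zar}$ is in place.

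For the full faithfulness assertion it suffices to prove that the unit $\cc{K} \to \Gamma\Spec\cc{K}$ is an equivalence for every rigid $2$-ring $\cc{K}$: a left adjoint is fully faithful on any full subcategory on which its unit is an equivalence, and this statement is also the ``in particular'' clause. Unwinding the construction of $\Spec$, the structure sheaf $\cO_{\Spec\cc{K}}$ is the sheafification of the presheaf of $2$-rings sending a basic quasi-compact open $U \subseteq \Spec\cc{K}$ to the Verdier localization $\cc{K}_U$ of $\cc{K}$ at the Thomason ideal complementary to $U$; so what must be shown is that this presheaf already satisfies Zariski descent, and in particular has global sections $\cc{K}$. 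Using that the Balmer spectrum is quasi-compact with a basis of quasi-compact opens, one reduces to checking, for each finite cover $V = U_1 \cup \dots \cup U_n$ of a basic open by basic opens, that the natural map
\[
  \cc{K}_V \;\longrightarrow\; \Tot\bigl(\textstyle\prod_{i}\cc{K}_{U_i} \rightrightarrows \textstyle\prod_{i<j}\cc{K}_{U_i\cap U_j} \to \cdots\bigr)
\]
is an equivalence in $\twoCAlg$, naturally in the cover --- of which the global-sections statement is the special case $V = \Spec\cc{K}$.

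This equivalence is the crux, and it is exactly the ``coherent enhancement of the descent results of Balmer and Favi'' promised in the introduction; here rigidity is essential. The plan is to pass to the compactly generated categories $\Ind(\cc{K})$ and $\Ind(\cc{K}_U)$, where each open $U$ is governed by a $\otimes$-idempotent commutative algebra $f_U$ --- the Balmer--Favi/Bousfield idempotent of the corresponding smashing localization --- so that $\Ind(\cc{K}_U) \simeq \Mod_{f_U}(\Ind(\cc{K}))$, and a finite Zariski cover becomes the statement that the functors $(-)\otimes f_{U_i}$ are jointly conservative on $\Ind(\cc{K})$. The key input is then a purely $\infty$-categorical descent lemma: for a finite jointly conservative family of $\otimes$-idempotent algebras, the unit is the totalization of the Čech conerve of $\prod_i f_{U_i}$, whose $k$-th term is identified --- using $f_{U_{i_0}}\otimes\dots\otimes f_{U_{i_k}} \simeq f_{U_{i_0}\cap\dots\cap U_{i_k}}$ --- with $\prod\cc{K}_{U_{i_0}\cap\dots\cap U_{i_k}}$ after restricting to dualizable objects. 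Rigidity enters in several places: it underlies the idempotent calculus $f_{U}\otimes f_{V}\simeq f_{U\cap V}$ and the joint detection of dualizables, it ensures each $\cc{K}_U$ is again rigid (being a base change along the idempotent algebra $f_U$, hence compactly generated with rigid compact part), and it is what makes Balmer--Favi's local-to-global and gluing results available at all. Restricting the resulting equivalence of big categories to dualizable (equivalently compact) objects yields the displayed cosimplicial identity, and a naturality check promotes it to the sheaf condition and hence to $\cc{K}\simeq\Gamma\Spec\cc{K}$, giving \cref{cor:rigidspec_fullyfaithful}. I expect the conceptual heart --- the idempotent-algebra descent lemma and its compatibility with dualizability --- to be relatively clean, and the genuine labour to lie in the coherent identification of the conerve terms and in verifying functoriality in the cover with all higher coherences.
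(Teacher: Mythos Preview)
Your overall strategy is correct and your proposal for the adjunction half matches the paper exactly: construct the Zariski geometry $\GBal$ on $(\twoCAlg^{\omega})^{\op}$ and invoke Lurie's machinery from \cite{LurieDAG5} to obtain $\Spec \dashv \Gamma$; the identification $\Ind(\GBal^{\op}) \simeq \twoCAlg$ (via compact generation of $\twoCAlg$) is what puts $2$-rings on the left, and the $0$-localic computation via the Balmer spectrum is what justifies the ``locally $2$-ringed spaces'' language.

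For the full-faithfulness half your route diverges from the paper's, though both reach the same destination. You propose to prove Zariski descent for the structure presheaf by treating an arbitrary finite cover at once: pass to $\Ind(\cc{K})$, use that each basic open is governed by a $\otimes$-idempotent $f_U$, identify the \v{C}ech conerve of $\prod_i f_{U_i}$ term-by-term via $f_U \otimes f_V \simeq f_{U\cap V}$, and argue that joint conservativity forces the totalization to recover the unit (equivalently, $\prod_i f_{U_i}$ is descendable in the sense of Mathew). The paper instead first reduces to two-element covers using a Mayer--Vietoris characterization of sheaves on coherent frames (\cref{prop:sheavesonspec}, resting on \cref{thm:mvsuffices}), and then proves the single pullback square
\[
\Ind(\cc{L}) \simeq \Ind(\cc{L}/\langle x\rangle) \times_{\Ind(\cc{L}/\langle x\oplus y\rangle)} \Ind(\cc{L}/\langle y\rangle)
\]
directly, by first checking that the square of idempotents $\unit \to L_x, L_y \to L_x\otimes L_y$ is Cartesian (using \cref{lem:intersectinglocalizingsubs}) and then invoking the lax-limit description of pullbacks from Horev--Yanovski. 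The paper explicitly acknowledges your route as a valid alternative in the remark following \cref{thm:2MODisasheaf}, noting it is shorter but imports more machinery. What the Mayer--Vietoris reduction buys is a self-contained argument that avoids descendability and the full cosimplicial bookkeeping; what your approach buys is a cleaner conceptual story and a direct line to the local-to-global spectral sequences mentioned in the introduction. One small correction: rigidity is not what underlies $f_U \otimes f_V \simeq f_{U\cap V}$ per se (that is the content of \cref{lem:intersectinglocalizingsubs}), but rather enters via radicality of all ideals (\cref{lem:rigid_radicalideal}), the existence of the smashing idempotents (\cref{prop:finitelocissmashing}), and the identification of compact with dualizable objects in $\Ind(\cc{K})$ (\cref{lem:rigidlycompactlygenerated}) needed to pass back down from the big categories.
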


The homotopy category of a 2-ring naturally admits the structure of a \emph{tensor-triangulated category}. The idea of studying tensor-triangulated categories by assigning to them a notion of Zariski geometry is first proposed by Balmer in \cite{balmerSpectrumPrimeIdeals2005}, as a mechanism to unify classification results and other mutually transferrable techniques employed in the %gamut of
examples above. In this field, known as \emph{tensor-triangular} or \emph{tt-geometry}, one associates to a small tensor-triangulated category $\cc{K}$ a topological space $\Spc \cc{K}$ called the \emph{Balmer spectrum}. Many structural features of $\cc{K}$ can be encoded in terms of the topology of $\Spc \cc{K}$, and its study and computation are of principal importance in the theory.

In the vein of Joyal's characterization of the Zariski spectrum discussed above, the work of \cite{KockPitsch17} constructs the Balmer spectrum of a tensor-triangulated category $\cc{K}$ entirely out of the poset of its categorical localizations $\cc{K} \to \cc{K}[W^{-1}]$ which respect the tensor-triangulated structure. This perspective underlies our approach to the Zariski geometry of 2-rings, and will also facilitate the following key computation.

        \begin{theorem*}
          For any $\cc{K} \in \twoCAlg$, there is a natural homeomorphism between the space underlying $\Spec \cc{K}$ and $\Spc \Ho \cc{K}$, the Balmer spectrum of the homotopy category $\Ho \cc{K}$ of $\cc{K}$. 
        \end{theorem*}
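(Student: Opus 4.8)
The plan is to realise both the space underlying $\Spec\cc{K}$ and the Balmer spectrum $\Spc\Ho\cc{K}$ as the space of points of one and the same locale, one that depends only on the tensor-triangulated structure of $\Ho\cc{K}$ and is functorial in $\cc{K}$. Recall that, in the Joyal / Kock--Pitsch point of view that (as remarked above) underlies our approach, the higher Zariski topology --- and hence the $\infty$-topos underlying $\Spec\cc{K}$ --- is assembled from the poset of symmetric monoidal exact localizations $\cc{K}\to\cc{K}[W^{-1}]$ of the $2$-ring $\cc{K}$; by \cref{thm:2zariski_geometry} this underlying $\infty$-topos is the topos of sheaves on the associated locale $\Loc(\cc{K})$, so that the space underlying $\Spec\cc{K}$ is the (sober) space of points of $\Loc(\cc{K})$. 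On the other side, the theorem of Kock--Pitsch \cite{KockPitsch17} produces, from any essentially small tensor-triangulated category $\cc{T}$ and its poset of tensor-triangulated localizations, a locale whose space of points is precisely $\Spc\cc{T}$; and the locale $\Loc(\cc{K})$ is obtained from $\cc{K}$ by this very same recipe.

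It therefore suffices to check that this recipe returns the same locale whether it is fed the stable symmetric monoidal $\infty$-category $\cc{K}$ or its homotopy category $\Ho\cc{K}$, naturally in $\cc{K}\in\twoCAlg$. For this I would use the standard fact that $\Ho$ induces an isomorphism between the lattice of thick subcategories of $\cc{K}$ and the lattice of thick subcategories of the triangulated category $\Ho\cc{K}$: a thick subcategory is a full subcategory closed under shifts, fibers/cofibers and retracts, hence is determined by its class of objects, and the defining closure conditions are literally the same whether tested in $\cc{K}$ or in $\Ho\cc{K}$, since the fiber sequences of $\cc{K}$ are exactly the exact triangles of $\Ho\cc{K}$. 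This isomorphism restricts to thick $\otimes$-ideals --- the absorption condition that $a\otimes(-)$ preserve the subcategory is again a condition on objects --- and it preserves radicals, because $a\in\sqrt{\cc{I}}$ if and only if $a^{\otimes n}\in\cc{I}$ for some $n\geq 1$ and the object $a^{\otimes n}$ is visibly the same in $\cc{K}$ and in $\Ho\cc{K}$. As the Kock--Pitsch construction factors through exactly this data --- the lattice of thick $\otimes$-ideals, the radical operation, and the frame it determines --- the two locales, and hence their spaces of points, coincide; this yields the asserted natural homeomorphism $|\Spec\cc{K}|\cong\Spc\Ho\cc{K}$.

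Most of this argument is assembly: the homotopy-invariance of thick $\otimes$-ideals and of radicals is soft, and the Kock--Pitsch theorem is invoked as a black box. I expect the genuine work to be concentrated in two bookkeeping points. First, one must verify that the $\infty$-categorical definition of the higher Zariski locale $\Loc(\cc{K})$ coincides, through the thick-$\otimes$-ideal dictionary, with the $1$-categorical frame that Kock--Pitsch attach to $\Ho\cc{K}$ --- as frames, not merely up to a bijection on points --- so that the comparison of underlying spaces is forced rather than merely plausible; this is the step that genuinely connects the $\infty$-categorical Zariski topology to the combinatorics of thick $\otimes$-ideals. Second, one must know that ``the space underlying $\Spec\cc{K}$'' really is an honest sober space, namely the space of points of $\Loc(\cc{K})$ (equivalently, that $\Spec\cc{K}$ is $0$-localic); but this is automatic once the frames are matched, since the Kock--Pitsch locale is spatial with space of points the spectral space $\Spc\Ho\cc{K}$.
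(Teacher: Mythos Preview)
Your outline is in the right spirit and aligns with the paper's strategy, but the step you label ``bookkeeping'' is in fact the entire content of the proof, and you have not carried it out. Concretely: by Lurie's general framework (\cref{def:spec}), the underlying $\infty$-topos of $\Spec\cc{K}$ is $\Shv(\Pro(\GBal)^{\ad}_{/\cc{K}})$ with the pro-admissible topology --- it is \emph{not} a priori sheaves on any locale built from localizations, and \cref{thm:2zariski_geometry} says only that $\GBal$ is a geometry, nothing about the shape of the resulting spectrum. So your first paragraph assumes what is to be proved.

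What the paper actually does is: (i) identify the pro-admissible site with $\mm{Prin}(\cc{K})^{\op}$ (\cref{lem:admissibleprincipal}), (ii) describe the pro-admissible topology on it explicitly in terms of radicals, (iii) construct a comparison morphism $\Shv(\mm{Prin}(\cc{K})^{\op})\to\Shv(\Rad(\cc{K})^{\vee})$ induced by $\sqrt{-}$, and (iv) prove this is an equivalence by matching their frames of subterminal objects via an explicit bijection between certain sieves on $\mm{Prin}(\cc{K})$ and filters on $\Rad(\cc{K})^{\omega}$. Step (iv) is genuine, because $\mm{Prin}(\cc{K})$ is in general \emph{not} $\Rad(\cc{K})^{\omega}$ --- a principal ideal need not be radical --- so the surjection $\sqrt{-}$ collapses the site nontrivially and one must verify that this collapse is invisible at the level of sheaf topoi. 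Your soft argument about $\cc{K}$ versus $\Ho\cc{K}$ is correct but accounts only for the trivial observation that $\Rad(\cc{K})$ depends only on the homotopy category; the paper takes this for granted by defining $\Spc\cc{K}$ directly from $\Rad(\cc{K})^{\vee}$ following Kock--Pitsch.
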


The relevance of the study of 2-rings and their Zariski geometry is thus retroactively justified by the computation above: The tensor-triangulated structure alone is not enough to produce a structure sheaf on $\Spc \cc{K}$ giving rise to the fundamental adjunction. We view the fundamental adjunction as an essential desideratum of any algebro-geometric theory, and as such the goal of this paper is to give a higher-algebraic reincarnation of the conceptual framework of tt-geometry which allows one to state and prove it. Accordingly, we hope that this work will serve as an entry point for those versed in homotopical algebra to immerse themselves in the rich field of tt-geometry, as well as an opportunity for technicians of the field to add a new set of methods to their toolkit.

\subsection{Geometries}\label{ssec:1B} In \cite{LurieDAG5}, Lurie introduces a framework to articulate the basic structures of ``derived geometry'' in various contexts. This works by assigning to a small Grothendieck site $\cc{G}$ an $\infty$-category of ``$\infty$-topoi with local $\cc{G}$-structure'' along with a structure theory of schemes, affine schemes, and functors of points analogous to the usual constructions in algebraic geometry. We provide a detailed account of the theory in \cref{sec:geometries}. Here we content ourselves with enough of an account to state our main results.

\begin{remark*}
An $\infty$-topos is a left-exact localization of an $\infty$-category of presheaves on a small $\infty$-category.  For the reader unfamiliar with the language of topoi and $\infty$-topoi, we review standard notation in \cref{ssec:conventions}. We recommend \cite{rezkLecturesTopos} as an overview of the theory. The main examples in this paper appear as the $\infty$-categories of sheaves of spaces on a topological space or small Grothendieck site.    
\end{remark*}

  Following Lurie, a \emph{geometry} is a triple $(\cG,\cG^{\ad},\tau)$ consisting of:
    \begin{enumerate}
        \item An idempotent-complete small $\infty$-category $\cG$ which admits finite limits.
        \item A class of morphisms $\cG^{\ad}\subseteq \cG$ closed under left cancellation, retracts, and base change in $\cG$.
        \item A Grothendieck topology $\tau$ on $\cG$ which is generated by morphisms in $\cG^{\ad}$.
    \end{enumerate}
    We refer to morphisms in~$\cat{G}^{\ad}$ and covers in~$\tau$ as \emph{admissible morphisms} and \emph{admissible covers}, respectively. We are interested in studying categories of the form $\Pro(\cG)$ where $\cG$ is equipped with the structure of a geometry. Admissible morphisms are meant to capture basic open inclusions, and the Grothendieck topology abstracts covers of affine spectra by basic opens. We will often return to the following example to orient the reader's intuition.

\begin{example*}
    The \emph{classical Zariski geometry} consists of the following data:
        \begin{enumerate}
            \item $\GZar=(\CAlg^{\omega})^{\op}$, the opposite of the category of finitely presented commutative rings.
            \item Admissible morphisms $\GZar^{\ad}$ correspond to localization maps $R \to R[x^{-1}]$ for $x \in R$, where $R$ is a finitely presented commutative ring.
            \item  A finite collection $\{R\to R[x_i^{-1}]\}_{i \in I}$ is declared to generate an admissible covering sieve if the set $\{x_i\}_{i \in I} \subset R$ generates the unit ideal. 
            \end{enumerate}
\end{example*}

Recall that $\twoCAlg$ denotes the $\infty$-category of 2-rings, i.e., that of small idempotent-complete stably symmetric monoidal $\infty$-categories. Our first result generalizes the classical Zariski geometry on commutative rings to the setting of 2-rings. 

\begin{ThmAlpha}\label{thmalph:zariski2rings}
    The following triple $(\GBal, \cG_{\Bal}^{\ad}, \tau)$ defines a geometry on the $\infty$-category of 2-rings: 
        \begin{enumerate}
            \item $\GBal=(\twoCAlg^{\omega})^{\op}$, the opposite of the category of compact 2-rings.
             \item The class of admissible morphisms $\cG_{\Bal}^{\ad}$ corresponds to the principal Verdier localizations 
                \[
                    \cc{K}\to \cc{K}/\langle a \rangle \in \twoCAlg^{\omega,[1]}
                \] 
            (rather, their idempotent completions) for objects $a \in \cc{K}$. Here, $\twoCAlg^{\omega,[1]}$ denotes the arrow category on $\twoCAlg^{\omega}$.
            \item A finite collection of admissible morphisms $\{f_i\colon\cc{K}\to\cc{K}_{i}\}_{i \in I}$ is declared to generate a covering sieve in $\tau$ if the kernel of $\prod_{I} f_{i}\colon\cc{K} \to \prod_{I}\cc{K}_{i}$ consists of $\otimes$-nilpotent objects.
        \end{enumerate}
    We will refer to the above data as the \emph{Zariski geometry} on the opposite of the category of 2-rings. 
\end{ThmAlpha}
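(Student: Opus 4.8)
The plan is to verify Lurie's three axioms for a geometry \cite{LurieDAG5}: that $\GBal=(\twoCAlg^{\omega})^{\op}$ is an essentially small, idempotent-complete $\infty$-category with finite limits, that $\cG_{\Bal}^{\ad}$ is an admissibility structure (a wide subcategory stable under composition, base change, left cancellation and retracts), and that $\tau$ is a Grothendieck topology generated by admissible covers. Axiom (1) is soft: by the results on $\twoCAlg$ recalled earlier it is presentable, so $\twoCAlg^{\omega}$ is essentially small, idempotent-complete and closed under finite colimits, and hence $\GBal$ has the stated properties. Everything else rests on one dictionary, which I would set up first: since every morphism of $2$-rings is symmetric monoidal exact, for $a\in\cc{K}$ the map $\cc{K}\to(\cc{K}/\langle a\rangle)^{\natural}$ is the universal $2$-ring under $\cc{K}$ killing $a$ --- it corepresents on $\twoCAlg_{\cc{K}/}$ the subfunctor $\cc{M}\mapsto\{f\colon\cc{K}\to\cc{M}\mid f(a)\simeq 0\}$, a union of connected components of $\Map_{\twoCAlg}(\cc{K},\cc{M})$, because a tensor-exact functor killing $a$ kills the whole thick $\otimes$-ideal $\langle a\rangle$. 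Moreover $(\cc{K}/\langle a\rangle)^{\natural}$ is again compact, being the quotient by a thick $\otimes$-ideal generated by a single compact object (equivalently, one checks that this corepresentability commutes with filtered colimits of the target).

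For axiom (2) I would argue as follows. Identities and equivalences correspond to $\langle 0\rangle$. \emph{Base change}: the pushout of $\cc{K}\to(\cc{K}/\langle a\rangle)^{\natural}$ along any $\phi\colon\cc{K}\to\cc{L}$ corepresents maps out of $\cc{L}$ killing $\phi(a)$, hence is canonically $(\cc{L}/\langle\phi(a)\rangle)^{\natural}$, which is compact as a pushout of compacts; this verifies stability under base change and records a formula used below. \emph{Left cancellation}: if $\cc{Z}\to\cc{Y}$ and $\cc{Z}\to\cc{X}$ are principal Verdier localizations with the second factoring through the first, then $\langle c\rangle:=\ker(\cc{Z}\to\cc{Y})\subseteq\langle b\rangle:=\ker(\cc{Z}\to\cc{X})$, so $\cc{X}\simeq(\cc{Y}/\langle\bar b\rangle)^{\natural}$ with $\bar b$ the image of $b$ in $\cc{Y}=(\cc{Z}/\langle c\rangle)^{\natural}$; hence $\cc{Y}\to\cc{X}$ is again a principal Verdier localization. \emph{Composition}: given $\cc{K}\to(\cc{K}/\langle a\rangle)^{\natural}\to((\cc{K}/\langle a\rangle)^{\natural}/\langle a'\rangle)^{\natural}$, write $a'$ as a direct summand of the image $q(b)$ of some $b\in\cc{K}$ and represent the complementary projection $q(b)\to q(b)$ by a fraction $t/s$ with $t,s\colon b_1\to b$ in $\cc{K}$ and $\cofib(s)\in\langle a\rangle$; then $d:=\cofib(t)\in\cc{K}$ satisfies $q(d)\simeq a'\oplus\Sigma a'$, so $\langle a'\rangle=\langle q(d)\rangle$ and the composite is the principal Verdier localization at $a\oplus d$. \emph{Retracts}: a retract of a categorical localization is again a localization (run the $1$-categorical argument in mapping spaces and on objects), so a retract $f\colon\cc{L}\to\cc{L}''$ of an admissible $\cc{K}\to(\cc{K}/\langle a\rangle)^{\natural}$ is the idempotent-completed Verdier quotient at its kernel $\ker f$; chasing the retraction data $\cc{L}\xrightarrow{\alpha}\cc{K}\xrightarrow{\gamma}\cc{L}$ and using tensor-exactness of $\alpha,\gamma$ identifies $\ker f=\langle\gamma(a)\rangle$, so $f$ is admissible.

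For axiom (3), unwinding the definition, a finite admissible family $\{\cc{K}\to\cc{K}_{i}=(\cc{K}/\langle a_{i}\rangle)^{\natural}\}_{i\in I}$ is a $\tau$-cover exactly when $\ker(\prod_{i}f_{i})=\bigcap_{i}\langle a_{i}\rangle$ lies in $\sqrt{0}$, the thick $\otimes$-ideal of $\otimes$-nilpotent objects. Then $\{\id\}$ covers, which gives the identity axiom, and transitivity is essentially formal: with $q_{i}\colon\cc{K}\to\cc{K}_{i}$ one has $\ker(\cc{K}\to\cc{K}_{ij})=q_{i}^{-1}\langle a_{ij}\rangle$, and using that $q_{i}^{-1}$ preserves intersections, that $q_{i}^{-1}(\sqrt{0}_{\cc{K}_{i}})=\sqrt{\langle a_{i}\rangle}$, and that $\sqrt{\cc{I}\cap\cc{J}}=\sqrt{\cc{I}}\cap\sqrt{\cc{J}}$, one obtains $\bigcap_{i,j}\ker(\cc{K}\to\cc{K}_{ij})\subseteq\bigcap_{i}\sqrt{\langle a_{i}\rangle}=\sqrt{\bigcap_{i}\langle a_{i}\rangle}\subseteq\sqrt{0}$. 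Pullback-stability is where the genuine input enters: via Balmer's classification of radical thick $\otimes$-ideals by supports together with Kock--Pitsch \cite{balmerSpectrumPrimeIdeals2005,KockPitsch17}, the condition $\bigcap_{i}\langle a_{i}\rangle\subseteq\sqrt{0}$ is equivalent to $\bigcap_{i}\supp_{\Ho\cc{K}}(a_{i})=\emptyset$, and by naturality of the support $\supp_{\Ho\cc{L}}(\phi a_{i})=(\Spc\Ho\phi)^{-1}\supp_{\Ho\cc{K}}(a_{i})$; hence the base-changed family $\{\cc{L}\to(\cc{L}/\langle\phi a_{i}\rangle)^{\natural}\}$ from the previous paragraph again covers, so the admissible covers form a pretopology and generate a Grothendieck topology.

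I expect the main obstacle to be the bookkeeping of axiom (2): confirming that the class of principal Verdier localizations is genuinely closed under composition and base change while staying inside $\twoCAlg^{\omega}$, and keeping idempotent completions straight throughout (e.g.\ in identifying kernels and in the composition step above). By contrast, axiom (1) is formal and the Grothendieck-topology axioms reduce cleanly to Balmer's classification --- transitivity being essentially elementary and pullback-stability amounting to the functoriality of the Balmer spectrum.
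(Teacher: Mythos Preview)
Your argument is correct, but it takes a noticeably more laborious route than the paper's, in two respects.

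First, for the admissibility structure, the paper does not work directly with \emph{principal} Verdier localizations. Instead it defines $\cG_{\Bal}^{\ad}$ to consist of all Karoubi quotients $\cc{K}\to\cc{K}'$ with $\cc{K},\cc{K}'\in\twoCAlg^{\omega}$, and proves closure under composition, base change, left cancellation and retracts for Karoubi quotients in general (\cref{cor:basechange}, \cref{cor:retractsandleftcancel}), which is nearly formal from the universal property recorded in \cref{prop:quotientposet}. Principality is then automatic: since $\cc{K}$ and $\cc{K}'$ are compact, the quotient $\cc{K}\to\cc{K}'$ is a compact object of $\twoCAlg_{\cc{K}/}$, hence principal by the last clause of \cref{prop:quotientposet}. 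This bypasses entirely your fraction argument for composition and the explicit kernel identification for retracts---both of which are correct but unnecessary once one observes that compactness forces principality.

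Second, the paper does not verify that the declared covers form a pretopology. Lurie's definition of a geometry only requires that $\tau$ be \emph{some} Grothendieck topology generated by admissible morphisms; one simply takes $\tau$ to be the topology generated by the declared covering families, and condition \ref{i:gen} of \cref{def:geometry} holds by fiat. Your verification of identity, transitivity, and pullback-stability is thus extra work (though it does establish that the generating covers are already stable, which is useful later). Your pullback-stability argument via Balmer supports is also heavier than needed: by \cref{lem:radicalgeneration} the covering condition $\bigcap_i\langle a_i\rangle\subseteq\sqrt{0}$ is equivalent to $\otimes$-nilpotence of $\bigotimes_i a_i$, which any symmetric monoidal functor manifestly preserves.
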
      

Given the data of a geometry $\cG$, the results of \cite{LurieDAG5} construct a theory of locally $\cG$-ringed spaces together with an affinization functor. There is an $\infty$-category $\LTop(\cG)$ of \emph{$\infty$-topoi with local $\cc{G}$-structure}, informally presented as:
        \begin{enumerate}
            \item Objects are given by pairs $(\cX, \cO)$ where $\cX$ is an $\infty$-topos and $\cc{O}$ is an $\Ind(\cG^{\op})$-valued sheaf on $\cX$ satisfying a locality condition.
            \item Morphisms between objects $f\colon (\cc{X}, \cc{O_{X}}) \to (\cc{Y}, \cc{O_{Y}})$ are given by pairs 
            \[f^{\ast}\colon \cc{X} \to \cc{Y}\in \LTop^{[1]}\] along with a morphism of associated sheaves \[\gamma\colon f^{\ast}\cc{O_{X}} \to \cc{O_{Y}} \in \Str^{\loc}_{\cG}(\cY)^{[1]}\] satisfying a locality condition intrinsic to $\cG$.  
    \end{enumerate} 
The main abstract input to higher Zariski geometry is then the following result:

\begin{theorem*}[Lurie]\label{thm:omnibusspectra}
    For any geometry $\cG$, there exists an adjunction of the following form
        \[
            \Spec^{\cG}\colon \Ind(\cG^{\op}) \rightleftarrows \LTop(\cc{G}) \noloc\Gamma_{\cc{G}},
        \]
    where $\Gamma_{\cG}$ is informally given by sending $(\cX, \cO) \in \LTop(\cG)$ to the global sections $\cO(\cX)$. In particular, for all $(\cX, \cO_{\cX}) \in \LTop(\cG)$ one has a natural equivalence
  \begin{equation*}
        \Map_{\LTop(\cat{G})}(\Spec^{\cat{G}}(-),(\cat{X},\cO_{\cat{Y}})) \simeq \Map_{\Ind(\cG^{\op})}(-,\Gamma_{\cat{G}}(\cat{Y},\cO_{\cat{Y}})).
      \end{equation*}
      of functors $\Pro(\cG) \coloneq\Ind(\cG^{\op})^{\op} \to \cS$.
    The left adjoint $\Spec^{\cG}$ in the above theorem is referred to as the \emph{absolute spectrum} functor.
\end{theorem*}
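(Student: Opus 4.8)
This is a theorem of Lurie, proved in \cite{LurieDAG5}; the plan is to recall the shape of the argument, whose ingredients we unpack in \cref{sec:geometries}. The right adjoint is the formal half: given a $\cG$-structured $\infty$-topos $(\cX,\cO)$ and the essentially unique geometric morphism $p\colon\cX\to\cS$ to the terminal $\infty$-topos, set $\Gamma_{\cG}(\cX,\cO) := p_{\ast}\cO$, the composite $\cG\xrightarrow{\cO}\cX\xrightarrow{p_{\ast}}\cS$. Since $\cO$ and $p_{\ast}$ preserve finite limits this is left exact, hence an object of $\Fun^{\lex}(\cG,\cS)\simeq\Ind(\cG^{\op})$ (using that $\cG$ has finite limits) --- and note that $p_{\ast}\cO$ need \emph{not} be a local $\cG$-structure, just as the ring of global sections of a locally ringed space need not be local. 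Functoriality in $(\cX,\cO)$, including along the local transformations of structure sheaves that constitute the morphisms of $\LTop(\cG)$, is routine. One could instead try to produce the left adjoint via an adjoint functor theorem, but checking the solution-set hypotheses for $\Gamma_{\cG}$ is no simpler than the explicit construction, so I would build $\Spec^{\cG}$ by hand.

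To construct the absolute spectrum, fix $A\in\Ind(\cG^{\op})$ and write $\bar A$ for the corresponding object of $\Pro(\cG) = \Ind(\cG^{\op})^{\op}$, the ``affine scheme'' attached to $A$. First I would form the \emph{admissible over-site} of $\bar A$: the full subcategory $\Pro(\cG)^{\ad}_{/\bar A}\subseteq\Pro(\cG)_{/\bar A}$ spanned by the admissible morphisms $U\to\bar A$ (admissibility in $\Pro(\cG)$ meaning ``obtained by base change from an admissible morphism of $\cG$''), topologized by the covers induced from $\tau$. This is a small site --- in the classical Zariski geometry it is the poset of basic opens $D(f)\subseteq\Spec A$ --- and I would \emph{define} the underlying $\infty$-topos $\lvert\Spec^{\cG}A\rvert$ to be its $\infty$-topos of sheaves. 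The structure sheaf $\cO_A\colon\cG\to\lvert\Spec^{\cG}A\rvert$ is then tautological: it sends $G\in\cG$ to the sheafification of the presheaf $(U\to\bar A)\mapsto\Map_{\Pro(\cG)}(U,\bar G)$, where $\bar G$ denotes the image of $G$ under the finite-limit-preserving embedding $\cG\hookrightarrow\Pro(\cG)$. That $\cO_A$ preserves finite limits --- corepresentable mapping spaces preserve all limits, $G\mapsto\bar G$ preserves finite limits, and sheafification is left exact --- and carries admissible covers to effective epimorphisms (immediate from the topology) is straightforward; the crucial point, and exactly where one uses that $\tau$ is generated by morphisms of $\cG^{\ad}$, is that $\cO_A$ is moreover a \emph{local} $\cG$-structure, so that $\Spec^{\cG}A := (\lvert\Spec^{\cG}A\rvert,\cO_A)$ is genuinely an object of $\LTop(\cG)$.

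It then remains to exhibit, naturally in $A$ and in $(\cX,\cO)$, an equivalence $\Map_{\LTop(\cG)}(\Spec^{\cG}A,(\cX,\cO))\simeq\Map_{\Ind(\cG^{\op})}(A,\Gamma_{\cG}(\cX,\cO))$. Unwinding the left-hand side, such a morphism consists of a geometric morphism relating the underlying $\infty$-topoi (in the direction fixed by the conventions of $\LTop$) together with a local transformation of structure sheaves; by the universal property of sheaf $\infty$-topoi the geometric-morphism datum is the same as a left-exact, cover-preserving functor out of the admissible site of $\bar A$, and feeding this together with the transformation through the tautological description of $\cO_A$ and the (co-)Yoneda embedding $\Pro(\cG)\hookrightarrow\Fun(\cG,\cS)^{\op}$ collapses the whole package to precisely a morphism $A\to p_{\ast}\cO = \Gamma_{\cG}(\cX,\cO)$ in $\Ind(\cG^{\op})$, with naturality in both variables a matter of bookkeeping. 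The steps that require genuine care rather than bookkeeping are twofold: verifying that $\cO_A$ is a \emph{local} structure --- without which $\Spec^{\cG}A$ is not even an object of $\LTop(\cG)$ --- which rests on the compatibility between $\tau$ and $\cG^{\ad}$ built into the axioms of a geometry; and checking that the locality conditions on the two sides of the claimed equivalence match with no slack, i.e.\ that the ``local transformation of structure sheaves'' appearing in $\LTop(\cG)$-morphisms corresponds to honest $\Ind(\cG^{\op})$-morphisms and to no more. Both are handled in \cite{LurieDAG5}, and we recall the precise statements we need in \cref{sec:geometries}.
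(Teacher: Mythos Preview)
Your sketch is correct and aligns closely with the paper's account in \cref{sec:geometries}: both attribute the result to Lurie, construct $\Gamma_{\cG}$ as global sections of the structure sheaf, and produce $\Spec^{\cG}A$ explicitly as sheaves on the admissible over-site $\Pro(\cG)^{\ad}_{/\bar A}$ with the tautological structure sheaf. The one structural difference is that the paper factors the adjunction through the discrete geometry---first exhibiting $\Ind(\cG^{\op})\hookrightarrow\LTop(\cG_{\disc})$ as left adjoint to $\Gamma_{\cG}$, then composing with Lurie's relative spectrum adjunction $\Spec^{\cG}_{\cG_{\disc}}\colon\LTop(\cG_{\disc})\rightleftarrows\LTop(\cG)$ (\cref{thm:relativespec}), and finally identifying the composite with the explicit construction via \cref{thm:spec}---whereas you propose to verify the universal property directly; either route is fine, and both are in \cite{LurieDAG5}.
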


Specialized to the classical Zariski geometry $(\GZar, \cG_{\Zar}^{\ad}, \tau)$, this recovers the fundamental adjunction of classical Zariski geometry \eqref{eq:zarfundadj}. Turning attention to the Zariski geometry $(\GBal, \cG_{\Bal}^{\st, \ad}, \tau)$ on 2-rings from \cref{thmalph:zariski2rings}, we obtain the fundamental adjunction of higher Zariski geometry \eqref{eq:hzarfundadj}:

\begin{CorAlpha}\label{cor:absoluteGBalspectrum}
    Let $(\cX, \cO) \in \LTop(\GBal)$. There is a natural equivalence
        \[
            \Map_{\twoCAlg}(\cc{K},\Gamma_{\GBal}(\cX, \cO)) \simeq \Map_{\LTop(\GBal)}(\Spec^{\GBal} \cc{K}, (\cX, \cO)).
        \]
\end{CorAlpha}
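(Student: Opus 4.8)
The plan is to obtain this as a direct specialization of Lurie's omnibus theorem to the geometry $\GBal$ constructed in \cref{thmalph:zariski2rings}. That theorem supplies, for any geometry $\cG$, an adjunction $\Spec^{\cG}\colon \Ind(\cG^{\op}) \rightleftarrows \LTop(\cG) \noloc \Gamma_{\cG}$, and in particular a natural equivalence $\Map_{\LTop(\cG)}(\Spec^{\cG}(-),(\cX,\cO)) \simeq \Map_{\Ind(\cG^{\op})}(-,\Gamma_{\cG}(\cX,\cO))$ of functors on $\Pro(\cG)$, for every fixed $(\cX,\cO) \in \LTop(\cG)$. Since $\GBal$ is a geometry by \cref{thmalph:zariski2rings}, applying this with $\cG = \GBal$ already gives the desired statement, provided we identify the source $\Ind(\GBal^{\op})$ of the left adjoint with $\twoCAlg$ and check that $\Gamma_{\GBal}$ corresponds, under this identification, to the 2-ring of global sections functor.

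For the identification: since $\GBal = (\twoCAlg^{\omega})^{\op}$ we have $\GBal^{\op} = \twoCAlg^{\omega}$, so what is needed is the equivalence $\Ind(\twoCAlg^{\omega}) \simeq \twoCAlg$. I would recall here that $\twoCAlg$ is a compactly generated $\infty$-category whose subcategory of compact objects is precisely $\twoCAlg^{\omega}$ — equivalently, that every 2-ring is a filtered colimit of compact 2-rings and that these colimits are computed so that the canonical functor $\Ind(\twoCAlg^{\omega}) \to \twoCAlg$ is an equivalence (this should be available from the presentation of $\twoCAlg$ set up in the earlier sections). Granting this, $\Spec^{\GBal}$ has source $\twoCAlg$, while $\Gamma_{\GBal}$ has target $\twoCAlg$ and by construction sends $(\cX,\cO)$ to $\cO(\cX)$; substituting into the omnibus mapping-space equivalence above and renaming the dummy variable $\cc K$ yields exactly the asserted natural equivalence $\Map_{\twoCAlg}(\cc K,\Gamma_{\GBal}(\cX,\cO)) \simeq \Map_{\LTop(\GBal)}(\Spec^{\GBal}\cc K,(\cX,\cO))$.

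The main obstacle is precisely the identification $\twoCAlg \simeq \Ind(\twoCAlg^{\omega})$: one must know that $\twoCAlg$ is presentable, that filtered colimits in it are suitably computed so that the finitely presented 2-rings are categorically compact, and that the compact 2-rings form a set of generators — i.e., that the notion of ``compact 2-ring'' appearing in \cref{thmalph:zariski2rings} agrees with categorical compactness in $\twoCAlg$. Once this compact generation statement is in hand, the corollary is entirely formal: it is Lurie's adjunction read off through this equivalence, requiring no further input about the Zariski topology, the admissible morphisms, or the locality conditions beyond their having been packaged into the geometry $\GBal$.
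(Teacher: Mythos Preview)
Your proposal is correct and matches the paper's approach exactly: the corollary is stated immediately after Lurie's omnibus theorem with no separate proof, as the direct specialization to $\cG = \GBal$ together with the identification $\Ind(\GBal^{\op}) \simeq \twoCAlg$, which the paper has already established via the compact generation of $\twoCAlg$ (\cref{prop:2ring_compactgen}).
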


Henceforth we will use the notation $\Spec \coloneq \Spec^{\GBal}$ and $\Gamma \coloneq \Gamma_{\GBal}$ for the adjoint pair of functors supplied by this result. The absolute spectrum $\Spec^{\cG}$ in \cref{thm:omnibusspectra} is given by an explicit construction; our next result will identify $\Spec$ and its accompanying sheaf of 2-rings in terms of the classical constructions of tensor-triangular geometry. First, we show that the underlying $\infty$-topos of the Zariski spectrum can be fully expressed in terms of the Balmer spectrum.
    
\begin{ThmAlpha}\label{thmalph:comparison}
    Let $\cc{K} \in \twoCAlg$. The underlying $\infty$-topos of the absolute spectrum $\Spec\cc{K}$ may be identified with the $\infty$-topos $\Shv(\Spc \cc{K})$ of sheaves on the Balmer spectrum of the underlying tt-category of $\cc{K}$. Moreover, this identification is natural in $\cc{K}$.
\end{ThmAlpha}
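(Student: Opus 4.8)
The plan is to exploit the explicit description of the absolute spectrum from \cite{LurieDAG5} together with the Kock--Pitsch computation, and then promote the resulting homeomorphism of spaces to a natural equivalence of $\infty$-topoi. Recall that for a geometry $\cG$, Lurie's absolute spectrum $\Spec^{\cG}(A)$ of an object $A \in \Ind(\cG^{\op})$ has underlying $\infty$-topos $\Shv(X_A)$ where $X_A$ is a certain topological space: the space of \emph{$\cG$-admissible points} of $A$, or equivalently the localic reflection of a classifying $\infty$-topos built from the admissible localizations of $A$. Concretely, the points are ``$\cG$-local maps out of $A$'' modulo equivalence, and the opens are generated by the admissible morphisms. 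So the first step is to unwind this for $\cG = \GBal$: a point of $\Spec \cc{K}$ is an admissible morphism $\cc{K} \to \cc{L}$ with $\cc{L}$ \emph{$\GBal$-local}, meaning its lattice of admissible localizations is trivial; and the basic opens are indexed by the principal Verdier localizations $\cc{K} \to \cc{K}/\langle a\rangle$ for $a \in \cc{K}^{\omega}$. (One must be mildly careful that the space is built from the \emph{compact} 2-ring data, i.e.\ from $\GBal = (\twoCAlg^{\omega})^{\op}$, and extended to general $\cc{K} \in \twoCAlg = \Pro(\GBal)$ by a limit/colimit procedure; this is where the $\Pro$/$\Ind$ bookkeeping enters.)

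The second step is the identification of this space with $\Spc \Ho\cc{K}$. Here I would invoke the work of Kock--Pitsch \cite{KockPitsch17}, as the paper's introduction signals: they show that the Balmer spectrum of a tt-category is precisely the space reconstructed from the poset of tensor-triangulated (``thick $\otimes$-ideal'') localizations, which — after passing to homotopy categories and using that the admissible morphisms of $\GBal$ are exactly the principal Verdier quotients $\cc{K}/\langle a\rangle$ — is the same poset/frame that governs the opens of $\Spec\cc{K}$. Matching the topologies amounts to checking that the Grothendieck topology $\tau$ from \cref{thmalph:zariski2rings}(c) — a finite family covers iff $\ker(\prod f_i)$ is $\otimes$-nilpotent — translates to the Balmer-support condition $\bigcup \supp(a_i) = \Spc\Ho\cc{K}$, equivalently $\langle a_1, \dots, a_n\rangle = \cc{K}$ as a thick $\otimes$-ideal; this is the tt-geometric reformulation of ``the $a_i$ generate the unit,'' and it is exactly the content of Balmer's original identification of basic opens $U(a) = \{\mathfrak{p} : a \in \mathfrak{p}\}$ with complements of supports. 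So $X_{\cc{K}} \cong \Spc\Ho\cc{K}$ as spaces, hence $\Shv(X_{\cc{K}}) \simeq \Shv(\Spc\Ho\cc{K})$ as $\infty$-topoi.

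The third step is naturality. The cleanest route is to phrase both sides as functors $\twoCAlg \to \LTop$ (post-composing $\Spec$ with the forgetful functor to $\infty$-topoi, and composing $\Ho\colon \twoCAlg \to \mathrm{tt\text{-}Cat}$ with $\Spc$ and then $\Shv$) and exhibit a natural transformation that is an objectwise equivalence. Lurie's construction is visibly functorial in $A$, and $\Spc$ is functorial for tt-functors; the point is that the homeomorphism of step two is induced by a map of \emph{frames} that is natural, because both frames are manufactured from the same functorially-assigned poset of principal localizations. Since a 2-ring map $\cc{K} \to \cc{K}'$ induces $\Ho\cc{K} \to \Ho\cc{K}'$ compatibly with passage to principal Verdier quotients, the induced maps on reconstructed spaces agree, and $\Shv(-)$ turns this into a natural equivalence of $\infty$-topoi.

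The main obstacle I anticipate is step two, specifically the precise dictionary between Lurie's ``admissible points / classifying topos'' construction of the underlying space of $\Spec^{\GBal}\cc{K}$ and the Kock--Pitsch reconstruction of $\Spc$. Lurie's space is defined via an $\infty$-categorical colimit of admissible pieces and then localically reflected, while Kock--Pitsch work with a frame of radical ideals (or the associated coherent locale); reconciling the two requires checking (i) that the $\GBal$-local objects correspond bijectively to Balmer primes (i.e.\ that a compact 2-ring is $\GBal$-local iff its homotopy category has a unique prime, which should follow from $\otimes$-nilpotence being detected pointwise), and (ii) that the specialization order and compact-open lattice match on the nose, not merely up to some coarser equivalence. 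A secondary subtlety is ensuring the argument is uniform over all of $\twoCAlg$ rather than just $\twoCAlg^{\omega}$, which requires that both constructions commute with the relevant filtered (co)limits — true for $\Spc$ by a standard continuity argument and for Lurie's $\Spec$ by construction, but worth stating carefully.
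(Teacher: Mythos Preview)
Your broad strategy --- reduce to a comparison of frames built from principal tensor-ideals and invoke Kock--Pitsch --- matches the paper's, but two concrete problems would prevent your plan from executing as written.

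First, your description of Lurie's absolute spectrum is inaccurate. The underlying $\infty$-topos of $\Spec^{\cG}(X)$ is \emph{not} built as sheaves on a topological space of ``$\cG$-admissible points'', nor as a localic reflection of anything; it is defined directly as $\Shv(\Pro(\cG)^{\ad}_{/X})$, sheaves on the \emph{site} of admissible morphisms over $X$ with the pro-admissible topology (\cref{def:spec}). The first real step, which you skip, is to identify this site: the paper shows (\cref{lem:admissibleprincipal}) that $\Pro(\GBal)^{\ad}_{/\cc{K}} \simeq \mm{Prin}(\cc{K})^{\op}$, the opposite of the poset of principal tt-ideals. Only because this happens to be a poset is the resulting topos $0$-localic, and that is a conclusion rather than the starting point. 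Your detour through points (``$\GBal$-local objects correspond to Balmer primes'') is therefore unnecessary, and indeed the paper establishes that correspondence only \emph{after} the main theorem, as a corollary (\cref{lem:identificationofstalks}).

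Second, your covering condition in step two is Hochster-dually flipped. The admissible open attached to $a \in \cc{K}$ is $U(a) = \{\cc{P} : a \in \cc{P}\}$, the \emph{complement} of $\supp(a)$; a family $\{\cc{K} \to \cc{K}/\langle a_i\rangle\}$ covers iff $\bigotimes a_i$ is $\otimes$-nilpotent, i.e.\ $\bigcap_i \supp(a_i) = \emptyset$ --- not ``$\langle a_1, \dots, a_n\rangle = \cc{K}$'' as you wrote. The paper's actual argument then compares $\Shv(\mm{Prin}(\cc{K})^{\op})$ (with this topology) to $\Shv(\Rad(\cc{K})^{\vee})$ via the radical map $\sqrt{-}\colon \mm{Prin}(\cc{K}) \to \Rad(\cc{K})^{\omega}$, and checks that the induced map of frames of subterminal objects is a bijection. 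This is the genuine technical content: since $\mm{Prin}(\cc{K})$ is not distributive in general, one must verify by hand that upward-closed subsets of $\mm{Prin}(\cc{K})$ satisfying the sheaf condition biject with filters on $\Rad(\cc{K})^{\omega}$ --- a combinatorial step your proposal does not anticipate. Naturality follows, as you say, from the functoriality of all the posets and frames involved.
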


\cref{cor:absoluteGBalspectrum} and \cref{thmalph:comparison} jointly provide a method to produce maps from the Balmer spectrum of a 2-ring into arbitrary $\infty$-topoi with $\GBal$-structure, enhancing a prior universal property of the Balmer spectrum expressed in the language of \emph{support data} as in \cite{balmerSpectrumPrimeIdeals2005}. In \cref{sec:supp}, we explain how an $\infty$-topos with $\GBal$-structure along with a map from a 2-ring $\cc{K}$ into its global sections naturally admits a support datum for $\cc{K}$. We then identify the resultant maps from $\Spec \cc{K}$ supplied by this support datum and \cref{cor:absoluteGBalspectrum}.

Part of the motivation for working within the abstract framework of geometries is its flexibility: the theory allows us to accommodate comparisons to other geometries, such as expressed through Thomason's theorem (see \cref{thm:affineness} taken from forthcoming work of the third author \cite{ChedRecon}), and paths the way towards an \'etale geometry of 2-rings which we hope to address in future work.

\subsection{Zariski descent} The absolute spectrum of a 2-ring $\cc{K}$ comes equipped with a natural sheaf of 2-rings, corresponding to its local $\GBal$-structure. Under the assumption that $\cc{K}$ is rigid, we will show that this sheaf recovers the ``structure presheaf'' of \cite{Balmer02} upon evaluation against quasicompact open subsets. 

Let us first justify our passage to the $\infty$-categorical setting by reviewing the shortcomings of descent in the framework of classical tt-geometry. We remark that partial results addressing these problems have been previously obtained in \cite{BalmerFaviGluing}, \cite{balmerGluingRepresentationsIdempotent2009}. 

\subsubsection{Motivation: descent in tensor-triangular geometry}

Let $\cc{K}$ be an idempotent-complete tt-category, and $\cc{I} \subseteq \cc{K}$ be a thick tensor ideal. The \emph{Karoubi quotient} of $\cc{K}$ by the ideal $\cc{I}$ is defined to be the idempotent completion of the localization $\cc{K}[W^{-1}]$ where $W$ refers to the class of morphisms with cofiber contained in $\cc{I}$. We write $\cc{K}/\cc{I}$ to denote this construction\footnote{Due to the implicit idempotent completion, this notation is nonstandard; see also \cref{warn:nonstandard}.}.

The ``structure presheaf'' on the spectrum of a tt-category, following \cite[\S 5]{Balmer02}, is defined as follows: To any open set $U \subseteq \Spc \cc{K}$, consider the thick tensor ideal $\cc{K}_{U^{c}} \coloneq \{a \in \cc{K} \mid \forall \cc{P} \in U, \ a\in \cc{P}\}$ of \emph{objects supported away from $U$}. One defines a presheaf of tt-categories on $\Spc \cc{K}$ via the assignment 
    \begin{equation}\label{eq:presheafofttcats}
        \widetilde{\cc{O}}_{\cc{K}}\colon \{U \subseteq \Spc \cc{K}\} \mapsto \cc{K}/\cc{K}_{U^{c}}, 
    \end{equation}
where an inclusion of open sets $U \subseteq V$ is sent to the induced map $\cc{K}/\cc{K}_{V^{c}} \mapsto \cc{K}/\cc{K}_{U^{c}}$, i.e., the Karoubi quotient of the source by the smallest thick tensor ideal containing the image of $\cc{K}_{U^{c}}$. 

Although this construction is natural from the perspective of tt-geometry, it is rarely the case that it extends to a sheaf (or more properly a stack) of categories on the Balmer spectrum. Let us study this failure---and how to rectify it via enhancement---using a classical example regarding ``nonuniqueness of gluing for morphisms'' along open subsets of the Balmer spectrum. 
  
\begin{example*}
Let $X = \bb{A}^{2}_{\bb{Z}} \setminus 0$ and $\cc{K} \coloneq \Ho \Perf_{X}$ be the tt-category of perfect complexes on $X$. A standard \v{C}ech complex computation with the open subsets $\{t_{1} \neq 0\},\ \{t_{2} \neq 0\} \subseteq \Spec \bb{Z}[t_{1},t_{2}]$ yields the following identifications  
    \[
        \hom_{\cc{K}}(\cc{O}_{X}, \Sigma \cc{O}_{X})\cong \Ext_{\Shv(X;\mm{Ab})}^{1}(\cc{O}_{X}, \cc{O}_{X}) \eqcolon \mm{R}^{1}\Gamma(X, \cc{O}_{X}) \cong \frac{1}{t_{1}t_{2}}\bb{Z}\left[\frac{1}{t_{1}},\frac{1}{t_{2}}\right].
    \]
Based on Thomason's theorem \cite{Thomason1997}, the main result of \cite{Balmer02} provides an equivalence $\Spc \cc{K} \simeq X$ which moreover identifies the assignment $\widetilde{\cO}_{\cc{K}}$ on $\Spc \cc{K}$ with the assignment $U \mapsto \Perf_{U}$ on quasicompact open subsets of $X$. Let $\{U_{i}\}_{i\in I}$ be a collection of affine open subschemes covering $X$. The vanishing of quasicoherent cohomology on affine schemes implies that $\prod_{i \in I}\hom_{\Perf_{U_{i}}}(\cc{O}_{X}|_{U_{i}}, \cc{O}_{X}|_{U_{i}}) = 0$ and hence the morphism presheaf      
    \[
        \hom_{\widetilde{\cc{O}}_{\cc{K}}}(\unit, \Sigma \unit)\colon U \subseteq X \mapsto \hom_{\cc{K}/\cc{K}_{U^{c}}}(\cc{O}_{X}, \Sigma \cc{O}_{X})
    \]
fails to be separated; there are maps which are locally zero which are not globally so.
\end{example*}

In the example above, we encounter a problem that already occurs upstream of tt-geometry. The nontriviality of the connecting homomorphism $\delta$ in the Mayer--Vietoris exact sequence below 
    \[
        \cdots \rightarrow \Gamma(\{t_{1}t_{2} \neq 0\}, \cc{O}) \xrightarrow{\delta} \mm{R}^{1}\Gamma(\bb{A}^{2}_{\bb{Z}}\setminus 0, \cc{O}) \rightarrow \mm{R}^{1}\Gamma(\{t_{1} \neq 0\}, \cc{O}) \oplus \mm{R}^{1}\Gamma(\{t_{2} \neq 0\}, \cc{O}) \rightarrow \cdots 
    \] 
implies that the right hand restriction map is not injective, and the assignment $U \mapsto \mm{R}^{i}\Gamma(X, \cc{O}_{X})$ fails to be a separated presheaf. Instead, a scheme $X$ and a choice of open cover $\coprod_{I}U_{i} \twoheadrightarrow X$ give rise to the following quasi-isomorphism of chain complexes in $\Shv(X; \Ab)$
    \begin{equation}\label{eq:totalization}
        \bb{I}^{\bullet}\cc{O}_{X} \simarrow \Tot\left(\prod_{I}\bb{I}^{\bullet}{\cc{O}_{U_{i}}} \xrightarrow{d_{0}-d_{1}} \prod_{I^{\times 2}}\bb{I}^{\bullet}\cc{O}_{U_{i}\cap U_{j}} \xrightarrow{d_{0} -d_{1} + d_{2}} \cdots\right),
    \end{equation}
where we write the prefix $\bb{I}^{\bullet}$ to indicate a choice of injective resolution in $\Shv(X; \Ab)$.

Let $\cc{U}^{\bullet}$ denote the \v{C}ech nerve of the open cover $\coprod_{I}U_{i} \twoheadrightarrow X$. Under the Dold--Kan correspondence between cochain complexes and simplicial objects in $\Shv(X; \Ab)$, the right hand side of \eqref{eq:totalization} is quasi-isomorphic to the diagonal of the bicosimplicial object associated to $\bb{I}\cc{O}_{\cc{U}^{\bullet}}$, see \cite[\S 4.2.2]{goerssSimplicialHomotopyTheory2009}. This latter object is a model for the homotopy limit in the simplicial model category of chain complexes in $\Shv(X; \Ab)$, giving the \emph{homotopy coherent} sheaf-theoretic statement 
    \[
        \cc{O}_{X} \simeq \mm{holim}_{\Delta}\cc{O}_{\cc{U}^{\bullet}} 
    \] 
prior to having passed to cohomology. In particular, this rectifies the failure of descent observed above, and begets a local-to-global spectral sequence with the following signature:
    \begin{equation}\label{rem:localtoglobalspectralsequence}
        \mm{\check{H}}_{\cc{U}}^{p}(X, \mm{R}^{q}\Gamma(\cc{U}^{\bullet}, \cc{O}_{\cc{U}^{\bullet}})) \Rightarrow \mm{R}^{p+q}\Gamma(X, \cc{O}_{X})  
    \end{equation}
via the identification in \eqref{eq:totalization}. Using higher Zariski descent, we will observe an analogue in the general tt-geometric setting below.

\subsubsection{Zariski descent in \texorpdfstring{$\twoCAlg$}{2CAlg}}

The discussion above suggests that one should expect an analogous homotopy coherent statement on mapping objects, which will necessitate working with an enhancement of the triangulated setting. Opting to work now in $\twoCAlg$, we obtain a descent result which aligns with this intuition. Recall that a 2-ring is \emph{rigid} if every object is dualizable. 

\begin{ThmAlpha}\label{thmalph:sheaf}
      Let $\cc{K}$ be a rigid 2-ring. The equivalence of \cref{thmalph:comparison} identifies the natural $\GBal$-structure on $\Spec \cc{K}$ with a $\twoCAlg$-valued sheaf on $\Spc \cc{K}$, denoted $\cc{O}_{\cc{K}}$, which upon passage to homotopy categories agrees with the presheaf of \eqref{eq:presheafofttcats} on quasicompact open subsets.
\end{ThmAlpha}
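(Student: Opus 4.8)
The plan is to identify the absolute structure sheaf of \cref{thm:omnibusspectra} by computing its values on a basis of the underlying topos. By \cref{thmalph:comparison} that topos is $\Shv(\Spc\cc{K})$, and---as in the proof of that theorem, which runs through \cite{KockPitsch17}---the defining Grothendieck site may be taken to be the poset of admissible localizations of $\cc{K}$, i.e.\ the basis of quasicompact opens $U\subseteq\Spc\cc{K}$, the object attached to such a $U$ being the admissible morphism $\cc{K}\to\cc{K}/\cc{K}_{U^{c}}$ of \cref{thmalph:zariski2rings}. By the explicit construction of $\Spec^{\cG}$ in \cite{LurieDAG5} (together with the comparison of \cref{thmalph:comparison}), the $\GBal$-structure $\cc{O}^{\Spec}$ on $\Shv(\Spc\cc{K})$ is the sheafification of the tautological presheaf of $2$-rings on this site sending $U\mapsto\cc{K}/\cc{K}_{U^{c}}$---just as the structure sheaf of $\Spec R$ restricts to $R[f^{-1}]$ on $D(f)$. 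Since $\twoCAlg\simeq\Ind(\twoCAlg^{\omega})$ is compactly generated, a $\twoCAlg$-valued presheaf is a sheaf exactly when its composite with each $\Map_{\twoCAlg}(L,-)$, $L\in\twoCAlg^{\omega}$, is a sheaf of spaces, and sheafification is the identity on a presheaf that is already a sheaf. Hence \emph{the whole theorem reduces to the single assertion} that $U\mapsto\cc{K}/\cc{K}_{U^{c}}$ already satisfies descent along covers of quasicompact opens by quasicompact opens---a Zariski-descent statement for rigid $2$-rings. Granting it, $\cc{O}^{\Spec}$ is the $\twoCAlg$-valued sheaf $\cc{O}_{\cc{K}}\colon U\mapsto\cc{K}/\cc{K}_{U^{c}}$; this is moreover a \emph{local} $\GBal$-structure, its stalk at $\cc{P}$ being the local $2$-ring $\colim_{U\ni\cc{P}}\cc{K}/\cc{K}_{U^{c}}$ by Balmer's stalk computation \cite{Balmer02} (suitably enhanced), and the identification is natural in $\cc{K}$ because every construction above is functorial.

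\textbf{Proving this descent statement is the main obstacle}, and rigidity is indispensable: the $\bb{A}^{2}_{\bb{Z}}\setminus 0$ example of the introduction shows the analogue already fails at the triangulated level, so the homotopy-coherent enhancement must carry the argument. By quasicompactness we may fix a \emph{finite} cover $U=U_{1}\cup\cdots\cup U_{n}$ by quasicompact opens, and we want the canonical map from $\cc{K}/\cc{K}_{U^{c}}$ to the totalization of the cosimplicial $2$-ring $[m]\mapsto\prod_{i_{0},\dots,i_{m}}\cc{K}/\cc{K}_{(U_{i_{0}}\cap\cdots\cap U_{i_{m}})^{c}}$ to be an equivalence in $\twoCAlg$. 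The route is: (i) by the $\infty$-categorical enhancement of the Balmer--Favi calculus \cite{BalmerFaviGluing,balmerGluingRepresentationsIdempotent2009}---valid since $\Ind(\cc{K})$ is rigidly-compactly generated---realize each $\cc{K}/\cc{K}_{V^{c}}$ as the finite (smashing) localization of $\cc{K}$ at the open $\otimes$-idempotent $f_{V}\in\Ind(\cc{K})$, with $f_{U_{i}}\otimes f_{U_{j}}\simeq f_{U_{i}\cap U_{j}}$, so that the cosimplicial $2$-ring above is the compact part (suitably idempotent-completed, as in \cref{thmalph:zariski2rings}) of the Amitsur complex $\Mod_{A^{\otimes\bullet+1}}$ of the idempotent $\bE_{\infty}$-algebra $A=\prod_{i}A_{f_{U_{i}}}$ in $\Ind(\cc{K}/\cc{K}_{U^{c}})$; (ii) invoke descent for module categories along descendable maps: $\Ind(\cc{K}/\cc{K}_{U^{c}})\simeq\lim_{\Delta}\Mod_{A^{\otimes\bullet+1}}$ once $A$ is descendable, i.e.\ once $\unit$ lies in the thick $\otimes$-ideal generated by the $f_{U_{i}}$; (iii) verify this last condition: since $\{U_{i}\}$ covers $U$ we have $\bigcap_{i}(U_{i}^{c}\cap U)=\emptyset$, so the corresponding torsion idempotent $\bigotimes_{i}e_{U_{i}^{c}\cap U}$ of $\Ind(\cc{K}/\cc{K}_{U^{c}})$ vanishes---equivalently $\bigotimes_{i}\cofib(\unit\to f_{U_{i}})\simeq 0$---and a standard Koszul argument places $\unit$ in $\operatorname{Thick}_{\otimes}(f_{U_{1}},\dots,f_{U_{n}})$ with descendability index $\le n$. (Equivalently one inducts on $n$, reducing to the Mayer--Vietoris square of \cite{BalmerFaviGluing}, now a homotopy pullback in $\twoCAlg$.) Because the descent has finite index, passing to compact objects and idempotent-completing returns the desired equivalence in $\twoCAlg$; this is the coherent enhancement of Balmer--Favi gluing promised in the introduction, and it establishes the first two assertions.

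It remains to pass to homotopy categories. For $U$ quasicompact open, $\cc{K}/\cc{K}_{U^{c}}$ is by definition the idempotent completion of the $\infty$-categorical Verdier quotient $\cc{K}[W^{-1}]$, $W$ the class of maps with cofiber in the stable $\otimes$-ideal $\cc{K}_{U^{c}}$. Since $\Ho(-)$ carries $\infty$-categorical Verdier quotients to triangulated Verdier quotients and commutes with idempotent completion, $\Ho(\cc{K}/\cc{K}_{U^{c}})$ is the idempotent completion of $\Ho\cc{K}/\Ho(\cc{K}_{U^{c}})$. Finally, $\Ho(\cc{K}_{U^{c}})$ is the thick $\otimes$-ideal $(\Ho\cc{K})_{U^{c}}$ of \eqref{eq:presheafofttcats}: the homotopy category of the stable $\otimes$-ideal of $\cc{K}$ generated by a set of objects is the thick $\otimes$-ideal of $\Ho\cc{K}$ generated by the same set, and the homeomorphism of \cref{thmalph:comparison} matches the two notions of ``support contained in $U^{c}$''. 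Hence $\Ho(\cc{O}_{\cc{K}}(U))$ is the Karoubi quotient $\Ho\cc{K}/(\Ho\cc{K})_{U^{c}}$, the value at $U$ of the presheaf \eqref{eq:presheafofttcats} attached to the tt-category $\Ho\cc{K}$, and the restriction maps agree under the same identification, which is the remaining assertion.
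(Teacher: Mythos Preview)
Your approach is essentially correct and closely parallels the paper's, with one genuine methodological difference and one small gap in the passage to small categories.

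\textbf{The difference.} The paper (\cref{thm:rigid_subcanonical} via \cref{thm:indsheaf}) follows the same outline: identify $\cO_{\cc{K}}$ as the sheafification of $U\mapsto\cc{K}/\cc{K}_{U^c}$, reduce to showing this presheaf is already a sheaf, work first at the $\Ind$-level, then descend to compacts. Where you invoke descendability of $A=\prod_i f_{U_i}$ (via the Koszul vanishing $\bigotimes_i e_{U_i^c}\simeq 0$), the paper instead reduces via \cref{prop:sheavesonspec} and \cref{lem:qcmvsuffices} to a single Mayer--Vietoris pullback (two-element covers) and verifies it directly using the lax-limit description of pullbacks from \cite{Horev2017}; the same Koszul vanishing, now just $e_1\otimes e_2\simeq 0$ via \cref{lem:intersectinglocalizingsubs}, is the key input. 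The paper explicitly notes (in the remark following the proof of \cref{thm:2MODisasheaf}) that the descendability route works and was in fact an earlier draft of the argument---shorter, but importing more machinery. Your parenthetical ``equivalently one inducts on $n$, reducing to the Mayer--Vietoris square'' is precisely what the paper does.

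\textbf{The gap.} Your passage from $\Ind$ to small categories via ``finite-index descendability'' is not quite right as stated: finite nilpotence degree controls the Tot-tower of \emph{objects} in $\Ind(\cc{K}/\cc{K}_{U^c})$, not the commutation of taking compact objects with a limit of \emph{categories}; and $(-)^{\omega}$ does not preserve even finite limits in $\PrL$. The paper's fix is cleaner and avoids this entirely: since each $\cc{K}/\cc{K}_{V^c}$ is rigid (\cref{lem:rigidradicalquotient}), one has $\Ind(\cc{K}/\cc{K}_{V^c})^{\mm{dbl}}\simeq\cc{K}/\cc{K}_{V^c}$ by \cref{lem:rigidlycompactlygenerated}, and $(-)^{\mm{dbl}}\colon\CAlg(\Catbig)\to\CAlg(\Catbig)$ preserves \emph{all} limits---so applying it to the $\Ind$-level sheaf immediately yields the small one, with no finiteness bookkeeping needed.
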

    
Let us indicate some sample consequences of this result, and explain the connection to prior gluing results in tt-geometry. These statements may fail outside the rigid setting; see \cref{warning:nonrigid}.
  
\begin{enumerate}
    \item[(1)]   A direct consequence of the above is the functor-of-points embedding for rigid 2-rings: The absolute spectrum functor $\Spec\colon \twoCAlg \to \LTop(\GBal)$ is fully faithful on the full subcategory of rigid 2-rings; in particular, any rigid $2$-ring $\cc{K}$ can be reconstructed from its absolute spectrum $(\Spc \cc{K},\cc{O}_{\cc{K}})$ by passing to global sections. This statement is in fact equivalent to the statement of \cref{thmalph:sheaf}, although we will not pursue the reverse direction here. 
    \item[(2)] Given a rigid 2-ring $\cc{K}$, objects $x, y \in \cc{K}$, and an open cover $\coprod_{I} U_{i} \twoheadrightarrow \Spc \cc{K}$ with \v{C}ech nerve $\cc{U}^{\bullet}$, there is a local-to-global spectral sequence with signature
    \[
        \mm{\check{H}}_{\cc{U}}^{p}(\Spc \cc{K}, \pi_{q}\Map_{\cO_{\cc{K}(U)}}(x,y)) \Rightarrow \pi_{q-p}\Map_{\cc{K}}(x,y)
    \]
    which is identified with the Bousfield--Kan spectral sequence for the cosimplicial object $\Map_{\cO_{\cc{K}}(\cc{U}^{\bullet})}(x,y)$; here, we abuse notation by continuing to write $x,y$ for the images of the respective objects under the restriction maps for $\cO_{\cc{K}}$. In the same way that the local-to-global spectral sequence of \eqref{rem:localtoglobalspectralsequence} generalizes Mayer--Vietoris sequences in sheaf cohomology, the local-to-global spectral sequence for a 2-ring is a generalization of the Mayer--Vietoris sequence for mapping objects constructed in \cite[\S 2]{BalmerFaviGluing}.
    \item[(3)]     Consider the functor $\mathfrak{pic} \colon \twoCAlg \to \Sp_{\geq 0}$ which sends a 2-ring to its space of invertible objects, considered as an $\bb{E}_{\infty}$-object (and hence a connective spectrum) via the tensor product operation. This functor commutes with all limits by \cite[Proposition 2.2.3]{mathewPicardGroupTopological2016}. Keeping the above notation, one obtains a similar local-to-global spectral sequence with signature
    \[
        \mm{\check{H}}_{\cc{U}}^{p}(\Spc \cc{K}, \pi_{q}\mathfrak{pic}(\cO_{\cc{K}}(U))) \Rightarrow \pi_{q-p}\mathfrak{pic}(\cc{K})
    \]
      which generalizes the Mayer--Vietoris sequence for Picard elements of \cite[\S 4]{BalmerFaviGluing}. 
\end{enumerate}

In \cref{sec:slp} we will use a strong form of \cref{thmalph:sheaf} (see \cref{thm:indsheaf}) to establish a locality criterion for the telescope conjecture in rigid 2-rings.

\begin{ThmAlpha}\label{thmalph:slp}
    Let $\cc{K}$ be a rigid 2-ring. Then $\Ind(\cc{K})$ satisfies the telescope conjecture if and only if $\Ind(\cc{K}/\cc{P})$ satisfies the telescope conjecture for all primes $\cc{P} \in \Spc \cc{K}$.
\end{ThmAlpha}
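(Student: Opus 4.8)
The plan is to deduce the statement from the sheaf-theoretic description of the structure sheaf $\cO_{\cc{K}}$ provided by \cref{thmalph:sheaf} (in its strong form \cref{thm:indsheaf}), reducing the telescope conjecture to a local question over $\Spc\cc{K}$. The key point is that the telescope conjecture for a compactly generated rigidly-compactly generated category $\Ind(\cc{K})$ can be detected on the smashing ideals, i.e.\ on the lattice of Bousfield localizations, and that these should be forced to be ``locally trivial'' by a descent argument once one knows that the frame of open subsets of $\Spc\cc{K}$ surjects onto the relevant lattice of localizations.

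First I would recall the standard reformulation: $\Ind(\cc{K})$ satisfies the telescope conjecture precisely when every smashing localization of $\Ind(\cc{K})$ is finite (equivalently generated by compact objects, equivalently arises from a Thomason subset of $\Spc\cc{K}$), so that the map from the frame of Thomason-open subsets of $\Spc\cc{K}$ to the lattice of smashing ideals of $\Ind(\cc{K})$ is surjective (it is always injective in the rigid setting). The telescope conjecture is thus the assertion that a certain comparison map of lattices is an isomorphism. For $\cc{P}\in\Spc\cc{K}$, the local object $\cc{K}/\cc{P}$ is again a rigid $2$-ring (being a Verdier quotient by a prime, idempotent-completed), so the statement to be proved is symmetric in form at the local and global levels.

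Next, the heart of the argument: $\cref{thmalph:comparison}$ and $\cref{thmalph:sheaf}$ identify $\Spec\cc{K}$ with $(\Spc\cc{K},\cO_{\cc{K}})$, and the stalk of $\cO_{\cc{K}}$ at a point $\cc{P}\in\Spc\cc{K}$ is $\cc{K}/\cc{P}$ (the filtered colimit of $\cc{K}/\cc{K}_{U^c}$ over open $U\ni\cc{P}$, whose homotopy category is the local category $\cc{K}_{\cc{P}}$ of Balmer--Favi). Using the strong (Ind-)sheaf statement $\cref{thm:indsheaf}$, $\Ind(\cc{K})$ is the global sections of a sheaf of compactly generated categories on $\Spc\cc{K}$ with stalks $\Ind(\cc{K}/\cc{P})$. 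A smashing ideal of $\Ind(\cc{K})$ then restricts, stalkwise, to a smashing ideal of each $\Ind(\cc{K}/\cc{P})$; conversely a smashing ideal is determined by this stalk data together with its support, which is an open subset of $\Spc\cc{K}$ in the topology generated by the Thomason opens of the stalks. If each $\Ind(\cc{K}/\cc{P})$ satisfies the telescope conjecture, each stalkwise restriction is finite, hence its support is a Thomason-open subset of $\Spc(\cc{K}/\cc{P})$; gluing these opens (using that $\{\Spc(\cc{K}/\cc{P})\}$ cover $\Spc\cc{K}$ in the appropriate sense, and that Thomason-open subsets form a sheaf of frames) exhibits the original smashing ideal as coming from a Thomason-open of $\Spc\cc{K}$, hence as finite. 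The converse direction is the easy one: a localization of $\cc{K}$ at a prime $\cc{P}$ is a filtered colimit of finite localizations of $\cc{K}$ along the poset of opens containing $\cc{P}$, so if $\Ind(\cc{K})$ has the telescope property then each $\Ind(\cc{K}/\cc{P})$ inherits it by a standard colimit/base-change argument (this is essentially Hrbek's stalk-locality in the affine case, which $\cref{thmalph:slp}$ generalizes).

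The main obstacle will be making the ``gluing of supports'' step precise: one must show that a smashing ideal $S\subseteq\Ind(\cc{K})$ whose stalks are all finite is itself finite, and the subtlety is that finiteness is not obviously a local-to-global property for smashing ideals without knowing that the smashing ideals themselves satisfy descent along $\Spc\cc{K}$. I would address this by invoking the strong form of Zariski descent for $\cO_{\cc{K}}$ and $\Ind$ thereof: the assignment $U\mapsto (\text{smashing ideals of }\Ind(\cO_{\cc{K}}(U)))$, or better the associated big category $\Ind(\cO_{\cc{K}}(U))$, forms a sheaf, so a compact-object-generated subcategory can be tested on a cover; combined with the fact that being a Thomason subset is local on $\Spc\cc{K}$, one concludes. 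An alternative, possibly cleaner route is to phrase everything in terms of the Balmer--Favi idempotents $e_{\cc{P}}, f_{\cc{P}}$ and the ``stratification''-style machinery: a smashing ideal is finite iff it is generated by the $e_{\cc{P}}$ it contains, and this containment is detected stalkwise; this avoids explicit frame-theoretic gluing at the cost of importing more structure about the support theory of $\Ind(\cc{K})$. Either way, the one genuinely new ingredient beyond \cref{thmalph:sheaf} is the observation that the telescope conjecture is equivalent to a \emph{local} statement about the sheaf $\cO_{\cc{K}}$, which is exactly what the coherent enhancement of Balmer--Favi descent makes available.
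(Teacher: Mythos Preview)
Your outline is structurally the same as the paper's: reformulate the telescope conjecture as the equality $\Idem_{\cc{K}}^{\mm{fin}}=\Idem_{\cc{K}}$, upgrade both sides to sheaves on $\Spc\cc{K}$ via \cref{thm:indsheaf}, and reduce the comparison to a stalkwise check. The ``only if'' direction is indeed the easy one; the paper uses the identification $\Idem_{\cc{K}/\cc{I}}\simeq(\Idem_{\cc{K}})_{L_{\cc{I}}/}$, which is essentially your base-change remark.

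The gap is in the stalk computation, which you flag as the main obstacle but do not resolve. You assert that a smashing ideal of $\Ind(\cc{K})$ ``restricts stalkwise'' to one of $\Ind(\cc{K}/\cc{P})$ and propose gluing Thomason supports, but the issue is more basic: one needs the stalk of the sheaf $U\mapsto\Idem_{\cc{O}_{\cc{K}}(U)}$ at $\cc{P}$ to actually be $\Idem_{\cc{K}/\cc{P}}$, not merely the filtered colimit $\varinjlim_{\cc{I}\subseteq\cc{P}}\Idem_{\cc{K}/\cc{I}}$ taken in posets. The functor $\Idem(-)$ does not obviously commute with this colimit. The paper's trick is to observe that each $\Idem_{\cc{K}/\cc{I}}$ is presentable and that the transition maps are localizations, so by a short lemma (\cref{lem:this-weird-thing-about-localizations-that-isnt-super-necessary-lol}) the filtered colimit in $\PrL$ agrees with the one in $\Catbig$; computed in $\PrL$ it becomes the cofiltered \emph{limit} along the fully faithful right adjoints, i.e.\ the intersection $\bigcap_{\cc{I}\subseteq\cc{P}}(\Idem_{\cc{K}})_{L_{\cc{I}}/}=(\Idem_{\cc{K}})_{L_{\cc{P}}/}\simeq\Idem_{\cc{K}/\cc{P}}$. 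The same argument applies to $\Idem^{\mm{fin}}$. Once the stalks are identified, the conclusion is immediate: a map of sheaves of sets on a topological space is an isomorphism iff it is so on stalks. Your proposed ``gluing of Thomason opens'' is not needed and would in any case require the same stalk identification to get off the ground.
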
 

This proves the \emph{stalk-locality principle} for the telescope conjecture outlined by Hrbek in \cite[\S 3]{hrbekTelescopeConjectureHomological2025} in the generality of arbitrary rigid 2-rings, extending work of \cite{Hrbek2024}.

\subsection{Zariski descent for modules and the local-to-global principle}\label{ssec:introZariskiLGP}

For $X$ a classical quasicompact quasiseparated scheme and $Z \subseteq X$ a closed subset with quasicompact open complement, the statement that the full $\QCoh_{X}$-submodule $\QCoh_{X, Z} \subseteq \QCoh_{X}$, i.e., the subcategory of complexes supported on $Z$, is compactly generated is first demonstrated in \cite{bondalGeneratorsRepresentabilityFunctors2003}. This is done via a topological ``reduction principle'' that inductively reduces the statement to a local property, which is then checked on affine schemes. Generalizing this, \cite{antieauBrauerGroupsEtale2014} supplies a Zariski local-to-global principle for the compact generation of objects of $\Mod_{\QCoh_{X}}(\PrL)$ (now where $X$ is allowed to be a quasicompact quasiseparated spectral scheme). However, in both cases, the basic arguments relied on facts about the Zariski geometry of the 2-ring $\Perf_X$. 

We will first show that the assignment of a 2-ring $\cc{K}$ to the category of modules over $\Ind(\cc{K})$ satisfies Zariski descent. In accordance with the observation above, we will also record an analog of the Zariski local-to-global principle for the Zariski spectrum of a 2-ring. These statements are succinctly stated below.

\begin{ThmAlpha}\label{thmalph:mod}
    Let $\cc{K}$ be a rigid 2-ring. The following assignments extend to $\CAlg(\Catbig)$-sheaves on $\Spec \cc{K}$. 
    \begin{enumerate}
    \item\label{thmalph:moda} The assignment $\twoModbig$ which sends a quasicompact open subset $U$ to $\Mod_{\Ind(\cO_{\cc{K}}(U))}(\PrL)$.
    \item\label{thmalph:modb} The assignment $\twoModcg$ which sends a quasicompact open subset $U$ to the full subcategory of $\twoModbig(U)$ consisting of those objects whose underlying presentable $\infty$-categories are compactly generated.
    \item\label{thmalph:modc} The assignment $\Mod$ which sends a quasicompact open subset $U$ to $\Mod_{\cO_{\cc{K}}(U)}(\Catperf)$.
    \end{enumerate} 
\end{ThmAlpha}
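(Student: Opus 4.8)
The plan is to deduce all three statements from a single descent statement, applied levelwise along the Čech nerve of a finite admissible affine cover of $\Spec \cc{K}$, and then bootstrap from the strong form of \cref{thmalph:sheaf} (i.e.\ \cref{thm:indsheaf}) which already provides the $\twoCAlg$-valued structure sheaf $\cc{O}_{\cc{K}}$ on $\Spc \cc{K}$. Concretely, since the quasicompact opens of $\Spc\cc{K}$ form a basis closed under finite intersection, it suffices to check the sheaf condition on finite covers by quasicompact opens $U = \bigcup_{i\in I} U_i$, i.e.\ to show that each assignment takes the Čech nerve $\cc{U}^\bullet$ to a limit diagram in $\CAlg(\Catbig)$. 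First I would treat \eqref{thmalph:moda}: for this one feeds the cosimplicial $2$-ring $\cc{O}_{\cc{K}}(\cc{U}^\bullet)$ (a limit diagram by \cref{thm:indsheaf}) through the functor $\Ind(-)\colon \twoCAlg \to \PrL$, which preserves limits along such diagrams (localizations/Verdier quotients of compactly generated categories behave well under $\Ind$; more precisely $\Ind$ sends the relevant colimit diagram of stable $\infty$-categories to a limit diagram of compactly generated ones, cf.\ the standard recollement/smashing-localization picture underlying \cite{BalmerFaviGluing}), and then applies $\Mod_{(-)}(\PrL)$, which preserves limits of $\bb{E}_\infty$-algebras. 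Composing, $\twoModbig$ carries $\cc{U}^\bullet$ to a limit diagram, and a symmetric monoidal refinement of the same argument upgrades this to a $\CAlg(\Catbig)$-valued sheaf.

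For \eqref{thmalph:modb} the point is that the full subcategory of $\twoModbig(U)$ on the compactly generated modules is preserved under the structure maps and is detected locally: a module $\cc{M}$ over $\Ind(\cc{O}_{\cc{K}}(U))$ is compactly generated iff each restriction $\cc{M}|_{U_i}$ is, which is exactly the Zariski local-to-global principle for compact generation (the generalization of \cite{antieauBrauerGroupsEtale2014, bondalGeneratorsRepresentabilityFunctors2003} alluded to in \cref{ssec:introZariskiLGP}). I would prove this local-to-global statement by the usual topological induction on the cover: using that $\Spc\cc{K}$ is spectral and the cover is finite, one reduces gluing a compact generator along $U = V \cup W$ with $V$, $W$, $V\cap W$ quasicompact to a two-step Mayer--Vietoris argument, where the key inputs are that $\cc{O}_{\cc{K}}$ is a sheaf (so $\Ind(\cc{O}_{\cc{K}}(U)) = \Ind(\cc{O}_{\cc{K}}(V)) \times_{\Ind(\cc{O}_{\cc{K}}(V\cap W))} \Ind(\cc{O}_{\cc{K}}(W))$) together with the rigidity of $\cc{K}$, which guarantees the relevant idempotents/supports are compact and that the $\otimes$-idempotent localizations are smashing. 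Granting this, $\twoModcg$ is a full $\CAlg(\Catbig)$-subsheaf of $\twoModbig$: the limit of the $\twoModcg(U_\bullet)$ computed in $\twoModbig$ lands in the compactly generated locus precisely by the local-to-global principle.

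Statement \eqref{thmalph:modc} then follows by passing to compact objects: for a compactly generated $\Ind(\cc{O}_{\cc{K}}(U))$-module $\cc{M}$ whose underlying category is compactly generated, its subcategory of compact objects is a module over $\cc{O}_{\cc{K}}(U) = \Ind(\cc{O}_{\cc{K}}(U))^\omega$ in $\Catperf$, and conversely; i.e.\ $\Mod_{\cc{O}_{\cc{K}}(U)}(\Catperf) \simeq \twoModcg(U)$ naturally, compatibly with the structure maps (the restriction functors are, on compact objects, the Verdier-quotient maps $\cc{O}_{\cc{K}}(V) \to \cc{O}_{\cc{K}}(U)$ base-changed, which preserve compactness by rigidity). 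So \eqref{thmalph:modc} is obtained from \eqref{thmalph:modb} by applying the symmetric monoidal equivalence $\Ind\colon \Catperf \xrightarrow{\sim} \PrLstomega$ (and its inverse $(-)^\omega$) fiberwise over $\cc{U}^\bullet$; limits are preserved because this is an equivalence of symmetric monoidal $\infty$-categories. The main obstacle I anticipate is \eqref{thmalph:modb}: verifying that $\Ind$ genuinely sends the Čech limit diagram of $2$-rings to a limit diagram of compactly generated categories, \emph{and} that the compactly-generated locus is closed under the gluing, both hinge on the rigidity hypothesis in an essential way (as flagged by \cref{warning:nonrigid}), so the careful point is to isolate exactly where rigidity enters — namely, in ensuring the localization functors are smashing and preserve compact objects, so that $\Ind$ of a Verdier quotient is a Bousfield localization with compactly generated kernel — and to assemble these into a coherent (rather than merely levelwise) sheaf statement, which is where one invokes \cref{thm:indsheaf} rather than just \cref{thmalph:sheaf}.
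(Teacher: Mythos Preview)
Your strategy for parts~\ref{thmalph:modb} and the local-to-global reduction is essentially what the paper does. But there are two genuine gaps.

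\textbf{Part~\ref{thmalph:moda}: the functor $\Mod_{(-)}(\PrL)$ does not preserve limits.} You assert that once $\Ind(\cc{O}_{\cc{K}}(\cc{U}^{\bullet}))$ is a limit diagram in $\CAlg(\PrL)$, applying $\Mod_{(-)}(\PrL)$ gives a limit diagram. This is false in general: for $R \simeq \varprojlim R_{i}$ in $\CAlg(\cc{C})$, a module over $R$ is not the same as a compatible system of $R_{i}$-modules (think of $\mathbb{Z}_{p} = \varprojlim \mathbb{Z}/p^{n}$). The statement that module categories satisfy descent along a Zariski cover is precisely the content of the theorem, not a formal input. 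The paper's proof (\cref{thm:2MODisasheaf}) reduces via \cref{prop:sheavesonspec} to a two-term cover and then argues directly: it first shows that for each fixed $\cc{M}$ the assignment $U \mapsto \cc{O}^{\Ind}_{\cc{K}}(U)\otimes_{\Ind(\cc{K})}\cc{M}$ is a sheaf, using \cref{lem:tensormoduleswithlocs} to identify the base change as a Karoubi quotient and the smashing idempotents $L_{x},L_{y}$ together with the lax-limit adjunction of \cite{Horev2017} to exhibit an explicit inverse; only then does it bootstrap to $\twoModbig$ itself by checking conservativity of the right adjoint. You cannot bypass this with a formal limit-preservation claim.

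\textbf{Part~\ref{thmalph:modc}: the equivalence you invoke is wrong on morphisms.} You write $\Mod_{\cc{O}_{\cc{K}}(U)}(\Catperf) \simeq \twoModcg(U)$, but these have the same objects and \emph{different} morphisms: the former (equivalently $\Mod_{\Ind(\cc{O}_{\cc{K}}(U))}(\PrLstomega)$) only contains compact-preserving functors, while $\twoModcg(U)$ is a \emph{full} subcategory of $\twoModbig(U)$ and hence allows all left adjoints. So deducing~\ref{thmalph:modc} from~\ref{thmalph:modb} is not a matter of applying an equivalence; the paper (\cref{cor:2modisasheaf}) instead observes that $\Mod_{\cc{O}_{\cc{K}}(U)}(\Catperf)$ sits inside $\twoModcg(U)$ as a wide subcategory and then proves, using the patching argument from \cref{prop:zariskilocaltoglobal}, that a morphism which preserves compacts after restriction to each piece of a two-term cover already preserves compacts globally. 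This extra step is where rigidity again enters, and it is not visible in your outline.
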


A consequence of the main result of \cite{ChedRecon} is that the Zariski local-to-global statement of \cite{antieauBrauerGroupsEtale2014} for spectral schemes (and more generally Dirac spectral schemes, suitably defined) follows entirely from the statement of \cref{thmalph:mod}. This uses an enhanced version of the reconstruction result of \cite{Balmer02} which is logically independent from the results above, see \cref{ssec:tt_thomason}.

\subsection{Outline of present work}

In \cref{sec:2rings}, we will study the $\infty$-category of 2-rings. The primary purpose of this section is to revisit localization theory in the rigid and nonrigid settings.
\Cref{sec:geometries} is a detailed recollection of the material on geometries from \cite{LurieDAG5}, to the extent necessary for our applications. In \cref{sec:zariski}, we introduce the Zariski geometry on the $\infty$-category of 2-rings and relate it to tensor-triangular geometry, proving \cref{thmalph:zariski2rings} and \cref{thmalph:comparison}.
\Cref{sec:structuresheaf} is dedicated to the proofs of our results on Zariski descent, namely \cref{thmalph:sheaf}, its consequences, and \cref{thmalph:mod}. \cref{sec:supp} contains a discussion of support data from the perspective of higher Zariski geometry. \cref{sec:slp} is dedicated to the proof of \cref{thmalph:slp}.  

\subsection{Conventions and Notation}\label{ssec:conventions}

We work with the framework of $\infty$-categories, developed in the formalism of quasicategories in \cite{LurieHTT} and \cite{LurieHA}. Our notations and terminology will mostly conform to these references. We will generally regard ordinary categories as $\infty$-categories via their quasicategorical nerves. 
    
\subsubsection{Notation and $\infty$-categorical recollections}For the reader's benefit, we have collected key recurrent notions and mathematical notation below.
    \begin{enumerate}
    \item  $\cS$ denotes the $\infty$-category of spaces, otherwise known as $\infty$-groupoids. $\Cat$ denotes the $(\infty,2)$-category of small $(\infty,1)$-categories (see \hyperref[item:sizematters]{Item (h)}), functors, and natural transformations between them. $\Sp$ denotes the $\infty$-category of spectra.
    \item For a small $\infty$-category $\cc{C}$ we write $\Map(-,-)\colon \cc{C}^{\op} \times \cc{C} \to \cS$ to refer to the mapping space bifunctor. In the case that $\cc{C}$ is stable, we write $\Hom(-,-)\colon \cc{C}^{\op} \times \cc{C} \to \Sp$ to refer to the mapping spectrum bifunctor, which is specified by the relation $\Omega^{\infty + n}\Hom(x,y) \simeq \Map(x, \Omega^{n}Y)$.
    \item For a natural number $n$, $\cS_{\leq n}$ refers to the full subcategory of $\cS$ spanned by the $n$-truncated spaces. $\cS_{\leq -1}$ refers to the full subcategory spanned by $\emptyset$ and $\ast$, and $\cS_{\leq -2}$ refers to the full subcategory spanned by $\ast$. 
    \item For a natural number $n$, we write $[n] \in \Cat$ to refer to the free composable chain of morphisms which is of length $n$.\footnote{what is otherwise referred to as $\Delta^{n}$} Accordingly, the notation $\cc{C}^{[n]}$ denotes the $\infty$-category $\Fun([n], \cc{C})$ of composable chains of morphisms in $\cc{C}$ which are of length $n$. The notation $\cc{C}^{\simeq}$ will denote the maximal $\infty$-subgroupoid contained in $\cc{C}$. 
    \item We decorate functor $\infty$-categories $\Fun(\cc{C}, \cc{D})$ with the superscripts $\lex$, $\mm{rex}$ $\mm{ex}$, $\lim$, and $\colim$ to refer to the full subcategories of functors preserving finite limits, finite colimits, finite limits and finite colimits, small limits, and small colimits, respectively. 
    \item The superscripts $\mm{L}$ and $\mm{R}$ refer to the full subcategories of functors admitting left adjoints and right adjoints, respectively. When $\cc{C}, \cc{D}$ admit symmetric monoidal structures, the superscript $\otimes$ refers to the $\infty$-category of functors equipped with a strong symmetric monoidal structure, with morphisms strongly symmetric monoidal natural transformations.  
    \item Given a symmetric monoidal $\infty$-category $\cc{C}$, $\CAlg(\cc{C})$ denotes the $\infty$-category of $\bb{E}_{\infty}$-algebras in $\cc{C}$. When $\cc{C} =\Sp$ we simply write $\CAlg$ and refer to the constituent objects as commutative ring spectra. When the symmetric monoidal structure on $\cc{C}$ is not implicit, we decorate $\CAlg(\cc{C})$ with a superscript to indicate which one is under consideration, e.g., $\CAlg^{\times}(\cc{C})$ for the product (aka Cartesian) symmetric monoidal structure. 
    \item\label{item:sizematters} We assume the existence of a sequence of strongly inaccessible cardinals $\kappa_{0} < \kappa_{1} < \kappa_{2}$,  where $\kappa_{0}$ is the smallest strongly inaccessible cardinal and $\kappa_{i}$ is the smallest strongly inaccessible cardinal dominating $\kappa_{i-1}$. Under this assumption, the set $V_{\kappa_{i}}$ of sets with rank strictly less than $\kappa_{i}$ is a Grothendieck universe. We will refer to elements of the sets $V_{\kappa_{0}}$, $V_{\kappa_{1}}$, $V_{\kappa_{2}}$ as \emph{small}, \emph{large}, and \emph{very large} sets, respectively. Wherever relevant, the decoration $\widehat{-}$ signifies that the category under consideration consists of large objects, e.g., $\widehat{\cS}$ or $\widehat{\mm{Cat}}_{\infty}$ will denote the (very large) $\infty$-categories of large spaces or large $\infty$-categories, respectively. 
    \item $\PrL$ denotes the $\infty$-category of presentable $\infty$-categories and left adjoint functors. The default symmetric monoidal structure on $\PrL$ is that of the \emph{Lurie tensor product}, which corepresents functors out of the Cartesian product which are left adjoint independently in each variable. Given a cardinal $\kappa$, the notation $\mm{Pr^{L}_{\kappa}} \subseteq \PrL$ denotes the wide subcategory spanned by $\kappa$-accessible presentable $\infty$-categories with functors preserving $\kappa$-compact objects. We will refer to $\omega$-presentable $\infty$-categories as compactly generated presentable $\infty$-categories. $\PrLst$ denotes the $\infty$-category of presentable stable $\infty$-categories. 
    \item Given a small $\infty$-category $\cc{C}$, the notation $\cc{P}(\cc{C})$ refers to the category of presheaves on $\cc{C}$. This is the free cocompletion of $\cc{C}$ under small colimits, and is equivalent to the category of functors $\Fun(\cc{C}^{\op}, \cS)$. The Yoneda embedding $\cc{C} \to \cc{P}(\cc{C})$ is denoted by $\Yo$. The full subcategory $\Ind(\cc{C}) \subseteq \cc{P}(\cc{C})$ of ind-objects consists of the closure of the Yoneda image of $\cc{C}$ under filtered colimits.  The $\infty$-category of pro-objects of $\cc{C}$ is defined as $\Pro(\cc{C}) \coloneq \Ind(\cc{C}^{\op})^{\op}$. The $\infty$-categories $\Ind(\cc{C})$, $\Pro(\cc{C})$ are identified as the free cocompletion, respectively completion, of $\cc{C}$ under filtered colimits, respectively cofiltered limits. 
    \item An $\infty$-topos is a left exact localization of $\cc{P}(\cc{C})$ for $\cc{C}$ a small $\infty$-category. The notation $\LTop$ refers to the $\infty$-category of $\infty$-topoi with morphisms left-exact left adjoint functors. The notation $\RTop$ refers to the $\infty$-category of $\infty$-topoi with morphisms right adjoint functors admitting left-exact left adjoints.
    \item $\mm{Poset}$ denotes the category of small posets. Given $P \in \mm{Poset}$, if $P$ admits a greatest or a least element it is denoted $\top$ or $\bot$, respectively.
\end{enumerate}

\subsubsection{Other typographical miscellanea}\label{sssec:typo}  \tdef{This color modifier} is used to mark instances of important definitions or notations for the reader's convenience. When writing an adjunction between $\infty$-categories as \[L\colon\cc{C} \rightleftarrows \cc{D} \noloc R,\] it is always understood that $L$ is left adjoint to $R$, in symbols $L \dashv R$, unless otherwise specified.

\subsubsection{Comments on terminology}

The choice of \emph{(commutative) 2-ring} as a moniker for idempotent-complete stably symmetric monoidal $\infty$-categories is aligned with the terminology of Mathew's work \cite{Mathew16}, and is meant to refer to a single level of categorification of classical commutative rings/ring spectra. We recognize that the notation $\twoCAlg$ might initially suggest that we are referencing $(\infty,2)$-categorical structure, and uniform notation to disambiguate these would be highly desirable for future work.

The numerology of ``Zariski geometry'' as opposed to ``2-Zariski geometry'' for the resultant structure on this category may read as contradictory. Given an ordinary commutative ring $R$, any localization $R \to R[S^{-1}]$ is associated to an abelian-enriched symmetric monoidal localization between the one-object categories associated to $R, R[S^{-1}]$; in fact, these two classes of morphisms are equivalent. Similarly, given a $2$-ring $\cc{K}$, the admissible morphisms for the higher Zariski geometry from $\cc{K}$ are associated to certain symmetric monoidal exact localizations $\cc{K} \to \cc{K}[W^{-1}]$. As such, it is often instructive to regard the higher Zariski geometry as an \emph{extension} of the classical Zariski geometry to the $(\infty,1)$-category of ``rings with many objects''. In particular, it is \emph{not} additional structure that arises from passing to a higher category level. A classical instance of this heuristic is \cref{thm:hopkinsneemanbalmer}, due to Balmer and Hopkins--Neeman, which shows that the Zariski spectra of an ordinary commutative ring $R$ and of the 2-ring $\Perf_{R}$ coincide in a highly structured sense. 

Our terminological choice may create confusion for ring spectra, where the classical Zariski spectrum of a ring spectrum $R$ and the higher Zariski spectrum of $\Perf_{R}$ are often not equivalent. To avoid this, we encourage distinguishing between the Zariski geometry of a ring spectrum and that of its associated 2-ring of perfect complexes as much as possible.

\subsection*{Acknowledgements}

We would like to thank Martin Gallauer and Luca Pol for useful feedback on an earlier draft of this work.
KA thanks Shane Kelly for useful conversations.
AC is grateful to David Gepner for helpful conversations and constant encouragement.
TB is supported by the European Research Council (ERC) under Horizon Europe (grant No.~101042990).
The authors would like to thank the Max Planck Institute for Mathematics for its hospitality.
Some of this material is based on work of AC and TB conducted while in residence at the Hausdorff Research Institute for Mathematics during the Trimester Program: ``Spectral Methods in Algebra, Geometry, and Topology'' from September--December 2022, funded by the Deutsche Forschungsgemeinschaft under Germany’s Excellence Strategy – EXC-2047/1 – 390685813. 

\vspace{5pt}

\tableofcontents

\section{The \texorpdfstring{$\infty$}{∞}-category of Commutative 2-Rings}\label{sec:2rings}

In this section, we review the main features of the theory of commutative 2-rings, in particular, the notions of ideals and their localization theory. Commutative 2-rings form an enhancement of tensor triangulated categories, and they are the affine building blocks of the Zariski geometry constructed in the following sections. 

Although the perspectives behind some constructions are new and will be of importance later in this paper, by and large many of the results in this section are classical. In the interest of self-containment, we make use of the opportunity to collect them in one place.

\subsection{Basic notions}\label{ssec:2rings_basics}

Recall that $\Catex$ denotes the $\infty$-category of small stable $\infty$-categories and exact functors. Let $\Catperf$ denote the $\infty$-category of idempotent-complete stable $\infty$-categories and exact functors. By \cite[Lemma 2.20]{BGT}, $\Catperf$ appears as a reflective subcategory of $\Catex$  with reflection $\mdef{(-)^{\natural}}\colon \Catex \to \Catperf$ known as the \tdef{idempotent completion}, given by the formula $\cc{C} \mapsto \Ind(\cc{C})^{\omega}$. Unless otherwise specified, all constructions of this paper are understood to happen internally to $\Catperf$ and are implicitly idempotent completed. 

Following \cite[\S4.1.2]{ben-zviIntegralTransformsDrinfeld2010a}, recall that the adjunction \[\Ind\colon \Catperf \rightleftarrows \PrLst \noloc (-)^{\omega}\] supplies an equivalence $\Catperf \simeq \PrLstomega$. Under this identification, $\Catperf$ inherits a symmetric monoidal structure from the Lurie tensor product. This construction is characterized by an equivalence between the $\infty$-category of exact functors $\cc{C}_1\otimes\dotsb\otimes\cc{C}_n\to\cc{C}'$ and the $\infty$-category of functors $\cc{C}_1\times\dotsb\times\cc{C}_n\to\cc{C}'$ which are exact in each variable. In what follows we will exclusively consider this symmetric monoidal structure on $\Catperf$.

\begin{definition}\label{def:2ring}
    We let $\mdef{\twoCAlg}$ denote the $\infty$-category $\CAlg^{\otimes}(\Catperf)$ whose morphisms consist of symmetric monoidal exact functors. The objects of $\twoCAlg$ are referred to as \tdef{commutative $2$-rings} or simply \tdef{$2$-rings} with commutativity understood implicitly henceforth.
\end{definition}

Concretely, a $2$-ring is a small symmetric monoidal idempotent-complete stable $\infty$-category $\cc{K} = (\cc{K},\otimes)$ such that $\otimes$ is exact in both variables.  

\begin{remark}\label{rem:puntingreaders}
    Many of the results in this section, and indeed in \cref{sec:zariski}, will work without the assumption of idempotent-completeness. The descent results of \cref{sec:structuresheaf} will however fail, see \cref{ex:whyidempotentcomplete}. For this reason, our 2-rings will always be idempotent-complete.
  \end{remark}

\begin{remark}\label{rem:bigandsmall} Since $\Ind \dashv (-)^{\omega}$ provide mutually inverse equivalences between $\Catperf$ and $\PrLstomega$ which admit strong symmetric monoidal refinements by design, there is an induced equivalence
    \begin{equation*}
            \Ind(-)\colon\twoCAlg \simeq \CAlg^{\otimes}(\PrLstomega)\noloc (-)^{\omega}.
    \end{equation*} Given this identification, the reader may wonder why we choose to center ``small'' objects as opposed to ``big'' 2-rings of $\Ind$-objects in our theory. For us, it will sometimes be useful to regard categories of $\Ind$ objects as members of $\PrLst$ and not $\PrLstomega$. In words, we will occasionally work with functors between compactly generated presentably symmetric monoidal stable $\infty$-categories that do \emph{not} preserve compact objects, and hence could not have arisen from application of $\Ind$ to a map of 2-rings. As the basic theory will usually necessitate working with functors which \emph{do} arise from maps of 2-rings, we find it conceptually clearer to work with small objects and pass to large categories only when adding generality.  
\end{remark}

\begin{example}\label{ex:example2rings} We collect some familiar examples of $2$-rings below.
    \begin{enumerate}
        \item \cite[\S 7.2.4]{LurieHA}: Given $R \in \CAlg$ a commutative ring spectrum (or $\bb{E}_{\infty}$-ring spectrum), let $(\Mod_{R}, \otimes_{R})$ be the category of modules over $R$ equipped with the $R$-linear symmetric monoidal structure. The category $\Perf_{R}$ of \emph{perfect complexes over $R$} is defined to be the full monoidal subcategory of dualizable (or equivalently, compact) objects of $(\Mod_{R}, \otimes_{R})$. This forms a 2-ring, and is often the basic example under consideration. The construction $R \mapsto \Perf_R$ promotes to an adjunction
            \begin{equation*}
                \Perf_{(-)}\colon \CAlg \rightleftarrows \twoCAlg \noloc \End_{(-)}(\unit),
            \end{equation*}
        where the right adjoint sends $\cc{C} \in \twoCAlg$ to the endomorphism ring spectrum $\End_{\cc{C}}(\unit) \in \CAlg$, see \cref{rem:smallschwedeshipley}.
        \item \cite[\S 7.1.4]{LurieHA}: Let $\CAlg^{\heartsuit}$ be the category of discrete (or ordinary) commutative rings. As a special case of the previous example, if $R \in \CAlg^{\heartsuit}$, then $(\Mod_{R}, \otimes_{R}) \simeq (D(R), \otimes^{\bb{L}}_{R})$ and $\Perf_{R} \simeq D^{\mm{perf}}(R)$, the monoidal subcategory of perfect complexes\footnote{The superscript $\Mod_{R}^{\heartsuit}$ is used to disambiguate between the derived and underived categories for an ordinary ring, inspired by the observation that $\Mod_{R}^{\heartsuit}$ is the heart of the standard t-structure on $\Mod_{R}$.}. 
        \item Given $R \in \CAlg$ and $G$ a finite group, the category of finite $G$-representations over $R$, given by $\mm{Rep}(G, R) \coloneqq \Fun(BG, \Perf_{R})$ equipped with the pointwise tensor product, is a commutative $2$-ring. 
        \item\label{ex:example2rings3} \cite[\S 2]{lurieSpectralAlgebraicGeometry}: Given $X \in \mm{SpSch}$ a spectral scheme, we write $\QCoh_{X}$ to denote the symmetric monoidal category of quasi-coherent sheaves on $X$. The full monoidal subcategory $\Perf_{X} \subset \QCoh_{X}$ of perfect complexes on $X$ is a commutative 2-ring. 
        \item \cite[\S 5--6]{mathewNilpotenceDescentEquivariant2017}, \cite[\S 6]{balchin2024profiniteequivariantspectratensortriangular}: The symmetric monoidal category of finite genuine $G$-spectra $\Sp_{G}^{\omega}$ for a compact Lie or profinite group $G$ is a commutative 2-ring.
      
    \end{enumerate}
\end{example} \vspace{3pt}

\begin{proposition}\label{prop:2ring_compactgen}
    The $\infty$-category $\twoCAlg$ is compactly generated.
\end{proposition}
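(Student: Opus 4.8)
The plan is to exhibit $\twoCAlg$ as an accessible localization of a presentable $\infty$-category, or better, to directly produce a small set of compact generators. First I would recall the equivalence $\Catperf \simeq \PrLstomega$ from the excerpt, but note that this is not immediately an equivalence of presentable categories in a convenient form; the cleaner route is to work with $\Catperf$ directly. The category $\Catex$ of small stable $\infty$-categories is compactly generated: this is standard (e.g., it is the category of models for a sound doctrine, or one cites \cite{BGT}), and the compact objects are the finitely presented stable $\infty$-categories, with $\Catex$ generated under filtered colimits by the free stable category on a finite $\infty$-category. Since $\Catperf$ is a reflective (accessible) localization of $\Catex$ via the idempotent completion functor $(-)^{\natural}$, which preserves filtered colimits, $\Catperf$ is again compactly generated, with compact generators the idempotent completions of the compact objects of $\Catex$.

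Next I would pass to commutative algebras. The symmetric monoidal structure on $\Catperf$ (the one inherited from the Lurie tensor product via $\Catperf \simeq \PrLstomega$) is compatible with filtered colimits — the tensor product preserves filtered colimits in each variable, since it corepresents functors exact in each variable and these are built from the original category by filtered-colimit-preserving operations. Given a presentably symmetric monoidal $\infty$-category $\cc{C}$ (in the sense that $\otimes$ preserves colimits separately in each variable), the forgetful functor $\CAlg^{\otimes}(\cc{C}) \to \cc{C}$ is monadic and the free-algebra monad $\Sym$ preserves filtered colimits; hence $\CAlg^{\otimes}(\cc{C})$ is presentable and, when $\cc{C}$ is compactly generated, compactly generated — this is essentially \cite[Corollary 3.2.3.5, Lemma 3.2.3.1]{LurieHA} or the discussion around operadic algebras in presentable symmetric monoidal categories. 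Strictly speaking $\Catperf$ with the Lurie tensor product is not a colimit-preserving monoidal structure on all of $\Catperf$ — tensoring does not preserve all colimits — but it does preserve filtered colimits and finite colimits, and one can either restrict to this sub-doctrine or invoke the version of the free-algebra argument that only needs the monoidal structure to commute with $\kappa$-filtered colimits for the relevant $\kappa$. The safest formulation: the free $2$-ring functor $\Catperf \to \twoCAlg$ is left adjoint to the forgetful functor, the forgetful functor is conservative and preserves filtered colimits, and the free functor sends a compact generator of $\Catperf$ to a compact object of $\twoCAlg$; together with the fact that $\twoCAlg$ is presentable (the forgetful functor creates the relevant limits and sifted colimits, so $\twoCAlg$ is presentable by \cite[Corollary 4.2.3.7]{LurieHA} applied to the $\infty$-operad), this gives compact generation.

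The main obstacle I anticipate is bookkeeping around which colimits the tensor product on $\Catperf$ actually preserves, and correspondingly getting the presentability and compact-generation statements for $\CAlg^{\otimes}$ cleanly from the literature rather than re-deriving the monadicity. Concretely: one must confirm that $\Sym\colon \Catperf \to \Catperf$ (the free $2$-ring monad) preserves $\omega$-filtered colimits — this follows because $\Sym(\cc{C}) = \coprod_n (\cc{C}^{\otimes n})_{h\Sigma_n}$, tensor powers preserve filtered colimits in each variable (so the diagonal filtered colimit as well), and homotopy orbits by a finite group are finite colimits hence preserve filtered colimits — and then that compact objects of $\Catperf$ are sent to objects whose images under the forgetful functor are compact, which is where preservation of filtered colimits by the forgetful functor is used. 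Once those two facts are in hand, the conclusion that $\twoCAlg$ is generated under filtered colimits by $\{\Sym(\cc{G})\}$ for $\cc{G}$ ranging over a set of compact generators of $\Catperf$, each $\Sym(\cc{G})$ being compact, is formal.
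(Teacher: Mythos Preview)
Your approach is correct and essentially the same as the paper's, which cites \cite[Corollary 4.2.5]{BGT} for $\Catperf$ being compactly generated with a colimit- and compact-preserving tensor product, and then invokes \cite[Corollary 5.3.1.17]{LurieHA} directly rather than unwinding the free-algebra monad argument by hand. Your ``main obstacle'' --- the worry that the tensor product on $\Catperf$ might fail to preserve all colimits --- is unfounded: the universal property characterizing $\cc{C}\otimes\cc{D}$ via biexact functors makes $\cc{C}\otimes -$ a left adjoint for each fixed $\cc{C}$, so it preserves all colimits and the hedging about restricting to filtered colimits is unnecessary.
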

\begin{proof}
    By \cite[Corollary 4.2.5]{BGT}, $\Catperf$ is compactly generated, and the tensor product inherited from $\PrLstomega$ preserves colimits and compact objects in each variable. \cite[Corollary~5.3.1.17]{LurieHA} now implies that $\twoCAlg$ is compactly generated.
\end{proof}

The following construction will provide several more key examples of commutative 2-rings, including a family of compact generators for $\twoCAlg$. The requisite material on symmetric monoidal structures and the Lurie tensor product may be found in \cite[\S 4.8.1--4.8.2]{LurieHA}.
\begin{definition}
    Let $\cc{C} \in \Cat$. The \tdef{stabilization} of $\cc{C}$ is defined to be the $\infty$-category $\Fun(\cc{C}^{\op} , \Sp)$, and is denoted $\mdef{\Sp[\cc{C}]}$. From the universal property of $\cc{P}(\cc{C})$, this object may be identified with $\Fun^{\mm{R}}(\cc{P}(\cc{C})^{\op}, \Sp)$ which is canonically equivalent to $\Sp \otimes \cc{P}(\cc{C})$ for the Lurie tensor product on $\PrL$.
\end{definition}  

\begin{construction}\label{cons:freeforgetful} Recall that $\Sp$ admits a unique presentably symmetric monoidal structure preserving colimits in each variable which moreover restricts to compact objects, and is thus an element of $\CAlg(\PrLomega)$. This supplies an adjunction
    \begin{equation}\label{eq:stabilization}
        \Sp \otimes - \colon \PrLomega \rightleftarrows \Mod_{\Sp}(\PrLomega) \noloc \mathrm{fgt}
    \end{equation} 
 whose left adjoint admits a canonical symmetric monoidal refinement, and whose right adjoint is fully faithful and has image the full subcategory $\PrLstomega$ of compactly-generated presentable stable $\infty$-categories. 
Recall also that the functor of presheaves $\cc{P}(-)\colon \Cat \to \PrL$ admits a symmetric monoidal refinement. The induced functor $\cc{P}(-)\colon \CAlg^\times(\Cat) \to \CAlg(\PrL)$ sends a symmetric monoidal $\infty$ category $\cc{C}$ to $\cc{P}(\cc{C})$ equipped with the \emph{Day convolution monoidal structure}, which is the unique presentably symmetric monoidal structure on $\cc{P}(\cc{C})$ such that the Yoneda embedding $\Yo\colon \cc{C} \to \cc{P}(\cc{C})$ admits a symmetric monoidal refinement. 
 
 Now \cite[Proposition 2.8, Proposition 2.43]{BenMoshe2023} together imply that there is an adjunction of the form \begin{equation}\Sp[-]\colon\Cat \rightleftarrows \PrLstomega\noloc (-)^{\omega} \end{equation} whose unit is identified in \cite[Corollary 2.47]{BenMoshe2023}, and may be computed on any $\cc{C} \in \Cat$ as
    \[
        \Sigma^{\infty}_{+} \circ \Yo\colon \cc{C} \to \cc{P}(\cc{C})\to \Sp[\cc{C}]
    \] 
    namely the pointed stabilization of the Yoneda embedding. Furthermore, since the composite left adjoint $\Sp[-] \simeq \Sp \otimes \cc{P}(\cc{C})$ admits a symmetric monoidal refinement there is an induced adjunction
    \begin{equation}\label{eq:freeforgetful}
        \Sp[-]\colon \CAlg^{\times}(\Cat) \rightleftarrows \CAlg(\PrLstomega)\noloc (-)^{\omega},
    \end{equation}
    where the left adjoint is given by $\cc{C} \mapsto \Sp \otimes \cc{P}(\cc{C})$ equipped with the Day convolution monoidal structure.
\end{construction}

\begin{definition}\label{def:free2ring}
The \tdef{free 2-ring} functor is defined to be the composite \[\mdef{\Sp^{\omega}[-]}\colon\CAlg^{\times}(\Cat) \xrightarrow{\Sp[-]} \CAlg(\PrLstomega) \xrightarrow{(-)^\omega} \twoCAlg.\]
\end{definition}
 
\begin{proposition}\label{prop:free2ring}
There is a free-forgetful adjunction
\[
\Sp^{\omega}[-]\colon  \CAlg^{\times}(\Cat) \rightleftarrows \twoCAlg \noloc \mathrm{fgt}
\]
where the right adjoint forgets stability. The unit of the adjunction evaluated on any  $\cc{C} \in \CAlg^{\times}(\Cat)$ is given by the composite $\Sigma^{\infty}_{+} \circ \Yo\colon  \cc{C} \to \Sp^{\omega}[\cc{C}]\subseteq \Sp[\cc{C}]$.
\end{proposition}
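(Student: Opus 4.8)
The plan is to deduce this proposition from the adjunction \eqref{eq:freeforgetful} of \cref{cons:freeforgetful} together with the equivalence $(-)^{\omega}\colon\CAlg(\PrLstomega)\xrightarrow{\ \sim\ }\twoCAlg$ recorded in \cref{rem:bigandsmall}; no new categorical input is needed beyond careful bookkeeping. By \cref{def:free2ring}, $\Sp^{\omega}[-]$ is by definition the composite
\[
\CAlg^{\times}(\Cat)\xrightarrow{\ \Sp[-]\ }\CAlg(\PrLstomega)\xrightarrow{\ (-)^{\omega}\ }\twoCAlg,
\]
whose first factor is the left adjoint in \eqref{eq:freeforgetful} and whose second factor is the equivalence inverse to $\Ind$. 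Since a left adjoint followed by an equivalence is again a left adjoint, $\Sp^{\omega}[-]$ has a right adjoint, which I would compute as the composite of the two right adjoints in the opposite order, namely
\[
\twoCAlg\xrightarrow{\ \Ind\ }\CAlg(\PrLstomega)\xrightarrow{\ (-)^{\omega}\ }\CAlg^{\times}(\Cat);
\]
note that the final $(-)^{\omega}$ here is the right adjoint of \eqref{eq:freeforgetful}, landing in $\CAlg^{\times}(\Cat)$, which is a genuinely different functor from the equivalence onto $\twoCAlg$ used just above — keeping these two functors apart is the main source of potential confusion in the argument. On a $2$-ring $\cc{K}$ one has $\Ind(\cc{K})^{\omega}\simeq\cc{K}$ as symmetric monoidal $\infty$-categories, immediately from the definition of $\Ind$ on idempotent-complete categories and the monoidal compatibility of \cref{rem:bigandsmall}; hence this composite sends $\cc{K}$ to its underlying symmetric monoidal $\infty$-category, i.e., it forgets stability (equivalently, it is the restriction to $\twoCAlg=\CAlg^{\otimes}(\Catperf)$ of the evident forgetful functor $\CAlg^{\otimes}(\Catperf)\to\CAlg^{\times}(\Cat)$). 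This gives the first assertion and identifies $\mathrm{fgt}$.

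To pin down the unit I would argue as follows. By \cref{cons:freeforgetful}, invoking \cite[Corollary~2.47]{BenMoshe2023} for the adjunction underlying \eqref{eq:freeforgetful}, the unit of $\Sp[-]\colon\Cat\rightleftarrows\PrLstomega$ at $\cc{C}$ is $\Sigma^{\infty}_{+}\circ\Yo\colon\cc{C}\to\cc{P}(\cc{C})\to\Sp[\cc{C}]$, and this already factors through the full subcategory $\Sp^{\omega}[\cc{C}]\subseteq\Sp[\cc{C}]$ of compact objects since the target of the adjunction is $\PrLstomega$. The $\CAlg$-level adjunction \eqref{eq:freeforgetful} is produced by applying $\CAlg(-)$ to a symmetric monoidal adjunction, hence lies over the underlying adjunction: the forgetful functors $\CAlg^{\times}(\Cat)\to\Cat$ and $\CAlg(\PrLstomega)\to\PrLstomega$ are conservative and compatible with both adjunctions, so the unit of \eqref{eq:freeforgetful} at $\cc{C}$ is a symmetric monoidal functor whose underlying functor is $\Sigma^{\infty}_{+}\circ\Yo$. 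Transporting along the equivalence $(-)^{\omega}\colon\CAlg(\PrLstomega)\simeq\twoCAlg$ and using the description of $\mathrm{fgt}$ from the previous paragraph, the unit of $\Sp^{\omega}[-]\dashv\mathrm{fgt}$ at $\cc{C}$ is a morphism $\cc{C}\to\mathrm{fgt}(\Sp^{\omega}[\cc{C}])=\Sp^{\omega}[\cc{C}]$ in $\CAlg^{\times}(\Cat)$ whose composite with the inclusion $\Sp^{\omega}[\cc{C}]\subseteq\Sp[\cc{C}]$ is $\Sigma^{\infty}_{+}\circ\Yo$, exactly the claimed description.

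I expect the genuinely delicate point — as opposed to routine diagram-chasing — to be the compatibility of the lifted adjunction \eqref{eq:freeforgetful} with the underlying adjunction at the level of units, i.e., ensuring that passing to $\bb{E}_{\infty}$-algebras does not disturb the explicit formula $\Sigma^{\infty}_{+}\circ\Yo$ for the unit. If one wished to sidestep this, an alternative would be to establish the adjunction directly: exhibit for $\cc{C}\in\CAlg^{\times}(\Cat)$ and $\cc{K}\in\twoCAlg$ a natural equivalence $\Map_{\twoCAlg}(\Sp^{\omega}[\cc{C}],\cc{K})\simeq\Map_{\CAlg^{\times}(\Cat)}(\cc{C},\mathrm{fgt}\,\cc{K})$ by combining the equivalence $\Map_{\twoCAlg}(\Sp^{\omega}[\cc{C}],\cc{K})\simeq\Map_{\CAlg(\PrLstomega)}(\Sp[\cc{C}],\Ind\cc{K})$ (applying the fully faithful $\Ind$) with the adjunction \eqref{eq:freeforgetful}, and then checking that this equivalence is implemented by precomposition with $\Sigma^{\infty}_{+}\circ\Yo$. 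Everything else is a formal assembly of \cref{cons:freeforgetful} and \cref{rem:bigandsmall}.
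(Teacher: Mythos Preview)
Your proposal is correct and takes essentially the same approach as the paper's proof, which is a one-liner stating that the result follows from the adjunction \eqref{eq:freeforgetful} composed with the equivalence $(-)^{\omega}\colon\CAlg(\PrLstomega)\simeq\twoCAlg$ of \cref{rem:bigandsmall}. You have simply spelled out the bookkeeping that the paper leaves implicit, including the identification of the right adjoint with the forgetful functor and the transport of the unit formula.
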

\begin{proof}
    This is merely the adjunction described in \eqref{eq:freeforgetful} composed with the identification $(-)^\omega$ between $\CAlg(\PrLstomega)$ and $\twoCAlg$ discussed in \cref{rem:bigandsmall}.
\end{proof}

\begin{notation}
    Recall that $\Fin^{\simeq}$ with the coproduct symmetric monoidal structure is the free symmetric monoidal $\infty$-category on a single generator. We write $\mdef{\Sp^{\omega}\{x\}}$ to denote $\Sp^{\omega}[\Fin^{\simeq}]\in\twoCAlg$.    
\end{notation}

\begin{corollary}\label{cor:free_universal}
    For any $\cc{K}\in \twoCAlg$, the evaluation-at-$\Yo(*)$ functor $\Fun^{\mm{ex},\otimes}(\Sp^{\omega}\{x\},\cc{K}) \to \cc{K}$ is an equivalence.
\end{corollary}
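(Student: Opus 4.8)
The plan is to chain together the free–forgetful adjunction of \cref{prop:free2ring} with the universal property of $\Fin^{\simeq}$ as the free symmetric monoidal $\infty$-category on one object. First I would unwind the definition $\Sp^{\omega}\{x\} \coloneq \Sp^{\omega}[\Fin^{\simeq}]$ and apply the adjunction isomorphism
\[
    \Fun^{\mm{ex},\otimes}(\Sp^{\omega}[\Fin^{\simeq}], \cc{K}) \;\simeq\; \Fun^{\times,\otimes}(\Fin^{\simeq}, \mathrm{fgt}(\cc{K}))
\]
supplied by \cref{prop:free2ring}, where the right-hand side denotes symmetric monoidal functors of (plain) symmetric monoidal $\infty$-categories and $\mathrm{fgt}(\cc{K})$ is $\cc{K}$ viewed as an object of $\CAlg^{\times}(\Cat)$ by forgetting stability. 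Since $\Fin^{\simeq}$ with its coproduct (i.e.\ Cartesian-on-the-core, equivalently cocartesian) symmetric monoidal structure is the free symmetric monoidal $\infty$-category on a single generator, evaluation at the generating object $* \in \Fin^{\simeq}$ induces an equivalence $\Fun^{\times,\otimes}(\Fin^{\simeq}, \cc{D}) \xrightarrow{\ \sim\ } \cc{D}$ for any symmetric monoidal $\infty$-category $\cc{D}$; applying this with $\cc{D} = \mathrm{fgt}(\cc{K})$ gives an equivalence to the underlying $\infty$-category of $\cc{K}$, i.e.\ to $\cc{K}$.

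The remaining point is a compatibility/naturality check: I must verify that the composite equivalence
\[
    \Fun^{\mm{ex},\otimes}(\Sp^{\omega}\{x\},\cc{K}) \;\simeq\; \Fun^{\times,\otimes}(\Fin^{\simeq}, \mathrm{fgt}(\cc{K})) \;\simeq\; \cc{K}
\]
agrees with the functor ``evaluate at $\Yo(*)$'' named in the statement. This is where I would use the explicit description of the unit of the free–forgetful adjunction from \cref{prop:free2ring}: the unit on $\Fin^{\simeq}$ is $\Sigma^{\infty}_{+}\circ\Yo\colon \Fin^{\simeq}\to \Sp^{\omega}[\Fin^{\simeq}]$, so $\Yo(*)\in\Sp^{\omega}\{x\}$ is (up to the standard identifications) the image of the generator $*$ under the unit. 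By the triangle identity for the adjunction, precomposing an exact symmetric monoidal functor $F\colon \Sp^{\omega}\{x\}\to\cc{K}$ with the unit and then evaluating at $*$ recovers $F(\Yo(*))$; chasing this through shows the two equivalences coincide.

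The main obstacle, such as it is, is bookkeeping rather than mathematics: one must be careful that all three functor $\infty$-categories are taken with the correct flavor of morphism (strong symmetric monoidal and, on the left, also exact), and that the adjunction of \cref{prop:free2ring} is indeed an adjunction of such functor $\infty$-categories — i.e.\ that forgetting stability is compatible with the symmetric monoidal structures in the way needed to produce the displayed natural equivalence of mapping $\infty$-categories and not merely of mapping spaces. Both of these are built into the construction of the adjunction in \cref{cons:freeforgetful} (the left adjoint $\Sp[-]$ carries a symmetric monoidal refinement, and the identification $\Catperf \simeq \PrLstomega$ of \cref{rem:bigandsmall} is symmetric monoidal), so once those are invoked the argument is complete; I would simply assemble the equivalences and the unit description into the one-line conclusion.
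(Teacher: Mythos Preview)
Your proposal is correct and matches the paper's intended argument: the corollary is stated without proof as an immediate consequence of \cref{prop:free2ring} together with the universal property of $\Fin^{\simeq}$, and your write-up simply makes that deduction explicit. The compatibility check you flag (that the adjunction equivalence is implemented by evaluation at the image of $*$ under the unit $\Sigma^{\infty}_{+}\circ\Yo$, and that it upgrades from mapping spaces to functor $\infty$-categories) is exactly the bookkeeping the paper leaves implicit.
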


\subsection{Thick tensor ideals and localization theory}\label{ssec:2rings_localization}

\begin{definition}\label{def:2ideals}
    Let $\cc{K}$ be a $2$-ring. A full replete subcategory $\cc{I}\subseteq\cc{K}$ is called a \tdef{thick tensor ideal} (henceforth \tdef{tt-ideal}), if the following conditions hold:
    \begin{enumerate}
        \item $\cc{I} = \mm{Thick}(\cc{I})$, i.e., $\cc{I}$ is closed under cofibers, shifts, and retracts;
        \item for any $x\in\cc{K}$ and $x'\in\cc{I}$, we have $x\otimes x'\in\cc{I}$.
    \end{enumerate}
    We write $\mdef{\Idl(\cc{K})}$ for the poset of tt-ideals ordered by inclusion. 
\end{definition}

\begin{notation}
 Given a subset $S\subset \cc{K}$, we write $\langle S \rangle$ to denote the tt-ideal generated by $S$. When $S = \{x\}$ is a singleton, we omit the braces from the notation.
\end{notation}

\begin{definition}
We say a tt-ideal $\cc{I} \subseteq \cc{K}$ is:
    \begin{enumerate}
        \item \tdef{principal} if $\cc{I} = \langle x \rangle$ for some $x \in \cc{K}$;
        \item \tdef{radical} if $x^{\otimes2}\in\cc{I}$ implies $x\in\cc{I}$ for any $x\in\cc{K}$. Equivalently, by induction, $x^{\otimes n} \in \cc{I}$ for some $n \geq 1$ implies $x \in \cc{I}$.
    \end{enumerate} We write $\mdef{\mm{Prin}(\cc{K})} \subset \Idl(\cc{K})$ to denote the poset of principal ideals and $\mdef{\Rad(\cc{K})} \subseteq \Idl(\cc{K})$  for the poset of radical ideals, ordered by inclusion.
\end{definition}

\begin{notation}
    Given a subset $S\subset \cc{K}$, we write $\mdef{\sqrt S}$ to denote the smallest radical tt-ideal containing $S$. The functor $\sqrt{-}$ assembles into a left adjoint to the inclusion $\Rad(\cc{K}) \hookrightarrow \Idl(\cc{K})$.
\end{notation}

\begin{example}
    Let $\cc{K}\in \twoCAlg$. The smallest tt-ideal of $\cc{K}$ is $\langle 0 \rangle$ and the largest tt-ideal is $\langle\unit\rangle=\cc{K}$. For any map of 2-rings $f\colon\cc{K}\to\cc{L}$, the full subcategory $\ker f \coloneq \cc{K} \times_{\cc{L}} \{0\}\subseteq \cc{K}$, namely the kernel of $f$, is a tt-ideal. This assembles into a functor $\mm{ker}(-)\colon  \twoCAlg_{\cc{K}/-} \to \Idl(\cc{K})$.
\end{example}

\begin{lemma}\label{lem:radicalgeneration}
    Let $\cc{K} \in \twoCAlg$ and $x$ and $y$ objects in $\cc{K}$. Then $\sqrt{x\otimes y}=\sqrt{x}\cap\sqrt{y}$.
\end{lemma}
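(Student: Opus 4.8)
The plan is to prove the two inclusions $\sqrt{x\otimes y}\subseteq\sqrt{x}\cap\sqrt{y}$ and $\sqrt{x}\cap\sqrt{y}\subseteq\sqrt{x\otimes y}$ separately, using only formal properties of tt-ideals and the closure conditions defining them.

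For the inclusion $\sqrt{x\otimes y}\subseteq\sqrt{x}\cap\sqrt{y}$, it suffices to show that $x\otimes y$ lies in both $\sqrt{x}$ and $\sqrt{y}$, since $\sqrt{x\otimes y}$ is by definition the smallest radical tt-ideal containing $x\otimes y$ and the intersection of two radical tt-ideals is again a radical tt-ideal (this is immediate: an intersection of tt-ideals is a tt-ideal, and if $z^{\otimes 2}$ lies in both $\sqrt{x}$ and $\sqrt{y}$ then $z$ does too). But $x\otimes y\in\langle x\rangle\subseteq\sqrt{x}$ because $\langle x\rangle$ is a tensor ideal containing $x$, and symmetrically $x\otimes y\in\sqrt{y}$ using the symmetry of $\otimes$. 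This direction is straightforward.

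For the reverse inclusion, since $\sqrt{x\otimes y}$ is radical it suffices to show that every $z\in\sqrt{x}\cap\sqrt{y}$ has some tensor power in $\sqrt{x\otimes y}$; in fact I will aim to show $\sqrt{x}\cap\sqrt{y}\subseteq\sqrt{x\otimes y}$ by first treating the generators and then propagating. The key claim is that $x\in\sqrt{x\otimes y}$ whenever $x\in\sqrt{y}$ — more precisely, I would argue: if $a\in\langle x\rangle$ and $b\in\langle y\rangle$ then $a\otimes b\in\langle x\otimes y\rangle$ (this follows by a double induction over the thick-subcategory-generation, using that $\langle x\otimes y\rangle$ is closed under cofibers, shifts, retracts, and tensoring with arbitrary objects). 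Applying this with $a=x$ and $b=y$ gives nothing new, so instead the real input is: any object of $\sqrt{x}\cap\sqrt{y}$ has a tensor power divisible (in the tt-ideal sense) by both $x$ and $y$, hence by $x\otimes y$. Concretely, if $z\in\sqrt{x}$ then $z^{\otimes m}\in\langle x\rangle$ for some $m$, and if $z\in\sqrt{y}$ then $z^{\otimes n}\in\langle y\rangle$ for some $n$; then $z^{\otimes(m+n)}=z^{\otimes m}\otimes z^{\otimes n}\in\langle x\rangle\otimes\langle y\rangle\subseteq\langle x\otimes y\rangle\subseteq\sqrt{x\otimes y}$, and radicality of $\sqrt{x\otimes y}$ forces $z\in\sqrt{x\otimes y}$.

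The main obstacle — and the one genuinely technical point — is establishing the "product of ideals" statement $\langle x\rangle\otimes\langle y\rangle\subseteq\langle x\otimes y\rangle$ (meaning: $a\otimes b\in\langle x\otimes y\rangle$ for all $a\in\langle x\rangle$, $b\in\langle y\rangle$), together with the analogous fact that if $z^{\otimes m}\in\langle x\rangle$ then $z^{\otimes m}$ is built from $x$ by the tt-ideal operations so that tensoring with $z^{\otimes n}\in\langle y\rangle$ lands in $\langle x\otimes y\rangle$. This is proved by the standard "thick subcategory argument": fix $b\in\langle y\rangle$ and observe that $\{a\in\cc{K}\mid a\otimes b\in\langle x\otimes y\rangle\}$ is a tt-ideal (closure under cofibers/shifts/retracts comes from exactness of $-\otimes b$ and thickness of $\langle x\otimes y\rangle$; closure under tensoring with arbitrary objects is clear) which contains $x$ (as $x\otimes b\in\langle x\rangle\otimes\langle y\rangle$, and one reduces to $b=y$ by a symmetric argument on the $b$-variable), hence contains all of $\langle x\rangle$; then run the symmetric argument in the $b$-variable. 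Once this divisibility lemma is in hand, the rest is the two-line tensor-power computation above plus the elementary observation that intersections and inverse images preserve the radical condition.
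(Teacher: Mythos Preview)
Your proof is correct and follows essentially the same approach as the paper's: both arguments establish the nontrivial inclusion by taking $z\in\sqrt{x}\cap\sqrt{y}$, extracting tensor powers $z^{\otimes m}\in\langle x\rangle$ and $z^{\otimes n}\in\langle y\rangle$, and observing that their product lands in $\langle x\otimes y\rangle$. The only difference is presentational---the paper chooses a single $N$ with $z^{\otimes N}\in\langle x\rangle\cap\langle y\rangle$ and uses $z^{\otimes 2N}$, while you use $z^{\otimes(m+n)}$; and you spell out the thick-subcategory argument for $\langle x\rangle\otimes\langle y\rangle\subseteq\langle x\otimes y\rangle$ that the paper leaves implicit.
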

\begin{proof}
    Since $x\otimes y\in\sqrt x\cap\sqrt y$, we have $\sqrt{x\otimes y}\subseteq\sqrt{x}\cap\sqrt{y}$. We check the converse. Pick $z\in\sqrt x\cap\sqrt y$. We can take $N$ such that $z^{\otimes N}\in\langle x\rangle\cap\langle y\rangle$ holds. From this, we see that
    \begin{equation*}
        z^{\otimes 2N} =z^{\otimes N}\otimes z^{\otimes N} \in\langle x\otimes y\rangle
    \end{equation*}
    holds and this implies $z\in\sqrt{x\otimes y}$.
\end{proof}

\begin{recollection}\label{rec:latticedefinitions} 
    Recall that a \tdef{complete lattice} is a poset $P$ admitting arbitrary colimits (or equivalently, admitting arbitrary limits). A \tdef{frame} is a complete lattice in which finite limits distribute over arbitrary colimits. A frame is \tdef{coherent} if the compact (aka~finite) objects are closed under the formation of finite limits, and every object is a colimit of compact objects. 
\end{recollection}

\begin{notation}
  When working in a distributive lattice, we write $\vee$ and $\wedge $ to denote the coproduct and product\footnote{often referred to as the \emph{join} and \emph{meet}, respectively}, respectively.
\end{notation}

\begin{proposition}\label{prop:latticeiscoherent}
  Given $\cc{K} \in \twoCAlg$, we have:
  \begin{itemize}
  \item[\emph{(a)}] $\Idl(\cc{K})$ is a complete lattice with compact objects $\Idl(\cc{K})^{\omega} = \mm{Prin}(\cc{K})$. 
  \item[\emph{(b)}] $\Rad(\cc{K})$ is a coherent frame with compact objects the radicals of principal tt-ideals.
  \end{itemize}
\end{proposition}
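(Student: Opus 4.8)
The plan is to treat the two statements in turn, using the structural description of $\Idl(\cc{K})$ as an ideal lattice and transporting everything to $\Rad(\cc{K})$ via the adjunction $\sqrt{-} \dashv \iota$.

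For part (a), I would first observe that $\Idl(\cc{K})$ is closed under arbitrary intersections inside the power-set of (isomorphism classes of) objects of $\cc{K}$: an arbitrary intersection of subcategories closed under cofibers, shifts, retracts, and tensoring with arbitrary objects is again so closed. Hence $\Idl(\cc{K})$ has arbitrary limits, so it is a complete lattice, with the join $\bigvee_i \cc{I}_i = \langle \bigcup_i \cc{I}_i \rangle$. For the identification of compact objects, the key point is that any tt-ideal is the filtered union of the principal tt-ideals it contains: $\cc{I} = \bigcup_{x \in \cc{I}} \langle x \rangle$, and this union is filtered because $\langle x \rangle \vee \langle y \rangle = \langle x \oplus y \rangle$ (the cofiber-sequence $x \to x\oplus y \to y$ shows $x, y \in \langle x \oplus y\rangle$, and $x \oplus y$ is a retract of anything containing both, so in fact $\langle x \rangle \vee \langle y \rangle = \langle x\oplus y\rangle$ as sets). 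Thus every object of $\Idl(\cc{K})$ is a filtered colimit of principal ideals, so every compact object is a retract — hence (the poset being skeletal) equal to — a principal ideal; conversely $\langle x \rangle$ is compact because if $\langle x \rangle \subseteq \bigvee_i \cc{I}_i = \langle \bigcup_i \cc{I}_i\rangle$ then $x$ lies in the tt-ideal generated by $\bigcup \cc{I}_i$, and a routine induction on the generation process (each step uses only finitely many objects of $\bigcup \cc{I}_i$, and tensoring one generator with an arbitrary object of $\cc{K}$) shows $x \in \cc{I}_{i_1} \vee \dots \vee \cc{I}_{i_n}$ for some finite subset. So $\Idl(\cc{K})^\omega = \mm{Prin}(\cc{K})$.

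For part (b), $\Rad(\cc{K})$ is a reflective sub-poset of $\Idl(\cc{K})$ via $\sqrt{-}$, hence is again a complete lattice: meets are computed as in $\Idl(\cc{K})$ (an intersection of radical ideals is radical, since $x^{\otimes 2} \in \bigcap \cc{I}_i$ forces $x \in \cc{I}_i$ for each $i$), and joins are $\sqrt{\bigvee_i \cc{I}_i}$. To see it is a frame I must check that finite meets distribute over arbitrary joins; the nontrivial case is $\cc{J} \cap \sqrt{\bigvee_i \cc{I}_i} = \sqrt{\bigvee_i (\cc{J} \cap \cc{I}_i)}$ for $\cc{J}$, $\cc{I}_i$ radical. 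The inclusion $\supseteq$ is formal. For $\subseteq$, the essential computational input is \cref{lem:radicalgeneration}: for radical ideals $\sqrt{\cc{A}} \cap \sqrt{\cc{B}}$ is generated as a radical ideal by the elements $a \otimes b$ with $a \in \cc{A}$, $b \in \cc{B}$ (apply the lemma and the fact that $\sqrt{-}$ preserves joins as a left adjoint). So if $z \in \cc{J} \cap \sqrt{\bigvee \cc{I}_i}$, writing $\sqrt{\bigvee\cc{I}_i} = \sqrt{\langle \bigcup_i \cc{I}_i\rangle}$ and using that $z^{\otimes N}$ lies in the plain tt-ideal generated by $\bigcup_i \cc{I}_i$ after some $N$, a generation-induction again confines $z^{\otimes N}$ to $\cc{I}_{i_1} \vee \dots \vee \cc{I}_{i_n}$, and then $z^{\otimes N} \otimes z^{\otimes N} \in \cc{J} \cap (\cc{I}_{i_1} \vee \dots \vee \cc{I}_{i_n})$, which after applying \cref{lem:radicalgeneration} repeatedly lands in $\sqrt{\bigvee_k (\cc{J} \cap \cc{I}_{i_k})} \subseteq \sqrt{\bigvee_i (\cc{J}\cap \cc{I}_i)}$; radicality then gives $z$ in this ideal. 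Finally, coherence: the compact objects of $\Rad(\cc{K})$ are exactly the $\sqrt{\langle x \rangle}$. That every radical ideal is a join (in $\Rad$) of these follows from $\cc{I} = \sqrt{\cc{I}} = \sqrt{\bigvee_{x \in \cc{I}} \langle x\rangle} = \bigvee^{\Rad}_{x\in\cc{I}} \sqrt{\langle x\rangle}$; that $\sqrt{\langle x\rangle}$ is compact in $\Rad(\cc{K})$ follows from compactness of $\langle x \rangle$ in $\Idl(\cc{K})$ together with the adjunction $\sqrt{-} \dashv \iota$ (a left adjoint between posets preserves compactness when the right adjoint preserves filtered joins, and $\iota$ does since a filtered union of radical ideals is radical); and closure of compacts under finite meets is precisely \cref{lem:radicalgeneration}, which says $\sqrt{\langle x\rangle} \wedge \sqrt{\langle y \rangle} = \sqrt{\langle x \otimes y\rangle}$ is again of the required form. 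Every object being a join of compacts then also reproves completeness-compatibility, closing out the frame-coherence claim.

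The main obstacle I expect is the distributivity step in (b): unlike in an ordinary commutative ring, one cannot manipulate finite sums of products of ideal elements directly, and one must carefully run the tt-ideal generation induction (cofibers, shifts, retracts, tensoring with arbitrary objects) while keeping track of which finitely many generators from $\bigcup_i \cc{I}_i$ are used, then pass to radicals using \cref{lem:radicalgeneration} to convert intersections into tensor products. Everything else is essentially formal poset/adjunction bookkeeping once this computational core is in place.
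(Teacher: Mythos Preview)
Your proposal is correct and follows essentially the same approach as the paper: part~(a) via closure under intersections and the observation $\langle a_1,\dots,a_n\rangle = \langle a_1\oplus\dots\oplus a_n\rangle$, and part~(b) via the reflection $\sqrt{-}\dashv\iota$ together with \cref{lem:radicalgeneration}. The paper is even more terse for~(b), simply citing \cite[Theorem~3.1.9]{KockPitsch17} and sketching in a remark that distributivity is the nontrivial point; your handling of that step (``applying \cref{lem:radicalgeneration} repeatedly'') would in practice be cleaner if phrased via the ideal product $\cc{J}\cdot\cc{L}=\langle j\otimes l\rangle$, using that $\cc{J}\cap\sqrt{\cc{L}}=\sqrt{\cc{J}\cdot\cc{L}}$ for radical $\cc{J}$ and that products distribute over joins of ideals, but you have correctly identified both the obstacle and its resolution.
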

\begin{proof}
  It is easy to see that the intersection of tt-ideals is a tt-ideal, and thus $\Idl(\cc{K})$ admits finite limits. Given $\cc{I}, \cc{J} \in \Idl(\cc{K})$ their coproduct may be formed as $\langle \cc{I} \cup \cc{J} \rangle$. Any union of tt-ideals along a nested sequence of inclusions is a tt-ideal, and thus $\Idl(\cc{K})$ admits filtered colimits. As $\Idl(\cc{K})$ is a poset, these generate all colimits. For the inclusion $\mm{Prin}(\cc{K}) \subseteq \Idl(\cc{K})^{\omega}$ note that for any tt-ideal $\cc{I}$, $\langle a \rangle \in \cc{I}$ if and only if $a \in \cc{I}$, and $a \in \bigcup_{I} \cc{I}_{i}$ if and only if there exists $i$ such that $a \in \cc{I}_{i}$. A similar argument shows that the compact objects of $\Idl(\cc{K})$ are exactly the finitely generated tt-ideals. The reverse inclusion now follows from the fact that any finitely generated $\cc{I}_{i} = \langle a_{1},\dotsc,a_{n}\rangle$ satisfies $\cc{I}_{i} = \langle a_{1} \oplus \dotsb \oplus a_{n} \rangle$ and is thus principal. Part (b) is recorded in \cite[Theorem 3.1.9]{KockPitsch17}.  
\end{proof}
\begin{remark}
  We sketch the proof of part (b) above. Repetition of the argument of (a) coupled with \cref{lem:radicalgeneration} is enough to deduce the characterization of compact objects and closure under finite limits. The requisite generation property in $\Rad(\cc{K})$ follows from $\Rad(\cc{K}) \hookrightarrow \Idl(\cc{K})$ admitting a reflection $\sqrt{-}$ and principal ideals generating $\Idl(\cc{K})$ under colimits. Distributivity is proved as \cite[Lemma 3.1.8]{KockPitsch17} and this crucially employs radicality. 
\end{remark}

Thick tensor ideals are naturally associated to a well-studied class of localizations, which we now recall.

\begin{definition}\label{def:Karoubiloc} Let $\cc{C} \in \Catperf$, and $\cc{I} \subseteq \cc{C}$ a replete stable subcategory closed under retracts. Let $W$ denote the collection of all arrows of $\cc{K}$ whose cofiber lies in $\cc{I}$. The \tdef{Karoubi quotient by $\cc{I}$} is formed by taking the idempotent-completion of $\cc{K}[W^{-1}]$, the Dwyer--Kan localization at the class of morphisms $W$. We denote this construction by $\mdef{L_{\cc{I}}\colon\cc{K} \to \cc{K}/\cc{I}}$ or simply $\cc{K}/\cc{I}$.
\end{definition}

\begin{warning}\label{warn:nonstandard}  Note that the notation of \cref{def:Karoubiloc} is traditionally used to denote the Dwyer--Kan localization as an object of $\Cat^{\mathrm{ex}}$, without idempotent completion. Since we will never leave the idempotent complete setting, we continue to use this notation without fear of confusion.
\end{warning}
   
The terminology above is first introduced in \cite[Appendix A]{calmesHermitianKtheoryStable2025}. The essential features of this construction in the triangulated setting are classical, while \cite[\S 5]{BGT} revisits the same higher-categorically. In \cite[\S 1.3]{nikolausTopologicalCyclicHomology2018a} the Dwyer--Kan localization above is shown to admit a unique symmetric monoidal refinement prior to idempotent completion. The proposition below is essentially a restatement of the results of \emph{loc.\ cit.}\ in the idempotent-complete setting. 

\begin{proposition}\label{prop:verdexists}
  Let $\cc{K} \in \Catperf$ and $\cc{I}\subseteq \cc{K}$ a replete stable subcategory closed under retracts. We have that:
    \begin{enumerate}
        \item[\emph{(a)}] The Karoubi quotient $\cc{K}/\cc{I}$ is a stable $\infty$-category, and the quotient $L_{\cc{I}}\colon\cc{K} \to \cc{K}/\cc{I}$ is exact.
        \item[\emph{(b)}]\label{prop:verdexistsb} The induced functor $\Ind(\cc{K}) \to \Ind(\cc{K}/\cc{I})$ is a localization in the sense of \cite[Definition 5.2.7.2]{LurieHTT}, i.e., it has a fully faithful right adjoint, and moreover has kernel $\Ind(\cc{I})$. The right adjoint is given by the unique colimit preserving functor $\Ind(\cc{K}/\cc{I}) \to \Ind(\cc{K})$ which on $\cc{K}/\cc{I} \to \Ind(\cc{K}) \subset \PShv(\cc{K})$ is the Yoneda embedding $x \mapsto \Map_{\cc{K}/\cc{I}}(-,x)$.  
        \item[\emph{(c)}]\label{prop:verdexistsc} Suppose $\cc{K} \in \twoCAlg$ and $\cc{I}\subseteq \cc{K}$ is a thick tensor ideal. There is a unique way to simultaneously endow the category $\cc{K}/\cc{I}$ and the functor $\cc{K} \to \cc{K}/\cc{I}$ with a stably symmetric monoidal structure. If $\cc{L}$ is another $2$-ring, then composition with $\cc{K} \to \cc{K}/\cc{I}$ induces an equivalence between $\Fun^{\mathrm{ex},\otimes}(\cc{K}/\cc{I},\cc{L})$ and the full subcategory of $\Fun^{\mathrm{ex},\otimes}(\cc{K},\cc{L})$ on those functors which send the objects of $\cc{I}$ to $0$.
    \end{enumerate}
\end{proposition}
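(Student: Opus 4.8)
The plan is to derive all three parts from the corresponding statements in the literature (\cite{nikolausTopologicalCyclicHomology2018a} and \cite{BGT}) plus the universal property of idempotent completion. Parts (a) and (b) are essentially non-monoidal and should be cited directly, while part (c) requires assembling the symmetric monoidal refinement.

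For \textbf{(a)}, I would recall that the Dwyer--Kan localization $\cc{K}[W^{-1}]$ of a stable $\infty$-category at the class of morphisms with cofiber in a thick subcategory is stable with exact quotient functor; this is \cite[\S 5]{BGT} (building on the triangulated picture of Verdier). Stability is inherited by the idempotent completion $(-)^{\natural}$ since $\Catperf$ is a reflective subcategory of $\Catex$ and the reflection preserves finite limits and colimits, so $\cc{K}/\cc{I} = \cc{K}[W^{-1}]^{\natural}$ is stable and $L_{\cc{I}}$ is exact.

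For \textbf{(b)}, the key input is that passing to $\Ind$-categories turns a Karoubi (Verdier) quotient into a Bousfield localization. Concretely, $\Ind(\cc{K}) \to \Ind(\cc{K}/\cc{I})$ agrees with the localization of $\Ind(\cc{K})$ at the $\Ind$-closure of $W$, whose kernel is $\Ind(\cc{I})$; this is the content of \cite[\S 5]{BGT} (see also \cite[Prop.~5.2.7.2]{LurieHTT} for the general shape, and note $\Ind$ commutes with idempotent completion, so $\Ind(\cc{K}/\cc{I}) \simeq \Ind(\cc{K}[W^{-1}])$). The right adjoint is fully faithful because a localization of presentable $\infty$-categories at a strongly saturated class has fully faithful right adjoint; and since it is colimit-preserving and agrees with the Yoneda embedding $x \mapsto \Map_{\cc{K}/\cc{I}}(-,x)$ on the compact generators $\cc{K}/\cc{I}$, it is the unique such functor. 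I would spell out that $\cc{K}/\cc{I} \hookrightarrow \Ind(\cc{K}/\cc{I})$ lands in $\Ind(\cc{K}) \subseteq \PShv(\cc{K})$ precisely because the right adjoint, restricted to compact objects, is by construction the inclusion into presheaves.

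For \textbf{(c)}, the hard part, I would invoke \cite[\S 1.3]{nikolausTopologicalCyclicHomology2018a}, where the Dwyer--Kan localization $\cc{K} \to \cc{K}[W^{-1}]$ is shown to admit a unique symmetric monoidal refinement and to be characterized by the universal property that symmetric monoidal exact functors out of $\cc{K}[W^{-1}]$ correspond to symmetric monoidal exact functors out of $\cc{K}$ killing $\cc{I}$ — here using that $\cc{I}$ being a \emph{tensor} ideal is exactly what makes $W$ compatible with $\otimes$. Then I would apply the idempotent completion functor $(-)^{\natural}\colon \Catex \to \Catperf$, which is symmetric monoidal (the monoidal structure on $\Catperf$ is the one induced from $\PrLstomega$, restricting the Lurie tensor product, as recalled in \cref{ssec:2rings_basics}), hence carries commutative algebra objects to commutative algebra objects and the unit map $\cc{C} \to \cc{C}^{\natural}$ to a map of 2-rings. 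Composing, $\cc{K}/\cc{I} = \cc{K}[W^{-1}]^{\natural}$ acquires a canonical stably symmetric monoidal structure and $L_{\cc{I}}$ upgrades to a map in $\twoCAlg$. Uniqueness follows from uniqueness in \cite{nikolausTopologicalCyclicHomology2018a} together with the universal property of idempotent completion (a symmetric monoidal exact functor out of $\cc{K}/\cc{I}$ is the same as one out of $\cc{K}[W^{-1}]$, since the target $\cc{L}$ is already idempotent-complete). The universal property in the last sentence of (c) is then obtained by concatenating two equivalences: $\Fun^{\mm{ex},\otimes}(\cc{K}/\cc{I}, \cc{L}) \simeq \Fun^{\mm{ex},\otimes}(\cc{K}[W^{-1}], \cc{L})$ (idempotent completion, since $\cc{L} \in \twoCAlg$ is idempotent-complete) and $\Fun^{\mm{ex},\otimes}(\cc{K}[W^{-1}], \cc{L}) \simeq \Fun^{\mm{ex},\otimes}_{\cc{I} \mapsto 0}(\cc{K}, \cc{L})$ (the universal property from \cite{nikolausTopologicalCyclicHomology2018a}); I would also note that a symmetric monoidal exact functor $\cc{K} \to \cc{L}$ kills $\cc{I}$ iff it sends $W$ to equivalences, since its cofibers are exactly the objects of $\cc{I}$ and an exact functor sends a morphism to an equivalence iff it kills the cofiber. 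The main obstacle is purely expository: making precise that idempotent completion is symmetric monoidal and that it does not disturb the universal property, i.e., that $\Fun^{\mm{ex},\otimes}$ into an idempotent-complete target is insensitive to the pre-completion localization; everything else is a citation.
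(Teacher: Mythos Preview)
Your proposal is correct and follows essentially the same approach as the paper: reduce to the pre-idempotent-completion statement via \cite{nikolausTopologicalCyclicHomology2018a} (the paper cites Theorem~I.3.3, Proposition~I.3.5, and Theorem~I.3.6 there rather than \cite{BGT} for (a) and (b), but this is cosmetic), and then handle the passage to idempotent completion. The only notable difference is that for the last step in (c) the paper invokes \cite[Proposition~4.8.1.10]{LurieHA} directly rather than phrasing it as ``$(-)^{\natural}$ is symmetric monoidal,'' but these amount to the same thing.
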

\begin{proof}
  Let $W \subseteq \cc{K}$ denote the collection of morphisms with cofiber contained in $\cc{I}$. In \cite[Theorem I.3.3]{nikolausTopologicalCyclicHomology2018a} it is shown that $\cc{K}[W^{-1}]$ is stable and $\cc{K} \to \cc{K}[W^{-1}]$ is exact. It follows that the composite $\cc{K} \to \cc{K}[W^{-1}] \to \Ind(\cc{K}[W^{-1}])^{\omega} \eqcolon \cc{K}/\cc{I}$ is an exact functor of stable $\infty$-categories, yielding part (a). Part (b) is demonstrated in \cite[Proposition I.3.5]{nikolausTopologicalCyclicHomology2018a} with $\cc{K}/\cc{I}$ replaced by $\cc{K}[W^{-1}]$, and the claim follows from noting that idempotent completion induces an equivalence on $\Ind$-objects.

  The conclusions of part (c) are demonstrated for the map $\cc{K} \to \cc{K}[W^{-1}]$ in \cite[Theorem I.3.6]{nikolausTopologicalCyclicHomology2018a}. The claim will follow if we demonstrate that the idempotent-completion map $\cc{K}[W^{-1}] \to \cc{K}/\cc{I}$ admits a unique stably symmetric monoidal structure such that the induced map
\[ \Fun^{\otimes, \mm{ex}}(\cc{K}, \cc{L}) \to \Fun^{\otimes, \mm{ex}}(\cc{K}[W^{-1}], \cc{L})
    \]
    is an equivalence for every $\cc{L} \in \twoCAlg$; this is a special case of \cite[Proposition 4.8.1.10]{LurieHTT}.
\end{proof}

We may rephrase the properties above in the following pleasant fashion. 

\begin{proposition}\label{prop:quotientposet}
For any $\cc{K} \in \twoCAlg$, there exists an adjunction as below 
    \[
        \cc{K}/-\colon \Idl(\cc{K}) \rightleftarrows \twoCAlg_{\cc{K}/-}\noloc \mm{ker}(-),
    \]
where the left adjoint is fully faithful, where the right adjoint preserves filtered colimits, and where $\cc{I} \in \Idl(\cc{K})$ satisfies $\cc{K}/\cc{I} \in (\twoCAlg_{\cc{K}/-})^{\omega}$ if and only if $\cc{I}$ is principal.
\end{proposition}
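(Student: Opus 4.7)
The idea is to extract the adjunction directly from the universal property of Karoubi quotients (\cref{prop:verdexists}) and then verify each of the three asserted properties separately, using throughout that $\Idl(\cc{K})$ is a poset.

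By \cref{prop:verdexistsc}, the mapping space in $\twoCAlg_{\cc{K}/-}$ between $\cc{K}/\cc{I}$ and $(f\colon\cc{K}\to\cc{L})$ is either empty or contractible, being nonempty precisely when $\cc{I}\subseteq\ker f$. Since $\Idl(\cc{K})$ sees only this empty/contractible dichotomy, this identification of mapping spaces assembles into the claimed adjunction $\cc{K}/-\dashv\ker(-)$. For fully faithfulness of the left adjoint it suffices to check the unit $\cc{I}\to\ker(\cc{K}\to\cc{K}/\cc{I})$ is an equality in $\Idl(\cc{K})$: by \cref{prop:verdexistsb}, the induced localization $\Ind(\cc{K})\to\Ind(\cc{K}/\cc{I})$ has kernel $\Ind(\cc{I})$, so $x\in\cc{K}$ becomes zero in $\cc{K}/\cc{I}$ iff it lies in $\Ind(\cc{I})$; compactness of $x$ in $\Ind(\cc{K})$ then forces $x$ to be a retract of an object of $\cc{I}$, and thickness of $\cc{I}$ gives $x\in\cc{I}$.

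For preservation of filtered colimits by $\ker$, consider a filtered diagram $\{(\cc{L}_i,f_i)\}$ in $\twoCAlg_{\cc{K}/-}$ with colimit $(\cc{L},f)$. Filtered colimits in the coslice are computed in $\twoCAlg$, and the further composite $\twoCAlg\to\Cat$ preserves filtered colimits via the equivalence $\twoCAlg\simeq\CAlg(\PrLstomega)$ together with standard facts about filtered colimits of (compactly generated) stable $\infty$-categories. Since mapping spaces in a filtered colimit of $\infty$-categories are filtered colimits of mapping spaces, $f(x)\simeq 0$ in $\cc{L}$ if and only if $f_i(x)\simeq 0$ for some index $i$, yielding $\ker f=\bigcup_i\ker f_i$ in $\Idl(\cc{K})$.

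For the compactness characterization, suppose first that $\cc{I}=\langle a\rangle$ for some $a\in\cc{K}$. By \cref{cor:free_universal}, the object $a$ classifies a map $\alpha_a\colon\Sp^{\omega}\{x\}\to\cc{K}$, and $\cc{K}/\langle a\rangle$ is identified with the coequalizer in $\twoCAlg_{\cc{K}/-}$ of the two maps $\cc{K}\otimes\Sp^{\omega}\{x\}\rightrightarrows\cc{K}$ induced, respectively, by $\alpha_a$ (sending $x\otimes 1\mapsto a$) and by the unique map $\alpha_0\colon\Sp^{\omega}\{x\}\to\Sp^{\omega}$ killing $x$ (composed with $\cc{K}\otimes\Sp^{\omega}\simeq\cc{K}$). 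Now $\Sp^{\omega}\{x\}$ is compact in $\twoCAlg$ by \cref{prop:free2ring} together with the compactness of $\Fin^{\simeq}$ in $\CAlg^{\times}(\Cat)$; hence $\cc{K}\otimes\Sp^{\omega}\{x\}$ is compact in $\twoCAlg_{\cc{K}/-}$ via the left adjoint $\cc{L}\mapsto(\cc{K}\to\cc{K}\otimes\cc{L})$ to the (filtered-colimit-preserving) forgetful functor, and $\cc{K}$ is compact in the slice as its initial object. Thus $\cc{K}/\langle a\rangle$ is a finite colimit of compact objects, hence compact. Conversely, write $\cc{I}=\bigcup_{S\subseteq\cc{I}\text{ finite}}\langle S\rangle$ as a filtered union; since $\cc{K}/-$ preserves filtered colimits, $\cc{K}/\cc{I}\simeq\colim_S\cc{K}/\langle S\rangle$ in the slice, and compactness of $\cc{K}/\cc{I}$ forces the identity to factor through some $\cc{K}/\langle S\rangle$. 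The adjunction then gives $\cc{I}\subseteq\langle S\rangle$, so $\cc{I}=\langle S\rangle=\langle\bigoplus_{s\in S}s\rangle$ is principal.

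The most delicate step is the forward compactness direction: one must verify the coequalizer description of $\cc{K}/\langle a\rangle$ and confirm that compactness in $\twoCAlg_{\cc{K}/-}$ is controlled by compactness in $\twoCAlg$, using that filtered colimits in the coslice are computed in the ambient category and that the free tensoring $\cc{L}\mapsto\cc{K}\otimes\cc{L}$ preserves compacts.
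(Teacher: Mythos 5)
Your proposal is correct, and the adjunction, fully-faithfulness, and filtered-colimit arguments are essentially the paper's. The one place you diverge meaningfully is the forward direction of the compactness characterization. The paper disposes of it (and of the converse) in a single stroke: once one knows $\cc{K}/-$ is a fully faithful left adjoint whose right adjoint $\ker(-)$ preserves filtered colimits, the standard fact that such a left adjoint both preserves and reflects compact objects reduces the entire biconditional to \cref{prop:latticeiscoherent}, which already identifies $\Idl(\cc{K})^{\omega}=\mm{Prin}(\cc{K})$. In other words, the forward direction is free once you have proven the first two parts, and your converse argument is in fact just a hands-on unwinding of "reflects compacts" — you did not need a separate argument for the forward implication at all.

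Your alternative route via the coequalizer of $\cc{K}\otimes\Sp^{\omega}\{x\}\rightrightarrows\cc{K}$ in the coslice does go through, but it is carrying hidden subtlety. The colimit over the walking parallel pair is governed, on corepresentables, by the \emph{space of paths} between the two points $\eta_{\cc{L}}(a)$ and $0$ in $\cc{L}^{\simeq}$, not just by the set-level condition $\eta_{\cc{L}}(a)\simeq 0$. That this path space is contractible rather than merely nonempty is special: it holds because $\Map_{\cc{L}}(\eta_{\cc{L}}(a),0)\simeq\ast$ (the target is a zero object) so there is no room for nontrivial loops. Without noting this, the claim that the coequalizer corepresents the right functor — and hence equals $\cc{K}/\langle a\rangle$ — is incomplete, which is precisely the gap you flag in your final paragraph. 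You also need, as you note, that $\Sp^{\omega}\{x\}$ is compact in $\twoCAlg$ (true, but another check) and that extension of scalars $\cc{K}\otimes-\colon\twoCAlg\to\twoCAlg_{\cc{K}/}$ preserves compacts. All of this is avoidable: after parts one and two of the statement, simply invoke the adjoint-functor preservation of compactness together with \cref{prop:latticeiscoherent}, which is what the paper's one-sentence conclusion is pointing at.

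One small positive: your explicit verification that the unit $\cc{I}\to\ker(\cc{K}\to\cc{K}/\cc{I})$ is an isomorphism (via \cref{prop:verdexistsb} and compactness of objects of $\cc{K}$ in $\Ind(\cc{K})$) spells out a step the paper leaves implicit, and is a correct and useful addition.
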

\begin{proof}
Let $\cc{I} \in \Idl(\cc{K})$. For any map of 2-rings $f\colon \cc{K} \to \cc{L}$, the fiber of $\Fun^{\otimes, \mm{ex}}(\cc{K}/\cc{I}, \cc{L}) \to \Fun^{\otimes, \mm{ex}}(\cc{K}, \cc{L})$ over $\{f\}$ may be naturally identified with $\Fun^{\otimes, \mm{ex}}_{\cc{K}/-}(\cc{K}/\cc{I}, \cc{L})$ and hence \hyperref[prop:verdexistsc]{\cref*{prop:verdexists}\,(c)} gives the expression
\begin{equation}
\Fun^{\otimes, \mm{ex}}_{\cc{K}/-}(\cc{K}/\cc{I}, \cc{L}) \simeq \Map_{\Idl(\cc{K})}(\cc{I}, \mm{ker}(f)) \simeq \begin{cases}
    \ast & \cc{I} \subseteq \mm{ker}(f); \\
    \emptyset & \text{otherwise.}
\end{cases}
\end{equation} yielding the first part of the claim. We now show that $\ker(-)\colon  \twoCAlg \to \Idl(\cc{K})$ preserves filtered colimits. Let $F\colon  I \to \twoCAlg_{\cc{K}/-}$ any filtered system receiving a map $f = \varinjlim_{I}f_{i}\colon \cc{K} \to \varinjlim_{I} F$. Let $x \in \mm{ker}(f)$ arbitrary, it suffices to show that there exists $i \in I$ such that $x \in \mm{ker}(f_{i})$. Equivalently, we must show that there is some $i$ for which $f_{i}$ sends the homotopy class of the automorphism $\id_x$ to the $0$ map. Recall that the forgetful functor $\twoCAlg \to \Cat$ and the functor which passes to underlying $\infty$-groupoids both preserve filtered colimits. We thus have the identification of automorphism groupoids 
    \begin{equation*}
            B\mm{Aut}(f(x)) \simeq {\varinjlim}_{I}B\mm{Aut}_{F(i)}(f_{i}(x))
    \end{equation*}
whence it follows that the homotopy class of $[\id_{x}]$ must map to $0$ in some $F(i)$, by commutation of homotopy groups with filtered colimits. The final claim follows from the exhibition of $\cc{K}/-$ as a fully faithful left adjoint, the fact that the right adjoint preserves filtered colimits, and \cref{prop:latticeiscoherent}. 
\end{proof}

\begin{corollary}\label{cor:basechange}
Let $\cc{K}, \cc{L} \in \twoCAlg$, $f\colon \cc{K} \to \cc{L}$ a morphism and $\cc{I}\in \mm{Idl}(\cc{K})$. The following diagram in $\twoCAlg$ is co-Cartesian
\begin{equation*}
        \xymatrix{
            \cc{K} \ar[r] \ar[d] &
            \cc{L} \ar[d] \\
            \cc{K}/\cc{I}\ar[r] &
            \cc{L}/\langle f(\cc{I}) \rangle.
        }
    \end{equation*}
\end{corollary}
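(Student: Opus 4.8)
The plan is to verify the universal property of the pushout directly, by mapping the purported co-Cartesian square into an arbitrary test $2$-ring $\cc{M} \in \twoCAlg$. First one notes that the lower horizontal map $\cc{K}/\cc{I} \to \cc{L}/\langle f(\cc{I})\rangle$ exists and makes the square commute: the composite $\cc{K} \xrightarrow{f} \cc{L} \to \cc{L}/\langle f(\cc{I})\rangle$ carries every object of $\cc{I}$ to $0$, so by \hyperref[prop:verdexistsc]{\cref*{prop:verdexists}\,(c)} it factors through $\cc{K}/\cc{I}$, uniquely up to a contractible space of choices. It then suffices to prove that for every $\cc{M}$ the resulting comparison map
\[
\Map_{\twoCAlg}(\cc{L}/\langle f(\cc{I})\rangle,\cc{M}) \longrightarrow \Map_{\twoCAlg}(\cc{L},\cc{M}) \times_{\Map_{\twoCAlg}(\cc{K},\cc{M})} \Map_{\twoCAlg}(\cc{K}/\cc{I},\cc{M})
\]
is an equivalence, naturally in $\cc{M}$.

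The key input is \hyperref[prop:verdexistsc]{\cref*{prop:verdexists}\,(c)}: it identifies $\Map_{\twoCAlg}(\cc{K}/\cc{I},\cc{M})$ with the union of those connected components of $\Map_{\twoCAlg}(\cc{K},\cc{M})$ spanned by the symmetric monoidal exact functors carrying every object of $\cc{I}$ to $0$, and likewise identifies $\Map_{\twoCAlg}(\cc{L}/\langle f(\cc{I})\rangle,\cc{M})$ with the subspace of $\Map_{\twoCAlg}(\cc{L},\cc{M})$ on the functors carrying $\langle f(\cc{I})\rangle$ to $0$. Here I would record the elementary observation that a symmetric monoidal exact functor $g\colon \cc{L}\to\cc{M}$ carries $\langle f(\cc{I})\rangle$ to $0$ if and only if it carries $f(\cc{I})$ to $0$ if and only if $g\circ f$ carries $\cc{I}$ to $0$: the full subcategory of $\cc{L}$ on the objects sent to $0$ by $g$ is a thick tensor ideal (using exactness, idempotent-completeness, and that $g$ is symmetric monoidal), hence contains $f(\cc{I})$ precisely when it contains the tt-ideal that $f(\cc{I})$ generates.

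With these identifications the claim becomes a formality about fiber products of spaces. Since $\Map_{\twoCAlg}(\cc{K}/\cc{I},\cc{M}) \hookrightarrow \Map_{\twoCAlg}(\cc{K},\cc{M})$ is an inclusion of path components, the fiber product on the right is just the full subspace of $\Map_{\twoCAlg}(\cc{L},\cc{M})$ consisting of those $g$ whose precomposite $g\circ f$ lands in that subspace, i.e.\ those $g$ with $g\circ f$ carrying $\cc{I}$ to $0$; by the previous paragraph these are exactly the functors carrying $\langle f(\cc{I})\rangle$ to $0$, which by \hyperref[prop:verdexistsc]{\cref*{prop:verdexists}\,(c)} again is $\Map_{\twoCAlg}(\cc{L}/\langle f(\cc{I})\rangle,\cc{M})$. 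Every step is visibly natural in $\cc{M}$, so the square is co-Cartesian. I do not expect a genuine obstacle here; the only points needing care are the reduction from the generated ideal $\langle f(\cc{I})\rangle$ to the image $f(\cc{I})$ and the (standard) fact that a pullback of spaces along an inclusion of path components is again such an inclusion, so the square of mapping spaces is automatically a pullback once one has matched up connected components. Alternatively, one could phrase the whole argument through the adjunction of \cref{prop:quotientposet}, using that $\ker(g\circ f) = f^{-1}(\ker g)$ and hence $\langle f(\cc{I})\rangle \subseteq \ker g \iff \cc{I} \subseteq \ker(g \circ f)$, but the mapping-space bookkeeping above seems the most transparent.
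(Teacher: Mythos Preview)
Your proof is correct and takes essentially the same approach as the paper: both arguments verify the universal property by using \hyperref[prop:verdexistsc]{\cref*{prop:verdexists}\,(c)} to identify mapping spaces out of Karoubi quotients with the appropriate subspaces of path components, and then match these up via the observation that $g$ kills $\langle f(\cc{I})\rangle$ if and only if $g\circ f$ kills $\cc{I}$. The paper phrases the same computation through the adjunction of \cref{prop:quotientposet} (exactly the alternative you mention at the end), but the content is identical.
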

\begin{proof} It is equivalent to show the above is co-Cartesian in $\twoCAlg_{\cc{K}/-}$. Let $\cc{K}' \in \twoCAlg$ arbitrary. From \cref{prop:quotientposet} one has that the fiber of the projection 
\begin{equation}\label{eq:basechangeproj}
\Fun^{\otimes,\mm{ex}}(\cc{K}/\cc{I}, \cc{K}') \times_{\Fun^{\otimes, \mm{ex}}(\cc{K}, \cc{K}')} \Fun^{\otimes,\mm{ex}}(\cc{L}, \cc{K}') \to \Fun^{\otimes,\mm{ex}}(\cc{L}, \cc{K}')
\end{equation}
over any $\{g\} \subseteq \Fun^{\otimes,\mm{ex}}(\cc{L}, \cc{K}')$ may be naturally identified with $\Map_{\Idl(\cc{K})}(\cc{I}, \mm{ker}(g \circ f))$. Since $\Idl(\cc{K})$ is a poset, this latter object is either $\emptyset$ or $\ast$, whence it follows that \eqref{eq:basechangeproj} is an inclusion of connected components of underlying simplicial sets whose essential image is exactly the full subcategory of functors $g\colon \cc{L} \to \cc{K}'$ satisfying $(g \circ f) \colon \cc{I} \mapsto 0$. It follows that there is a natural map $\cc{K}/\cc{I} \otimes_{\cc{K}} \cc{L} \to \cc{L}/\langle f(\cc{I})\rangle$, which moreover induces an equivalence on $\Fun^{\otimes, \mm{ex}}(-, \cc{K}')$ by \hyperref[prop:verdexistsc]{\cref*{prop:verdexists}\,(c)}. As $\cc{K}'$ was allowed to be arbitrary, we may conclude by the Yoneda lemma.
\end{proof}

\begin{corollary}\label{cor:universalquotient}
    Given $\cc{K} \in \twoCAlg$ and $a \in \cc{K}$, the following diagram in $\twoCAlg$ is co-Cartesian 
    \[
    \xymatrix{
        \Sp^{\omega}\{x\} \ar[r]^{x \mapsto 0} \ar[d]_{x \mapsto a} & \Sp^{\omega} \ar[d] \\
        \cc{K} \ar[r] & \cc{K}/\langle a \rangle.
    }
    \]
\end{corollary}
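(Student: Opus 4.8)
The plan is to realise this square as a base-change square in the sense of \cref{cor:basechange}. Write $x \in \Sp^{\omega}\{x\}$ for the universal object, so that by \cref{cor:free_universal} the functor $\Fun^{\mm{ex},\otimes}(\Sp^{\omega}\{x\},-)$ is evaluation at $x$; in particular there is an essentially unique morphism of $2$-rings $f_a\colon\Sp^{\omega}\{x\}\to\cc{K}$ with $f_a(x)\simeq a$. I would then apply \cref{cor:basechange} to the morphism $f_a$ and the tt-ideal $\langle x\rangle\in\Idl(\Sp^{\omega}\{x\})$; this immediately produces a co-Cartesian square
\[
\xymatrix{
    \Sp^{\omega}\{x\} \ar[r]^{f_a} \ar[d]_{L_{\langle x\rangle}} & \cc{K} \ar[d] \\
    \Sp^{\omega}\{x\}/\langle x\rangle \ar[r] & \cc{K}/\langle f_a(\langle x\rangle)\rangle
}
\]
in $\twoCAlg$, and it remains only to identify the two bottom corners, compatibly with the maps, with $\Sp^{\omega}$ and $\cc{K}/\langle a\rangle$ respectively.

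The bottom-right corner is easy: since $f_a$ is exact and symmetric monoidal, $f_a(\langle x\rangle)\subseteq\langle f_a(x)\rangle=\langle a\rangle$, while $a=f_a(x)\in f_a(\langle x\rangle)$, so $\langle f_a(\langle x\rangle)\rangle=\langle a\rangle$. For the bottom-left corner I would combine \hyperref[prop:verdexistsc]{\cref*{prop:verdexists}\,(c)} with \cref{cor:free_universal}: for any $\cc{L}\in\twoCAlg$, the $\infty$-category $\Fun^{\mm{ex},\otimes}(\Sp^{\omega}\{x\}/\langle x\rangle,\cc{L})$ is the full subcategory of $\Fun^{\mm{ex},\otimes}(\Sp^{\omega}\{x\},\cc{L})\simeq\cc{L}$ on those functors killing $x$, i.e.\ the full subcategory of $\cc{L}$ spanned by its zero object, which is contractible. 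Hence $\Sp^{\omega}\{x\}/\langle x\rangle$ corepresents the constant functor at a point, so it is the initial object of $\twoCAlg$, namely $\Sp^{\omega}$ (the unit of $\Catperf$; equivalently, $\Perf_{(-)}$ is a left adjoint and so sends the initial $\bE_{\infty}$-ring to $\Perf_{\unit}=\Sp^{\omega}$). Under this identification $L_{\langle x\rangle}$, which sends $x\in\langle x\rangle$ to $0$, becomes precisely the map ``$x\mapsto 0$'' of the statement. Substituting the two identifications into the displayed square then gives the asserted co-Cartesian square.

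I do not expect a real obstacle here: every step is a direct appeal to a result already established, and the square one must build is quite literally an instance of \cref{cor:basechange}. The only point that takes a moment's thought is the identification $\Sp^{\omega}\{x\}/\langle x\rangle\simeq\Sp^{\omega}$, handled by the corepresentability argument above --- heuristically, killing the generator of the free $2$-ring on one generator leaves the free $2$-ring on no generators, i.e.\ the unit.
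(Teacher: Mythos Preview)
Your proposal is correct and is essentially the same argument as the paper's: both apply \cref{cor:basechange} after identifying the map $\Sp^{\omega}\{x\}\to\Sp^{\omega}$ with the Karoubi quotient by $\langle x\rangle$, via the observation that $\Fun^{\mm{ex},\otimes}(\Sp^{\omega}\{x\}/\langle x\rangle,-)\simeq\ast$ corepresents the initial object. The only cosmetic difference is that the paper phrases the identification first and then base-changes along $f_a$, whereas you base-change first and identify the corners afterward; the squares are the same up to transposition.
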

\begin{proof}
  The top of the square induces $\ast \simeq \Fun^{\otimes, \mm{ex}}(\Sp^{\omega}, -) \hookrightarrow  \Fun^{\otimes, \mm{ex}}(\Sp^{\omega}\{x\}, -)$, which is a fully faithful inclusion onto those functors which send $\{x \mapsto 0\}$. As in the proof of \cref{cor:basechange}, we deduce that the top horizontal morphism is a Karoubi quotient, and \cref{cor:basechange} supplies the claim. 
\end{proof}

\begin{corollary}\label{cor:retractsandleftcancel}
    Karoubi quotients in $\twoCAlg$ are closed under compositions, retracts, and left cancellation.
\end{corollary}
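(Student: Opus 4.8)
Throughout, write $f\colon\cc{K}\to\cc{L}$ for the morphism whose status as a Karoubi quotient is in question. The plan is to reformulate the notion via the universal property of \cref{prop:verdexists}\,(c) and reduce all three closure properties to a single criterion. By \cref{prop:verdexists}\,(b) one has $\ker L_{\cc{I}}=\cc{I}$, so $f$ is a Karoubi quotient if and only if the canonical comparison $c\colon\cc{K}/\ker f\to\cc{L}$ (the counit of the adjunction of \cref{prop:quotientposet}) is an equivalence. By Yoneda it suffices to check, for every $\cc{N}\in\twoCAlg$: \textbf{(i)} $f^{*}\colon\Fun^{\mm{ex},\otimes}(\cc{L},\cc{N})\to\Fun^{\mm{ex},\otimes}(\cc{K},\cc{N})$ is fully faithful; and \textbf{(ii)} every $\phi\in\Fun^{\mm{ex},\otimes}(\cc{K},\cc{N})$ annihilating $\ker f$ lies in the essential image of $f^{*}$. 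Given (i) and (ii), $f^{*}$ and $(L_{\ker f})^{*}$ are both fully faithful with essential image exactly the functors annihilating $\ker f$ (for $f^{*}$ the inclusion in this subcategory is automatic and (ii) gives equality; for $(L_{\ker f})^{*}$ this is \cref{prop:verdexists}\,(c)); since $f^{*}=(L_{\ker f})^{*}\circ c^{*}$, the map $c^{*}$ is an equivalence for all $\cc{N}$, hence $c$ is an equivalence. Karoubi quotients themselves satisfy (i) and (ii) by \cref{prop:verdexists}\,(c).

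\textbf{Retracts and left cancellation} are then formal. If $f$ is a retract of a Karoubi quotient $g$ in $\twoCAlg^{[1]}$, applying $\Fun^{\mm{ex},\otimes}(-,\cc{N})$ exhibits $f^{*}$ as a retract of $g^{*}$ in $\Cat^{[1]}$, and a retract of a fully faithful functor is fully faithful, giving (i); for (ii), writing the retract data as $\cc{K}\xrightarrow{s}\cc{K}'\xrightarrow{r}\cc{K}$, $\cc{L}\xrightarrow{s'}\cc{L}'\xrightarrow{r'}\cc{L}$ with $gs=s'f$ and $r'g=fr$, the square identities give $r(\ker g)\subseteq\ker f$, so a $\phi$ annihilating $\ker f$ has $\phi r$ annihilating $\ker g$, hence $\phi r=\psi\circ g$, and then $\phi=\phi rs=\psi gs=\psi s'f$ factors through $f$. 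For left cancellation, suppose $f$ and $g\circ f$ are Karoubi quotients; since $(gf)^{*}=f^{*}g^{*}$ and $f^{*}$ are fully faithful, cancelling $f^{*}$ on mapping spaces shows $g^{*}$ is fully faithful, giving (i); for (ii), if $\psi$ annihilates $\ker g$ then $\psi f$ annihilates $\ker(gf)=f^{-1}(\ker g)$, so $\psi f=\chi\circ(gf)$ for some $\chi\colon\cc{M}\to\cc{N}$, whence $f^{*}\psi\simeq f^{*}(g^{*}\chi)$, and a fully faithful functor is injective on equivalence classes of objects, so $\psi\simeq g^{*}\chi$.

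\textbf{Composition} is where the real work lies. For $f,g$ Karoubi quotients, $(gf)^{*}=f^{*}g^{*}$ is a composite of fully faithful functors, giving (i). For (ii), a functor $\phi$ annihilating $\ker(gf)\supseteq\ker f$ factors as $\phi=\bar\phi\circ f$, and since $\bar\phi f$ annihilates $f^{-1}(\ker g)$, the tensor-exact functor $\bar\phi$ annihilates the thick tensor ideal $\langle f(f^{-1}(\ker g))\rangle$. So (ii) for $gf$ reduces to the following lemma, which I expect to be the only delicate point: \emph{if $f\colon\cc{K}\to\cc{L}$ is a Karoubi quotient and $\cc{J}\subseteq\cc{L}$ is a thick tensor ideal, then $\langle f(f^{-1}\cc{J})\rangle=\cc{J}$}; equivalently, since $f^{-1}\cc{J}\supseteq\ker f$ forces $\cc{K}/f^{-1}\cc{J}\simeq\cc{L}/\langle f(f^{-1}\cc{J})\rangle$ by the iterated-quotient form of \cref{cor:basechange}, the canonical map $\cc{K}/f^{-1}\cc{J}\to\cc{L}/\cc{J}$ is an equivalence. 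Only the inclusion $\cc{J}\subseteq\langle f(f^{-1}\cc{J})\rangle$ has content, and the obstacle is that a given $y\in\cc{J}$ is a priori only a retract of some $f(x)$ with $x\in\cc{K}$, while the complementary summand of $f(x)$ need not lie in $\cc{J}$ — so one cannot conclude $x\in f^{-1}\cc{J}$ directly.

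I would resolve this lemma by passing to ind-categories. By \cref{prop:verdexists}\,(b), $\Ind(f)$ is a Bousfield localization with fully faithful right adjoint $R$ and \emph{compactly generated} kernel $\Ind(\ker f)$; being generated by compacts it is smashing, so $R$ preserves all colimits. Using this, one checks that the preimage localizing tensor ideal $\Ind(f)^{-1}(\Ind\cc{J})$ is again compactly generated, equal to $\Ind(f^{-1}\cc{J})$ (its compact objects being $\Ind(f)^{-1}(\Ind\cc{J})\cap\cc{K}=f^{-1}\cc{J}$). Then any $y\in\cc{J}$ is the image under $\Ind(f)$ of an object of $\Ind(f^{-1}\cc{J})$, hence a filtered colimit of objects $f(z_{j})$ with $z_{j}\in f^{-1}\cc{J}$; as $y$ is compact in $\Ind(\cc{L})$ it is a retract of some such $f(z_{j})$, so $y\in\langle f(f^{-1}\cc{J})\rangle$. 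The hard part is this last lemma — precisely, controlling the interplay of idempotent completion, preimages of tt-ideals, and the smashing localization $\Ind(f)$ (equivalently, ruling out the telescope-type failure for the preimage subcategory) — everything else being bookkeeping with the universal property of \cref{prop:verdexists}\,(c).
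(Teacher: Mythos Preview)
Your reformulation via (i)--(ii) and the arguments for retracts and left cancellation are correct; the paper instead argues directly with the equivalence $(\cc{K}/\cc{I})/\langle f(\cc{J})\rangle\simeq\cc{K}/\cc{J}$ from \cref{cor:basechange} together with the full faithfulness of $\cc{K}/-$ in \cref{prop:quotientposet}, which is terser but covers the same ground.

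For composition you correctly isolate the key lemma $\langle f(f^{-1}\cc{J})\rangle=\cc{J}$, but your Ind-category proof is circular. The equality $\Ind(f)^{-1}(\Ind\cc{J})=\Ind(f^{-1}\cc{J})$ unwinds to the assertion that, for each $y\in\cc{J}$, one has $R(y)\in\Ind(f^{-1}\cc{J})$. Since $R(y)$ is already $\Ind(\ker f)$-local and $\ker f\subseteq f^{-1}\cc{J}$, this holds iff the image of $y=\Ind(f)R(y)$ under $\Ind(\cc{L})\to\Ind(\cc{L}/\langle f(f^{-1}\cc{J})\rangle)$ vanishes, i.e.\ iff $y\in\langle f(f^{-1}\cc{J})\rangle$ --- exactly the lemma in question. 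Colimit-preservation of $R$ does not break this loop; this is precisely the ``telescope-type failure'' you flag but never discharge. The missing ingredient is Thomason's theorem on dense subcategories (cf.\ its use in \cref{ex:whyidempotentcomplete} and \cref{prop:zariskilocaltoglobal}): for any $y\in\cc{L}$ one has $y\oplus\Sigma y\simeq f(x)$ for some $x\in\cc{K}$, so if $y\in\cc{J}$ then $f(x)\in\cc{J}$, hence $x\in f^{-1}\cc{J}$, and $y$ is a retract of $f(x)$. This proves the lemma directly at the small level, without passing to $\Ind$. The paper's own one-line treatment of composition (``proceeds the same way'') does not spell this point out either.
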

\begin{proof}
  We first show closure under left cancellation, namely that given $\cc{K} \in \twoCAlg$ and morphisms $\cc{K} \xrightarrow{f} \cc{K}' \xrightarrow{g} \cc{K}''$ such that $f, g \circ f$ are equivalent to Karoubi quotients at tt-ideals $\cc{I}, \cc{J} \subset \cc{K}$, respectively, then $g\colon \cc{K}'\to \cc{K}''$ is itself a Karoubi quotient. By assumption, $g$ is equivalent to a morphism $\cc{K}/\cc{I} \to \cc{K}/\cc{J}$, and therefore \cref{prop:quotientposet} implies that $\cc{I} \subset \cc{J}$. By the universal property of Karoubi quotients \hyperref[prop:verdexistsc]{\cref*{prop:verdexists}\,(c)}, the natural map $(\cc{K}/\cc{I})/\langle f(\cc{J})\rangle \to \cc{K}/\cc{J}$ supplied by \cref{cor:basechange} is easily seen to induce an equivalence of corepresentable functors in $\twoCAlg_{\cc{K}/}$, from which the claim follows. Closure under composition proceeds the same way, by instead producing a map $\cc{K}/\cc{J} \to (\cc{K}/\cc{I})/\langle f(\cc{J})\rangle$. We now show closure under retracts formed in $\twoCAlg^{[1]}$. Consider a homotopy commutative retract diagram
        \begin{equation*}
        \begin{aligned}
            \xymatrix{
                \cc{K}' \ar[r] \ar[d] &
                \cc{K} \ar[r]^{f} \ar[d] &
                \cc{K}' \ar[d] \\
                \cc{K}'' \ar[r] &
                \cc{K} /\cc{I} \ar[r] &
                \cc{K}'' \rlap{.}
            }
            \end{aligned}
          \end{equation*}
          with notation as above. \cref{cor:basechange} supplies a factorization of the bottom right horizontal arrow as $\cc{K}/\cc{I} \to \cc{K}'/\langle f(\cc{J}) \rangle \to \cc{K}''$, implying that $\cc{K}''$ is a retract of $\cc{K}'/f(J)$. By \cref{prop:quotientposet} the functor $\cc{K}'/$ is fully faithful, and since $\Idl(\cc{K}')$ is a poset, all retracts are isomorphisms so $\cc{K}'' \simeq \cc{K}'/\langle f(\cc{J})\rangle$.          
\end{proof}

\subsection{Rigidity}\label{ssec:2rings_rigidification}

We now restrict our attention to a distinguished class of well-behaved 2-rings, whose unique behavior with respect to localization theory as in \cref{ssec:rigidlocalizationtheory} will be of essential importance in later sections.

\begin{definition}\label{def:rigid}
    We say a symmetric monoidal $\infty$-category is \tdef{rigid} if every object is dualizable. We write $\mdef{\twoCAlgrig} \subset \twoCAlg$ for the full subcategory spanned by rigid 2-rings.
\end{definition}

\begin{remark}\label{rem:horigid}
    A $2$-ring $\cc{K}$ is rigid if and only if $\Ho\cc{K}$ is rigid.
\end{remark}

\begin{lemma}[{\cite[2.1.3]{hoveyAxiomaticStableHomotopy1997a}}]\label{lem:dualizablesclosedunderthicksub}
  Let $\cc{K} \in \twoCAlg$, and $\cat{S} \subseteq \cc{K}^{\mm{dbl}}$ any collection of dualizable objects. Then $\mm{Thick}(\cat{S}) \subseteq \cc{K}^{\mm{dbl}}$, where $\mm{Thick}(\cat{S})$ refers to the closure of $\cat{S}$ under cofibers, shifts, and retracts. 
\end{lemma}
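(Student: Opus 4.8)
The plan is to reduce everything to the single statement that the full subcategory $\cc{K}^{\mm{dbl}} \subseteq \cc{K}$ of dualizable objects is a thick subcategory. Granting this, the lemma is immediate: by definition $\mm{Thick}(\cat{S})$ is the smallest full subcategory of $\cc{K}$ containing $\cat{S}$ and closed under cofibers, shifts, and retracts, so if $\cc{K}^{\mm{dbl}}$ has these three closure properties and contains $\cat{S}$, then $\mm{Thick}(\cat{S}) \subseteq \cc{K}^{\mm{dbl}}$. This thickness assertion is exactly the content of \cite[Theorem 2.1.3]{hoveyAxiomaticStableHomotopy1997a} in the setting of an axiomatic stable homotopy category, and the argument carries over verbatim to $\twoCAlg$; I would sketch it for completeness as follows.

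Closure of $\cc{K}^{\mm{dbl}}$ under retracts holds in any symmetric monoidal $\infty$-category: transporting the coevaluation and evaluation of a dualizable object along the section and retraction maps exhibits a retract as dualizable, with dual the corresponding retract of the dual. Closure under (de)suspension follows from the facts that $\Sigma\unit$ is $\otimes$-invertible, hence dualizable, and that a tensor product of dualizable objects is dualizable, via $\Sigma^{\pm 1} x \simeq \Sigma^{\pm 1}\unit \otimes x$. The one substantive point is closure under cofibers, and for this I would pass to $\Ind(\cc{K})$, which is presentably symmetric monoidal by \cref{rem:bigandsmall} and hence closed. Here one uses that an object of $\cc{K} = \Ind(\cc{K})^{\omega}$ is dualizable in $\cc{K}$ if and only if it is dualizable in $\Ind(\cc{K})$: the dual of a dualizable object $x$ satisfies $\Map_{\Ind(\cc{K})}(x^{\vee},-) \simeq \Map_{\Ind(\cc{K})}(\unit, x\otimes -)$, and since $\unit$ is compact in $\Ind(\cc{K})$ and $x\otimes-$ preserves filtered colimits, $x^{\vee}$ is compact, hence again lies in $\cc{K}$; the duality data then lives in the full symmetric monoidal subcategory $\cc{K}$.

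Now, for objects $c, w$ of $\Ind(\cc{K})$ there is a natural comparison map $\underline{\map}(c,\unit)\otimes w \to \underline{\map}(c,w)$, and $c$ is dualizable precisely when this is an equivalence for all $w$. Both sides, regarded as functors of $c$, send cofiber sequences to fiber sequences — the left because $\underline{\map}(-,\unit)$ is exact and $-\otimes w$ is exact, the right because $\underline{\map}(-,w)$ is exact. Given a cofiber sequence $x \to y \to z$ in $\cc{K}$ with $x$ and $y$ dualizable, the comparison map thus gives a morphism of fiber sequences whose legs over $x$ and $y$ are equivalences; in a stable $\infty$-category this forces the leg over $z$ to be an equivalence, so $z$ is dualizable. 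Hence $\cc{K}^{\mm{dbl}}$ is closed under cofibers, shifts, and retracts and contains $\cat{S}$, giving $\mm{Thick}(\cat{S}) \subseteq \cc{K}^{\mm{dbl}}$. I expect the only genuinely delicate bookkeeping — the "hard part" — to be the passage to $\Ind(\cc{K})$ required to have internal homs and to compare dualizability there with dualizability in $\cc{K}$; the rest of the argument is purely formal manipulation of duality data and exactness.
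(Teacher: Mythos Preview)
The paper gives no proof of this lemma; it simply records the statement with a citation to \cite[2.1.3]{hoveyAxiomaticStableHomotopy1997a}. Your argument is correct and is essentially the Hovey--Palmieri--Strickland proof, with one genuine addition: since a $2$-ring $\cc{K}$ is not assumed closed monoidal, the internal-hom characterization of dualizability used in \emph{loc.\ cit.}\ is not directly available, and your passage to $\Ind(\cc{K})$ (together with the check that dualizability in $\cc{K}$ and in $\Ind(\cc{K})$ agree for objects of $\cc{K}$, using compactness of~$\unit$) is exactly what is needed to make the argument go through in this generality. One could alternatively avoid internal homs entirely by constructing the duality data on $\cofib(x\to y)$ by hand from those of $x$ and $y$, but your route is cleaner and aligns with how the paper already freely moves between $\cc{K}$ and $\Ind(\cc{K})$.
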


Consequently, for any 2-ring $\cc{K}$ the full subcategory of dualizable objects $\cc{K}^{\mathrm{dbl}} \subseteq \cc{K}$ is stable and idempotent complete. As the tensor product of any two dualizable objects is itself dualizable, the symmetric monoidal structure on $\cc{K}$ must restrict to one on $\cc{K}^{\mm{dbl}}$, so $\cc{K}^{\mm{dbl}}$ is itself a member of $\twoCAlgrig$. 

\begin{proposition}\label{prop:rigidclosureundercolims}
    $\twoCAlgrig$ is a coreflective subcategory of $\twoCAlg$, with coreflection $\cc{K} \mapsto \cc{K}^{\mm{dbl}}$. 
\end{proposition}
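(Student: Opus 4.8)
The plan is to show that the inclusion $\twoCAlgrig \hookrightarrow \twoCAlg$ admits a right adjoint given by $\cc{K} \mapsto \cc{K}^{\mm{dbl}}$, with counit the inclusion $\cc{K}^{\mm{dbl}} \hookrightarrow \cc{K}$. By the discussion preceding the statement, $\cc{K}^{\mm{dbl}}$ is itself a $2$-ring (it is stable, idempotent complete by \cref{lem:dualizablesclosedunderthicksub}, and the symmetric monoidal structure restricts since the tensor product of dualizables is dualizable and the unit is dualizable), and it lies in $\twoCAlgrig$ by construction. So the only substantive task is to check the universal property of the counit: for every rigid $2$-ring $\cc{L}$ and every map of $2$-rings $f\colon \cc{L} \to \cc{K}$, there is a unique factorization through $\cc{K}^{\mm{dbl}} \hookrightarrow \cc{K}$ in $\twoCAlg$.

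First I would record the key closure property: a symmetric monoidal exact functor sends dualizable objects to dualizable objects. Indeed, if $x \in \cc{L}$ is dualizable with dual $x^{\vee}$ and evaluation/coevaluation maps, then applying the strong symmetric monoidal functor $f$ produces a dual for $f(x)$ with the corresponding structure maps, since $f$ preserves the monoidal unit and tensor products up to coherent equivalence. Hence for any map $f\colon \cc{L} \to \cc{K}$ with $\cc{L}$ rigid, the image of $f$ consists of dualizable objects, i.e., $f$ factors through the full subcategory $\cc{K}^{\mm{dbl}} \subseteq \cc{K}$ on underlying $\infty$-categories. Since $\cc{K}^{\mm{dbl}} \subseteq \cc{K}$ is a \emph{full} subcategory, this factorization through $\cc{K}^{\mm{dbl}}$ is unique; and since the symmetric monoidal and exact structures on $\cc{K}^{\mm{dbl}}$ are the ones restricted from $\cc{K}$, the factorizing functor is automatically a map of $2$-rings and the factorization is unique as such. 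This gives a natural equivalence
\[
    \Map_{\twoCAlg}(\cc{L}, \cc{K}) \simeq \Map_{\twoCAlgrig}(\cc{L}, \cc{K}^{\mm{dbl}})
\]
for $\cc{L} \in \twoCAlgrig$, which is precisely the assertion that $(-)^{\mm{dbl}}$ is right adjoint to the inclusion.

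To turn this into an honest adjunction statement rather than an objectwise bijection on mapping spaces, I would either invoke the adjoint functor criterion via the universal property of the counit (e.g. \cite[Proposition 5.2.7.8]{LurieHTT} or the dual of it — a fully faithful inclusion with an objectwise counit exhibiting the universal property is a coreflective localization), or, more directly, observe that the above shows the inclusion functor corepresents the desired functor pointwise and conclude functoriality of $(-)^{\mm{dbl}}$ and naturality of the counit formally. A clean way: since $\twoCAlgrig \subseteq \twoCAlg$ is a full subcategory and every object of $\twoCAlg$ admits a counit map $\cc{K}^{\mm{dbl}} \to \cc{K}$ (the inclusion) with the universal property established above, \cite[Proposition 5.2.7.8]{LurieHTT} applies verbatim to produce the right adjoint, exhibiting $\twoCAlgrig$ as a coreflective subcategory.

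I do not expect any serious obstacle here; the statement is essentially formal once one knows that strong symmetric monoidal exact functors preserve dualizable objects and that $\cc{K}^{\mm{dbl}}$ is a $2$-ring. The only point requiring a little care — and the nearest thing to a ``hard part'' — is the bookkeeping that the factorization $\cc{L} \to \cc{K}^{\mm{dbl}}$ inherits a canonical symmetric monoidal exact structure (not merely that the underlying functor factors): this follows because $\cc{K}^{\mm{dbl}} \to \cc{K}$ is a map of $2$-rings which is fully faithful as a symmetric monoidal functor, so the structure on the composite $\cc{L} \to \cc{K}^{\mm{dbl}} \to \cc{K}$ descends uniquely. One should also note $\cc{K}^{\mm{dbl}}$ is nonempty/idempotent-complete and contains the unit, already handled by \cref{lem:dualizablesclosedunderthicksub} and the remarks following it.
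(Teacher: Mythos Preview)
Your proposal is correct and follows essentially the same approach as the paper: both argue that symmetric monoidal functors preserve dualizable objects, so any map from a rigid $2$-ring to $\cc{K}$ factors uniquely through the full subcategory $\cc{K}^{\mm{dbl}}$, and then invoke a general result from \cite{LurieHTT} to promote this objectwise universal property to an adjunction. The paper's version is terser and cites \cite[Corollary 5.2.2.7]{LurieHTT} rather than Proposition 5.2.7.8, but the content is the same.
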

\begin{proof}
Let $\cc{K} \in \twoCAlgrig$. For any $\cc{L}\in \twoCAlg$, the inclusion $\cc{L}^{\mm{dbl}} \hookrightarrow \cc{L}$ induces an equivalence $\Map_{\twoCAlg}(\cc{K}, \cc{L}^{\mm{dbl}}) \simeq \Map_{\twoCAlg}(\cc{K}, \cc{L})$ since the image of any symmetric monoidal functor $\cc{K} \to \cc{L}$ must land in the full subcategory $\cc{L}^{\mm{dbl}}$. We conclude by \cite[Corollary 5.2.2.7]{LurieHTT}.
\end{proof}

Finally, we note the behavior of the ``big'' category associated to a rigid 2-ring.  

  \begin{lemma}\label{lem:rigidlycompactlygenerated}
    For $\cc{K} \in \twoCAlgrig$ there is an identification $\Ind(\cc{K})^{\omega} = \Ind(\cc{K})^{\mm{dbl}} \simeq \cc{K}$. 
  \end{lemma}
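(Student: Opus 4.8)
The plan is to split the statement into its two logically separate halves. The identification $\Ind(\cc{K})^{\omega} \simeq \cc{K}$ requires nothing about rigidity: it is a direct consequence of the equivalence $\Ind(-)\colon \twoCAlg \simeq \CAlg^{\otimes}(\PrLstomega) \noloc (-)^{\omega}$ recorded in \cref{rem:bigandsmall}, whose unit exhibits $\cc{K}$ as (equivalent to) the full subcategory of compact objects of $\Ind(\cc{K})$, which is already idempotent complete. So the content of the lemma is the equality $\Ind(\cc{K})^{\mm{dbl}} = \Ind(\cc{K})^{\omega}$, which I would establish by a pair of inclusions.

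For $\Ind(\cc{K})^{\omega} \subseteq \Ind(\cc{K})^{\mm{dbl}}$: by \cref{rem:bigandsmall} the canonical functor $\cc{K} \to \Ind(\cc{K})$ is (strong) symmetric monoidal, with essential image $\Ind(\cc{K})^{\omega}$, and strong symmetric monoidal functors preserve dualizable objects. Since $\cc{K}$ is rigid, every object of $\cc{K}$ is dualizable, hence every compact object of $\Ind(\cc{K})$ is dualizable.

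For the reverse inclusion $\Ind(\cc{K})^{\mm{dbl}} \subseteq \Ind(\cc{K})^{\omega}$: first observe that $\unit_{\Ind(\cc{K})}$ is compact, being the image of $\unit_{\cc{K}} \in \Ind(\cc{K})^{\omega}$ under the embedding above. Now let $x \in \Ind(\cc{K})$ be dualizable with dual $x^{\vee}$, so that there is a natural equivalence $\Map_{\Ind(\cc{K})}(x, -) \simeq \Map_{\Ind(\cc{K})}(\unit, x^{\vee} \otimes -)$. Because the symmetric monoidal structure on $\Ind(\cc{K})$ is the one inherited from $\PrL$, the functor $x^{\vee} \otimes -$ preserves all colimits, while $\Map_{\Ind(\cc{K})}(\unit, -)$ preserves filtered colimits by compactness of the unit; hence $\Map_{\Ind(\cc{K})}(x, -)$ preserves filtered colimits, i.e., $x$ is compact. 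Combining the two inclusions with the first paragraph yields the chain $\Ind(\cc{K})^{\omega} = \Ind(\cc{K})^{\mm{dbl}} \simeq \cc{K}$.

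I do not expect a genuine obstacle here: the argument is the standard ``dualizable $+$ compact unit $\Rightarrow$ compact'' principle. The only points requiring care are bookkeeping ones — confirming that $\Ind(\cc{K})$ is presentably symmetric monoidal with compact unit and colimit-preserving tensor product, and that the unit map $\cc{K} \to \Ind(\cc{K})$ is strong symmetric monoidal — both of which are already built into the setup recalled in \cref{cons:freeforgetful} and \cref{rem:bigandsmall}.
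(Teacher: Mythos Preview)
Your proposal is correct and follows essentially the same approach as the paper: both split the claim into $\Ind(\cc{K})^{\omega}\simeq\cc{K}$ (immediate from idempotent completeness) and the equality $\Ind(\cc{K})^{\omega}=\Ind(\cc{K})^{\mm{dbl}}$, with the nontrivial inclusion $\Ind(\cc{K})^{\mm{dbl}}\subseteq\Ind(\cc{K})^{\omega}$ deduced from compactness of~$\unit$ via the standard ``dualizable plus compact unit implies compact'' argument. The only difference is that the paper leaves that last step to a citation of \cite[Theorem~2.1.3]{hoveyAxiomaticStableHomotopy1997a}, whereas you spell it out.
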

  \begin{proof}
    The identification $\Ind(\cc{K})^{\omega} = \cc{K}$ follows from the fact that $\cc{K}$ is closed under finite colimits and is idempotent complete. This provides an inclusion $\cc{K} \simeq \Ind(\cc{K})^{\omega} \subseteq \Ind(\cc{K})^{\mm{dbl}}$ by rigidity. The reverse inclusion is a standard argument using the fact that $\unit \in \Ind(\cc{K})^{\omega}$, see for example the proof of \cite[Theorem 2.1.3]{hoveyAxiomaticStableHomotopy1997a}.
  \end{proof}

\begin{example}
    As an example, consider the category $\Mod_R$, which is compactly generated by dualizable objects (as it is compactly generated by elements in the thick subcategory of the unit). The above implies that $\Mod_{R}^{\omega} = \Mod_{R}^{\mm{dbl}} \eqqcolon \Perf_R$.
\end{example}

\begin{definition}
    We say that $\cc{C} \in \CAlg(\PrLstomega)$ is \tdef{rigidly compactly generated} if $\cc{C}^\omega = \cc{C}^\mm{dbl}$, i.e., if the compact and dualizable objects of $\cc{C}$ agree. We will write $\mdef{\CAlg(\PrLstomega)^{\rig}} \subset \CAlg(\PrLstomega)$ to refer to the full subcategory of such objects.
\end{definition}

\cref{lem:rigidlycompactlygenerated} alongside \cref{rem:bigandsmall} imply that passage to $\Ind$ objects provides an equivalence between the $\infty$-category of rigid 2-rings and the $\infty$-category of rigidly compactly generated presentably symmetric monoidal $\infty$-categories.

\subsection{Localization theory in the rigid setting}\label{ssec:rigidlocalizationtheory}

In this subsection we highlight the features of localization theory which are unique to the rigid setting. 

\begin{lemma}\label{lem:rigid_radicalideal}
    If a $2$-ring~$\cc{K}$ is rigid, then $\Rad(\cc{K}) = \Idl(\cc{K})$. 
\end{lemma}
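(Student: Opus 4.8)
The plan is to prove the inclusion $\Idl(\cc{K}) \subseteq \Rad(\cc{K})$; the reverse inclusion holds for any 2-ring by definition. By the inductive reformulation of radicality in the definition of a radical tt-ideal, it suffices to fix a tt-ideal $\cc{I} \subseteq \cc{K}$ and an object $x \in \cc{K}$ with $x^{\otimes 2} \in \cc{I}$, and to deduce $x \in \cc{I}$. The only feature of $\cc{K}$ that I will invoke is rigidity: every object $x$ admits a dual $x^{\vee}$, together with a coevaluation $\eta \colon \unit \to x \otimes x^{\vee}$ and an evaluation $\varepsilon \colon x^{\vee} \otimes x \to \unit$ satisfying the triangle identities. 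This is the $\infty$-categorical incarnation of the classical argument of Balmer (and of Hovey--Palmieri--Strickland \cite{hoveyAxiomaticStableHomotopy1997a}) that thick tensor ideals in a rigid tt-category are automatically radical.

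The key point is a retract argument. The triangle identity for the dual says that the composite
\[
x \simeq \unit \otimes x \xrightarrow{\eta \otimes \id_x} x \otimes x^{\vee} \otimes x \xrightarrow{\id_x \otimes \varepsilon} x \otimes \unit \simeq x
\]
is homotopic to $\id_x$, so $x$ is a retract of $x \otimes x^{\vee} \otimes x$. Since the conditions defining a tt-ideal (closure under cofibers, shifts, retracts, and tensoring) are detected on homotopy classes of maps, there is no harm in working in $\Ho \cc{K}$, which is rigid by \cref{rem:horigid}; it therefore suffices to exhibit this retract at the level of homotopy categories, where it is the standard statement.

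It remains to observe that $x \otimes x^{\vee} \otimes x$ lies in $\cc{I}$. Permuting tensor factors via the symmetry of the monoidal structure gives $x \otimes x^{\vee} \otimes x \simeq x^{\otimes 2} \otimes x^{\vee}$, and since $x^{\otimes 2} \in \cc{I}$ while $\cc{I}$ is a tensor ideal, $x^{\otimes 2} \otimes x^{\vee} \in \cc{I}$. As $\cc{I}$ is closed under retracts, we conclude $x \in \cc{I}$, which is exactly the radicality condition. I do not expect a genuine obstacle in this argument: its entire content is the dualizability retract, and the only step requiring even minor care is the passage between $\cc{K}$ and $\Ho\cc{K}$, which is innocuous because thick tensor ideals, retracts, and the tensor-ideal condition are all defined at the level of homotopy categories.
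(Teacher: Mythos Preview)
Your proof is correct and follows essentially the same argument as the paper: both use the triangle identity to exhibit $x$ as a retract of $x \otimes x^{\vee} \otimes x \simeq x^{\otimes 2} \otimes x^{\vee}$, which lies in $\cc{I}$ since $\cc{I}$ is a tensor ideal, and then conclude by closure under retracts. The paper's version is just more terse.
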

\begin{proof}
    Let $\cc{I}$ be an ideal. We need to show that any $x\in\cc{K}$ satisfying $x^{\otimes2}\in\cc{I}$ satisfies $x\in\cc{I}$. Let $x^{\vee}$ denote its dual. As $\cc{I}$ is an ideal, we have $x\otimes x\otimes x^{\vee}\in\cc{I}$. Since $x$ is a retract of $x\otimes x^{\vee}\otimes x$, this implies $x\in\cc{I}$.
\end{proof}

\begin{lemma}\label{lem:rigidradicalquotient}
    If $\cc{K} \in \twoCAlgrig$, then for any tt-ideal $\cc{I} \subseteq \cc{K}$, the Karoubi quotient $\cc{K}/\cc{I} \in \twoCAlgrig$.  
\end{lemma}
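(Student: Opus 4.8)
The plan is to show directly that every object of $\cc{K}/\cc{I}$ is dualizable, by realizing each one as a retract of a dualizable object coming from $\cc{K}$.

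First I would use that by \cref{prop:verdexists}(c) the quotient functor $L_{\cc{I}}\colon \cc{K} \to \cc{K}/\cc{I}$ admits a (unique) stably symmetric monoidal refinement. Since any symmetric monoidal functor preserves duality data, and every object of the rigid $2$-ring $\cc{K}$ is dualizable, it follows that $L_{\cc{I}}(x)$ is dualizable in $\cc{K}/\cc{I}$ for every $x \in \cc{K}$, with $L_{\cc{I}}(x)^{\vee} \simeq L_{\cc{I}}(x^{\vee})$ and the structure maps the images of those for $x$.

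Next I would observe that every object of $\cc{K}/\cc{I}$ is a retract of one of the form $L_{\cc{I}}(x)$. This is a formal consequence of the construction recalled in \cref{def:Karoubiloc}: the Dwyer--Kan localization $\cc{K} \to \cc{K}[W^{-1}]$ (with $W$ the class of maps with cofiber in $\cc{I}$) is essentially surjective, and passing to the idempotent completion $\cc{K}/\cc{I} = \Ind(\cc{K}[W^{-1}])^{\omega}$ adjoins precisely the retracts of objects already present. Hence any $y \in \cc{K}/\cc{I}$ sits as a retract of $L_{\cc{I}}(x)$ for some $x \in \cc{K}$.

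Finally I would invoke \cref{lem:dualizablesclosedunderthicksub} applied to the singleton $\cat{S} = \{L_{\cc{I}}(x)\} \subseteq (\cc{K}/\cc{I})^{\mm{dbl}}$, which gives $\mm{Thick}(\{L_{\cc{I}}(x)\}) \subseteq (\cc{K}/\cc{I})^{\mm{dbl}}$; since $y$ lies in this thick subcategory, $y$ is dualizable. As $y$ was arbitrary, $\cc{K}/\cc{I}$ is rigid, i.e.\ $\cc{K}/\cc{I} \in \twoCAlgrig$. I do not anticipate a genuine obstacle here; the only point deserving care is the claim that every object of the Karoubi quotient is a retract of an object pulled back from $\cc{K}$, which rests on the essential surjectivity of Dwyer--Kan localization together with the standard description of idempotent completion as adding only retracts. (Alternatively, one could reduce to the classical fact that Verdier quotients of rigid tensor-triangulated categories are rigid via \cref{rem:horigid}, but the argument above is self-contained.)
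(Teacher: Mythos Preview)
Your proof is correct and follows essentially the same approach as the paper: show that images of objects from $\cc{K}$ are dualizable (since $L_{\cc{I}}$ is symmetric monoidal), then observe that every object of $\cc{K}/\cc{I}$ is a retract of such an image, and conclude via \cref{lem:dualizablesclosedunderthicksub}. The only difference is in justifying the retract step: the paper invokes the Neeman--Thomason localization theorem \cite[Proposition~1.5]{BGT} for the statement that $\cc{K} \to \cc{K}/\cc{I}$ is essentially surjective up to retracts, whereas you argue directly from the definition of the Karoubi quotient (Dwyer--Kan localization is essentially surjective, and idempotent completion adjoins only retracts), which is equally valid and arguably more self-contained.
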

\begin{proof}
The Neeman--Thomason localization theorem \cite[Proposition 1.5]{BGT} implies that the functor $\cc{K} \to \cc{K}/\cc{I}$ is essentially surjective after idempotent completion. Since the image of any $x \in \cc{K}$ must be dualizable in $\cc{K}/\cc{I}$, we may apply \cref{lem:dualizablesclosedunderthicksub} and conclude.
\end{proof}

\begin{corollary}\label{cor:rigidquotientposet}
For any $\cc{K} \in \twoCAlgrig$, there exists an adjunction as below 
\[
\cc{K}/-\colon  \Idl(\cc{K}) \rightleftarrows \twoCAlgrig_{\cc{K}/-}\noloc \mm{ker}(f)
\]
where the left adjoint is fully faithful. The right adjoint moreover preserves filtered colimits, and $\cc{I} \in \Idl(\cc{K})$ satisfies $\cc{K}/\cc{I} \in (\twoCAlgrig_{\cc{K}/-})^{\omega}$ if and only if $\cc{I}$ is principal.
\end{corollary}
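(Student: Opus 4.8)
The plan is to deduce \cref{cor:rigidquotientposet} directly from \cref{prop:quotientposet} by restricting along the coreflective inclusion $\twoCAlgrig \hookrightarrow \twoCAlg$ of \cref{prop:rigidclosureundercolims}. First I would observe that for $\cc{K}$ rigid, \cref{lem:rigidradicalquotient} guarantees that the left adjoint $\cc{K}/-\colon\Idl(\cc{K}) \to \twoCAlg_{\cc{K}/-}$ of \cref{prop:quotientposet} lands in the full subcategory $\twoCAlgrig_{\cc{K}/-}$, since every Karoubi quotient $\cc{K}/\cc{I}$ is again rigid. Moreover, the inclusion $\twoCAlgrig_{\cc{K}/-} \hookrightarrow \twoCAlg_{\cc{K}/-}$ is fully faithful, so this corestriction $\cc{K}/-\colon\Idl(\cc{K}) \to \twoCAlgrig_{\cc{K}/-}$ remains fully faithful.

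Next I would produce the right adjoint. Since $\cc{K}$ itself is rigid, for any $\cc{L} \in \twoCAlgrig$ equipped with a map from $\cc{K}$, the kernel $\ker(\cc{K} \to \cc{L})$ is a tt-ideal of $\cc{K}$, so restricting the functor $\ker(-)\colon\twoCAlg_{\cc{K}/-} \to \Idl(\cc{K})$ along $\twoCAlgrig_{\cc{K}/-} \hookrightarrow \twoCAlg_{\cc{K}/-}$ gives a candidate right adjoint. The adjunction identity is inherited: for $\cc{I} \in \Idl(\cc{K})$ and $\cc{L} \in \twoCAlgrig_{\cc{K}/-}$ we have
\[
\Map_{\twoCAlgrig_{\cc{K}/-}}(\cc{K}/\cc{I},\cc{L}) \simeq \Map_{\twoCAlg_{\cc{K}/-}}(\cc{K}/\cc{I},\cc{L}) \simeq \Map_{\Idl(\cc{K})}(\cc{I},\ker(\cc{K}\to\cc{L})),
\]
where the first equivalence is full faithfulness of the inclusion and the second is \cref{prop:quotientposet}. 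This establishes the adjunction.

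For the remaining assertions: preservation of filtered colimits by $\ker(-)$ follows because filtered colimits in $\twoCAlgrig_{\cc{K}/-}$ agree with those in $\twoCAlg_{\cc{K}/-}$ — the coreflection $(-)^{\mm{dbl}}$ is a right adjoint, but more to the point $\twoCAlgrig$ is closed under filtered colimits in $\twoCAlg$ since a filtered colimit of rigid 2-rings is rigid (every object comes from some stage and is dualizable there, hence dualizable in the colimit), so the inclusion preserves and reflects filtered colimits and the claim transports from \cref{prop:quotientposet}. Finally, the characterization of when $\cc{K}/\cc{I}$ is compact in $\twoCAlgrig_{\cc{K}/-}$: as in the proof of \cref{prop:quotientposet}, this follows formally from $\cc{K}/-$ being a fully faithful left adjoint whose right adjoint preserves filtered colimits, together with \cref{prop:latticeiscoherent}\,(a) identifying $\Idl(\cc{K})^{\omega} = \mm{Prin}(\cc{K})$. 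I expect the only genuine point requiring care — and the one I would spell out — is the verification that $\twoCAlgrig$ is closed under filtered colimits in $\twoCAlg$ (equivalently, that the inclusion preserves filtered colimits), since everything else is a transparent transport along a fully faithful inclusion; this is presumably where one invokes that the forgetful functor to $\Cat$ detects dualizability in the relevant sense.
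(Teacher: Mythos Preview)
Your proposal is correct and follows essentially the same approach as the paper: restrict the adjunction of \cref{prop:quotientposet} along the fully faithful inclusion $\twoCAlgrig_{\cc{K}/-} \hookrightarrow \twoCAlg_{\cc{K}/-}$, using \cref{lem:rigidradicalquotient} to ensure the left adjoint lands in the rigid subcategory. The paper's proof is terser, simply citing \cref{prop:rigidclosureundercolims} (the coreflection) for the filtered-colimit claim, whereas you give a direct argument that rigidity is preserved under filtered colimits; both justifications amount to the same fact that the inclusion $\twoCAlgrig \hookrightarrow \twoCAlg$ preserves filtered colimits.
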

\begin{proof}
The right adjoint is given by the inclusion $\twoCAlgrig \hookrightarrow \twoCAlg$ and the right adjoint of \cref{prop:quotientposet}. That this preserves filtered colimits is an immediate consequence of \cref{prop:rigidclosureundercolims}.
\end{proof}

\begin{definition}\label{rec:smashingloc} Let $\cat{C}$ be a symmetric monoidal $\infty$-category.
  \begin{enumerate}
  \item An \tdef{idempotent algebra} of $\cat{C}$ is an object $A \in \cat{C}_{\unit/}$ such that $A \otimes(\unit \to A)$ is an isomorphism. Such an object is uniquely a commutative algebra by \cite[Proposition 4.8.2.9]{LurieHA}, and we write $\mdef{\mm{Idem}(\cat{C})}$ to denote the full subcategory of $\CAlg(\cc{C})$ consisting of idempotent algebras. Note that this is a poset, as the codiagonal $A \otimes A \to A$ is an equivalence for any $A \in \Idem(\cc{C})$ and thus the functor $\Map_{\CAlg(\cc{C})}(A,-)$ takes values in $\cS_{\leq -1}$.
  \item  A \tdef{smashing localization} of $\cat{C}$ is a symmetric monoidal localization $L\colon  \cat{C} \rightleftarrows \cat{D}\noloc R$ witnessing $\cat{D} \simeq \Mod_{A}(\cc{C})$ for $A \in \mm{Idem}(\cat{C})$.
\end{enumerate}
\end{definition}

We recall the following foundational fact due to Miller, recorded as \cite[Theorem 3.3.3]{hoveyAxiomaticStableHomotopy1997a}.

\begin{proposition}\label{prop:finitelocissmashing}
  Given $\cc{K} \in \twoCAlgrig$ and $\cc{I}\subset \cc{K}$ a tt-ideal, the localization $L_{\cc{I}}\colon \Ind(\cc{K}) \rightleftarrows \Ind(\cc{K}/\cc{I})\colon  R_{\cc{I}}$ is smashing, and the tensor idempotent $A$ is equivalent to $R_{\cc{I}}L_{\cc{I}}\unit$.
\end{proposition}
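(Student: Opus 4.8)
The plan is to run Miller's classical argument for finite localizations (\cite[Theorem 3.3.3]{hoveyAxiomaticStableHomotopy1997a}) in the present setting, where everything reduces to a single computation that exploits rigidity. I would write $\cc{C} \coloneq \Ind(\cc{K})$, which by \cref{lem:rigidlycompactlygenerated} is rigidly compactly generated with $\cc{C}^{\omega} = \cc{C}^{\mm{dbl}} = \cc{K}$, and set $\cc{J} \coloneq \ker L_{\cc{I}}$. By \hyperref[prop:verdexistsb]{\cref*{prop:verdexists}\,(b)} we have $\cc{J} = \Ind(\cc{I})$, the localizing subcategory of $\cc{C}$ generated by the dualizable objects underlying $\cc{I}$; since $\cc{I}$ is a tt-ideal, $\cc{J}$ is a localizing tensor ideal of $\cc{C}$, and it is closed under duals because for dualizable $i \in \cc{I}$ the object $i^{\vee}$ is a retract of $i^{\vee} \otimes i \otimes i^{\vee} \in \cc{I}$, just as in the proof of \cref{lem:rigid_radicalideal}. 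Finally I would put $A \coloneq R_{\cc{I}} L_{\cc{I}} \unit$: as $R_{\cc{I}}$ is lax symmetric monoidal this is canonically a commutative algebra in $\cc{C}$; it is $\cc{J}$-local; and the fiber $\Gamma \coloneq \fib(\unit \to A)$ lies in $\cc{J}$, by the usual properties of the Bousfield localization $R_{\cc{I}} L_{\cc{I}}$ (which is exact).

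The key step — and the one I expect to be the only nonformal point — is the vanishing $A \otimes i \simeq 0$ for every $i \in \cc{I}$. For $W \in \cc{C}$, dualizability of $i$ gives $\Map_{\cc{C}}(W, A \otimes i) \simeq \Map_{\cc{C}}(W \otimes i^{\vee}, A)$, and $W \otimes i^{\vee}$ lies in $\cc{J}$ since $i^{\vee} \in \cc{I}$ and $\cc{J}$ is a tensor ideal; so this space vanishes by $\cc{J}$-locality of $A$, and the Yoneda lemma gives $A \otimes i \simeq 0$. Because $A \otimes (-)$ preserves colimits, the full subcategory $\{Z \in \cc{C} : A \otimes Z \simeq 0\}$ is localizing, contains $\cc{I}$, and hence contains $\cc{J}$; in particular $A \otimes \Gamma \simeq 0$, so the fiber of $A \otimes (\unit \to A)$ vanishes. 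This shows that $\unit \to A$ exhibits $A$ as an idempotent algebra, so the free–forgetful adjunction $\cc{C} \rightleftarrows \Mod_{A}(\cc{C})$ is a smashing symmetric monoidal localization with localization endofunctor $(-) \otimes A$, in the sense of \cref{rec:smashingloc}.

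To finish, I would identify this smashing localization with $R_{\cc{I}} L_{\cc{I}}$. Both are reflective localizations of the stable category $\cc{C}$ (with fully faithful right adjoint), so each is determined by its kernel, the local objects being the right orthogonal of the kernel. We have $\cc{J} \subseteq \{Z : A \otimes Z \simeq 0\}$ from the previous paragraph, and conversely if $A \otimes X \simeq 0$ then the cofiber sequence $\Gamma \otimes X \to X \to A \otimes X$ identifies $X \simeq \Gamma \otimes X \in \cc{J}$, using again that $\cc{J}$ is a tensor ideal. Hence the two localizations have the same kernel $\cc{J}$ and therefore agree; this produces a symmetric monoidal equivalence $\Ind(\cc{K}/\cc{I}) \simeq \Mod_{A}(\Ind(\cc{K}))$ under which $L_{\cc{I}}$ becomes base change along $\unit \to A$, which is exactly the assertion that $L_{\cc{I}}$ is smashing with tensor idempotent $A = R_{\cc{I}} L_{\cc{I}} \unit$. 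The main obstacle is really just the vanishing $A \otimes i \simeq 0$: it is where rigidity of $\cc{K}$ enters — both to dualize $i$ and to keep $i^{\vee}$ inside $\cc{I}$ — and once it is established the rest is a routine juggling of Bousfield localizations.
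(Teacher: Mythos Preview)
Your argument is correct and complete; it is the classical Miller argument from \cite[Theorem~3.3.3]{hoveyAxiomaticStableHomotopy1997a}, carried out carefully in the $\infty$-categorical setting. The paper takes a slightly different, more ``projection-formula-first'' route: rather than showing directly that $A=R_{\cc{I}}L_{\cc{I}}\unit$ is idempotent and then matching kernels, it invokes \cref{rec:projectionformula} to reduce the smashing claim to the statement that $x\otimes R_{\cc{I}}(y)\simeq R_{\cc{I}}(L_{\cc{I}}(x)\otimes y)$ for all $x,y$, verifies this on compact (hence dualizable) $x\in\cc{K}$ and $y\in\cc{K}/\cc{I}$ via a Yoneda diagram chase using $L_{\cc{I}}(x^{\vee})\simeq L_{\cc{I}}(x)^{\vee}$, and then passes to general $x,y$ using that both $L_{\cc{I}}$ and $R_{\cc{I}}$ preserve colimits. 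Both proofs hinge on the same rigidity input---dualizability of compacts and closure of $\cc{I}$ under duals---but the paper's version avoids the explicit kernel comparison at the end (the projection formula gives the identification $\Ind(\cc{K}/\cc{I})\simeq\Mod_{A}(\Ind(\cc{K}))$ for free), while your approach has the advantage of isolating the vanishing $A\otimes i\simeq 0$ as the single nonformal step and not needing the rigidity of $\cc{K}/\cc{I}$ from \cref{lem:rigidradicalquotient}.
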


In the interest of self containment we have decided to include a proof of the above; before this, let us discuss our desired consequence. Let $\cc{K}\in \twoCAlgrig$ as before. Recall that \cite[Corollary 4.8.5.21]{LurieHA} provides a fully faithful embedding
\[
  \Mod\colon  \CAlg(\Ind(\cc{K})) \to \CAlg(\PrLstomega)_{\Ind(\cc{K})/}.
  \]
  We write $\Mod^{\omega}\colon \CAlg(\Ind(\cc{K})) \to \twoCAlg_{\cc{K}/}$ to denote the passage to compact objects. 
  
  \begin{notation}\label{not:idempotents}
      For a 2-ring $\cc{K}$, we will write $\mm{Idem}_{\cc{K}} \coloneq \mm{Idem}(\Ind(\cc{K}))$. 
  \end{notation}
  
  \begin{definition}\label{def:finiteidemp}
      Let $\cc{K}$ be a 2-ring. We say an idempotent algebra $A \in \mm{Idem}_{\cc{K}}$ is \tdef{finite} if its associated smashing localization is equivalent to $\Ind(\cc{K}) \to \Ind(\cc{K}/\cc{I})$ for some $\cc{I} \in \Idl(\cc{K})$. We write $\mdef{\mm{Idem^{fin}_{\cc{K}}}} \subseteq \mm{Idem}_{\cc{K}}$ to denote the full subcategory of finite idempotent algebras.
  \end{definition}
  
  The following is an easy consequence of \cref{prop:finitelocissmashing}, compare \cite[Theorem 3.5]{Balmer2011}.
  
  \begin{corollary}\label{cor:fidemtoideal}
    The functor $\Mod^{\omega}$ supplies an equivalence between $\Idem_{\cc{K}}^{\mm{fin}}$ and the full subcategory of $\twoCAlg_{\cc{K}/}$ on objects of the form $\cc{K} \to \cc{K}/\cc{I}$ for $\cc{I} \in \Idl(\cc{K})$. In particular, the composite of $\Mod^{\omega}$ with the functor $\ker(-)$ of \cref{prop:quotientposet} supplies an equivalence $\mm{Idem^{fin}_{\cc{K}}}  \simeq \Idl(\cc{K})$. 
  \end{corollary}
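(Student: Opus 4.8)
The plan is to deduce the corollary formally from the ingredients assembled above, the only substantial input being Miller's theorem \cref{prop:finitelocissmashing}. First I would unwind the definition of $\Mod^{\omega}$ as the composite of the fully faithful embedding $\Mod\colon \CAlg(\Ind(\cc{K})) \to \CAlg(\PrLstomega)_{\Ind(\cc{K})/}$ of \cite[Corollary 4.8.5.21]{LurieHA} with $(-)^{\omega}\colon \CAlg(\PrLstomega)_{\Ind(\cc{K})/} \to \twoCAlg_{\cc{K}/}$. Since $\cc{K}$ is rigid, $\Ind(\cc{K})^{\omega} \simeq \cc{K}$ by \cref{lem:rigidlycompactlygenerated}, so this second functor is just the passage to slice categories of the equivalence $\CAlg(\PrLstomega) \simeq \twoCAlg$ recorded in \cref{rem:bigandsmall}, hence itself an equivalence. (One should note here, using e.g.\ that a smashing localization of a compactly generated $\infty$-category is again compactly generated, that $\Mod$ genuinely takes values in $\PrLstomega$, so that the composite is well-posed.) Therefore $\Mod^{\omega}$ is fully faithful, and in particular so is its restriction to the full subcategory $\Idem_{\cc{K}}^{\mm{fin}} \subseteq \CAlg(\Ind(\cc{K}))$.

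Next I would identify the essential image of $\Mod^{\omega}|_{\Idem_{\cc{K}}^{\mm{fin}}}$ with the full subcategory $\cc{T} \subseteq \twoCAlg_{\cc{K}/}$ spanned by the objects $\cc{K} \to \cc{K}/\cc{I}$, $\cc{I} \in \Idl(\cc{K})$. On the one hand, if $A \in \Idem_{\cc{K}}^{\mm{fin}}$ then by \cref{def:finiteidemp} the associated smashing localization is equivalent to $\Ind(\cc{K}) \to \Ind(\cc{K}/\cc{I})$ for some tt-ideal $\cc{I}$, so $\Mod_{A}(\Ind(\cc{K})) \simeq \Ind(\cc{K}/\cc{I})$; passing to compact objects, and using that $\cc{K}/\cc{I}$ is again rigid by \cref{lem:rigidradicalquotient} and hence $\Ind(\cc{K}/\cc{I})^{\omega} \simeq \cc{K}/\cc{I}$ by \cref{lem:rigidlycompactlygenerated}, gives $\Mod^{\omega}(A) \simeq (\cc{K} \to \cc{K}/\cc{I})$ in $\cc{T}$. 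On the other hand, every object of $\cc{T}$ is hit: by \cref{prop:finitelocissmashing} the localization $L_{\cc{I}}\colon \Ind(\cc{K}) \to \Ind(\cc{K}/\cc{I})$ is smashing with tensor idempotent $A_{\cc{I}} \coloneq R_{\cc{I}}L_{\cc{I}}\unit$, which is finite by \cref{def:finiteidemp}, and the same computation gives $\Mod^{\omega}(A_{\cc{I}}) \simeq (\cc{K} \to \cc{K}/\cc{I})$. Together with the fully faithfulness of the previous step, this yields the equivalence $\Mod^{\omega}\colon \Idem_{\cc{K}}^{\mm{fin}} \xrightarrow{\sim} \cc{T}$.

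Finally, for the last assertion I would compose with $\ker(-)$. By \cref{prop:quotientposet} the functor $\cc{K}/-\colon \Idl(\cc{K}) \to \twoCAlg_{\cc{K}/}$ is fully faithful with essential image precisely $\cc{T}$, and $\ker(-)$ is right adjoint to it; since the left adjoint is fully faithful, its unit $\id \to \ker(\cc{K}/-)$ is an equivalence, so $\ker(-)|_{\cc{T}}\colon \cc{T} \xrightarrow{\sim} \Idl(\cc{K})$ is a quasi-inverse to $\cc{K}/-$. Composing with the equivalence of the previous paragraph gives $\ker(-)\circ \Mod^{\omega}\colon \Idem_{\cc{K}}^{\mm{fin}} \xrightarrow{\sim} \Idl(\cc{K})$, the promised analogue of \cite[Theorem 3.5]{Balmer2011}. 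The genuinely nontrivial step is \cref{prop:finitelocissmashing} itself; everything else is bookkeeping, the only delicate points being to keep track of the slice-category equivalence $(-)^{\omega}$ and to confirm that all localizations involved stay inside the compactly generated world, so that passing to compact objects is legitimate.
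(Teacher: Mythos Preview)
Your proof is correct and faithfully elaborates the argument the paper only sketches (the paper merely says the corollary is an ``easy consequence'' of \cref{prop:finitelocissmashing} and cites \cite[Theorem 3.5]{Balmer2011}). One caveat: your parenthetical claim that ``a smashing localization of a compactly generated $\infty$-category is again compactly generated'' is false in general---its failure is precisely the failure of the telescope conjecture---but this does not affect your argument, since for $A \in \Idem_{\cc{K}}^{\mm{fin}}$ the module category $\Mod_A(\Ind(\cc{K})) \simeq \Ind(\cc{K}/\cc{I})$ is compactly generated by construction as an $\Ind$-category, and that is all you actually use.
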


      \begin{recollection}\label{rec:projectionformula}
Let $\cat{E}$, $\cat{F}$ be symmetric monoidal categories and $L'\colon  \cat{E} \rightleftarrows \cat{F} \noloc R'$ an adjunction datum with $L'$ symmetric monoidal. For $x \in \cat{E}$, $y \in \cat{F}$, consider the map \begin{equation}\label{eq:projformula} p\colon  x \otimes R'(y) \to R'(L'(x) \otimes y) \end{equation} which is adjoint to $\mm{id}_{L'(X)} \otimes \epsilon\colon  L'(X\otimes R'(Y)) \simeq L'(X) \otimes L'R'(Y) \to L'(X) \otimes Y$ where $\epsilon$ is the counit of $L' \dashv R'$. We say $L' \dashv R'$ \tdef{satisfies the projection formula} if the map $p$ of \eqref{eq:projformula} is an equivalence for every $x, y$ as above. If $L'$ is a localization, then $L' \dashv R'$ satisfies the projection formula if and only if $L'$ is smashing, see for example \cite[Proposition 5.1.4]{carmeliAmbidexterityHeight2021}.
\end{recollection}
  
\begin{proof}[Proof of \cref{prop:finitelocissmashing}]
  By \cref{rec:projectionformula}, it suffices to show that the pair $L_{\cc{I}} \dashv R_{\cc{I}}$ satisfies the projection formula. Let $x \in \Ind(\cc{K})^{\omega}, y \in \Ind(\cc{K}/\cc{I})^{\omega}$. Note that both $x$ and $y$ are dualizable by \cref{lem:rigidlycompactlygenerated} and \cref{lem:rigidradicalquotient}. Since $L_{\cc{I}}$ is symmetric monoidal, one has the following commuting diagram:
  \[\xymatrix{
      \Map(-\otimes x^{\vee}, R_{\cc{I}}y) \ar[r] & {\Map(L_{\cc{I}}(-\otimes x^{\vee}), y)} \ar[r]& {\Map(L_{\cc{I}}(-),L_{\cc{I}}x \otimes y)}\ar[d] \\
    {\Map(-, x \otimes R_{\cc{I}}y)} \ar[u] \ar[rr]^{\text{\eqref{eq:projformula}}} && \Map(-, R_{\cc{I}}(L_{\cc{I}}x \otimes y))
    }\]    
  where the vertical and top arrows are all equivalences, implying that the bottom arrow is also an equivalence and the projection formula holds in this case. \cref{prop:verdexists} implies that $L_{\cc{I}}$ and $R_{\cc{I}}$ are both colimit-preserving, and hence one deduces the same for all objects in $x \in \Ind(\cc{K})$ and $y \in \Ind(\cc{K}/\cc{I})$. 
\end{proof}
\vspace{5pt}

\section{Geometries}\label{sec:geometries}

This section will serve as a more detailed account of the prerequisite material from \cite{LurieDAG5}; with the exception of \cref{ssec:dirac}, all ideas in this section are due to Lurie. 

\subsection{Geometries and locality}\label{ssec:geometries_basics}

\begin{definition}\label{def:geometry}
    Let $\cG$ be a small $\infty$-category, $\cG^{\ad}$ a wide subcategory of~$\cG$, and $\tau$ a Grothendieck topology on~$\cG$. We say that $(\cG^{\ad},\tau)$ is an \tdef{admissibility structure} on~$\cG$ or that $(\cG,\cG^{\ad},\tau)$ is a \tdef{geometry} if the conditions below are satisfied.
    \begin{enumerate}
        \item\label{i:g} $\cG$ has finite limits and is idempotent complete.
        \item\label{i:gen} $\tau$ is generated by morphisms in $\cG^{\ad}$.
        \item\label{i:bc} $\cG^{\ad}$ is closed under base changes in~$\cG$.
        \item\label{i:canc} If $f$ and $g$ are composable morphisms in~$\cG$ satisfying $g$, $g\circ f\in\cG^{\ad}$, then $f\in\cG^{\ad}$ as well.
        \item\label{i:ret} If $f$ is a retract of~$g$ in $\cG^{[1]}$ satisfying $g\in\cG^{\ad}$, then $f\in\cG^{\ad}$ as well.
    \end{enumerate}
    We refer to morphisms in~$\cG^{\ad}$ and covers in~$\tau$ as admissible morphisms and admissible covers, respectively.
\end{definition}

\begin{example}\label{ex:discretegeometry}
    Let $\cG$ be an arbitrary idempotent-complete $\infty$-category having finite limits. There is an admissibility structure where the only admissible morphisms are equivalences and the topology is discrete. We call this the \tdef{discrete geometry} and write $\mdef{\cG_{\disc}}$ for it.
\end{example}

\begin{definition}\label{def:geometry_transformation}
    A morphism between geometries is a functor between the underlying $\infty$-categories which preserves finite limits, admissible morphisms, and admissible covers.
\end{definition}

\begin{example}\label{ex:discretetransformation}
    Let $\cG$ be a geometry. Then the identity functor determines a morphism $\cG_{\disc}\to\cG$ from its underlying discrete geometry.
\end{example}

  Given an admissibility structure on $\cc{G}$, objects in $\Ind(\cc{G}^{\op})$ admit a natural notion of locality with respect to the admissibility structure. This will be an instance of a more general notion of locality for $\Ind(\cG^{\op})$-valued sheaves on an arbitrary $\infty$-topos, which we discuss in the next subsection. 
  
  To motivate what follows, let us study the key example of the Zariski geometry on commutative ring spectra, following \cite{LurieDAG5} and \cite{lurieDerivedAlgebraicGeometry2011a}. The notion of locality discussed above will recover the notion of a \emph{local ring spectrum}. For the reader who is uncomfortable with ring spectra, we remark that no validity is lost in opting to mentally substitute everywhere the category of ordinary commutative rings.

\begin{definition}\label{def:classicalzariski}
    The \tdef{classical Zariski geometry} consists of the following data:
        \begin{enumerate}
            \item $\GZar=(\CAlg^{\omega})^{\op}$, the opposite of the category of compact $\bb{E}_{\infty}$ rings.
            \item Admissible morphisms correspond to localization maps $R \to R[x^{-1}]$ for $x \in \pi_{0}R$
            \item   A finite collection $\{R\to R[x_i^{-1}]\}_{i \in I}$ is declared to generate a covering sieve if the set $\{x_i\}_{i \in I} \subset \pi_{0}R$ generates the unit ideal. 
        \end{enumerate}
\end{definition}

A commutative ring spectrum $R \in \CAlg$ is said to be \tdef{local} if $\pi_{0}R$ is a local ring, namely that it admits a unique maximal ideal $\mathfrak{m} \subset \pi_{0}R$. This condition is equivalent to demanding that for every collection $f_{1},\dotsc,f_{n} \in \pi_{0}R$ satisfying $f_{1} + \dotsb + f_{n} = 1$, there is some $i$ so that $f_{i}$ is a unit\footnote{Let $S \in \CAlg^{\heartsuit}$ be an arbitrary commutative ring. We claim that $S \setminus S^{\times}$ is additively closed only if it is an ideal (and hence necessarily maximal). Recall that $S \setminus S^{\times}$ is exactly the set of elements of $S$ which are contained in some maximal ideal. Suppose $\mathfrak{m}, \mathfrak{m}'$ are distinct maximal ideals of $S$. The ideal sum $\mathfrak{m} + \mathfrak{m}' = S$ must contain a unit, and so there are nonunit elements $f \in \mathfrak{m},\ f' \in \mathfrak{m}'$ which sum to $1$. Hence $S \setminus S^{\times}$ is not additively closed in this case.}.  Using this latter formulation and the universal property of Zariski localization, the following lemma is not difficult to show.

\begin{lemma}\label{lem:zariskilocality}
  $R \in \CAlg$ is a local ring spectrum if and only if for every $S \in \CAlg^{\omega}$ and elements $f_{1},\dotsc,f_{n} \in \pi_{0}S$ such that $f_{1}+\dotsb+f_{n} = 1$, every map $S \to R$ admits a factorization $S \to S[f_{i}^{-1}] \dashrightarrow R $ for some $i$. Equivalently, the induced map
  \begin{equation}\label{eq:zariskilocality}
    \coprod_{i=1}^{n} \Map_{\CAlg}(S[f_{i}^{-1}], R) \to \Map_{\CAlg}(S, R)
  \end{equation}
  is a surjection on $\pi_{0}$.
\end{lemma}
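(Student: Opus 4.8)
The plan is to prove the equivalence in two directions, using the universal property of Zariski localization $S \to S[f^{-1}]$, which says that for any $T \in \CAlg$ the space $\Map_{\CAlg}(S[f^{-1}], T)$ is the subspace (union of connected components) of $\Map_{\CAlg}(S, T)$ consisting of those maps $g \colon S \to T$ such that $g(f) \in \pi_0 T$ is a unit. The whole statement is really about $\pi_0$, so I will reduce everything to a statement about the ordinary ring $\pi_0 R$ and ordinary compact (i.e.\ finitely presented) rings, using the fact that $\pi_0 \colon \CAlg \to \CAlg^\heartsuit$ is left adjoint to the inclusion and that $\mathbb{Z}[x] \in \CAlg^\omega$.

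First I would record the reformulation: the surjectivity of \eqref{eq:zariskilocality} on $\pi_0$ says precisely that every map $S \to R$ factors through some $S[f_i^{-1}]$, which by the universal property of localization happens if and only if some $f_i$ maps to a unit in $\pi_0 R$. So the right-hand condition in the lemma is equivalent to: for every $S \in \CAlg^\omega$ and every partition of unity $f_1 + \dots + f_n = 1$ in $\pi_0 S$, and every map $\phi \colon S \to R$, some $\phi(f_i)$ is a unit in $\pi_0 R$.

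For the forward direction, suppose $R$ is local. Given such $S$, $\{f_i\}$, and $\phi \colon S \to R$, apply $\pi_0$ to get $\bar\phi \colon \pi_0 S \to \pi_0 R$ with $\sum \bar\phi(f_i) = 1$. By the characterization of locality recalled just before the lemma (the complement of the units is closed under addition, equivalently $\sum g_i = 1$ forces some $g_i$ to be a unit), applied to the elements $\bar\phi(f_i) \in \pi_0 R$, some $\bar\phi(f_i) = \phi(f_i)$ is a unit; done. For the reverse direction, suppose the factorization condition holds for all $S \in \CAlg^\omega$. I want to show $\pi_0 R$ is local; again by the pre-lemma characterization it suffices to show that whenever $g_1, \dots, g_n \in \pi_0 R$ satisfy $g_1 + \dots + g_n = 1$, some $g_i$ is a unit. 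To feed this into the hypothesis, take $S = \mathbb{Z}[x_1, \dots, x_{n-1}]$, a compact $\mathbb{E}_\infty$-ring (it is finitely presented, hence compact by \cite[\S 7.2.4]{LurieHA}); set $f_i = x_i$ for $i < n$ and $f_n = 1 - x_1 - \dots - x_{n-1}$, so $\sum f_i = 1$ in $\pi_0 S = \mathbb{Z}[x_1,\dots,x_{n-1}]$; and let $\phi \colon S \to R$ be the map classified (via the free--forgetful adjunction $\CAlg \rightleftarrows \Sp$, or directly the universal property of the polynomial ring) by the points $g_1, \dots, g_{n-1} \in \Omega^\infty R$, so that $\pi_0\phi(x_i) = g_i$ and hence $\pi_0\phi(f_n) = g_n$. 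The hypothesis gives a factorization through some $S[f_i^{-1}]$, i.e.\ some $\pi_0\phi(f_i) = g_i$ is a unit in $\pi_0 R$, which is what we wanted.

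The main obstacle — really the only subtlety, since the argument is otherwise a routine unwinding — is making sure that the passage to $\pi_0$ is clean on both sides: that compactness of $S$ in $\CAlg$ is not needed beyond exhibiting one witness $\mathbb{Z}[x_1,\dots,x_{n-1}]$ (it is not, polynomial rings over $\mathbb{Z}$ on finitely many generators being compact), and that the universal property of $S[f^{-1}]$ is genuinely the $\pi_0$-unit condition so that ``surjective on $\pi_0$ of the coproduct of mapping spaces'' translates exactly to ``factors through one of the localizations''. Once those two points are in place the two implications are immediate from the local-ring characterization quoted before the lemma statement.
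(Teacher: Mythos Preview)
Your approach is correct and is exactly what the paper intends: it does not give a proof, only the remark ``Using this latter formulation and the universal property of Zariski localization, the following lemma is not difficult to show,'' and your two directions unwind precisely those two ingredients.

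One technical slip: in the reverse direction you take $S = \mathbb{Z}[x_1,\dots,x_{n-1}]$ and assert it is compact in $\CAlg$ because it is finitely presented. In the paper's conventions $\CAlg$ means $\mathbb{E}_\infty$-ring spectra, and the discrete ring $H\mathbb{Z}[x_1,\dots,x_{n-1}]$ is \emph{not} compact there (already $H\mathbb{Z}$ requires infinitely many cells over $\mathbb{S}$; equivalently its cotangent complex over $\mathbb{S}$ is not perfect). The fix is immediate and is in fact what your parenthetical ``via the free--forgetful adjunction $\CAlg \rightleftarrows \Sp$'' points to: take instead the free $\mathbb{E}_\infty$-ring $S = \mathbb{S}\{x_1,\dots,x_{n-1}\}$, which is compact and has $\pi_0 S = \mathbb{Z}[x_1,\dots,x_{n-1}]$. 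Then set $f_i = x_i$ for $i<n$ and $f_n = 1 - \sum_{i<n} x_i$ in $\pi_0 S$ and proceed exactly as you wrote. With that substitution the argument is complete.
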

 Recall that $\CAlg \simeq \Ind(\CAlg^{\omega})$ via a composite identification \begin{equation}\label{eq:calgislexfunct}\CAlg \simarrow \Ind(\CAlg^{\omega}) \coloneq \Fun^{\lex}(\GZar, \cc{S}) \end{equation} which sends a ring spectrum $R$ to the representable functor $\Map_{\CAlg}(-, R)$. Under this embedding, the condition of \eqref{eq:zariskilocality} may be phrased entirely in terms of \cref{def:classicalzariski}. Explicitly, $R$ is a local ring spectrum if and only if for every $S \in \GZar$ and admissible covering sieve $\coprod U_{i} \to S$, the following induced map is an epimorphism on passing to $\pi_{0}$
\[
  \coprod R(U_{i}) \to R(S) 
  \]
 where we have identified $R$ with its image under the embedding of \eqref{eq:calgislexfunct}.

\subsection{\texorpdfstring{$\cc{G}$}{G}-structures} Let $\cc{G}$ be a geometry. We will discuss how to obtain a ``point-free'' analog of the notion of locality for $\Ind(\cG^{\op})$-valued sheaves over an arbitrary topos. We will keep the example of the classical Zariski geometry in mind throughout, later discussing how this perspective recovers and generalizes the notion of a locally ringed space.

\begin{definition}
    Given an $\infty$-topos $\cX \in \LTop$ and an $\infty$-category $\cc{C}$ admitting small limits, the category of \tdef{sheaves on $\cX$ with coefficients in $\cc{C}$} is defined by $\Shv(\cc{X}; \cc{C}) \coloneq \Fun^{\lim}(\cc{X}^{\op}, \cc{C})$. In the case that $\cX$ appears as the $\infty$-category of sheaves of spaces on a small Grothendieck site $\cc{V}$, then \cite[Corollary 1.3.1.8]{lurieSpectralAlgebraicGeometry} supplies a canonical equivalence between $\cc{C}$-valued sheaves on $\cc{V}$ and $\cc{C}$-valued sheaves on the $\infty$-topos $\Shv(\cc{V};\cS)$. In this case we will implicitly identify these notions and simply write $\Shv(\cc{V}; \cc{C})$.
\end{definition}

\begin{observation}\label{obs:sheafislexfunct}
    Here, we note that the equivalence recorded in \eqref{eq:calgislexfunct} is a generic feature of $\infty$-topoi.  Let $\cX$ be an $\infty$-topos, and suppose that $\cc{C} \simeq \Ind(\cc{G}^{\op})$ for $\cc{G}$ a small $\infty$-category. The adjoint functor theorem \cite[Corollary 5.5.2.9, Remark 5.5.2.10]{LurieHTT} supplies an equivalence $\Shv(\cc{X}; \cc{C}) \simeq \Fun^{\mm{R}}(\cc{X}^{\op}, \cc{C})$. Consider the following identifications
 \begin{equation}\label{eq:sheafislexfunct}\begin{aligned}
    \Shv(\cc{X}; \cc{C})  \simeq & \Fun^{\mm{R}}(\cc{X}^{\op}, \cc{C}) \\
     \simeq & \Fun^{\mm{R}}(\cc{X}^{\op}, \Fun^{\lex}(\cc{G},\cS)) \\
     \simeq & \Fun^{\mm{R}, \lex}(\cc{X}^{\op} \times \cc{G}, \cS) \\
     \simeq  & \Fun^{\lex}(\cc{G}, \Fun^{\mm{R}}(\cc{X}^{\op}, \cS)) \\
     \simeq & \Fun^{\lex}(\cc{G}, \cc{X})\\
  \end{aligned}\end{equation}
where the equivalence in the last line is obtained from the Yoneda embedding $\cX \simeq \Fun^{\mm{R}}(\cX^{\op}, \cS)$ for any $\infty$-topos $\cX$. Unwinding the equivalence above, this sends a sheaf $\cc{F}\colon \cc{X}^{\op} \to \cc{C}$ to the left exact functor \[\Map_{\Ind(\cc{G}^{\op})}(-, \cc{F})\colon \cc{G} \to \cc{X} \subseteq \Fun(\cc{X}^{\op}, \cS).\] 
\end{observation}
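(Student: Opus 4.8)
The plan is to verify the displayed chain \eqref{eq:sheafislexfunct} one arrow at a time and then to unwind the composite on objects. The recurring preliminary input is the identification $\cc{C} \simeq \Ind(\cc{G}^{\op}) \simeq \Fun^{\lex}(\cc{G}, \cS)$: since $\cc{G}$ admits finite limits, $\cc{G}^{\op}$ admits finite colimits, so $\Ind(\cc{G}^{\op})$ is the full subcategory of $\cc{P}(\cc{G}^{\op}) = \Fun(\cc{G}, \cS)$ on the finite-limit-preserving functors, by \cite[Proposition 5.3.5.4]{LurieHTT}. In particular $\cc{C}$ is presentable, and since finite limits commute with arbitrary limits, $\Fun^{\lex}(\cc{G}, \cS)$ is closed under small limits inside $\Fun(\cc{G}, \cS)$; hence the inclusion $\Fun^{\lex}(\cc{G},\cS) \hookrightarrow \Fun(\cc{G},\cS)$ preserves and reflects small limits, and the same holds for the Yoneda inclusion $\cX \hookrightarrow \cc{P}(\cX)$. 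These two facts are what make the currying steps routine, so I would record them at the outset.

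First I would treat the arrow $\Shv(\cX; \cc{C}) = \Fun^{\lim}(\cX^{\op}, \cc{C}) \simeq \Fun^{\mm{R}}(\cX^{\op}, \cc{C})$: both $\cX$ and $\cc{C}$ are presentable, so a limit-preserving functor $\cX^{\op} \to \cc{C}$ is accessible \cite[Remark 5.5.2.10]{LurieHTT} and hence admits a left adjoint \cite[Corollary 5.5.2.9]{LurieHTT}, while conversely any functor admitting a left adjoint preserves limits; thus the two superscripts cut out the same full subcategory of $\Fun(\cX^{\op}, \cc{C})$. The second arrow is the substitution $\cc{C} \simeq \Fun^{\lex}(\cc{G}, \cS)$. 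For the third and fourth arrows I would invoke the exponential equivalence $\Fun(\cc{A}, \Fun(\cc{B}, \cc{D})) \simeq \Fun(\cc{A} \times \cc{B}, \cc{D})$ together with its symmetry interchanging $\cc{A}$ and $\cc{B}$, and then check that the decorations survive. Here one uses that limits and filtered colimits in a functor category are computed pointwise: a functor $\cX^{\op} \to \Fun(\cc{G},\cS)$ preserves limits and is accessible if and only if, for each object of $\cc{G}$, the associated functor $\cX^{\op} \to \cS$ does (the accessibility bounds may be taken uniform since $\cc{G}$ is small). Combined with the preliminary remarks, a functor $\cX^{\op} \times \cc{G} \to \cS$ therefore corresponds to an object of $\Fun^{\mm{R}}(\cX^{\op}, \Fun^{\lex}(\cc{G},\cS))$ exactly when it is left exact in the $\cc{G}$-variable and preserves limits and is accessible in the $\cX^{\op}$-variable, the two conditions holding separately; this is precisely what $\Fun^{\mm{R},\lex}(\cX^{\op} \times \cc{G}, \cS)$ abbreviates, and being a conjunction of pointwise conditions it is insensitive to the order of the two variables, which yields the fourth arrow. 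Finally, for the fifth arrow: given presentable $\cX$, a functor $G\colon \cX^{\op} \to \cS$ with left adjoint $L$ satisfies $G(-) \simeq \Map_{\cX^{\op}}(L(\ast), -) \simeq \Map_{\cX}(-, L(\ast))$, so $G$ is representable, while conversely every representable presheaf on $\cX$ is limit-preserving and accessible; hence $\Fun^{\mm{R}}(\cX^{\op}, \cS)$ is exactly the essential image of the Yoneda embedding $\cX \hookrightarrow \cc{P}(\cX)$, which is the asserted equivalence $\cX \simeq \Fun^{\mm{R}}(\cX^{\op}, \cS)$.

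It then remains to unwind the composite. Under $\cc{C} \simeq \Fun^{\lex}(\cc{G}, \cS)$ the Yoneda lemma identifies the evaluation functor $\ev_g\colon \cc{C} \to \cS$ at $g \in \cc{G}$ with $\Map_{\Ind(\cc{G}^{\op})}(g, -)$, where $g$ is regarded in $\Ind(\cc{G}^{\op})$ along the canonical embedding $\cc{G}^{\op} \hookrightarrow \Ind(\cc{G}^{\op})$. Chasing $\cc{F} \in \Shv(\cX; \cc{C})$ through the chain, it is carried to the functor $\cc{G} \to \cX$ sending $g$ to the object of $\cX$ that represents $\ev_g \circ \cc{F} = \Map_{\Ind(\cc{G}^{\op})}(g, \cc{F}(-))\colon \cX^{\op} \to \cS$; this functor is indeed representable, being the composite of the limit-preserving $\cc{F}$ with the limit-preserving and accessible $\ev_g$. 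Transporting along the Yoneda identification of the fifth arrow, this is precisely $\Map_{\Ind(\cc{G}^{\op})}(-, \cc{F})\colon \cc{G} \to \cX \subseteq \Fun(\cX^{\op}, \cS)$, as claimed. The only step I expect to require genuine care is the third/fourth arrow, namely checking that the $\mm{R}$- and $\lex$-decorations are simultaneously detected pointwise in the two variables and hence commute with currying; everything else is a direct application of the adjoint functor theorem and the Yoneda lemma, which is presumably why this appears here as an observation rather than as a separately proved proposition.
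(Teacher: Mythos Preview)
Your proposal is correct and follows precisely the approach sketched in the paper: the observation itself is just the displayed chain with the adjoint functor theorem and the Yoneda identification $\cX \simeq \Fun^{\mm{R}}(\cX^{\op},\cS)$ cited as the two nontrivial inputs, and you have carefully filled in the currying steps and the pointwise detection of the $\mm{R}$- and $\lex$-decorations that the paper leaves implicit. There is nothing to add.
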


 Given the data of a fixed geometry $\cc{G}$ and an $\infty$-topos $\cc{X}$ one may define a category of local $\cc{G}$-structures on $\cc{X}$. 

  \begin{definition}\label{def:omnibusgeometries} Let $\cat{X}$ be an $\infty$-topos and $\cat{G}$ a geometry. 
    \begin{enumerate}
    \item\label{def:omnibusgeometries1}  A \tdef{$\cat{G}$-structure on~$\cat{X}$} is a left-exact functor $\cO\colon \cat{G} \to \cat{X}$ such that for any admissible cover $\{U_i \to X\}_{i \in I}$, the induced map
    \begin{equation*}
        \coprod_{i \in I}\cO(U_i) \to \cO(X)
    \end{equation*}
    is an effective epimorphism in~$\cat{X}$. 
  \item\label{def:omnibusgeometries2}     let $\cO$, $\cO'\in \Str_{\cat{G}}(\cat{X})$ be two $\cat{G}$-structures on~$\cat{X}$. A morphism $\alpha\colon \cO \to \cO'$ is said to be \tdef{local} if for any admissible map $U \to X$, the natural diagram 
    \begin{equation*}
        \xymatrix{\cO(U) \ar[r] \ar[d] & \cO'(U) \ar[d] \\
        \cO(X) \ar[r] & \cO'(X)
        }
    \end{equation*}
    is a pullback square. We write $\Str_{\cc{G}}^{\loc}(\cc{X})$ for the wide subcategory of $\Str_{\cc{G}}(\cc{X})$ spanned by local morphisms.
    \end{enumerate}
\end{definition}

\begin{definition}\label{def:localobject}
  An object $X \in \Ind(\cG^{\op})$ is said to be \tdef{local with respect to $\cG$} if it corresponds to a local $\cG$-structure on $\cS$ via the equivalence $\Ind(\cG^{\op}) \simeq \Fun^{\lex}(\cG, \cS)$. Similarly, a map of local objects $X \to Y \in \Ind(\cG^{\op})^{[1]}$ is said to be a \tdef{local map} if it corresponds to a local map of associated $\cG$-structures on $\cS$.
\end{definition}

\begin{example}
The definitions above are best approached from the lens of classical algebraic geometry. Let $\cX$ be an $\infty$-topos and $\cO$ be a $\GZar$-structure on $\cX$. Given a point $p_{\ast}\colon\cc{S} \to \cX \in \RTop$, the pullback 
    \[
        p^{\ast} \circ \cO  \in \Fun^{\lex}(\CAlg^{\omega, \op}, \cS) \simeq \Shv(\ast; \CAlg) \simeq \CAlg
    \] 
corresponds to a local ring spectrum by \cref{lem:zariskilocality}. Given a map $\cO \to \cO' \in \Str^{\loc}_{\GZar}(\cX)$, the induced map
    \[
        p^{\ast} \circ \cO \to p^{\ast} \circ \cO' \in \CAlg^{[1]} 
    \] 
corresponds to a \emph{local map} of local ring spectra, namely it induces local ring homomorphisms on $\pi_{0}$. When $\cX$ appears as the $\infty$-topos of sheaves on a space, this shows that $\cO$ corresponds to a sheaf of rings with local stalks at every point. In this sense, the definition of a local $\GZar$ structure provides a ``point-free'' generalization of the concept of a \emph{locally ringed space}\footnote{It is important to note that the stalkwise condition is not equivalent to the condition that a left exact functor $\cO\colon \GZar \to \cX$ is a $\GZar$-structure unless one demonstrates additional properties of $\cO$ or $\cX$. Thus, having a local $\GZar$-structure on is slightly stronger than having a locally-ringed space.}.
\end{example}

Recall from \cref{obs:sheafislexfunct} the equivalence $\Fun^{\lex}(\cG, \cX) \simeq \Shv(\cX; \Ind(\cG^{\op}))$. In the case $\cG = \cG_{\mm{disc}}$, a $\cG$-structure is no more data than a left exact functor from $\cG$ to $\cX$, and thus an $\Ind(\cG^{\op})$-valued sheaf on $\cc{X}$. We record this observation below. 

\begin{example}\label{ex:discretegeometry_local}
    Let $\cG$ be an idempotent-complete $\infty$-category having finite limits. For any $\infty$-topos $\cX$ there is a canonical equivalence $\Str_{\cG_{\disc}}^{\loc}(\cX)=\Str_{\cG_{\disc}}(\cX) \simeq \Shv(\cX;\Ind(\cG^{\op}))$.
\end{example}

\begin{remark}
  For a general geometry $\cG$ and $\infty$-topos $\cX$, there is a forgetful functor from $\Str^{\loc}_{\cG}(\cX) \to \Str_{\G_{\mm{disc}}}(\cX)$ which is faithful in the sense that the induced morphism \[\Map_{\Str^{\loc}_{\cG}(\cX)}(\cc{F},\cc{G}) \to \Map_{\Str_{\cG_{\disc}}(\cX)}(\cc{F},\cc{G})\] is an inclusion of connected components for any local $\cG$-structures $\cc{F}, \cc{G}$ on $\cX$. Thus, \hyperref[def:omnibusgeometries1]{\cref*{def:omnibusgeometries}\,(a)}, resp.  \hyperref[def:omnibusgeometries2]{\cref*{def:omnibusgeometries}\,(b)}, are \emph{properties} of an underlying $\cG_{\disc}$ structure on $\cX$, resp. of a morphism of underlying $\G_{\disc}$ structures.
\end{remark}

The definition below is recorded as \cite[Definition~1.4.8]{LurieDAG5}\footnote{In \emph{loc.\ cit.}, Lurie refers to the opposite of $\LTop(\cG)$ as the $\infty$-category of $\cG$-structured $\infty$-topoi.}.

\begin{definition}\label{def:structuredtopoi}
Fix a geometry $\cG$. Let $\overline{\LTop}\to\LTop$ denotes the universal $\infty$-topos fibration, namely the co-Cartesian fibration associated to the forgetful functor $\id\colon \LTop \to \Catbig$, and consider the $\infty$-category 
    \[
        \Fun(\cG, \overline{\LTop})\times_{\Fun(\cG,\LTop)}\LTop 
    \] 
whose objects may be formally identified as pairs $\cX \in \LTop$, $\cO\colon \cG \to \overline{\LTop}\times_{\LTop} \{\cX\} \simeq \cX$. We define the subcategory 
    \[
        \mdef{\LTop(\cG)} \subseteq \Fun(\cG, \overline{\LTop}) \times_{\Fun(\cG,\LTop)}\LTop 
    \] 
of \tdef{$\cG$-structured $\infty$-topoi} as follows:
  \begin{enumerate}
  \item  An object $(\cX, \cO) \in \LTop(\cG)$ if and only if the functor $\cO \colon \cG \to \cX$ is a $\cG$-structure.
  \item A morphism $(\cX,\cO_{\cX}) \to (\cY,\cO_{\cY}) \in \LTop(\cG)^{[1]}$ if and only if for every admissible morphism $U \to X$ in $\cG$, the diagram below is Cartesian in $\cY$
    \[\xymatrix{
        f^{\ast}\cO_{\cX}(U) \ar[r] \ar[d] &  \cO_{\cY}(U) \ar[d] \\
         f^{\ast}\cO_{\cX}(X) \ar[r] & \cO_{\cY}(X)
      }
    \]
    where $f^{\ast}\colon \cX \to \cY$ is the underlying morphism in $\LTop$.
  \end{enumerate}
\end{definition}

\begin{remark}
  Given an $\infty$-topos $\cX$, the fiber $\LTop(\cG) \times_{\LTop}\{\cX\} \simeq \Str^{\mm{loc}}_{\cG}(\cX)$. 
\end{remark}

\begin{remark}
 Let $f^{\ast}: \cX \to \cY \in \LTop^{[1]}$ be any morphism. Since $f^{\ast}$ is left exact, it follows that for any $\cG$-structure $\cO \in \Str^{\loc}_{\cG}(\cX)$, the composite $f^{\ast}\circ \cO$ itself supplies a $\cG$-structure on $\cY$. By the left exactness of $f^{\ast}$ it follows that if \[\alpha\colon \cO \to \cO' \in \Str_{\cG}^{\loc}(\cX)^{[1]}\] is a local morphism of $\cG$-structures on $\cX$ then $f^{\ast}(\alpha)$ must be a local morphism of of $\cG$-structures on $\cY$. Motivated by this observation, \cite[Proposition 1.4.11]{LurieDAG5} states that the forgetful map $\LTop(\cG) \to \LTop$ is a co-Cartesian fibration, and it is classified by the functor $\Str^{\loc}_{\cG}(-) \colon \LTop \to \Catbig$.
\end{remark}

\subsection{Affine spectra}
We now review the construction of affine spectra with respect to a geometry $\cG$. For details on the material below, we refer to \cite[\S 2.1, \S 2.2]{LurieDAG5}. Let us first recall how a morphism of geometries gives rise to a restriction functor between their categories of structured $\infty$-topoi.

\begin{construction}\label{obs:restrictionfunctor}
  Fix a morphism of geometries $\alpha\colon \cG \to \cG^{\prime}$. Precomposition with $\alpha$ yields a functor
  \[ - \circ \alpha \colon \Fun(\cG', \overline{\LTop})  \to \Fun(\cG, \overline{\LTop}).
    \]
     Since the inclusion of constant diagrams $\LTop \to \Fun(\cc{C}, \LTop)$ is functorial in precomposition for small $\infty$-categories $\cc{C}$, there is an induced functor
    \[
\Fun(\cG', \overline{\LTop})\times_{\Fun(\cG',\LTop)}\LTop  \to \Fun(\cG, \overline{\LTop})\times_{\Fun(\cG,\LTop)} \LTop
\]
pointwise given by sending a pair $(\cX, \cO)$ to $(\cX, \cO \circ \alpha)$. Given $\cX \in \LTop$, if $\cO\colon \cG' \to \cX$ is a $\cG'$-structure then it is easy to see that $\cO \circ \alpha\colon \cG \to \cX$ is a $\cG$-structure on $\cX$; similarly, applying precomposition with $\alpha$ to a local morphism of $\cG'$-structures on $\cX$ yields a local transformation of $\cG$-structures on $\cX$. As a result, we may restrict the functor contructed above to $\LTop(\cG')$ to obtain the following map
    \[ \mdef{\mm{res}_{\alpha}}\colon \LTop(\cG^{\prime}) \to \LTop(\cG)\] referred to as the  \tdef{restriction along $\alpha$}, pointwise given by sending $(\cX, \cO) \in \LTop(\cG')$ to $(\cX, \cO \circ \alpha) \in \LTop(\cG)$.
\end{construction}

The statement below appears as \cite[Theorem~2.1.1]{LurieDAG5}. 

\begin{theorem}[Lurie]\label{thm:relativespec}
    For a morphism $\cG'\to \cG$ of geometries, the induced functor $\mm{res}\colon \LTop(\cG) \to \LTop(\cG')$ admits a left adjoint, denoted by $\mdef{\Spec_{\cG'}^{\cG}}$.
\end{theorem}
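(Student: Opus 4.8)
The plan is to establish the adjunction one object at a time. Recall that $\LTop(\cG)\to\LTop$ is a coCartesian fibration classified by $\Str^{\loc}_{\cG}(-)$, and that $\mm{res}_\alpha$ is a morphism of coCartesian fibrations over $\LTop$ which preserves coCartesian edges, being fibrewise given by precomposition with $\alpha\colon\cG'\to\cG$. By the usual recognition criterion for adjoint functors \cite[\S5.2]{LurieHTT} it therefore suffices to produce, for each $\cG'$-structured $\infty$-topos $(\cX,\cO_\cX)$, a $\cG$-structured $\infty$-topos $\Spec^{\cG}_{\cG'}(\cX,\cO_\cX)$ together with a morphism
\[
  u\colon (\cX,\cO_\cX)\longrightarrow\mm{res}_\alpha\bigl(\Spec^{\cG}_{\cG'}(\cX,\cO_\cX)\bigr)
\]
in $\LTop(\cG')$ which corepresents the functor $\Map_{\LTop(\cG')}\bigl((\cX,\cO_\cX),\mm{res}_\alpha(-)\bigr)\colon\LTop(\cG)\to\cS$; naturality of the construction below in $(\cX,\cO_\cX)$ then assembles these universal arrows into a functor left adjoint to $\mm{res}_\alpha$.

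I would construct $\Spec^{\cG}_{\cG'}(\cX,\cO_\cX)=(\cY,\cO_\cY)$ by relativising the construction of the affine spectrum of a ring over the base topos $\cX$. Using the left-exact functor $\cO_\cX\colon\cG'\to\cX$ and $\alpha$, form an $\infty$-category $\cat{C}$ whose objects are pairs $(U,s)$ with $U\in\cG$ and $s$ a point in $\cX$ of the value at $U$ of a comparison functor $\cG\to\cX$ built from $\cO_\cX$ and $\alpha$ (for instance the left Kan extension $\Lan_\alpha\cO_\cX$), and whose morphisms are those induced by \emph{admissible} morphisms of $\cG$; equip $\cat{C}$ with the Grothendieck topology generated by families lying over admissible covers in $\tau$. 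Set $\cY\coloneq\Shv_\tau(\cat{C})$ and let $\cO_\cY\colon\cG\to\cY$ send $U$ to the sheafification of the presheaf of ``$U$-valued sections'' on $\cat{C}$; there is a tautological geometric morphism $\cY\to\cX$, and the sections $s$ assemble into the structure map $u$. In the special case $\cG'=\cG_{\disc}$, $\cX=\cS$, with $\cO_\cX$ corresponding to an ordinary ring $R$ under \cref{ex:example2rings}, this recovers $\Shv(\Spec R;\cS)$ with its usual structure sheaf.

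The remaining work is to verify three points. First, $\cO_\cY$ is a \emph{local} $\cG$-structure on $\cY$ in the sense of \cref{def:omnibusgeometries}; this uses that $\tau$ is generated by admissible morphisms and that $\cG^{\ad}$ is closed under base change. Second, $u$ is a morphism of $\cG'$-structured topoi, i.e.\ satisfies the Cartesian-square condition of \cref{def:structuredtopoi} against $\cG'$-admissible morphisms, which follows because $\alpha$ carries admissible morphisms and admissible covers of $\cG'$ to those of $\cG$. Third, the universal property, obtained by unwinding mapping spaces in $\Shv_\tau(\cat{C})$ against the definition of morphisms in $\LTop(\cG)$ and the characterisation of $\cG$-structures as left-exact functors carrying admissible covers to effective epimorphisms. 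It is convenient to isolate two phenomena: the change of underlying $\infty$-category along $\alpha$, for which restriction already admits a left adjoint \emph{over} $\LTop$ — fibrewise given by $\Lan_\alpha$ followed by reflection into $\Fun^{\lex}(\cG,-)\simeq\Shv(-;\Ind(\cG^{\op}))$, and assembled into a relative left adjoint since geometric morphisms preserve finite limits and colimits (cf.\ \cite[\S7.3.2]{LurieHA}) — and the change of admissibility structure with fixed underlying category, for which the left adjoint is a sheafification that genuinely enlarges the topos. I expect the first two verifications to be the main obstacle: producing a $\cG$-structure that is provably local, and a unit that is provably a local morphism, is precisely where all the axioms of a geometry (finite limits and idempotent completeness, generation of $\tau$ by $\cG^{\ad}$, and stability of $\cG^{\ad}$ under base change, left cancellation, and retracts) are needed, and where a careless construction fails.
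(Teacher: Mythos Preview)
The paper does not supply its own proof of this statement: it is attributed to Lurie and cited as \cite[Theorem~2.1.1]{LurieDAG5}, with the paper only spelling out the special case of the \emph{absolute} spectrum $\Spec^{\cG}_{\cG_{\disc}}$ restricted to $\Ind(\cG^{\op})\subseteq\LTop(\cG_{\disc})$ (see \cref{def:spec} and \cref{thm:spec}). So there is no in-paper argument to compare against; your proposal should be measured against Lurie's proof in \cite[\S2]{LurieDAG5}.

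Your overall strategy---produce universal arrows object by object, exploit that $\mm{res}_\alpha$ is a map of coCartesian fibrations preserving coCartesian edges, and build the target topos as sheaves on a site of ``admissible charts'' over $(\cX,\cO_\cX)$---is the correct shape and parallels Lurie's approach. The list of verifications you isolate (locality of $\cO_\cY$, locality of the unit, and the universal property) is also right, as is your observation that the two-step decomposition into ``change of underlying category'' followed by ``localise to impose the admissibility structure'' is the natural way to organise the argument.

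The one concrete gap is your candidate comparison functor. Writing $\Lan_\alpha\cO_\cX$ is not harmless: left Kan extension along $\alpha$ need not preserve finite limits, so $\Lan_\alpha\cO_\cX$ will not generally land in $\Fun^{\lex}(\cG,\cX)$, and the resulting category of ``sections'' $(U,s)$ will not have the correct functoriality in admissible pullbacks---which is exactly what you need to define the site and verify locality. Lurie's construction avoids this by working pro-representably: the site is built from $\Pro(\cG)^{\ad}_{/(-)}$ (relativised over $\cX$), not from sections of a Kan extension. Concretely, one replaces your $\cat{C}$ by the category whose objects over $V\in\cX$ are admissible morphisms in $\Pro(\cG)$ to the pro-object corepresented by $\cO_\cX$ evaluated at $V$; the compatibility with base change in $\cG^{\ad}$ then falls out of \cref{def:geometry}\ref{i:bc}. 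Your ``for instance'' hedge correctly signals that this is where the proposal is under-specified; filling it in with $\Lan_\alpha$ would not work, and substituting the pro-admissible site is the missing idea.
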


The left adjoint of the theorem above is referred to as the \tdef{relative spectrum} functor. For illustrative purposes and for applications later in the next section, we will also record the behavior of the functor above under the embedding of the category of $\cG$-structures into $\Ind(\cG^{\op})$-valued sheaves on a given $\infty$-topos.

\begin{lemma}\label{lem:resintermsofsheaves}
  Let $\alpha: \cG_{1} \to \cG_{2}$ be a morphism of geometries, and let $\alpha_{\ast}: \Ind(\cG^{\op}_{2}) \to \Ind(\cG^{\op}_{1})$ indicate the ind-existent right adjoint to the opposite of $\alpha$. There is a commutative square of the form
  \[\xymatrix{
      \Fun^{\lex}(\cG_{2}, \cX) \ar[d]^{\eqref{eq:sheafislexfunct}} \ar[r]^{- \circ \alpha} & \Fun^{\lex}(\cG_{1}, \cX) \ar[d]^{\eqref{eq:sheafislexfunct}} \\
      \Shv(\cX; \Ind(\cG^{\op}_{2})) \ar[r]^{\alpha_{\ast}} &  \Shv(\cX; \Ind(\cG^{ \op}_{1}))
    }
  \]
  where the bottom horizontal arrow sends a sheaf $\cc{F}: \cX^{\op} \to \Ind(\cG^{\prime \op})$ to the composite $\alpha_{\ast}\circ \cc{F}$. 
\end{lemma}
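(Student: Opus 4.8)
The plan is to reduce \eqref{eq:sheafislexfunct} to its building blocks — the adjoint-functor-theorem identification, the two currying equivalences, and the Yoneda equivalence $\cX\simeq\Fun^{\mm{R}}(\cX^{\op},\cS)$ — to identify $\alpha_*$ explicitly as a precomposition functor, and then to verify that ``postcompose by $\alpha_*$'' passes through these building blocks to become ``precompose by $\alpha$'', checking the match on objects and then invoking naturality of each building block in the coefficient variable.

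First I would make $\alpha_*$ concrete. Because $\alpha$ preserves finite limits, precomposition $(-)\circ\alpha$ carries $\Fun^{\lex}(\cG_2,\cS)$ into $\Fun^{\lex}(\cG_1,\cS)$, and I claim that under the identifications $\Ind(\cG_i^{\op})\simeq\Fun^{\lex}(\cG_i,\cS)$ this \emph{is} the functor $\alpha_*$ of the statement. This is a one-line adjunction check: the $\Ind$-extension $\Ind(\alpha^{\op})\colon\Ind(\cG_1^{\op})\to\Ind(\cG_2^{\op})$ sends the representable attached to $g\in\cG_1$ to the one attached to $\alpha(g)$, so for $G\in\Fun^{\lex}(\cG_2,\cS)$ and $g\in\cG_1$ one has $\Map(g,\alpha_*G)\simeq\Map(\alpha g,G)\simeq G(\alpha g)=(G\circ\alpha)(g)$, whence $\alpha_*G\simeq G\circ\alpha$. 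In particular $\alpha_*$ is limit-preserving and accessible, hence postcomposition with it preserves the full subcategory $\Shv(\cX;-)\subseteq\Fun(\cX^{\op},-)$ and is compatible with the step $\Shv(\cX;-)\simeq\Fun^{\mm{R}}(\cX^{\op},-)$ of \eqref{eq:sheafislexfunct}.

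Next I would chase the square on objects using the explicit form of \eqref{eq:sheafislexfunct} recorded in \cref{obs:sheafislexfunct}. An $\Ind(\cG^{\op})$-valued sheaf $\cc{F}$ on $\cX$, viewed as a functor $\widetilde{\cc{F}}\colon\cX^{\op}\times\cG\to\cS$ that is limit-preserving (and accessible) in the first variable and left exact in the second, corresponds to the left-exact functor $\widehat{\cc{F}}\colon\cG\to\cX$ sending $g$ to the object of $\cX\simeq\Fun^{\mm{R}}(\cX^{\op},\cS)$ corepresented by $\widetilde{\cc{F}}(-,g)$. Postcomposing $\cc{F}$ with $\alpha_*=(-)\circ\alpha$ replaces $\widetilde{\cc{F}}$ by $\widetilde{\cc{F}}\circ(\id_{\cX^{\op}}\times\alpha)$, hence replaces $\widehat{\cc{F}}$ by $g\mapsto\widehat{\cc{F}}(\alpha g)=(\widehat{\cc{F}}\circ\alpha)(g)$. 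So both composites around the square carry $\cc{F}$, equivalently the associated $\cG_2$-structure $\cO$, to the $\cG_1$-structure $\cO\circ\alpha$, as the top map prescribes.

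To upgrade this to a commuting square of functors I would track the operation ``postcompose by $\alpha_*$'' through each equivalence in \eqref{eq:sheafislexfunct}: under the identification $\Fun^{\lim}(\cX^{\op},-)\simeq\Fun^{\mm{R}}(\cX^{\op},-)$ of \cite[Corollary 5.5.2.9, Remark 5.5.2.10]{LurieHTT} it remains ``postcompose by $\alpha_*$''; under the currying equivalences $\Fun(A,\Fun(B,-))\simeq\Fun(A\times B,-)$ it becomes precomposition by $\id_{\cX^{\op}}\times\alpha$, and re-currying makes this precomposition by $\alpha$ at the $\cG_i$-slot; and the remaining Yoneda step $\Fun^{\mm{R}}(\cX^{\op},\cS)\simeq\cX$ leaves ``precompose by $\alpha$'' unchanged. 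Each of these compatibilities is an instance of naturality of a Yoneda-type equivalence in the coefficient variable with respect to postcomposition by limit-preserving accessible functors, so composing them shows \eqref{eq:sheafislexfunct} intertwines postcomposition by $\alpha_*$ with precomposition by $\alpha$; combined with the object-level computation this gives the lemma. I expect the only real friction to be exactly this coherence bookkeeping — confirming that ``postcompose by $\alpha_*$'' passes through the currying and Yoneda equivalences coherently, not merely on objects. One can sidestep it entirely by instead characterizing \eqref{eq:sheafislexfunct} through the natural mapping-space formula $\Map_\cX(x,\widehat{\cc{F}}(g))\simeq\Map_{\Ind(\cG^{\op})}(g,\cc{F}(x))$ in $x$ and $g$, and comparing these formulas directly for $\cc{F}$ and $\alpha_*\circ\cc{F}$.
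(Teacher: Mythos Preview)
Your proposal is correct and follows essentially the same approach as the paper: both arguments identify $\alpha_{\ast}$ with precomposition by $\alpha$ on $\Fun^{\lex}(\cG_{i},\cS)$, then embed each row of the square into the common target $\Fun(\cG_{i}\times\cX^{\op},\cS)$ via currying and Yoneda, where both horizontal maps become precomposition by $\alpha\times\id$. The paper's version is slightly more compressed in that it handles the coherence by working directly with these embeddings rather than tracking through each step of \eqref{eq:sheafislexfunct} separately, but the content is the same.
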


\begin{proof}
  Let $\alpha^{\ast} \dashv \alpha_{\ast}$. As in \cref{obs:sheafislexfunct}, consider the embedding \[\Fun^{\lex}(\cG_{i}, \cX) \simeq \Fun^{\lex}(\cG_{i}, \Fun^{\mm{R}}(\cX^{\op}, \cS)) \hookrightarrow \Fun(\cG_{i}\times \cX^{\op}, \cS)\]
  under which the top horizontal morphism of the claim is identified with the restriction of the functor given by precomposition with $\alpha \times \id$. Symmetrically, there is an embedding \[\Fun(\cX, \Fun^{\lex}(\cG_{i},\cS)) \hookrightarrow \Fun(\cG_{i} \times \cX^{\op}, \cS)\] for which the restriction of $\alpha \times \id$ is identified with the functor given by postcomposition with \[- \circ \alpha\colon \Fun^{\lex}(\cG_{2}, \cS)\to  \Fun^{\lex}(\cG_{1}, \cS)\] which is canonically identified with the right adjoint $\alpha_{\ast}$ of the theorem, yielding the claim.
\end{proof}

For the remainder of this subsection we will fix a geometry $\cG = (\cG,\cG^{\ad},\tau)$. Recall that $\overline{\LTop}\to\LTop$ denotes the universal $\infty$-topos fibration and define the \tdef{global sections functor} as the map 
    \[
        \Gamma \colon \overline{\LTop} \to \cS
    \]
corepresented by $(\cS,\ast)$. Note that $\cS$ is initial in $\LTop$, where the unique map from $\cS$ to any $\infty$-topos $\cX$ sends $\ast$ to the terminal object of $\cX$. It follows that the restriction of $\Gamma$ to $\overline{\LTop} \times_{\LTop} \cX \simeq \cX$ is corepresented by the terminal object $\bf{1} \in \cX$. Now consider the composite
    \[
        \LTop(\cG) \times \cG \rightarrow \Fun(\cG, \overline{\LTop}) \times \cG \xrightarrow{\ev} \overline{\LTop} \xrightarrow{\Gamma} \cS.
    \] Its adjoint factors as
    \[
        \xymatrix{\LTop(\cG) \ar[r] \ar[d]_{\Gamma_{\cG}} & \Fun(\cG,\cS) \\
        \Fun^{\lex}(\cG,\cS) \ar[r]^-{\simeq} & \Ind(\cG^{\op}) \ar[u]}
    \]
    and we refer to $\mdef{\Gamma_{\cG}}$ as the \tdef{$\cG$-structured global sections functor}. 

    \begin{lemma}\label{lem:globalsectionsisglobalsections}
      Given an $\infty$-topos $\cX$, the composite \[\Str^{\loc}_{\cG}(\cX) \simeq \LTop(\cG) \times_{\LTop}\{\cX\} \hookrightarrow \LTop(\cG) \xrightarrow{\Gamma_{\cG}} \Ind(\cG^{\op})\]
      may be identified with the functor \[\Str^{\loc}_{\cG}(\cX) \subseteq \Fun^{\lex}(\cG, \cX) \simarrow \Shv(\cX; \Ind(\cG^{\op})) \xrightarrow{\ev_{\bf{1}}} \Ind(\cG^{\op})  \]
      where $\ev_{\bf{1}}$ sends a sheaf $\cc{F}$ to its value on the terminal object $\bf{1} \in \cX$.
    \end{lemma}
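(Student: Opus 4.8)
The plan is to unwind the definition of $\Gamma_{\cG}$ after restricting everything to the fixed $\infty$-topos $\cX$, and then match the result term-by-term against the equivalence \eqref{eq:sheafislexfunct}; nothing essential will enter beyond the bookkeeping of curry/uncurry adjunctions.

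First I would restrict the diagram defining $\Gamma_{\cG}$ along the inclusion $\{\cX\}\hookrightarrow\LTop$. Under $\overline{\LTop}\times_{\LTop}\{\cX\}\simeq\cX$ the evaluation functor becomes the map $\Str^{\loc}_{\cG}(\cX)\times\cG\to\cX$, $(\cO,U)\mapsto\cO(U)$, and, as observed just before the statement, $\Gamma$ restricts on $\cX$ to the functor corepresented by the terminal object, i.e.\ to $\Map_{\cX}(\mathbf 1,-)$. Hence the composite $\Str^{\loc}_{\cG}(\cX)\times\cG\to\cS$ defining $\Gamma_{\cG}$ over $\cX$ is $(\cO,U)\mapsto\Map_{\cX}(\mathbf 1,\cO(U))$, and passing to its adjoint in the $\cG$-variable produces the functor sending $\cO$ to $\bigl(U\mapsto\Map_{\cX}(\mathbf 1,\cO(U))\bigr)$. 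Since $\cO$ preserves finite limits and $\Map_{\cX}(\mathbf 1,-)$ preserves all limits, this is a left-exact functor $\cG\to\cS$, so the factorization through $\Fun^{\lex}(\cG,\cS)\simeq\Ind(\cG^{\op})$ exists and, by construction of $\Gamma_{\cG}$, equals $\Gamma_{\cG}|_{\cX}$.

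Next I would unwind the other composite. By \cref{obs:sheafislexfunct}, the equivalence \eqref{eq:sheafislexfunct} sends a sheaf $\cc F\colon\cX^{\op}\to\Ind(\cG^{\op})$ to the left-exact functor $U\mapsto\Map_{\Ind(\cG^{\op})}\bigl(U,\cc F(-)\bigr)\colon\cG\to\cX\subseteq\Fun(\cX^{\op},\cS)$; using the Yoneda identification $\Map_{\Ind(\cG^{\op})}(U,G)\simeq G(U)$ for $G\in\Ind(\cG^{\op})=\Fun^{\lex}(\cG,\cS)$, the inverse equivalence carries a left-exact $\cO\colon\cG\to\cX$ to the sheaf $\cc F_{\cO}$ determined by $\cc F_{\cO}(x)(U)=\Map_{\cX}(x,\cO(U))$ (which is left-exact in $U$, as required, because $\cO$ and $\Map_{\cX}(x,-)$ both are). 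Therefore $\ev_{\mathbf 1}(\cc F_{\cO})=\cc F_{\cO}(\mathbf 1)$ is the object of $\Ind(\cG^{\op})=\Fun^{\lex}(\cG,\cS)$ given by $U\mapsto\Map_{\cX}(\mathbf 1,\cO(U))$, which is exactly the object produced in the previous paragraph. Comparing the two descriptions — and noting both are manifestly functorial in morphisms of local $\cG$-structures — yields the asserted identification.

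The main obstacle is not conceptual but notational: one must track the chain of adjunctions concealed inside \eqref{eq:sheafislexfunct} and confirm that ``evaluate the associated sheaf at the terminal object'' corresponds to ``postcompose the left-exact functor $\cO\colon\cG\to\cX$ with $\Gamma|_{\cX}=\Map_{\cX}(\mathbf 1,-)$''. As indicated, this reduces to the single point that under Yoneda $\cX\simeq\Fun^{\mm R}(\cX^{\op},\cS)$ the terminal object corepresents $\Gamma|_{\cX}$, so evaluation at $\mathbf 1$ of the presheaf attached to $Z\in\cX$ returns $\Map_{\cX}(\mathbf 1,Z)$; everything else is formal.
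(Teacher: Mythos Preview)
Your proof is correct and follows essentially the same route as the paper's: both reduce the claim to the observation that, under the equivalence of \cref{obs:sheafislexfunct}, postcomposition of $\cO$ with $\Gamma|_{\cX}=\Map_{\cX}(\mathbf 1,-)$ coincides with evaluation of the associated sheaf at $\mathbf 1$. The only difference is cosmetic: the paper tracks this match by embedding both sides into $\Fun(\cG\times\cX^{\op},\cS)$ via uncurrying and identifying $h$ with precomposition by $\id\times\Gamma^{\ast}$, whereas you compute both sides directly as $U\mapsto\Map_{\cX}(\mathbf 1,\cO(U))$ using the explicit endpoint formula recorded after \eqref{eq:sheafislexfunct}.
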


    \begin{proof}
      It suffices to show that the map $h: \Fun^{\lex}(\cG, \cX) \to \Fun^{\lex}(\cG, \cS)$ given by postcomposing with the map $\Gamma\colon \cX \to \cS$ may be identified through the equivalence of \cref{obs:sheafislexfunct} with the map $\ev_{\bf{1}}$. By definition, $\Gamma$ is corepresented by the terminal object of $\cX$. Let $\Gamma^{\ast}\colon\cS \to \cX$ indicate the left adjoint to this map, i.e., the unique map sending $\ast \mapsto \bf{1}$. We obtain a commutative square
      \[\xymatrix{
          \cX \ar[r]^{\Gamma} \ar[d]^{\simeq} & \cS \ar[d]^{\simeq} \\
          \Fun^{\mm{R}}(\cX^{\op}, \cS) \ar[r]^{- \circ \Gamma^{\ast}} & \Fun^{\mm{R}}(\cS^{\op}, \cS)}
        \] where the vertical equivalences arise from the Yoneda embedding. Once again invoking the embedding $\Fun^{\lex}(\cG, -) \hookrightarrow \Fun(\cG \times -^{\op}, \cS)$, the map $h$ is identified with the restriction of  \[- \circ (\id \times \hspace{1pt}\Gamma^{\ast})\colon \Fun(\cG \times \cX^{\op}, \cS) \to \Fun(\cG \times \cS^{\op}, \cS)\] whose restriction along $\Shv(-;\Ind(\cG^{\op})) \hookrightarrow \Fun(\cG \times -^{\op}, \cS)$ is readily identified as the functor \[\Shv(\cX; \Ind(\cG^{\op})) \to \Shv(\ast; \Ind(\cG^{\op})) \simeq \Ind(\cG^{\op})\] given by sending a sheaf $\cc{F}$ to its value on $\Gamma^{\ast}(\ast) \simeq \bf{1} \in \cX$. 
    \end{proof}

The functor $\Gamma_{\cG}$ constructed above is right adjoint to the inclusion 
    \[
        \Ind(\cG^{\op}) \simeq \Str_{\cG_{\disc}}(\cS)
        \simeq \LTop(\cG_{\disc}) \times_{\LTop} \{\cS\} \subseteq \LTop(\cG_{\disc}).
      \] To see this, note that for every $(\cX, \cO) \in \LTop(\cG_{\disc})$, the space of maps $(\cS, \cO') \to (\cX, \cO) \in \LTop(\cG_{\disc})^{[1]}$ may be identified with the space of maps $\cO' \circ \Gamma^{\ast} \to \cO \in \Fun^{\mm{lex}}(\cG, \cX)^{[1]}$; however, this may equivalently be identified with the space of maps $\cO' \to \cO \circ \Gamma_{\ast} \eqcolon \Gamma_{\cG}(\cX, \cO) \in \Fun^{\mm{lex}}(\cG, \cS) \simeq \Ind(\cG^{\op})$ using the adjunction $\Gamma^{\ast} \dashv \Gamma_{\ast}$. This fact along with \cite[Proposition 5.2.7.8]{LurieHTT} imply that the inclusion $\Ind(\cG^{\op}) \to \LTop(\cG_{\disc})$ is a fully faithful left adjoint, with right adjoint $\Gamma_{\cG}$. By composing the described adjunction with the adjunction of \cref{thm:relativespec}, we obtain the lemma below.

    \begin{lemma}\label{lem:globalsections}
      There is an adjunction of the form
      \[
        \Spec^{\cG}_{\cG_{\disc}}\colon \Ind(\cG^{\op}) \rightleftarrows \LTop(\cG) \noloc\Gamma_{\cG}.
        \]
\end{lemma}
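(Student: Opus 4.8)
The plan is to obtain the claimed adjunction as a composite of two adjunctions that are both essentially in hand from the discussion preceding the statement. First I would apply \cref{thm:relativespec} to the canonical morphism of geometries $\cG_{\disc}\to\cG$ of \cref{ex:discretetransformation}; since the discrete geometry $\cG_{\disc}$ has the same underlying $\infty$-category as $\cG$ (so that $\Ind(\cG^{\op})$ is unambiguous), this yields an adjunction
\[
\Spec^{\cG}_{\cG_{\disc}}\colon \LTop(\cG_{\disc}) \rightleftarrows \LTop(\cG) \noloc \mm{res}.
\]
Second, the paragraph immediately above the statement has already verified — via the adjunction $\Gamma^{\ast}\dashv\Gamma_{\ast}$ of the essentially unique geometric morphism out of $\cS$, together with \cite[Proposition 5.2.7.8]{LurieHTT} — that the inclusion
\[
\iota\colon \Ind(\cG^{\op}) \simeq \Str_{\cG_{\disc}}(\cS) \simeq \LTop(\cG_{\disc})\times_{\LTop}\{\cS\} \hookrightarrow \LTop(\cG_{\disc})
\]
is fully faithful and left adjoint to the functor $\Gamma_{\cG_{\disc}}$ sending $(\cX,\cO)$ to $\Gamma\circ\cO\in\Fun^{\lex}(\cG,\cS)\simeq\Ind(\cG^{\op})$. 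Since adjunctions compose, $\Spec^{\cG}_{\cG_{\disc}}\circ\iota$ is left adjoint to $\Gamma_{\cG_{\disc}}\circ\mm{res}$, and — legitimately, because $\iota$ is fully faithful — I would retain the name $\Spec^{\cG}_{\cG_{\disc}}$ for the composite.

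It then remains only to identify the right adjoint $\Gamma_{\cG_{\disc}}\circ\mm{res}$ with the $\cG$-structured global sections functor $\Gamma_{\cG}$. For this I would unwind \cref{obs:restrictionfunctor}: because the morphism $\cG_{\disc}\to\cG$ is the identity on underlying $\infty$-categories, $\mm{res}$ carries $(\cX,\cO)\in\LTop(\cG)$ to $(\cX,\cO)$ regarded merely as a left-exact functor $\cG\to\cX$. Comparing this with the construction of $\Gamma_{\cG}$ as the adjoint of $\LTop(\cG)\times\cG\to\overline{\LTop}\xrightarrow{\Gamma}\cS$, one sees that $\Gamma_{\cG}$ and $\Gamma_{\cG_{\disc}}\circ\mm{res}$ both send $(\cX,\cO)$ to $\Gamma\circ\cO$; alternatively, \cref{lem:globalsectionsisglobalsections} (specialized via \cref{lem:resintermsofsheaves} to the case of the discrete geometry) exhibits both as the evaluation $\ev_{\mathbf{1}}$ of the underlying $\Ind(\cG^{\op})$-valued sheaf at the terminal object. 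Hence $\Gamma_{\cG_{\disc}}\circ\mm{res}\simeq\Gamma_{\cG}$, which gives the lemma.

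I do not anticipate a genuine obstacle: the substantive input is \cref{thm:relativespec}, which is cited rather than proved, and the already-completed verification that $\iota$ is a fully faithful left adjoint. The only step requiring any care is the compatibility $\Gamma_{\cG}\simeq\Gamma_{\cG_{\disc}}\circ\mm{res}$, which amounts to matching the two parallel descriptions of ``take the global sections of the structure functor'' — I expect no surprises there.
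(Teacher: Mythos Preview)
Your proposal is correct and matches the paper's own argument essentially verbatim: compose the adjunction $\iota\dashv\Gamma_{\cG_{\disc}}$ established in the preceding paragraph with the adjunction $\Spec^{\cG}_{\cG_{\disc}}\dashv\mm{res}$ of \cref{thm:relativespec}. You are slightly more explicit than the paper in spelling out the identification $\Gamma_{\cG}\simeq\Gamma_{\cG_{\disc}}\circ\mm{res}$, which the paper leaves implicit (treating $\Gamma_{\cG}$ as depending only on the underlying $\infty$-category of~$\cG$), but the content is identical.
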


The left adjoint above may be computed via an explicit presentation which we will now recall.  A morphism $f\colon U \to X$ in $\Pro(\cG)$ is called pro-admissible if there exists a pushout square in $\Pro(\cG)$ of the form
    \[
        \xymatrix{U \ar[r] \ar[d]_{f} & \Yo(U') \ar[d]^{\Yo(f')} \\
        X \ar[r] & \Yo(X')}
    \]
 where $f'\colon U' \to X' \in \cG^{\ad,[1]}$. Any admissible morphism in $\Pro(\cG)$ is pro-admissible, from which it follows that any $f\in \cG$ with $\Yo(f) \in \Pro(\cG)$ admissible is already in $\cG^{\ad}$. The wide subcategory of admissible morphisms in $\Pro(\cG)$ will be denoted by $\Pro(\cG)^{\ad}$. $\Pro(\cG)^{\ad}$ contains the equivalences, is stable under pullbacks, and if $f,g,h$ are morphisms in $\Pro(\cG)$ with $h$ admissible, then $f$ is admissible if and only if $g$ is. However, $\Pro(\cG)^{\ad}$ might not be closed under retracts. 

For $X \in \Pro(\cG)$, let $\Pro(\cG)_{/X}^{\ad} \subseteq \Pro(\cG)_{/X}$ be the (essentially small) full subcategory spanned by the admissible morphisms. Furthermore, we will view $\Pro(\cG)_{/X}$ as being endowed with the coarsest Grothendieck topology such that, for any $U \to X$, every admissible cover $\{V_i' \to U'\}_{I}$ in $\cG$, and any morphism $U \to \Yo(U')$, the collection $\{\Yo(V_i)\times_{j(U')} U \to U\}_{I}$ generates a covering sieve of $U \to X$. We are ready for the key construction.

\begin{definition}\label{def:spec}
Let $\cG$ be a geometry and $X$ an object of $\Pro(\cG)$, then we define the \tdef{absolute $\cG$-spectrum} of $X$ as the following object of $\LTop(\cG_{\disc})$
    \[
        \mdef{\Spec^{\cG}(X)} \coloneq (\Shv(\Pro(\cG)_{/X}^{\ad}), \cO_{X})
    \]
where $\cO_{X}$ is defined to be the $\cG_{\disc}$ structure
    \[
        \cO_{X}\colon \cG \xrightarrow{\widetilde{\cO}_{X}}
        \PShv(\Pro(\cG)_{/X}^{\ad}) \xrightarrow{L} \Shv(\Pro(\cG)_{/X}^{\ad}) = \Spec^{\cG}(X).
    \]
In the equation above, $\widetilde{\cO}_{X}$ is adjoint to the composite
    \[
        \cG \times (\Pro(\cG)_{/X}^{\ad})^{\op} \to \cG \times \Pro(\cG)^{\op} = \cG \times \Fun^{\lex}(\cG,\cS) \xrightarrow{\ev} \cS
      \]
     and $L$ denotes sheafification with respect to the topology described in the previous paragraph.
  \end{definition}

\begin{proposition}\label{prop:spec_gstructure}
    $\cO_{X}$ equips $\Spec^{\cG}(X)$ with a $\cG$-structure. 
\end{proposition}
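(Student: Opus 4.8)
The plan is to verify the two defining properties of a $\cG$-structure listed in \hyperref[def:omnibusgeometries1]{\cref*{def:omnibusgeometries}\,(a)}: that $\cO_{X}\colon \cG \to \Shv(\Pro(\cG)_{/X}^{\ad})$ is left exact, and that it carries admissible covers to effective epimorphisms. Left exactness is formal: for a fixed object $(U \to X)$ of the site, $\widetilde{\cO}_{X}(-)(U \to X)$ is by construction the evaluation $W \mapsto \ev_{W}(U)$, which by definition of $\Pro(\cG)$ is a left-exact functor $\cG \to \cS$. Since limits of presheaves are computed pointwise, $\widetilde{\cO}_{X}\colon \cG \to \PShv(\Pro(\cG)_{/X}^{\ad})$ preserves finite limits, and as the sheafification $L$ is a left-exact localization, the composite $\cO_{X} = L \circ \widetilde{\cO}_{X}$ is left exact.

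For the covering condition, fix an admissible cover $\{W_{i} \to W\}_{i \in I}$ in $\cG$; I must show $\coprod_{i} \cO_{X}(W_{i}) \to \cO_{X}(W)$ is an effective epimorphism in $\Shv(\Pro(\cG)_{/X}^{\ad})$. Since sheafification commutes with colimits, this is the sheafification of the presheaf morphism $\coprod_{i} \widetilde{\cO}_{X}(W_{i}) \to \widetilde{\cO}_{X}(W)$, and it suffices to check that the latter is a local epimorphism of presheaves --- recalling that effective epimorphisms in an $\infty$-topos are detected on $0$-truncations, and that every section of a sheaf lies, locally, in the image of its defining presheaf. I would then unwind the definitions in \cref{def:spec}: a section of $\widetilde{\cO}_{X}(W)$ over an object $(U \to X)$ of the site is exactly a morphism $\phi\colon U \to \Yo(W)$ in $\Pro(\cG)$, and a lift of such a section along $\coprod_{i} \widetilde{\cO}_{X}(W_{i}) \to \widetilde{\cO}_{X}(W)$ over an object $(V \to X)$ is the choice of an index $i$ together with a factorization of $\phi|_{V}$ through $\Yo(W_{i}) \to \Yo(W)$.

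The geometric input is to pull the cover back along $\phi$: set $V_{i} \coloneq \Yo(W_{i}) \times_{\Yo(W)} U$ in $\Pro(\cG)$. Since $W_{i} \to W$ is admissible, $\Yo(W_{i}) \to \Yo(W)$ is admissible in $\Pro(\cG)$, hence so is each base change $V_{i} \to U$; and since admissible morphisms are closed under composition, the structure map $V_{i} \to X$ is admissible, so $(V_{i} \to X) \in \Pro(\cG)_{/X}^{\ad}$. By the definition of the Grothendieck topology on $\Pro(\cG)_{/X}$ --- applied with $U' = W$, the admissible cover $\{W_{i} \to W\}$, and the classifying morphism $\phi\colon U \to \Yo(W)$ --- the family $\{V_{i} \to U\}_{i \in I}$ generates a covering sieve of $(U \to X)$. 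Finally, the projection $V_{i} \to \Yo(W_{i})$ exhibits $\phi|_{V_{i}}$ as factoring through $\Yo(W_{i}) \to \Yo(W)$, i.e.\ provides the required lift over each $V_{i}$. This shows $\coprod_{i} \widetilde{\cO}_{X}(W_{i}) \to \widetilde{\cO}_{X}(W)$ is a local epimorphism, and sheafifying gives the effective epimorphism we want.

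I do not anticipate a genuine obstacle: the argument is engineered into the choice of topology in \cref{def:spec}. The two points that need care are both bookkeeping. First, the reduction ``local epimorphism of presheaves implies effective epimorphism after sheafification'' should be made precise by passing to $0$-truncations, identifying the $0$-truncated sheafification with the plus construction, and checking local surjectivity of sheaves of sets there. Second, one must confirm that the pulled-back families $\{V_{i} \to U\}$ actually lie in the subcategory $\Pro(\cG)_{/X}^{\ad}$ over which we are forming sheaves, which is exactly where closure of admissible morphisms under composition enters.
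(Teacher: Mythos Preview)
The paper does not supply its own proof of this proposition: \cref{sec:geometries} is explicitly a recollection of material from \cite{LurieDAG5}, and \cref{prop:spec_gstructure} is stated there without argument, with the surrounding results (\cref{thm:relativespec}, \cref{thm:spec}) cited directly to Lurie. So there is nothing in the paper to compare against beyond the implicit reference.

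Your argument is correct and is essentially the standard one. Left exactness is indeed formal, and for the covering condition the definition of the Grothendieck topology on $\Pro(\cG)_{/X}^{\ad}$ is tailored precisely so that the pulled-back family $\{V_i \to U\}$ is a covering sieve along which the classifying section $\phi$ lifts. The two bookkeeping points you flag are the right ones to worry about, and neither is an obstruction: for the first, the cleanest phrasing is to take the effective epi--mono factorization of $\coprod_i \widetilde{\cO}_X(W_i) \to \widetilde{\cO}_X(W)$ in presheaves and observe that the resulting monomorphism becomes an equivalence after sheafification exactly because every section is locally in its image (which is what your cover $\{V_i \to U\}$ witnesses); sheafification then sends the effective epimorphism onto the image to an effective epimorphism onto $\cO_X(W)$. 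For the second, closure of $\Pro(\cG)^{\ad}$ under base change and composition is exactly what is recorded in the paragraph preceding \cref{def:spec}.
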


  \begin{lemma}\label{lem:identificationofGstructure}
    Let $\cX$ be the underlying topos of $\Spec^{\cG}(X)$ for some $X \in \Pro(\cG)$. Under the equivalence of \cref{obs:sheafislexfunct}, the object $\cO_{X} \in \Shv(\cX;\Ind(\cG^{\op}))$ corresponds to the sheafification of the forgetful functor $(\Pro(\cG)^{\ad}_{/X})^{\op} \to \Pro(\cG)^{\op} = \Ind(\cG^{\op})$ with respect to the pro-admissible topology.
  \end{lemma}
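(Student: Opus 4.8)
The plan is to unwind both sides of the asserted identity through the explicit form of the equivalence $\Fun^{\lex}(\cG, \cX) \simeq \Shv(\cX; \Ind(\cG^{\op}))$ of \cref{obs:sheafislexfunct}. Using the identification $\Ind(\cG^{\op}) \simeq \Fun^{\lex}(\cG, \cS)$, every object $W \in \Ind(\cG^{\op})$ acquires an evaluation $W(U) \in \cS$ for each $U \in \cG$, depending functorially and left-exactly on $U$; the equivalence of \cref{obs:sheafislexfunct} sends a sheaf $\cc{F}\colon \cX^{\op} \to \Ind(\cG^{\op})$ to the left-exact functor $U \mapsto \bigl(\xi \mapsto \cc{F}(\xi)(U)\bigr)$, and conversely sends a left-exact $\cO\colon \cG \to \cX$ to the unique sheaf $\cc{F}$ with $\cc{F}(-)(U) \simeq \cO(U)$ in $\cX$ for all $U$. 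Writing $\pi^{\op}\colon (\Pro(\cG)^{\ad}_{/X})^{\op}\to\Pro(\cG)^{\op}=\Ind(\cG^{\op})$ for the forgetful functor of the statement and $L$ for sheafification with respect to the pro-admissible topology, it therefore suffices to prove that $L(\pi^{\op})(-)(U) \simeq \cO_X(U)$ naturally in $U \in \cG$.

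The first ingredient is that the evaluation functors commute with sheafification. Via $\Ind(\cG^{\op}) \simeq \Fun^{\lex}(\cG,\cS)$ one obtains $\Shv(\cX;\Ind(\cG^{\op})) \simeq \Fun^{\lex}(\cG, \Shv(\cX; \cS)) = \Fun^{\lex}(\cG, \cX)$ compatibly with the inclusions into $\cS$-valued presheaves, and under this identification the sheafification functor corresponds to postcomposition with the $\cS$-valued sheafification $L\colon \PShv(\Pro(\cG)^{\ad}_{/X}) \to \cX$, which is left exact and hence preserves left-exactness in the $\cG$-variable. Concretely, for fixed $U$ the evaluation $W \mapsto W(U)$ on $\Ind(\cG^{\op})$ is corepresentable, hence preserves all limits, hence commutes with the sheaf condition and with sheafification, giving $L(\pi^{\op})(-)(U) \simeq L\bigl(\pi^{\op}(-)(U)\bigr)$, with the outer $L$ on the right now denoting sheafification of $\cS$-valued presheaves on $\Pro(\cG)^{\ad}_{/X}$.

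The second ingredient is the identification of the $\cS$-valued presheaf $\pi^{\op}(-)(U)$ with the presheaf $\widetilde{\cO}_X(U)$ appearing in \cref{def:spec}. Since $\pi^{\op}$ sends $(V\to X)$ to $V$, its evaluation at $U$ sends $(V \to X)$ to $\Map_{\Pro(\cG)}(V, \Yo(U))$; but this is exactly the value at $(V\to X)$ of the presheaf obtained by currying the composite $\cG \times (\Pro(\cG)^{\ad}_{/X})^{\op} \to \cG \times \Pro(\cG)^{\op} \xrightarrow{\ev} \cS$ in the $U$-slot, i.e.\ of $\widetilde{\cO}_X(U)$; naturality in $U$ is built into this description. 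Combining the two ingredients, $L(\pi^{\op})(-)(U) \simeq L(\widetilde{\cO}_X(U)) = \cO_X(U)$ naturally in $U$, so $L(\pi^{\op})$ and $\cO_X$ correspond to one another under \cref{obs:sheafislexfunct}, which is the claim.

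The step requiring the most care is making the ``pointwise over $\cG$'' description of $\Ind(\cG^{\op})$-valued sheafification precise and verifying that the chain of equivalences in \eqref{eq:sheafislexfunct} transports sheafification to postcomposition with the $\cS$-valued $L$ — all the opposite-category bookkeeping is concentrated there, and it is perhaps cleanest to isolate it as a short preliminary observation (the key point being the left-exactness of $\cS$-valued sheafification). Once that is in place, everything downstream is a direct unwinding of the formula defining $\cO_X$ in \cref{def:spec}.
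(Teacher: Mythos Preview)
Your proof is correct and follows essentially the same strategy as the paper: both arguments establish that under the equivalence of \cref{obs:sheafislexfunct} the $\Ind(\cG^{\op})$-valued sheafification corresponds to postcomposition with the $\cS$-valued sheafification $L$ (using left-exactness of $L$), and then identify the presheaf-level object $\widetilde{\cO}_X$ with the forgetful functor via the two-variable currying $\Fun(\cG,\PShv)\simeq\Fun((\Pro(\cG)^{\ad}_{/X})^{\op},\Ind(\cG^{\op}))$. The paper phrases the first step as a commutative square of categories and passes to left adjoints, while you argue more pointwise in $U$; this is a cosmetic difference only.
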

  \begin{proof}
    Using the same argument of \cref{lem:globalsectionsisglobalsections}, there is commutative square of the form
    \[\xymatrix{
           \Fun^{\lex}(\cG, \cX) \ar[r] \ar[d]^{\simeq} & \Fun^{\lex}(\cG,\cc{P}(\Pro(\cG)^{\ad}_{/X}))  \ar[d]^{\simeq} \\
           \Fun^{\mm{R}}(\cX^{\op}, \Ind(\cG^{\op}))  \ar[r]^{\! \! -\circ L \ \ } & \Fun(\Pro(\cG)^{\ad}_{/X}, \Ind(\cG^{\op}))}
      \]
      where the top arrow arises from postcomposition with $\Shv(\Pro(\cG)^{\ad}_{/X})) \to \cc{P}(\Pro(\cG)^{\ad}_{/X})$ the right adjoint inclusion. Passing to left adjoints, we see that the functor \[L \circ -\colon \Fun^{\lex}(\cG, \cc{P}(\Pro(\cG)^{\ad}_{/X})) \to \Fun^{\lex}(\cG, \cX)\] is identified with the left adjoint to $\Shv(\cX;\Ind(\cG^{\op})) \to \Fun((\Pro(\cG)^{\ad}_{/X})^{\op}, \Ind(\cG^{\op}))$ the functor which forgets a pro-admissible sheaf to its underlying presheaf; i.e., $L$ is the sheafification with respect to the pro-admissible topology. It remains to identify the sheaf corresponding to $\cO_{X}$. As before, we make use of the equivalences
\[  \Fun(\cG , \cc{P}(\Pro(\cG)^{\ad}_{/X})) \simarrow \Fun(\cG \times (\Pro(\cG)^{\ad}_{/X})^{\op}, \cc{S}) \simarrow \Fun((\Pro(\cG)^{\ad}_{/X})^{\op}, \Ind(\cG^{\op})) 
\]
under which $\widetilde{\cO}_{X}$ is mapped to the adjoint of the composite
\[
  \cG \times (\Pro(\cG^{\ad})_{/X})^{\op} \to \cG \times \Pro(\cG)^{\op} = \cG \times \Ind(\cG^{\op}) \xrightarrow{ev} \cS
\]
which is easily seen to be the forgetful functor $(\Pro(\cG^{\ad})_{/X})^{\op} \to \Pro(\cG)^{\op} = \Ind(\cG^{\op})$.
  \end{proof}

  From \cref{lem:globalsectionsisglobalsections} and \cref{lem:identificationofGstructure}, we obtain a natural identification \[\Gamma_{\cG_{\disc}}(\cc{P}(\Pro(\cG)^{\ad}_{/X}), \widetilde{\cO}_{X}) \simeq X\] by evaluating the unique limit-preserving extension of the forgetful functor $\cc{P}(\Pro(\cG)^{\ad}_{/X})^{\op} \to \Ind(\cG^{\op})$ on the terminal object, which is the Yoneda image of $X$. We thus obtain a map
    \[
        \alpha\colon X \to \Gamma_{\cG}(\Spec^{\cG}(X), \cO_{\Spec^{\cG}(X)}) \in \Ind(\cG^{\op})^{[1]}
    \]
by evaluating the sheafification map $\widetilde{\cO}_{X} \to \cO_{X}$ on the terminal object. The theorem below is \cite[Theorem 2.2.12]{LurieDAG5}.

\begin{theorem}\label{thm:spec}
Let $\cG$ be a geometry and let $\cG_{\disc} \to \cG$ be the canonical functor from the discrete geometry $\cG_{\disc}$ on $\cG$ to $\cG$. Then for every $X \in \Pro(\cG)$ the map $\alpha$ constructed above is adjoint to an equivalence \[\Spec_{\cG_{\disc}}^{\cG}X \simeq \Spec^{\cG}X\] via the adjunction of \cref{lem:globalsections}.
\end{theorem}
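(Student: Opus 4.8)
The plan is to show that the adjunction $\Spec^{\cG}_{\cG_{\disc}} \dashv \Gamma_{\cG}$ from \cref{lem:globalsections}, composed with the fully faithful inclusion $\Ind(\cG^{\op}) \simeq \LTop(\cG_{\disc})\times_{\LTop}\{\cS\} \hookrightarrow \LTop(\cG_{\disc})$, factors the explicit construction of \cref{def:spec}. Concretely, the adjunction of \cref{lem:globalsections} is obtained by composing the relative spectrum adjunction $\Spec^{\cG}_{\cG_{\disc}} \dashv \mm{res}$ of \cref{thm:relativespec} (for the morphism $\cG_{\disc}\to\cG$) with the inclusion-global sections adjunction $\Ind(\cG^{\op}) \hookrightarrow \LTop(\cG_{\disc})$. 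So the content of the theorem is: given $X\in\Pro(\cG)$, the object $\Spec^{\cG}(X) = (\Shv(\Pro(\cG)^{\ad}_{/X}), \cO_X) \in \LTop(\cG)$ of \cref{def:spec} corepresents the functor $(\cY, \cO_\cY) \mapsto \Map_{\Ind(\cG^{\op})}(X, \Gamma_{\cG}(\cY,\cO_\cY))$ on $\LTop(\cG)$, compatibly with the unit map $\alpha$.

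First I would recall, following \cite[\S 2.2]{LurieDAG5}, the universal property of the site $\Pro(\cG)^{\ad}_{/X}$ equipped with the pro-admissible topology: its $\infty$-topos of sheaves is the classifying topos for ``$X$-points'' — that is, for any $\infty$-topos $\cY$, geometric morphisms $\cY \to \Shv(\Pro(\cG)^{\ad}_{/X})$ correspond to suitably flat/left-exact functors out of the site, which in turn (by \cref{obs:sheafislexfunct} and \cref{lem:identificationofGstructure}) correspond to $\cG$-structures $\cO_\cY$ on $\cY$ equipped with a local morphism realizing them as ``localizations'' of the pullback of the tautological structure. The second ingredient is \cref{prop:spec_gstructure}, which guarantees $\cO_X$ is genuinely a $\cG$-structure, so that $\Spec^{\cG}(X)$ lands in $\LTop(\cG)$ and not merely $\LTop(\cG_{\disc})$. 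The third ingredient is the identification, already assembled in the paragraph preceding the theorem from \cref{lem:globalsectionsisglobalsections} and \cref{lem:identificationofGstructure}, that $\Gamma_{\cG_{\disc}}(\cc{P}(\Pro(\cG)^{\ad}_{/X}), \widetilde\cO_X) \simeq X$ via evaluation at the terminal object, giving the unit map $\alpha\colon X \to \Gamma_{\cG}(\Spec^{\cG}(X), \cO_X)$.

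The main step is then to verify that $\alpha$ exhibits $\Spec^{\cG}(X)$ as the value of the left adjoint $\Spec^{\cG}_{\cG_{\disc}}$ on $X$: by the characterization of adjoints it suffices to check that for every $(\cY,\cO_\cY)\in\LTop(\cG)$, composition with $\alpha$ induces an equivalence
\[
\Map_{\LTop(\cG)}(\Spec^{\cG}(X),(\cY,\cO_\cY)) \simarrow \Map_{\Ind(\cG^{\op})}(X, \Gamma_{\cG}(\cY,\cO_\cY)).
\]
One unwinds the left side: a map $\Spec^{\cG}(X)\to(\cY,\cO_\cY)$ is a geometric morphism $f^*\colon\Shv(\Pro(\cG)^{\ad}_{/X})\to\cY$ together with a local morphism of $\cG$-structures $f^*\cO_X\to\cO_\cY$; using the site-theoretic universal property of $\Shv(\Pro(\cG)^{\ad}_{/X})$ together with \cref{lem:identificationofGstructure} — which says $\cO_X$ is the pro-admissible sheafification of the forgetful functor $(\Pro(\cG)^{\ad}_{/X})^{\op}\to\Ind(\cG^{\op})$ — this data is equivalent to a left-exact functor $\cG\to\cY$ that ``is'' $\cO_\cY$ together with a compatible cocone under the diagram indexed by $\Pro(\cG)^{\ad}_{/X}$, i.e. exactly to a map $X\to\Gamma_{\cG}(\cY,\cO_\cY)=\cO_\cY(\mathbf 1)$ in $\Ind(\cG^{\op})$. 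The final equivalence $\Spec^{\cG}_{\cG_{\disc}}X \simeq \Spec^{\cG}X$ then follows because both represent the same functor via the same unit. The hard part is the bookkeeping in this unwinding — matching the locality condition built into morphisms of $\LTop(\cG)$ (Cartesian squares over admissible morphisms) with the flatness/covering condition built into the pro-admissible topology, and checking naturality of all identifications. In practice I would not redo this from scratch but cite \cite[Theorem 2.2.12]{LurieDAG5} for the core argument, supplying only the translation through \cref{obs:sheafislexfunct}, \cref{lem:identificationofGstructure}, and \cref{lem:globalsectionsisglobalsections} to match our notation, and noting that \cref{prop:spec_gstructure} is what upgrades the statement from $\cG_{\disc}$ to $\cG$.
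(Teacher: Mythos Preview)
Your proposal is correct and matches the paper's approach: the paper does not supply an independent proof of this theorem but simply records it as \cite[Theorem~2.2.12]{LurieDAG5}, exactly as you ultimately propose to do. Your additional unwinding via \cref{obs:sheafislexfunct}, \cref{lem:identificationofGstructure}, and \cref{lem:globalsectionsisglobalsections} is consonant with how the paper sets up the statement and its unit map~$\alpha$.
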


\begin{corollary}\label{cor:geometry_adjunction}
    For any $(\cY,\cO_{\cY}) \in \LTop(\cG)$ and $X \in \Pro(\cG)^{\op}$, there is an equivalence
    \begin{equation*}
        \Map_{\LTop(\cG)}((\Spec^{\cG}X,\cO_{\Spec^{\cG}X}),(\cY,\cO_{\cY})) \simeq \Map_{\Pro(\cG)^{\op}}(X,\Gamma_{\cG}(\cY,\cO_{\cY})).
    \end{equation*}
    induced by composition with $\alpha$.
  \end{corollary}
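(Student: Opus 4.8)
The plan is to deduce the corollary formally from \cref{thm:spec} together with the adjunction of \cref{lem:globalsections}. First I would note that, by the convention $\Pro(\cG) \coloneq \Ind(\cG^{\op})^{\op}$, an object $X \in \Pro(\cG)^{\op}$ is the same datum as an object of $\Ind(\cG^{\op})$, the source of the left adjoint $\Spec^{\cG}_{\cG_{\disc}}$ in \cref{lem:globalsections}. That adjunction yields, for each $(\cY, \cO_{\cY}) \in \LTop(\cG)$, a natural equivalence
\[
\Map_{\LTop(\cG)}(\Spec^{\cG}_{\cG_{\disc}} X, (\cY, \cO_{\cY})) \simeq \Map_{\Ind(\cG^{\op})}(X, \Gamma_{\cG}(\cY, \cO_{\cY})),
\]
which, as for any adjunction, sends a morphism $g$ on the left-hand side to $\Gamma_{\cG}(g)$ precomposed with the unit $\eta_X \colon X \to \Gamma_{\cG}(\Spec^{\cG}_{\cG_{\disc}} X, \cO_{\Spec^{\cG}_{\cG_{\disc}} X})$.

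Next I would invoke \cref{thm:spec}, which provides a natural equivalence $\theta_X \colon \Spec^{\cG}_{\cG_{\disc}} X \xrightarrow{\sim} (\Spec^{\cG} X, \cO_{\Spec^{\cG} X})$ in $\LTop(\cG)$ with the defining property that the map $\alpha$ constructed immediately before \cref{thm:spec} equals the composite $\Gamma_{\cG}(\theta_X) \circ \eta_X$. Precomposition with $\theta_X$ is an equivalence $\Map_{\LTop(\cG)}((\Spec^{\cG} X, \cO_{\Spec^{\cG} X}), (\cY, \cO_{\cY})) \simeq \Map_{\LTop(\cG)}(\Spec^{\cG}_{\cG_{\disc}} X, (\cY, \cO_{\cY}))$, and composing it with the previous display gives the asserted equivalence
\[
\Map_{\LTop(\cG)}((\Spec^{\cG} X, \cO_{\Spec^{\cG} X}), (\cY, \cO_{\cY})) \simeq \Map_{\Pro(\cG)^{\op}}(X, \Gamma_{\cG}(\cY, \cO_{\cY})).
\]

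It then remains to check that this composite is ``composition with $\alpha$''. I would do this by unwinding: a morphism $f \colon (\Spec^{\cG} X, \cO_{\Spec^{\cG} X}) \to (\cY, \cO_{\cY})$ is carried first to $f \circ \theta_X$ and then to $\Gamma_{\cG}(f \circ \theta_X) \circ \eta_X = \Gamma_{\cG}(f) \circ \Gamma_{\cG}(\theta_X) \circ \eta_X = \Gamma_{\cG}(f) \circ \alpha$, i.e.\ to $\alpha$ followed by the global sections of $f$, exactly as claimed. The whole argument is formal once the two cited inputs are in hand; the one genuinely substantive point — already carried out in the proof of \cref{thm:spec} — is the identification of $\alpha$ with the unit of $\Spec^{\cG}_{\cG_{\disc}} \dashv \Gamma_{\cG}$ transported along $\theta_X$, and that is what I would regard as the main obstacle, since without it one obtains only an unnamed equivalence of mapping spaces rather than the concrete one induced by $\alpha$.
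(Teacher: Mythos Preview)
Your argument is correct and is exactly the intended deduction: the paper states this corollary without proof because it follows formally from \cref{lem:globalsections} and \cref{thm:spec} in precisely the way you describe. Your additional care in verifying that the resulting equivalence is literally ``composition with $\alpha$'' (via $f \mapsto \Gamma_{\cG}(f)\circ\alpha$) is a nice touch that the paper leaves implicit.
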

  
  \subsection{Example: classical Zariski geometry}
  
Let us return once again to the classical Zariski geometry. The absolute spectrum functor supplies the following adjunction
  \begin{equation}\label{eq:classicalaffinezariski}
     \Spec \colon \CAlg \rightleftarrows \LTop(\GZar) : \Gamma_{\GZar} 
  \end{equation}
  where we have written $\Spec \coloneq \Spec^{\GZar}$. All that remains is to identify the left adjoint, which is identified with the familiar presentation below;  we refer the reader to the proof of \cite[Theorem 2.40]{lurieDerivedAlgebraicGeometry2011a} for a complete account.

  \begin{proposition}
        Given $R \in \CAlg$, $\Spec R \in \LTop(\GZar)$ may be identified with the pair $(\Shv(\Spec \pi_{0}R), \cO)$ where:
\begin{enumerate}
\item $\Spec \pi_{0}R$ refers to the ordinary Zariski spectrum of prime ideals.
\item $\cO$ is the unique sheaf of commutative ring spectra on $\Spec \pi_{0}R$ satisfying 
\[ \cO: D(f) \mapsto R[f^{-1}]\]
  for $D(f)$ a basic open set of $\pi_{0}R$ associated to some element $f$.
  \end{enumerate}
\end{proposition}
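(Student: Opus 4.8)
The plan is to unwind \cref{def:spec} in the special case $\cG = \GZar$ and match it against the hands-on construction of $\Spec R$. First I would observe that by \cref{thm:spec} it suffices to compute the absolute spectrum $\Spec^{\GZar}(R)$, where $R \in \CAlg \simeq \Fun^{\lex}(\GZar,\cS) = \Pro(\GZar)$ under the identification \eqref{eq:calgislexfunct}. By definition this is the pair $(\Shv(\Pro(\GZar)^{\ad}_{/R}), \cO_R)$, so the task splits into two parts: identifying the underlying $\infty$-topos with $\Shv(\Spec\pi_0 R)$, and identifying the $\GZar$-structure with the sheaf $\cO$ described in the statement.

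For the topos, the key input is that a pro-admissible morphism $U \to R$ in $\Pro(\GZar)$ is, by the universal property of Zariski localization, a filtered limit of finite composites of localizations, hence corepresented by $R \to R[f^{-1}]$ for $f \in \pi_0 R$ (up to the evident equivalence $R[f^{-1}][g^{-1}] \simeq R[(fg)^{-1}]$); thus $\Pro(\GZar)^{\ad}_{/R}$ is equivalent to the poset of principal opens $D(f) \subseteq \Spec\pi_0 R$, i.e. the distinguished basis of the Zariski topology. Next I would check that the pro-admissible Grothendieck topology of \cref{def:spec} restricts on this basis to the topology generated by finite covers $\{D(f_i) \hookrightarrow D(f)\}$ with $(f_1,\dots,f_n) = (f)$ in $\pi_0 R[f^{-1}]$ — this is exactly the content of \cref{def:classicalzariski}(c) transported along the identification, using \cref{lem:zariskilocality} to recognize effective epimorphisms of covers. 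Since sheaves on a space are determined by their values on any basis closed under finite intersections, this yields a canonical equivalence $\Shv(\Pro(\GZar)^{\ad}_{/R}) \simeq \Shv(\Spec\pi_0 R)$.

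For the structure sheaf, I would invoke \cref{lem:identificationofGstructure}: under the equivalence of \cref{obs:sheafislexfunct}, $\cO_R$ corresponds to the sheafification (with respect to the pro-admissible topology) of the forgetful functor $(\Pro(\GZar)^{\ad}_{/R})^{\op} \to \Pro(\GZar)^{\op} \simeq \CAlg$. Under the basis identification above, this forgetful presheaf sends $D(f) \mapsto R[f^{-1}]$. Since the localizations $R \to R[f^{-1}]$ already satisfy descent along finite Zariski covers — this is precisely the statement that affine Zariski descent holds for commutative ring spectra, which one can extract from \cite[Theorem 2.40]{lurieDerivedAlgebraicGeometry2011a} or verify directly via the \v{C}ech-complex computation — the presheaf $D(f) \mapsto R[f^{-1}]$ is already a sheaf on the basis, so sheafification is the identity there. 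Hence $\cO_R$ restricts on the basis to $D(f) \mapsto R[f^{-1}]$, and by uniqueness of the sheaf extension from a basis this identifies $\cO_R$ with $\cO$. Finally, unwinding \cref{thm:spec} and \cref{cor:geometry_adjunction} shows the unit map $R \to \Gamma_{\GZar}(\Spec R)$ is the evaluation of this sheaf at the terminal object $D(1) = \Spec\pi_0 R$, which is $R[1^{-1}] = R$, so it is an equivalence, consistent with the stated adjunction \eqref{eq:classicalaffinezariski}.

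The main obstacle is the Grothendieck topology comparison: one must carefully check that the ad hoc pro-admissible topology on $\Pro(\GZar)^{\ad}_{/R}$ — defined via pullbacks of admissible covers of representables along arbitrary maps $U \to \Yo(U')$ — agrees, after restricting to the basis of principal localizations, with the honest Zariski topology on $\Spec\pi_0 R$. This requires knowing that every pro-object $U \to R$ is suitably cofinally approximated by its finite-stage localizations so that covers of $U$ are refined by pullbacks of covers of representable stages; the surjectivity statement of \cref{lem:zariskilocality} (phrased there exactly in terms of \eqref{eq:zariskilocality} being a $\pi_0$-epimorphism) is the technical lever that makes this go through. Everything else — the description of pro-admissible morphisms, affine descent for $R[f^{-1}]$, and the passage from basis to full sheaf — is standard and can be cited from \cite{lurieDerivedAlgebraicGeometry2011a}.
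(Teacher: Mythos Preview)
Your proposal is correct and follows the natural strategy: identify $\Pro(\GZar)^{\ad}_{/R}$ with the poset of principal opens $D(f)$, match the pro-admissible topology with the Zariski topology, and then use \cref{lem:identificationofGstructure} together with affine Zariski descent to pin down the structure sheaf. The paper itself does not supply a proof of this proposition at all --- it simply defers to \cite[Theorem~2.40]{lurieDerivedAlgebraicGeometry2011a} for ``a complete account'' --- so your sketch is in fact more detailed than what appears in the paper, and it is essentially the argument one finds in Lurie's reference.

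One minor imprecision: your description of a pro-admissible morphism as ``a filtered limit of finite composites of localizations'' is not how the paper defines the notion. A pro-admissible morphism $U \to R$ is by definition a \emph{single} pushout (in $\Pro(\GZar)$) of an admissible morphism $\Yo(U') \to \Yo(X')$ along some map $R \to \Yo(X')$; concretely this is $R \to R[f^{-1}]$ where $f$ is the image in $\pi_0 R$ of the element being inverted in the representable stage. Your conclusion --- that $\Pro(\GZar)^{\ad}_{/R}$ is the poset of principal localizations --- is correct, but the route to it is a one-step base change, not a filtered procedure. This does not affect the validity of the rest of your argument.
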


In light of the computation above, one finds that the adjunction of \eqref{eq:classicalaffinezariski} recovers the traditional characterization of affine spectral schemes. In the next subsection, we explain how the Balmer spectrum of a 2-ring will similarly recover the underlying space of its absolute Zariski spectrum, for a suitably defined Zariski geometry on $\twoCAlg$.

\subsection{Example: spectral Dirac geometry}\label{ssec:dirac}

The definition below is a spectral variant of the underlying geometry in \cite{hesselholtDiracGeometryCommutative2023}.  

\begin{definition}\label{def:dirac}
    The \tdef{spectral Dirac geometry} consists of the following data:
        \begin{enumerate}
            \item $\GDir=(\CAlg^{\omega})^{\op}$, the opposite of the category of compact commutative ring spectra.
            \item Admissible morphisms correspond to localization maps $R \to R[x^{-1}]$ for $x \in \pi_{\ast}R$.
            \item   A finite collection $\{R\to R[x_i^{-1}]\}_{i \in I}$ is declared to generate a covering sieve if the set $\{x_i\}_{i \in I} \subset \pi_{\ast}R$ generates the unit ideal. 
        \end{enumerate}
      \end{definition}

            \begin{definition}
        A commutative ring spectrum $R$ is \tdef{Dirac-local} if $\pi_{2\ast}R$ is a local ring. A map of $R \to S$ of Dirac-local commutative ring spectra is a \tdef{Dirac-local map} if the induced map on $\pi_{2\ast}$ is a local map of rings.
      \end{definition}

      \begin{theorem}\label{thm:diracspectrum}
        Given a commutative ring spectrum $R$, the underlying $\infty$-topos of $\Spec^{\GDir}R$ is naturally identified with the $\infty$-topos of sheaves on the spectral space $\Spec^{h}\pi_{2\ast}R$ of homogeneous prime ideals of the graded ring $\pi_{2\ast}R$. The structure sheaf corresponding to $\cO_{R}$ is given by the unique sheaf of commutative ring spectra satisfying
        \[          D(f) \coloneq \{\mathfrak{p} \in \Spec^{h}\pi_{2\ast}R\ |\ f^{2} \notin \mathfrak{p}\} \mapsto R[f^{-1}]
          \]
          for $f\in\pi_{\ast}R$.
      \end{theorem}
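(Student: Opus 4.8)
The plan is to deduce this from the general theory (\cref{thm:spec}, \cref{cor:geometry_adjunction}) by computing the absolute spectrum $\Spec^{\GDir}(R)$ explicitly, following the template of the classical Zariski case. The essential observation is that the spectral Dirac geometry $\GDir$ differs from the classical Zariski geometry $\GZar$ only in that admissible localizations are now indexed by homogeneous elements $x \in \pi_\ast R$ rather than by $x \in \pi_0 R$; everything formal about the construction of $\Spec^{\GDir}$ carries over verbatim. So first I would identify the underlying $\infty$-topos of $\Spec^{\GDir}(R)$. Using the explicit presentation of \cref{def:spec}, the underlying topos is $\Shv(\Pro(\GDir)_{/R}^{\ad})$; as in the proof of \cite[Theorem 2.40]{lurieDerivedAlgebraicGeometry2011a}, the poset of admissible localizations of $R$ (modulo the identifications forced by the pro-admissible topology) is the poset of basic open subsets $D(f) = \{\mathfrak p : f^2 \notin \mathfrak p\}$ for $f$ homogeneous, and covering families correspond precisely to families $\{f_i\}$ generating the unit ideal of $\pi_{2\ast}R$. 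This poset-with-topology is exactly a basis for the Zariski topology on the homogeneous spectrum $\Spec^h \pi_{2\ast}R$ — here one uses that $\Spec^h$ of a commutative graded ring, graded over $\Z$ or equivalently over $\Z/2$ after folding odd-degree elements, is a spectral space whose lattice of quasicompact opens is generated by the $D(f)$. Hence $\Shv(\Pro(\GDir)_{/R}^{\ad}) \simeq \Shv(\Spec^h \pi_{2\ast}R)$, naturally in $R$.

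Next I would identify the structure sheaf. By \cref{lem:identificationofGstructure}, $\cO_R$ corresponds under the equivalence of \cref{obs:sheafislexfunct} to the sheafification of the forgetful functor $(\Pro(\GDir)^{\ad}_{/R})^{\op} \to \Pro(\GDir)^{\op} = \Ind(\GDir^{\op}) \simeq \CAlg$, i.e. to the presheaf sending an admissible localization $R \to R[f^{-1}]$ to $R[f^{-1}]$ itself. Under the identification of the indexing poset with basic opens $D(f)$, this is precisely the presheaf $D(f) \mapsto R[f^{-1}]$; one must check this is well-defined, i.e. that $D(f) = D(g)$ implies $R[f^{-1}] \simeq R[g^{-1}]$ canonically (this follows since $D(f) \subseteq D(g)$ iff $f^N \in (g)$ for some $N$, which makes $R[g^{-1}] \to R[f^{-1}]$ and conversely, forcing an equivalence), and that it is a sheaf after sheafification — but in fact the Zariski-descent statement for modules over commutative ring spectra shows the presheaf $D(f) \mapsto R[f^{-1}]$ is already a sheaf on the basis, hence extends uniquely to the claimed sheaf of commutative ring spectra on $\Spec^h\pi_{2\ast}R$. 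Naturality in $R$ follows from naturality of \cref{def:spec} together with functoriality of the above identifications.

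Finally, I would assemble these: \cref{thm:spec} gives $\Spec^{\GDir}_{(\GDir)_{\disc}}R \simeq \Spec^{\GDir}R$, and the two identifications above pin down the pair $(\Shv(\Spec^h\pi_{2\ast}R), \cO_R)$ with $\cO_R$ the asserted sheaf. The uniqueness clause (``the unique sheaf of commutative ring spectra satisfying \dots'') is automatic from the fact that the $D(f)$ form a basis of quasicompact opens closed under finite intersection and every sheaf is determined by its restriction to such a basis.

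The main obstacle I anticipate is the bookkeeping around gradings: one must be careful that ``homogeneous element of $\pi_\ast R$'' and the resulting $D(f) = \{f^2 \notin \mathfrak p\}$ really do cut out the Zariski topology on $\Spec^h$ of the $\Z$-graded ring $\pi_\ast R$ — the squaring appears because $f$ of odd degree is nilpotent modulo any homogeneous prime only up to $f^2$, and one should verify that passing to $\pi_{2\ast}R$ (equivalently, remembering only the $\Z/2$-grading, or noting $\Spec^h$ of a $\Z$-graded ring agrees with that of its even part in the relevant sense) does not lose or add points or opens. Relatedly, checking that the pro-admissible topology on $\Pro(\GDir)_{/R}^{\ad}$ matches the Zariski topology on the basis requires reconciling the sieve-theoretic covering condition of \cref{def:spec} with the ``unit ideal'' condition of \cref{def:dirac}, exactly as in the classical case but now in the graded setting. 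Once this dictionary is set up, the rest is a formal transcription of the classical Zariski computation.
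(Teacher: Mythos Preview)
Your proposal is correct and matches the paper's approach: the paper does not give a proof but simply remarks that \cref{thm:diracspectrum} ``requires only cosmetic modifications to the argument of \cite[Theorem 2.40]{lurieDerivedAlgebraicGeometry2011a}'' and refers to \cite{hesselholtDiracGeometryCommutative2023} for the graded bookkeeping. Your outline is precisely that adaptation, and you have correctly flagged the only nontrivial point (the $\Z$-graded versus even-graded dictionary and the squaring in the definition of $D(f)$) that distinguishes the Dirac case from the classical Zariski case.
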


            The proof of \cref{thm:diracspectrum} requires only cosmetic modifications to the argument of \cite[Theorem 2.40]{lurieDerivedAlgebraicGeometry2011a} which we leave to the reader, see also \cite[Remark 2.25, Proposition 2.35, Theorem 2.26]{hesselholtDiracGeometryCommutative2023}. The following result is a spectral incarnation of \cite[Theorem 2.26.(2)]{hesselholtDiracGeometryCommutative2023}, again utilizing the same proof. 

      \begin{lemma}\label{lem:diracstalks}
        Given a commutative ring spectrum $R$ and point $x^{\ast}\colon \Spec^{\GDir}R \to \cS$ corresponding to a homogeneous prime ideal $\mathfrak{p} \subset \pi_{2\ast}R$, the pullback $x^{\ast}\cO_{R} \in \Str_{\GDir}(\cS)$ corresponds to the Dirac-local ring spectrum $R_{\mathfrak{p}}$.
      \end{lemma}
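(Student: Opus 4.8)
The plan is to mirror the classical computation of stalks of the structure sheaf, using \cref{thm:diracspectrum} to transport the question to the spectral space $X \coloneq \Spec^{h}\pi_{2\ast}R$. Write $\mathfrak{p}\in X$ for the homogeneous prime of $\pi_{2\ast}R$ corresponding to the point $x^{\ast}$. Since $X$ is a sober (indeed spectral) space, its points are exactly the points of the $\infty$-topos $\Shv(X)$, and the inverse image functor $x^{\ast}$, applied to $\CAlg$-valued sheaves, is the stalk at $\mathfrak{p}$:
\[
    x^{\ast}\colon \Shv(X;\CAlg) \to \CAlg,\qquad \cc{F}\mapsto \cc{F}_{\mathfrak{p}} = \colim_{\mathfrak{p}\in U}\cc{F}(U),
\]
a filtered colimit over the open neighbourhoods of $\mathfrak{p}$, computed in $\CAlg$ (it is left exact, being filtered). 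Moreover $x^{\ast}\cO_{R}$ is the pullback of a $\GDir$-structure along a morphism in $\LTop$, hence is again a $\GDir$-structure on $\cS$; by the Dirac analogue of \cref{lem:zariskilocality} (same proof), $\GDir$-structures on $\cS$ are precisely the Dirac-local commutative ring spectra, so it remains to identify the stalk above, with its restriction maps, with $R_{\mathfrak{p}}$.

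By \cref{thm:diracspectrum}, the basic opens $D(f) = \{\mathfrak{q}\in X : f^{2}\notin\mathfrak{q}\}$ for $f\in\pi_{\ast}R$ (homogeneous) form a basis for the topology of $X$, and $\cO_{R}$ restricts to $R[f^{-1}]$ on $D(f)$, with restriction along $D(fg)\subseteq D(f)$ the localization map $R[f^{-1}]\to R[(fg)^{-1}]$. As $\mathfrak{p}$ is a homogeneous prime and $\pi_{\ast}R$ is graded-commutative, the set $S \coloneq \{f\in\pi_{\ast}R : f^{2}\notin\mathfrak{p}\}$ is multiplicatively closed, and the $D(f)$ with $f\in S$ are cofinal among the open neighbourhoods of $\mathfrak{p}$, with $D(f)\cap D(g) = D(fg)$. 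Hence
\[
    x^{\ast}\cO_{R}\simeq \colim_{f\in S}R[f^{-1}]\simeq R[S^{-1}],
\]
the second equivalence by the universal property of localization of commutative ring spectra; by definition this $R[S^{-1}]$ is the Dirac localization $R_{\mathfrak{p}}$, and the identification is compatible with the $\GDir$-structures because on each $D(g)$ with $g\in S$ both sides restrict to $R[g^{-1}]$. In particular $R_{\mathfrak{p}}$ is Dirac-local; one also checks this directly, since $\pi_{\ast}$ commutes with filtered colimits gives $\pi_{2\ast}R_{\mathfrak{p}}\cong (\pi_{2\ast}R)[S^{-1}]$, a graded ring in which every homogeneous element outside the homogeneous ideal $\mathfrak{p}(\pi_{2\ast}R)[S^{-1}]$ is invertible, so the latter is its unique maximal homogeneous ideal.

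The step I expect to be the main (though routine) obstacle is keeping straight the interplay between the admissible morphisms of $\GDir$, which invert arbitrary elements of $\pi_{\ast}R$, and the homogeneous primes of $\pi_{2\ast}R$ where the points of $X$ live: that $D(f)$ depends only on $f^{2}\in\pi_{2\ast}R$, that $\{f : f^{2}\notin\mathfrak{p}\}$ is multiplicatively closed by graded-commutativity, and that the resulting localization is the one denoted $R_{\mathfrak{p}}$. This is precisely the point at which the arguments of \cite[Theorem 2.26]{hesselholtDiracGeometryCommutative2023} and \cite[Theorem 2.40]{lurieDerivedAlgebraicGeometry2011a} transfer verbatim, and no input beyond \cref{thm:diracspectrum} and the Dirac analogue of \cref{lem:zariskilocality} is required.
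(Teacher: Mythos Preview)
Your proof is correct and follows exactly the approach the paper defers to: the paper gives no explicit argument for this lemma, instead citing it as a spectral incarnation of \cite[Theorem~2.26.(2)]{hesselholtDiracGeometryCommutative2023} ``utilizing the same proof,'' and your write-up is precisely that proof spelled out. Your care with the $\pi_{\ast}$ versus $\pi_{2\ast}$ bookkeeping is on point, and your closing remark already acknowledges the provenance.
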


      We also collect the lemma below for later applications.

      \begin{lemma}\label{lem:diraclocality}
        The category $\Str^{\loc}_{\GDir}(\cS)$ is naturally equivalent to the subcategory $\cc{C} \subset \CAlg$ of Dirac-local commutative ring spectra with Dirac-local maps between them.
      \end{lemma}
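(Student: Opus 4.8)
The plan is to realize $\Str^{\loc}_{\GDir}(\cS)$ as a (generally non-full) subcategory of $\Str_{\GDir_{\disc}}(\cS)$ and to identify precisely which objects and morphisms occur, following the blueprint of the classical Zariski case (\cref{lem:zariskilocality} and the surrounding discussion). Since $\GDir$ and $\GZar$ have the same underlying $\infty$-category $(\CAlg^{\omega})^{\op}$, \cref{ex:discretegeometry_local} together with \eqref{eq:calgislexfunct} gives a canonical equivalence $\Str_{\GDir_{\disc}}(\cS) \simeq \Ind(\GDir^{\op}) = \CAlg$ under which a commutative ring spectrum $R$ corresponds to the left-exact functor $\ev_R \coloneq \Map_{\CAlg}(-,R)\colon \GDir \to \cS$. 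By the remark following \cref{ex:discretegeometry_local}, the forgetful functor $\Str^{\loc}_{\GDir}(\cS) \to \Str_{\GDir_{\disc}}(\cS)$ is faithful, and being a $\GDir$-structure (resp.\ being a local morphism) is a \emph{property} of the underlying $\GDir_{\disc}$-structure (resp.\ of the underlying morphism). Hence it suffices to prove: (i) $\ev_R$ is a $\GDir$-structure if and only if $R$ is Dirac-local; and (ii) for Dirac-local $R,R'$ and a map $f\colon R \to R'$ in $\CAlg$, the induced morphism $\ev_R \to \ev_{R'}$ is local if and only if $f$ is Dirac-local. It is convenient to first record that $S[x^{-1}] \simeq S[(x^2)^{-1}]$ for any $x \in \pi_{\ast}S$ (inverting $x$ inverts $x^2$, and $x \cdot \bigl(x\cdot (x^2)^{-1}\bigr) = 1$ gives the converse), so that every admissible cover in $\GDir$ is refined by one of the form $\{S \to S[x_i^{-1}]\}_{i}$ with $x_i \in \pi_{2\ast}S$ homogeneous and $(x_i)_i = \pi_{2\ast}S$; this lets us phrase the covering and locality conditions entirely in terms of $\pi_{2\ast}$.

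For (i): by \cref{def:omnibusgeometries} and the universal property of localization, $\ev_R$ is a $\GDir$-structure precisely when, for every $S \in \CAlg^{\omega}$ and homogeneous $x_1,\dots,x_n \in \pi_{2\ast}S$ generating the unit ideal, every map $S \to R$ carries some $x_i$ to a unit of $\pi_{2\ast}R$. If $\pi_{2\ast}R$ is local this is immediate, since the images of the $x_i$ generate the unit ideal and hence one is invertible; conversely, testing against (a compact approximation of) the free $\bE_{\infty}$-ring on a homogeneous generator, and using that two distinct maximal homogeneous ideals of $\pi_{2\ast}R$ always together generate the unit ideal, one manufactures an admissible cover detecting any failure of locality. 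This is the verbatim Dirac analogue of \cref{lem:zariskilocality}. For (ii): again by \cref{def:omnibusgeometries}, $\ev_R \to \ev_{R'}$ is local exactly when, for every admissible $S \to S[x^{-1}]$ with $x \in \pi_{2\ast}S$, the square
\[
    \xymatrix{
        \Map_{\CAlg}(S[x^{-1}],R) \ar[r] \ar[d] & \Map_{\CAlg}(S[x^{-1}],R') \ar[d] \\
        \Map_{\CAlg}(S,R) \ar[r] & \Map_{\CAlg}(S,R')
    }
\]
is a pullback in $\cS$. As $\Map_{\CAlg}(S[x^{-1}],-) \hookrightarrow \Map_{\CAlg}(S,-)$ is the inclusion of those components inverting $x$, and $f$ always sends units to units, this says precisely that a map $S \to R$ inverts $x$ as soon as its composite with $f$ does; testing against free generators, this is the statement that $f\colon \pi_{2\ast}R \to \pi_{2\ast}R'$ reflects units on homogeneous elements, i.e.\ that $f$ is a Dirac-local map.

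Combining (i) and (ii), the equivalence $\Str_{\GDir_{\disc}}(\cS) \simeq \CAlg$ restricts to an equivalence between $\Str^{\loc}_{\GDir}(\cS)$ and the subcategory $\cc{C} \subseteq \CAlg$ of Dirac-local commutative ring spectra and Dirac-local maps; naturality is inherited from that of the ambient equivalence. The only genuinely technical point is the converse direction of (i): producing, from a ring spectrum with non-local $\pi_{2\ast}$, an explicit compact $S$ and admissible cover witnessing the failure of the effective-epimorphism condition, which requires approximating free $\bE_{\infty}$-rings on generators in fixed degrees by compact ones. This subtlety is already present in the classical situation and is handled exactly as in the proof of \cref{lem:zariskilocality} (equivalently, of \cite[Theorem 2.40]{lurieDerivedAlgebraicGeometry2011a}); the remainder is formal manipulation of \cref{def:omnibusgeometries} and the $\Ind$-object description of $\CAlg$.
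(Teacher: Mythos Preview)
Your proof is correct and follows essentially the same approach as the paper: identify objects via the Dirac analogue of \cref{lem:zariskilocality}, and identify local morphisms by testing the pullback-square criterion against free $\bE_\infty$-rings. One small remark: the free $\bE_\infty$-ring on a generator in a fixed degree is already compact (it corepresents $\Omega^{\infty-n}$, which preserves filtered colimits), so no approximation step is actually needed in the converse direction of~(i).
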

      \begin{proof}
        The verification that any object $\cO \in \Str^{\loc}_{\GDir}(\cS)$ corresponds to a Dirac-local commutative ring spectrum utilizes the exact same argument as \cref{lem:zariskilocality}. Given a map of Dirac-local ring spectra $R \to S$, an arbitrary map $R' \to S$ and an element $f \in \pi_{\ast}R'$ where $R'$ is compact, the following is a pullback square
        \[\xymatrix{
            \Map_{\CAlg}(R'[f^{-1}], R) \ar[r] \ar[d] & \Map_{\CAlg}(R'[f^{-1}], S) \ar[d] \\
            \Map_{\CAlg}(R',R) \ar[r] & \Map_{\CAlg}(R',S)
          }
        \]
        if and only if any map $R' \to R$ inverts $f$ in $\pi_{\ast}S$ only when it inverts $f$ in $\pi_{\ast}R$. Allowing $R'$ to range over free commutative ring spectra, we find that this is the case if and only if $R \to S$ is local on $\pi_{2\ast}$. 
      \end{proof}
\vspace{5pt}

\section{Higher Zariski Geometry}\label{sec:zariski}

In this section, we define the Zariski geometry on 2-rings and demonstrate that the resultant spectrum recovers the Balmer spectrum of tensor-triangular geometry. We will supply a quick recollection of the requisite material on Balmer spectra in \cref{ssec:tt_spc}. 

We will then go on to study the local structure of the structure sheaf on the Zariski spectrum of a 2-ring, recovering a variant of the notion of locality previously described in \cite{balmerSpectraSpectraSpectra2010} in the tensor-triangulated setting. We finally conclude by comparing the Zariski geometry on 2-rings with the classical Zariski and Dirac geometries on ring spectra via the functor of perfect complexes. Using this, we recover a classical computational tool described in \cite{balmerSpectraSpectraSpectra2010}, and use this to compute the Zariski spectra of 2-rings which arise as the perfect complexes of ordinary commutative rings. 

\subsection{The Zariski geometry on \texorpdfstring{$\twoCAlg$}{2CAlg}}\label{ssec:zariski_2zariski}

The following is the central new definition of this paper.

\begin{definition}\label{def:zariski}
    The following data defines the \tdef{Zariski geometry} on commutative 2-rings.
        \begin{enumerate}
            \item $\mdef{\GBal}=(\twoCAlg^{\omega})^{\op}$ is the opposite of the $\infty$-category of compact $2$-rings.
            \item The class of admissible morphisms $\mdef{\G_{\Bal}^{\ad}}\subset \GBal$ correspond to the Karoubi quotients $\cc{K} \to \cc{K}' \in (\twoCAlg^{\omega})^{[1]}$. 
            \item A finite collection of admissible morphisms $\{f_i\colon\cc{K}\to\cc{K}_{i}\}_{i \in I}$ is declared to generate a covering sieve if $\bigcap_{i\in I}\ker f_{i} \subseteq \sqrt{0}$.
            \end{enumerate}
          \end{definition}      

 We now prove \cref{thmalph:zariski2rings}.

\begin{theorem}\label{thm:2zariski_geometry} 
    The Zariski data $\GBal$ satisfies the conditions of \cref{def:geometry}, i.e., it is a geometry.
\end{theorem}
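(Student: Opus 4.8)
The plan is to check the five conditions of \cref{def:geometry} for $(\GBal,\G_{\Bal}^{\ad},\tau)$ one at a time, feeding off the localization theory of \cref{ssec:2rings_localization}. Condition \ref{i:g} is immediate: since $\twoCAlg$ is compactly generated (\cref{prop:2ring_compactgen}), the subcategory $\twoCAlg^{\omega}$ of compact objects is idempotent complete and closed under finite colimits in $\twoCAlg$, so $\GBal=(\twoCAlg^{\omega})^{\op}$ is idempotent complete and admits finite limits. It is convenient to note at the outset that, by \cref{prop:quotientposet}, a Karoubi quotient $\cc{K}\to\cc{K}/\cc{I}$ of a compact $2$-ring has compact target exactly when $\cc{I}$ is principal; hence the admissible morphisms are precisely the principal Verdier localizations $\cc{K}\to\cc{K}/\langle a\rangle$ with $\cc{K}$ compact and $a\in\cc{K}$, and since $\id_{\cc{K}}=L_{\langle 0\rangle}$ and Karoubi quotients compose (\cref{cor:retractsandleftcancel}), $\G_{\Bal}^{\ad}$ is a wide subcategory of $\GBal$.

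Conditions \ref{i:bc}, \ref{i:canc}, \ref{i:ret} then follow directly from the structure theory of Karoubi quotients. A base change in $\GBal$ along a morphism $g$ is computed as a pushout along the corresponding $g\colon\cc{K}\to\cc{L}$ in $\twoCAlg^{\omega}$ (which agrees with the pushout in $\twoCAlg$, finite colimits of compact objects being compact), and \cref{cor:basechange} identifies the pushout of $\cc{K}\to\cc{K}/\langle a\rangle$ along $g$ with $\cc{L}\to\cc{L}/\langle g(a)\rangle$, again a principal Verdier localization of a compact $2$-ring, hence admissible; this gives \ref{i:bc}. Conditions \ref{i:canc} and \ref{i:ret} translate, under $\GBal=(\twoCAlg^{\omega})^{\op}$, into the assertions that Karoubi quotients are closed under left cancellation and under retracts, both supplied by \cref{cor:retractsandleftcancel}; for \ref{i:ret} one notes in addition that the source and target of a retract of an admissible morphism are retracts of compact $2$-rings, hence compact, so the retract is itself admissible.

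The substantive point is condition \ref{i:gen}: that $\tau$ is a Grothendieck topology generated by admissible morphisms. Generation by admissible morphisms holds by construction, so it remains to verify that the families declared covering in \cref{def:zariski}(c) satisfy the pretopology axioms. The singleton $\{\id_{\cc{K}}\}$ is covering since $\ker(\id_{\cc{K}})=\langle 0\rangle\subseteq\sqrt{0}$. For pullback-stability: given a covering family $\{\cc{K}\to\cc{K}/\langle a_i\rangle\}_{i\in I}$, so that $\bigcap_{i}\langle a_i\rangle\subseteq\sqrt{0}$, and any $g\colon\cc{K}\to\cc{L}$, the pulled-back family is $\{\cc{L}\to\cc{L}/\langle g(a_i)\rangle\}_{i\in I}$ by \cref{cor:basechange}; since $\bigotimes_{i}a_i$ lies in every $\langle a_i\rangle$ it lies in $\sqrt{0}$, so $\bigotimes_{i}g(a_i)=g\bigl(\bigotimes_{i}a_i\bigr)$ lies in $\sqrt{0}_{\cc{L}}$ (the preimage of a radical ideal under a morphism of $2$-rings being a radical ideal), and then \cref{lem:radicalgeneration} yields
\[
\bigcap_{i}\langle g(a_i)\rangle\;\subseteq\;\bigcap_{i}\sqrt{g(a_i)}\;=\;\sqrt{\textstyle\bigotimes_{i}g(a_i)}\;=\;\sqrt{0}_{\cc{L}},
\]
so the pulled-back family is again covering.

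Finally, for transitivity: suppose $\{\cc{K}\xrightarrow{q_i}\cc{K}_i\}_{i}$ is covering and, for each $i$, $\{\cc{K}_i\xrightarrow{r_{ij}}\cc{K}_{ij}\}_{j}$ is covering. The composites $r_{ij}q_i$ are Karoubi quotients between compact $2$-rings (\cref{cor:retractsandleftcancel}), so $\{\cc{K}\to\cc{K}_{ij}\}_{i,j}$ is a legitimate family of admissible morphisms; writing $\cc{I}_i=\ker q_i$, $\cc{J}_{ij}'=\ker r_{ij}$ and $\cc{J}_{ij}=\ker(r_{ij}q_i)=q_i^{-1}(\cc{J}_{ij}')$, one combines the elementary facts that $\sqrt{-}$ commutes with finite intersections, that $\sqrt{q_i^{-1}(-)}=q_i^{-1}\bigl(\sqrt{-}\bigr)$, and that $q_i^{-1}(-)$ commutes with intersections, together with the two covering hypotheses $\bigcap_{j}\cc{J}_{ij}'\subseteq\sqrt{0}_{\cc{K}_i}$ and $\bigcap_{i}\cc{I}_i\subseteq\sqrt{0}_{\cc{K}}$, to get
\[
\bigcap_{i,j}\cc{J}_{ij}\;\subseteq\;\bigcap_{i,j}\sqrt{\cc{J}_{ij}}\;=\;\bigcap_{i}q_i^{-1}\Bigl(\textstyle\bigcap_{j}\sqrt{\cc{J}_{ij}'}\Bigr)\;=\;\bigcap_{i}q_i^{-1}\bigl(\sqrt{0}_{\cc{K}_i}\bigr)\;=\;\bigcap_{i}\sqrt{\cc{I}_i}\;=\;\sqrt{0}_{\cc{K}},
\]
so the composite family is covering. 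This completes the verification. The only place where any real work is needed is condition \ref{i:gen}, and there the main (still routine) obstacle is the bookkeeping in the transitivity step, where one must keep track of the passage between ideals of $\cc{K}$, of $\cc{K}_i$, and of the various quotients; everything else is a direct appeal to the results of \cref{ssec:2rings_localization}.
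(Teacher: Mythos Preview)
Your proof is correct and, for conditions \ref{i:g}, \ref{i:bc}, \ref{i:canc}, \ref{i:ret}, proceeds exactly as the paper does, appealing to \cref{prop:2ring_compactgen}, \cref{cor:basechange}, and \cref{cor:retractsandleftcancel}. The one place you go beyond the paper is in your treatment of condition \ref{i:gen}: the paper's proof is silent on the pretopology axioms, taking $\tau$ to be the Grothendieck topology \emph{generated} by the declared families of admissible morphisms, so that condition \ref{i:gen} holds by fiat. Your explicit verification that the declared families are stable under pullback and composition is a genuine addition; it is not needed for the bare statement that $\GBal$ is a geometry, but it does confirm that the covering families in \cref{def:zariski}(c) form a basis for $\tau$, which is implicitly used later (e.g.\ in the identification of the pro-admissible topology and in \cref{prop:sheavesonspec}). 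Your bookkeeping in the transitivity step is fine; the identities $\sqrt{q_i^{-1}(-)}=q_i^{-1}(\sqrt{-})$ and $q_i^{-1}(\sqrt{0}_{\cc{K}_i})=\sqrt{\cc{I}_i}$ are straightforward but worth having written down.
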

\begin{proof}
Since $\twoCAlg$ is compactly generated, $\twoCAlg^{\omega}$ admits all finite colimits. It will thus suffice to show that admissible morphisms are closed under base change, left cancellation, and retracts formed in $\twoCAlg^{[1]}$; these properties are demonstrated in \cref{cor:basechange} and \cref{cor:retractsandleftcancel}, respectively. 
\end{proof}

For use in the next section, we record an alternative characterization of the admissibility structure for the Zariski geometry.

\begin{lemma}\label{lem:generatingclass}
  The class of admissible morphisms specified in part (b) of \cref{def:zariski} may be equivalently characterized as:
  \begin{enumerate}
  \item The class of principal Verdier localizations $\cc{K} \to \cc{K}/\langle a \rangle$ for $a \in \cc{K}$.
  \item The coarsest admissible class in $(\twoCAlg^{\omega})^{\op}$ containing the universal principal quotient $\Sp^{\omega}\{x\} \to \Sp^{\omega}$.
  \end{enumerate}
  \end{lemma}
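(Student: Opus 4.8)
The plan is to treat the two characterizations separately: characterization (1) does the real work, and (2) then follows formally.

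For (1), the content is that for $\cc{K}\in\twoCAlg^{\omega}$ the Karoubi quotient $\cc{K}/\cc{I}$ is again a compact $2$-ring exactly when the tt-ideal $\cc{I}$ is principal. (Any Karoubi quotient $f$ is equivalent to $L_{\ker f}$, and $\ker f$ is a tt-ideal, so the only datum is the kernel.) For the ``if'' direction, given $a\in\cc{K}$ I would invoke the cocartesian square of \cref{cor:universalquotient}, which presents $\cc{K}/\langle a\rangle$ as the pushout of $\Sp^{\omega}\{x\}\xrightarrow{x\mapsto 0}\Sp^{\omega}$ along $\Sp^{\omega}\{x\}\xrightarrow{x\mapsto a}\cc{K}$. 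Here $\Sp^{\omega}$ is the initial object of $\twoCAlg$, hence compact, and $\Sp^{\omega}\{x\}$ is compact because \cref{cor:free_universal} identifies $\Map_{\twoCAlg}(\Sp^{\omega}\{x\},-)$ with $(-)^{\simeq}\colon\twoCAlg\to\cS$, which preserves filtered colimits; since compact objects are closed under finite colimits, $\cc{K}/\langle a\rangle$ is then compact whenever $\cc{K}$ is. For the ``only if'' direction I would use the general fact that, over a compact object $X$ of a compactly generated $\infty$-category $\cc{C}$, an object $X\to Y$ of $\cc{C}_{X/}$ is compact as soon as $Y$ is compact in $\cc{C}$ --- seen by combining the fiber sequence $\Map_{\cc{C}_{X/}}(X\to Y,\,X\to Z)\to\Map_{\cc{C}}(Y,Z)\to\Map_{\cc{C}}(X,Z)$ with the facts that filtered colimits in $\cc{C}_{X/}$ are computed underneath $X$ (filtered diagrams being connected) and that filtered colimits commute with finite limits in spaces. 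Applying this with $\cc{C}=\twoCAlg$, $X=\cc{K}$, $Y=\cc{K}/\cc{I}$, compactness of $\cc{K}/\cc{I}$ forces $(\cc{K}\to\cc{K}/\cc{I})$ to be compact in $\twoCAlg_{\cc{K}/}$, whence \cref{prop:quotientposet} forces $\cc{I}$ to be principal. The two directions together identify $\G_{\Bal}^{\ad}$ with the class of principal Verdier localizations $\cc{K}\to\cc{K}/\langle a\rangle$.

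For (2), note first that the universal principal quotient is itself such a localization, $\Sp^{\omega}\{x\}\to\Sp^{\omega}\{x\}/\langle x\rangle\simeq\Sp^{\omega}$, so by (1) it belongs to $\G_{\Bal}^{\ad}$, which is an admissible class by \cref{thm:2zariski_geometry}. Conversely, suppose given an admissible class in $\GBal$ that contains this morphism. For $\cc{K}\in\twoCAlg^{\omega}$ and $a\in\cc{K}$, the square of \cref{cor:universalquotient}, read in $\GBal=(\twoCAlg^{\omega})^{\op}$, is cartesian and exhibits the admissible morphism attached to $\cc{K}\to\cc{K}/\langle a\rangle$ as a base change of the universal principal quotient along the morphism classified by $a$; hence it too lies in the given class. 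Since by (1) every admissible morphism arises this way, the given class contains all of $\G_{\Bal}^{\ad}$, so $\G_{\Bal}^{\ad}$ is the coarsest admissible class containing the universal principal quotient.

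The step most in need of care is the slice-compactness argument in (1): one must check that the diagrams in play are genuinely filtered, so that colimits in $\twoCAlg_{\cc{K}/}$ really do agree with those in $\twoCAlg$, and keep track of the basepoints in the fiber sequence. The only other point to watch is the routine variance bookkeeping, so that ``base change in $\GBal$'' is consistently matched with ``pushout in $\twoCAlg^{\omega}$'' when \cref{cor:universalquotient} is transported across the opposite category.
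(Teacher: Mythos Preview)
Your proof is correct and follows essentially the same approach as the paper: both use \cref{prop:quotientposet} to relate principality of the ideal to compactness in the slice, and \cref{cor:universalquotient} to realize every principal quotient as a base change of the universal one. The paper's argument is terser---it invokes ``the compactness of $\cc{K}, \cc{K}'$ along with the latter part of \cref{prop:quotientposet}'' for (a) and cites (a) plus \cref{cor:universalquotient} for (b)---whereas you explicitly supply the slice-compactness step (via the fiber sequence) and the converse direction (via the pushout of compacts), both of which the paper leaves implicit.
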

  \begin{proof}
   Given any admissible morphism $\cc{K}\to \cc{K}' \in \twoCAlg^{\omega}$, the compactness of $\cc{K}, \cc{K}'$ along with the latter part of \cref{prop:quotientposet} implies that $\cc{K}'$ must be associated to a compact and hence principal Karoubi quotient of $\cc{K}$; this yields claim (a). Claim (b) is now an immediate application of part (a) and \cref{cor:universalquotient}.
  \end{proof}

\begin{remark} The previous lemma shows that the nomenclature in \cref{def:zariski} is Hochster dual to the definition of the \emph{Zariski frame} provided in \cite[\S 2]{KockPitsch17}, and this may cause some confusion. We view the results of \cref{ssec:zariski_comparison} as justification for this choice.
\end{remark}

\subsection{Comparison with the Balmer spectrum}\label{ssec:tt_spc}

In this subsection we will compare the $\infty$-topos obtained via the constructions of the previous subsection with the Balmer spectrum. We first review the lattice-theoretic definition of the Balmer spectrum given in \cite[\S 3]{Aoki2023}, which essentially appeared in \cite{KockPitsch17}. The following is a natural continuation of \cref{rec:latticedefinitions}.

\begin{definition}  Recall first that a lattice, i.e., a poset admitting finite limits and colimits\footnote{This is usually called a \emph{bounded} lattice, since we require the empty limit $\top$ and colimit $\bot$ to exist.}, is said to be \tdef{distributive} if finite limits distribute over finite colimits; in other words, finite meets distribute over finite joins. This is equivalent to requiring that finite colimits distribute over finite limits, see \cite[Lemma 1.5]{Johnstone1986-th}. 
\begin{enumerate}
   \item  A morphism of distributive lattices $f\colon L \to L'$ is a functor preserving finite limits and colimits, i.e., an order-preserving function preserving finite meets and joins.
  \item A morphism of frames $f\colon F \to F'$ is a left adjoint functor which preserves finite limits, i.e., an order-preserving function preserving finite meets and arbitrary joins. 
  \item A morphism of coherent frames $f \colon F \to F'$ is a morphism of frames which moreover preserves compact objects. 
  \end{enumerate}
We write $\mdef{\mm{DLat}},\ \mdef{\mm{Frm}},\ \mdef{\mm{Frm^{coh}}}$ to refer to the categories of distributive lattices, frames, and coherent frames (if necessary, see \cref{rec:latticedefinitions}), respectively.
\end{definition}

\begin{definition} Let $L \in \mm{DLat}$. An \tdef{ideal} of $L$ is a downward-closed subset $I \subseteq L$ which is closed under finite colimits (and hence nonempty as it necessarily contains the minimal element $\bot$). We write $\mdef{\Idl(L)}$ to refer to the set of ideals of $L$ partially ordered by inclusion.
\end{definition}

The assignment $L \mapsto \mm{Idl}(L)$ is covariantly functorial in $\mm{DLat}$; given a morphism $f\colon L \to L' \in \mm{DLat}^{[1]}$, one may send an ideal $I \subseteq L$ to the smallest ideal containing $f(I)$ in $L'$. The poset of ideals of a distributive lattice turns out to be a coherent frame, which we note in the remark below.

\begin{remark}\label{rem:idl-is-accessible}
  The coherent frame $\mm{Idl}(L)$ is naturally identified with $\Ind(L) = \Fun^{\lex}(L^{\op}, \cS)$ via the map which assigns to an ideal $I \subseteq L$ the finite product-preserving functor $L^{\op} \to \cS$ via $I \mapsto \ast$ and $L \setminus I \mapsto \emptyset$. In this way, $\mm{Idl}(L)$ may be regarded as the completion of $L$ under filtered colimits.
\end{remark}

\begin{recollection}\label{rec:stoneduality}
  The details surrounding the basic theory recalled here may be found in \cite[\S II.3]{Johnstone1986-th}. Let us review the Stone duality for distributive lattices, which manifests itself in the following diagram of adjoints
  \[
    \xymatrix{
      \mm{DLat} \ar@<.5ex>[r]^-{\mm{Idl}} & \mm{Frm^{coh}} \ar@<.5ex>[l]^-{(-)^{\omega}}  \subseteq \mm{Frm} \ar@<.5ex>[r]^-{|-|} & \mm{Top}^{\op} \ar@<.5ex>[l]^-{\cc{U}} \\
    }
  \]
  where:
  \begin{enumerate}
  \item $\mm{Top}$ is the category of topological spaces. 
  \item $\cc{U}$ sends a topological space $X$ to its poset of open subsets. This is a frame as it admits arbitrary colimits (given by taking unions) and finite limits (given by intersections) and the former clearly distribute over the latter.
  \item $|-|$ sends a frame $F$ to the space whose underlying set of points is given by $\Fun^{\mm{L}}(F, [1])^{\simeq}$ the frame maps from $F$ to $[1]$, and whose topology is given by open sets $U_{a} \coloneq \{f \in \Fun^{L}(F, [1]) \ | \ f(a) = 1\}$ for every $a \in F$.  
  \end{enumerate}
  The adjunctions above admit the following features, which are collectively referred to as the Stone duality for frames and lattices. 
  \begin{enumerate}
  \item[(i)] The adjunction $|-| \dashv \cc{U}$ restricts to an equivalence between the subcategory $\mdef{\mm{Frm^{spa}}} \subseteq \mm{Frm}$ of \tdef{spatial frames}, namely those frames which are in the essential image of $\cc{U}$, and the opposite of the full subcategory $\mdef{\mm{Sob}}\subseteq \mm{Top}$ of \tdef{sober spaces}, namely those spaces whose irreducible closed subsets admit unique generic points.
  \item[(ii)] $\mm{Frm^{coh}} \subseteq \mm{Frm^{spa}}$, and the aforementioned adjunction supplies an equivalence between $\mm{Frm^{coh}}$ and the opposite of the subcategory $\mdef{\mm{Top^{spec}}}\subseteq \mm{Sob}$ of \tdef{spectral spaces}, namely the subcategory with objects given by those sober spaces which are quasicompact with a basis of quasicompact open subsets closed under finite intersections, and morphisms given by quasicompact maps.
    \item[(iii)] The adjoints $\mm{Idl} \dashv (-)^{\omega}$ are mutually inverse, and thus $\mm{DLat} \simeq \mm{Frm^{coh}}$. In particular, $\mm{DLat}^{\op}$ and $\mm{Top^{spec}}$ are equivalent via the composite $(-)^{\omega} \circ \cc{U}$ which sends a spectral space to its distributive lattice of quasicompact open subsets.
    \end{enumerate}
    
    Note that $\mm{DLat}$ admits an involutive duality given by $L \mapsto L^{\op}$, i.e., by flipping the partial order and interchanging joins and meets. This induces an involutive duality $(-)^{\vee}$ on $\mm{Frm^{coh}}$ and $\mm{Top^{spec}}$ known as \tdef{Hochster duality}. On the level of frames, Hochster duality simply sends a coherent frame $F$ to the ideal frame of $(F^{\omega})^{\op}$. On the level of topological spaces, Hochster duality sends a spectral space $X$ to the space $X^{\vee}$ whose underlying set is the same as $X$, and whose topology is generated by an open basis comprising the complements of quasicompact open subsets of $X$. We refer to \cite[\S 1.2]{KockPitsch17} for details.
\end{recollection}

We may now define the Balmer spectrum of a 2-ring, following \cite[\S 3.4]{KockPitsch17}. 

\begin{definition}\label{def:balmer}
  Let $\cc{K} \in \twoCAlg$. By \cref{prop:latticeiscoherent} the poset of radical ideals $\Rad(\cc{K})$ is a coherent frame, hence it is spatial. The \tdef{Balmer spectrum} of $\cc{K}$, denoted by $\mdef{\Spc \cc{K}}$, is the spectral space associated to the Hochster dual frame $\Rad(\cc{K})^{\vee}$. Unwinding the definitions, this is the topological space whose points are labelled by prime tt-ideals of $\cc{K}$, and whose topology is generated by the open subsets $U(a) \coloneq \{\cc{P} \subseteq \cc{K} \text{ prime }| \ a \in \cc{P}\}$ for $a \in \cc{K}$. 
 \end{definition}
 
 Before approaching the proof of \cref{thmalph:comparison}, we will require a brief recollection on the connection between $\infty$-topoi, frames, and topological spaces.

 \begin{recollection}\label{rec:0topoi}
   Given a frame $F$, consider the canonical Grothendieck topology on $F$; namely, a collection $\{c_{\alpha} \to c\}_{\alpha \in A}$ generates a covering sieve if $\bigvee_{A} c_{\alpha} = c$ in $F$. We write $\Shv(F)$ without any decorations to denote the category of sheaves on a given frame $F$ with respect to this Grothendieck topology. From this definition, the category of sheaves on a topological space $X$ is manifestly identified with the category $\Shv(\cc{U}(X))$ of sheaves on its associated frame of open subsets.

   Given an $\infty$-topos $\cc{X}$, we write $\mdef{\mm{Sub}(\cc{X})}$ to refer to its poset of \tdef{subterminal objects}, namely those objects which correspond to elements of $\Fun^{\mm{R}}(\cc{X}, \tau_{\leq -1}\cS)$ under the equivalence $\cc{X} \simeq \Shv(\cc{X})$. \cite[Theorem 6.4.2.1, \S 6.4.5]{LurieHTT} supply the following adjunction
   \[
     \Shv(-)\colon \mm{Frm} \rightleftarrows \LTop \noloc\mm{Sub}(-)
   \]
   whose left adjoint is fully faithful. The essential image of $\Shv(-)$ is spanned by the \emph{0-localic $\infty$-topoi}, see Section 6.4.5 of \emph{loc.\ cit.}\ for details.
 \end{recollection}

We are ready to prove \cref{thmalph:comparison}.
 
\begin{theorem}\label{thm:2zariski_balmerspectrum}
    For any $\cc{K} \in \twoCAlg$ there is an equivalence $\Spec^{\GBal} \cc{K} \simeq \Shv(\Spc\cc{K})$ of underlying $\infty$-topoi. 
\end{theorem}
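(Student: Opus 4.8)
The strategy is to unwind Lurie's explicit model for the absolute spectrum (\cref{def:spec}) and then match the resulting posetal site, via the comparison lemma for sites, with a basis of quasicompact opens of $\Spc\cc{K}$. Since $\twoCAlg$ is compactly generated (\cref{prop:2ring_compactgen}), there is a canonical equivalence $\Pro(\GBal)\simeq\twoCAlg^{\op}$ under which $\cc{K}$ corresponds to an object of $\Pro(\GBal)$. Using \cref{cor:universalquotient} and \cref{cor:basechange}, a morphism of $\Pro(\GBal)$ with target $\cc{K}$ is pro-admissible exactly when it corresponds to a principal Karoubi quotient $\cc{K}\to\cc{K}/\langle a\rangle$ with $a\in\cc{K}$; and since $\cc{K}/-$ is fully faithful (\cref{prop:quotientposet}), the comma $\infty$-category $\Pro(\GBal)^{\ad}_{/\cc{K}}$ is the poset $\mm{Prin}(\cc{K})^{\op}$, ordered so that $[\cc{K}/\langle a\rangle]\le[\cc{K}/\langle b\rangle]$ iff $\langle b\rangle\subseteq\langle a\rangle$. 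Hence the underlying $\infty$-topos of $\Spec^{\GBal}\cc{K}$ is $\Shv$ of the posetal site $(\mm{Prin}(\cc{K})^{\op},\tau)$, where $\tau$ is the pro-admissible topology.

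Next I would compute $\tau$. Unwinding the pro-admissible topology of \cref{def:spec} in terms of \cref{def:zariski}\,(c), using essential surjectivity of Karoubi quotients and the identification $\cc{K}/\langle a,b\rangle\simeq(\cc{K}/\langle a\rangle)/\langle\bar b\rangle$ from \cref{cor:basechange}, one finds that $\tau$ is generated by the finite families $\{\langle a\oplus b_i\rangle\to\langle a\rangle\}_{i\in I}$ for which $b_1\otimes\dotsb\otimes b_n$ is $\otimes$-nilpotent in $\cc{K}/\langle a\rangle$, i.e.\ $b_1\otimes\dotsb\otimes b_n\in\sqrt a$ (\cref{lem:radicalgeneration} lets one pass freely between intersections of ideals and their radicals). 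I would then introduce the order-preserving map $\rho\colon\mm{Prin}(\cc{K})^{\op}\to\cc{U}(\Spc\cc{K})$, $\langle a\rangle\mapsto U(a)=\{\cc{P}\mid a\in\cc{P}\}$. From the identities $U(x)\cap U(y)=U(x\oplus y)$ and $U(x)\subseteq U(y)\iff y\in\sqrt x$, together with the fact that $\sqrt a$ is the intersection of the prime tt-ideals containing $a$ (spatiality of $\Rad(\cc{K})$, cf.\ \cref{prop:latticeiscoherent} and \cref{rec:stoneduality}), the computation of $\tau$ translates into: $\{\langle a\oplus b_i\rangle\to\langle a\rangle\}$ is a $\tau$-cover precisely when $\{U(a\oplus b_i)\}$ covers $U(a)$. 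Since $\Spc\cc{K}$ is a spectral space, every open cover of a quasicompact open is refined by a finite cover by basic opens $U(a)$, so $\rho$ is continuous and cocontinuous, it is essentially surjective onto the basis of quasicompact opens of $\Spc\cc{K}$ (which is closed under finite intersection), and a family is $\tau$-covering iff its $\rho$-image is covering. In particular $\langle a\rangle$ and $\langle b\rangle$ acquire the same sheafification whenever $\sqrt a=\sqrt b$, as both legs of the cospan $\langle a\rangle\to\langle a\otimes b\rangle\leftarrow\langle b\rangle$ are then $\tau$-covering.

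Finally, applying the comparison lemma for $\infty$-sheaves — a cover-dense, cover-reflecting morphism of sites induces an equivalence on sheaves — together with the identification of sheaves on a space with sheaves on a basis of quasicompact opens, one concludes
\[
  \Spec^{\GBal}\cc{K}\;\simeq\;\Shv\bigl(\mm{Prin}(\cc{K})^{\op},\tau\bigr)\;\simeq\;\Shv\bigl(\cc{U}(\Spc\cc{K})\bigr)\;=\;\Shv(\Spc\cc{K})
\]
of underlying $\infty$-topoi. Naturality in $\cc{K}$ follows since a morphism $\cc{K}\to\cc{L}$ induces compatible maps $\mm{Prin}(\cc{K})\to\mm{Prin}(\cc{L})$ and $\Spc\cc{L}\to\Spc\cc{K}$, with the comparison equivalence functorial in the site. (In the spirit of the attribution to \cite{KockPitsch17}: the site $(\mm{Prin}(\cc{K})^{\op},\tau)$ presents the coherent frame whose compact objects are $\mm{Prin}(\cc{K})$ modulo the radical relation, i.e.\ the Zariski frame of the tt-category $\Ho\cc{K}$, which is Hochster dual to $\Rad(\cc{K})$; both $\infty$-topoi above are $0$-localic by \cref{rec:0topoi}, so the equivalence can alternatively be obtained by matching their frames of subterminal objects, both equal to $\Rad(\cc{K})^{\vee}=\cc{U}(\Spc\cc{K})$.) The only genuinely technical point is this last comparison: one needs a form of the comparison lemma valid for $\infty$-sheaves and for a morphism of sites which is merely a surjection of posets onto a basis rather than a full subcategory inclusion; everything else is the lattice-theoretic bookkeeping above together with the Stone/Hochster duality recalled in \cref{rec:stoneduality}.
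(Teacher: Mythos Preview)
Your proposal is correct and shares its first half with the paper: both identify $\Pro(\GBal)^{\ad}_{/\cc{K}}$ with the poset $\mm{Prin}(\cc{K})^{\op}$ (isolated in the paper as \cref{lem:admissibleprincipal}) and compute the pro-admissible topology in terms of radicals exactly as you describe. The divergence is only in the final step. You appeal to a comparison lemma for $\infty$-sites, mapping $\mm{Prin}(\cc{K})^{\op}$ onto the basis of quasicompact opens of $\Spc\cc{K}$ via $\rho$, and you flag the technical cost: one needs the comparison lemma for a surjection of posets, not a full-subcategory inclusion. The paper instead takes precisely the route you outline in your closing parenthetical: both $\infty$-topoi are $0$-localic, so by \cref{rec:0topoi} it suffices to match their frames of subterminal objects. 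Concretely, the paper identifies $(-1)$-truncated sheaves on $(\mm{Prin}(\cc{K})^{\op},\tau)$ with upward-closed subsets $S\subseteq\mm{Prin}(\cc{K})$ stable under $\tau$-covers, and then checks by hand that direct image and pullback along the radical map $\mm{Prin}(\cc{K})\twoheadrightarrow\Rad(\cc{K})^{\omega}$ give mutually inverse bijections between such $S$ and filters on $\Rad(\cc{K})^{\omega}$, i.e., elements of $\Rad(\cc{K})^{\vee}$. Your route is more conceptual but imports an external lemma; the paper's route is self-contained at the cost of a short explicit computation.
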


\begin{lemma}\label{lem:admissibleprincipal}
  For any $\cc{K} \in \twoCAlg$, the composite $\cc{K}/- : \Idl(\cc{K})^{\op} \to (\twoCAlg_{\cc{K}/})^{\op} \simeq \Pro(\GBal)_{/\cc{K}}$ induces an equivalence $\mm{Prin}(\cc{K})^{\op} \simeq \Pro(\GBal)^{\ad}_{/ \cc{K}}$.
\end{lemma}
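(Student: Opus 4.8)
The goal is to identify the poset $\mm{Prin}(\cc{K})^{\op}$ with the category $\Pro(\GBal)^{\ad}_{/\cc{K}}$ of pro-admissible morphisms into $\cc{K}$. First I would recall that, by definition, $\Pro(\GBal) = \Ind(\GBal^{\op})^{\op} = \twoCAlg^{\op}$ essentially (restricting to where this makes sense), and more precisely $\Pro(\GBal)_{/\cc{K}} \simeq (\twoCAlg_{\cc{K}/})^{\op}$ whenever $\cc{K}$ is regarded as a pro-object. The functor $\cc{K}/-$ of \cref{prop:quotientposet} exhibits $\Idl(\cc{K})$ as a full subcategory of $\twoCAlg_{\cc{K}/}$ via a fully faithful left adjoint, so passing to opposite categories gives a fully faithful functor $\Idl(\cc{K})^{\op} \hookrightarrow (\twoCAlg_{\cc{K}/})^{\op} \simeq \Pro(\GBal)_{/\cc{K}}$. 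It remains to show this restricts to an equivalence between $\mm{Prin}(\cc{K})^{\op}$ and $\Pro(\GBal)^{\ad}_{/\cc{K}}$.

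The plan is to verify the two inclusions of essential images. For the first, I would show that every principal quotient $\cc{K} \to \cc{K}/\langle a\rangle$, viewed as a morphism in $\Pro(\GBal)_{/\cc{K}}$, is pro-admissible. By \cref{cor:universalquotient}, $\cc{K} \to \cc{K}/\langle a\rangle$ is the pushout of the universal principal quotient $\Sp^{\omega}\{x\} \to \Sp^{\omega}$ along the classifying map $\Sp^{\omega}\{x\} \to \cc{K}$ sending $x \mapsto a$. Since $\Sp^{\omega}\{x\}$ and $\Sp^{\omega}$ are compact 2-rings and $\Sp^{\omega}\{x\} \to \Sp^{\omega}$ is admissible in $\GBal$ by \cref{lem:generatingclass}, this pushout of 2-rings translates (under the contravariant equivalence with $\Pro(\GBal)$) precisely into the pushout square in $\Pro(\GBal)$ witnessing pro-admissibility in the sense recalled just before \cref{def:spec}. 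For the reverse inclusion, I would take an arbitrary pro-admissible morphism $f\colon U \to \cc{K}$ in $\Pro(\GBal)$, i.e., one fitting into a pushout square with a representable admissible morphism $f'\colon \cc{K}' \to \cc{K}''$ in $\GBal^{\ad,[1]}$. By \cref{lem:generatingclass}(a), $f'$ is a principal Verdier localization $\cc{K}' \to \cc{K}'/\langle b\rangle$; pushing out along $\cc{K}' \to \cc{K}$ (now in $\twoCAlg$, which is where the pushout in $\Pro(\GBal)$ is computed) and invoking \cref{cor:basechange}, the pushout is $\cc{K} \to \cc{K}/\langle f(b)\rangle$, a principal quotient. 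Hence $U \simeq \cc{K}/\langle f(b)\rangle$ lies in the image of $\mm{Prin}(\cc{K})^{\op}$.

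Finally, I would observe that full faithfulness of the restricted functor is inherited from full faithfulness of $\Idl(\cc{K})^{\op} \hookrightarrow \Pro(\GBal)_{/\cc{K}}$ (being a full subcategory inclusion restricted to a full subcategory), and that the essential image computations above show it lands in and hits all of $\Pro(\GBal)^{\ad}_{/\cc{K}}$. Since source and target are posets, essential surjectivity plus full faithfulness gives the desired equivalence $\mm{Prin}(\cc{K})^{\op} \simeq \Pro(\GBal)^{\ad}_{/\cc{K}}$.

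The main obstacle I anticipate is bookkeeping the contravariance carefully: pushouts in $\twoCAlg$ correspond to pushouts in $\Pro(\GBal) = \twoCAlg^{\op}$ only after a variance flip, and the definition of pro-admissible morphism is phrased via a pushout square in $\Pro(\GBal)$ with a leg of the form $\Yo(f')$ for $f' \in \cG^{\ad,[1]}$ — so I must make sure the square from \cref{cor:universalquotient}, which lives in $\twoCAlg$, is exactly the one required after dualizing, matching $\Yo$ with the inclusion $\GBal \hookrightarrow \Pro(\GBal)$. A secondary subtlety is that compactness of $\Sp^{\omega}\{x\}$ and $\Sp^{\omega}$ (so that these are genuinely objects of $\GBal$ and not just $\Pro(\GBal)$) and of the principal quotients is needed to land in the representable part; this is where \cref{prop:quotientposet}'s characterization of when $\cc{K}/\cc{I}$ is compact — namely for $\cc{I}$ principal — does the essential work.
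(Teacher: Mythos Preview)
Your proof is correct and follows the same route as the paper's compressed two-sentence argument, which invokes \cref{lem:generatingclass} to identify $\Pro(\GBal)^{\ad}_{/\cc{K}}$ with base changes of the universal quotient $\Sp^{\omega}\{x\}\to\Sp^{\omega}$ (hence with principal Karoubi quotients, via \cref{cor:universalquotient} and \cref{cor:basechange}) and then \cref{prop:quotientposet} for the poset identification. Your variance worry is on target but resolves cleanly: the ``pushout square'' in the paper's recollection of pro-admissibility should read ``pullback'' (as in Lurie's original), so a pullback in $\Pro(\GBal)=\twoCAlg^{\op}$ is precisely a pushout in $\twoCAlg$, and the co-Cartesian square of \cref{cor:universalquotient} is exactly the diagram required.
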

\begin{proof}
  \cref{lem:generatingclass} implies that $\Pro(\GBal)^{\ad}_{/ \cc{K}}$ consists exactly of those morphisms $\cc{K} \to \cc{K}'$ which are base-changed from the universal quotient $\Sp^{\omega}\{x\} \to \Sp^{\omega}$; in particular, it consists exactly of principal Verdier quotients of $\cc{K}$. The resulting identification follows from \cref{prop:quotientposet}.
\end{proof}

\begin{proof}[Proof of \cref{thm:2zariski_balmerspectrum}]
  Let $P:=\mm{Prin}(\cc{K})$. From the equivalence of \cref{lem:admissibleprincipal}, the pro-admissible topology is identified as the following Grothendieck topology on $P$: a set of inclusions $\{\langle x \rangle \subset \langle y_i \rangle \mid i \in I\}$ generates a covering sieve if and only if there exists a finite subset $I_0 \subset I $ such that $\bigotimes_{i \in I_0} y_i \in \sqrt{x}$. By \cref{lem:radicalgeneration}, \cref{prop:latticeiscoherent}, and the fact that $\sqrt{-}\colon \Idl(\cc{K}) \to \Rad(\cc{K})$ is a left adjoint, taking radicals supplies an order-preserving surjection $f\colon P \to \Rad(\cc{K})^{\omega}$ which preserves finite meets and joins. Since $f^{\op}$ also preserves covering sieves, applying $\Shv(-)$ supplies a left adjoint morphism  $f^{\ast}\colon \Shv(P^{\op}) \to \Shv(\Rad(\cc{K})^{\vee}) \in \LTop^{[1]}$ which we will show is an equivalence.

  \cref{rec:0topoi} reduces us to checking that this induces an isomorphism between their frames of subterminal objects; that is to say those sheaves valued in the full subcategory of $\cS$ spanned by $\{\emptyset, \ast\}$. Unwinding the definitions, we must show that the right adjoint $f_{\ast}$ gives a bijection between the following
        \begin{enumerate}
            \item upward closed subsets $S \subset P$ such that if for every covering $\{\langle x \rangle \subset \langle y_i \rangle \mid i \in I\}$ satisfying $\langle y_i \rangle \in S$ for all $i$ the principal ideal $\langle x \rangle \in S$
            \item Limit-preserving functors $(\Rad(\cc{K})^{\vee})^{\op} \to [1]$. 
            \end{enumerate}

            \noindent and using the coherence of $\Rad(\cc{K})^{\vee}$, we see that item (b) above may be identified with:

            \begin{enumerate}
            \item[(b$^{\prime}$)] Ideals of $(\Rad(\cc{K})^{\omega})^{\op}$, i.e., upward closed subsets of the distributive lattice $\Rad(\cc{K})^{\omega}$ closed under finite meets, otherwise known as \tdef{filters}.
            \end{enumerate}

            We write $\Phi\colon \mathbf{2}^{P} \to \mathbf{2}^{\Rad(\cc{K})^{\omega}}$ to denote the map which sends a subset of $P$ to its direct image under $f$. Let $\Psi$ denote the map which assigns to a filter $F \subseteq \Rad(\cc{K})^{\omega}$ the subset of $P$ consisting of principal ideals whose radicals are in $F$. It is easy to see that $\Psi(F)$ is in the set (a), and that this in fact is an expression for the direct image $f_{\ast}F \in \Shv(P^{\op})$, where $F$ is naturally considered as an object of $\Shv(\Rad(\cc{K})^{\vee})$. It will thus suffice to prove that $\Phi$ supplies an inverse to $\Psi$.

            Given a filter $F \subset \Rad(\cc{K})^{\omega}$, the fact that $\Phi(\Psi(F)) = F$ is an immediate consequence of the surjectivity of $f^{\op}$. Conversely, let $S \subset P$ be a subset as in (a). Clearly $S \subset \Psi(\Phi(S))$, and we must show $\Psi(\Phi(S)) \subset S$. Let $ x \in \cc{K} $ be an element satisfying $\langle x \rangle \in \Psi(\Phi(S))$. By definition $\sqrt{x} \in \Phi(S)$, so there exists $ y \in \cc{K} $ satisfying $\langle y \rangle \in S $ and $\sqrt{x} = \sqrt{y}$. From $\langle y \rangle \subset \langle x \oplus y \rangle$ it follows that $\langle x \oplus y \rangle \in S$. Since $\{\langle x \rangle \subset \langle x \oplus y \rangle\}$ is a covering, $\langle x \rangle \in S$. 
\end{proof}

\begin{notation}
  We henceforth write $\mdef{\Spec \cc{K}}$ to refer to the absolute $\GBal$-spectrum of a 2-ring $\cc{K}$. We write $\mdef{|\! \Spec \cc{K}|}$ to denote the spectral space associated to $\Spec \cc{K}$ via the composite $|-| \circ \mm{Sub}$. By \cref{thm:2zariski_balmerspectrum}, this is identified with the Balmer spectrum $\Spc \cc{K}$.
\end{notation}

The results cited in \cref{rec:0topoi} and \cref{rec:stoneduality} combine to supply the following corollary.

\begin{corollary}
    For $\cc{K} \in \twoCAlg$ one has natural identifications between:
    \begin{enumerate}
        \item The space of maps $\Spec \cc{K} \to \cS \in \LTop^{[1]}$.
        \item Points of the space $|\! \Spec \cc{K}|$.
        \item Prime ideals $\cc{P} \subseteq \cc{K}$.
    \end{enumerate}
\end{corollary}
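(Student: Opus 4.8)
The plan is to deduce the corollary from the frame/topos adjunction $\Shv(-)\colon\mm{Frm}\rightleftarrows\LTop\noloc\mm{Sub}(-)$ of \cref{rec:0topoi} together with Stone duality (\cref{rec:stoneduality}), after feeding in the identification of the underlying topos of $\Spec\cc{K}$ supplied by \cref{thm:2zariski_balmerspectrum}. Write $F\coloneq\cc{U}(\Spc\cc{K})$ for the frame of open subsets of the Balmer spectrum; since $\Spc\cc{K}$ is spectral and hence sober, \cref{rec:stoneduality} gives $|F|\simeq\Spc\cc{K}$, and of course $\Shv(F)=\Shv(\Spc\cc{K})$. By \cref{thm:2zariski_balmerspectrum}, and naturally in $\cc{K}$ by \cref{thmalph:comparison}, the underlying $\infty$-topos of $\Spec\cc{K}$ is equivalent to the $0$-localic topos $\Shv(F)$; applying $\mm{Sub}(-)$ and using that $\Shv(-)$ is fully faithful yields $\mm{Sub}(\Spec\cc{K})\simeq F$, whence $|\!\Spec\cc{K}|=|\mm{Sub}(\Spec\cc{K})|\simeq|F|\simeq\Spc\cc{K}$. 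In particular the point set named in (b) is canonically that of $\Spc\cc{K}$.

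To identify (a) with (b) I would use that $\cS=\Shv(\ast)=\Shv([1])$ for $[1]=\cc{U}(\ast)$ the terminal frame, so that the space of maps in (a) is
\[
  \Map_{\LTop}(\Spec\cc{K},\cS)\simeq\Map_{\LTop}(\Shv(F),\Shv([1]))\simeq\Map_{\mm{Frm}}(F,[1]),
\]
the final equivalence again being full faithfulness of $\Shv(-)$ from \cref{rec:0topoi}. In particular this mapping space is discrete, and by the very definition of the points functor $|-|$ in \cref{rec:stoneduality} the set $\Map_{\mm{Frm}}(F,[1])=\Fun^{\mm{L}}(F,[1])^{\simeq}$ is the underlying point set of $|F|\simeq\Spc\cc{K}\simeq|\!\Spec\cc{K}|$. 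This matches (a) with (b). Finally, (b) is identified with (c) by \cref{def:balmer}, which exhibits the points of the spectral space $\Spc\cc{K}$ as exactly the prime tt-ideals $\cc{P}\subseteq\cc{K}$. Composing the three bijections, and noting that every construction in the chain ($\Spec$, $\mm{Sub}$, $\Shv(-)$, $|-|$, $\cc{U}$, $\Rad$, and the equivalence of \cref{thmalph:comparison}) is functorial in $\cc{K}$, produces the asserted natural identifications.

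I do not expect a genuine obstacle here; the content is bookkeeping with dualities. The two spots that need care are: (i) confirming that $\cS$ really is $\Shv(-)$ evaluated at the terminal frame $[1]$, since it is exactly this that lets full faithfulness of $\Shv(-)$ compute the space in (a), together with the observation that a morphism $\cc{X}\to\cS$ in $\LTop$ is the inverse-image part of a point $\cS\to\cc{X}$ (so (a) computes the space of points of $\Spec\cc{K}$, not of its opposite); and (ii) keeping the Hochster dual on the correct side when one unwinds $\cc{U}(\Spc\cc{K})$ in terms of $\Rad(\cc{K})$, since replacing $\Rad(\cc{K})^{\vee}$ by $\Rad(\cc{K})$ would silently change the topology and hence scramble the point set.
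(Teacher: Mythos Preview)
Your proposal is correct and follows exactly the approach the paper indicates: the paper's proof is simply the sentence ``The results cited in \cref{rec:0topoi} and \cref{rec:stoneduality} combine to supply the following corollary,'' and you have faithfully unpacked that sentence. One terminological slip: the frame $[1]=\cc{U}(\ast)$ is the \emph{initial} object of $\mm{Frm}$, not the terminal one (consistently with $\cS$ being initial in $\LTop$ and $\Shv(-)$ being a covariant left adjoint); this does not affect your argument, which only uses $\cS\simeq\Shv([1])$ and the full faithfulness of $\Shv(-)$.
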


Given a map $f\colon \cc{K} \to \cc{L} \in \twoCAlg^{[1]}$ it is not hard to see that the proof of \cref{thm:2zariski_balmerspectrum} also allows us to compute the behavior of the induced map $\Spec \cc{L} \to \Spec \cc{K}$ on points. After the identification above, points of $\Spec \cc{K}, \Spec \cc{L}$ correspond to prime ideals of $\cc{K}, \cc{L}$, and the induced map sends $\{\cc{P} \subseteq \cc{L}\} \mapsto \{f^{-1}\cc{P} \subseteq \cc{K}\}$. We will demonstrate this as a corollary of a different statement in the following section. 

\subsection{Locally 2-ringed topoi}\label{subsec:locally2ringed} In analogy with the classical Zariski topology, we collect the following definitions.

\begin{notation}
  Henceforth, we will abusively identify a $\G$-structure $\cO_{\cX} \in \Str^{\loc}_{\G}(\cX)$ with its associated sheaf under the composite \[\Str^{\loc}_{\G}(\cX) \to \Fun^{\lex}(\G, \cX) \simeq \Shv(\cX; \Ind(\G^{\op})).\]
\end{notation}

\begin{definition} Set $\G = \GBal$.
  \begin{enumerate}
  \item Let $\mdef{\RTop_{\twoCAlg}}:= \LTop(\G_{\disc})^{\op}$ denote the $\infty$-category of \tdef{2-ringed topoi}.  Objects of $\RTop_{\twoCAlg}$ are pairs $\cX \in \RTop$, $\cO_{\cX} \in \Shv(\cX;\twoCAlg)$. Informally, morphisms $(\cX, \cO_{\cX}) \to (\cY,\cO_{\cY})$ are given by pairs \[f_{\ast}:\cX \to \cY \in \RTop^{[1]},\ \cO_{\cY} \to f_{\ast}\cO_{\cX} \in \Shv(\cY;\twoCAlg)^{[1]}.\]
  \item   Let $\mdef{\RTop^{\loc}_{\twoCAlg}} := \LTop(\cG)^{\op}$ denote the $\infty$-category of \tdef{locally 2-ringed topoi}. Objects of $\RTop^{\loc}_{\twoCAlg}$ consist of pairs $(\cX, \cO_{\cX})$ as above where $\cO_{\cX} \in \Str^{\loc}_{\G}(\cX)$. Informally, morphisms $(\cX,\cO_{\cX}) \to (\cY, \cO_{\cY})$ are given by pairs
  \[f_{\ast}:\cX \to \cY \in \RTop^{[1]},\ \cO_{\cY} \to f_{\ast}\cO_{\cX} \in \Shv(\cY;\twoCAlg)^{[1]}\]
  such that the associated mate $f^{\ast}\cO_{\cY} \to \cO_{\cX} \in \Shv(\cX;\twoCAlg)^{[1]}$ admits a lift to $\Str_{\G}^{\loc}(\cX)^{[1]}$. Note that since being a local transformation of $\G$-structures is a property, such a lift is essentially unique if it exists.
  \end{enumerate}
\end{definition}

We first remark on the connection between this notion and the notion of ``locality'' of 2-rings as in \cite[\S 4]{balmerSpectraSpectraSpectra2010}. We propose a slight modification to this notion.

\begin{definition}\label{def:locality} Let $\cc{K}\in \twoCAlg$. $\cc{K}$ is \tdef{local} if the thick tensor ideal $\{0\} \subset \cc{K}$ is prime. We write $\mdef{\twoCAlg^{\loc}}$ to denote the category of local 2-rings, with morphisms given by maps of 2-rings whose underlying functor is conservative. 
\end{definition}

\begin{proposition}\label{prop:localstructuresonapoint}
  There is an equivalence $\Str^{\loc}_{\GBal}(\cS) \simeq \twoCAlg^{\loc}$. 
\end{proposition}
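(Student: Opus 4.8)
The plan is to work through the identification $\twoCAlg \simeq \Ind(\GBal^{\op}) \simeq \Fun^{\lex}(\GBal,\cS)$, under which a $2$-ring $\cc K$ corresponds to $\Map_{\twoCAlg}(-,\cc K)$. By \cref{def:localobject} the category $\Str^{\loc}_{\GBal}(\cS)$ is the subcategory of $\Fun^{\lex}(\GBal,\cS)$ whose objects are those $\cc K$ for which $\Map_{\twoCAlg}(-,\cc K)$ is a $\GBal$-structure and whose morphisms are the local morphisms of \hyperref[def:omnibusgeometries2]{\cref*{def:omnibusgeometries}\,(b)}; since the asserted equivalence carries this subcategory onto a subcategory of $\twoCAlg$ with the same objects and morphisms, it suffices to prove: (i) $\Map_{\twoCAlg}(-,\cc K)$ is a $\GBal$-structure if and only if $\{0\}\subseteq\cc K$ is a prime tt-ideal; and (ii) a morphism $\cc K\to\cc L$ of local $2$-rings is a local morphism of $\GBal$-structures if and only if its underlying functor is conservative.

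For (i), I first record the pointwise reformulation parallel to \cref{lem:zariskilocality}. Since $\cS$ has universal colimits, a left-exact $\cO\colon\GBal\to\cS$ is a $\GBal$-structure as soon as it carries a generating family of admissible covers to effective epimorphisms; by \cref{def:zariski} such a generating family over $\cc A^{\op}\in\GBal$ is a finite collection of principal Karoubi quotients $\{\cc A\to\cc A/\langle a_i\rangle\}_{i\in I}$ with $\bigcap_{i\in I}\langle a_i\rangle\subseteq\sqrt0$ (using $\ker(\cc A\to\cc A/\langle a_i\rangle)=\langle a_i\rangle$ from \hyperref[prop:verdexistsb]{\cref*{prop:verdexists}\,(b)}). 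By \hyperref[prop:verdexistsc]{\cref*{prop:verdexists}\,(c)} the space $\Map_{\twoCAlg}(\cc A/\langle a_i\rangle,\cc K)$ is the union of the path-components of $\Map_{\twoCAlg}(\cc A,\cc K)$ on those maps killing $a_i$, so effectivity of $\coprod_i\Map_{\twoCAlg}(\cc A/\langle a_i\rangle,\cc K)\to\Map_{\twoCAlg}(\cc A,\cc K)$ says precisely that every $\phi\colon\cc A\to\cc K$ kills some $a_i$. Hence $\Map_{\twoCAlg}(-,\cc K)$ is a $\GBal$-structure if and only if: for every $\cc A\in\twoCAlg^\omega$, every finite family $\{a_i\}_{i\in I}$ of objects of $\cc A$ with $\bigcap_i\langle a_i\rangle\subseteq\sqrt0$, and every $\phi\colon\cc A\to\cc K$, some $\phi(a_i)=0$. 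Using \cref{lem:radicalgeneration}, the hypothesis on the $a_i$ amounts to $\bigotimes_i a_i$ being $\otimes$-nilpotent; and the empty family occurs only for $\cc A$ the zero $2$-ring $\mathbf 0$, where the condition reads ``$\cc K\neq\mathbf 0$''.

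Now I verify this criterion is equivalent to primality of $\{0\}$. If $\{0\}$ is prime it is proper, so $\cc K\neq\mathbf 0$, which handles the empty family (here one uses $\mathbf 0=\Sp^\omega/\langle\unit\rangle\in\twoCAlg^\omega$ by \cref{prop:quotientposet}, so that $\mathbf 0^{\op}\in\GBal$); for a nonempty family, applying $\phi$ to $(\bigotimes_i a_i)^{\otimes N}=0$ gives $\bigotimes_i\phi(a_i)^{\otimes N}=0$, and splitting off one tensor factor at a time and invoking that $\{0\}$ is prime (hence radical) yields $\phi(a_i)=0$ for some $i$. Conversely, assume the criterion. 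The empty family over $\mathbf 0$ gives $\cc K\neq\mathbf 0$, i.e.\ $\{0\}$ proper. Given $x,y\in\cc K$ with $x\otimes y=0$, the map $\Sp^\omega\{x\}\otimes\Sp^\omega\{x\}\to\cc K$ sending the generators $x_1,x_2$ of the two tensor factors to $x,y$ (cf.\ \cref{cor:free_universal}) kills $x_1\otimes x_2$, hence by \hyperref[prop:verdexistsc]{\cref*{prop:verdexists}\,(c)} factors through $\phi\colon\cc B\to\cc K$ where $\cc B\coloneq(\Sp^\omega\{x\}\otimes\Sp^\omega\{x\})/\langle x_1\otimes x_2\rangle\in\twoCAlg^\omega$ (compact by \cref{prop:quotientposet}); in $\cc B$ the images $a_1,a_2$ of $x_1,x_2$ satisfy $a_1\otimes a_2=0$, so $\{a_1,a_2\}$ meets the hypothesis of the criterion and we get $x=\phi(a_1)=0$ or $y=\phi(a_2)=0$. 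Thus $\{0\}$ is prime.

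For (ii), by \hyperref[def:omnibusgeometries2]{\cref*{def:omnibusgeometries}\,(b)} a morphism $\cc K\to\cc L$ is local precisely when, for every admissible $\cc A\to\cc A/\langle a\rangle$ in $\GBal$, the square obtained by applying $\Map_{\twoCAlg}(-,\cc K)$ and $\Map_{\twoCAlg}(-,\cc L)$ is a pullback in $\cS$; by \hyperref[prop:verdexistsc]{\cref*{prop:verdexists}\,(c)} its two vertical maps are inclusions of collections of path-components (the maps killing $a$), so the square is a pullback if and only if, writing $\phi\colon\cc K\to\cc L$ for the underlying functor, $(\phi\circ g)(a)\simeq0$ implies $g(a)\simeq0$ for every $g\colon\cc A\to\cc K$. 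Taking $\cc A=\Sp^\omega\{x\}$, $a=x$, and using \cref{cor:free_universal} that every object of $\cc K$ arises as such a $g(a)$, this is exactly the assertion that $\phi(k)\simeq0$ implies $k\simeq0$ for all $k\in\cc K$, which for an exact functor of stable $\infty$-categories is conservativity. Combining (i) and (ii) gives the equivalence. The main obstacle is the opening reduction in (i): one must check carefully both that it suffices to test the $\GBal$-structure condition on the generating covers of \cref{def:zariski} (using pullback-stability of those families, via \cref{cor:basechange}) and that the zero $2$-ring is itself compact — without the latter there would be no empty admissible cover and the zero $2$-ring would spuriously satisfy the criterion; once these are in place, the remaining steps are short manipulations with $\otimes$-nilpotence and the universal properties of free $2$-rings and Karoubi quotients.
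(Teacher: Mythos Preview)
Your proof is correct and follows essentially the same approach as the paper: identify $\twoCAlg\simeq\Fun^{\lex}(\GBal,\cS)$, test the $\GBal$-structure condition on principal Karoubi-quotient covers, and use the free $2$-ring $\Sp^\omega\{x,y\}/\langle x\otimes y\rangle$ (equivalently $\Sp^\omega\{x\}\otimes\Sp^\omega\{x\}$ modulo the product) to detect primality of $\{0\}$, then test locality of morphisms on $\Sp^\omega\{x\}\to\Sp^\omega$ to detect conservativity. You are in fact slightly more careful than the paper in explicitly handling the empty admissible cover of the zero $2$-ring, which is what forces $\{0\}$ to be proper.
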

\begin{proof}
  Consider the composite  \begin{equation}\label{eq:structuresonapoint} \Str^{\loc}_{\GBal}(\cS) \hookrightarrow \Fun^{\lex}(\twoCAlg^{\omega,\op}, \cS) =: \Ind(\twoCAlg^{\omega}) \simeq \twoCAlg \end{equation}  whose essential image we claim consists exactly of the local 2-rings. To this end, let $\cc{K}$ be in the essential image of this functor. Given $\cc{L} \in \twoCAlg^{\omega}$, the condition of arising from a local $\GBal$-structure implies that the following maps are surjective \begin{equation}\label{eq:localatapoint}\coprod_{i=1}^{n} \pi_{0}\Map\left(\cc{L}/\langle a_{i}\rangle, \cc{K}\right)  \twoheadrightarrow \pi_{0}\Map(\cc{L}, \cc{K})\end{equation} for every collection $a_{1},\dotsc,a_{n} \in \cc{L}$ satisfying $\sqrt{\cap_{i=1}^{n} \langle a_{i} \rangle} = \sqrt{0}$; equivalently, that $a_{1} \otimes \dotsb \otimes a_{n} \in \sqrt{0}$ by \cref{lem:radicalgeneration}. Now let $a, b \in \cc{K}$ be any pair such that $a \otimes b = 0$. This yields a map $\Sp^{\omega}\{x, y\}/\langle x \otimes y \rangle\to \cc{K}$ sending $x \mapsto a,\ y \mapsto b$, and the surjection of \eqref{eq:localatapoint} implies this must send either $\langle x \rangle$ or $\langle y \rangle$ to $0$. In particular, $\{0\} \in \Idl(\cc{K})$ is prime, implying that $\cc{K} \in \twoCAlg^{\loc}$. Conversely, assuming $\cc{K} \in \twoCAlg^{\loc}$ we must show that the map in \eqref{eq:localatapoint} is surjective. Given any $\cc{L} \in \twoCAlg^{\omega}$ and $a_{1},\dotsc,a_{n} \in \cc{L}$ satisfying $(a_{1} \otimes \dotsb \otimes a_{n})^{\otimes k} = 0$, there is an induced map $\Sp^{\omega}\{x_{1},\dotsc,x_{n}\}/\langle (x_{1} \otimes \dotsb \otimes x_{n})^{\otimes k}\rangle \to \cc{L}$ sending $x_{i} \mapsto a_{i}$. Given any map $\cc{L} \to \cc{K}$, the restriction $\Sp^{\omega}\{x_{1},\dotsc,x_{n}\}/\langle (x_{1} \otimes \dotsb \otimes x_{n})^{\otimes k}\rangle \to \cc{K}$ must send $x_{i} \mapsto 0$ for some $i$, since $\{0\} \in \Idl(\cc{K})$ is assumed to be prime. By \cref{cor:universalquotient}, we see that the map $\cc{L} \to \cc{K}$ must factor through $\cc{L}/\langle a_{i} \rangle$ for some $i$, yielding the first claim.

It remains to show that given $\cc{K}, \cc{K}' \in \twoCAlg^{\loc}$, a map $f\colon \cc{K} \to \cc{K}' \in \twoCAlg^{[1]}$ corresponds to a local transformation of associated $\GBal$ structures if and only if $f$ is conservative. Suppose first that $f$ arose from a local transformation of $\GBal$-structures: namely, that for any $\cc{L} \in \twoCAlg^{\omega}$ and $a \in \cc{L}$, the following square
  \begin{equation}\label{eq:localmapsonapoint}
    \begin{aligned}
      \xymatrix{
      \Map(\cc{L}/ \langle a \rangle, \cc{K}) \ar[r] \ar[d] & \Map(\cc{L}/\langle a \rangle, \cc{K}') \ar[d] \\
      \Map(\cc{L}, \cc{K}) \ar[r] & \Map(\cc{L}, \cc{K}') \\
      }
      \end{aligned}
  \end{equation}
  is Cartesian. Setting $\cc{L} = \Sp\{x\}$ and $a = x$ gives the Cartesian diagram
  \[
    \xymatrix{
      0 \ar@{=}[r] \ar[d] & 0 \ar[d] \\
      \cc{K} \ar[r] & \cc{K}' \\
    }
  \]
  implying that $f\colon \cc{K} \to \cc{K}'$ must be conservative. Conversely, assuming $g\colon \cc{K} \to \cc{K}'$ is conservative, we must show that the square in \eqref{eq:localmapsonapoint} is Cartesian. Let $\cc{L} \in \twoCAlg$ and $a \in \cc{L}$ arbitrary. Recall that \cref{prop:quotientposet} implies that the map $\Map(\cc{L}/\langle a \rangle, -) \to \Map(\cc{L}, -)$ has $(-1)$-truncated fibers, given exactly by $\Map_{\twoCAlg_{\cc{L}/}}(\cc{L}/\langle a \rangle, -)$. The desired claim thus reduces to showing that the map
  \[\pi_{0}\Map(\cc{L}/\langle a \rangle, \cc{K}) \to \pi_{0}\Map(\cc{L}, \cc{K}) \times_{\pi_{0}\Map(\cc{L},\cc{K}')} \pi_{0}\Map(\cc{L}/\langle a \rangle, \cc{K}') \] is surjective, or equivalently that any map $h: \cc{L} \to \cc{K}$ satisfies $(g \circ h)(a) = 0$ only if $h(a) = 0$. Since $g$ was assumed conservative, we may conclude. 
\end{proof}

\begin{remark} The following conditions are shown to be equivalent in \cite[\S 4]{balmerSpectraSpectraSpectra2010}:
  \begin{enumerate}
  \item The ideal $\sqrt{0} \subset \cc{K}$ is a minimal prime.
  \item The Balmer spectrum $\Spc \cc{K}$ admits a unique closed point.
  \item $\Spc \cc{K}$ is a \emph{local topological space} in the sense that any open cover $\coprod U_{i} \twoheadrightarrow \Spc \cc{K}$ admits a splitting $\Spc\cc{K} \to U_{i}$ for some $i$.
  \end{enumerate}
  Thus, \cref{def:locality} is equivalent to the condition that $\cc{K}\in \twoCAlg$ satisfies the equivalent conditions above and moreover that it contains no tensor-nilpotent elements. We view \cref{prop:localstructuresonapoint} as proposed justification for this addendum.
\end{remark}

\begin{notation}\label{not:stuffaboutadjoints}
  Given a map $f_{\ast}\colon\cX \to \cY \in \RTop^{[1]}$ and $\cc{C} \simeq \Ind(\cc{C}_{0})$, there is an induced adjunction
  \[
    f^{\ast}\colon \Fun^{\lex}(\cc{C}_{0}^{\op};\cY) \rightleftarrows \Fun^{\lex}(\cc{C}^{\op}_{0}, \cX) \noloc f_{\ast}
  \]
given by postcomposition with the respectively named adjoints. Under the equivalence of \cref{obs:sheafislexfunct}, $f_{\ast}\colon\Shv(\cX;\cc{C}) \to \Shv(\cY;\cc{C})$ is the functor which sends a sheaf $\cc{F}$ to $(\cc{F} \circ f^{\ast})\colon \cY^{\op} \to \cc{C}$, see for example the argument of \cref{lem:identificationofGstructure}. As before, we will freely use the notation $f^{\ast} \dashv f_{\ast}$ to refer to the adjunction either on categories of $\cG_{\disc}$-structures or of $\Ind(\cG^{\op})$-valued sheaves. 
\end{notation}

 We obtain the following corollary as an immediate consequence of the previous proposition, the slogan being that the stalks of a locally 2-ringed topos are local 2-rings, and maps between locally 2-ringed topoi induce conservative morphisms on stalks.

\begin{corollary}\label{cor:mapsonstalks}
  Let $(\cX, \cO_{\cX}) \in \RTop^{\loc}_{\twoCAlg}$.
  \begin{enumerate}
  \item Given any point $p_{\ast}\colon \cS \to \cX \in \RTop^{[1]}$, the pullback $p^{\ast}\cO_{\cX} \in \twoCAlg$ is a local 2-ring. 
  \item Given any point $p_{\ast}\colon \cS \to \cX \in \RTop^{[1]}$ and a map of locally 2-ringed topoi $f\colon (\cX, \cO_{\cX}) \to (\cY,\cO_{\cY})$, the induced map of 2-rings $p^{\ast}f^{\ast}\cO_{\cY} \to p^{\ast}\cO_{\cX} \in \twoCAlg^{[1]}$ is conservative. 
  \end{enumerate}
\end{corollary}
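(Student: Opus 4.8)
The plan is to deduce \cref{cor:mapsonstalks} directly from \cref{prop:localstructuresonapoint} by transporting everything along points. First I would recall that a point $p_{\ast}\colon \cS \to \cX$ of an $\infty$-topos is by definition a geometric morphism, so its inverse image $p^{\ast}\colon \cX \to \cS$ is left exact. Postcomposition with $p^{\ast}$ therefore sends a $\GBal$-structure $\cO_{\cX}\in \Str^{\loc}_{\GBal}(\cX) \subseteq \Fun^{\lex}(\GBal, \cX)$ to a left-exact functor $p^{\ast}\circ \cO_{\cX}\colon \GBal \to \cS$, i.e. to an object of $\Ind(\GBal^{\op}) \simeq \twoCAlg$ via the equivalence \eqref{eq:sheafislexfunct}; this is what we mean by $p^{\ast}\cO_{\cX}$. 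The content of part (a) is then exactly that this composite lands in the subcategory $\Str^{\loc}_{\GBal}(\cS)$, after which \cref{prop:localstructuresonapoint} identifies it with a local $2$-ring.

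For part (a), the key step is the observation already recorded in the excerpt (in the remark following \cref{ex:discretegeometry_local} and in the discussion preceding \cref{def:structuredtopoi}): if $f^{\ast}\colon \cX \to \cY$ is any morphism in $\LTop$ and $\cO\in \Str^{\loc}_{\cG}(\cX)$, then $f^{\ast}\circ \cO$ is again a \emph{local} $\cG$-structure on $\cY$, because $f^{\ast}$ is left exact and hence preserves both effective epimorphisms (so the covering condition of \hyperref[def:omnibusgeometries1]{\cref*{def:omnibusgeometries}\,(a)} is preserved) and pullback squares (so the locality condition of \hyperref[def:omnibusgeometries2]{\cref*{def:omnibusgeometries}\,(b)} is preserved). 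Indeed this is precisely the content cited just after \cref{def:structuredtopoi}, where it is used to exhibit $\LTop(\cG)\to\LTop$ as a co-Cartesian fibration classified by $\Str^{\loc}_{\cG}(-)$. Applying this with $f^{\ast} = p^{\ast}$, $\cY = \cS$, we conclude $p^{\ast}\cO_{\cX}\in \Str^{\loc}_{\GBal}(\cS)$; \cref{prop:localstructuresonapoint} then finishes part (a).

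For part (b), a morphism $f\colon(\cX,\cO_{\cX})\to(\cY,\cO_{\cY})$ in $\RTop^{\loc}_{\twoCAlg} = \LTop(\GBal)^{\op}$ is by definition a morphism in $\LTop(\GBal)$, which (unwinding \cref{def:structuredtopoi} and the $\RTop/\LTop$ conventions of \cref{not:stuffaboutadjoints}) consists of $f^{\ast}\colon\cY\to\cX$... more carefully, the underlying geometric morphism goes $\cX \to \cY$ on the $\RTop$ side so its inverse image runs $f^{\ast}\colon\cY\to\cX$ in $\LTop$, together with a local transformation $\gamma\colon f^{\ast}\cO_{\cY}\to\cO_{\cX}\in\Str^{\loc}_{\GBal}(\cX)^{[1]}$. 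Postcomposing $\gamma$ with the left-exact functor $p^{\ast}$, and using again that $p^{\ast}$ preserves pullback squares, we obtain a local transformation $p^{\ast}f^{\ast}\cO_{\cY}\to p^{\ast}\cO_{\cX}$ in $\Str^{\loc}_{\GBal}(\cS)^{[1]}$. Under the equivalence $\Str^{\loc}_{\GBal}(\cS)\simeq\twoCAlg^{\loc}$ of \cref{prop:localstructuresonapoint}, whose morphisms are by construction the conservative maps of $2$-rings, this transformation corresponds to a conservative morphism $p^{\ast}f^{\ast}\cO_{\cY}\to p^{\ast}\cO_{\cX}$, which is the assertion of (b).

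The only real obstacle is bookkeeping: one must be careful that the identification of "$p^{\ast}\cO_{\cX}$" (a composite of left-exact functors) with a genuine $2$-ring is the one furnished by the chain of equivalences in \cref{obs:sheafislexfunct} specialized to the one-point topos, and that the local transformation $\gamma$ packaged into a morphism of $\RTop^{\loc}_{\twoCAlg}$ is indeed a morphism in $\Str^{\loc}_{\GBal}(\cX)^{[1]}$ and not merely in $\Str_{(\GBal)_{\disc}}(\cX)^{[1]}$ — but this is guaranteed by the defining lifting condition in the definition of $\RTop^{\loc}_{\twoCAlg}$. Beyond that, everything reduces to the single functoriality principle "left-exact functors preserve local $\cG$-structures and local transformations", which is already stated and used in the excerpt, together with \cref{prop:localstructuresonapoint}.
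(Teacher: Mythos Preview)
Your proposal is correct and matches the paper's approach exactly: the paper states this corollary as ``an immediate consequence of the previous proposition'' (\cref{prop:localstructuresonapoint}) without further proof, and you have spelled out precisely the intended reasoning---pulling back along $p^{\ast}$ preserves local $\cG$-structures and local transformations, then invoking the identification $\Str^{\loc}_{\GBal}(\cS)\simeq\twoCAlg^{\loc}$. One minor wording correction: it is not left exactness alone that guarantees $p^{\ast}$ preserves effective epimorphisms, but rather that $p^{\ast}$ is a left-exact \emph{left adjoint} (morphisms in $\LTop$ preserve colimits as well as finite limits); the conclusion you draw is nonetheless correct and is exactly what the paper records in the remark following \cref{def:structuredtopoi}.
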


In the special case of $\Spec \cc{K} \in \RTop^{\loc}_{\twoCAlg}$ for some $\cc{K} \in \twoCAlg$, the equivalence of \cref{thm:2zariski_balmerspectrum} additionally endows us with a full understanding of the $\GBal$-structure on the points of $\Spec \cc{K}$. We will first need the following lemma, which roughly says that the sheafification functor does not affect stalks. 

\begin{lemma}\label{lem:sheafificationandstalks}
  Let $F$ be a frame, $\cc{C} \simeq \Ind(\cc{C}_{0})$, and $x \colon F \to [1]$ a point. There is a commutative square
  \[\xymatrix{
      \Fun(F^{\op}, \cc{C}) \ar[d] \ar[r]^{x_{!}} & \Fun([1]^{\op}, \cc{C}) \ar[d]^{\cc{F} \mapsto \cc{F}(1)} \\
      \Shv(F; \cc{C}) \ar[r]^{x^{\ast}} & \Shv(\ast; \cc{C}) \simeq \cc{C}
    }
  \]
  where the vertical morphisms are the natural sheafification maps and $x_{!}$ denotes the left Kan extension along $x^{\ast}$.
\end{lemma}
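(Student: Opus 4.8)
The plan is to show that the two composites $\Fun(F^{\op},\cc{C})\to\cc{C}$ obtained by going around the square agree, by identifying their right adjoints. Write $i\colon\Shv(F;\cc{C})\hookrightarrow\Fun(F^{\op},\cc{C})$ for the inclusion, so that sheafification is the left adjoint $L\dashv i$. By \cref{rec:0topoi}, the frame morphism $x\colon F\to[1]$ determines a geometric morphism $\cS=\Shv([1])\to\Shv(F)$, and hence --- passing to $\cc{C}$-coefficients as in \cref{obs:sheafislexfunct} and \cref{not:stuffaboutadjoints} --- an adjunction $x^*\colon\Shv(F;\cc{C})\rightleftarrows\Shv(\ast;\cc{C})\simeq\cc{C}\noloc x_*$ whose left adjoint is the functor $x^*$ of the statement. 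Consequently $x^*\circ L$, being a composite of left adjoints, has right adjoint $i\circ x_*$. On the other side, $x_!=\Lan_{x^{\op}}$ is left adjoint to restriction along $x^{\op}\colon F^{\op}\to[1]^{\op}$, and evaluation at the object $1$, $\ev_1\colon\Fun([1]^{\op},\cc{C})\to\cc{C}$, is left adjoint to right Kan extension $\Ran_\iota$ along $\iota\colon\{1\}\hookrightarrow[1]^{\op}$; hence $\ev_1\circ x_!$ has right adjoint the composite $(\text{restriction along }x^{\op})\circ\Ran_\iota\colon\cc{C}\to\Fun(F^{\op},\cc{C})$.

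It then suffices to produce a natural equivalence between these two right adjoints. Both, I claim, carry $c\in\cc{C}$ to the skyscraper presheaf $\mathrm{sky}_x(c)$ on $F$ defined by $\mathrm{sky}_x(c)(a)=c$ when $x(a)=1$ and $\mathrm{sky}_x(c)(a)=\ast_{\cc{C}}$ (the terminal object) when $x(a)=0$, the restriction maps being identities or the unique maps. For $(\text{restriction along }x^{\op})\circ\Ran_\iota$ this is the pointwise Kan extension formula: $\Ran_\iota(c)$ is the presheaf on $[1]$ with value $c$ at $1$ and value $\ast_{\cc{C}}$ at $0$ --- the latter an empty limit, since $[1]^{\op}$ has no morphism $0\to1$ --- so precomposing with $x^{\op}$ gives $\mathrm{sky}_x(c)$. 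For $i\circ x_*$, the sheaf $x_*c$ is the skyscraper sheaf of the point $x$, whose value at $a\in F$ is $c$ if the point lies in the open $a$, i.e.\ if $x(a)=1$, and $\ast_{\cc{C}}$ otherwise --- this behaviour being the defining property of the point, cf.\ \cref{rec:0topoi} --- so its underlying presheaf is again $\mathrm{sky}_x(c)$. Granting the two identifications, uniqueness of adjoints gives $x^*\circ L\simeq\ev_1\circ x_!$, which is exactly the commutativity of the square.

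The one point demanding genuine care is to upgrade the identifications of the preceding paragraph to an equivalence of functors $i\circ x_*\simeq\mathrm{sky}_x\simeq(\text{restriction along }x^{\op})\circ\Ran_\iota$, rather than a merely objectwise one. On the Kan extension side this is immediate from the functoriality of $\Ran$ and of restriction; on the $x_*$ side it follows from the functoriality of the localic pushforward together with the description of $x^*$ on subterminal objects (which sends $\Yo(a)$ to $\ast$ or $\varnothing$ according to whether $x(a)$ is $1$ or $0$). I expect this coherence bookkeeping, rather than any conceptual obstruction, to be the bulk of the proof. As a consistency check, unwinding the resulting equivalence recovers the classical description of the stalk, $x^*\cc{F}\simeq\colim_{a\in(x^{-1}(1))^{\op}}\cc{F}(a)$, as a filtered colimit of presheaf values --- filtered because $x^{-1}(1)\subseteq F$ is closed under finite meets, being the preimage of the filter $\{1\}\subseteq[1]$.
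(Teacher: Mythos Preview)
Your proof is correct and takes a somewhat different route from the paper's. The paper first reduces to the case $\cc{C}=\cS$ by applying $\Fun^{\lex}(\cc{C}_0^{\op},-)$, then argues that the commutativity of the square is determined by its restriction to the Yoneda image $F\to\cc{P}(F)$ using the universal property of presheaves, invokes naturality of Yoneda to get the left-hand square of
\[
\xymatrix{
F \ar[r]^{\Yo} \ar[d]^{x} & \cc{P}(F) \ar[d]^{x_!} \ar[r] & \Shv(F) \ar[d] \\
[1] \ar[r]^{\Yo} & \cc{P}([1]) \ar[r] & \Shv(\ast),
}
\]
and finishes by identifying the sheafification $\cc{P}([1])\to\Shv(\ast)$ with $\ev_1$. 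You instead work directly with general $\cc{C}$, identify both composites as left adjoints, and match their right adjoints by computing each as the skyscraper functor.

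Regarding the coherence worry you flag: it dissolves once you observe that the two right adjoints are not merely pointwise equal but are the \emph{same composite}. Indeed, $\Ran_\iota$ is nothing other than the sheaf inclusion $i_{[1]}\colon\Shv([1];\cc{C})\hookrightarrow\Fun([1]^{\op},\cc{C})$ (your own computation shows this), and the formula of \cref{not:stuffaboutadjoints} gives $(i_F\circ x_*)(\cc{G})(a)=\cc{G}(x^*(\Yo_F a))=\cc{G}(\Yo_{[1]}(x(a)))=(i_{[1]}\cc{G})(x(a))$, i.e.\ $i_F\circ x_*=(\text{restriction along }x^{\op})\circ i_{[1]}$ as functors. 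The step $x^*(\Yo_F a)=\Yo_{[1]}(x(a))$ is exactly the content of the paper's left square; so the two arguments converge at this point, and no further bookkeeping is needed. Your approach has the advantage of avoiding the reduction to $\cS$ and yielding the classical stalk formula as a visible byproduct; the paper's is terser once one grants that reduction.
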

\begin{proof}
  We will demonstrate the above in the case $\cc{C} = \cS$, as the full case will follow from an application of $\Fun^{\lex}(\cc{C}_{0}^{\op},-)$, see \cref{not:stuffaboutadjoints}. Note that the sheafification map $\cc{P}(F) \to \Shv(F)$ is determined by its restriction to the Yoneda image $F \to \Shv(F)$, using the universal property of the Yoneda embedding \cite[\S 5.3.6]{LurieHTT}. We moreover obtain a homotopy commutative diagram
  \[\xymatrix{
      F \ar[r]^{\Yo} \ar[d]^{x^{\ast}}\ar[r]^{\Yo} &\cc{P}(F) \ar[d]^{x_{!}} \ar[r] & \Shv(F)  \ar[d] \\
      [1] \ar[r]^{\Yo} &  \cc{P}([1]) \ar[r] & \Shv(\ast)
      }
    \]
    from the naturality of the Yoneda embedding. It remains to show that the sheafification map $\cc{P}([1]) \to \Shv(\ast)$ is given by evaluation at $1$, which is an immediate consequence of the fact that the right adjoint inclusion is the functor $\Shv(\ast) \to \cc{P}([1])$ sending a space $X$ to the unique morphism $X \to \ast \in \cS^{[1]^{\op}}$. 
\end{proof}

\begin{lemma}\label{lem:identificationofstalks}
  Let $\cc{K} \in \twoCAlg$, and let $\cc{P} \subseteq \cc{K}$ be a prime ideal with associated point $x_{\cc{P}, \ast}\colon \cS\to \Spec \cc{K}$. Under the equivalence of \cref{prop:localstructuresonapoint}, the pullback $x_{\cc{P}}^{\ast}(\cO_{\cc{K}}) \in \Str^{\loc}_{\GBal}(\cS)$ corresponds exactly to $\cc{K}/\cc{P} \in \twoCAlg^{\loc}$.
\end{lemma}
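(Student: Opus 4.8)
The plan is to combine the explicit description of $\Spec^{\GBal}\cc{K}$ from \cref{def:spec} with the stalk-preservation property of sheafification in \cref{lem:sheafificationandstalks}, and then identify the resulting presheaf-level stalk with the Karoubi quotient $\cc{K}/\cc{P}$. By \cref{thm:2zariski_balmerspectrum} and its proof, the underlying $\infty$-topos of $\Spec\cc{K}$ is $\Shv(\Spc\cc{K}) \simeq \Shv(P^{\op})$ where $P = \mm{Prin}(\cc{K})$, and by \cref{lem:identificationofGstructure} the $\GBal$-structure $\cO_{\cc{K}}$ corresponds, under the equivalence of \cref{obs:sheafislexfunct}, to the sheafification of the forgetful functor $(\Pro(\GBal)^{\ad}_{/\cc{K}})^{\op} \to \Pro(\GBal)^{\op} = \Ind(\GBal^{\op}) \simeq \twoCAlg$. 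Using \cref{lem:admissibleprincipal} this forgetful functor is, after the identification $\mm{Prin}(\cc{K})^{\op} \simeq \Pro(\GBal)^{\ad}_{/\cc{K}}$, simply the presheaf $\langle a \rangle \mapsto \cc{K}/\langle a \rangle$ on $P$.

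The first step is to describe the point $x_{\cc{P},\ast}$ concretely: under $\Shv(\Spc\cc{K}) \simeq \Shv(P^{\op})$, the prime $\cc{P}$ corresponds to the frame map $x_{\cc{P}}\colon \Rad(\cc{K})^{\vee} \to [1]$, or equivalently (pulling back along the surjection $f^{\op}\colon P^{\op} \to (\Rad(\cc{K})^{\omega})^{\op}$ from the proof of \cref{thm:2zariski_balmerspectrum}) to a point of the frame $\cc{U}(P^{\op})$ given by the filter of those $\langle a \rangle \in P$ with $a \in \cc{P}$ — i.e.\ the ``basic open'' $U(a)$ containing $\cc{P}$ is the one with $a \in \cc{P}$. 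The second step is to apply \cref{lem:sheafificationandstalks} with $F = \cc{U}(P^{\op})$ (or directly with the site $P$), $\cc{C} = \twoCAlg$: the stalk $x_{\cc{P}}^{\ast}\cO_{\cc{K}}$ is computed as the value at $1$ of the left Kan extension of the presheaf $\langle a \rangle \mapsto \cc{K}/\langle a\rangle$ along $x_{\cc{P}}^{\ast}$, which is the filtered colimit $\colim_{a \in \cc{P}} \cc{K}/\langle a \rangle$ over the poset of principal ideals contained in $\cc{P}$ (this poset is filtered since $\langle a \rangle, \langle b\rangle \subseteq \langle a \oplus b\rangle$ and $a\oplus b \in \cc{P}$ whenever $a,b\in\cc{P}$). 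The third step is the algebraic identification $\colim_{a \in \cc{P}} \cc{K}/\langle a \rangle \simeq \cc{K}/\cc{P}$ in $\twoCAlg$. Since $\cc{P} = \bigcup_{a\in\cc{P}}\langle a\rangle$ as a filtered union of principal ideals, and $\cc{K}/-\colon \Idl(\cc{K}) \to \twoCAlg_{\cc{K}/}$ is a fully faithful left adjoint by \cref{prop:quotientposet}, it preserves colimits; combined with the fact (also from \cref{prop:quotientposet} and \cref{prop:latticeiscoherent}) that every tt-ideal is the filtered colimit of its principal sub-ideals, this gives $\cc{K}/\cc{P} \simeq \colim_{a\in\cc{P}}\cc{K}/\langle a\rangle$. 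Finally, one checks $\cc{K}/\cc{P} \in \twoCAlg^{\loc}$, i.e.\ that $\{0\}$ is prime in $\cc{K}/\cc{P}$: this is immediate since the kernel of $\cc{K} \to \cc{K}/\cc{P}$ is exactly $\cc{P}$ (\cref{prop:quotientposet}) and $\cc{P}$ is prime, so the identification lands in the subcategory $\twoCAlg^{\loc}$ matching the target of \cref{prop:localstructuresonapoint}.

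The main obstacle I anticipate is bookkeeping the chain of equivalences correctly — in particular making sure that the point $x_{\cc{P},\ast}\colon \cS \to \Spec\cc{K}$, transported through the equivalence $\Spec\cc{K} \simeq \Shv(P^{\op})$ of \cref{thm:2zariski_balmerspectrum} and then through \cref{obs:sheafislexfunct}, really does correspond to the ``evaluate then left Kan extend'' recipe of \cref{lem:sheafificationandstalks}, rather than some variant indexed by the Hochster-dual open $U(a)$. Concretely, the subtlety is whether the filtered colimit is taken over $\{a : a \in \cc{P}\}$ or over the complementary family, and the proof of \cref{thm:2zariski_balmerspectrum} (where subterminal objects/filters are matched up) is the place to pin this down: the open subsets of $\Spc\cc{K}$ containing $\cc{P}$ are exactly the $U(a)$ with $a\in\cc{P}$, so the costalk-style colimit of the structure presheaf is over principal ideals $\langle a\rangle$ with $a\in\cc{P}$, giving $\cc{K}/\cc{P}$ and not $\cc{K}/\sqrt{0}$ or similar. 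Once this identification of the indexing poset is secured, the remaining steps are formal consequences of results already established (\cref{lem:sheafificationandstalks}, \cref{prop:quotientposet}, \cref{lem:identificationofGstructure}).
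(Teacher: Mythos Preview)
Your proposal is correct and follows essentially the same route as the paper's proof: identify the stalk via \cref{lem:sheafificationandstalks} as a filtered colimit of the structure presheaf over a neighborhood basis at $\cc{P}$, then use \cref{prop:quotientposet} to recognize this colimit as $\cc{K}/\cc{P}$. The only cosmetic difference is that the paper indexes the colimit over $\Rad(\cc{K})^{\omega}_{/\cc{P}}$ (working directly on the frame $\Rad(\cc{K})^{\vee}$) while you index over $\mm{Prin}(\cc{K})_{/\cc{P}}$; these yield the same colimit since $\sqrt{-}\colon \mm{Prin}(\cc{K})_{/\cc{P}} \to \Rad(\cc{K})^{\omega}_{/\cc{P}}$ is cofinal, and your extra verification that $\cc{K}/\cc{P}$ is local is harmless.
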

\begin{proof}
  From \cref{rec:0topoi} and \cref{thm:2zariski_balmerspectrum} the map $x_{\cc{P}}^{\ast}$ arises from application of $\Shv(-)$ to the map of frames $x_{\cc{P}}\colon\Rad(\cc{K})^{\vee} \to [1]$ which sends a radical ideal $\cc{I}$ to $1$ if and only if $\cc{I} \subseteq \cc{P}$, and $0$ otherwise. \cref{lem:sheafificationandstalks} implies that $x^{\ast}_{\cc{P}}\cO_{\cc{K}} \in \twoCAlg$ may be computed as the value at $1$ of the left Kan extension along $x_{\cc{P}}^{\op}$ of $\widetilde{\cO}_{\cc{K}}\colon \Rad(\cc{K})^{\omega}\to \twoCAlg$ which sends $\cc{I} \mapsto \cc{K}/\cc{I}$. We obtain the expression
  \[
    x^{\ast}_{\cc{P}}(\cO_{\cc{K}}) \simeq {\varinjlim}_{\Rad(\cc{K})^{\omega}_{/\cc{P}}} \cc{K}/\cc{I}
  \]
  and by \cref{prop:quotientposet} this is exactly $\cc{K}/\cc{P}$.
\end{proof}

We arrive at the application promised at the end of the previous subsection.

\begin{corollary}
  Given a map $f: \cc{K} \to \cc{L} \in \twoCAlg^{[1]}$, the induced map $f_{\ast}\colon \Spec \cc{L} \to \Spec \cc{K} \in \RTop^{[1]}$ sends points of the form $x_{\cc{P},\ast}: \cS \to \Spec \cc{L}$ to $x_{f^{-1}\cc{P}, \ast}: \cS \to \Spec \cc{K}$, for $\cc{P} \subseteq \cc{L}$ is a prime ideal.
\end{corollary}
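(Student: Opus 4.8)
The plan is to reduce the statement to a computation with the structure sheaves, leaning on \cref{lem:identificationofstalks} and \cref{cor:mapsonstalks}. By \cref{thm:2zariski_balmerspectrum} the topoi $\Spec\cc{L}$ and $\Spec\cc{K}$ are $0$-localic with spatial frames of subterminal objects, so by \cref{rec:0topoi} and \cref{rec:stoneduality} a point $\cS\to\Spec\cc{K}$ in $\RTop$ is the same datum as a point of the sober space $|\!\Spec\cc{K}|=\Spc\cc{K}$, i.e.\ (\cref{def:balmer}) a prime ideal of $\cc{K}$. Thus, writing $\cc{Q}\subseteq\cc{K}$ for the prime corresponding to the point $y:=f_{\ast}\circ x_{\cc{P},\ast}$ of $\Spec\cc{K}$, it suffices to prove $\cc{Q}=f^{-1}\cc{P}$.

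The first ingredient is that $\cc{Q}$ can be recovered from the structure sheaf: by \cref{lem:identificationofstalks} one has $y^{\ast}\cO_{\cc{K}}\simeq\cc{K}/\cc{Q}$, and unwinding that proof, the canonical map $\cc{K}\to y^{\ast}\cO_{\cc{K}}$ — namely the unit $\cc{K}\to\Gamma_{\GBal}(\Spec\cc{K},\cO_{\cc{K}})$ of \cref{thm:spec} followed by restriction along $y$, which by \cref{lem:globalsectionsisglobalsections}, \cref{lem:sheafificationandstalks} and \cref{prop:quotientposet} is the Karoubi quotient $q_{\cc{Q}}\colon\cc{K}\to\cc{K}/\cc{Q}$ — has kernel exactly $\cc{Q}$. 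The second ingredient is obtained by lifting $x_{\cc{P},\ast}$ along the Cartesian fibration $\RTop^{\loc}_{\twoCAlg}\to\RTop$ to the morphism $(\cS,x_{\cc{P}}^{\ast}\cO_{\cc{L}})\to(\Spec\cc{L},\cO_{\cc{L}})$ (note $x_{\cc{P}}^{\ast}\cO_{\cc{L}}\simeq\cc{L}/\cc{P}$ is a local $2$-ring, as $\cc{P}$ is prime) and composing with $f_{\ast}$; this exhibits $y$ as the underlying point of a morphism of locally $2$-ringed topoi $(\cS,x_{\cc{P}}^{\ast}\cO_{\cc{L}})\to(\Spec\cc{K},\cO_{\cc{K}})$, so by \cref{cor:mapsonstalks} (applied at the point $\id_{\cS}$) its structure-sheaf comparison $g\colon\cc{K}/\cc{Q}\simeq y^{\ast}\cO_{\cc{K}}\to x_{\cc{P}}^{\ast}\cO_{\cc{L}}\simeq\cc{L}/\cc{P}$ is conservative. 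Finally, naturality of the unit of $\Spec\dashv\Gamma_{\GBal}$ together with compatibility of restriction-to-a-point with the comparison carried by $\Spec(f)$ identifies the square
\[
\xymatrix{
\cc{K} \ar[r]^{f} \ar[d] & \cc{L} \ar[d] \\
\cc{K}/\cc{Q} \ar[r]^{g} & \cc{L}/\cc{P}
}
\]
(the vertical arrows the canonical quotients $q_{\cc{Q}}$ and $q_{\cc{P}}$) as commutative. Taking kernels then gives $\cc{Q}=\ker q_{\cc{Q}}=\ker(g\circ q_{\cc{Q}})=\ker(q_{\cc{P}}\circ f)=f^{-1}\cc{P}$, where the second equality uses that $g$ is conservative, hence $\ker g=0$.

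I expect the main obstacle to be the commutativity of that last square: it is really a functorial enhancement of \cref{lem:identificationofstalks}, requiring one to see that the identification of the stalk of $\cO_{(-)}$ at a prime with the corresponding Karoubi quotient — together with the canonical map from the $2$-ring and the restriction maps — is natural in $\twoCAlg$ and compatible with the $\GBal$-structure comparison of $\Spec(f)$. An alternative route that avoids this bookkeeping is to invoke the naturality in $\cc{K}$ of the equivalence $\Spec(-)\simeq\Shv(\Spc(-))$ from \cref{thmalph:comparison}: this identifies the underlying map of spaces of $f_{\ast}$ with Balmer's functorial map $\Spc\cc{L}\to\Spc\cc{K}$, whose behaviour on primes, $\cc{P}\mapsto f^{-1}\cc{P}$, follows directly from the description of $\Spc$ via the coherent frame $\Rad(-)$ in \cref{def:balmer} and \cref{rec:stoneduality} (cf.\ \cite{balmerSpectrumPrimeIdeals2005}).
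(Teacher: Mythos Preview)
Your proposal is correct and follows essentially the same route as the paper: identify the image point with a prime $\cc{Q}$, use \cref{lem:identificationofstalks} and \cref{cor:mapsonstalks} to obtain a conservative $\cc{K}$-linear map $\cc{K}/\cc{Q}\to\cc{L}/\cc{P}$, and conclude $\cc{Q}=f^{-1}\cc{P}$. The commutative square you worry about is exactly what the paper packages by saying the stalk comparison lies in $\twoCAlg_{\cc{K}/}^{[1]}$; from there the paper invokes \cref{prop:quotientposet} to get $\cc{Q}\subseteq f^{-1}\cc{P}$ and then conservativity for the reverse inclusion, whereas you equivalently read off both inclusions at once from $\ker(g\circ q_{\cc{Q}})=\ker(q_{\cc{P}}\circ f)$.
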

\begin{proof}
  Note that $f_{\ast}x_{\cc{P},\ast}$ corresponds to a prime ideal $\cc{Q} \subseteq \cc{K}$. Let $x_{\cc{Q},\ast}\coloneq f_{\ast}x_{\cc{P},\ast}$. \cref{cor:mapsonstalks} implies that the induced map $x_{\cc{Q}}^{\ast}\cc{O}_{\cc{K}} \to x_{\cc{P}}^{\ast}\cc{O}_{\cc{L}} $ is identified with a map $\cc{K}/\cc{Q} \to \cc{L}/\cc{P}$ in $\twoCAlg^{[1]}_{\cc{K}/}$. \cref{prop:quotientposet} shows that $\cc{Q} \subseteq f^{-1}\cc{P}$, and the condition of conservativity from \cref{lem:identificationofstalks} forces $\cc{Q} = f^{-1}\cc{P}$. 
\end{proof}

\subsection{Comparison transformations}\label{ssec:zariski_comparison}

We now take a detour to discuss comparison transformations between the classical and higher Zariski geometries. The main result is \cref{prop:diraccomp}, which recovers a comparison map originally constructed in \cite[Theorem 5.3]{balmerSpectraSpectraSpectra2010}. This map is one of the early innovations of the tt-geometric perspective towards the classification of tensor ideals.

\begin{notation}
  We write $\mdef{\cc{R}_{(-)}}$ to indicate the functor sending $\cc{C} \in \CAlg(\PrLst)$ to the endomorphism ring spectrum $\Hom_{\cc{C}}(\unit, \unit) \in \CAlg$.
\end{notation}

\begin{construction}\label{rem:smallschwedeshipley}
    The results of \cite[\S 4.8.5]{LurieHA} and \cite[7.3.2.12]{LurieHA} supply an adjunction
  \[
    \Mod\colon \CAlg \rightleftarrows \CAlg(\PrLst) \noloc \cc{R}_{(-)}
  \]
  with fully faithful left adjoint. Note that all compact objects in a category of modules are in the thick subcategory generated by the unit \cite[7.2.4.2]{LurieHA}, and in particular for $R \in \CAlg$, $\cc{C} \in \CAlg(\PrLstomega)$ any exact functor $\Mod_{R} \to \cc{C}$ that preserves the unit must preserve compact objects. In particular, there is a refined adjunction \[\Perf\colon \CAlg \rightleftarrows  \twoCAlg \noloc \cc{R}_{(-)} \] after composing with the equivalence $\CAlg(\PrLstomega) \simeq \twoCAlg$ from \cref{rem:bigandsmall}.
\end{construction}
 
 \begin{lemma}\label{lem:endfilteredcols}
   The functor $\cc{R}_{(-)}: \twoCAlg \to \CAlg$ preserves filtered colimits. 
 \end{lemma}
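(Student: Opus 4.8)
The plan is to reduce the assertion to a statement about mapping spectra and then feed in the fact --- already invoked in the proof of \cref{prop:quotientposet} --- that the forgetful functor $\twoCAlg \to \Cat$ preserves filtered colimits. Fix a filtered diagram $\{\cc{K}_i\}_{i\in I}$ in $\twoCAlg$ with colimit $\cc{K}$; we must show the canonical map $\colim_i \cc{R}_{\cc{K}_i} \to \cc{R}_{\cc{K}}$ is an equivalence in $\CAlg$. Since the smash product on $\Sp$ commutes with filtered colimits, the forgetful functor $\CAlg \to \Sp$ preserves (indeed creates) filtered colimits (see \cite[\S 3.2.3]{LurieHA}), so it suffices to check the comparison map on underlying spectra, i.e.\ to show that the natural map $\colim_i \Hom_{\cc{K}_i}(\unit,\unit) \to \Hom_{\cc{K}}(\unit,\unit)$ is an equivalence in $\Sp$. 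Here the maps assembling this comparison come from the structure maps $\cc{K}_i \to \cc{K}$, which are symmetric monoidal and hence carry units to the unit of $\cc{K}$.

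To compute the target I would first use the cited fact to identify the underlying $\infty$-category of $\cc{K}$ with $\colim_i \cc{K}_i$ in $\Cat$, and (since passing to maximal subgroupoids likewise preserves filtered colimits) note that $\unit_{\cc{K}}$ is the image of $\unit_{\cc{K}_j}$ for any $j$. Then mapping spaces commute with the colimit: $(-)^{[1]}$ preserves filtered colimits in $\Cat$ since $[1]$ is a finite simplicial set, and a mapping space is cut out of $\cc{C}^{[1]}$ by a finite limit, which commutes with filtered colimits in $\Cat$; hence for objects $x,y$ of $\cc{K}$ coming from a stage $\cc{K}_j$ one gets $\Map_{\cc{K}}(x,y) \simeq \colim_{i\ge j}\Map_{\cc{K}_i}(x_i,y_i)$. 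Finally I would bootstrap from mapping spaces to mapping spectra: in a stable $\infty$-category every homotopy group $\pi_n\Hom_{\cc{C}}(x,y)$, $n\in\bb Z$, is a homotopy group of a mapping space $\Map_{\cc{C}}(\Sigma^a x,\Sigma^b y)$; exact functors commute with $\Sigma$, so these identifications are compatible along the diagram, and since $\pi_n$ commutes with filtered colimits of spaces we conclude that $\pi_n\Hom_{\cc{K}}(\unit,\unit) \simeq \colim_i \pi_n\Hom_{\cc{K}_i}(\unit,\unit) \simeq \pi_n\!\left(\colim_i \Hom_{\cc{K}_i}(\unit,\unit)\right)$ for all $n$, which gives the claim.

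The two ``preserves filtered colimits'' inputs --- for $\CAlg\to\Sp$ and for $(-)^{[1]}$ on $\Cat$ --- are standard, as is the compatibility of finite limits with filtered colimits in $\Cat$. The only point requiring care, and the one I would treat as the crux, is the final step: organizing the stabilization bookkeeping uniformly in $n$ so that the resulting equivalence of spectra is genuinely the map induced by the functoriality of $\cc{R}_{(-)}$. (Alternatively one can observe that $\Hom_{\Ind(\cc{K})}(\unit,-)$ is right adjoint to $-\otimes\unit\colon\Sp\to\Ind(\cc{K})$ and, because $\unit\in\Ind(\cc{K})$ is compact, preserves filtered colimits --- but making this naturality in $\cc{K}$ explicit reduces to the same bookkeeping.) No genuine obstacle is anticipated.
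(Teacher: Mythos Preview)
Your proposal is correct and follows essentially the same approach as the paper: both reduce to checking the comparison on underlying spectra, then detect this via the family of mapping spaces $\Map(\unit,\Sigma^{-n}\unit)$ and use that filtered colimits in $\twoCAlg$ can be computed in $\Cat$. The paper packages the ``stabilization bookkeeping'' you flag by postcomposing with the jointly conservative family $\prod_{\bb{Z}}\Omega^{\infty+n}\colon\Sp\to\prod_{\bb{Z}}\cS_{\ast}$, which creates filtered colimits, thereby handling all $n$ at once.
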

 
 \begin{proof}
   First, note that filtered colimits in $\twoCAlg$ can be computed in $\Catperf$ and hence in $\Catex$, using that the localization $\Catex \to \Catperf$ preserves compact objects. Recall that $\prod_{\bb{Z}}\Omega^{\infty + n}: \Sp \to \prod_{\bb{Z}} \mathcal{S}_{\ast}$ creates filtered colimits. It suffices then to show that the composite \begin{equation}\label{eq:composite}\CAlg(\Catex) \xrightarrow{\cc{R}_{(-)}} \CAlg \xrightarrow{\prod_{\bb{Z}}\Omega^{\infty + n}} \prod\nolimits_{\bb{Z}} \mathcal{S}_{\ast}\end{equation} preserves filtered colimits. In the $n$th component this is given by 
   \[\cc{K} \mapsto \Omega^{\infty + n}\Hom_{\cc{K}}(\unit, \unit) \simeq \Map_{\cc{K}}(\unit, \Sigma^{-n}\unit)\]
   which in particular commutes with filtered colimits in $\CAlg(\Catex)$, since these may be computed in $\Catex$ and thus directly in $\Cat$.
 \end{proof}

\begin{corollary}\label{cor:perfpreservescompacts}
    The functor $\Perf: \CAlg \to \twoCAlg$ preserves compact objects.
\end{corollary}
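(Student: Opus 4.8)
The statement \texttt{cor:perfpreservescompacts} asserts that $\Perf\colon \CAlg \to \twoCAlg$ preserves compact objects. The plan is to deduce this from the adjunction $\Perf \dashv \cc{R}_{(-)}$ of \cref{rem:smallschwedeshipley} together with the fact, just established in \cref{lem:endfilteredcols}, that the right adjoint $\cc{R}_{(-)}$ preserves filtered colimits.

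The argument runs as follows. First I would recall the standard categorical fact: if $L \colon \cc{A} \rightleftarrows \cc{B} \noloc R$ is an adjunction in which the right adjoint $R$ preserves filtered colimits, then the left adjoint $L$ preserves compact objects. Indeed, for $A \in \cc{A}$ compact and any filtered diagram $(B_i)$ in $\cc{B}$, one has the chain of natural equivalences
\[
  \Map_{\cc{B}}\bigl(L(A), \colim_i B_i\bigr) \simeq \Map_{\cc{A}}\bigl(A, R(\colim_i B_i)\bigr) \simeq \Map_{\cc{A}}\bigl(A, \colim_i R(B_i)\bigr) \simeq \colim_i \Map_{\cc{A}}\bigl(A, R(B_i)\bigr) \simeq \colim_i \Map_{\cc{B}}\bigl(L(A), B_i\bigr),
\]
where the second equivalence uses that $R$ preserves filtered colimits and the third uses compactness of $A$. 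Hence $L(A)$ is compact. Applying this with $L = \Perf$, $R = \cc{R}_{(-)}$, $\cc{A} = \CAlg$, $\cc{B} = \twoCAlg$, and using \cref{lem:endfilteredcols} for the hypothesis on $R$, we conclude that $\Perf$ carries compact objects of $\CAlg$ to compact objects of $\twoCAlg$.

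There is essentially no obstacle here; the content has already been extracted into \cref{lem:endfilteredcols}, and the remaining step is the formal adjunction argument above. One small point worth a sentence is that filtered colimits in $\twoCAlg$ are the relevant ones for testing compactness and that $\Map_{\twoCAlg}$ does indeed commute with them in the target variable, which is immediate since $\twoCAlg$ is presentable (indeed compactly generated, by \cref{prop:2ring_compactgen}). With that noted, the proof is complete.

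\begin{proof}
  By \cref{rem:smallschwedeshipley} we have an adjunction $\Perf \colon \CAlg \rightleftarrows \twoCAlg \noloc \cc{R}_{(-)}$, and by \cref{lem:endfilteredcols} the right adjoint $\cc{R}_{(-)}$ preserves filtered colimits. Let $R \in \CAlg$ be compact and let $(\cc{K}_i)_{i \in I}$ be a filtered diagram in $\twoCAlg$. Then
  \[
    \Map_{\twoCAlg}\bigl(\Perf_R, {\varinjlim}_i \cc{K}_i\bigr) \simeq \Map_{\CAlg}\bigl(R, \cc{R}_{({\varinjlim}_i \cc{K}_i)}\bigr) \simeq \Map_{\CAlg}\bigl(R, {\varinjlim}_i \cc{R}_{\cc{K}_i}\bigr) \simeq {\varinjlim}_i \Map_{\CAlg}\bigl(R, \cc{R}_{\cc{K}_i}\bigr) \simeq {\varinjlim}_i \Map_{\twoCAlg}\bigl(\Perf_R, \cc{K}_i\bigr),
  \]
  using the adjunction for the first and last equivalences, \cref{lem:endfilteredcols} for the second, and compactness of $R$ for the third. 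Hence $\Perf_R$ is a compact object of $\twoCAlg$.
\end{proof}
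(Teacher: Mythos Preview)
Your proof is correct and matches the paper's intended argument: the corollary is stated immediately after \cref{lem:endfilteredcols} with no proof, precisely because the standard ``right adjoint preserves filtered colimits $\Rightarrow$ left adjoint preserves compacts'' deduction you spell out is understood.
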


Using this, we may compare the Zariski geometry on commutative 2-rings and the classical Zariski geometry on ring spectra. 

\begin{proposition}\label{prop:2-1Zariski_comparison}
    The assignment $A\mapsto\Perf_{A}$ defines a morphism of geometries $\GDir\to\GBal$.
\end{proposition}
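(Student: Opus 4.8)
The plan is to verify the three conditions of \cref{def:geometry_transformation} for the functor $\Phi\colon\GDir\to\GBal$ induced by $\Perf\colon\CAlg^{\omega}\to\twoCAlg^{\omega}$, which is well defined by \cref{cor:perfpreservescompacts}: that $\Phi$ preserves finite limits, admissible morphisms, and admissible covers. Since $\GDir=(\CAlg^{\omega})^{\op}$ and $\GBal=(\twoCAlg^{\omega})^{\op}$, preservation of finite limits amounts to $\Perf$ preserving finite colimits of compact commutative ring spectra; this is immediate because $\Perf$ is a left adjoint (\cref{rem:smallschwedeshipley}), hence preserves all colimits, and it preserves compact objects, so these finite colimits stay compact.

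For admissible morphisms, I would first reduce to a generating class: by \cref{lem:generatingclass}, which identifies the admissible morphisms of $\GBal$ with the principal Verdier quotients, and by closure of Karoubi quotients under composition (\cref{cor:retractsandleftcancel}), it suffices to show that $\Perf$ sends each localization $R\to R[x^{-1}]$, $x\in\pi_{*}R$, to a principal Verdier quotient, since the admissible morphisms of $\GDir$ are generated under composition by these localization maps. The key input is the standard fact that $\Mod_{R}\to\Mod_{R[x^{-1}]}$ is the smashing localization whose acyclics form the localizing tensor ideal generated by the Koszul object $R/x\coloneq\cofib\bigl(x\colon\Sigma^{|x|}R\to R\bigr)$; combining this with \hyperref[prop:verdexistsb]{\cref*{prop:verdexists}\,(b)} and the Neeman--Thomason localization theorem (used as in the proof of \cref{lem:rigidradicalquotient}) identifies $\Perf_{R}\to\Perf_{R[x^{-1}]}$ with the Karoubi quotient $\Perf_{R}\to\Perf_{R}/\langle R/x\rangle$ by the principal tt-ideal $\langle R/x\rangle$. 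This recovers the comparison map of \cite[Theorem 5.3]{balmerSpectraSpectraSpectra2010}.

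Finally, for admissible covers let $\{R\to R[x_{i}^{-1}]\}_{i\in I}$ be a finite collection with $\{x_{i}\}_{i\in I}$ generating the unit ideal of $\pi_{*}R$; the previous step shows the images are admissible, so it remains to check $\bigcap_{i}\ker\bigl(\Perf_{R}\to\Perf_{R[x_{i}^{-1}]}\bigr)\subseteq\sqrt{0}$ in $\Perf_{R}$. Since the functor $\Perf_{R}\to\Perf_{R[x_{i}^{-1}]}$ is base change $M\mapsto M\otimes_{R}R[x_{i}^{-1}]=M[x_{i}^{-1}]$, this kernel is exactly $\{M\in\Perf_{R}\mid M[x_{i}^{-1}]=0\}$, so the intersection consists of those perfect complexes all of whose localizations $M[x_{i}^{-1}]$ vanish. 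Writing $1=\sum_{i}a_{i}x_{i}$ in $\pi_{*}R$ and noting that each homotopy class of such an $M$ is annihilated by a power of each $x_{i}$, a partition-of-unity argument on $\pi_{*}M$ forces $\pi_{*}M=0$, hence $M=0$; thus the intersection is $\langle 0\rangle\subseteq\sqrt{0}$ and the image sieve is a cover.

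The main obstacle is the middle step: pinning down $\Perf_{R}\to\Perf_{R[x^{-1}]}$ as a Karoubi quotient by a \emph{principal} tt-ideal, which rests on the smashing-localization description of $\Mod_{R[x^{-1}]}$ together with the Neeman--Thomason comparison of compact objects. Once that is in hand, the finite-limit condition is formal and the covering condition reduces to the elementary commutative-algebra fact that a graded module killed by localization at a family of elements generating the unit ideal must vanish.
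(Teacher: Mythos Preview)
Your proof is correct and follows the same outline as the paper's. The paper likewise notes that $\Perf$ lands in compact $2$-rings (via \cref{cor:perfpreservescompacts}), that $\Perf_{A}\to\Perf_{A[a^{-1}]}$ is the Karoubi quotient with kernel $\langle\cofib(a)\rangle$, and then verifies the covering condition. The only substantive difference is in that last step: the paper shows the Koszul object $\bigotimes_i A/a_i$ vanishes by observing that each $a_i^{2}$ annihilates it (so its annihilator contains a unit), whereas you take an arbitrary $M$ in the intersection of kernels and kill $\pi_{*}M$ by a partition-of-unity argument. Both are standard and essentially equivalent; the paper's version is marginally slicker since it avoids reasoning about $\pi_{*}M$ elementwise, while yours has the virtue of not needing to know that $a_i^{2}$ (rather than $a_i$) annihilates $A/a_i$. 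Two small remarks: the reduction to a ``generating class'' of admissible morphisms via composition is unnecessary, since the admissible morphisms of $\GDir$ are by definition exactly the single-element localizations; and the citation of \cite[Theorem~5.3]{balmerSpectraSpectraSpectra2010} for this step is off---that reference is the comparison map $\rho$ discussed later in the paper, not the identification of $\Perf_{R[x^{-1}]}$ as a principal Karoubi quotient.
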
 

\begin{proof}
  \cref{cor:perfpreservescompacts} implies that $\Perf$ indeed defines a functor $\CAlg^{\omega} \to \twoCAlg^{\omega}$. Let $A$ be an $\bE_{\infty}$-ring. For $a\in\pi_{\ast}A$, $\Perf_{A}\to\Perf_{A[a^{-1}]}$ is a localization with kernel $\langle \cofib(A\xrightarrow{a} A)\rangle$, and hence it preserves admissible morphisms. Now consider $\{a_{i}\}_{i \in I}\in\pi_{\ast}A$ a finite collection of homogeneous elements such that $\{a_i\}_{i \in I}$ generates the unit ideal. We wish to see that $ \mm{Kos}(a_{1},\dotsc,a_{n}) := \cofib(A\xrightarrow{a_1}A) \otimes_A \dotsb \otimes_A \cofib(A\xrightarrow{a_n}A) \simeq 0$. For all $i$, $a_{i}^{2}$ annihilates $\cofib(A \xrightarrow{a_{i}} A)$, and in particular must annihilate $\mm{Kos}(a_{1},\dotsc,a_{n})$. We have shown that $(a_{1},\dotsc,a_{n}) \subseteq  \mm{ann}_{\mm{Kos}(a_{1},\dotsc,a_{n})}$ and by assumption the former generates the unit ideal, yielding the claim.
\end{proof}

\begin{remark}
  The same proof as above implies that $\Perf$ defines a morphism of geometries $\GZar \to \GBal$.
\end{remark}

Recall that \cref{obs:restrictionfunctor} associates to the morphism of geometries $\Perf: \GDir \to \GBal$ a functor \[\mm{res}_{\Perf} \colon \LTop(\GBal) \to \LTop(\GDir) \] 
which is pointwise given by the assignment $(\cX, \cO) \mapsto (\cX, \cO \circ \Perf)$.

\begin{lemma}\label{lem:affinizingdiracrestriction}
  There is an equivalence of functors from $\LTop(\GZar)$ to $\CAlg$
        \[\Gamma_{\GDir}\circ \mm{res}_{\Perf} \simeq \cc{R}_{(-)} \circ \Gamma_{\GBal}.\]
\end{lemma}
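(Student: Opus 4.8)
The plan is to unwind both sides of the claimed equivalence through the identifications established earlier and reduce everything to the statement that the endomorphism functor $\cc{R}_{(-)}$ is right adjoint to $\Perf$. First I would recall that by \cref{lem:globalsectionsisglobalsections}, for an $\infty$-topos $\cX$ the functor $\Gamma_{\cG}$ restricted to $\Str^{\loc}_{\cG}(\cX)$ is identified with evaluation at the terminal object $\mathbf{1} \in \cX$ of the associated $\Ind(\cG^{\op})$-valued sheaf; moreover, by \cref{lem:resintermsofsheaves} applied to the morphism of geometries $\Perf\colon\GDir \to \GBal$, the restriction functor $\mm{res}_{\Perf}$ corresponds under \eqref{eq:sheafislexfunct} to postcomposition with the right adjoint $\Perf_{\ast}\colon\Ind(\GBal^{\op}) = \twoCAlg \to \Ind(\GDir^{\op}) = \CAlg$ of the opposite of $\Perf$. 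The point is then to identify this right adjoint $\Perf_{\ast}$ with $\cc{R}_{(-)}$.

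Next I would observe that this last identification is exactly the content of \cref{rem:smallschwedeshipley}, which exhibits the adjunction $\Perf \dashv \cc{R}_{(-)}$ between $\CAlg$ and $\twoCAlg$; passing to $\Ind$-objects (or rather noting that $\CAlg \simeq \Ind(\CAlg^{\omega})$ and $\twoCAlg \simeq \Ind(\twoCAlg^{\omega})$ via \eqref{eq:calgislexfunct} and \cref{prop:2ring_compactgen}), and using that $\cc{R}_{(-)}$ preserves filtered colimits (\cref{lem:endfilteredcols}) while $\Perf$ preserves compact objects (\cref{cor:perfpreservescompacts}), one sees that $\cc{R}_{(-)}$ is the ind-extension of the right adjoint to $\Perf\colon\twoCAlg^{\omega} \to$ (presentable setting), i.e., it agrees with the functor $\Perf_{\ast}$ appearing in \cref{lem:resintermsofsheaves}. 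Concretely, for a $\GBal$-structured topos $(\cX,\cO)$ with associated sheaf $\cc{F}\colon\cX^{\op}\to\twoCAlg$, the sheaf associated to $\mm{res}_{\Perf}(\cX,\cO) = (\cX,\cO\circ\Perf)$ is $\Perf_{\ast}\circ\cc{F} = \cc{R}_{\cc{F}(-)}$, so that
\[
  \Gamma_{\GDir}(\mm{res}_{\Perf}(\cX,\cO)) \simeq (\Perf_{\ast}\circ\cc{F})(\mathbf{1}) \simeq \cc{R}_{\cc{F}(\mathbf{1})} \simeq \cc{R}_{\Gamma_{\GBal}(\cX,\cO)},
\]
where the middle equivalence uses that $\cc{R}_{(-)}$, being a right adjoint, commutes with the relevant limit defining $\cc{F}(\mathbf{1})$ — or more simply, that both $\Gamma$-functors are computed as evaluation at $\mathbf{1}$ and $\Perf_{\ast}$ is applied objectwise.

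I would then assemble this into a natural equivalence of functors $\LTop(\GBal) \to \CAlg$ by running the argument inside the naturality diagrams: the square in \cref{lem:resintermsofsheaves} is natural in $\cX$ and in the structure, and \cref{lem:globalsectionsisglobalsections} identifies $\Gamma$ naturally with $\ev_{\mathbf{1}}$, so composing these gives the asserted natural equivalence. (Note that the statement says ``from $\LTop(\GZar)$'', which is presumably a typo for $\LTop(\GBal)$, or uses the remark after \cref{prop:2-1Zariski_comparison} that $\Perf$ also defines a morphism $\GZar\to\GBal$; in either case the argument is identical, one just composes with the corresponding restriction functor.) The main obstacle I anticipate is purely bookkeeping: carefully matching the right adjoint $\Perf_{\ast}$ produced abstractly by the adjoint functor theorem in \cref{lem:resintermsofsheaves} with the explicit endomorphism-ring functor $\cc{R}_{(-)}$ of \cref{rem:smallschwedeshipley}, and checking that the equivalence $\Gamma\simeq\ev_{\mathbf{1}}$ is compatible with postcomposition by $\Perf_{\ast}$ — both of which are ``soft'' once one is careful about which Yoneda-type identifications are in play, with no genuinely hard computation required.
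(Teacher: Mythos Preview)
Your proposal is correct and takes essentially the same approach as the paper: both arguments reduce to identifying the functor $-\circ\Perf\colon\Ind(\GBal^{\op})\to\Ind(\GDir^{\op})$ with the right adjoint $\cc{R}_{(-)}$ to $\Perf$, using the adjunction of \cref{rem:smallschwedeshipley}. The paper re-derives the relevant commutative square from scratch via naturality of precomposition, whereas you invoke the already-established \cref{lem:globalsectionsisglobalsections} and \cref{lem:resintermsofsheaves} directly; this is a cosmetic difference and your packaging is arguably tidier.
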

\begin{proof}
  Unwinding the definitions, the composite $\Gamma_{\GDir} \circ \mm{res}_{\Perf}\colon \LTop(\GBal) \to \CAlg \subseteq \Fun(\GZar, \cS)$ is identified with the following composite
  \begin{align*}
     \LTop(\GBal)\rightarrow \Fun(\GBal, \overline{\LTop}) \xrightarrow{- \circ \Perf}  & \Fun(\GZar, \overline{\LTop}) \xrightarrow{\Gamma \circ -} \Fun(\GZar,\cS).
  \end{align*}
  There is a commutative diagram as below, from the naturality of precomposition
  \[\xymatrix{
      \Fun(\GBal, \overline{\LTop}) \ar[d]_{-\circ \Perf} \ar[r]^-{\Gamma \circ -}& \Fun(\GBal, \cS) \ar[d]^{- \circ \Perf} \\
      \Fun(\GZar, \overline{\LTop}) \ar[r]^-{\Gamma \circ -} &  \Fun(\GZar, \cS)
    }
  \]
  from which one obtains the commutativity of the following diagram
  \[\xymatrix{
    \LTop(\GBal) \ar[d]_{\mm{res}_{\Perf}} \ar[r]^-{\Gamma_{\GBal}}& \Ind(\GBal^{\op}) \subseteq \Fun(\GBal, \cS) \ar[d]^{- \circ \Perf} \\
      \LTop(\GZar)  \ar[r]^-{\Gamma_{\GZar}} &  \Ind(\GZar^{\op}) \subseteq \Fun(\GZar, \cS).
    }
  \]

  Note that the precomposition $- \circ \Perf\colon \Ind(\GBal^{\op}) \to \Ind(\GZar^{\op})$ may be identified with the right adjoint to the functor 
  \[\Ind(\GBal^{\op}) \to \Ind(\GZar^{\op})\] induced from the map $\Perf\colon \GZar^{\op} \to \GBal^{\op}$. Under the equivalences $\Ind(\GZar^{\op}) \simeq \CAlg$, $\Ind(\GBal^{\op}) \simeq \twoCAlg$, this left adjoint may be identified with $\Perf\colon \CAlg \to \twoCAlg$. Thus, its right adjoint $- \circ \Perf\colon \Ind(\GBal^{\op}) \to \Ind(\GZar^{\op})$ is given by $\cc{R}_{(-)}$ under the same equivalences, from which the diagram above supplies the claim.
\end{proof}

\begin{definition}
Let $\mdef{\RTop^{\mm{Dir}}_{\CAlg}} \coloneq \LTop(\GDir)^{\op}$ denote the $\infty$-category of \tdef{Dirac-locally spectrally ringed topoi}.
\end{definition}

\begin{construction}
Consider the following counit transformation associated to the adjunction of \cref{lem:globalsections}
\begin{equation}\label{eq:diraccompstep1}
  \mm{id} \Rightarrow \Gamma_{\GBal}(\Spec(-), \cc{O}) \in \Fun(\twoCAlg, \twoCAlg)^{[1]}.
\end{equation} Composing \eqref{eq:diraccompstep1} with the functor $\cc{R}_{(-)}$ and applying \cref{lem:affinizingdiracrestriction}, we obtain
\begin{equation}\label{eq:diraccomponsections}\cc{R}_{(-)} \Rightarrow \Gamma_{\GDir}(\Spec(-) , \cO \circ \Perf) \in \Fun(\twoCAlg, \CAlg)^{[1]}. \end{equation} Passing to mates, we obtain a transformation of the form
  \begin{equation}\label{eq:diraccomp}
    \mdef{\rho} \colon (\Spec(-), \cO \circ \Perf) \Rightarrow \Spec^{\GDir}\cc{R}_{(-)} \in \Fun(\twoCAlg^{\op},\RTop^{\Dir}_{\CAlg})^{[1]}
  \end{equation} 
which is pointwise given by the adjoint of the map \eqref{eq:diraccomponsections}. 
\end{construction}

The transformation $\rho$ described in \eqref{eq:diraccomp} generalizes the comparison transformation of \cite[Theorem 5.3]{balmerSpectraSpectraSpectra2010}, as we now show.

\begin{proposition}\label{prop:diraccomp}
  Let $\cc{K} \in \twoCAlg$. The underlying morphism of $\infty$-topoi for the transformation of \eqref{eq:diraccomp} evaluated on $\cc{K}$ is given on points by the map
  \[\cc{P}\in |\! \Spec \cc{K}| \mapsto \{f \in \pi_{\ast}R_{\cc{K}} \text{ homogeneous}\mid \cofib(f\colon \unit \to \unit) \notin \cc{P}\} \in \Spec^{\mm{h}}\! \pi_{\ast}R_{\cc{K}}\]
\end{proposition}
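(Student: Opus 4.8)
The plan is to unwind the construction of $\rho$ on points and reduce everything to the explicit descriptions of the two absolute spectra already in hand. Concretely, $\rho_{\cc{K}}$ is defined as the mate (under the $\Spec^{\GDir} \dashv \Gamma_{\GDir}$ adjunction from \cref{lem:globalsections}, applied to $\GDir$) of the map on global sections $\cc{R}_{\cc{K}} \to \Gamma_{\GDir}(\Spec \cc{K}, \cO_{\cc{K}} \circ \Perf)$, which in turn is obtained by applying $\cc{R}_{(-)}$ to the unit $\cc{K} \to \Gamma_{\GBal}(\Spec \cc{K}, \cO_{\cc{K}})$ and invoking the identification $\Gamma_{\GDir}\circ \mm{res}_{\Perf} \simeq \cc{R}_{(-)} \circ \Gamma_{\GBal}$ of \cref{lem:affinizingdiracrestriction}. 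The first step is therefore to identify the underlying map of $\infty$-topoi: by \cref{thm:diracspectrum} the target of $\rho_{\cc{K}}$ has underlying topos $\Shv(\Spec^{\mm{h}}\pi_{2\ast}R_{\cc{K}})$, while by \cref{thm:2zariski_balmerspectrum} the source has underlying topos $\Shv(\Spc \cc{K})$. Since both are $0$-localic, by \cref{rec:0topoi} it suffices to describe the induced map of frames of subterminal objects, equivalently (by Stone duality, \cref{rec:stoneduality}) the induced continuous map of spectral spaces, equivalently (again by Stone duality) the induced map of distributive lattices of quasicompact opens — and since on the source side the quasicompact opens of $\Spc \cc{K}$ form the lattice $\Rad(\cc{K})^{\omega}$, it suffices to track where the basic opens $U(a) = \{\cc{P} \mid a \in \cc{P}\}$ go.

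The second step is to compute the pullback of the structure sheaves along $\rho_{\cc{K}}$, or rather just enough of $\rho_{\cc{K}}$ on the level of open inclusions to pin down the map on points. By \cref{thm:diracspectrum}, the $\GDir$-structure on $\Spec^{\GDir}\cc{R}_{\cc{K}}$ sends $D(f) = \{\mathfrak{p} \mid f^2 \notin \mathfrak{p}\}$ to $\cc{R}_{\cc{K}}[f^{-1}]$; on the $\GBal$ side, the key computation is that under the morphism of geometries $\Perf \colon \GDir \to \GBal$, the admissible localization $\cc{R}_{\cc{K}} \to \cc{R}_{\cc{K}}[f^{-1}]$ at a homogeneous $f \in \pi_{\ast}\cc{R}_{\cc{K}}$ corresponds — after applying $\Perf$ and the unit into $\Gamma_{\GBal}(\Spec\cc{K})$ — to the basic open inclusion of $\Spec\cc{K}$ given by the Karoubi localization $\cc{K} \to \cc{K}/\langle \cofib(f \colon \unit \to \unit)\rangle$; this is exactly the content worked out in the proof of \cref{prop:2-1Zariski_comparison}, where $\Perf_A \to \Perf_{A[a^{-1}]}$ is shown to be the Verdier quotient by $\langle\cofib(A \xrightarrow{a} A)\rangle$. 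Tracing the universal property of the relative spectrum $\Spec^{\GBal}_{\GDir} = \Spec^{\GBal} \circ \Perf$ (via \cref{thm:spec}), the map $\rho_{\cc{K}}$ is characterized by the fact that it pulls back $D(f)$ on the target to the basic open $U(\cofib(f\colon\unit\to\unit))^c$ — equivalently, the open complement of $\{\cc{P} \mid \cofib(f) \in \cc{P}\}$ — on the source.

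The third step is to pass from this description on basic opens to the description on points. A point of $\Spec^{\mm{h}}\pi_{\ast}\cc{R}_{\cc{K}}$ is a homogeneous prime $\mathfrak{q}$, equivalently a frame map $|\!\cc{U}(\Spec^{\mm{h}}\pi_{\ast}\cc{R}_{\cc{K}})| \to [1]$ sending $D(f) \mapsto 1$ iff $f^2 \notin \mathfrak{q}$ iff $f \notin \mathfrak{q}$; a point of $|\!\Spec\cc{K}|$ is a prime tt-ideal $\cc{P}$. By the naturality/adjunction computations above, the image of $\cc{P}$ under $\rho_{\cc{K}}$ is the point sending $D(f) \mapsto 1$ iff the preimage open $\{\cc{Q} \mid \cofib(f) \notin \cc{Q}\}$ contains $\cc{P}$, i.e. iff $\cofib(f \colon \unit \to \unit) \notin \cc{P}$. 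Hence $\rho_{\cc{K}}$ sends $\cc{P}$ to the homogeneous prime $\{f \in \pi_{\ast}\cc{R}_{\cc{K}} \text{ homogeneous} \mid \cofib(f \colon \unit \to \unit) \notin \cc{P}\}$, which is precisely the claim. (One should double-check that this set is indeed a homogeneous prime: it is a direct consequence of the fact that its complement $\{f \mid \cofib(f) \in \cc{P}\}$ is closed under sums and under multiplication by arbitrary homogeneous elements and is a prime — this follows from $\cc{P}$ being a prime tt-ideal together with the standard Koszul identities $\langle \cofib(fg)\rangle \subseteq \langle\cofib(f)\rangle \vee \langle\cofib(g)\rangle$ and $\langle\cofib(f)\rangle \otimes \langle\cofib(g)\rangle \supseteq$-type relations already used in \cref{prop:2-1Zariski_comparison}; alternatively cite \cite[\S 5]{balmerSpectraSpectraSpectra2010}.)

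The main obstacle I anticipate is bookkeeping the chain of adjoints and mates cleanly: one must verify that passing to the mate of \eqref{eq:diraccomponsections} under $\Spec^{\GDir}\dashv\Gamma_{\GDir}$ really does produce the map that pulls back $D(f)$ to the correct basic open of $\Spec\cc{K}$, rather than, say, its Hochster dual. This is where the Hochster-duality conventions of \cref{def:zariski}/\cref{def:balmer} and the remark after \cref{lem:generatingclass} must be handled with care — the appearance of $\cofib(f) \notin \cc{P}$ (an \emph{open} condition on $\Spc\cc{K}$) versus $\cofib(f) \in \cc{P}$ is exactly the kind of thing a sign error in the duality would flip. Once the open inclusion $U(f) \leftrightarrow \cc{R}_{\cc{K}}[f^{-1}]$ correspondence is nailed down via the explicit structure sheaves, the rest is a formal consequence of Stone duality applied to $0$-localic topoi.
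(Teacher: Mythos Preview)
Your overall strategy—compute the underlying map of $0$-localic topoi at the level of frames by determining where the basic opens $D(f)$ pull back to, then read off the map on points via Stone duality—is sound, and the paper explicitly records it as an alternative route in \cref{obs:alternativepaths} and the remark thereafter. The paper's own proof proceeds differently: it computes the stalk of the restricted $\GDir$-structure $\cc{R}_{\cO_{\cc{K}}}$ at $x_{\cc{P}}$, identifies it with $\cc{R}_{\cc{K}/\cc{P}}$ using that $\cc{R}_{(-)}$ preserves filtered colimits (\cref{lem:endfilteredcols}), and then uses that the induced map $\cc{R}_{\cc{K},\mathfrak{p}} \to \cc{R}_{\cc{K}/\cc{P}}$ on stalks must be Dirac-local (\cref{lem:diracstalks}, \cref{lem:diraclocality}) to pin down $\mathfrak{p}$. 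Your approach avoids the stalk/locality machinery at the cost of unwinding the adjunction on frames; the paper's avoids that unwinding at the cost of the pointwise locality argument.

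That said, your execution contains precisely the Hochster-duality error you flagged. In the paper's conventions (\cref{def:balmer}), the basic opens of $\Spc\cc{K}$ are $U(a)=\{\cc{P}\mid a\in\cc{P}\}$, so ``$\cofib(f)\in\cc{P}$'' is the open condition, not ``$\cofib(f)\notin\cc{P}$'' as you assert. The correct pullback of $D(f)$ is $U(\cofib(f))=\{\cc{P}\mid\cofib(f)\in\cc{P}\}$ (this is exactly what \cref{obs:alternativepaths} records), not its complement as you claim in step~2. You then reach the correct final formula only by a compensating second error in step~3: from ``$\mathfrak{q}_{\cc{P}}\in D(f)\iff\cofib(f)\notin\cc{P}$'' one concludes $f\in\mathfrak{q}_{\cc{P}}\iff\cofib(f)\in\cc{P}$, i.e.\ $\mathfrak{q}_{\cc{P}}=\{f\mid\cofib(f)\in\cc{P}\}$, the opposite of what you wrote down. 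Two sign flips cancel, but the argument as written is incoherent. Separately, the substantive claim in step~2—that the frame map induced by $\rho_{\cc{K}}$ really sends $D(f)$ to the specified basic open—is asserted via ``tracing the universal property'' but never carried out; this is where the actual work lies if you pursue this route (cf.\ the paper's remark after \cref{obs:alternativepaths}).
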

\begin{proof}
  Let $\cc{P} \subseteq \cc{K}$ be a prime ideal, and $x_{\ast, \cc{P}}\colon \cS \to \Spec \cc{K} \in \RTop^{[1]}$ the associated point. Note that \cref{lem:resintermsofsheaves} implies that $\cc{R_{O_{K}}} \in \Shv(\Spec \cc{K}; \CAlg)$ is the sheaf corresponding to the $\GDir$-structure $\cO_{\cc{K}} \circ \Perf$ on $\Spec \cc{K}$. This provides an equivalence \begin{equation}\label{eq:idofstalksdirac}
    x_{\cc{P}}^{\ast}\cc{R_{O_{K}}} \simeq {\varinjlim}_{\Rad(\cc{K})^{\omega}_{/\cc{P}}} \cc{R}_{\cc{K}/\cc{I}}\end{equation} using the same argument as \cref{lem:identificationofstalks}. Since $\Rad(\cc{K})^{\omega}_{/\cc{P}}$ admits finite joins, it is filtered, and applying \cref{lem:endfilteredcols} implies that the right-hand side of \eqref{eq:idofstalksdirac} is equivalent to $\cc{R}_{\cc{K}/\cc{P}}$. By \cref{thm:diracspectrum} the point $\gamma_{\ast}x_{\cc{P},\ast}$ corresponds to a homogeneous prime ideal $\mathfrak{p} \subseteq \pi_{\ast}\cc{R_{K}}$, so we may write $x_{\mathfrak{p}, \ast}\coloneq \gamma_{\ast}x_{\cc{P},\ast}$. Furthermore, the induced morphism \[x^{\ast}_{\mathfrak{p}}\cc{O_{R_{\cc{K}}}} \to x^{\ast}_{\cc{P}}\cc{R_{O_{K}}} \in \CAlg_{\cc{R}_{\cc{K}}/}^{[1]}\] is identified with a $\pi_{\ast}$-local map $\cc{R}_{\cc{K}, \mathfrak{p}} \to \cc{R_{K/P}}$ by \cref{lem:diracstalks}, \cref{lem:diraclocality}. Since the map
  \[
    \cc{R_{K}} \coloneq \Hom_{\cc{K}}(\unit, \unit) \to \Hom_{\cc{K}/\cc{P}}(\unit, \unit) \eqcolon \cc{R_{K/P}}
  \]
  inverts exactly those maps with cofiber in $\cc{P}$, it follows that $\mathfrak{p}\subseteq \cc{R_{K}}$ as a set must consist exclusively of maps with cofiber not contained in $\cc{P}$. 
\end{proof}

\begin{remark}
  The result above holds in the setting where $\GDir$ and $\Spec^{h}\pi_{\ast}\cc{R_{\cc{K}}}$ are replaced by $\GZar$ and $\Spec\pi_{0}\cc{R_{K}}$, respectively, using essentially the same proof. 
\end{remark}

\begin{observation}\label{obs:alternativepaths}
  Since $\Spec^{h}\pi_{\ast}\cc{R_{K}}$ is a spectral space \cite[Proposition 2.24]{hesselholtDiracGeometryCommutative2023}, it arises from a spatial frame. We thus note that the comparison map
  \[
    \Spec \cc{K} \to \Shv(\Spec^{h}\pi_{2\ast}\cc{R_{K}}) \in \RTop^{[1]}
  \]
  is fully determined by the induced morphism $\cc{U}(\Spec^{h}\pi_{2\ast}R) \to \Rad(\cc{K})^{\vee}$ obtained from passage to subterminal objects, see \cref{rec:0topoi}. From the description on points given by \cref{prop:diraccomp}, we see that any quasicompact open subset $D(f) \in \cc{U}(\Spec^{h}\pi_{2\ast}R)$ has preimage given by $\langle \cofib(f\colon \unit \to \unit)\rangle \in \Rad(\cc{K})^{\vee}$ which is itself quasicompact. It follows that the comparison transformation above is induced by a map of coherent frames, and hence from the following map of posets
  \begin{equation}\label{eq:perfrestrictstoproad} \Perf\colon\Pro(\GZar)^{\ad}_{/\cc{R}_{\cc{K}}} \to \Pro(\GBal)^{\ad}_{/\cc{K}} \end{equation}
  which sends a localization $R \to R[f^{-1}]$ to the associated map $\Perf_{R} \to \Perf_{R[f^{-1}]}$.
\end{observation}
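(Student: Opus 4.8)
The plan is to reduce the comparison morphism to a map of coherent frames — equivalently, of the associated distributive lattices of quasicompact opens — and then to recognise that lattice map as the one induced by $\Perf$. First, by \cref{thm:2zariski_balmerspectrum} the underlying $\infty$-topos of $\Spec\cc{K}$ is $\Shv(\Spc\cc{K}) = \Shv(\Rad(\cc{K})^{\vee})$, and by \cref{thm:diracspectrum} the underlying $\infty$-topos of $\Spec^{\GDir}\cc{R}_{\cc{K}}$ is $\Shv(\Spec^{h}\pi_{2\ast}\cc{R}_{\cc{K}}) = \Shv(\cc{U}(\Spec^{h}\pi_{2\ast}\cc{R}_{\cc{K}}))$; since both are $0$-localic and $\Shv(-)\colon\mm{Frm}\to\LTop$ is fully faithful (\cref{rec:0topoi}), the underlying morphism in $\RTop$ of the transformation $\rho$ from \eqref{eq:diraccomp}, evaluated on $\cc{K}$, is $\Shv(-)$ of a unique frame morphism, recovered by applying $\mm{Sub}(-)$; denote it $\theta\colon\cc{U}(\Spec^{h}\pi_{2\ast}\cc{R}_{\cc{K}})\to\Rad(\cc{K})^{\vee}$. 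As $\Spec^{h}\pi_{2\ast}\cc{R}_{\cc{K}}$ is spectral its quasicompact opens generate $\cc{U}(\Spec^{h}\pi_{2\ast}\cc{R}_{\cc{K}})$ under joins, and $\theta$, being a left adjoint, is determined by the values $\theta(D(f))$ on the basic opens $D(f)$, $f \in \pi_{\ast}\cc{R}_{\cc{K}}$ homogeneous.

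Second, I would compute these values. The frame map $\theta$ is ``inverse image'' along the underlying continuous map $\Spc\cc{K}\to\Spec^{h}\pi_{2\ast}\cc{R}_{\cc{K}}$, whose effect on points is recorded in \cref{prop:diraccomp}: a prime $\cc{P}$ is sent to $\{g \mid \cofib(g\colon\unit\to\unit)\notin\cc{P}\}$. Hence $\cc{P}$ lies over $D(f) = \{\mathfrak p \mid f^{2}\notin\mathfrak p\}$ precisely when $\cofib(f^{2}\colon\unit\to\unit)\in\cc{P}$, so $\theta(D(f)) = U(\cofib(f^{2}))$ in the notation of \cref{def:balmer}, which as an element of $\Rad(\cc{K})^{\vee}$ is $\sqrt{\langle\cofib(f^{2})\rangle} = \sqrt{\langle\cofib(f\colon\unit\to\unit)\rangle}$; the last equality is the standard fact that $\cofib(f)$ and $\cofib(f^{n})$ generate the same radical tensor ideal (equivalently, have equal support). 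This element is compact in $\Rad(\cc{K})^{\vee}$, so $\theta$ carries quasicompact opens to quasicompact opens, i.e.\ is a morphism of \emph{coherent} frames; by the Stone duality of \cref{rec:stoneduality} it is therefore $\mm{Idl}(-)$ of its restriction $\theta^{\omega}\colon\cc{U}(\Spec^{h}\pi_{2\ast}\cc{R}_{\cc{K}})^{\omega}\to(\Rad(\cc{K})^{\vee})^{\omega}$ to distributive lattices of quasicompact opens, and the comparison morphism is $\Shv(\mm{Idl}(\theta^{\omega}))$, hence ``induced'' by $\theta^{\omega}$.

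Finally I would identify $\theta^{\omega}$ with $\Perf$. The identification of \cref{thm:diracspectrum} labels the $D(f)$ by the localizations $\cc{R}_{\cc{K}}\to\cc{R}_{\cc{K}}[f^{-1}]$, so the argument of \cref{lem:admissibleprincipal} identifies $\cc{U}(\Spec^{h}\pi_{2\ast}\cc{R}_{\cc{K}})^{\omega}$ with $\Pro(\GDir)^{\ad}_{/\cc{R}_{\cc{K}}}$; on the other side, \cref{lem:admissibleprincipal} together with the order-preserving surjection $\mm{Prin}(\cc{K})\twoheadrightarrow\Rad(\cc{K})^{\omega}$ from the proof of \cref{thm:2zariski_balmerspectrum} presents $(\Rad(\cc{K})^{\vee})^{\omega}$ as a quotient of $\Pro(\GBal)^{\ad}_{/\cc{K}}$. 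Under these identifications the computation $\theta^{\omega}(D(f)) = \sqrt{\langle\cofib(f\colon\unit\to\unit)\rangle}$ says exactly that $\theta^{\omega}$ is the composite of $\Perf$ with this quotient: indeed $\Perf$ carries $\cc{R}_{\cc{K}}\to\cc{R}_{\cc{K}}[f^{-1}]$ to $\Perf_{\cc{R}_{\cc{K}}}\to\Perf_{\cc{R}_{\cc{K}}[f^{-1}]} = \Perf_{\cc{R}_{\cc{K}}}/\langle\cofib(f\colon\unit\to\unit)\rangle$ and then, base-changing along the counit $\Perf_{\cc{R}_{\cc{K}}}\to\cc{K}$ (using \cref{cor:basechange}, as in the proof of \cref{prop:2-1Zariski_comparison}), to $\cc{K}\to\cc{K}/\langle\cofib(f\colon\unit\to\unit)\rangle$, whose image in $(\Rad(\cc{K})^{\vee})^{\omega}$ is $\sqrt{\langle\cofib(f\colon\unit\to\unit)\rangle}$. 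Thus the comparison morphism is induced by $\Perf$ (cf.\ \eqref{eq:perfrestrictstoproad}, with $\GDir$ in place of $\GZar$), and the variant with $\GZar$ and $\Spec\pi_{0}\cc{R}_{\cc{K}}$ is verbatim the same, by the remark following \cref{prop:diraccomp}. The main obstacle is purely organisational: tracking the opposites and Hochster dualities relating $\RTop$-morphisms, frame maps, $\Pro(-)^{\ad}$-slices and their distributive lattices so that each induced poset map has the right source, target and direction; the only non-formal input is the support identity $\sqrt{\langle\cofib(f)\rangle} = \sqrt{\langle\cofib(f^{2})\rangle}$.
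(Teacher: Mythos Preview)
Your proposal is correct and follows essentially the same route as the paper's own argument: reduce to a frame map via $0$-localicness, compute on basic opens using the point description of \cref{prop:diraccomp}, conclude coherence, and identify the resulting lattice map with $\Perf$. You supply more detail than the paper (notably the reduction $\sqrt{\langle\cofib(f^{2})\rangle}=\sqrt{\langle\cofib(f)\rangle}$ arising from the definition $D(f)=\{\mathfrak p\mid f^{2}\notin\mathfrak p\}$), but the structure of the argument is the same.
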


\begin{warning}
    Note that the left-hand object in \eqref{eq:perfrestrictstoproad} is not a distributive lattice, but only a \emph{distributive lower semi-lattice}. This does not affect the assertion that the cited map uniquely determines the comparison transformation $\rho$ described in \eqref{eq:diraccomp}; since the open subsets $D(f)$ form a basis of quasicompact opens of $\Spec^{h} \pi_{2\ast}R_{\cc{K}}$, any object of $\cc{U}(\Spec^{h} \pi_{2\ast}R_{\cc{K}})$ is a formal colimit of objects of the form $D(f)$ and hence the map is uniquely determined by its restriction to $\Pro(\GZar)^{\ad}_{/\cc{R}_{\cc{K}}} \subseteq \cc{U}(\Spec^{h} \pi_{2\ast}R_{\cc{K}})^{\omega}$
\end{warning}

\begin{remark}
  In fact, it is possible to prove \cref{prop:diraccomp} in a more streamlined fashion by directly demonstrating that the comparison transformation is induced by the map \eqref{eq:perfrestrictstoproad}; in fact, this is a general result about the counit of the adjunction of \cref{thm:relativespec} evaluated on an absolute spectrum. Given the prior detour into the behavior of points and requisite preliminaries on Stone duality, we find our present approach to be the most conceptual path. 
\end{remark}

The observation above supplies the following result, first proved as a bijection of posets in the Noetherian case by work of Hopkins \cite{hopkinsGlobalMethodsHomotopy1987} and Neeman \cite{neemanChromaticTower1992}. A prototype of the $\GZar$-structured statement is first recorded on the level of $\pi_{0}$ in \cite{Balmer02}. The result below is obtained on the level of frames as \cite[Theorem 2.1.9]{KockPitsch17}, and we merely sketch the necessary modifications of that argument.

\begin{theorem}\label{thm:hopkinsneemanbalmer}
  Given an ordinary commutative ring $R \in \CAlg^{\heartsuit}$, the comparison transformation
  \[
    \Spec \Perf_{R} \to \Spec^{\GZar} R \in \RTop(\GZar)^{[1]}
  \]
  is an equivalence.
\end{theorem}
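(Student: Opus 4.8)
The plan is to reduce the statement, via Stone duality, to Thomason's reconstruction theorem. Both $\Spec \Perf_{R}$ and $\Spec^{\GZar} R$ have $0$-localic underlying $\infty$-topoi: the former by \cref{thm:2zariski_balmerspectrum}, the latter by the explicit computation recalled above (following \cite[Theorem 2.40]{lurieDerivedAlgebraicGeometry2011a}). Hence by \cref{rec:0topoi} and \cref{rec:stoneduality} the comparison map of underlying $\infty$-topoi is equivalent data to a map of coherent frames $\cc{U}(\Spec R) \to \Rad(\Perf_{R})^{\vee}$, and by the equivalence $\mm{Idl}\colon \mm{DLat} \xrightarrow{\sim} \mm{Frm^{coh}}$ this is in turn equivalent data to a morphism of the associated distributive lattices of quasicompact objects. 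Moreover, repeating the argument of \cref{obs:alternativepaths} with $\GZar$ in place of $\GDir$, the entire comparison transformation $\rho$ --- underlying topos together with its restricted structure sheaf --- is induced by the map of posets $\Perf\colon \Pro(\GZar)^{\ad}_{/R} \to \Pro(\GBal)^{\ad}_{/\Perf_{R}}$ sending a Zariski localization $R \to R[f^{-1}]$ to $\Perf_{R} \to \Perf_{R[f^{-1}]}$. So it suffices to show this map of posets induces an isomorphism after $\mm{Idl}$-completion and to check the structure sheaves agree on a common basis.

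Identify the two sides. By \cref{lem:admissibleprincipal}, $\Pro(\GBal)^{\ad}_{/\Perf_{R}} \simeq \mm{Prin}(\Perf_{R})^{\op}$; since $\Perf_{R} = \Mod_{R}^{\mm{dbl}}$ is rigid, \cref{lem:rigid_radicalideal} and \cref{prop:latticeiscoherent} give $\mm{Prin}(\Perf_{R}) = \Rad(\Perf_{R})^{\omega}$, so this is precisely the distributive lattice of quasicompact opens of $\Spc \Perf_{R}$. On the other side, $\Pro(\GZar)^{\ad}_{/R}$ is the (distributive lower semilattice of) principal localizations of $R$, which surjects --- by taking radicals of ideals in $R$ --- onto the distributive lattice $\cc{U}(\Spec R)^{\omega}$ of quasicompact opens, generated under finite union by the basic opens $D(f)$ (the caveat of the Warning following \cref{obs:alternativepaths} applies and is immaterial, as those basic opens form a basis). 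Using the Neeman--Thomason localization identity $\Perf_{R}/\langle \Kos(f)\rangle \simeq \Perf_{R[f^{-1}]}$ recorded in the proof of \cref{prop:2-1Zariski_comparison}, the induced map of distributive lattices is
\[
  \phi\colon \cc{U}(\Spec R)^{\omega} \to \mm{Prin}(\Perf_{R})^{\op}, \qquad D(f) \longmapsto \langle \Kos(f)\rangle,
\]
where $\Kos(f) = \cofib(f\colon R \to R)$, noting that $\supp \Kos(f) = V(f)$ is the closed complement of $D(f)$.

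The heart of the matter is that $\phi$ is an isomorphism, which is exactly Thomason's reconstruction theorem \cite{Thomason1997} (cf. \cite{Balmer02} for the $\pi_{0}$-level statement and \cite[Theorem 2.1.9]{KockPitsch17} for the frame-theoretic version). Indeed, support gives an order isomorphism between $\Rad(\Perf_{R}) = \Idl(\Perf_{R})$ and the frame of Thomason-closed subsets of $\Spec R$, restricting on compact objects to a bijection between principal tt-ideals and closed subsets with quasicompact complement; composing with Hochster duality (complementation) this says $\phi$ is bijective, with inverse sending a quasicompact open $W$ with Thomason-closed complement $Z$ to $\langle x_{W}\rangle$ for any perfect complex $x_{W}$ with $\supp x_{W} = Z$. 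Compatibility of $\phi$ with the lattice operations is immediate from $\supp(x\otimes y) = \supp(x)\cap\supp(y)$, the octahedral containment $\Kos(fg) \in \mm{Thick}(\Kos(f),\Kos(g))$, \cref{lem:radicalgeneration} (which in the rigid setting reads $\langle x\otimes y\rangle = \langle x\rangle \cap \langle y\rangle$), and the classification. Since $\mm{Idl}$ is an equivalence and $\Shv(-)$ is fully faithful on $0$-localic $\infty$-topoi, the comparison map of underlying $\infty$-topoi is an equivalence. Finally, under this equivalence the structure sheaves match: the $\GZar$-restriction of $\cO_{\Perf_{R}}$ is $\cc{R}_{(-)}\circ\cO_{\Perf_{R}}$ by \cref{lem:resintermsofsheaves} (cf. \cref{lem:affinizingdiracrestriction}), and by \cref{lem:identificationofGstructure} and \cref{lem:admissibleprincipal} it evaluates on the quasicompact open corresponding to $\langle\Kos(f)\rangle$ to $\cc{R}_{\Perf_{R}/\langle\Kos(f)\rangle} \simeq \cc{R}_{\Perf_{R[f^{-1}]}} = R[f^{-1}]$ --- the last equality because $R$, hence $R[f^{-1}]$, is discrete so $\Hom_{\Perf_{R[f^{-1}]}}(\unit,\unit) = R[f^{-1}]$ --- which agrees naturally in $f$ with the classical structure sheaf $D(f)\mapsto R[f^{-1}]$ on $\Spec^{\GZar} R$. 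Hence $\rho$ is an equivalence in $\RTop(\GZar)^{[1]}$. The only substantive input is the reconstruction theorem \cite{Thomason1997}; everything else is bookkeeping translating the geometry formalism through Stone duality.
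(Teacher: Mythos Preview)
Your approach is essentially the same as the paper's: reduce the underlying-topos equivalence via \cref{obs:alternativepaths} and Stone duality to showing that the map \eqref{eq:perfrestrictstoproad} becomes an isomorphism after passing to ideal frames, then separately verify the structure sheaves agree. The paper phrases the lattice step as ``every finitely generated radical ideal of $\Perf_R$ is generated by objects of the form $\cofib(f)$'' and cites \cite[Proposition~2.1.13]{KockPitsch17}; your direct appeal to Thomason's classification \cite{Thomason1997} is the same input repackaged.

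There is, however, a gap in your structure-sheaf identification. You assert that $\cO_{\Perf_R}$ evaluates on the quasicompact open corresponding to $\langle\Kos(f)\rangle$ to $\Perf_R/\langle\Kos(f)\rangle$, citing only \cref{lem:identificationofGstructure} and \cref{lem:admissibleprincipal}. But those results only identify the \emph{presheaf} $\widetilde{\cO}_{\Perf_R}$; the structure sheaf $\cO_{\Perf_R}$ is its sheafification, and nothing you cite guarantees the sheafification map is an equivalence on basic opens. The paper closes this gap by invoking \cref{thm:rigid_subcanonical} (Zariski descent for rigid $2$-rings), which is precisely the statement $\widetilde{\cO}_{\cc{K}}\simeq\cO_{\cc{K}}$ for $\cc{K}$ rigid; since $\Perf_R$ is rigid this applies. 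Alternatively, you can sidestep the forward reference entirely by checking on stalks instead of on basic opens: by \cref{lem:sheafificationandstalks} the stalk of $\cc{R}_{\cO_{\Perf_R}}$ at the point corresponding to a prime $\cc{P}$ is the filtered colimit $\varinjlim_{\cc{I}\subseteq\cc{P}}\cc{R}_{\Perf_R/\cc{I}}$ computed directly from the presheaf, and the lattice isomorphism you have already established shows that the subsystem $\{\langle\Kos(f)\rangle : f\notin\mathfrak{p}\}$ is cofinal, whence (using \cref{lem:endfilteredcols}) this colimit is $\varinjlim_{f\notin\mathfrak{p}} R[f^{-1}] \simeq R_{\mathfrak{p}}$, matching the classical stalk.
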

\begin{proof}
   After \cref{obs:alternativepaths}, the statement on the level of $\RTop$ will follow from demonstrating that the map of \eqref{eq:perfrestrictstoproad} induces an equivalence upon passage to ideal frames. Since $\Perf$ is fully faithful, it will suffice in fact to show that any ideal of $\Rad(\Perf_{R})^{\op}$ is determined by its intersection with the ideals in the image of $\Pro(\GZar)^{\ad}_{/R}$. Unwinding the definitions and using the coherence of $\Rad(\Perf_{R})^{\vee}$, we are tasked with showing that any finitely generated radical ideal of $\cc{K}$ is generated by finitely many objects of the form $\cofib(f)$ for $f \in R$, which is the statement of \cite[Proposition 2.1.13]{KockPitsch17}. For the identification of $\cG$-structures, we note that $\Perf_{R}$ is rigid and \cref{thm:rigid_subcanonical} applies; by \cref{lem:resintermsofsheaves} the restricted $\GZar$-structure on $\Spec \Perf_{R}$ may thus be identified with the sheaf assigning to an open subset $D(f)$ the endomorphism ring spectrum of the unit of $\Perf_{R}/\langle \cofib(f)\rangle \simeq \Perf_{R[f^{-1}]}$, which is $R[f^{-1}]$. This manifestly agrees with the structure sheaf on $\Spec^{\GZar}R$. 
\end{proof}

\subsection{Thomason's theorem}\label{ssec:tt_thomason}
\cref{thm:hopkinsneemanbalmer} admits a globalization to the setting of quasicompact quasiseparated schemes, demonstrated in \cite{Balmer02}. In the language of Zariski geometries, this is a specialization of the following fact, which will be proven in \cite{ChedRecon}.

\begin{theorem}\label{thm:affineness}
  Let $\cc{G} = \GZar$ and let $\cc{G}' = \GBal$. For any quasicompact quasiseparated nonconnective spectral scheme $X$ considered as a locally ringed topos, there is an equivalence of locally 2-ringed topoi
  \[
    \Spec^{\cc{G}'}_{\cc{G}}X \simeq \Spec \Perf_{X}
  \]
which is moreover natural in $X$. The same result holds if $X$ is assumed to be a spectral Dirac scheme, suitably defined.
\end{theorem}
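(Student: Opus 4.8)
The plan is to reduce to the affine case by Zariski descent and then globalise by a reduction principle in the style of Thomason and Trobaugh. Write $\Perf\colon \GZar \to \GBal$ for the morphism of geometries of \cref{prop:2-1Zariski_comparison}; by \cref{thm:relativespec} it induces an adjunction $\Spec^{\GBal}_{\GZar} \dashv \mm{res}_{\Perf}\colon \LTop(\GBal) \to \LTop(\GZar)$, and a quasicompact quasiseparated nonconnective spectral scheme $X$ is regarded as an object of $\LTop(\GZar)$ through its structure sheaf of $\bE_{\infty}$-rings. For the Dirac variant one uses instead the morphism $\Perf\colon \GDir \to \GBal$, also provided by \cref{prop:2-1Zariski_comparison}, and reads ``spectral Dirac scheme'' as an object of $\LTop(\GDir)$ glued from the affine models $\Spec^{\GDir} R$ of \cref{thm:diracspectrum}; everything below is insensitive to which of $\GZar$, $\GDir$ is used.

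For $X = \Spec^{\GZar} R$ affine I would argue as follows. Since $\Perf$ preserves equivalences and trivial covers it also defines a morphism of the underlying discrete geometries, and the two factorisations $(\GZar)_{\disc}\to\GZar\to\GBal$ and $(\GZar)_{\disc}\to(\GBal)_{\disc}\to\GBal$ of geometry morphisms give, after passing to restriction functors and their essentially unique left adjoints, natural equivalences
\[
  \Spec^{\GBal}_{\GZar}\circ\Spec^{\GZar}_{(\GZar)_{\disc}} \;\simeq\; \Spec^{\GBal}_{(\GZar)_{\disc}} \;\simeq\; \Spec^{\GBal}\circ\Spec^{(\GBal)_{\disc}}_{(\GZar)_{\disc}} .
\]
Here $\Spec^{\GZar}_{(\GZar)_{\disc}} R \simeq \Spec^{\GZar} R$ by \cref{thm:spec}, while $\Spec^{(\GBal)_{\disc}}_{(\GZar)_{\disc}}$ is left adjoint to the functor which \cref{lem:resintermsofsheaves} identifies, on the fibre over the point, with the right adjoint of $\Ind(\Perf^{\op})$, hence restricts on $\Ind(\GZar^{\op})\simeq\CAlg$ to $\Ind(\Perf^{\op})$; by \cref{cor:perfpreservescompacts} the latter is $\Perf\colon\CAlg\to\twoCAlg$. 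Chaining these gives $\Spec^{\GBal}_{\GZar}(\Spec^{\GZar}R)\simeq\Spec^{\GBal}(\Perf_R)=\Spec\Perf_R$, compatibly with the $\GBal$-structures, and $\Perf_{\Spec R}\simeq\Perf_R$; this is the affine case, visibly natural in $R$. (Its underlying $\infty$-topos is $\Shv(\Spc\Perf_R)$ by \cref{thm:2zariski_balmerspectrum}, recovering \cref{thm:hopkinsneemanbalmer} for discrete $R$.)

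To globalise I would show that both $X\mapsto\Spec^{\GBal}_{\GZar}X$ and $X\mapsto\Spec\Perf_X$ satisfy Zariski descent on qcqs spectral schemes, and then run the reduction principle of \cite{Thomason1997,bondalGeneratorsRepresentabilityFunctors2003}: the class of $X$ for which the natural comparison map — assembled from the affine equivalences above and the adjunction $\Spec^{\GBal}_{\GZar}\dashv\mm{res}_{\Perf}$ (using \cref{lem:resintermsofsheaves} to identify $\mm{res}_{\Perf}\cc{O}_{\Perf_X}$ with $\cc{R}_{(-)}\circ\cc{O}_{\Perf_X}$) — is an equivalence contains the affines, is closed under finite disjoint unions, and is closed under gluings $X = U\cup V$ along quasicompact opens once the statement is known for $U$, $V$, $U\cap V$; since every qcqs spectral scheme is built from affines by finitely many such moves, this concludes. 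The left-hand functor respects such gluings because $\Spec^{\GBal}_{\GZar}$ is a left adjoint (\cref{lem:globalsections}), an open cover exhibits $X$ as the corresponding colimit of structured topoi, and Lurie's construction computes the relative spectrum of a scheme locally from an affine cover \cite{LurieDAG5}. The right-hand functor respects them because perfect complexes satisfy Zariski descent, each $\Perf_X\to\Perf_U$ is the Karoubi quotient by the tt-ideal $\Perf_{X\setminus U}$ of complexes supported on the closed complement (Neeman--Thomason localisation \cite[Proposition~1.5]{BGT}), and $\bigcap_i\Perf_{X\setminus U_i}=\Perf_{\bigcap_i(X\setminus U_i)}=0$, so an open cover of $X$ becomes a $\GBal$-admissible cover of the rigid $2$-ring $\Perf_X$ in the sense of \cref{def:zariski}; that the resulting $\GBal$-structure is $\cc{O}_{\Perf_X}$ then follows from \cref{thmalph:sheaf}, \cref{thm:rigid_subcanonical}, and the affine structure-sheaf computation.

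I expect the main obstacle to be the descent statement for the absolute spectrum: that $\Spec^{\GBal}$, which preserves colimits but not limits, nonetheless carries a Milnor square $\Perf_X\simeq\Perf_U\times_{\Perf_{U\cap V}}\Perf_V$ (equivalently the cosimplicial descent presentation $\Perf_X\simeq\lim_{\Delta}\Perf_{U_{\bullet}}$ of an affine cover) to the corresponding pushout, i.e.\ gluing, of structured topoi. I would prove this by making the descent explicit through the presentation $\Spec^{\GBal}\cc{K}\simeq\Shv(\Spc\cc{K})$ of \cref{thm:2zariski_balmerspectrum} and the identification $\Pro(\GBal)^{\ad}_{/\cc{K}}\simeq\mm{Prin}(\cc{K})^{\op}$ of \cref{lem:admissibleprincipal}: a $\GBal$-admissible cover of $\Perf_X$ corresponds on $\Spc\Perf_X$ to the open cover cut out by the $\Perf_{U_i}$, so the assertion reduces to ordinary sheaf-gluing over the spectral space $\Spc\Perf_X$, with the bookkeeping organised by the strong form of \cref{thmalph:sheaf} recorded in \cref{thm:indsheaf}. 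The Dirac case is parallel throughout, with $\GDir$ in place of $\GZar$ and the affine models $\Spec^{\GDir}R$ of \cref{thm:diracspectrum} replacing $\Spec^{\GZar}R$.
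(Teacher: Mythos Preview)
The paper does not contain a proof of this theorem. It is explicitly deferred to forthcoming work \cite{ChedRecon}, with only the remark that ``the proof of this result will eschew the use of support-theoretic techniques, and instead follow from a general study of \emph{affine 2-schemes} for the geometry $\GBal$.'' So there is nothing to compare against beyond that sentence; your proposal is necessarily a different route, and in fact precisely the support-theoretic one the paper says it will avoid, since the globalisation step leans on \cref{thm:2zariski_balmerspectrum} and the sheaf results of \cref{sec:structuresheaf}.

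On the content of your sketch: the affine case is handled correctly --- the composition-of-adjoints argument identifying $\Spec^{\GBal}_{\GZar}(\Spec^{\GZar}R)$ with $\Spec\Perf_R$ is clean. The globalisation has one misleading justification. You write that the left-hand functor respects gluings ``because $\Spec^{\GBal}_{\GZar}$ is a left adjoint'', but an open cover exhibits $X$ as a \emph{limit} (pullback) in $\LTop(\GZar)$, not a colimit: open immersions $U\subseteq X$ give morphisms $\Shv(X)\to\Shv(U)$ in $\LTop$, so the Mayer--Vietoris square is Cartesian there, and a left adjoint has no reason to preserve it. The correct input is the one you also cite, that Lurie's relative spectrum is computed Zariski-locally (this is the content of the ``relative spectrum is local'' results in \cite{LurieDAG5}), and that is genuinely what carries the argument --- drop the left-adjoint sentence. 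For the right-hand side you correctly flag the real difficulty, that $\Spec^{\GBal}$ need not send the Milnor limit square to a gluing square, and your proposed reduction to sheaf-gluing over $\Spc\Perf_X$ via \cref{thm:2zariski_balmerspectrum} and \cref{thm:indsheaf} is the natural attack, though carrying it out with the $\GBal$-structure intact is where most of the work would lie.
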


The proof of this result will eschew the use of support-theoretic techniques, and instead follow from a general study of \emph{affine 2-schemes} for the geometry $\GBal$.  
\vspace{5pt}

\section{Zariski Descent}\label{sec:structuresheaf}
Classical Zariski geometry rests on the ability to perform \emph{descent} with respect to the Zariski topology: this, for example, is what enables the gluing of maps and the construction of schematic moduli problems. In this section we will prove several instances of descent over the Zariski spectra of 2-rings. 

\subsection{Zariski descent and Mayer--Vietoris} 
In this subsection we supply our basic reduction schema for proving Zariski descent, \cref{prop:sheavesonspec}. We first recall the following characterization of sheaves on a frame, recorded as~\cite[Theorem 3.19]{aokiSheavesspectrumAdjunction2023}.

\begin{theorem}\label{thm:mvsuffices} Let $F$ be a frame. For an $\infty$-category with limits $\cc{C}$, the subcategory $\Shv(F; \cc{C}) \subseteq \Fun(F^{\op}, \cc{C})$ consists exactly of those presheaves $\mathcal{F}$ satisfying:
  \begin{enumerate}
  \item The value $\mathcal{F}(\bot)$ is the final object.
  \item\label{thmmvsuffices(b)} For any opens $U, U' \in F$, the square \[ \xymatrix{ \mathcal{F}(U \vee U') \ar[r] \ar[d] & \mathcal{F}(U')\ar[d] \\ \mathcal{F}(U) \ar[r] & \mathcal{F}(U \wedge U')
      } \] is Cartesian.
  \item The canonical morphism \[\mathcal{F}\left(\bigvee D\right) \to \varprojlim_{U \in D}\mathcal{F}(U)\] is an equivalence for any directed subset $D \subseteq F$.
  \end{enumerate}
\end{theorem}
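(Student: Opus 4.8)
The statement characterizes when a presheaf $\mathcal{F}\in\Fun(F^{\op},\cc{C})$ lies in $\Shv(F;\cc{C})$, i.e.\ (unwinding the definition via the site $F$ equipped with the canonical topology of \cref{rec:0topoi}) when $\mathcal{F}(U)\to\varprojlim_{V\in S}\mathcal{F}(V)$ is an equivalence for every $U\in F$ and every covering sieve $S$ of $U$ — that is, every down-closed $S\subseteq F_{\leq U}$ with $\bigvee S=U$. Writing $\mathcal{F}\{S\}$ for $\varprojlim_{V\in S}\mathcal{F}(V)$ when $S\subseteq F$ is down-closed, I would isolate three elementary facts and run the whole argument off them. \textbf{(A)} If $S$ has a greatest element $W$, then $W$ is initial in the indexing category $S^{\op}$, so $\mathcal{F}\{S\}\simeq\mathcal{F}(W)$. \textbf{(B)} If $S=S_1\cup S_2$ with $S_1,S_2$ down-closed, then any chain in $S_1\cup S_2$ lies entirely in one of $S_1,S_2$ (it is determined by its top element), so $S_1\cup S_2$ is the pushout of $S_1,S_2$ over $S_1\cap S_2$ as simplicial sets; since $\Fun(-,\cc{C})$ turns such pushouts of index categories into pullbacks, $\mathcal{F}\{S_1\cup S_2\}\simeq\mathcal{F}\{S_1\}\times_{\mathcal{F}\{S_1\cap S_2\}}\mathcal{F}\{S_2\}$. \textbf{(C)} If $S=\bigcup_i S_i$ is a directed union of down-closed subsets, then $S\simeq\varinjlim_i S_i$ as simplicial sets (a directed colimit of inclusions is the union, levelwise), whence $\mathcal{F}\{S\}\simeq\varprojlim_i\mathcal{F}\{S_i\}$.

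For the \emph{easy} direction, suppose $\mathcal{F}\in\Shv(F;\cc{C})$. Condition (a) is descent against the empty covering sieve of $\bot=\bigvee\varnothing$. Condition (b) is descent against the sieve generated by $\{U,U'\}$, which is the down-set $F_{\leq U}\cup F_{\leq U'}$ with intersection $F_{\leq U\wedge U'}$; by (B) and (A) this sieve has $\mathcal{F}$-descent object $\mathcal{F}(U)\times_{\mathcal{F}(U\wedge U')}\mathcal{F}(U')$, which descent identifies with $\mathcal{F}(U\vee U')$. Condition (c) is descent against the sieve generated by a directed $D$, namely the directed union $\bigcup_{V\in D}F_{\leq V}$; by (C) and (A) its $\mathcal{F}$-descent object is $\varprojlim_{V\in D}\mathcal{F}(V)$, which descent identifies with $\mathcal{F}(\bigvee D)$.

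For the \emph{substantive} direction, assume (a)--(c); I must show $\mathcal{F}(U)\simeq\mathcal{F}\{S\}$ for every covering sieve $S$ of every $U$. First, by induction on $n$, for any $W_1,\dots,W_n\in F$ one has $\mathcal{F}\{F_{\leq W_1}\cup\dots\cup F_{\leq W_n}\}\simeq\mathcal{F}(W_1\vee\dots\vee W_n)$: writing $A=F_{\leq W_1}\cup\dots\cup F_{\leq W_n}$, the step $n\to n+1$ splits $A\cup F_{\leq W_{n+1}}$ via (B) along $A\cap F_{\leq W_{n+1}}=F_{\leq W_1\wedge W_{n+1}}\cup\dots\cup F_{\leq W_n\wedge W_{n+1}}$ (intersection distributes over the union of subposets), applies the inductive hypothesis to $A$ and to $A\cap F_{\leq W_{n+1}}$ — here using distributivity of the frame $F$ to identify $(W_1\vee\dots\vee W_n)\wedge W_{n+1}$ with the join of the $W_i\wedge W_{n+1}$ — and then invokes (b). Now let $S$ be an arbitrary covering sieve of $U$, so $\bigvee S=U$ (the case $S=\varnothing$, $U=\bot$ is (a)). Write $S=\bigcup_{T}S_T$ as the directed union over finite subsets $T\subseteq S$ of the finitely generated subsieves $S_T=\bigcup_{V\in T}F_{\leq V}$, and set $W_T=\bigvee_{V\in T}V$. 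Then $\mathcal{F}\{S\}\simeq\varprojlim_T\mathcal{F}\{S_T\}$ by (C), $\mathcal{F}\{S_T\}\simeq\mathcal{F}(W_T)$ by the finite case, and $\varprojlim_T\mathcal{F}(W_T)\simeq\mathcal{F}(\bigvee_T W_T)\simeq\mathcal{F}(U)$ by (c), since $\{W_T\}_T$ is directed with join $\bigvee S=U$. Hence $\mathcal{F}(U)\simeq\mathcal{F}\{S\}$, so $\mathcal{F}$ is a sheaf.

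The main obstacle is organizational rather than conceptual: the proof rests entirely on (B) and (C), and the care goes into verifying that the unions of down-sets in play genuinely are the colimits of the corresponding nerves — the ``top element controls the chain'' observation is the crux for (B), and a levelwise-union argument for (C) — and into threading the frame distributivity through the finite-cover induction without slippage. One could instead package the reverse direction by appealing to a general principle (descent against a pullback-stable family of covers that generates the topology — here the empty, two-element, and directed covers — implies the sheaf condition), after which only the finite-cover induction and the directed-union bootstrap are needed to see these generate $\tau$; I would present whichever is lighter given the ambient machinery.
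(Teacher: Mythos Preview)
The paper does not prove this statement; it records it as a recollection and cites \cite[Theorem~3.19]{aokiSheavesspectrumAdjunction2023} for the proof. Your argument is therefore not competing with anything in the paper itself.

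Your proof is correct. The three lemmas are sound: for~(B), the observation that down-closedness forces any chain in $S_1\cup S_2$ to lie entirely in one $S_i$ gives $N(S_1\cup S_2)=N(S_1)\cup N(S_2)$ and $N(S_1)\cap N(S_2)=N(S_1\cap S_2)$ as sub-simplicial sets, so the union is a strict pushout along monomorphisms and hence a pushout in $\Cat$; the passage from $\Fun(-,\cc{C})$ sending this to a pullback to the corresponding decomposition of limits is the usual mapping-space argument. Lemma~(C) is analogous, using that filtered colimits of monomorphisms of simplicial sets model filtered colimits in $\Cat$. The finite induction in the substantive direction is clean, and you correctly isolate the one place distributivity of~$F$ enters (identifying $(W_1\vee\dotsb\vee W_n)\wedge W_{n+1}$ with the join of the $W_i\wedge W_{n+1}$). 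Two points you leave implicit but which check out: first, that the chain of equivalences you build is compatible with the \emph{canonical} comparison map $\mathcal{F}(U)\to\mathcal{F}\{S\}$ (routine, since every step is induced by restriction); second, that the reindexing from $\varprojlim_T\mathcal{F}(W_T)$ over finite $T\subseteq S$ to $\varprojlim_{W\in D}\mathcal{F}(W)$ over the directed set $D=\{W_T\}$ is justified by a cofinality check (for each $W\in D$ the poset $\{T:W_T\leq W\}$ is directed, hence weakly contractible). Neither is a gap.
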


\begin{lemma}\label{lem:qcmvsuffices}
  For a coherent frame $F$, the restriction $\Shv(F;\cc{C}^{\op}) \hookrightarrow \Fun(F^{\omega,\op}, \cc{C})$ is fully faithful with essential image those functors $\mathcal{F}$ satisfying conditions (a) and (b) of \cref{thm:mvsuffices} on compact elements.
\end{lemma}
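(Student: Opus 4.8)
The statement is a ``compact reduction'' of \cref{thm:mvsuffices}: a $\cc{C}^{\op}$-valued sheaf on a coherent frame $F$ is determined by its values on the compact (quasicompact) elements $F^{\omega}$, and a presheaf on $F^{\omega}$ extends to a sheaf exactly when it satisfies the nullary and binary Mayer--Vietoris conditions on $F^{\omega}$. The guiding principle is \cref{rem:idl-is-accessible}: a coherent frame is the free filtered cocompletion $\Ind(F^{\omega})$ of its distributive lattice of compact objects, so any limit-preserving functor out of $F^{\op}$ is the right Kan extension of its restriction to $(F^{\omega})^{\op}$. I would first make this precise: for $\cc{F} \in \Fun(F^{\op},\cc{C})$ satisfying condition (c) of \cref{thm:mvsuffices} (the directed-colimit condition), the value $\cc{F}(U)$ for arbitrary $U \in F$ is recovered as $\varprojlim_{V \in (F^{\omega})_{/U}} \cc{F}(V)$, since $U = \bigvee\{V \in F^{\omega} \mid V \le U\}$ is a directed join. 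Dually, a $\cc{C}^{\op}$-valued sheaf is the \emph{left} Kan extension (a filtered colimit in $\cc{C}$) of its restriction to $F^{\omega}$; hence restriction $\Shv(F;\cc{C}^{\op}) \to \Fun(F^{\omega,\op},\cc{C})$ is fully faithful, with a left adjoint given by left Kan extension along $(F^{\omega})^{\op}\hookrightarrow F^{\op}$ followed by sheafification — but in fact no further sheafification is needed once conditions (a), (b) hold on $F^\omega$, which is the content to be proved.

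\textbf{Key steps.} (1) Full faithfulness of the restriction functor: given sheaves $\cc{F},\cc{G}$, a natural transformation on compacts extends uniquely because every object of $F$ is a directed join of compacts and both $\cc{F},\cc{G}$ convert these to limits (condition (c)), so $\Map(\cc{F},\cc{G}) \simeq \varprojlim_{U}\Map(\cc{F}(U),\cc{G}(U)) \simeq \varprojlim_{V\in F^{\omega}}\Map(\cc{F}(V),\cc{G}(V))$, using cofinality of $F^{\omega}$ in the indexing. (2) Identify the essential image. One inclusion is immediate: a sheaf restricted to $F^{\omega}$ satisfies (a) and (b) on compacts since $\bot \in F^{\omega}$ and $F^{\omega}$ is closed under $\vee$ and $\wedge$ (it is a sublattice, by coherence). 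Conversely, given $\cc{F}_0 \in \Fun(F^{\omega,\op},\cc{C})$ satisfying (a), (b) on compacts, define $\cc{F}(U) := \varprojlim_{V\in (F^{\omega})_{/U}}\cc{F}_0(V)$ (equivalently, left Kan extend in $\cc{C}^{\op}$). I must check $\cc{F}$ is a sheaf, i.e.\ it satisfies (a), (b), (c) of \cref{thm:mvsuffices} on \emph{all} of $F$. Condition (a) is clear since $\bot$ is already compact. Condition (c) (directed joins) follows from associativity/cofinality of filtered colimits: for a directed $D\subseteq F$, the poset $\coprod_{U\in D}(F^{\omega})_{/U}$ is cofinal in $(F^{\omega})_{/\bigvee D}$ because every compact $V \le \bigvee D$ already lies below some $U \in D$ (compactness of $V$ against the directed join). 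Condition (b) is the crux: for arbitrary $U,U'\in F$ I need the square relating $\cc{F}(U\vee U'),\cc{F}(U),\cc{F}(U'),\cc{F}(U\wedge U')$ to be Cartesian. Write $U = \bigvee_i V_i$, $U' = \bigvee_j W_j$ as directed joins of compacts; then $U\vee U' = \bigvee_{i,j}(V_i\vee W_j)$, $U\wedge U' = \bigvee_{i,j}(V_i\wedge W_j)$ (using that finite meets distribute over directed joins in a frame), and each of the four terms is a directed limit over $(i,j)$ of $\cc{F}_0$ evaluated at $V_i\vee W_j$, $V_i$, $W_j$, $V_i\wedge W_j$ respectively. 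The pullback square for each fixed $(i,j)$ holds by hypothesis (b) on compacts, and a \emph{filtered} limit of pullback squares in $\cc{C}$ is a pullback square; taking this filtered limit gives (b) for $U,U'$.

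\textbf{Main obstacle.} The delicate point is the bookkeeping in step (2) showing condition (b) on all opens: one must be careful that the indexing categories for the four vertices of the Mayer--Vietoris square can be chosen compatibly (i.e.\ indexed by a single filtered poset, here pairs $(V_i, W_j)$ with $V_i \in (F^{\omega})_{/U}$, $W_j \in (F^{\omega})_{/U'}$), and that the identifications $\cc{F}(U\vee U') \simeq \varprojlim_{(i,j)}\cc{F}_0(V_i\vee W_j)$ and $\cc{F}(U\wedge U') \simeq \varprojlim_{(i,j)}\cc{F}_0(V_i\wedge W_j)$ are cofinal reindexings of the defining limits over $(F^{\omega})_{/(U\vee U')}$ and $(F^{\omega})_{/(U\wedge U')}$ — this uses coherence of $F$ (so that $F^\omega$ is a sublattice and compacts are stable under finite meets) together with the fact that in a frame finite meets distribute over directed joins. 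Granting these cofinalities, the argument reduces to the stability of pullback squares under filtered limits, which is standard. One should also remark that \cref{rem:idl-is-accessible} already packages ``$F \simeq \Ind(F^{\omega})$'' and the universal property of $\Ind$ gives the fully-faithful restriction and its left-Kan-extension inverse essentially formally; the only genuine verification is translating the combinatorial conditions (a)--(c) through this equivalence, which is what the above steps do.
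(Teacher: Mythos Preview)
Your approach is correct and essentially identical to the paper's: both prove full faithfulness from condition~(c) and the fact that $F^{\omega}$ generates $F$ under directed joins, and both verify the essential image by extending a presheaf on $F^{\omega}$ via directed limits and then checking condition~(b) on arbitrary $U,U'$ by exhausting with compact pairs $(V_i,W_j)$ and passing a filtered limit through the resulting Cartesian squares. The paper's proof is considerably terser (a single paragraph), while you spell out the cofinality arguments and the distributivity needed to identify $\cc{F}(U\wedge U')$ with $\varprojlim_{(i,j)}\cc{F}_0(V_i\wedge W_j)$; this is the only genuinely delicate point and you flag it correctly. One cosmetic remark: the $\cc{C}^{\op}$ in the lemma statement appears to be a typo for $\cc{C}$ (as the paper's own proof and the downstream applications in \cref{prop:sheavesonspec} confirm), so there is no need for the ``dually, left Kan extension in $\cc{C}^{\op}$'' maneuver---you can work directly with right Kan extension into $\cc{C}$.
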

\begin{proof}
  By assumption, $F^{\omega}$ generates $F$ under directed colimits, so the full faithfulness follows from condition (c) of the cited theorem. Now let $\mathcal{F} \in \Fun(F^{\op}, \cc{C})$ be a functor satisfying conditions (a) and (c), such that its restriction to $\mathcal{F}^{\omega}$ satisfies condition (b).  Let $U, U' \in F$ arbitrary opens of $F$. It is possible to exhaust $\{U, U', U \vee U', U \wedge U'\}$ by a directed sequence of quasicompact opens $\{U_{i}, U_{i}', U_{i}\vee U_{i}', U_{i} \wedge U_{i}'\}$ by the assumption of coherence. Taking a directed limit of the Cartesian squares associated to $U_{i}, U_{i}'$ and invoking condition (c) yields condition (b) on all of $F$, yielding the claim. 
\end{proof}

\begin{remark}
  In fact, the above proof only requires that $F$ is quasiseparated and has a basis of quasicompact opens: namely, that $F$ is \emph{locally coherent}.
\end{remark}

\begin{definition}
  Let $\cc{K} \in \twoCAlg$. We say that a map $f\colon \cc{K} \to \cc{K}'$ is a \tdef{Zariski cover} if $f$ exhibits $\cc{K}'$ as a finite product of Karoubi quotients $\cc{K} \to \prod_{i=1}^{n} \cc{K}_{i}$ away from principal ideals $\cc{I}_{i}$ satisfying $\bigcap_{i=1}^{n}\cc{I}_{i} \subseteq \sqrt{0}$.  We refer to the Grothendieck topology on $\twoCAlg^{\op}$ generated by Zariski covers as the \tdef{Zariski topology} on $\twoCAlg^{\op}$.
\end{definition}

\begin{remark}
  The proof of \cref{lem:admissibleprincipal} implies that the Zariski topology on $\twoCAlg^{\op} \simeq \Pro(\GBal)$ recovers the pro-admissible topology for the Zariski geometry on the latter.
\end{remark}

\begin{lemma}\label{lem:preimagesofidealsandradicals}
  Let $\cc{K} \in \twoCAlg$. Given $\cc{I} \in \Idl(\cc{K})$, the map $f^{-1}\colon \Idl(\cc{K}/\cc{I}) \to \Idl(\cc{K})$ which sends a thick tensor ideal to its preimage is fully faithful with image $\Idl(\cc{K})_{\cc{I}/}$. The same result holds upon restricting $f^{-1}$ to $\Rad(-)$,  to $\mm{Prin}(-)$ when $\cc{I} \in\mm{Prin}(\cc{K})$, and to $\Rad(\cc{K})^{\omega}$ when $\cc{I} \in \Rad(\cc{K})^{\omega}$. 
\end{lemma}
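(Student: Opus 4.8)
The plan is to exploit the universal property of Karoubi quotients established in \hyperref[prop:verdexistsc]{\cref*{prop:verdexists}\,(c)} together with the poset-level adjunction of \cref{prop:quotientposet}. Write $f\colon \cc{K} \to \cc{K}/\cc{I}$ for the quotient map. First I would observe that the map $f^{-1}\colon \Idl(\cc{K}/\cc{I}) \to \Idl(\cc{K})$ sending $\cc{J}' \mapsto f^{-1}\cc{J}' = \ker(\cc{K} \to (\cc{K}/\cc{I})/\cc{J}')$ obviously lands in $\Idl(\cc{K})_{\cc{I}/}$, since $\cc{I} = \ker(f) \subseteq f^{-1}\cc{J}'$ for any $\cc{J}'$. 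It is also clearly order-preserving and injective on the level of posets. For surjectivity onto $\Idl(\cc{K})_{\cc{I}/}$: given $\cc{J} \in \Idl(\cc{K})$ with $\cc{I} \subseteq \cc{J}$, the universal property of \hyperref[prop:verdexistsc]{\cref*{prop:verdexists}\,(c)} (equivalently, \cref{cor:basechange} applied to $f$ and the quotient by $\cc{J}$) gives a factorization $\cc{K} \to \cc{K}/\cc{I} \to \cc{K}/\cc{J}$, and the second map is a Karoubi quotient of $\cc{K}/\cc{I}$ by $\langle f(\cc{J})\rangle$; one checks $f^{-1}\langle f(\cc{J})\rangle = \cc{J}$ using that $\cc{J} \supseteq \cc{I} = \ker f$. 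Concretely, $x \in f^{-1}\langle f(\cc{J})\rangle$ iff $f(x) \in \langle f(\cc{J})\rangle$ in $\cc{K}/\cc{I}$; since $\cc{K} \to \cc{K}/\cc{I}$ is essentially surjective after idempotent completion (Neeman--Thomason, as in \cref{lem:rigidradicalquotient}) and $\cc{J}$ is thick tensor containing $\cc{I}$, a standard thick-subcategory argument shows $f^{-1}\langle f(\cc{J})\rangle = \cc{J}$. This establishes the poset isomorphism $\Idl(\cc{K}/\cc{I}) \simeq \Idl(\cc{K})_{\cc{I}/}$, and since these are posets, full faithfulness as a functor of $\infty$-categories is automatic.

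Next I would handle the variants. For the radical case: $f^{-1}$ preserves radicality because $f$ is a symmetric monoidal functor, so $x^{\otimes 2} \in f^{-1}\cc{J}'$ means $f(x)^{\otimes 2} = f(x^{\otimes 2}) \in \cc{J}'$, hence $f(x) \in \cc{J}'$ by radicality of $\cc{J}'$, hence $x \in f^{-1}\cc{J}'$; so $f^{-1}$ restricts to a map $\Rad(\cc{K}/\cc{I}) \to \Rad(\cc{K})$. Conversely, if $\cc{J} \in \Rad(\cc{K})$ with $\cc{I} \subseteq \cc{J}$, then the corresponding $\langle f(\cc{J})\rangle$ under the bijection above is radical: if $f(x)^{\otimes 2} \in \langle f(\cc{J})\rangle$ then $x^{\otimes 2} \in f^{-1}\langle f(\cc{J})\rangle = \cc{J}$, so $x \in \cc{J}$ and $f(x) \in \langle f(\cc{J})\rangle$. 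For the principal case with $\cc{I} \in \mm{Prin}(\cc{K})$: here compactness is the key point — by the final clause of \cref{prop:quotientposet}, $\cc{K}/\cc{J} \in (\twoCAlg_{\cc{K}/})^{\omega}$ iff $\cc{J}$ is principal, and $\cc{K}/\cc{I} \to (\cc{K}/\cc{I})/\langle f(\cc{J})\rangle$ composed with $\cc{K} \to \cc{K}/\cc{I}$ is $\cc{K} \to \cc{K}/\cc{J}$; if $\cc{K}/\cc{I}$ is compact over $\cc{K}$ and $(\cc{K}/\cc{I})/\langle f(\cc{J})\rangle$ is compact over $\cc{K}/\cc{I}$, then the composite is compact over $\cc{K}$, so $\cc{J}$ is principal — and conversely a principal $\langle f(\cc{J})\rangle$ in $\cc{K}/\cc{I}$ comes from principal $\cc{J}$, with $\cc{J}$ principal iff $f(\cc{J})$ is (generated by the image of a single generator). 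For $\Rad(\cc{K})^{\omega}$ with $\cc{I} \in \Rad(\cc{K})^{\omega}$, combine the radical and compact arguments, using \cref{prop:latticeiscoherent}(b) identifying compact radical ideals as radicals of principal ideals and \cref{lem:radicalgeneration}.

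The main obstacle I anticipate is the careful bookkeeping in the principal/compact case: one must be precise about what "compact object of $\twoCAlg_{\cc{K}/}$" means after idempotent completion and verify that compactness is transitive along the factorization $\cc{K} \to \cc{K}/\cc{I} \to \cc{K}/\cc{J}$ in the appropriate slice categories (compactness over $\cc{K}$ versus over $\cc{K}/\cc{I}$ are \emph{a priori} different conditions, and the equivalence $\cc{K}/\cc{I} \to \cc{L}$ being compact in $\twoCAlg_{(\cc{K}/\cc{I})/}$ needs to be transported). A clean way around this is to avoid compactness entirely and instead argue directly: $\cc{J}$ is principal iff $\cc{J} = \langle a \rangle$ for some $a$, and then $f(\cc{J}) = \langle f(a) \rangle$, so $\langle f(\cc{J})\rangle$ is principal; conversely if $\langle f(\cc{J})\rangle = \langle \bar{a}\rangle$, lift $\bar{a}$ to $a \in \cc{K}$ (possible by essential surjectivity of $f$ after idempotent completion) and check $f^{-1}\langle \bar a \rangle = \langle a, \cc{I}\rangle = \langle a \rangle$ when $\cc{I}$ itself is principal, by taking a direct sum of generators — this keeps everything on the level of generators of tt-ideals and sidesteps the categorical compactness subtleties. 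The radical and radical-compact cases then fold in with the symmetric-monoidality observation and \cref{lem:radicalgeneration}.
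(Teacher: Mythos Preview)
Your proposal is essentially correct and follows the same overall strategy as the paper. The paper phrases the first part slightly more categorically, deducing the poset isomorphism from a commutative square relating $f^{-1}$ to the fully faithful embeddings $\cc{K}/-$ and $(\cc{K}/\cc{I})/-$ of \cref{prop:quotientposet} together with the fact that $\twoCAlg_{(\cc{K}/\cc{I})/} \hookrightarrow \twoCAlg_{\cc{K}/}$ is fully faithful; your direct verification of the bijection $\Idl(\cc{K}/\cc{I}) \simeq \Idl(\cc{K})_{\cc{I}/}$ amounts to the same content unpacked. For the radical and principal cases the arguments are likewise parallel: the paper handles principals via the compactness route you sketch first (with exactly the filtered-colimit transitivity argument you anticipate as the obstacle), and handles radicals via the identity $f^{-1}\circ\sqrt{-} = \sqrt{-}\circ f^{-1}$, which is your symmetric-monoidality observation repackaged.

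One small imprecision in your alternative principal-case argument: you write ``lift $\bar a$ to $a \in \cc{K}$, possible by essential surjectivity of $f$ after idempotent completion.'' Essential surjectivity after idempotent completion only guarantees that $\bar a$ is a \emph{retract} of some $f(a')$, not that $\bar a$ itself lifts. To make your generator-based approach go through cleanly, replace $\bar a$ by $\bar a \oplus \Sigma \bar a$, which has trivial class in $K_0$ and therefore lies in the image of $\cc{K}[W^{-1}]$ by Thomason's dense-subcategory theorem; since $\langle \bar a \oplus \Sigma \bar a\rangle = \langle \bar a\rangle$, you can then proceed exactly as you describe. With this fix your alternative works and is a pleasant concrete counterpart to the paper's compactness argument.
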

\begin{proof}
 Let $\cc{I} \in \Idl(\cc{K})$ be arbitrary, and $f\colon \cc{K} \to \cc{K}/\cc{I}$ the associated localization. There is an induced map $\Idl(\cc{K}/\cc{I}) \to \Idl(\cc{K})$ given by sending $\cc{J} \to f^{{-1}}(\cc{J})$ for $\cc{J} \in \Idl(\cc{K}/\cc{J})$. The proof of \cref{cor:retractsandleftcancel} shows that this map fits into the following square
 \begin{equation}
   \begin{aligned}
     \xymatrix{
\Idl(\cc{K}/\cc{I}) \ar[d]_{(\cc{K}/\cc{I})/-} \ar[r]^{f^{-1}} & \Idl(\cc{K}) \ar[d]^{\cc{K}/-} \\
\twoCAlg_{\cc{K}/\cc{I}} \ar[r] &  \twoCAlg_{\cc{K}}} 
   \end{aligned}
 \end{equation} where the bottom map is fully faithful since $\cc{K} \to \cc{K}/\cc{I}$ is a localization, and hence $\Idl(\cc{K}/\cc{I}) \to \Idl(\cc{K})$ is fully faithful with image $\Idl(\cc{K})_{\cc{I}/}$.  The left adjoint of the bottom map is given by $\cc{K}/\cc{I}\otimes_{\cc{K}}-$, and the closure of Karoubi quotients under base change implies that this restricts to a left adjoint of $f^{-1}\colon \Idl(\cc{K}/\cc{I}) \to \Idl(\cc{K})$. Applying $\ker(-)$ identifies the right adjoint to this map with the inclusion $\Idl(\cc{K})_{\cc{I}/} \to \Idl(\cc{K})$, whose left adjoint is given taking the join $\cc{I} \vee -$. Altogether, we find that $\mm{ker}(-) \circ (\cc{K}/\cc{I} \otimes_{\cc{K}} -)\colon \Idl(\cc{K}) \to \Idl(\cc{K})_{\cc{I}/}$ is given by the join $\cc{I} \vee -$.

 Let us now prove that under the assumption that $\cc{I}$ is principal, the map $f^{-1}$ restricts to $\mm{Prin}$. From the equivalence of \cref{prop:quotientposet}, we are reduced to showing that the induced map $\twoCAlg_{(\cc{K}/\cc{I})/} \to \twoCAlg_{\cc{K}/}$ sends compact localizations under $\cc{K}/\cc{I}$ to compact localizations under $\cc{K}$. Let $\cc{K}/\cc{I} \to \cc{K}/\cc{J}$ be a localization which is compact as an object of $\twoCAlg_{(\cc{K}/\cc{I})/}$, and let $F\colon I \to \twoCAlg_{\cc{K}/}$ be an arbitrary directed system which admits a map \[h\colon \cc{K}/\cc{J} \to {\varinjlim}_{I}F \in \twoCAlg_{\cc{K}/}^{[1]}.\] Since $\cc{I}$ was assumed to be a principal ideal of $\cc{K}$, \cref{prop:quotientposet} implies that the composite map $g\colon \cc{K}/ \cc{I} \to \varinjlim_{I} F$  lifts through a (necessarily unique) map $g_{i}\colon \cc{K}/ \cc{I} \to F(i)$ for $i \in I$. Reindexing $F$ over $I_{\geq i}$, we have that $F$ admits a lift to $\twoCAlg_{(\cc{K}/\cc{I})/}$ and furthermore that $\varinjlim_{I}F$ may be computed in either $\twoCAlg_{\cc{K}/}$ or $\twoCAlg_{\cc{K}/\cc{I}}$, as the latter fully faithfully embeds into the former and fully faithful embeddings create colimits. Since $\cc{K}/ \cc{J}$ was assumed to be compact as an object of $\twoCAlg_{(\cc{K}/\cc{I})/}$, it follows that the map $h\colon \cc{K}/ \cc{J} \to \varinjlim_{I}F$ lifts through a (necessarily unique) map $h_{j}\colon \cc{K}/ \cc{J} \to F(j)$ for some $j \in I$. Thus, $\cc{K}/ \cc{J}$ is compact as an object of $\twoCAlg_{\cc{K}}$, yielding the claim. 

 It now remains to approach the corresponding statement on $\Rad$. Given any $\cc{J} \in \Rad(\cc{K}/\cc{I})$, $x^{\otimes n} \in f^{-1}(\cc{J})$ if and only if $f(x^{\otimes n}) \in \cc{J}$; it follows that that there is an identification $f^{-1} \circ \sqrt{-} = \sqrt{-} \circ f^{-1}$ of maps from $\Idl(\cc{K}/\cc{I})$ to $\Idl(\cc{K})$. This implies that $f^{-1} \circ \sqrt{-} = f^{-1}$ on $\Rad(\cc{K}/\cc{I})$ takes values in $\Rad(\cc{K})$, and similarly for $\Rad(\cc{K}/\cc{I})^{\omega}$ under the assumption that $\cc{I} = \sqrt{\cc{I}'}$ for $\cc{I}'$ a principal ideal. 
\end{proof}

\begin{proposition}\label{prop:sheavesonspec}
  Let $\cc{C}$ be a full subcategory of $\twoCAlg$ closed under finite products and Zariski covers. Let $\cc{D}$ be an $\infty$-category admitting all small limits and $\cc{F}\colon \cc{C} \to \cc{D}$ any functor. Then for every $\cc{K} \in \cc{C}$, the induced functor
  \[
    \Rad(\cc{K})^{\omega} \xrightarrow{\cc{K}/-} \cc{C}_{\cc{K}/} \xrightarrow{\cc{F}} \cc{D} 
  \]
  extends to sheaf on $\Spec \cc{K} \simeq \Shv(\Rad(\cc{K})^{\vee})$ if and only if for every $\cc{L} \in \cc{C}$ and Zariski cover $\cc{L} \to \cc{L}_{1} \times \cc{L}_{2}$, the following square is Cartesian
  \[\xymatrix{
      \cc{F}(\cc{L}) \ar[r] \ar[d] & \cc{F}(\cc{L}_{1})\ar[d] \\
      \cc{F}(\cc{L}_{2}) \ar[r] & \cc{F}((\cc{L}_{1} \otimes_{\cc{L}}\cc{L}_{2})/\sqrt{0}).
    }
  \]
\end{proposition}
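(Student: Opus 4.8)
The plan is to reduce the characterization of sheaves on $\Spec\cc{K}\simeq\Shv(\Rad(\cc{K})^{\vee})$ to a statement about binary covers via \cref{lem:qcmvsuffices}, and then to translate the abstract frame-theoretic condition into the stated Cartesian square involving Zariski covers of objects of $\cc{C}$. First I would invoke \cref{thmalph:comparison} (i.e. \cref{thm:2zariski_balmerspectrum}) to identify $\Spec\cc{K}$ with $\Shv(\Rad(\cc{K})^{\vee})$, and recall from \cref{prop:latticeiscoherent} that $\Rad(\cc{K})^{\vee}$ is a coherent frame whose lattice of compact elements is $(\Rad(\cc{K})^{\omega})^{\op}$. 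Unwinding the Hochster dual, a functor $\Rad(\cc{K})^{\omega}\xrightarrow{\cc{K}/-}\cc{C}_{\cc{K}/}\xrightarrow{\cc{F}}\cc{D}$ amounts to a presheaf on $\Rad(\cc{K})^{\vee,\omega}$; by \cref{lem:qcmvsuffices} applied to the coherent frame $\Rad(\cc{K})^{\vee}$, this presheaf extends to a sheaf on $\Shv(\Rad(\cc{K})^{\vee})$ if and only if it carries $\bot$ to a terminal object and sends the binary-join/meet squares of $\Rad(\cc{K})^{\vee}$ to Cartesian squares. Since $\cc{F}$ preserves terminal objects trivially (the ideal $\cc{K}$ itself, corresponding to $\bot$ in the Hochster dual, goes to $\cc{K}/\cc{K}\simeq 0$, the terminal/initial object), the content is entirely in condition (b).

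Next I would carry out the translation of condition (b) into the language of Zariski covers. A binary join $\sqrt{\langle a\rangle}\vee\sqrt{\langle b\rangle}$ and binary meet $\sqrt{\langle a\rangle}\wedge\sqrt{\langle b\rangle}$ in $\Rad(\cc{K})^{\vee,\omega}$ correspond, under Hochster duality and the identification of compact radical ideals with radicals of principal ideals, to meets and joins in $\Rad(\cc{K})^{\omega}$; by \cref{lem:radicalgeneration}, $\sqrt{\langle a\rangle}\cap\sqrt{\langle b\rangle}=\sqrt{\langle a\otimes b\rangle}$. Thus for the quotient $\cc{K}\to\cc{K}/\sqrt{\langle a\otimes b\rangle}$ we obtain a two-element Zariski cover $\cc{K}/\sqrt{\langle a\otimes b\rangle}\to\cc{K}/\sqrt{\langle a\rangle}\times\cc{K}/\sqrt{\langle b\rangle}$ (using that $\langle a\rangle\vee\langle b\rangle=\langle a\oplus b\rangle$ and that a product of quotients away from ideals whose meet is $\sqrt{0}$ inside the relevant quotient is a Zariski cover), and the square in condition (b) evaluated at $U=\sqrt{\langle a\rangle}$, $U'=\sqrt{\langle b\rangle}$ becomes precisely the square
\[
\xymatrix{
\cc{F}(\cc{L})\ar[r]\ar[d] & \cc{F}(\cc{L}_1)\ar[d]\\
\cc{F}(\cc{L}_2)\ar[r] & \cc{F}((\cc{L}_1\otimes_{\cc{L}}\cc{L}_2)/\sqrt{0})
}
\]
with $\cc{L}=\cc{K}/\sqrt{\langle a\otimes b\rangle}$ and $\cc{L}_i$ the two factors; here \cref{cor:basechange} identifies the pullback corner $\cc{F}(\cc{L}_1\otimes_{\cc{L}}\cc{L}_2)$ and the further quotient by $\sqrt{0}$ accounts for the radical in $\Rad$. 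Crucially, \cref{lem:preimagesofidealsandradicals} guarantees that $\Rad(\cc{L})^{\omega}\simeq(\Rad(\cc{K})^{\omega})_{\sqrt{\langle a\otimes b\rangle}/}$, so as $a,b$ range over $\cc{K}$ the objects $\cc{K}/\sqrt{\langle a\otimes b\rangle}$ together with their binary covers exhaust all binary join/meet squares that appear for \emph{every} $\cc{L}$ in the image; conversely a general $\cc{L}\in\cc{C}$ with an arbitrary two-element Zariski cover is of this form up to the hypothesis that $\cc{C}$ is closed under finite products and Zariski covers, which ensures all the intermediate objects $\cc{L}_i$, $(\cc{L}_1\otimes_{\cc{L}}\cc{L}_2)/\sqrt{0}$ actually lie in $\cc{C}$ so that $\cc{F}$ can be evaluated on them.

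The main obstacle I anticipate is bookkeeping the passage between $\Rad(\cc{K})$, $\Rad(\cc{K})^{\vee}$, and $\mm{Prin}(\cc{K})$ carefully enough to see that the ``for every $\cc{L}\in\cc{C}$ and Zariski cover'' quantifier in the statement matches exactly the ``for all compact $U,U'$ in $F$ and all $\cc{K}$'' quantifier coming from \cref{lem:qcmvsuffices}: a priori the lemma only demands the Mayer--Vietoris square for covers of $\cc{K}$ itself, but since the restriction of $\cc{F}\circ(\cc{K}/-)$ along any admissible $\cc{K}\to\cc{L}$ is of the same shape $\cc{F}\circ(\cc{L}/-)$ — this is exactly the content of the compatibility in \cref{lem:preimagesofidealsandradicals} together with \cref{cor:basechange} giving $\cc{L}/\cc{J}\simeq\cc{L}\otimes_{\cc{K}}\cc{K}/\cc{J}$ — applying the $\cc{K}$-level criterion to every such $\cc{L}$ recovers exactly the universally-quantified statement. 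I would therefore spend the bulk of the write-up verifying that this reindexing is legitimate, i.e. that checking the binary square for all $\cc{L}\in\cc{C}$ is both necessary (take $\cc{L}$ a quotient of $\cc{K}$) and sufficient (conversely, the binary squares of $\Rad(\cc{K})^{\vee}$ are among those arising from $\cc{L}=\cc{K}$), after which the equivalence with being a sheaf is immediate from \cref{lem:qcmvsuffices} and \cref{thm:mvsuffices}.
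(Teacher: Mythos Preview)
Your proposal is correct and follows essentially the same approach as the paper: reduce via \cref{lem:qcmvsuffices} on the coherent frame $\Rad(\cc{K})^{\vee}$ to binary Mayer--Vietoris squares on compact elements, then translate these into the stated Zariski-cover squares using \cref{lem:preimagesofidealsandradicals} and \cref{cor:basechange}. The reindexing subtlety you flag in your final paragraph is precisely what the paper dispatches in its last sentence (``Replacing $\cc{K}$ with $\cc{L}$\dots\ yields both directions'').

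One small slip: your claim that condition~(a) of \cref{thm:mvsuffices} holds ``trivially'' because $\cc{K}/\cc{K}\simeq 0$ is the terminal $2$-ring is not a valid argument---$\cc{F}$ is an arbitrary functor and need not carry $0\in\twoCAlg$ to a terminal object of~$\cc{D}$. The paper's own proof does not address condition~(a) either, so this is a shared omission rather than a defect of your approach; in every application in the paper (e.g.\ $\cc{F}=\Ind$ in \cref{thm:indsheaf}) the condition is immediate.
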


\begin{proof}
  By \cref{lem:qcmvsuffices} and the coherence of $\Rad(\cc{K})^{\vee}$, the claim that $\cc{F}$ extends to a sheaf on $\Spec \cc{K}$ is equivalent to showing that for every $\cc{I} = \sqrt{\cc{I}'\vee \cc{I}''} \in \Rad(\cc{K})^{\omega}$, the following square is Cartesian
  \begin{equation}\label{eq:almostmvnotquite}
    \begin{aligned}
    \xymatrix{
      \cc{F}(\cc{K}/\cc{I}' \wedge \cc{I}'') \ar[r] \ar[d] & \cc{F}(\cc{K}/ \cc{I}'')\ar[d] \\
      \cc{F}(\cc{K}/\cc{I}') \ar[r] & \cc{F}(\cc{K}/\cc{I}).
      }
    \end{aligned}
  \end{equation}
From the identification $\Rad(\cc{K})^{\omega}_{\sqrt{\cc{I}' \vee \cc{I}''}/ } \simeq \Rad(\cc{K}/ \sqrt{\cc{I}'\vee \cc{I}'})^{\omega}$ of \cref{lem:preimagesofidealsandradicals}, the map $\cc{K}/ \cc{I}' \wedge \cc{I}'' \to \cc{K}/ \cc{I}' \times \cc{K}/\cc{I}''$ is easily seen to be a Zariski cover of the source. Furthermore, one has \[\cc{K}/\sqrt{\cc{I}' \vee \cc{I}''} = (\cc{K}/\cc{I}' \wedge \cc{I}'')/\cc{I}'\vee \cc{I}'' = \cc{K}/\cc{I}' \otimes_{\cc{K}/\cc{I}'\wedge \cc{I}''} \cc{K}/\cc{I}'' \] since $\cc{K}/\cc{I}'\wedge \cc{I}''\colon \Idl(\cc{K}/\cc{I}') \to \twoCAlg_{\cc{K}/\cc{I}'\wedge \cc{I}''}$ is a left adjoint. Once again invoking \cref{lem:preimagesofidealsandradicals} supplies an identification $(\cc{K}/\cc{I}' \vee \cc{I}'')/\sqrt{0} \simeq \cc{K}/\sqrt{\cc{I}' \vee \cc{I}''}$. Replacing $\cc{K}$ with $\cc{L}$ and $\cc{K}/\cc{I}', \cc{K}/\cc{I}''$ with $\cc{L}_{1}, \cc{L}_{2}$ now yields both directions of the claim.
\end{proof}

\subsection{The structure sheaf and full faithfulness of the absolute spectrum}
 This subsection is dedicated to the proof of \cref{thmalph:sheaf}.

\begin{notation}  As before, we continue to identify a $\GBal$-structure $\cO$ with its associated $\twoCAlg$-valued sheaf, and abusively utilize the same notation to refer to either object. In light of this identification and \cref{lem:globalsectionsisglobalsections}, we write $\Gamma = \Gamma_{\GBal}$.
\end{notation}

\begin{theorem}\label{thm:rigid_subcanonical}
    For $\cc{K} \in \twoCAlgrig$, the structure presheaf $\widetilde{\cO}_{\cc{K}}$ of \cref{def:spec} satisfies \[\widetilde{\cO}_{\cc{K}} \in \Shv(\Spec \cc{K}; \twoCAlg) \subseteq \Fun((\Pro(\GBal)^{\ad}_{-/\cc{K}})^{\op}, \twoCAlg)\] i.e., sheafification supplies an equivalence $\widetilde{\cO}_{\cc{K}} \simarrow \cO_{\cc{K}}$. 
\end{theorem}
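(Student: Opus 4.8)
The plan is to apply the Mayer--Vietoris criterion of \cref{prop:sheavesonspec} to the functor $\widetilde{\cO}_{\cc{K}}$, restricted to the full subcategory $\twoCAlgrig \subseteq \twoCAlg$. By \cref{lem:rigidradicalquotient}, $\twoCAlgrig$ is closed under Karoubi quotients, and it is clearly closed under finite products; since every $\cc{I}\in\Idl(\cc{K})$ is radical by \cref{lem:rigid_radicalideal}, it is also closed under Zariski covers. Thus \cref{prop:sheavesonspec} applies with $\cc{C}=\twoCAlgrig$, $\cc{D}=\twoCAlg$, and $\cc{F}$ the inclusion $\twoCAlgrig \hookrightarrow \twoCAlg$ (noting that $\widetilde{\cO}_{\cc{K}}$ is precisely $\cc{K}/-$ followed by this inclusion, by \cref{lem:identificationofGstructure} and \cref{lem:admissibleprincipal}). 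We are therefore reduced to the following purely algebraic statement: for every rigid 2-ring $\cc{L}$ and every Zariski cover $\cc{L}\to\cc{L}_1\times\cc{L}_2$ — i.e., $\cc{L}_i = \cc{L}/\cc{I}_i$ for ideals $\cc{I}_i$ with $\cc{I}_1\cap\cc{I}_2 = \langle 0\rangle$ (using radicality) — the square
\[
\xymatrix{
\cc{L} \ar[r] \ar[d] & \cc{L}/\cc{I}_1 \ar[d] \\
\cc{L}/\cc{I}_2 \ar[r] & \cc{L}/(\cc{I}_1\vee\cc{I}_2)
}
\]
is a pullback in $\twoCAlg$ (here $(\cc{L}_1\otimes_{\cc{L}}\cc{L}_2)/\sqrt 0 \simeq \cc{L}/(\cc{I}_1\vee\cc{I}_2)$ by \cref{cor:basechange} and \cref{lem:preimagesofidealsandradicals}).

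The hard part is verifying this pullback square, and this is exactly where rigidity is essential. The plan is to pass to the ``big'' categories $\Ind(-)$, where \cref{prop:finitelocissmashing} tells us that each localization $\Ind(\cc{L})\to\Ind(\cc{L}/\cc{I}_i)$ is \emph{smashing}, corepresented by an idempotent algebra $A_i = R_{\cc{I}_i}L_{\cc{I}_i}\unit$, so that $\Ind(\cc{L}/\cc{I}_i)\simeq\Mod_{A_i}(\Ind(\cc{L}))$. The condition $\cc{I}_1\cap\cc{I}_2 = \langle 0\rangle$ translates, via \cref{cor:fidemtoideal}, into the statement that $A_1\times A_2$ is faithful, equivalently that the localization functor $\Ind(\cc{L})\to\Mod_{A_1}\times\Mod_{A_2}$ is conservative; and $\cc{I}_1\vee\cc{I}_2$ corresponds to the idempotent algebra $A_1\otimes A_2$. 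One then shows that $\Ind(\cc{L})$ is the pullback of $\Mod_{A_1}(\Ind(\cc{L}))$ and $\Mod_{A_2}(\Ind(\cc{L}))$ over $\Mod_{A_1\otimes A_2}(\Ind(\cc{L}))$ in $\PrLst$ — this is a standard ``descent for a faithful pair of complementary idempotents'' argument: the comparison functor has an obvious section, and conservativity plus the projection formula (\cref{rec:projectionformula}, available since the localizations are smashing) give that it is an equivalence. Concretely, for $X\in\Ind(\cc{L})$ the square $X\to A_1\otimes X$, $X\to A_2\otimes X$, $A_1\otimes X\to A_1\otimes A_2\otimes X$ is a pullback: its total fiber is $(\fib(\unit\to A_1))\otimes(\fib(\unit\to A_2))\otimes X$, and $\fib(\unit\to A_1)\otimes\fib(\unit\to A_2)$ is the idempotent algebra for $\cc{I}_1\cap\cc{I}_2 = \langle 0\rangle$, hence vanishes after tensoring (here again faithfulness of the localization is used).

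Having established the pullback square at the level of $\Ind$-categories in $\PrLst$, the final step is to descend it back to $\twoCAlg$. The point is that all four corners are rigidly compactly generated (each $\Ind(\cc{L}/\cc{I}_i)$ is, by \cref{lem:rigidradicalquotient} and \cref{lem:rigidlycompactlygenerated}), and the functors in the square preserve compact objects, being of the form $\Ind(g)$ for $g$ a map of 2-rings. A pullback square in $\PrLst$ of compactly generated categories along compact-object-preserving functors restricts to a pullback square of compact objects, provided the relevant essential-surjectivity/generation statement holds — here one uses that compact objects of the pullback $\Ind(\cc{L})$ are detected by the (jointly conservative, compact-preserving) projections together with idempotent-completeness, i.e. \cref{lem:rigidlycompactlygenerated} again. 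Passing to compact objects then yields exactly the desired Cartesian square in $\twoCAlg$, and \cref{prop:sheavesonspec} concludes that $\widetilde{\cO}_{\cc{K}}$ is already a sheaf, so the sheafification map $\widetilde{\cO}_{\cc{K}}\to\cO_{\cc{K}}$ is an equivalence. I expect the main obstacle to be the careful bookkeeping in this last descent-to-compacts step — ensuring that the pullback of compactly generated stable categories really does compute compact objects cornerwise — though in the rigid setting this is precisely the kind of statement that \cref{lem:rigidlycompactlygenerated} is designed to make routine.
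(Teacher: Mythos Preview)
Your approach is essentially the same as the paper's: reduce via \cref{prop:sheavesonspec} to a Mayer--Vietoris square, establish that square at the level of $\Ind$-categories using smashing localizations, and then descend to compact objects. The paper organizes this by first isolating the $\Ind$-level statement as a separate theorem (\cref{thm:indsheaf}) and then deducing \cref{thm:rigid_subcanonical} in one line.

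The one place where the paper is sharper is precisely your self-identified ``main obstacle'': the descent from $\Ind$ back to $\twoCAlg$. Your phrasing---compact objects ``detected by the jointly conservative, compact-preserving projections together with idempotent-completeness''---does not quite work as stated: joint conservativity of compact-preserving functors does not by itself force an object with compact images to be compact. The paper's move is cleaner and avoids this entirely: apply $(-)^{\mm{dbl}}$, which commutes with all limits (since dualizability is a diagrammatic condition checked componentwise in a limit of symmetric monoidal categories), and then invoke \cref{lem:rigidlycompactlygenerated} to identify $(-)^{\mm{dbl}}$ with $(-)^{\omega}$ on each corner. That is the actual content your citation of \cref{lem:rigidlycompactlygenerated} should be cashing out.

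Two minor points. First, $\fib(\unit\to A_1)\otimes\fib(\unit\to A_2)$ is not the idempotent \emph{algebra} for $\cc{I}_1\cap\cc{I}_2$ but its acyclization (the co-idempotent); what you need is that it lies in $\Ind(\cc{I}_1)\cap\Ind(\cc{I}_2)$, and that this intersection vanishes. Second, the passage from $\cc{I}_1\cap\cc{I}_2=\langle 0\rangle$ to $\Ind(\cc{I}_1)\cap\Ind(\cc{I}_2)=0$ is not immediate from \cref{cor:fidemtoideal} alone---the paper isolates this as \cref{lem:intersectinglocalizingsubs}, whose proof genuinely uses rigidity (via duals) and is worth spelling out.
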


\begin{observation}\label{obs:identifyingOK}
    Recall that $\Pro(\GBal)^{\ad}_{/\cc{K}} \simeq (\Rad(\cc{K})^{\omega})^{\op}$ by \cref{lem:admissibleprincipal} and \cref{lem:rigid_radicalideal}. Under this identification, \cref{lem:identificationofGstructure} implies that the functor
\[ \widetilde{\cO}_{\cc{K}} \colon \Rad(\cc{K})^{\omega} \to \twoCAlg \] may be identified with the assignment $\cc{K}/- \colon \cc{I} \mapsto \cc{K}/\cc{I}$. 
\end{observation}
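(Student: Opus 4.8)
\textbf{Proof proposal for \cref{obs:identifyingOK}.}

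The plan is to unwind the three identifications in play and observe that they compose to the desired formula; this is purely a matter of bookkeeping, since all the conceptual content already resides in \cref{lem:admissibleprincipal}, \cref{lem:rigid_radicalideal}, and \cref{lem:identificationofGstructure}. First I would recall that for $\cc{K}\in\twoCAlgrig$ we have $\Rad(\cc{K}) = \Idl(\cc{K})$ by \cref{lem:rigid_radicalideal}, and hence $\Rad(\cc{K})^{\omega} = \mm{Prin}(\cc{K})$ by \cref{prop:latticeiscoherent}. By \cref{lem:admissibleprincipal} the functor $\cc{K}/-$ induces an equivalence $\mm{Prin}(\cc{K})^{\op} \simeq \Pro(\GBal)^{\ad}_{/\cc{K}}$, sending a principal ideal $\cc{I}$ to the object $(\cc{K}\to\cc{K}/\cc{I})$ of $\Pro(\GBal)^{\ad}_{/\cc{K}} \simeq (\twoCAlg_{\cc{K}/})^{\op}_{\mm{(small\ quotients)}}$. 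Combining these gives the identification $\Pro(\GBal)^{\ad}_{/\cc{K}} \simeq (\Rad(\cc{K})^{\omega})^{\op}$ asserted at the start.

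Next I would invoke \cref{lem:identificationofGstructure}, which says that under the equivalence $\Shv(\cX;\Ind(\GBal^{\op})) \simeq \Fun^{\lex}(\GBal,\cX)$ of \cref{obs:sheafislexfunct}, the structure presheaf $\widetilde{\cO}_{\cc{K}}$ (before sheafification) corresponds to the forgetful functor $(\Pro(\GBal)^{\ad}_{/\cc{K}})^{\op} \to \Pro(\GBal)^{\op} = \Ind(\GBal^{\op}) = \twoCAlg$. So it remains only to transport this forgetful functor across the equivalence $(\Rad(\cc{K})^{\omega})^{\op} \simeq \Pro(\GBal)^{\ad}_{/\cc{K}}$ of the previous paragraph. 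By construction of that equivalence, an object $\cc{I}\in\Rad(\cc{K})^{\omega}$ corresponds to the pro-admissible morphism $(\cc{K}\to\cc{K}/\cc{I})$, i.e.\ to the object $\cc{K}/\cc{I} \in \twoCAlg_{\cc{K}/}$; applying the forgetful functor $\twoCAlg_{\cc{K}/}\to\twoCAlg$ then returns precisely $\cc{K}/\cc{I}$. Hence the composite $\Rad(\cc{K})^{\omega}\xrightarrow{\widetilde{\cO}_{\cc{K}}}\twoCAlg$ is the assignment $\cc{I}\mapsto\cc{K}/\cc{I}$, and one checks that on morphisms it is the Karoubi-quotient-of-the-source map described in \cref{prop:quotientposet}, as needed for the functoriality.

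The only point requiring slight care — and the closest thing to an obstacle — is the compatibility of the two equivalences at the level of morphisms: one must verify that the identification $\mm{Prin}(\cc{K})^{\op}\simeq\Pro(\GBal)^{\ad}_{/\cc{K}}$ of \cref{lem:admissibleprincipal} is indeed induced by the fully faithful functor $\cc{K}/-\colon\Idl(\cc{K})\to\twoCAlg_{\cc{K}/}$ of \cref{prop:quotientposet} (restricted to principal ideals), so that the forgetful functor pulls back to the honest assignment $\cc{I}\mapsto\cc{K}/\cc{I}$ rather than to some twisted variant. This is immediate from the statement of \cref{lem:admissibleprincipal}, whose proof explicitly routes through \cref{prop:quotientposet}; I would simply cite this and remark that no further verification is needed since $\Rad(\cc{K})^{\omega}$, $\Pro(\GBal)^{\ad}_{/\cc{K}}$, and their equivalence are all posetal, so the data of the identification is determined by its effect on objects.
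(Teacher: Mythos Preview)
Your proposal is correct and takes essentially the same approach as the paper, which in fact provides no separate proof: the observation is stated with inline citations to \cref{lem:admissibleprincipal}, \cref{lem:rigid_radicalideal}, and \cref{lem:identificationofGstructure}, and you have simply (and accurately) made explicit how those three ingredients assemble. Your additional care about functoriality and the posetal nature of the identification is sound but goes slightly beyond what the paper deems necessary to record.
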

\begin{remark}
    Recall the equivalence $\Spec \cc{K} \simeq \Shv(\Spc \cc{K})$ of \cref{thm:2zariski_geometry}. Under this identification, $\widetilde{\cO}_{\cc{K}}$ supplies a presheaf on $\Spc \cc{K}$ which sends a quasicompact open subset  $U \subseteq |\! \Spec \cc{K}|$ associated to a principal ideal $\langle a \rangle$ to the object $\cc{K}/\langle a \rangle$. After passing to homotopy categories, this assignment partially recovers the ``structure presheaf'' considered in \eqref{eq:presheafofttcats}. 
\end{remark}

We will in fact show the following stronger statement, from which we will obtain the desired statement after passing to subcategories of compact objects.

\begin{theorem}\label{thm:indsheaf}
  For $\cc{K} \in \twoCAlgrig$, the following composite \[ \widetilde{\cO}^{\Ind}_{\cc{K}}\colon (\Pro(\GBal)_{-/\cc{K}}^{\ad})^{\op} \xrightarrow{\widetilde{\cO}_{\cc{K}}} \twoCAlg^{\rig} \xrightarrow{\Ind} \CAlg(\Catbig)\] extends to a $\CAlg(\Catbig)$-valued sheaf on $\Spec\cc{K}$. 
\end{theorem}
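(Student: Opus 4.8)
The plan is to apply the reduction criterion of \cref{prop:sheavesonspec} with $\cc{C} = \twoCAlgrig$ (which is closed under finite products, and closed under Zariski covers by \cref{lem:rigidradicalquotient}), $\cc{D} = \CAlg(\Catbig)$, and $\cc{F} = \Ind(-)\colon \twoCAlgrig \to \CAlg(\Catbig)$. By \cref{obs:identifyingOK}, the composite $\Rad(\cc{K})^{\omega} \xrightarrow{\cc{K}/-} \twoCAlgrig_{\cc{K}/} \xrightarrow{\Ind} \CAlg(\Catbig)$ is precisely $\widetilde{\cO}^{\Ind}_{\cc{K}}$ (using $\Rad(\cc{K}) = \Idl(\cc{K})$ from \cref{lem:rigid_radicalideal}), so it suffices to verify the single Mayer--Vietoris condition: for every $\cc{L} \in \twoCAlgrig$ and every Zariski cover $\cc{L} \to \cc{L}_1 \times \cc{L}_2$ witnessed by principal (equivalently, arbitrary, by rigidity) ideals $\cc{I}_1, \cc{I}_2$ with $\cc{I}_1 \cap \cc{I}_2 \subseteq \sqrt{0} = 0$, the square
\[
\xymatrix{
\Ind(\cc{L}) \ar[r] \ar[d] & \Ind(\cc{L}_1) \ar[d] \\
\Ind(\cc{L}_2) \ar[r] & \Ind((\cc{L}_1 \otimes_{\cc{L}} \cc{L}_2)/\sqrt{0})
}
\]
is Cartesian in $\CAlg(\Catbig)$. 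Here $\cc{L}_i = \cc{L}/\cc{I}_i$, and by \cref{cor:basechange} together with \cref{lem:preimagesofidealsandradicals} one identifies $(\cc{L}_1 \otimes_{\cc{L}} \cc{L}_2)/\sqrt{0} \simeq \cc{L}/\sqrt{\cc{I}_1 \vee \cc{I}_2}$; since $\cc{I}_1 \cap \cc{I}_2 = 0$ in the rigid setting by \cref{lem:radicalgeneration} (giving $\sqrt{\cc{I}_1 \otimes \cc{I}_2}$-type control), the corner term is the Karoubi quotient by $\sqrt{\cc{I}_1 \vee \cc{I}_2}$, which is the smallest radical ideal containing both.

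The key reduction is that a square of presentably symmetric monoidal compactly generated stable $\infty$-categories and colimit-preserving symmetric monoidal functors is a pullback in $\CAlg(\Catbig)$ precisely when the underlying square of $\infty$-categories is a pullback in $\Catbig$; and since $\Ind(-)$ carries our Karoubi quotients to smashing localizations by \cref{prop:finitelocissmashing} and \cref{prop:verdexists}(b), the square is a diagram of smashing localizations of $\Ind(\cc{L})$ corresponding to the finite idempotent algebras $A_i := R_{\cc{I}_i}L_{\cc{I}_i}\unit \in \Idem^{\mm{fin}}_{\cc{L}}$. I would then argue as follows: the localization at $\cc{I}_1 \vee \cc{I}_2$ corresponds to the idempotent $A_1 \otimes A_2$ (join of idempotents is tensor), and the fiber of the localization $\Ind(\cc{L}) \to \Ind(\cc{L}/\cc{I}_i)$ is $\Ind(\cc{I}_i)$ by \cref{prop:verdexists}(b). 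The Cartesianness of the square is then equivalent to the statement that $\Ind(\cc{I}_1) \cap \Ind(\cc{I}_2) = 0$ inside $\Ind(\cc{L})$ as localizing subcategories, i.e., that the two complementary smashing idempotents $\fib(\unit \to A_1)$ and $\fib(\unit \to A_2)$ have trivial tensor product after passing to the relevant Bousfield classes — which follows because $\cc{I}_1 \cap \cc{I}_2 = 0$ on compacts and both $\Ind(\cc{I}_i)$ are compactly generated (being kernels of smashing localizations), so a nonzero object of the intersection would contain a nonzero compact object of $\cc{I}_1 \cap \cc{I}_2 = 0$. More structurally: writing $e_i = \fib(\unit \to A_i)$, one checks $e_1 \otimes e_2 \simeq 0$ from $\cc{I}_1 \cap \cc{I}_2 = 0$ (the smashing idempotent $e_i$ generates $\Ind(\cc{I}_i)$ as a localizing tensor-ideal by \cref{prop:finitelocissmashing}), and a standard argument with the Mayer--Vietoris/arithmetic-fracture square for a pair of idempotent algebras whose "intersection" is trivial then gives the pullback.

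Concretely, I would phrase the last step via the general fact that for idempotent algebras $A_1, A_2$ in a presentably symmetric monoidal stable $\infty$-category $\cc{M}$ with $e_1 \otimes e_2 \simeq 0$ (equivalently $A_1 \otimes A_2$ is the pushout $A_1 \sqcup_{\unit} A_2$ and also the "sum" making $\{A_1, A_2\}$ cover $\unit$), the square $\cc{M} \to \Mod_{A_1} \times \Mod_{A_2} \rightrightarrows \Mod_{A_1 \otimes A_2}$ is a pullback; apply this to $\cc{M} = \Ind(\cc{L})$. This is essentially a gluing/recollement statement and I expect it is either in the cited literature or a short formal argument from the projection formula (\cref{rec:projectionformula}) and the fact that $\unit \simeq \fib(A_1 \oplus A_2 \to A_1 \otimes A_2)$ when $e_1 \otimes e_2 = 0$. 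Finally, having established \cref{thm:indsheaf}, the weaker \cref{thm:rigid_subcanonical} follows by applying $(-)^\omega$: compact objects in $\CAlg(\PrLstomega)^{\rig}$-sheaves are computed termwise (limits of rigidly compactly generated categories along colimit-preserving symmetric monoidal functors that preserve compacts have the expected compact objects), using \cref{lem:rigidlycompactlygenerated} to identify $\Ind(\cc{K}/\cc{I})^\omega \simeq \cc{K}/\cc{I}$.

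The main obstacle I anticipate is the last step — verifying that $e_1 \otimes e_2 \simeq 0$ genuinely forces the $\Ind$-square to be a pullback of $\infty$-categories, and doing so at the level of $\CAlg(\Catbig)$ rather than just $\PrL$. One must be careful that $\Catbig$-pullbacks of presentable categories along left-exact-left-adjoint or along right-adjoint functors behave well; the cleanest route is probably to exhibit the square as a limit of a recollement diagram (so that it is automatically a pullback in $\PrL$, hence in $\CAlg(\PrL)$ after checking the symmetric monoidal refinement, hence in $\CAlg(\Catbig)$ since the forgetful functors are conservative and preserve the relevant limits), invoking \cref{prop:verdexists}(b) for the identification of the fibers. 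The bookkeeping around which ambient category the limit is computed in, and confirming that passing to $(-)^\omega$ in \cref{thm:rigid_subcanonical} commutes with the relevant finite limits, is the part that will require the most care.
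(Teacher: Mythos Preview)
Your overall strategy matches the paper's exactly: reduce via \cref{prop:sheavesonspec}, identify the corner of the Mayer--Vietoris square, and then establish the pullback by analyzing the idempotents $e_i = \fib(\unit \to A_i)$. The paper likewise proves the square \eqref{eq:indsheafmvprelude} is Cartesian and then invokes the lax-limit adjunction of \cite{Horev2017} to show $F$ is fully faithful and $G$ conservative, which is a precise version of the ``gluing/recollement statement'' you gesture at.

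There is, however, a genuine gap at the step ``$\cc{I}_1 \cap \cc{I}_2 = 0$ on compacts $\Rightarrow \Ind(\cc{I}_1) \cap \Ind(\cc{I}_2) = 0$'' (equivalently $e_1 \otimes e_2 \simeq 0$). Your justification---``a nonzero object of the intersection would contain a nonzero compact object of $\cc{I}_1 \cap \cc{I}_2$''---is not valid: an object of $\Ind(\cc{I}_1) \cap \Ind(\cc{I}_2)$ is separately a filtered colimit of objects of $\cc{I}_1$ and of $\cc{I}_2$, but there is no reason a priori for it to be detected by objects of $\cc{I}_1 \cap \cc{I}_2$. The paper isolates exactly this as \cref{lem:intersectinglocalizingsubs}, and the proof uses rigidity in an essential way: given $b$ in the intersection with $\Hom(a_1 \otimes a_2, b) = 0$ for all $a_i \in \cc{I}_i$, one rewrites $\Hom(a_1 \otimes a_2, b) \simeq \Hom(a_1, a_2^\vee \otimes b)$ using dualizability, observes $a_2^\vee \otimes b \in \Ind(\cc{I}_1)$ since the latter is a tensor-ideal, and then uses compact generation twice. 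Without this dual trick the implication can fail. So your sketch is missing the one place where rigidity actually does work, and you should make that explicit.

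A minor point on your coda about \cref{thm:rigid_subcanonical}: applying $(-)^\omega$ to a pullback square in $\CAlg(\Catbig)$ is delicate, since $(-)^\omega$ does not generally commute with limits. The paper instead applies $(-)^{\mm{dbl}}$, which \emph{does} commute with all limits, and then invokes \cref{lem:rigidlycompactlygenerated} to identify $(-)^{\mm{dbl}}$ with $(-)^\omega$ on each vertex. This sidesteps the bookkeeping you flag in your final paragraph.
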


\begin{notation}
  In general, we will write $\mdef{\cO^{\Ind}_{\cc{K}}}$ to refer to the sheafification of the assignment $\widetilde{\cO}_{\cc{K}}$ above. Thus, the theorem above states that sheafification supplies an equivalence $\widetilde{\cO}_{\cc{K}}^{\Ind} \simarrow \cO^{\Ind}_{\cc{K}}$.
\end{notation}

\begin{recollection}\label{rec:compactgeneration}
Let $\cc{C} \in \PrLst$ be a presentable stable $\infty$-category. Recall that a \tdef{localizing subcategory} $\cc{E} \subseteq \cc{C}$ is a full stable subcategory which is closed under all small colimits in $\cc{C}$. 
\begin{enumerate}
    \item We say a set $\{X_\alpha\}_{\alpha \in A} \subseteq \cc{C}$ \tdef{generates} $\cc{C}$ if the smallest localizing subcategory $\cc{C}$ which contains all $X_{\alpha}$ is $\cc{C}$ itself.
    \item A generating collection $\{X_{\alpha}\}_{\alpha \in A}$ is said to \tdef{compactly generate} $\cc{C}$  if $X_{\alpha} \in \cc{C}^{\omega}$ for every $\alpha \in A$. 
\end{enumerate}
 Recall that by convention, we use the term ``compactly generated'' and ``$\omega$-presentable'' interchangeably; this is not an overloading of notation, as every compactly generated presentable stable $\infty$-category $\cc{C}$ is $\omega$-presentable; in fact, it is of the form $\Ind(\cc{C}^{\omega})$. In particular, the set of compact objects in a compactly generated presentable stable $\infty$-category itself forms a system of compact generators. By \cite[Lemma 7.6]{mathewNilpotenceDescentEquivariant2017}, condition (a) above is equivalent to the following. 
 \begin{enumerate}
     \item[(a')] A set $\{X_\alpha\}_{\alpha \in A} \subseteq \cc{C}$ \tdef{generates} $\cc{C}$ if for every object $Y \in \cc{C}$, $Y \simeq 0$ if and only if $\Hom(X_{\alpha}, Y) \simeq 0$ for every $\alpha \in A$.
 \end{enumerate}
 We will henceforth use the term ``generate'' to refer to either of the conditions (a) or (a') above without specification.
\end{recollection}

Before proceeding to the proof of our main theorem, we require the following ``big'' variant of \cref{lem:radicalgeneration} for rigid 2-rings.

\begin{lemma}\label{lem:intersectinglocalizingsubs}
  Given $\cc{K} \in \twoCAlgrig$ with $x_{1}, x_{2} \in \cc{K}$, there is an identification $\Ind(\langle x_{1} \rangle) \cap \Ind(\langle x_{2}\rangle) = \Ind(\langle x_{1}\otimes x_{2}\rangle)$. 
\end{lemma}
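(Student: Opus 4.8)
\textbf{Proof plan for \cref{lem:intersectinglocalizingsubs}.}

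The plan is to establish the two inclusions separately, leveraging the rigidity of $\cc{K}$ to pass freely between finitely generated and ``big'' localizing subcategories. For the easy inclusion $\Ind(\langle x_1 \otimes x_2 \rangle) \subseteq \Ind(\langle x_1 \rangle) \cap \Ind(\langle x_2 \rangle)$, I would note that $x_1 \otimes x_2$ lies in both $\langle x_1 \rangle$ and $\langle x_2 \rangle$ (each is a tt-ideal), hence in both localizing subcategories $\Ind(\langle x_i \rangle) \subseteq \Ind(\cc{K})$; since these are by definition closed under all small colimits, and since $\Ind(\langle x_1 \otimes x_2 \rangle)$ is the smallest localizing subcategory of $\Ind(\cc{K})$ containing $x_1 \otimes x_2$, the inclusion follows. (One should first identify $\Ind(\langle x \rangle)$ with the localizing subcategory of $\Ind(\cc{K})$ generated by $x$; this uses \hyperref[prop:verdexistsb]{\cref*{prop:verdexists}\,(b)}, which identifies $\Ind(\langle x \rangle)$ as the kernel of $\Ind(\cc{K}) \to \Ind(\cc{K}/\langle x \rangle)$, a compactly generated localizing subcategory whose compact objects are exactly $\langle x \rangle$, cf.\ \cref{rec:compactgeneration}.)

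For the reverse inclusion, the key point is that a smashing localization of a rigidly compactly generated category is controlled by its behaviour on the unit. By \cref{prop:finitelocissmashing}, the localization $\Ind(\cc{K}) \to \Ind(\cc{K}/\langle x_i \rangle)$ is smashing with associated idempotent $A_i := R_{\langle x_i \rangle} L_{\langle x_i \rangle} \unit$, so that $\Ind(\langle x_i \rangle)$ is precisely the kernel of $A_i \otimes (-)$; equivalently, an object $Y \in \Ind(\cc{K})$ lies in $\Ind(\langle x_i \rangle)$ if and only if $\fib(\unit \to A_i) \otimes Y \xrightarrow{\sim} Y$, i.e.\ $Y$ is a module over the complementary idempotent $\bar A_i := \fib(\unit \to A_i)$. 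Now $\bar A_i$ lies in the localizing subcategory generated by $x_i$ (indeed $\bar A_i = L_{\cc{K},x_i} \unit$ is the finite localization, and by rigidity it is built from $x_i$), and conversely $\langle x_i \rangle \subseteq \Ind(\langle x_i \rangle) = \Mod_{\bar A_i}(\Ind(\cc{K}))^{\text{something}}$; the upshot is that $Y \in \Ind(\langle x_1 \rangle) \cap \Ind(\langle x_2 \rangle)$ iff $Y$ is a module over $\bar A_1$ and over $\bar A_2$, hence over $\bar A_1 \otimes \bar A_2$. It therefore suffices to show that $\bar A_1 \otimes \bar A_2$ is the complementary idempotent to the smashing localization killing $\Ind(\langle x_1 \otimes x_2\rangle)$, i.e.\ that $\langle x_1 \otimes x_2 \rangle$ is the tt-ideal with $\Ind(\langle x_1 \otimes x_2\rangle) = \Mod_{\bar A_1 \otimes \bar A_2}$. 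This reduces, via \cref{cor:fidemtoideal} (the equivalence $\Idem^{\mm{fin}}_{\cc{K}} \simeq \Idl(\cc{K})$), to a purely finite statement: the idempotent $A_1 \otimes A_2$ (which is the join of $A_1, A_2$ in $\Idem_{\cc{K}}$) corresponds under $\Mod^\omega$ to the quotient $\cc{K} \to \cc{K}/\cc{I}$ where $\cc{I}$ is the tt-ideal generated by $A_1 \otimes A_2$'s kernel. By \cref{cor:basechange} the pushout $\cc{K}/\langle x_1 \rangle \otimes_{\cc{K}} \cc{K}/\langle x_2 \rangle \simeq \cc{K}/\langle x_1, x_2\rangle$, and since $\cc{K}$ is rigid, \cref{lem:radicalgeneration} together with \cref{lem:rigid_radicalideal} give that the relevant radical closures behave correctly; concretely, one checks $\langle x_1 \rangle \vee \langle x_2 \rangle$ has the same associated smashing localization as $\langle x_1 \otimes x_2 \rangle$ is \emph{false} in general — rather, it is the \emph{meet} $\langle x_1 \rangle \wedge \langle x_2\rangle$ that should be compared, and one wants $\langle x_1 \otimes x_2 \rangle = \langle x_1 \rangle \cap \langle x_2 \rangle$ on the nose or at least after $\Ind$. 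So the cleanest route is: by rigidity $\sqrt{x_1 \otimes x_2} = \sqrt{x_1} \cap \sqrt{x_2} = \langle x_1 \rangle \cap \langle x_2 \rangle$ using \cref{lem:radicalgeneration} and \cref{lem:rigid_radicalideal} (every ideal is radical), and then apply $\Ind$ to this equality of tt-ideals, noting that $\Ind(-)$ commutes with intersections of tt-ideals because $\Ind(\cc{I} \cap \cc{J})^\omega = \cc{I} \cap \cc{J} = \cc{I}^\omega \cap \cc{J}^\omega = (\Ind(\cc{I}) \cap \Ind(\cc{J}))^\omega$ and all three are compactly generated localizing subcategories, hence determined by their compact objects.

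The main obstacle I anticipate is the last commutation claim — that $\Ind(\cc{I}) \cap \Ind(\cc{J}) = \Ind(\cc{I} \cap \cc{J})$ for tt-ideals $\cc{I}, \cc{J}$ of a rigid $\cc{K}$. The inclusion $\supseteq$ is formal, but $\subseteq$ requires knowing that the intersection of two compactly generated localizing subcategories is again compactly generated with compact objects the intersection of the compact objects; this is where smashing-ness (\cref{prop:finitelocissmashing}) is essential, since for a general localizing subcategory of $\Ind(\cc{K})$ this can fail. The argument is: $\Ind(\cc{I}) = \Mod_{\bar A_{\cc{I}}}$ and $\Ind(\cc{J}) = \Mod_{\bar A_{\cc{J}}}$ for finite idempotents, so $\Ind(\cc{I}) \cap \Ind(\cc{J}) = \Mod_{\bar A_{\cc{I}} \otimes \bar A_{\cc{J}}}$, which is again smashing hence compactly generated (by $\bar A_{\cc{I}} \otimes \bar A_{\cc{J}} \otimes \cc{K}^\omega$, a set of dualizable-hence-compact objects by \cref{lem:rigidlycompactlygenerated}), and its compact objects are $\cc{I} \cap \cc{J}$ by \cref{cor:fidemtoideal}. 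Once this is in hand the lemma drops out immediately by combining with $\sqrt{x_1 \otimes x_2} = \langle x_1 \rangle \cap \langle x_2\rangle$ in the rigid setting.
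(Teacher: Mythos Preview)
Your reduction to the claim $\Ind(\cc{I}) \cap \Ind(\cc{J}) = \Ind(\cc{I}\cap\cc{J})$ is correct, and your identification $\langle x_1 \rangle \cap \langle x_2 \rangle = \langle x_1 \otimes x_2 \rangle$ via \cref{lem:radicalgeneration} and \cref{lem:rigid_radicalideal} is fine. The gap is in your proposed justification that $\Mod_{\bar A_{\cc{I}} \otimes \bar A_{\cc{J}}}$ is compactly generated by $\bar A_{\cc{I}} \otimes \bar A_{\cc{J}} \otimes \cc{K}^\omega$: you assert these objects are ``dualizable-hence-compact'', but the complementary idempotents $\bar A_{\cc{I}} = \fib(\unit \to A_{\cc{I}})$ are essentially never dualizable in $\Ind(\cc{K})$ (e.g.\ for $\cc{K}=\Sp^\omega$ and $\cc{I} = \langle \Sp/p\rangle$ one gets $\bar A_{\cc{I}} \simeq \Sigma^{-1}\Sp/p^\infty$, which is not compact). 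So these elements do not furnish compact generators, and without them you cannot invoke \cref{cor:fidemtoideal}, which only applies to idempotents already known to be finite. In effect your argument is circular: knowing that $\bar A_1 \otimes \bar A_2$ is a \emph{finite} idempotent is equivalent to knowing that $\Ind(\langle x_1\rangle)\cap\Ind(\langle x_2\rangle)$ is generated by its intersection with $\cc{K}$, which is precisely the nontrivial inclusion you are trying to prove. (That the kernel of an arbitrary smashing localization need not be compactly generated is the failure of the telescope conjecture, so ``smashing hence compactly generated'' is not available as a general principle.)

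The paper's proof supplies exactly the missing generation statement by a direct argument: it shows that the set $S = \{a_1 \otimes a_2 \mid a_i \in \langle x_i\rangle\}$, which \emph{does} consist of compact objects of $\Ind(\cc{K})$, detects the intersection. The mechanism is the dualizability of the $a_i$ themselves (not of $\bar A_i$): if $b \in \Ind(\langle x_1\rangle) \cap \Ind(\langle x_2\rangle)$ is right-orthogonal to all of $S$, then $\Hom(a_1, a_2^\vee \otimes b) \simeq 0$ for all $a_1 \in \langle x_1\rangle$, forcing $a_2^\vee \otimes b \simeq 0$ since $a_2^\vee \otimes b \in \Ind(\langle x_1\rangle)$; iterating with the roles of $1,2$ swapped gives $b \simeq 0$. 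Your framework via idempotents is a reasonable way to \emph{organize} the statement, but the substantive input is this compact-generation step, and your sketch does not contain it.
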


\begin{proof}
  The containment $\langle x_{1} \otimes x_{2} \rangle \subseteq \Ind(\langle x_{1} \rangle) \cap \Ind(\langle x_{2}\rangle)$ follows from \cref{lem:radicalgeneration}. Note that the subcategory $\Ind(\langle x_{1} \rangle) \cap \Ind(\langle x_{2}\rangle)$ is itself presentable stable as $\PrLst$ admits pullbacks, and moreover that its inclusion into $\cc{C}$ preserves colimits. We now claim that the collection $S = \{a_{1} \otimes a_{2} \mid a_{1} \in \langle x_{1}\rangle, a_{2} \in \langle x_{2} \rangle\}$ generates $\Ind(\langle x_{1} \rangle) \cap \Ind(\langle x_{2}\rangle)$ in the sense that the collection $\Hom(a_{1} \otimes a_{2} ,-) \colon \Ind(\langle x_{1} \rangle) \cap \Ind(\langle x_{2}\rangle) \to \Sp$ is jointly conservative as $a_{1} \otimes a_{2}$ ranges over $S$. To this end, let  $b \in \Ind(\langle x_{1} \rangle) \cap \Ind(\langle x_{2}\rangle)$ such that $\Hom(a_{1} \otimes a_{2},b) \simeq 0$ for every  $a_{1}\otimes a_{2} \in S$. As $b \in \Ind(\langle x_{1} \rangle)$ and $\langle x_{1} \rangle$ is closed under tensoring with any object of $\cc{K}$, the collection of formal filtered colimits of elements of $\langle x_{1}\rangle$ is closed under tensoring with objects of $\cc{K}$ and hence $a_{2}^{\vee} \otimes b \in \Ind(\langle x_{1} \rangle)$. The assumptions yield that \[\Hom(a_{1}, a_{2}^{\vee} \otimes b) \simeq 0 \implies a_{2}^{\vee} \otimes b \simeq 0\] for every $a_{2} \in \langle x_{2} \rangle$ and since $b \in \Ind(\langle x_{2} \rangle)$, a similar argument implies $b \simeq 0$. \cref{rec:compactgeneration} implies that $\Ind(\langle x_{1} \rangle) \cap \Ind(\langle x_{2}\rangle)$ is the closure under colimits in $\Ind(\cc{K})$ of the set $S$, which yields the claim.
\end{proof}

\begin{proof}[Proof of \cref{thm:indsheaf}]
  \cref{obs:identifyingOK} implies that $\widetilde{\cO}^{\Ind}_{\cc{K}}$ may be identified with the composite \[\Rad(\cc{K}) \xrightarrow{\cc{K}/-} \twoCAlgrig_{\cc{K}/} \xrightarrow{\Ind} \CAlg(\Catbig)\] and we now apply \cref{prop:sheavesonspec} in the case $\cc{C} = \twoCAlgrig$ and $\cc{F} = \Ind$ to reduce to the case below.

 Let $\cc{L}\in \twoCAlgrig$ arbitrary and $x, y \in \cc{L}$ such that $x \otimes y \simeq 0$. Write $L_{x}, L_{y} \in \mm{Idem}_{\cc{L}}^{\mm{fin}}$ for the smashing idempotents associated to $x, y$ by \cref{cor:fidemtoideal}. We claim that the following square is Cartesian
  \begin{equation} \label{eq:indsheafmvprelude}
 \begin{aligned} 
  \xymatrix{
      \unit \ar[r] \ar[d] & L_{x} \ar[d] \\
      L_{y}\ar[r] & L_{x} \otimes L_{y} \simeq L_{x \oplus y}.
    }
    \end{aligned}
  \end{equation}
  Passing to horizontal fibers yields a map $f\colon \fib(\unit \to L_{x}) \to L_{y} \otimes \fib(\unit \to L_{x})$ and it suffices to show that this is an equivalence. However, $\fib f$ is in the kernel of smashing with both $L_{x}, L_{y}$, and thus must be in $\Ind(\langle x \rangle) \cap \Ind(\langle y \rangle)$ by part (2) of \cref{prop:verdexists}. The assumptions on $x, y$ and \cref{lem:intersectinglocalizingsubs} imply that this is the zero subcategory, whence $f$ must have been an equivalence.

  We now demonstrate that the following square is Cartesian, which will yield the claim \begin{equation}\label{eq:indsheafmv}
  \begin{aligned}
  \xymatrix{
       \Ind(\cc{L}) \ar[r] \ar[d] & \Ind(\cc{L}/\langle y \rangle) \ar[d]\\
       \Ind(\cc{L}/\langle y \rangle) \ar[r]& \Ind(\cc{L}/\langle x \oplus y \rangle).
    }
    \end{aligned}
  \end{equation}

  By \cite[Theorem B]{Horev2017}, there is an adjunction of the form
  \[F \colon \Ind(\cc{L}) \to \Ind(\cc{L}/\langle x \rangle) \times_{\Ind(\cc{L}/\langle x \oplus y \rangle)} \Ind(\cc{L}/\langle y \rangle) \noloc G\] where $F$ is pointwise given by projecting an element $a\in \Ind(\cc{L})$ to its image in the corresponding localizations and $G$ is pointwise given by the formula
  \[
  (c_{x}, c_{y}, \alpha\colon L_{x \oplus y}c_{x} \simarrow L_{x \oplus y} \otimes c_{x}) \mapsto c_{x} \times_{L_{x \oplus y}c_{x}} c_{y}
  \]
  where $c_{x}$ is identified with their images in $\Ind(\cc{L})$ under the fully faithful right adjoint $\Ind(\cc{L}/\langle x \rangle) \hookrightarrow \Ind(\cc{K})$ and similarly for $c_{y}$, and latter pullback is computed in $\Ind(\cc{L})$. Note first that for every $a \in \Ind(\cc{L})$ the counit $a \to GF(a)$ is exactly the map $a \mapsto (L_{x}\otimes a)\times_{(L_{x \oplus y} \otimes a)} (L_{y} \otimes a)$ induced by tensoring the square of \eqref{eq:indsheafmvprelude} with $a$; since the cited square has already been shown to be Cartesian, this map is an equivalence, and thus $F$ is fully faithful.

  To demonstrate that $F \dashv G$ are inverse equivalences, it thus suffices to show that $G$ is conservative. To this end, let 
  \[h \colon (c_{x}, c_{y}, \alpha) \to (c_{x}', c_{y}', \alpha') \in (\Ind(\cc{L}/\langle x \rangle) \times_{\Ind(\cc{L}/\langle x \oplus y \rangle)} \Ind(\cc{L}/\langle y \rangle))^{[1]}\] an arbitrary map satisfying $G(h) \in \Ind(\cc{L})^{[1]}$ is an equivalence. Note that the counit associated to $F \dashv G$ projected to the first coordinated yields a map $L_{x} \otimes G(c_{x}, c_{y}, \alpha) \to L_{x} \otimes c_{x}$ which appears as the top horizontal arrow in following Cartesian square 
\begin{equation}\label{eq:indsheafrightadjoint}
 \begin{aligned} 
  \xymatrix{
      L_{x} \otimes G(c_{x}, c_{y}, \alpha) \ar[r] \ar[d] & L_{x} \otimes c_{x} \ar[d] \\
      L_{x} \otimes L_{y} \otimes c_{y}\ar[r] & L_{x} \otimes L_{y} \otimes c_{x} \simeq L_{x \oplus y} \otimes c_{x}.
    }
    \end{aligned}
  \end{equation}
 and since the bottom arrow is clearly an equivalence, the desired counit map must be as well. Hence, $L_{x} \otimes G(h) \colon c_{x} \to c_{x}'$ recovers the map $h$ projected to the coordinate $\Ind(\cc{L}/\langle x \rangle)$, regarded as a full subcategory of $\Ind(\cc{L})$; the symmetric claim holds with $y$ in the place of $x$. It follows that the map $FG(h)\colon (c_{x}, c_{y}, \alpha) \to (c_{x}', c_{y}', \alpha')$ agrees with $h$ on the first two coordinates via the counit map, and thus $h$ must itself have been an equivalence.
 \end{proof}
 
\begin{proof}[Proof of \cref{thm:rigid_subcanonical}]
  Note that the functor $(-)^{\mm{dbl}}: \CAlg(\Catbig) \to \CAlg(\Catbig)$ commutes with all limits. By \cref{lem:rigidlycompactlygenerated} we have the identification of presheaves \[(\widetilde{\cO}^{\Ind}_{\cc{K}})^{\mm{dbl}} \coloneq \Ind (\widetilde{\cO}_{\cc{K}})^{\mm{dbl}} \simeq \widetilde{\cO}_{\cc{K}} \in \Fun((\Pro(\GBal)^{\ad}_{/\cc{K}})^{\op}, \twoCAlg)\] implying that the latter object must also agree with its sheafification.
\end{proof}

\begin{corollary}\label{cor:rigidspec_fullyfaithful}
  The absolute spectrum functor $\Spec\colon \twoCAlgrig \to \LTop(\GBal)$ is fully faithful. 
\end{corollary}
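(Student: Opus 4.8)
The plan is to invoke the standard fact that a left adjoint is fully faithful precisely when the unit of the adjunction is an equivalence, and to recognize that the requisite unit is exactly the comparison map whose invertibility is the content of \cref{thm:rigid_subcanonical}.

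First I would set up the adjunction $\Spec \dashv \Gamma$ in its usable form. Using \cref{prop:2ring_compactgen} and \cref{rem:bigandsmall} to identify $\Ind(\GBal^{\op}) \simeq \twoCAlg$, \cref{lem:globalsections} supplies an adjunction $\Spec\colon \twoCAlg \rightleftarrows \LTop(\GBal) \noloc \Gamma_{\GBal}$, and by \cref{thm:spec} its unit is the natural transformation whose component at $\cc{K} \in \twoCAlg$ is the map $\alpha_{\cc{K}}\colon \cc{K} \to \Gamma_{\GBal}(\Spec \cc{K}, \cO_{\cc{K}})$ constructed just before \cref{thm:spec} (cf.\ \cref{cor:geometry_adjunction}, where the adjunction equivalence is exhibited as composition with $\alpha$). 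By \cref{lem:globalsectionsisglobalsections} together with the identification $\widetilde{\cO}_{\cc{K}}(\mathbf{1}) \simeq \cc{K}$ recorded in the discussion preceding \cref{thm:spec} (which combines \cref{lem:globalsectionsisglobalsections} and \cref{lem:identificationofGstructure}), the map $\alpha_{\cc{K}}$ is obtained by evaluating the sheafification morphism $\widetilde{\cO}_{\cc{K}} \to \cO_{\cc{K}}$ of \cref{def:spec} on the terminal object $\mathbf{1}$ of the underlying $\infty$-topos of $\Spec \cc{K}$.

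Now for $\cc{K} \in \twoCAlgrig$, \cref{thm:rigid_subcanonical} states precisely that this sheafification morphism $\widetilde{\cO}_{\cc{K}} \to \cO_{\cc{K}}$ is an equivalence in $\Shv(\Spec \cc{K}; \twoCAlg)$; evaluating it on $\mathbf{1}$ therefore shows that $\alpha_{\cc{K}}$ is an equivalence. Hence the unit of $\Spec \dashv \Gamma_{\GBal}$ is an equivalence at every rigid $2$-ring. Consequently, for $\cc{K}, \cc{L} \in \twoCAlgrig$ the map
\[
\Map_{\twoCAlgrig}(\cc{K}, \cc{L}) \simeq \Map_{\twoCAlg}(\cc{K}, \cc{L}) \xrightarrow{(\alpha_{\cc{L}})_{\ast}} \Map_{\twoCAlg}\bigl(\cc{K}, \Gamma_{\GBal}(\Spec \cc{L})\bigr) \simeq \Map_{\LTop(\GBal)}(\Spec \cc{K}, \Spec \cc{L})
\]
is an equivalence — the first identification because $\twoCAlgrig \hookrightarrow \twoCAlg$ is full, the last by the adjunction — which is exactly full faithfulness of $\Spec$ on $\twoCAlgrig$. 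There is no serious obstacle here: essentially all the work has already been discharged into \cref{thm:indsheaf}/\cref{thm:rigid_subcanonical}, and the only points requiring care are the bookkeeping of opposite categories along $\Pro(\GBal) = \Ind(\GBal^{\op})^{\op}$ and $\Ind(\GBal^{\op}) \simeq \twoCAlg$, and the verification that the abstract unit of the adjunction genuinely coincides with the explicit map $\alpha$ of \cref{thm:spec} rather than merely being a transformation of the same shape.
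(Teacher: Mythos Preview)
Your proposal is correct and follows essentially the same approach as the paper: both identify the unit of the $\Spec \dashv \Gamma_{\GBal}$ adjunction at $\cc{K}$ with the global-sections evaluation of the sheafification map $\widetilde{\cO}_{\cc{K}} \to \cO_{\cc{K}}$, invoke \cref{thm:rigid_subcanonical} to see this is an equivalence for rigid $\cc{K}$, and conclude full faithfulness from the unit being an equivalence. The paper's proof is just a terser version of what you wrote.
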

\begin{proof}
  Let $\cc{K} \in \twoCAlgrig$ arbitrary. From \cref{thm:rigid_subcanonical} and \cref{obs:identifyingOK}, the canonical map $\widetilde{\cO}_{\cc{K}} \to \cO_{\cc{K}}$ induces an identification $\cc{K} \simarrow \Gamma(\Spec \cc{K}, \cO_{\cc{K}})$. This is the unit of the adjunction of \cref{thm:spec}, and so the left adjoint $\Spec$ is fully faithful on the full subcategory $\twoCAlgrig \subseteq \twoCAlg$.
\end{proof}

\begin{warning}\label{warning:nonrigid}
    The above statements may in general fail outside the rigid setting. As an example, consider the commutative 2-ring $\cc{K} \coloneq \Fun([1], \Sp)^{\omega}$ equipped with the pointwise multiplicative structure. It is shown in \cite[Theorem 4.3]{Aoki2023} that the maps $s,t \colon \cc{K} \to \Sp^{\omega} \times \Sp^{\omega}$ corresponding to source and target are Karoubi quotients inducing an equivalence $\Spc\cc{K} \simeq \Spc \Sp^{\omega} \sqcup \Spc \Sp^{\omega}$, although $\Fun([1], \Sp)^{\omega} \simeq \Fun([1], \Sp^{\omega}) \neq \Sp^{\omega} \times \Sp^{\omega}$ via the map $s \times t$. 
\end{warning}

Before concluding this section, we collect an example explaining the failure of the presheaf of \eqref{eq:presheafofttcats} to satisfy gluing of objects outside of the idempotent complete setting, following \cref{rem:puntingreaders}. In particular, \cref{thm:rigid_subcanonical} cannot be expected to work if one defines the Zariski geometry on symmetric monoidal stable $\infty$-categories without the assumption of idempotent completeness.
    
     \begin{example}\label{ex:whyidempotentcomplete}
In this example we freely make use of tt-geometric terminology. Consider the tt-category $\cc{K}\coloneq \Ho \Perf_{k}$ for $k$ a field, and consider the full triangulated subcategory of $\cc{K} \times \cc{K}$ generated under cofibers and shifts by the objects $(k,k), (k^{\times 2}, 0), (0, k^{\times 2})$, denoted by $\cc{K}'$. It is easy to see that every object of $\cc{K}\times \cc{K}$ may be obtained as a retract of an object in $\cc{K}'$; namely, $\cc{K}' \subset \cc{K} \times \cc{K}$ is a \emph{dense} or \emph{\'epaisse} subcategory of $\cc{K}\times \cc{K}$. The induced map from $ K_{0}(\cc{K'})\to \cc{K}_{0}(\cc{K} \times \cc{K})$ is injective by a theorem of Thomason \cite[Theorem A.3.2]{calmesHermitianKtheoryStable2025} and corresponds exactly to the proper subgroup of $K_{0}(\cc{K} \times \cc{K}) \simeq \bb{Z} \oplus \bb{Z}$ generated by the elements $(1,1), (2,0), (0,2)$; the characterization of $\cc{K}'$ as exactly those objects with $K_0$ classes in this subgroup also shows that the tensor product on $\cc{K} \times \cc{K}$ restricts to one on $\cc{K}'$, since the specified subgroup is also a subring of $\bb{Z} \times \bb{Z}$ with the ring structure induced by the tensor product on $\cc{K} \times \cc{K}$. Now, it is easy to show that $\Spc \cc{K}' \cong \ast \coprod \ast$, induced by the maps $\pi_{1}, \pi_{2}$ from $\cc{K}'$ to $\cc{K}$ sending an object $(x,y)$ to its first or second component respectively. It is also easy to show that these maps are Verdier localizations away from the thick $\otimes$-ideals generated by $(k^{\times 2},0)$ and $(0, k^{\times 2})$. Together, these form an open cover $\Spc \cc{K}/\langle (k^{\times 2}, 0)\rangle \coprod \Spc \cc{K}/\langle (0,k^{\times 2})\rangle \twoheadrightarrow \Spc \cc{K}'$ whose associated restriction map
    \begin{equation}
      \widetilde{\cc{O}}_{\cc{K}'}(\Spc \cc{K}') \to \widetilde{\cc{O}}_{\cc{K}'}(\Spc \cc{K}/\langle (k^{\times 2}, 0)\rangle) \times \widetilde{\cc{O}}_{\cc{K}'}(\Spc \cc{K}/\langle (0,k^{\times 2})\rangle)
    \end{equation}
    may be identified with the fully faithful inclusion $\cc{K}' \to \cc{K} \times \cc{K}$. Since this latter map fails to be an equivalence on $K_{0}$, it cannot be essentially surjective; thus, the presheaf on $\Spc \cc{K}'$ defined by sending an open subset to its corresponding localization of $\cc{K}'$ cannot satisfy gluing for objects. 
  \end{example}

\subsection{Zariski descent for modules}\label{ssec:ZariskidescentMod}
In this section we will prove \cref{thmalph:mod}. Before commencing the proof, we will need the following observation on the behaviour of Karoubi quotients on modules.

\begin{notation}
  We write $\mdef{\twoModbig}\colon \twoCAlg \to \CAlg(\PrL)$ to denote the composite
  \[
    (\Mod \circ \Ind) \colon \twoCAlg \to \CAlg(\PrL) \to \CAlg(\Catbig)
    \]
  which sends a 2-ring $\cc{K} \mapsto \Mod_{\Ind(\cc{K})}(\PrL)$.
\end{notation}

\begin{lemma}\label{lem:tensormoduleswithlocs}
    For any $2$-ring $\cc{K}$, module $\cc{M} \in \twoModbig_{\cc{K}}$, and ideal $\cc{J} \subseteq \cc{K}$, the map \[\cc{M} \to  \Ind(\cc{K}/\cc{J})\otimes_{\Ind(\cc{K})} \cc{M}\] is identified with the (large) Karoubi quotient away from the closure under small colimits of the essential image of $\Ind(\cc{J}) \times \cc{M} \to   \Ind(\cc{K}) \otimes_{\Ind(\cc{K})}\cc{M} \simeq \cc{M}$.
  \end{lemma}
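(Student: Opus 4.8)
The statement to prove is \cref{lem:tensormoduleswithlocs}: for a $2$-ring $\cc{K}$, module $\cc{M} \in \twoModbig_{\cc{K}}$, and ideal $\cc{J} \subseteq \cc{K}$, the base change $\cc{M} \to \Ind(\cc{K}/\cc{J}) \otimes_{\Ind(\cc{K})} \cc{M}$ is the large Karoubi (i.e.\ Verdier) quotient of $\cc{M}$ away from the localizing subcategory generated by the essential image of $\Ind(\cc{J}) \times \cc{M} \to \cc{M}$ under the action map. The plan is to reduce the statement to the known behaviour of the base change $\Ind(\cc{K}) \to \Ind(\cc{K}/\cc{J})$ established in \hyperref[prop:verdexistsb]{\cref*{prop:verdexists}\,(b)} together with the projection formula encoded in \cref{prop:finitelocissmashing}. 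First I would invoke \hyperref[prop:verdexistsb]{\cref*{prop:verdexists}\,(b)}: the map $L_{\cc{J}}\colon \Ind(\cc{K}) \to \Ind(\cc{K}/\cc{J})$ is a localization with fully faithful right adjoint $R_{\cc{J}}$ and kernel $\Ind(\cc{J})$. Since $-\otimes_{\Ind(\cc{K})}\cc{M}$ is a left adjoint out of $\Mod_{\Ind(\cc{K})}(\PrL)$, it preserves the Bousfield-localization data: the unit $\Ind(\cc{K}) \to \Ind(\cc{K}/\cc{J})$, being a smashing localization by \cref{prop:finitelocissmashing} when $\cc{K}$ is rigid (and in general an idempotent-algebra localization after passing to the idempotent algebra $A = R_{\cc{J}}L_{\cc{J}}\unit$ — here one only needs that $\Ind(\cc{K}/\cc{J}) = \Mod_A(\Ind(\cc{K}))$, which is \cref{def:finiteidemp} together with the discussion preceding \cref{cor:fidemtoideal}), base-changes to $\cc{M} \to \Mod_A(\cc{M}) \simeq A \otimes_{\Ind(\cc{K})} \cc{M} \simeq \Ind(\cc{K}/\cc{J}) \otimes_{\Ind(\cc{K})} \cc{M}$, which is again a (smashing) localization of presentable stable categories.

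The key step is then to identify the kernel of this localization. A Bousfield localization $\cc{M} \to \cc{M}'$ of presentable stable $\infty$-categories is determined by its kernel, a localizing subcategory $\cc{N} \subseteq \cc{M}$, and $\cc{M}' \simeq \cc{M}/\cc{N}$ as the Verdier (large Karoubi) quotient. So I must show $\ker(\cc{M} \to \Ind(\cc{K}/\cc{J})\otimes_{\Ind(\cc{K})}\cc{M})$ equals the localizing subcategory $\langle \Ind(\cc{J})\cdot\cc{M}\rangle$ generated by the image of $\Ind(\cc{J}) \times \cc{M} \to \cc{M}$. For one containment: every object of the form $j \otimes m$ with $j \in \Ind(\cc{J})$, $m \in \cc{M}$ is sent to $(L_{\cc{J}}j)\otimes m \simeq 0\otimes m \simeq 0$ since $L_{\cc{J}}j \simeq 0$; as the kernel is localizing, $\langle \Ind(\cc{J})\cdot\cc{M}\rangle \subseteq \ker$. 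For the reverse containment: write the localization as $-\otimes A$ for the idempotent algebra $A$; an object $m \in \cc{M}$ lies in the kernel iff $A \otimes m \simeq 0$, iff the fibre sequence $\fib(\unit \to A)\otimes m \to m \to A\otimes m$ exhibits $m \simeq \fib(\unit \to A)\otimes m$. Since $\fib(\unit \to A) \in \Ind(\cc{J})$ (this is exactly the content of \hyperref[prop:verdexistsb]{\cref*{prop:verdexists}\,(b)}: the kernel of $L_{\cc{J}}$ is $\Ind(\cc{J})$, and $\fib(\unit \to A)$ is $L_{\cc{J}}$-acyclic), the object $m \simeq \fib(\unit \to A)\otimes m$ is in the image of $\Ind(\cc{J})\times\cc{M}\to\cc{M}$, hence in $\langle \Ind(\cc{J})\cdot\cc{M}\rangle$. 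This gives equality of kernels and hence the identification of the map with the large Karoubi quotient.

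\textbf{Main obstacle.} The subtle point is that the localization $\Ind(\cc{K})\to\Ind(\cc{K}/\cc{J})$ need not be smashing when $\cc{K}$ is \emph{not} rigid — \cref{prop:finitelocissmashing} is stated only for $\cc{K}\in\twoCAlgrig$. Without the smashing property I cannot present the base change simply as $-\otimes A$ for an idempotent algebra, and the argument identifying the kernel via $\fib(\unit\to A)$ breaks. The resolution I would pursue: even without smashing, $\Ind(\cc{K})\to\Ind(\cc{K}/\cc{J})$ is a \emph{localization} of presentable stable categories (\hyperref[prop:verdexistsb]{\cref*{prop:verdexists}\,(b)}), and in $\CAlg(\PrL)$ a localization of the base ring always base-changes to a localization of modules with kernel the thick-localizing subcategory generated by (kernel of base ring) $\otimes$ (module) — this is the general fact that for a localization $L\colon \cc{R}\to \cc{S}$ of $\bE_\infty$-rings in $\PrL$ and $\cc{M}\in\Mod_{\cc{R}}$, one has $\cc{S}\otimes_{\cc{R}}\cc{M} \simeq \cc{M}/\langle\ker(L)\cdot\cc{M}\rangle$; one proves it by checking that $-\otimes_{\cc{R}}\cc{S}$ sends the localization square to a localization square and computing the kernel as above, using only that $\ker(L)$ is an ideal in $\cc{R}$ (which holds since $L$ is symmetric monoidal, so its kernel is closed under tensoring with all of $\cc{R}$). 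So the genuinely load-bearing input is \hyperref[prop:verdexistsb]{\cref*{prop:verdexists}\,(b)} plus this general module-theoretic lemma about base-change of localizations along a symmetric monoidal functor; I would either cite the latter from the literature on smashing/finite localizations (e.g.\ the projection-formula circle of ideas around \cref{rec:projectionformula}) or include a short self-contained verification. The rigid case is then a clean special case where $A$ is literally a tensor idempotent, but the argument is uniform once phrased in terms of the localizing subcategory generated by $\Ind(\cc{J})\cdot\cc{M}$ rather than in terms of an idempotent algebra.
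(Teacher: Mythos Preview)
Your approach is correct and clean in the rigid case: once $\Ind(\cc{K})\to\Ind(\cc{K}/\cc{J})$ is smashing (\cref{prop:finitelocissmashing}), the base change is $A\otimes(-)$ for an idempotent $A$, and your kernel computation via $\fib(\unit\to A)\in\Ind(\cc{J})$ goes through. This is essentially how the paper itself argues in the \emph{applications} of the lemma (see the proof of \cref{thm:2MODisasheaf}, which invokes \cref{prop:finitelocissmashing} together with \cite[Proposition~4.1]{ben-zviIntegralTransformsDrinfeld2010a}).

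The lemma, however, is stated for arbitrary $\cc{K}$, and there your proposed resolution does not close the gap. The ``general fact'' you invoke---that a symmetric monoidal localization $\cc{R}\to\cc{S}$ in $\CAlg(\PrL)$ always base-changes to a localization $\cc{M}\to\cc{S}\otimes_{\cc{R}}\cc{M}$ with kernel generated by $\ker(L)\cdot\cc{M}$---is precisely the content of the lemma, and your sketch of its proof is circular: you assume the base change is already a localization (in $\PrL$) in order to identify its kernel, but without the smashing hypothesis there is no idempotent $A\in\cc{R}$ to mediate this. The $2$-categorical idempotence of $\cc{S}$ in $\Mod_{\cc{R}}(\PrL)$ does not by itself imply the $1$-categorical statement that $\cc{M}\to\cc{S}\otimes_{\cc{R}}\cc{M}$ has a fully faithful right adjoint, and your $\fib(\unit\to A)\otimes m$ argument for the reverse kernel containment likewise depends on the smashing presentation. (Your citation of \cref{def:finiteidemp} and the discussion before \cref{cor:fidemtoideal} does not help: those sit squarely inside the rigid hypothesis of \cref{prop:finitelocissmashing}.)

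The paper's proof runs in the opposite order, which is what makes it work uniformly. It first forms the Verdier quotient $\cc{M}/\cc{J}_{\cc{M}}$, then uses \cite[Proposition~2.2.1.9]{LurieHA} on compatibility of localizations with $\mathrm{LMod}^{\otimes}$-structure to endow it with an $\Ind(\cc{K}/\cc{J})$-module structure directly---the verification being that the action map sends any morphism whose cofiber lies in $\Ind(\cc{J})$ (in an $\Ind(\cc{K})$-factor) or in $\cc{J}_{\cc{M}}$ (in the $\cc{M}$-factor) to a morphism with cofiber in $\cc{J}_{\cc{M}}$. Only then does it compare with $\Ind(\cc{K}/\cc{J})\otimes_{\Ind(\cc{K})}\cc{M}$ via the two universal properties (of base change and of Verdier quotient), producing mutually inverse maps. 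That the unit $\cc{M}\to\Ind(\cc{K}/\cc{J})\otimes_{\Ind(\cc{K})}\cc{M}$ is a localization then appears as a \emph{corollary}, not as an input.
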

  \begin{proof}

    Let $\cc{J}_{\cc{M}}$ denote the closure under small colimits of the image of $\Ind(\cc{J}) \times \cc{M} \to \cc{M}$; note that since $\Ind(\cc{J})$ is a thick tensor-ideal, $\cc{J}_{\cc{M}}$ is closed under $\Ind(\cc{K})$-tensors in $\cc{M}$. We first show that $\cc{M}/\cc{J}_{\cc{M}}$ naturally admits the structure of an $\Ind(\cc{K}/\cc{J})$-module in $\CAlg(\PrL)$; for this, consider an arbitrary map \[h = (h_{1},\dotsc,h_{n}, h_{\cc{M}})\colon (a_{1},\dotsc,a_{n}, m) \to (a'_{1},\dotsc,a'_{n}, m') \in (\Ind(\cc{K})^{\times n} \times \cc{M})^{[1]}\] such that $\cofib(h) = (\cofib(h_{1}),\dotsc,\cofib(h_{n}), \cofib(h_{\cc{M}}))$ satisfies either $\cofib(h_{i}) \in \Ind(\cc{J})$ for some $i$ or $\cofib(h_{\cc{M}}) \in \cc{J}_{\cc{M}}$. Then $\cofib(h_{1} \otimes \dotsb \otimes h_{n} \otimes h_{\cc{M}}) \in \cc{J}_{\cc{M}}$; in the example that $\cofib(h_{i}) \in \Ind(\cc{J})$, we may express this as the cofiber of the composite
    \[
      \cofib\left((h_{1}\otimes \dotsb \otimes \id\otimes \dotsb \otimes h_{n}, h_{\cc{M}}) \circ (\id\otimes \dotsb \otimes h_{i}\otimes \dotsb \otimes \id) \right)
    \] whence it may be represented as the cofiber of the map \[(h_{1}\otimes \dots \otimes \id\otimes \dots \otimes h_{n}, h_{\cc{M}}) \colon \cofib(\id\otimes \dotsb \otimes h_{i}\otimes \dotsb \otimes \id) \to \cofib(\id\otimes \dotsb \otimes h_{i}\otimes \dotsb \otimes \id)\]
    by expressing the desired cofiber as an iterated pushout\footnote{also known as the \emph{octahedral axiom}.}. Since $\cofib(h_{i}) \in \Ind(\cc{J})$ by assumption, $\cofib(\id\otimes \dotsb \otimes h_{i}\otimes \dotsb \otimes \id) \in \cc{J}_{\cc{M}}$ and thus $\cofib(h_{1} \otimes \dotsb \otimes h_{n} \otimes h_{\cc{M}}) \in \cc{J}_{\cc{M}}$. The same proof works in the case that $h_{i}$ is replaced with $h_{\cc{M}}$.

    We have just shown that the localization maps $\Ind(\cc{K}) \to \Ind(\cc{K}/\cc{I})$, $\cc{M} \to \cc{M}/\cc{J}_{\cc{M}}$ are \emph{compatible with the $\mm{LMod}^{\otimes}$-structure} in the sense of \cite[Proposition 2.2.1.6]{LurieHA}, and thus \cite[Proposition 2.2.1.9]{LurieHA} supplies a unique left $\Ind(\cc{K}/\cc{J})$-module structure on $\cc{M}/\cc{J}_{\cc{M}}$ such that the localization map $\cc{M} \to \cc{M}/\cc{J_{M}}$ is strongly left $\Ind(\cc{K})$-linear. Furthermore, the $\Ind(\cc{K}/\cc{J})$-module structure on $\cc{M}/\cc{J}_{\cc{M}}$ is a localization of the $\Ind(\cc{K})$-module structure on $\cc{M}$ and therefore preserves colimits in every variable; hence, $\cc{M} \in \Mod_{\Ind(\cc{K}/\cc{J})}(\PrL)$.

    By this last observation, there is an induced $\Ind(\cc{K}/\cc{J})$-linear functor $\Ind(\cc{K}/\cc{J}) \otimes_{\Ind(\cc{K})} \cc{M} \to \cc{M}/\cc{J}_{\cc{M}}$. Note furthermore that the unit $\cc{M} \to \Ind(\cc{K}/\cc{J})\otimes_{\Ind(\cc{K})} \cc{M}$ sends the image of $\Ind(\cc{J}) \times \cc{M}$ under the action map on $\cc{M}$ to $0$; as this is a left adjoint functor of presentable $\Ind(\cc{K})$ modules, it must send the closure of this image under colimits to 0, i.e., $\cc{J}_{\cc{M}} \mapsto 0$. This supplies an induced map $\cc{M}/\cc{J}_{\cc{M}} \to \Ind(\cc{K}/\cc{J})\otimes_{\Ind(\cc{K})}\cc{M}$ by the universal property of Karoubi quotients. The composite $\cc{M}/ \cc{J}_{\cc{M}} \to \Ind(\cc{K}/\cc{J}) \otimes_{\Ind(\cc{K})} \cc{M} \to \cc{M}/\cc{J}_{\cc{M}}$ agrees with the map induced by the Karoubi quotient $\cc{M} \to \cc{M}/\cc{J}_{\cc{M}}$ by construction, which is the identity. Similarly, the map $\cc{M}/\cc{J}_{\cc{M}} \to \Ind(\cc{K}/\cc{J}) \otimes_{\Ind(\cc{K})} \cc{M}$ is induced from the unit map $\cc{M} \to \Ind(\cc{K}/\cc{J})\otimes_{\Ind(\cc{K})}\cc{M}$, and hence the composite
    \[
      \Ind(\cc{K}/\cc{J})\otimes_{\Ind(\cc{K})}\cc{M} \to \cc{M}/\cc{J}_{\cc{M}} \to \Ind(\cc{K}/\cc{J}) \otimes_{\Ind\cc{K}} \cc{M}
    \]
    may be identified with the $\Ind(\cc{K}/\cc{J})$-linear map induced from the unit map $\cc{M} \to \Ind(\cc{K}/\cc{J}) \otimes_{\Ind(\cc{K})} \cc{M}$, which is also the identity, yielding the desired inverse equivalence. 
  \end{proof}

\begin{corollary}
  In the situation of \cref{lem:tensormoduleswithlocs}, the unit map $\cc{M} \to \Ind(\cc{K}/\cc{J}) \otimes_{\Ind(\cc{K})} \cc{M}$ is a localization in the sense of \cite[Definition 5.2.7.2]{LurieHA}, i.e., it has a fully faithful right adjoint.
\end{corollary}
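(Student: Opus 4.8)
The plan is to read the statement off from \cref{lem:tensormoduleswithlocs} together with the standard fact that, inside a presentable stable $\infty$-category, the Verdier quotient by a localizing subcategory generated by a \emph{set} of objects is an accessible Bousfield localization.

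By \cref{lem:tensormoduleswithlocs} the unit map $\cc{M}\to\Ind(\cc{K}/\cc{J})\otimes_{\Ind(\cc{K})}\cc{M}$ is equivalent to the quotient functor $\cc{M}\to\cc{M}/\cc{J}_{\cc{M}}$, where $\cc{J}_{\cc{M}}\subseteq\cc{M}$ is the smallest localizing subcategory containing the essential image of the action map $\Ind(\cc{J})\times\cc{M}\to\cc{M}$; so it suffices to produce a fully faithful right adjoint to this quotient functor. First I would observe that $\cc{J}_{\cc{M}}$ is generated as a localizing subcategory by a small set: since $\cc{M}$ is presentable we may write $\cc{M}=\Loc(S)$ for some small $S\subseteq\cc{M}$, and since the module action $\Ind(\cc{K})\times\cc{M}\to\cc{M}$ preserves small colimits separately in each variable while $\Ind(\cc{J})=\Loc(\cc{J})$ with $\cc{J}$ small, bilinearity of the action gives $\cc{J}_{\cc{M}}=\Loc\bigl(\{\, j\otimes s \mid j\in\cc{J},\ s\in S \,\}\bigr)$.

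Granting this, the localization of $\cc{M}$ at the (small) set of morphisms whose cofiber lies in a fixed set of generators of $\cc{J}_{\cc{M}}$ exists and is accessible by \cite[Proposition~5.5.4.15]{LurieHTT}; its acyclics are exactly $\cc{J}_{\cc{M}}$, so it presents $\cc{M}/\cc{J}_{\cc{M}}$, and in particular the quotient functor $\cc{M}\to\cc{M}/\cc{J}_{\cc{M}}$ has a fully faithful right adjoint, with essential image the right orthogonal $\cc{J}_{\cc{M}}^{\perp}$ --- alternatively one may invoke Neeman's Bousfield localization theorem applied to the well-generated triangulated category $\Ho\cc{M}$. Transporting this along the identification of \cref{lem:tensormoduleswithlocs} shows that $\cc{M}\to\Ind(\cc{K}/\cc{J})\otimes_{\Ind(\cc{K})}\cc{M}$ is a localization in the sense of \cite[Definition~5.2.7.2]{LurieHTT}. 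There is no genuinely hard step; the only point needing (routine) care is the middle paragraph --- checking that replacing $\Ind(\cc{J})\times\cc{M}$ by a product of generating sets does not enlarge the localizing subcategory generated --- which is immediate from separate cocontinuity of the action and presentability of $\cc{M}$.
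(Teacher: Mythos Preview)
Your argument is correct, but it takes a different route from the paper's.  You argue directly: $\cc{J}_{\cc{M}}$ is a localizing subcategory of the presentable stable $\infty$-category $\cc{M}$ generated by a \emph{small} set (using presentability of $\cc{M}$ and separate cocontinuity of the action to replace $\Ind(\cc{J})\times\cc{M}$ by $\cc{J}\times S$), and then standard accessible Bousfield localization produces the fully faithful right adjoint.  One small imprecision: the phrase ``the set of morphisms whose cofiber lies in a fixed set of generators'' is not literally a small set; what you want is the small set $\{0\to\Sigma^{n}(j\otimes s)\}$, but this is routine.

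The paper instead enlarges universes so that $\cc{M}$ becomes small and $\Ind(\cc{M})$ presentable in a larger universe, then applies \cref{prop:verdexists}(b) there: the right adjoint to $\Ind(\cc{M})\to\Ind(\cc{M}/\cc{J}_{\cc{M}})$ restricted to $\cc{M}/\cc{J}_{\cc{M}}$ is the Yoneda-type embedding $x\mapsto\Map_{\cc{M}/\cc{J}_{\cc{M}}}(L(-),x)$ into $\Ind(\cc{M})$, which is fully faithful.  By adjunction this embedding equals $\Yo_{\cc{M}}\circ R$ where $R$ is the right adjoint (in the original universe) to the unit map; since $\Yo_{\cc{M}}$ is fully faithful, so is $R$.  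Your approach is more self-contained and avoids universe manipulation; the paper's approach has the virtue of reusing \cref{prop:verdexists} uniformly rather than re-deriving Bousfield localization for presentable stable $\infty$-categories.
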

\begin{proof}
    We enlarge universes so that $\Ind(\cc{M})$ is a presentable $\infty$-category in a larger universe. By \cref{lem:tensormoduleswithlocs} and \cref{prop:verdexists}, the induced functor $\Ind(\cc{K}/\cc{J}) \otimes_{\Ind(\cc{K})} \cc{M} \to \Ind(\cc{M})$ is fully faithful. Since the unit map $\Ind(\cc{M}) \to \Ind(\cc{K}/\cc{J}) \otimes_{\Ind(\cc{K})}\cc{M}$ was itself a left adjoint, the embedding above may uniquely be identified with the composite 
    \[
    \Ind(\cc{M}) \to \Ind(\cc{K}/\cc{J}) \otimes_{\Ind(\cc{K})}\cc{M} \to \cc{M} \xrightarrow{\Yo} \Ind(\cc{M})
    \]
    where the first arrow is the right adjoint to the unit map, yielding the desired claim.
\end{proof}
  
  We are now ready to prove \hyperref[thmalph:moda]{\cref*{thmalph:mod}\,(a)}.

\begin{theorem}\label{thm:2MODisasheaf} Let $\cc{K} \in \twoCAlgrig$.
  \begin{enumerate}
  \item For any $\cc{M} \in \twoModbig_{\cc{K}}$, the assignment \[\cc{O_{M}} \colon U \mapsto \cO^{\Ind}_{\cc{K}}(U) \otimes_{\Ind(\cc{K})} \cc{M}\] for $U \subseteq |\! \Spec \cc{K}|$ a quasicompact open subset extends to a $\twoModbig_{\cc{K}}$-valued sheaf on $\Spec \cc{K}$.
  \item The assignment which sends a quasicompact open subset $U \subseteq |\! \Spec \cc{K}|$ to $\twoModbig_{\cc{O}_{\cc{K}}(U)}$ extends to a $\CAlg(\Catbig)$-valued sheaf on $\Spec \cc{K}$.
  \end{enumerate}
\end{theorem}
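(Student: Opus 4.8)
The statement to prove is \cref{thm:2MODisasheaf}, parts (a) and (b), concerning $\twoModbig_{\cc{K}}$-valued and $\CAlg(\Catbig)$-valued descent for the module sheaf over $\Spec \cc{K}$ when $\cc{K}$ is rigid. The strategy in both cases is to invoke \cref{prop:sheavesonspec}: since $\twoModbig_{\cc{K}}$ and $\CAlg(\Catbig)$ both admit all small limits, it suffices to verify the Mayer--Vietoris square for a single Zariski cover $\cc{L} \to \cc{L}_1 \times \cc{L}_2$ (equivalently, for a pair of objects $x, y \in \cc{L}$ with $x \otimes y \simeq 0$, giving $\cc{L}_i = \cc{L}/\langle x\rangle$, $\cc{L}/\langle y\rangle$ and the corner term $\cc{L}/\langle x \oplus y\rangle$). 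For part (a), I would first use \cref{lem:tensormoduleswithlocs} to identify $\cO^{\Ind}_{\cc{K}}(U) \otimes_{\Ind(\cc{K})} \cc{M}$ with the large Karoubi quotient of $\cc{M}$ by the localizing subcategory generated by $\Ind(\cc{J}_U) \otimes \cc{M}$; this reduces the MV square to a statement about localizing subcategories of $\cc{M}$, namely that the square of localizations of $\cc{M}$ along the ideals $\langle x \rangle_{\cc{M}}$, $\langle y \rangle_{\cc{M}}$ is Cartesian in $\PrL$.

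\textbf{Core computation.} The heart of part (a) is a module-theoretic version of the argument in the proof of \cref{thm:indsheaf}. The key input there was \cref{lem:intersectinglocalizingsubs}: $\Ind(\langle x_1 \rangle) \cap \Ind(\langle x_2 \rangle) = \Ind(\langle x_1 \otimes x_2 \rangle)$ in $\Ind(\cc{K})$. I would prove the analogous statement for the module $\cc{M}$: the localizing subcategories $\langle x \rangle_{\cc{M}}$ and $\langle y \rangle_{\cc{M}}$ (closures under colimits of the images of $\Ind(\langle x\rangle)\otimes\cc{M}$, $\Ind(\langle y\rangle)\otimes\cc{M}$ in $\cc{M}$) intersect in $\langle x\otimes y\rangle_{\cc{M}} = \langle 0 \rangle_{\cc{M}} = 0$. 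This uses rigidity of $\cc{L}$ (so that the objects $x,y$ are dualizable and the proof of \cref{lem:intersectinglocalizingsubs} transports: one writes elements of the intersection as acted on by $x^\vee$ and by $y^\vee$ and deduces they must vanish). Granting this, the pullback $\unit \to L_x$, $L_y \to L_x \otimes L_y$ of smashing idempotents in $\Ind(\cc{L})$ (which is Cartesian by the proof of \cref{thm:indsheaf}, display \eqref{eq:indsheafmvprelude}) becomes Cartesian after tensoring with any $\cc{M}$-object; more structurally, tensoring the Cartesian square \eqref{eq:indsheafmv} with $\cc{M}$ over $\Ind(\cc{L})$ and using \cref{lem:tensormoduleswithlocs} gives exactly the required MV square in $\PrL$. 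One must check this base change preserves the pullback: this is where I expect to lean on the fact from \cite{Horev2017} used in the proof of \cref{thm:indsheaf}, or directly re-run the gluing-of-recollements argument with coefficients in $\cc{M}$, since $-\otimes_{\Ind(\cc{L})}\cc{M}$ is a left adjoint and need not preserve limits a priori.

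\textbf{Part (b) and the main obstacle.} Part (b) follows from part (a) together with the fact that $\Mod_{(-)}\colon \CAlg(\Catbig) \to \CAlg(\Catbig)$ is a right adjoint (by \cite[Corollary 4.8.5.21]{LurieHA}, $\Mod$ is a fully faithful left adjoint, hence its essential image is characterized by a limit-preservation property), so $\CAlg(\Catbig)$-valued descent for $U \mapsto \twoModbig_{\cO_{\cc{K}}(U)}$ reduces to descent for the underlying $\CAlg(\Catbig)$-sheaf $\cO^{\Ind}_{\cc{K}}$ established in \cref{thm:indsheaf}; one verifies the MV square for $\twoModbig$ is obtained by applying $\Mod_{(-)}$ to the MV square of \eqref{eq:indsheafmv}, which is Cartesian and is preserved since $\Mod_{(-)}$ preserves pullbacks along the relevant localization maps (the projection formula / base change for module categories). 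The step I expect to be the genuine obstacle is establishing that $-\otimes_{\Ind(\cc{L})}\cc{M}$ carries the Cartesian square of localizations \eqref{eq:indsheafmv} to a Cartesian square: since this functor is merely a left adjoint, one cannot invoke limit-preservation formally. The resolution should be to observe that each leg of \eqref{eq:indsheafmv} is a smashing localization (by \cref{prop:finitelocissmashing}), so tensoring with $\cc{M}$ again produces smashing localizations of $\cc{M}$ whose kernels are the $\langle x\rangle_{\cc{M}}$ etc., and smashing-localization squares with pairwise-trivial kernel intersection are automatically Cartesian — this is precisely the content that the $\cc{M}$-level analogue of \cref{lem:intersectinglocalizingsubs} is designed to supply. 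Once the localizing-subcategory intersection vanishes, the formal gluing lemma for recollements closes the argument.
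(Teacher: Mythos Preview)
Your plan for part~(a) is essentially correct and matches the paper's approach, though you are doing more work than necessary. You do not need a module-level analogue of \cref{lem:intersectinglocalizingsubs}: the Cartesian square~\eqref{eq:indsheafmvprelude} of idempotents already lives in $\Ind(\cc{L})$, and tensoring it with any $m\in\cc{M}$ preserves it because the action map is exact. This immediately gives that the unit $m \to GF(m)$ is an equivalence, so $F$ is fully faithful; conservativity of $G$ is then the same counit computation as in the proof of \cref{thm:indsheaf} (the paper literally writes ``the rest of the proof proceeds exactly as in the proof of \cref{thm:indsheaf}''). Your suggestion to instead tensor the whole category-level square~\eqref{eq:indsheafmv} with $\cc{M}$ is the less direct route and runs into exactly the obstacle you flag.

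Your argument for part~(b) has a genuine error. The functor $\Mod_{(-)}\colon \CAlg(\PrLst) \to \CAlg(\Catbig)$ is a fully faithful \emph{left} adjoint by \cite[Corollary~4.8.5.21]{LurieHA}, not a right adjoint; the sentence ``$\Mod$ is a fully faithful left adjoint, hence its essential image is characterized by a limit-preservation property'' does not make sense, and $\Mod$ does not formally preserve the pullback~\eqref{eq:indsheafmv}. The paper's proof is quite different and does real work beyond part~(a). One again invokes \cite[Theorem~B]{Horev2017} to build an adjunction between $\twoModbig_{\cc{L}}$ and the pullback $\twoModbig_{\cc{L}_1} \times_{\twoModbig_{\cc{L}_1\otimes_{\cc{L}}\cc{L}_2}} \twoModbig_{\cc{L}_2}$; part~(a) gives that the left adjoint (base change) is fully faithful. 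The substantive step is conservativity of the right adjoint $(\cc{M}_1,\cc{M}_2,\alpha) \mapsto \cc{M}_1 \times_{\cc{M}_1/\cc{I}_2} \cc{M}_2$. For this the paper shows, using stability of Karoubi sequences under pullback \cite[Proposition~A.1.18, Theorem~A.3.12]{calmesHermitianKtheoryStable2025}, that the projections $\phi_i$ out of the glued category are themselves Karoubi quotients, and then identifies $\phi_1$ with the localization at $\cc{I}_1$ by checking its kernel is generated by $\cc{I}_1$-tensors. This last identification is where the $\Ind(\cc{L})$-linearity and the explicit generation of $\ker\psi_1$ are used, and it is not a formal consequence of limit-preservation of any functor.
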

\begin{proof}
 Given $\cc{K} \in \twoCAlg$ arbitrary, $\cc{J}\subseteq \cc{K}$ a thick tensor ideal, and $\cc{M} \in \twoModbig_{\cc{K}}$ we write $\cc{M}/\cc{J}$ to denote $\Ind(\cc{K}/ \cc{J})\otimes_{\Ind(\cc{K})} \cc{M}$, and $\cc{M}_{\cc{J}}$ to denote the kernel of the map $\cc{M} \to \cc{M}/\cc{J}$.

    Let us now prove part (a). As in the proof of \cref{thm:indsheaf}, we use the reduction schema of \cref{prop:sheavesonspec} for $\twoCAlgrig$. Let $\cc{L} \in \twoCAlgrig$, $\cc{L} \to \cc{L}_{1} \times \cc{L}_{2}$ a Zariski cover, where $\cc{L} \to \cc{L}_{i}$ has kernel $\cc{I}_{i}$, and $\cc{M} \in \twoModbig_{\cc{L}}$. We must show that the following square is Cartesian:
\begin{equation}
\begin{aligned}
\label{eq:modulemv}\xymatrix{
     \cc{M} \ar[r] \ar[d] & \cc{M}/\cc{I}_{1} \ar[d]\\
      \cc{M}/\cc{I}_{2} \ar[r]& \cc{M}/\langle \cc{I}_{1}, \cc{I}_{2}\rangle.
    }
    \end{aligned}
  \end{equation}
  Combining \cref{prop:finitelocissmashing} and \cite[Proposition 4.1]{ben-zviIntegralTransformsDrinfeld2010a}, the localization $\cc{M} \to \cc{M}/\cc{I}_{i}$ may be identified as the left adjoint in the adjunction
  \[
  L_{i} \otimes - \colon \cc{M} \rightleftarrows \Mod_{L_{i}}(\cc{M}) \noloc \mm{fgt} 
  \]
  where $L_{i}\in \Idem_{\cc{L}}^{\mm{fin}}$ is the finite idempotent associated to the ideal $\cc{I}_{i}$, $i = 1,2$. There is thus an induced adjunction of the form 
  \[
  F \colon \cc{M} \rightleftarrows \cc{M}/\cc{I}_{1} \times_{\cc{M}/\langle \cc{I}_{1}, \cc{I}_{2}\rangle} \cc{M}/\cc{I}_{2} \noloc G
  \]
  by \cite[Theorem B]{Horev2017}, where $F$ is pointwise given by sending an element $m \in \cc{M}$ to its corresponding images in $\cc{M}/\cc{J_{i}}$ under the localization maps, and $G$ is pointwise given by the formula 
  \[
  (m_{1}, m_{2}, \alpha\colon L_{1} \otimes L_{2} \otimes m_{1} \simarrow L_{1} \otimes L_{2} \otimes m_{2}) \mapsto m_{1} \times_{L_{1} \otimes L_{2} \otimes m_{1}} m_{2} 
  \]
  where $m_{1}, m_{2}$ are naturally regarded as elements of $\cc{M}$ under the fully faithful right adjoints $\cc{M}/\langle \cc{I}_{i}\rangle \to \cc{M}$, and the pullback on the right is computed in $\cc{M}$. The rest of the proof proceeds exactly as in the proof of \cref{thm:indsheaf}.
  
  We may now turn to the proof of part (b), for which we employ the same reduction schema and maintain the same notation as in the previous proof. We are tasked with showing that the map
  \begin{equation}\label{eq:twomodpb}
    \twoModbig_{\cc{L}} \to \twoModbig_{\cc{L}_{1}} \times_{\twoModbig_{\cc{L}_{1} \otimes_{\cc{L}} \cc{L}_{2}}}\twoModbig_{\cc{L}_{2}}
  \end{equation} induced by base-change along $\cc{L} \to \cc{L}_{1} \times \cc{L}_{2}$ is an equivalence. Once again invoking \cite[Theorem B]{Horev2017} the right adjoint to the functor in \eqref{eq:twomodpb} is pointwise given by the assignment
  \begin{equation}\label{eq:twomodpb2}
    (\cc{M}_{1} , \cc{M}_{2}, \alpha\colon \cc{M}_{1}/\cc{I}_{2} \simarrow \cc{M}_{2}/\cc{I}_{1}) \mapsto \cc{M}_{1} \times_{\cc{M}_{1}/\cc{I}_{2}} \cc{M}_{2} 
  \end{equation}
  where the right-hand side pullback is computed in $\twoModbig_{\cc{L}}$. Part (a) now implies that the left adjoint functor in \eqref{eq:twomodpb} is fully faithful, since for any $\cc{M} \in \twoModbig_{\cc{L}}$ the counit $\cc{M} \to \cc{M}/\cc{I}_{1} \times_{\cc{M}/\langle \cc{I}_{1}, \cc{I}_{2}\rangle} \cc{M}/\cc{I}_{2}$ is an equivalence. It will thus suffice to show that the right adjoint of \eqref{eq:twomodpb2} is conservative.

  To this end, consider $(\cc{M}_{1}, \cc{M}_{2}, \alpha)$ an arbitrary element in the right-hand side of \eqref{eq:twomodpb}. By \cite[Proposition A.1.18, Theorem A.3.12]{calmesHermitianKtheoryStable2025}, all of the arrows in the Cartesian square below are (large) Karoubi quotients, since the bottom horizontal and right-hand vertical arrows have already been shown to be Karoubi quotients
  \[\xymatrix{
      \cc{M}_{1} \times_{\cc{M}_{1}/\cc{I}_{2}}\cc{M}_{2} \ar[r]^-{\phi_{1}} \ar[d]_{\phi_{2}} &  \cc{M}_{1} \ar[d]^{\psi_{1}}\\
      \cc{M}_{2} \ar[r]^{\psi_{2}} & \cc{M}_{1}/\cc{I}_{2}. 
    }
  \]

  We claim that the map $\phi_{1}$ in the square above may be identified with the localization $\cc{M}_{1} \times_{\cc{M}_{1}/\cc{I}_{2}} \cc{M}_{2} \to (\cc{M}_{1} \times_{\cc{M}_{1}/\cc{I}_{2}} \cc{M}_{2})/\cc{I}_{1}$; the analogous claim for the $\phi_{2}$ follows by symmetry. Given that $\phi_{1}$ is already known to be a localization, we are left to show that $\ker \phi_{1}$ is generated under colimits by objects of the form $i \otimes m$ for $i \in \Ind(\cc{I}_{1}), m \in \cc{M}$ arbitrary. By assumption, $\cc{M}_{1}$ is restricted from an $\Ind(\cc{L}/\cc{I}_{1})$ module and hence $i \otimes m \in \ker \phi_{1}$ for every $i,m$ as aforementioned. Since the square above is Cartesian and $\psi_{2}$ is a Karoubi quotient, the map $\phi_{2}$ restricts to an equivalence $ \phi_{2}\colon \ker \phi_{1} \simarrow \ker \psi_{1}$. As $\phi_{2}$ is an $\Ind(\cc{L})$-linear left adjoint, it follows that the $\ker\phi_{1} \subseteq \cc{M}_{1} \times_{\cc{M}_{1}/\cc{I}_{2}} \cc{M}_{2}$ is the smallest subcategory which is closed under colimits and contains $i \otimes m_{2}$ for every $i \in \Ind(\cc{I}_{1})$ and $m_{2}$ in the image of $\cc{M}_{2} \to \cc{M}$.

  Thus, given any map of triples \[(\delta_{1}, \delta_{2}, \epsilon) \colon (\cc{M}_{1}, \cc{M}_{2}, \alpha) \to (\cc{M}'_{1}, \cc{M}'_{2}, \alpha') \in (\twoModbig_{\cc{L}_{1}} \times_{\twoModbig_{\cc{L}_{1} \otimes_{\cc{L}} \cc{L}_{2}}}\twoModbig_{\cc{L}_{2}})^{[1]}\] such that the induced map $\delta_{1} \times \delta_{2} \colon \cc{M}_{1} \times_{\cc{M}_{1}/\cc{I}_{2}} \cc{M}_{2} \to \cc{M}'_{1} \times_{\cc{M}'_{1}/\cc{I}_{2}} \cc{M}'_{2}$ is an equivalence in $\twoModbig_{\cc{L}}$, the induced map $\delta_{1}\colon \cc{M}_{1} \to \cc{M}_{1}'$ may be identified with $\Ind(\cc{L}/\cc{I}_{1}) \otimes_{\Ind(\cc{L})} (\delta_{1} \times \delta_{2})$ and is thus an equivalence, symmetrically for $\delta_{2}$, implying that the right adjoint of \eqref{eq:twomodpb2} is conservative and hence an inverse equivalence. 
\end{proof}

\begin{remark}
    It is possible to prove \cref{thm:indsheaf} and \cref{thm:2MODisasheaf} by noting that an intermediate claim in the proof of the former result already implies that $L_{x} \times L_{y}$ is \emph{descendable} in the sense of \cite{Mathew16}. An earlier iteration of the results above proceeded through this route, which is a shorter path, albeit one which assumes more. 
\end{remark}

We now approach the second and third parts of \cref{thmalph:mod}, for which we will require a ``Zariski local-to-global principle'' for compact generation. Apart from some modifications to account for the generality, the proof proceeds in the same way as in \cite[Theorem 6.11]{antieauBrauerGroupsEtale2014}. We first collect the following useful lemma.

\begin{lemma}\label{lem:the-one-about-compacts}
  Let $\cc{K} \in \twoCAlgrig$, and $\cc{M} \in \twoModbig_{\cc{K}}$ any module over $\Ind(\cc{K})$.
  \begin{enumerate}
  \item Given any map $\cc{K} \to \cc{L} \in \twoCAlgrig$, the induced unit map $\cc{M} \to \Ind(\cc{L}) \otimes_{\Ind(\cc{K})}\cc{M}$ preserves compact objects.
  \item If $\cc{M}$ is compactly generated, then the $\Ind(\cc{K})$-module structure on $\cc{M}$ is induced from a $\cc{K}$-module structure on its subcategory of compact objects. In other words, $\cc{M}$ is in the essential image of the functor $\Ind \colon \Mod_{\cc{K}}(\Catperf) \to \twoModbig_{\cc{K}}$.
  \end{enumerate}
\end{lemma}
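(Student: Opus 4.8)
The plan is to prove the two parts of \cref{lem:the-one-about-compacts} by reducing everything to statements about dualizable objects and the rigidity hypotheses established in \cref{ssec:2rings_rigidification}.

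For part (1), the key observation is that since $\cc{K}$ is rigid, every object of $\cc{K} \simeq \Ind(\cc{K})^{\omega}$ is dualizable in $\Ind(\cc{K})$ by \cref{lem:rigidlycompactlygenerated}. The unit map $\cc{M} \to \Ind(\cc{L}) \otimes_{\Ind(\cc{K})} \cc{M}$ is $\Ind(\cc{K})$-linear and admits a right adjoint, so I want to argue that this right adjoint commutes with colimits; by \cite[Proposition 5.5.7.2]{LurieHTT}, the existence of a colimit-preserving right adjoint is equivalent to the left adjoint preserving compact objects. To see the right adjoint is colimit-preserving, I would use that $\Ind(\cc{L})$ is dualizable as an $\Ind(\cc{K})$-module: indeed $\Ind(\cc{L}) = \Ind(\cc{K}/\ker f)$ for some ideal when $f$ is a localization, but for a general map $\cc{K}\to\cc{L}$ of rigid 2-rings one instead uses that $\Ind(\cc{L})$ is rigidly compactly generated over $\Ind(\cc{K})$ — more precisely, one uses the projection formula. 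The cleanest route: the right adjoint $R$ to $\cc{M} \to \Ind(\cc{L})\otimes_{\Ind(\cc{K})}\cc{M}$ is computed by restriction of scalars along the unit $\unit_{\Ind(\cc{K})}\to \Ind(\cc{L})$ (regarded as an algebra), and since $\Ind(\cc{L})$ is dualizable over $\Ind(\cc{K})$ — its underlying object lies in the thick subcategory of the unit only after $\Ind$, but dualizability suffices — restriction of scalars commutes with all colimits. Hence the left adjoint preserves compacts. I expect this to be the main obstacle: pinning down precisely why $\Ind(\cc{L})$ is dualizable (not just rigid internally) as an object of $\twoModbig_{\cc{K}}$, which should follow from \cref{lem:rigidlycompactlygenerated} together with the fact that a rigid symmetric monoidal presentable stable category is self-dual, but the relative statement over $\Ind(\cc{K})$ needs a small argument via the counit/unit of the free-forgetful adjunction $\Ind(\cc{L})\otimes_{\Ind(\cc{K})}(-) \dashv \mathrm{fgt}$.

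For part (2), suppose $\cc{M}$ is compactly generated, so $\cc{M} \simeq \Ind(\cc{M}^{\omega})$. I want to show the $\Ind(\cc{K})$-action restricts to a $\cc{K}$-action on $\cc{M}^{\omega}$, i.e.\ that tensoring with a compact object of $\Ind(\cc{K})$ preserves compact objects of $\cc{M}$. But a compact object of $\Ind(\cc{K})$ is an object of $\cc{K}$, hence dualizable in $\Ind(\cc{K})$ by rigidity; tensoring with a dualizable object is a left adjoint whose right adjoint is tensoring with the dual, which also preserves colimits, so tensoring with a dualizable object preserves compact objects. Therefore the action functor $\cc{K} \times \cc{M}^{\omega} \to \cc{M}$ lands in $\cc{M}^{\omega}$ and is exact in each variable, exhibiting $\cc{M}^{\omega} \in \Mod_{\cc{K}}(\Catperf)$; applying $\Ind$ and using that $\Ind$ is symmetric monoidal and $\Ind(\cc{M}^{\omega}) \simeq \cc{M}$ recovers the original module. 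The only point requiring care is the compatibility of the recovered $\cc{K}$-module structure with the $\Ind(\cc{K})$-module structure under $\Ind$, which follows formally from the universal property of $\Ind$ as the free colimit-completion preserving the relevant symmetric monoidal structure (\cref{rem:bigandsmall}) and the fact that both structures agree on the compact objects by construction.
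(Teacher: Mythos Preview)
Your argument for part~(b) is essentially identical to the paper's: both show that tensoring a compact $m \in \cc{M}^{\omega}$ with a dualizable $a \in \cc{K}$ stays compact, using that $(a\otimes -)\dashv(a^{\vee}\otimes -)$ and the right adjoint preserves colimits. The paper actually proves this claim first and uses it to establish~(b) before~(a).

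For part~(a) you take a genuinely different route from the paper, and your idea---that the right adjoint to the unit map preserves colimits, hence the unit preserves compacts---is correct. However, the justification you give is misdirected. You invoke dualizability of $\Ind(\cc{L})$ as an $\Ind(\cc{K})$-module in $\PrL$; this is a nontrivial statement not established in the paper, and it is entirely unnecessary. Once you identify $\Ind(\cc{L})\otimes_{\Ind(\cc{K})}\cc{M}\simeq \Mod_{\Ind(\cc{L})}(\cc{M})$ (as the paper does via \cite[Proposition~4.1]{ben-zviIntegralTransformsDrinfeld2010a}), the right adjoint to the unit is simply the forgetful functor from $\Ind(\cc{L})$-modules in $\cc{M}$ back to $\cc{M}$, and forgetful functors from module categories \emph{always} preserve colimits---no dualizability needed. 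So your ``main obstacle'' evaporates once you drop the dualizability detour.

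The paper's own approach to~(a) is different again: having already shown in the course of proving~(b) that the $\Ind(\cc{K})$-action on~$\cc{M}$ lifts to $\PrLstomega$, it uses the symmetric monoidal equivalence $\Ind\colon \Mod_{\cc{K}}(\Catperf)\simeq \Mod_{\Ind(\cc{K})}(\PrLstomega)$ to identify $\Ind(\cc{L})\otimes_{\Ind(\cc{K})}\cc{M}$ with $\Ind(\cc{L}\otimes_{\cc{K}}\cc{M}^{\omega})$, and the unit map with $\Ind$ applied to the small unit $\cc{M}^{\omega}\to\cc{L}\otimes_{\cc{K}}\cc{M}^{\omega}$, which visibly preserves compacts. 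Your route, once cleaned up, is arguably more direct and does not require first establishing~(b).
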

\begin{proof}
We first claim that given any object $a \in \cc{K}$ and $m \in \cc{M}^{\omega}$, the object $a \otimes m \in \cc{M}^{\omega}$. Note first that the duality datum on $a$ furnishes an adjunction $(a \otimes -) \dashv (a^{\vee}\otimes - )$ of endofunctors of $\cc{M}$. Given an arbitrary filtered system $F\colon I \to \cc{M}$, the left-hand arrow in the following diagram is an equivalence
    \[\xymatrix{
        \Map_{\cc{M}}(a \otimes m, \varinjlim_{I}F_{i}) \ar[r]^{\sim} \ar[d] & \Map_{\cc{M}}(m, a^{\vee} \otimes \varinjlim_{I}F_{i}) \ar[d] \\
        \varinjlim_{I} \Map_{\cc{M}}(a \otimes m, F_{i}) \ar[r]^{\sim} &  \varinjlim_{I} \Map_{\cc{M}}(m, a^{\vee} \otimes F_{i})} 
      \]
      since the right-hand vertical arrow is an equivalence by the compactness of $m$. Thus, $a \otimes m$ must also have been compact. 
      
      From the above, it follows that the action map $\Ind(\cc{K}) \otimes \cc{M} \to \cc{M}$ lifted to a morphism in $\PrLstomega$, whence $\cc{M} \in \Mod_{\Ind(\cc{K})}(\PrLstomega)$. The symmetric monoidal equivalence of \cref{rem:bigandsmall} furnishes an induced symmetric monoidal equivalence \[\Ind\colon \Mod_{\cc{K}}(\Catperf) \simeq \Mod_{\Ind(\cc{K})}(\PrLstomega)\] and part (b) follows.

      For part (a), note that the symmetric monoidal equivalence above supplies an equivalence $\Ind(\cc{L} \otimes_{\cc{K}} \cc{M}^{\omega}) \simarrow \Ind(\cc{L}) \otimes_{\Ind(\cc{K})} \cc{M}$ via the counit map, and furthermore an identification of the unit map $\cc{M} \to \Ind(\cc{L} \otimes_{\cc{K}} \cc{M}^{\omega})$ with the application of $\Ind$ to the unit map $\cc{M}^{\omega} \to \cc{L}\otimes_{\cc{K}} \cc{M}^{\omega}$, yielding the result. 
\end{proof}

\begin{proposition}\label{prop:zariskilocaltoglobal}
  Let $\cc{K} \in \twoCAlg^{\rig}$, and $\cc{M} \in \twoModbig_{\cc{K}}$. Suppose that there exists a Zariski cover $\coprod_{I}\Spec \cc{K}_{i} \twoheadrightarrow \Spec \cc{K}$ such that $\prod_{I}\cc{O_{M}}(\Spec \cc{K}_{i})$ admits a compact generator. Then $\cc{M}$ admits a compact generator. 
\end{proposition}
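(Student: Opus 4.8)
The strategy is to reduce the global statement to a local one by induction on the size of the cover, following the Zariski local-to-global template of \cite[Theorem 6.11]{antieauBrauerGroupsEtale2014} but phrased entirely in terms of the Zariski geometry on $\twoCAlg$ developed above. First I would reduce to the case of a cover by two opens. Since $\Spec \cc{K}$ is a spectral space (it is $\Shv(\Spc \cc{K})$ by \cref{thm:2zariski_balmerspectrum}), any Zariski cover $\coprod_{I}\Spec \cc{K}_{i} \twoheadrightarrow \Spec \cc{K}$ may be refined to a finite one, and then reorganized: write $\Spec \cc{K} = U \cup V$ where $U$ is one of the basic opens $\Spec \cc{K}_{1}$ and $V$ is the union of the rest; then $V$ and $U \cap V$ are each covered by strictly fewer opens. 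The passage from the sections over the pieces of the cover to the restrictions over the pair $(U, V)$ is exactly the Mayer--Vietoris square that \cref{thm:2MODisasheaf}(a) provides: $\cc{O_{M}}$ is a $\twoModbig_{\cc{K}}$-valued sheaf on $\Spec \cc{K}$, so $\cc{M} = \cc{O_{M}}(\Spec \cc{K}) \simeq \cc{O_{M}}(U) \times_{\cc{O_{M}}(U \cap V)} \cc{O_{M}}(V)$, a pullback in $\PrL$ of compactly generated categories. So I am reduced to the following gluing assertion: if $\cc{M} \simeq \cc{M}_{1} \times_{\cc{M}_{12}} \cc{M}_{2}$ is a pullback of $\Ind(\cc{K})$-modules along localizations (Karoubi quotients away from principal ideals, by \cref{lem:tensormoduleswithlocs} and its corollary), and $\cc{M}_{1} \times \cc{M}_{2}$ admits a compact generator, then so does $\cc{M}$ — using inductively that $\cc{M}_{2}$, $\cc{M}_{12}$ are compactly generated because they are built over $V$, $U\cap V$ which are covered by fewer opens.

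The heart of the argument is then the two-open gluing step. Let $L_{1}, L_{2} \in \Idem_{\cc{K}}^{\mm{fin}}$ be the finite idempotents corresponding to the ideals $\cc{I}_{1}, \cc{I}_{2}$ cutting out the complements of $U, V$, so $L_{1} \otimes L_{2}$ corresponds to $\sqrt{\cc{I}_{1} \vee \cc{I}_{2}}$ and $\cc{I}_{1} \wedge \cc{I}_{2} \subseteq \sqrt{0}$ (this is what ``$U \cup V = \Spec\cc{K}$'' unwinds to, via \cref{lem:radicalgeneration} and \cref{lem:intersectinglocalizingsubs}). As in \cite[Theorem 6.11]{antieauBrauerGroupsEtale2014}, pick a compact generator $g_{i}$ of $\cc{M}_{i}$ (for $i=1,2$), and consider the objects $G_{i}$ of $\cc{M}$ obtained by ``extending'' $g_{i}$ across the pullback: explicitly, using the right adjoint description of $\cc{M} \to \cc{M}/\cc{I}_{i}$, one can lift $g_{1}$ to $\cc{M}$ via the fully faithful right adjoint to $\cc{M} \to \cc{M}_{1} = \cc{M}/\cc{I}_{1}$, obtaining a compact object of $\cc{M}$ (compactness is \cref{lem:the-one-about-compacts}(a): the localization $\cc{M} \to \cc{M}/\cc{I}_{i}$ preserves compacts — one checks this since its right adjoint preserves filtered colimits, the localization being smashing by \cref{prop:finitelocissmashing}). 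Alternatively, and more symmetrically, I would take a single object of the form $G := (g_{1}, g_{2}', \alpha)$ in the pullback presentation, where $g_{2}'$ is chosen so that $L_{1}\otimes L_{2}\otimes g_{1} \simeq L_{1}\otimes L_{2}\otimes g_{2}'$ — such a choice is possible after replacing $g_{2}$ by a suitable coproduct of shifts, since $\cc{M}_{12}$ is compactly generated by $L_{1}\otimes L_{2}\otimes g_{1}$ (its image under a localization of $\cc{M}_{1}$) and by the image of $g_{2}$ — then $G$ is compact in $\cc{M}$ by \cref{lem:the-one-about-compacts}, and its images generate $\cc{M}_{1}$ and $\cc{M}_{2}$.

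Then I would check that $G$ (together, if necessary, with finitely many of its twists $a \otimes G$ for $a \in \cc{K}^{\omega}$, which remain compact by the adjunction $(a\otimes -)\dashv (a^{\vee}\otimes -)$ as in the proof of \cref{lem:the-one-about-compacts}) generates $\cc{M}$ in the sense of \cref{rec:compactgeneration}(a$'$): if $m \in \cc{M}$ satisfies $\Hom_{\cc{M}}(a \otimes G, m) \simeq 0$ for all $a$, then its images $L_{1}\otimes m \in \cc{M}_{1}$ and $L_{2}\otimes m \in \cc{M}_{2}$ are killed by the images of $G$ there — using that $\cc{M}_{i}\to \cc{M}$ is fully faithful on the right adjoint side and that tensoring commutes appropriately — hence $L_{1}\otimes m \simeq 0$ and $L_{2}\otimes m \simeq 0$ by compact generation of $\cc{M}_{1},\cc{M}_{2}$; but then $m$ lies in $\ker(\cc{M}\to\cc{M}/\cc{I}_{1}) \cap \ker(\cc{M}\to\cc{M}/\cc{I}_{2}) = \cc{M}_{\cc{I}_{1}} \cap \cc{M}_{\cc{I}_{2}}$, which by the module version of \cref{lem:intersectinglocalizingsubs} (applied to $\cc{I}_{1}\wedge\cc{I}_{2}\subseteq\sqrt 0$) is the zero subcategory, so $m \simeq 0$. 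Finally, by \cref{lem:the-one-about-compacts}(b), a compactly generated module is automatically in the image of $\Ind$ from $\Mod_{\cc{K}}(\Catperf)$, so this is genuinely the right notion. The main obstacle I anticipate is the bookkeeping in producing a \emph{single} compact generator rather than a compact generating \emph{set} — matching the $\cc{M}_{1}$- and $\cc{M}_{2}$-generators along $\cc{M}_{12}$ requires either allowing infinite coproducts of shifts and then absorbing them (which is fine since a compact object together with arbitrary coproducts of its shifts is still "one generator" only if one is careful) or, cleanly, observing that $\cc{M}$ admitting a compact \emph{set} of generators already suffices for the inductive step and the reduction, and that a compactly generated presentable stable $\infty$-category over $\cc{K}$ with a finite generating set of compacts has a single compact generator by taking the direct sum; I would spell out this last reduction carefully, as it is where the analogous argument in \cite{antieauBrauerGroupsEtale2014} does real work.
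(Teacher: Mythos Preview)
Your overall strategy matches the paper's, but the construction of the compact generator has two real gaps. First, lifting $g_1$ via the fully faithful right adjoint $\cc{M}/\cc{I}_1 \hookrightarrow \cc{M}$ does not give a compact object: \cref{lem:the-one-about-compacts}(a) says the \emph{localization} preserves compacts, not its right adjoint. (What the paper actually proves and uses is that an object of $\cc{M}$ with compact restrictions to both $\cc{M}/\cc{I}_i$ is itself compact, via the Cartesian square on mapping spaces coming from \eqref{eq:patchingsquare}.) Second, and more seriously, your glued object $(g_1, g_2', \alpha)$ runs into the Thomason obstruction you half-anticipate at the end: the map $(\cc{M}/\cc{I}_2)^\omega \to (\cc{M}/\langle\cc{I}_1,\cc{I}_2\rangle)^\omega$ is essentially surjective only up to idempotent completion, so the image of $g_1$ need not lift to a compact object of $\cc{M}/\cc{I}_2$, and infinite coproducts of shifts are not compact.

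The paper resolves this with Thomason's $K_0$ trick: replace $P_1$ by $P_1 \oplus \Sigma P_1$, whose $K_0$-class vanishes, so by \cite[Theorem~A.3.2]{calmesHermitianKtheoryStable2025} it lifts to some compact $P_1' \in (\cc{M}/\cc{I}_2)^\omega$; gluing $(P_1 \oplus \Sigma P_1, P_1')$ yields a compact $P \in \cc{M}$ whose image generates $\cc{M}/\cc{I}_1$. But $P_1'$ has no reason to generate $\cc{M}/\cc{I}_2$, so your symmetric vanishing argument cannot conclude. The paper instead works asymmetrically: it builds a second compact $Q \in \cc{M}$ generating the kernel $\cc{M}_{\cc{I}_1}$---the Mayer--Vietoris square identifies $\cc{M}_{\cc{I}_1} \simeq (\cc{M}/\cc{I}_2)_{\cc{I}_1}$, which is compactly generated by $a_1 \otimes P_2$ (with $a_1$ a generator of the principal ideal $\cc{I}_1$), and this lifts to $\cc{M}^\omega$ since its restriction to $\cc{M}/\cc{I}_1$ is zero. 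Then $\Hom(P \oplus Q, N) = 0$ forces $N|_{\cc{M}/\cc{I}_1} = 0$ (from $P$), hence $N \in \cc{M}_{\cc{I}_1}$, hence $N = 0$ (from $Q$).
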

\begin{proof}
  Note that the case where $|I| \leq 1$ is trivial. By induction, it suffices to demonstrate the claim in the case $|I| = 2$. To this end, let the open cover $\cc{K} \to \cc{K}_{1} \times \cc{K}_{2}$ be given by Karoubi quotients away from principal ideals $\cc{I}_{1}, \cc{I}_{2}$.  In the notation of the proof of \cref{thm:2MODisasheaf}, the demonstrated theorem yields a Cartesian square of the form
    \begin{equation}\label{eq:patchingsquare}
    \begin{aligned}
    \xymatrix{
        \cc{M} \ar[r] \ar[d] & \cc{M}/\cc{I}_{2} \ar[d] \\
        \cc{M}/\cc{I}_{1} \ar[r] & \cc{M}/\langle \cc{I}_{1}, \cc{I}_{2}\rangle
      }
    \end{aligned}
    \end{equation}
    where $\cc{M}/\cc{I}_{1},\ \cc{M}/\cc{I}_{2}$ admit compact generators $P_{1}, P_{2}$ by assumption, and the image of $P_{1}$ generates $\cc{M}/\langle \cc{I}_{1}, \cc{I}_{2}\rangle$ since the bottom horizontal arrow is a localization.
    
    By \cref{lem:the-one-about-compacts}, the right hand vertical map and the bottom horizontal maps above preserve compact objects. Using this, we claim that any object $M\in \cc{M}$ which restricts to compact objects $M_{i} \in (\cc{M}/\cc{I}_{i})^{\omega}$ for $i = 1,2$ must itself be in $\cc{M}^{\omega}$. To see this, note that \eqref{eq:patchingsquare} yields a Cartesian square
    \[\xymatrix{
        \Map_{\cc{M}}(M, M') \ar[r] \ar[d] &  \Map_{\cc{M}/\cc{I}_{1}}(M_{1}, M_{1}') \ar[d] \\
        \Map_{\cc{M}/\cc{I}_{2}}(M_{2}, M_{2}') \ar[r] & \Map_{\cc{M}/\langle \cc{I}_{1}, \cc{I}_{2}\rangle}(M_{12}, M_{12}')
      }
      \] for every object $M^{\prime} \in \cc{M}$, which is moreover natural in the choice of $M'$. The left-exactness of filtered colimits implies that $\Map_{\cc{M}}(M, -)$ commutes with filtered colimits if all other vertices of the square do so. Since the right-hand vertical map in \eqref{eq:patchingsquare} preserves compactness, $M_{12}$ is compact and hence $M$ is compact.
      
      Now note that $\cc{I}_{1}, \cc{I}_{2}$ are assumed to be principal, generated by elements $a_{1}, a_{2}$ and hence $\Ind(\cc{I}_{1}), \Ind(\cc{I}_{2})$ admit compact generators $a_{1}, a_{2}$. Thus, the image of $\cc{I}_{1} \times \cc{M}/\cc{I}_{2} \to \cc{M}/\cc{I}_{2}$ must be generated by the object $a_{1} \otimes P_{2}$, and the symmetric fact holds in the case where $1,2$ are swapped. With this in mind, we may proceed to construct a compact generator of $\cc{M}$.  Consider the following steps.

      \begin{enumerate}
      \item  Passing to vertical fibers in the Cartesian square \eqref{eq:patchingsquare} we see that the object $a_{2} \otimes P_{2}\in (\cc{M}/\cc{I}_{2})_{\cc{I}_{1}}$ lifts to a generator $Q$ of the subcategory $\cc{M}_{\cc{I}_{1}} \subseteq \cc{M}$. $Q \in \cc{M}^{\omega}$ as it has compact image in $\cc{M}/\cc{I}_{2}$ and image $0$ (hence compact) in $\cc{M}/\cc{I}_{1}$. 
      \item  By a theorem of Thomason \cite[Theorem A.3.2]{calmesHermitianKtheoryStable2025}, the image of  $P_{1} \oplus \Sigma P_{1}$ in $\cc{M}/\langle \cc{I}_{1}, \cc{I}_{2}\rangle$ is in the image of the localization map from $(\cc{M}/\cc{I}_{2})^{\omega}$, as it has a trivial class in $K_{0}$. Let $P_{1}' \in \cc{M}/\cc{I}_{2}$ be some choice of lift of this object. The objects $P_{1} \oplus \Sigma P_{1}, P'_{1}$ have equivalent images in $(\cc{M}/\langle \cc{I}_{1}, \cc{I}_{2}\rangle)^{\omega}$ by construction, and the Cartesian square of \eqref{eq:patchingsquare} supplies an object $P \in \cc{M}^{\omega}$ which lifts the objects $P_{1} \oplus \Sigma P_{1}, P'_{1}$.
      \end{enumerate}

We claim that the object $P \oplus Q \in \cc{M}^{\omega}$ generates $\cc{M}$. Let $N \in \cc{M}$ be any object such that $\Map_{\cc{M}}(P \oplus Q, N) = 0$. Then on the one hand, $\Map_{\cc{M}/\langle{I}_{1}}(P_{1} \oplus \Sigma P_{1}, N) = 0$ and thus $N$ has trivial restriction in $\cc{M}/\cc{I}_{1}$, implying that $N \subseteq \cc{M}_{\cc{I}_{1}}$. As $Q$ compactly generates the latter, $\Map_{\cc{M}_{\cc{I}_{1}}}(Q, N) = 0$ implies $N = 0$.
\end{proof}

We now turn to the proof of \hyperref[thmalph:modb]{\cref*{thmalph:mod}\,(b)}.
We first define the principal object of concern. 

\begin{construction}\label{xf0qhf}
  Recall the co-Cartesian fibration
  ${\Mod(\PrLst)}^{\otimes}\to \CAlg(\PrLst) \times
  \Fin_{\ast}$ of~\cite[Theorem~4.5.3.1]{LurieHA},
  which is classified by the
  functor
  \begin{equation*}
    {\Mod}\in \Fun(\CAlg(\PrLst) \times \Fin_{\ast}, \Catbig) \simeq
    \Fun(\CAlg(\PrL), \Fun(\Fin_{\ast}, \Catbig))
  \end{equation*}
  sending a presentably symmetric monoidal category $\cc{C}$
  to the commutative monoid object $\Mod_{\cc{C}}(\PrLst)$.
  Let $\mdef{\Mod^{\mm{cg}}(\PrLst)^{\otimes}}\subseteq \Mod(\PrLst)^{\otimes}$
  denote the full subcategory spanned by objects $(\cc{C}, \cc{M}_{1},\dotsc,\cc{M}_{n})$
  (\(\cc{C}\in\CAlg(\PrLst)\) and \(\cc{M}_{1}\), \dots, \(\cc{M}_{n}\in\Mod_{\cc{C}}(\PrLst)\))
  such that
  \(\cc{C}\) is rigidly compactly generated
  and the underlying \(\infty\)-categories of~\(\cc{M}_{1}\), \dots,~\(\cc{M}_{n}\)
  are compactly generated.
\end{construction}

\begin{remark}\label{Z-1miv}
  In the definition of \(\Mod^{\mm{cg}}(\PrLst)\) in \cref{xf0qhf},
  the rigidity assumption is important;
  if we just impose \(\cc{C}\in\CAlg(\PrLstomega)\),
  \cref{xwpbfz} below does not hold.
\end{remark}

\begin{lemma}\label{xwpbfz}
  The induced functor
  \begin{equation*}
    {\Mod^{\mm{cg}}(\PrLst)}^{\otimes}
    \to{\CAlg(\PrLstomega)}^{\rig} \times \Fin_{\ast}
  \end{equation*}
  is again a co-Cartesian fibration.
\end{lemma}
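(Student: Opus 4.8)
The statement to prove is that the functor $\Mod^{\mm{cg}}(\PrLst)^{\otimes} \to \CAlg(\PrLstomega)^{\rig} \times \Fin_{\ast}$ is a co-Cartesian fibration. Since $\Mod^{\mm{cg}}(\PrLst)^{\otimes}$ is by definition a full subcategory of $\Mod(\PrLst)^{\otimes}$ and the latter is already a co-Cartesian fibration over $\CAlg(\PrLst) \times \Fin_{\ast}$ by \cite[Theorem~4.5.3.1]{LurieHA}, the strategy is the standard one for restricting a (co-Cartesian) fibration to a full subcategory of the base and a full subcategory of the total space: one checks that (i) the inclusion $\CAlg(\PrLstomega)^{\rig} \times \Fin_{\ast} \hookrightarrow \CAlg(\PrLst) \times \Fin_{\ast}$ lands in the base, which is automatic, and (ii) for every object $X = (\cc{C}, \cc{M}_1, \dots, \cc{M}_n) \in \Mod^{\mm{cg}}(\PrLst)^{\otimes}$ and every morphism $(\phi, \psi) \colon (\cc{C}, \langle n \rangle) \to (\cc{D}, \langle m \rangle)$ in $\CAlg(\PrLstomega)^{\rig} \times \Fin_{\ast}$, the $p$-co-Cartesian lift of $(\phi,\psi)$ starting at $X$ (which exists in $\Mod(\PrLst)^{\otimes}$) has its target again in the full subcategory $\Mod^{\mm{cg}}(\PrLst)^{\otimes}$. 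Granting this, that co-Cartesian lift is automatically co-Cartesian for the restricted fibration as well (a co-Cartesian edge over a full subcategory of the base whose target lies in a full subcategory of the total space remains co-Cartesian there), and one concludes via \cite[Proposition~2.4.2.3(2)]{LurieHTT} or a direct verification.

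\textbf{Key steps.} The essential content, then, is to identify the co-Cartesian pushforward in $\Mod(\PrLst)^{\otimes}$ along such a morphism and show it preserves the relevant finiteness conditions. Unwinding the description of the co-Cartesian fibration ${\Mod(\PrLst)}^{\otimes}\to \CAlg(\PrLst)\times\Fin_{\ast}$, a co-Cartesian lift of a morphism $(\phi \colon \cc{C}\to\cc{D},\ \psi\colon\langle n\rangle\to\langle m\rangle)$ sends the tuple $(\cc{M}_1,\dots,\cc{M}_n)$ of $\cc{C}$-modules to the tuple of $\cc{D}$-modules whose $j$-th entry is $\cc{D}\otimes_{\cc{C}}\bigl(\bigotimes_{i\in\psi^{-1}(j)}^{\cc{C}}\cc{M}_i\bigr)$, the relative tensor products and base change being taken in $\PrLst$ (this is the multiplicativity of the Lurie tensor product on module categories, \cite[\S4.5.3]{LurieHA}). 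Thus the claim reduces to the following two stability facts: (a) if $\cc{C}\in\CAlg(\PrLstomega)^{\rig}$ and $\cc{M},\cc{M}'$ are compactly generated $\cc{C}$-modules, then $\cc{M}\otimes_{\cc{C}}\cc{M}'$ is compactly generated; and (b) if moreover $\phi\colon\cc{C}\to\cc{D}$ is a morphism in $\CAlg(\PrLstomega)^{\rig}$ and $\cc{M}$ is a compactly generated $\cc{C}$-module, then $\cc{D}\otimes_{\cc{C}}\cc{M}$ is a compactly generated $\cc{D}$-module. For (b) I would argue that $\cc{D}\otimes_{\cc{C}}\cc{M}\simeq\Mod_{\cc{D}}(\cc{M})$-type base change is presented as a Bousfield localization / quotient of $\Mod_{\Ind(\cdot)}$ along lines analogous to \cref{lem:the-one-about-compacts}: since $\cc{C}$ is rigidly compactly generated and $\phi$ preserves compact objects (being a morphism of $\PrLstomega$), the functor $\cc{M}\to\cc{D}\otimes_{\cc{C}}\cc{M}$ preserves compact objects (by the adjunction argument in \cref{lem:the-one-about-compacts}(1), using dualizability of objects of $\cc{C}^{\omega}=\cc{C}^{\mm{dbl}}$) and the images of a compact generating set of $\cc{M}$ under this colimit-preserving base-change functor generate $\cc{D}\otimes_{\cc{C}}\cc{M}$, using that base change is essentially surjective onto a localizing subcategory closure. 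For (a) the same reasoning applies to $\cc{M}\otimes_{\cc{C}}\cc{M}'$, viewing it as $\cc{M}$ base-changed along the action of $\cc{C}$ on $\cc{M}'$, or symmetrically; the compact objects $m\otimes m'$ for $m\in\cc{M}^{\omega}$, $m'\in\cc{M}'^{\omega}$ form a generating set, and each such is compact because $\cc{C}$ being rigid makes the tensor functors pass compacts to compacts.

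\textbf{Main obstacle.} The crux — and the place Remark~\ref{Z-1miv} is flagging — is precisely why rigidity of $\cc{C}$ (not merely $\cc{C}\in\CAlg(\PrLstomega)$) is needed to conclude that $\cc{M}\otimes_{\cc{C}}\cc{M}'$ and $\cc{D}\otimes_{\cc{C}}\cc{M}$ are compactly generated. The point is that without rigidity, the base-change/localization functor $\cc{M}\to\cc{D}\otimes_{\cc{C}}\cc{M}$ need not preserve compact objects, so the image of a set of compact generators of $\cc{M}$ need not be compact in $\cc{D}\otimes_{\cc{C}}\cc{M}$, and one only gets a generating set of non-compact objects — hence no compact generation. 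With $\cc{C}$ rigidly compactly generated, every object of $\cc{C}^{\omega}$ is dualizable, so tensoring with such an object is both a left and a right adjoint on module categories, which is exactly what makes tensor functors preserve compactness (the argument spelled out in \cref{lem:the-one-about-compacts}(1)). So the proof will isolate this rigidity input as the one genuine ingredient, with everything else being bookkeeping about restricting fibrations to full subcategories. I would write the proof as: (1) recall the explicit form of the co-Cartesian pushforward in $\Mod(\PrLst)^{\otimes}$; (2) invoke the compactness-preservation of base change along rigid bases, citing \cref{lem:the-one-about-compacts} and \cite[\S4.8.5]{LurieHA}; (3) deduce that compact generation is preserved by this pushforward, hence the target lies in $\Mod^{\mm{cg}}(\PrLst)^{\otimes}$; (4) conclude that the restriction remains a co-Cartesian fibration by \cite[Proposition~2.4.2.3]{LurieHTT}.
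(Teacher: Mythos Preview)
Your proposal is correct and follows essentially the same approach as the paper: reduce to showing that co-Cartesian lifts in $\Mod(\PrLst)^{\otimes}$ have target in the full subcategory, identify the pushforward as a relative tensor product over~$\cc{C}$, and invoke rigidity of~$\cc{C}$ to preserve compact generation. The paper's execution is slightly more streamlined: it reduces your case~(b) to case~(a) by viewing $\cc{D}$ itself as a compactly generated $\cc{C}$-module, and then dispatches~(a) in one line via the identification $\cc{M}_1\otimes_{\cc{C}}\cc{M}_2\simeq\Ind(\cc{M}_1^{\omega}\otimes_{\cc{C}^{\omega}}\cc{M}_2^{\omega})$ coming from the symmetric monoidal equivalence $\Catperf\simeq\PrLstomega$ (once one knows the $\cc{C}^{\omega}$-action restricts to $\cc{M}_i^{\omega}$, which is exactly the rigidity input you isolate). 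This $\Ind$ formula simultaneously handles both the compactness of $m\otimes m'$ and the generation claim, making your somewhat informal ``images generate under localizing closure'' step for~(b) unnecessary.
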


\begin{proof}
  % N-017794
  We prove that it is full on co-Cartesian morphisms.
  We consider an object \((\cc{C},\cc{M}_{1},\dotsc,\cc{M}_{m})\in \Mod^{\mm{cg}}(\PrLst)\),
  a morphism \(F\colon\cc{C}\to\cc{D}\) in \(\CAlg(\PrLstomega)^{\rig}\),
  and a morphism \(f\colon\langle m\rangle\to\langle n\rangle\) in \(\Fin_{\ast}\).
  We then have to show that the co-Cartesian lift in \(\Mod(\PrLst)^{\otimes}\),
  which exists by~\cite[Theorem~4.5.3.1]{LurieHA},
  lies in \(\Mod^{\mm{cg}}(\PrLst)^{\otimes}\).
  By unpacking the definition,
  we must show that for each \(i\in\langle n\rangle^{\circ}\),
  % Z-8856
  \begin{equation*}
    \cc{D}\otimes_{\cc{C}}\biggl(\bigotimes_{j\in f^{-1}(i),\cc{C}}\cc{M}_{j}\biggr)
  \end{equation*}
  is compactly generated.
  We only have to consider the following cases,
  where \(i\) is the unique element in \(\langle1\rangle^{\circ}\).
  \begin{enumerate}
    \item
      \(F\) is an equivalence and
      \(f\) is the active morphism \(\langle0\rangle\to\langle1\rangle\).
    \item
      \(F\) is an equivalence and
      \(f\) is the active morphism \(\langle2\rangle\to\langle1\rangle\).
    \item
      \(f\) is the identity \(\langle1\rangle\to\langle1\rangle\).
  \end{enumerate}
  The first case is clear
  and the third case is reduced to the content of the second case.

  Therefore,
  we are reduced to showing that
  for \((\cc{C},\cc{M}_{1},\cc{M}_{2})\in \Mod^{\mm{cg}}(\PrLst)^{\otimes}\),
  the presentable \(\infty\)-category
  \(\cc{M}_{1}\otimes_{\cc{C}}\cc{M}_{2}\) is compactly generated.
  First, we claim that the action of~\(\cc{C}^{\omega}\) on~\(\cc{M}_{i}\)
  restricts to \(\cc{M}_{i}^{\omega}\).
  Consider an object \(c\in\cc{C}^{\omega}\).
  We need to show that \(c\otimes{-}\colon\cc{M}_{i}\to\cc{M}_{i}\)
  preserves compact objects.
  This follows from the observation that its right adjoint \(c^{\vee}\otimes{-}\)
  also preserves colimits.
  Therefore, \(\cc{M}_{1}\otimes_{\cc{C}}\cc{M}_{2}\)
  is equivalent
  to \(\Ind(\cc{M}_{1}^{\omega}\otimes_{\cc{C}^{\omega}}\cc{M}_{2}^{\omega})\),
  and hence compactly generated,
  where inside \(\Ind\) we consider the relative tensor product in \(\Catperf\).
\end{proof}

\begin{definition}
  Recall that by \cref{lem:rigidlycompactlygenerated}
  the functor $\Ind$ induces an equivalence between
  $\twoCAlgrig$ and the full subcategory $\CAlg(\PrLstomega)^{\rig} \subseteq \CAlg(\PrLstomega)$.
  By \cref{xwpbfz},
  we obtain a co-Cartesian fibration
  \begin{equation*}
    \Mod^{\mm{cg}} \to \twoCAlgrig \times \Fin_{\ast}
  \end{equation*}
  which is classified by a functor
  \begin{equation*}
    \mdef{\twoModcg}\in\Fun(\twoCAlgrig, \Fun(\Fin_{\ast}, \Catbig))
    \simeq\Fun(\twoCAlgrig, \Fun(\Fin_{\ast}, \Catbig)).
  \end{equation*}
  Moreover, $\twoModcg$ lifts
  to a functor $\twoCAlgrig \to \CAlg^{\times}(\Catbig) \subseteq \Fun(\Fin_{\ast}, \Catbig)$,
  as it is a full subfunctor of $\twoModbig \coloneq
  \Mod \circ \Ind$ which did so
  by~\cite[Theorem~4.5.3.1]{LurieHA}.
  Explicitly, this functor sends a rigid $2$-ring $\cc{K}$
  to the full symmetric monoidal subcategory of
  $\twoModbig_{\cc{K}}$ consisting of modules~$\cc{M}$
  whose underlying presentable $\infty$-categories are compactly generated. 
\end{definition}

We now prove \hyperref[thmalph:modb]{\cref*{thmalph:mod}\,(b)}.

\begin{theorem}\label{thm:2modcgisasheaf} Let $\cc{K}\in \twoCAlgrig$. The assignment sending a quasicompact open  $U \subseteq |\! \Spec \cc{K}|$ to $\twoModcg_{\cO_{\cc{K}}(U)}$ extends to a $\CAlg(\Catbig)$-valued sheaf on $\Spec \cc{K}$.
\end{theorem}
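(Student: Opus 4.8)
The plan is to feed the assignment into the reduction schema of \cref{prop:sheavesonspec}, applied with $\cc{C}=\twoCAlgrig$ (which is closed under finite products and, by \cref{lem:rigidradicalquotient}, under Zariski covers), $\cc{D}=\CAlg(\Catbig)$, and $\cc{F}=\twoModcg$. Since $\cO_{\cc{K}}(U)\simeq\cc{K}/\cc{I}$ for the radical ideal $\cc{I}$ corresponding to the quasicompact open $U$ by \cref{thm:rigid_subcanonical} and \cref{obs:identifyingOK}, the functor $U\mapsto\twoModcg_{\cO_{\cc{K}}(U)}$ is precisely the one produced by \cref{prop:sheavesonspec} from $\cc{F}=\twoModcg$. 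Writing $\cc{M}/\cc{J}$ for $\Ind(\cc{L}/\cc{J})\otimes_{\Ind(\cc{L})}\cc{M}$ as in the proof of \cref{thm:2MODisasheaf}, we are thus reduced to showing that for every $\cc{L}\in\twoCAlgrig$ and every Zariski cover $\cc{L}\to\cc{L}_{1}\times\cc{L}_{2}$, with $\cc{L}\to\cc{L}_{i}$ of kernel a principal ideal $\cc{I}_{i}$ and $a_{i}$ a generator of $\cc{I}_{i}$, the square
\[
\xymatrix{
\twoModcg_{\cc{L}} \ar[r]\ar[d] & \twoModcg_{\cc{L}_{1}}\ar[d]\\
\twoModcg_{\cc{L}_{2}} \ar[r] & \twoModcg_{(\cc{L}_{1}\otimes_{\cc{L}}\cc{L}_{2})/\sqrt{0}}
}
\]
is Cartesian. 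As $\twoModcg$ is a full subfunctor of $\twoModbig$ (by construction, cf.\ \cite[Theorem~4.5.3.1]{LurieHA}), and the corresponding square for $\twoModbig$ is already Cartesian by \hyperref[thmalph:modb]{\cref*{thm:2MODisasheaf}\,(b)}, with right adjoint sending a gluing datum $(\cc{M}_{1},\cc{M}_{2},\alpha)$ to $\cc{M}\coloneq\cc{M}_{1}\times_{\cc{M}_{1}/\cc{I}_{2}}\cc{M}_{2}$ computed in $\twoModbig_{\cc{L}}$, it suffices to check that this equivalence identifies the full subcategory of gluing data with $\cc{M}_{1},\cc{M}_{2}$ compactly generated with the full subcategory $\twoModcg_{\cc{L}}\subseteq\twoModbig_{\cc{L}}$ of compactly generated modules.

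One inclusion is immediate: if $\cc{M}\in\twoModcg_{\cc{L}}$, then by \hyperref[lem:the-one-about-compacts]{\cref*{lem:the-one-about-compacts}\,(1)} each base-change (localization) map $\cc{M}\to\cc{M}/\cc{I}_{i}$, $\cc{M}\to(\cc{L}_{1}\otimes_{\cc{L}}\cc{L}_{2})/\sqrt{0}\otimes_{\Ind(\cc{L})}\cc{M}$ preserves compact objects, and since such a localization is essentially surjective the images of a set of compact generators of $\cc{M}$ again form a set of compact generators downstairs; hence all the corner modules are compactly generated.

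For the converse, let $\cc{M}=\cc{M}_{1}\times_{\cc{M}_{1}/\cc{I}_{2}}\cc{M}_{2}$ with $\cc{M}_{1},\cc{M}_{2}$ compactly generated; I would show $\cc{M}$ is compactly generated by running the argument of \cref{prop:zariskilocaltoglobal} with the single compact generators $P_{1}$, $P_{2}$ there replaced by fixed sets $\mathcal{P}_{1}$, $\mathcal{P}_{2}$ of compact generators of $\cc{M}_{1}$, $\cc{M}_{2}$. Concretely: using $\phi_{2}\colon\cc{M}_{\cc{I}_{1}}\simarrow(\cc{M}_{2})_{\cc{I}_{1}}$ and \cref{lem:tensormoduleswithlocs}, the objects $\{a_{1}\otimes P : P\in\mathcal{P}_{2}\}$ compactly generate $\cc{M}_{\cc{I}_{1}}=\ker(\cc{M}\to\cc{M}/\cc{I}_{1})$ and lift to objects $\{Q_{P}\}$ of $\cc{M}$ that are compact there, since their image in $\cc{M}_{1}$ is $0$, their image in $\cc{M}_{2}$ is $a_{1}\otimes P$, and the Cartesian-square plus left-exactness-of-filtered-colimits argument of \cref{prop:zariskilocaltoglobal} then forces compactness in $\cc{M}$; and for each $P_{1}\in\mathcal{P}_{1}$, the triviality of the class of $P_{1}\oplus\Sigma P_{1}$ in $K_{0}$ together with Thomason's theorem \cite[Theorem A.3.2]{calmesHermitianKtheoryStable2025} produces a compact lift $\widetilde{P}_{1}\in\cc{M}$ of $P_{1}\oplus\Sigma P_{1}$. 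The set $\{\widetilde{P}_{1}\}_{P_{1}\in\mathcal{P}_{1}}\cup\{Q_{P}\}_{P\in\mathcal{P}_{2}}$ then compactly generates $\cc{M}$: an object orthogonal to all $\widetilde{P}_{1}$ restricts to $0$ in $\cc{M}_{1}$, hence lies in $\cc{M}_{\cc{I}_{1}}$, and orthogonality to all $Q_{P}$ then forces it to vanish. This gives both inclusions, so the square is Cartesian in $\twoModcg$ and \cref{prop:sheavesonspec} concludes.

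The main obstacle is exactly this converse step: the set-indexed refinement of the Zariski local-to-global principle. One must keep track of the gluing datum (the equivalence $\alpha$, absorbed into the pullback $\cc{M}_{1}\times_{\cc{M}_{1}/\cc{I}_{2}}\cc{M}_{2}$), and, more delicately, perform the $K_{0}$-correction via Thomason's theorem uniformly over a (possibly infinite) generating set while verifying that the resulting lifts are compact in $\cc{M}$ \emph{before} $\cc{M}$ is known to be compactly generated; once the combinatorics of \cref{prop:zariskilocaltoglobal} are organized set-wise, the remaining bookkeeping is routine.
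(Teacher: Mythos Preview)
Your proposal is correct and follows essentially the same route as the paper: reduce via \cref{prop:sheavesonspec}, use that $\twoModcg$ is a full subfunctor of the sheaf $\twoModbig$ from \cref{thm:2MODisasheaf}, and then run the set-indexed version of the local-to-global argument of \cref{prop:zariskilocaltoglobal}. The paper's proof is terser---it only spells out the converse inclusion, taking the forward one as implicit in the construction of $\twoModcg$ as a subfunctor---but the substance is identical, including the Thomason $K_{0}$-correction and the compactness-via-Cartesian-square step.

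One small point worth tightening (present in the paper's argument as well): in the final generation check, the implication ``$N$ orthogonal to all $\widetilde{P}_{1}$ implies $N|_{\cc{M}_{1}}=0$'' does not follow from $P$-orthogonality alone, since the Mayer--Vietoris square for $\Hom_{\cc{M}}(\widetilde{P}_{1},N)$ being zero does not force the corners to vanish. The clean way is to use $Q$-orthogonality first: since each $Q_{P}\in\cc{M}_{\cc{I}_{1}}$ and the inclusion $\cc{M}_{\cc{I}_{1}}\hookrightarrow\cc{M}$ has colimit-preserving right adjoint (smashing), orthogonality to all $Q_{P}$ gives $\Gamma_{\cc{I}_{1}}N=0$, so $N\simeq R_{1}L_{1}N$; then $\Hom_{\cc{M}}(\widetilde{P}_{1},N)\simeq\Hom_{\cc{M}_{1}}(P_{1}\oplus\Sigma P_{1},L_{1}N)$ by adjunction, forcing $L_{1}N=0$ and hence $N=0$. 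Equivalently, argue with the localizing subcategory generated by the $\widetilde{P}_{1}$'s and $Q_{P}$'s. This is a reordering, not a new idea.
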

\begin{proof}
  Let $\cc{L}\in \twoCAlgrig$ be arbitrary. Since $\twoModcg$ is constructed as a full subfunctor of the sheaf defined in \cref{thm:2MODisasheaf}, we are reduced to showing that any object $\cc{M} \in \twoModbig_{\cc{L}}$ which is locally compactly generated must itself be compactly generated; where by \emph{locally compactly generated} we mean that there exists a Zariski cover $\cc{L} \to \cc{L}_{1} \times \dotsb \times \cc{L}_{n}$ for which $\cc{O}_{\cc{M}}(\Spec \cc{L}_{1}) \times \dotsb \times \cc{O}_{\cc{M}}(\Spec \cc{L}_{n})$ is compactly generated. \cref{prop:sheavesonspec} provides a further reduction to the case where $n = 2$.

  Let the kernels of the maps $\cc{L} \to \cc{L}_{1},\ \cc{L} \to \cc{L}_{2}$ be denoted $\cc{I}_{1}, \cc{I}_{2}$. Preserving the notation of \cref{prop:zariskilocaltoglobal}, let $\{m_{i}\}_{I_{1}}$ and $\{m'_{j}\}_{I_{2}}$ be small sets of compact generators of $\cc{M}/\cc{I}_{1}$ and $\cc{M}/\cc{I}_{2}$, respectively. The same construction as in the proof of the cited proposition provides a subset $S\coloneq \{P_{i} \oplus Q_{j}\}_{i \in I_{1}, j\in I_{2}} \subseteq \cc{M}^{\omega}$ such that the collection $\{Q_{j}\}_{j \in I_{2}}$ compactly generates $\cc{M}_{\cc{I}_{1}} \subseteq \cc{M}$ and the image of the collection $\{P_{i}\}_{i \in I_{1}}$ in $\cc{M}/\cc{I}_{1}$ compactly generates the latter. We may now employ the same argument as before to show that $S$ generates $\cc{M}$. 
\end{proof}

\hyperref[thmalph:modc]{\cref*{thmalph:mod}\,(c)} is obtained as a corollary of the above, which concludes this subsection.

\begin{corollary}\label{cor:2modisasheaf}
  Let $\cc{K} \in \twoCAlgrig$. The assignment sending a quasicompact open subset $U \subseteq |\! \Spec \cc{K}|$ to $\Mod_{\cO_{\cc{K}}(U)}(\Catperf)$ extends to a $\CAlg(\PrL)$-valued sheaf on $\Spec \cc{K}$. 
\end{corollary}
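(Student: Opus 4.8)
The statement follows from \cref{thm:2modcgisasheaf} by passing to compact objects, exactly in the spirit of the proof of \cref{thm:rigid_subcanonical} (where one obtained $\widetilde{\cO}_{\cc{K}}$ from $\widetilde{\cO}^{\Ind}_{\cc{K}}$ by applying $(-)^{\mm{dbl}}$). The point is that for a rigid $2$-ring $\cc{K}$ and any quasicompact open $U \subseteq |\!\Spec\cc{K}|$, the structure $2$-ring $\cO_{\cc{K}}(U)$ is again rigid by \cref{lem:rigidradicalquotient}, so that by \cref{lem:rigidlycompactlygenerated} and \cref{lem:the-one-about-compacts}\,(b) one has a natural equivalence $\Mod_{\cO_{\cc{K}}(U)}(\Catperf) \simeq (\twoModcg_{\cO_{\cc{K}}(U)})^{\omega}$, where $(-)^{\omega}$ denotes the fiberwise passage to compact objects (which makes sense since, as noted in the proof of \cref{xwpbfz}, the action of $\cO_{\cc{K}}(U)^{\omega}$ on a compactly generated module restricts to compact objects).

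\textbf{Key steps.} First I would record that $(-)^{\omega}$ assembles into a symmetric monoidal functor $\CAlg^{\times}(\twoModcg_{(-)}) \to \CAlg^{\times}(\Mod_{(-)}(\Catperf))$, i.e.\ that passage to compact objects is functorial and lax symmetric monoidal on the relevant fiberwise categories; this is essentially the equivalence $\Ind \colon \Mod_{\cc{K}}(\Catperf) \simeq \Mod_{\Ind(\cc{K})}(\PrLstomega)$ of \cref{lem:the-one-about-compacts}\,(b), promoted to the level of diagrams of symmetric monoidal $\infty$-categories, together with the fact that $\PrLstomega \hookrightarrow \PrLst$ and $\Ind \colon \Catperf \hookrightarrow \PrLst$ are symmetric monoidal and fully faithful. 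Second, I would observe that $(-)^{\omega}$ so defined commutes with all small limits of $\CAlg(\Catbig)$-valued diagrams on the relevant subcategories: a limit of compactly generated modules computed in $\Mod_{\cc{C}}(\PrL)$ need not be compactly generated, but for the specific Mayer--Vietoris square appearing in the proof of \cref{prop:sheavesonspec}, the limit \emph{is} compactly generated (this is exactly the content of \cref{thm:2modcgisasheaf}), and then the equivalence $(\varprojlim \cc{M}_i)^{\omega} \simeq \varprojlim (\cc{M}_i^{\omega})$ holds because the right adjoints in the pullback square preserve compact objects by \cref{lem:the-one-about-compacts}\,(a), so that a map in $(\varprojlim \cc{M}_i)^{\omega}$ is detected by compactness in each $\cc{M}_i^{\omega}$. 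Applying $(-)^{\omega}$ to the sheaf $U \mapsto \twoModcg_{\cO_{\cc{K}}(U)}$ of \cref{thm:2modcgisasheaf} thus produces the $\CAlg(\Catbig)$-valued sheaf $U \mapsto \Mod_{\cO_{\cc{K}}(U)}(\Catperf)$; the values land in $\CAlg(\PrL) \subseteq \CAlg(\Catbig)$ since $\Mod_{\cO_{\cc{K}}(U)}(\Catperf) \simeq (\Mod_{\Ind(\cO_{\cc{K}}(U))}(\PrL))^{\omega}$ is presentable (it is $\Ind$ of a small idempotent-complete category, hence compactly generated presentable), so the sheaf condition verified in $\CAlg(\Catbig)$ upgrades automatically to one valued in $\CAlg(\PrL)$ because limits in $\CAlg(\PrL)$ are computed in $\CAlg(\Catbig)$.

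\textbf{Main obstacle.} The delicate point is the compatibility of $(-)^{\omega}$ with the specific finite limit (the Mayer--Vietoris pullback) that controls the sheaf condition via \cref{prop:sheavesonspec}; one must not merely invoke ``$(-)^{\omega}$ preserves limits'' (which is false in general for limits of compactly generated presentable categories) but rather exploit that the relevant diagram has already been shown to have compactly generated limit in \cref{thm:2modcgisasheaf}, together with the fact that all structure maps in that square preserve compact objects. Once this is isolated, the verification is a matter of unwinding definitions, using \cref{lem:the-one-about-compacts} and the symmetric monoidal equivalence of \cref{rem:bigandsmall}; I expect the proof to be short, as the substantive analytic work was already done in \cref{thm:indsheaf}, \cref{thm:2MODisasheaf}, and \cref{thm:2modcgisasheaf}.
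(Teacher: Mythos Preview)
Your overall strategy---deduce the result from \cref{thm:2modcgisasheaf}---is the same as the paper's, but your execution contains a conceptual slip that obscures the actual content of the argument.

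The identification $\Mod_{\cO_{\cc{K}}(U)}(\Catperf)\simeq(\twoModcg_{\cO_{\cc{K}}(U)})^{\omega}$ is not correct: there is no reason for $\Mod_{\cc{L}}(\Catperf)$ to be the full subcategory of \emph{compact objects} of the very large $\infty$-category $\twoModcg_{\cc{L}}$. What \cref{lem:the-one-about-compacts}\,(b) actually gives is $\Mod_{\cc{L}}(\Catperf)\simeq\Mod_{\Ind(\cc{L})}(\PrLstomega)$, which sits inside $\twoModcg_{\cc{L}}$ as a \emph{wide}, non-full subcategory (same objects, but only those morphisms that preserve compact objects). Consequently, ``applying $(-)^{\omega}$ to the sheaf $\twoModcg$'' is not a well-defined operation: there is no functor $(-)^{\omega}\colon\twoModcg_{\cc{L}}\to\Mod_{\cc{L}}(\Catperf)$, since a general left adjoint between compactly generated $\Ind(\cc{L})$-modules need not preserve compacts.

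The paper's proof makes this precise: one views $U\mapsto\Mod_{\cO_{\cc{K}}(U)}(\Catperf)$ as a wide sub-presheaf of the sheaf $\twoModcg$, and then the sheaf condition reduces (via \cref{lem:qcmvsuffices}) to showing that a morphism $\cc{M}\to\cc{N}$ in $\twoModcg_{\cc{L}}$ which \emph{locally} preserves compact objects already does so globally. This is exactly where the argument of \cref{prop:zariskilocaltoglobal} is reused: if $m\in\cc{M}^{\omega}$ has compact image in each $\cc{N}/\cc{I}_i$, then its image in $\cc{N}$ is compact. Your intermediate claim $(\varprojlim\cc{M}_i)^{\omega}\simeq\varprojlim(\cc{M}_i^{\omega})$ is closely related and would indeed imply this, but it does not follow merely from the structure maps preserving compacts as you suggest; it requires precisely the local-to-global compactness argument of \cref{prop:zariskilocaltoglobal}. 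Once you reframe the problem as ``wide subcategory of a known sheaf'' and isolate the morphism-gluing step, your proof and the paper's become the same.
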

\begin{proof}
  By \cref{lem:the-one-about-compacts}, for any $\cc{L} \in \twoCAlgrig$ and $\cc{M} \in \twoModcg_{\cc{L}}$, the object $\cc{M}$ lifts to an object of $\Mod_{\Ind(\cc{L})}(\PrLomega)$. From the proof of the same lemma, there are equivalences \[\Ind \colon \Mod_{\cc{L}}(\Catperf) \simeq \Mod_{\Ind(\cc{L})}(\PrLomega) \noloc (-)^{\omega}\] which are natural in $\cc{L}$. Moreover, there is an inclusion $\Mod_{\Ind(\cc{L})}(\PrLomega)\subseteq \twoModcg_{\cc{L}}$ as the wide subcategory consisting of the functors which preserve compact objects, also natural in $\cc{L}$. In light of \cref{thm:2modcgisasheaf} and \cref{lem:qcmvsuffices}, we are reduced to showing that for any $\cc{L} \in \twoCAlgrig$ and map \[\cc{M} \to \cc{N} \in (\twoModcg_{\cc{L}})^{[1]}\] admitting a Zariski cover $\cc{L} \to \cc{L}_{1} \times \cc{L}_{2}$ such that the induced functor \[\cc{O}_{\cc{M}}(\Spec \cc{L}_{1}) \times \cc{O}_{\cc{M}}(\Spec \cc{L}_{2}) \to \cc{O}_{\cc{N}}(\Spec \cc{L}_{1}) \times \cc{O}_{\cc{N}}(\Spec \cc{L}_{2})\] preserves compact objects, the original map $\cc{M}\to \cc{N}$ preserves compact objects. Let $m \in \cc{M}^{\omega}$ arbitrary with image $n \in \cc{N}$, which we must show is compact. The restriction of $m$ to the cover $(m_{1}, m_{2}) \in \cc{O}_{\cc{M}}(\Spec \cc{L}_{1}) \times \cc{O}_{\cc{M}}(\Spec \cc{L}_{2})$ is compact, and by assumption its image in $\cc{O}_{\cc{N}}(\Spec \cc{L}_{1}) \times \cc{O}_{\cc{N}}(\Spec \cc{L}_{2})$ must be compact. Thus, $n \in \cc{N}$ admits compact restrictions, and the proof of \cref{prop:zariskilocaltoglobal} implies that it must itself be compact.
\end{proof}
\vspace{5pt}

\appendix

\section{Support Data and \texorpdfstring{$\GBal$}{GZar}-Structures}\label{sec:supp}

In \cite{balmerSpectrumPrimeIdeals2005}, the author characterizes the underlying the Balmer spectrum of a tensor-triangulated category $\cc{K}$ as the \emph{final support datum} for $\cc{K}$; this was hitherto the primary way to produce maps from the Balmer spectrum into an arbitrary frame. \cref{cor:absoluteGBalspectrum} provides an alternative approach to producing maps from the Balmer spectrum to an $\infty$-topos when the latter is equipped with a $\GBal$-structure. This section is aimed at reconciling these notions.

\begin{definition}\label{def:supportdata}
  A \tdef{support datum} for $\cc{K} \in \twoCAlg$ is a pair $(F, d)$ where $F$ is a frame and $d: \pi_{0}\cc{K}  \to F^{\op}$ is a map satisfying:
  \begin{enumerate}
  \item $d(0) = \bot_{F^{\op}}$ and $d(\unit) = \top_{F^{\op}}$
  \item $\forall a \in \cc{K}$, $d(a) = d(\Sigma a)$
  \item $\forall a,b \in \cc{K}$, $d(a \oplus b) = d(a) \vee_{F^{\op}} d(b)$
  \item $\forall a,b \in \cc{K}$, $d(a\otimes b) = d(a) \wedge_{F^{\op}} d(b)$
  \item $\forall f: a\to b \in \cc{K}^{[1]}$, $d(b) \leq d(a) \vee_{F^{\op}} d(\cofib(f))$
  \end{enumerate}
  A morphism of support data from $(F,d) \to (F',d')$ is a morphism of frames $f: F \to F'$ such that $f \circ d = d'\circ f$. We write $\mdef{\mm{Supp}_{\cc{K}}}$ to denote the category of support data for $\cc{K}$.
\end{definition}

\begin{example}
  For a topological space $X$, a support datum on $X$ is a support datum on the associated frame of open subsets $\cc{U}(X)$. Under the identification $\cc{U}(X)^{\op} \cong \mm{Cl}(X)$ between the opposite of the frame of open subsets and the poset of closed subsets of $X$, a support datum on $X$ is an assignment of closed subsets to elements of $\cc{K}$ satisfying the conditions of \cref{def:supportdata}. This is the context originally considered in \cite{balmerSpectrumPrimeIdeals2005}.
\end{example}

\begin{example}\label{ex:radsupp}
  The map $\sqrt{-} \colon \pi_{0}\cc{K} \to \Rad(\cc{K}) \subseteq (\Rad(\cc{K})^{\vee})^{\op}$ which sends an element $a \in \cc{K}$ to the radical of the thick tensor-ideal generated by $a$ is a support datum. To see this, note that conditions (a) and (b) of \cref{def:supportdata} are immediate, while conditions (c) and (e) are consequences of the fact that the assignment $\langle -\rangle\colon \cc{K} \to \Idl(\cc{K})^{\omega}$ obeys these conditions coupled with the fact that $\sqrt{}\colon \Idl(\cc{K}) \to \Rad(\cc{K})$ is a left adjoint and thus preserves joins. Condition (c) is recorded in \cref{lem:radicalgeneration}. 
\end{example}
 
\begin{proposition}\label{prop:rad-is-universal-support}
  Any support datum $d\colon \pi_{0}\cc{K}  \to F^{\op}$ admits an essentially unique factorization
  \[\xymatrix{
      \pi_{0}\cc{K}  \ar[dr]_{\sqrt{-}} \ar[rr]^{d}  & & F^{\op} \\
      & \Rad(\cc{K})^{\omega}\ar@{..>}[ur]_{\mm{supp}} & \\
    }
  \]
  where $\mm{supp}$ is a map of distributive lattices.
\end{proposition}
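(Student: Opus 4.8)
The statement is a universal property: the radical-support map $\sqrt{-}\colon\pi_0\cc{K}\to\Rad(\cc{K})^\omega$ (viewed with values in the distributive lattice of compact elements of $\Rad(\cc{K})^\vee$, i.e.\ in $(\Rad(\cc{K})^\omega)^{\op}$ after Hochster duality) is initial among support data. The plan is to first verify that $\sqrt{-}$ genuinely is a support datum — this is essentially done in \cref{ex:radsupp}, relying on \cref{lem:radicalgeneration} for the multiplicativity axiom (d) and on the fact that $\sqrt{-}\colon\Idl(\cc{K})\to\Rad(\cc{K})$ is a left adjoint, hence join-preserving, for axioms (c) and (e). Then, given an arbitrary support datum $d\colon\pi_0\cc{K}\to F^{\op}$, I would construct the dotted arrow $\mm{supp}$ by hand and check it is the unique morphism of distributive lattices through which $d$ factors.

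\textbf{Construction of $\mm{supp}$.} The key point is that every compact element of $\Rad(\cc{K})^\vee$ is of the form $\sqrt{a}$ for some $a\in\cc{K}$ (this is \cref{prop:latticeiscoherent}(b): the compacts are the radicals of principal ideals), and by \cref{lem:radicalgeneration} and \cref{prop:latticeiscoherent} the join and meet in $\Rad(\cc{K})^\omega$ are computed by $\sqrt{a}\vee\sqrt{b}=\sqrt{a\oplus b}$ and $\sqrt{a}\wedge\sqrt{b}=\sqrt{a\otimes b}$. So I would define $\mm{supp}(\sqrt{a}):=d(a)\in F^{\op}$ and argue this is well-defined: if $\sqrt{a}=\sqrt{b}$, then $a\in\langle b\rangle$ and $b\in\langle a\rangle$, so by the \emph{thick tensor ideal} generation — using axioms (b), (c), (e) of \cref{def:supportdata} repeatedly, which give that $\{x\in\cc{K}\mid d(x)\le d(a)\}$ is closed under shifts, sums (retracts), cofibers, and tensoring with arbitrary objects — one gets $d(b)\le d(a)$ and symmetrically $d(a)\le d(b)$, hence equality. (More precisely: the set $\{x : d(x) \le d(a)\}$ contains $a$ and is a thick tensor ideal, hence contains $\langle a\rangle$, and for the radical closure one uses axiom (d) to see $x^{\otimes n}$ having $d$ below $d(a)$ forces $d(x)\le d(a)$ since $d(x)=d(x)\wedge\dots\wedge d(x)\ge \dots$ — actually here one wants: $d(x^{\otimes n})=d(x)$ for $n\ge 1$ by axiom (d), so $\langle a\rangle$-membership already handles radicals after noting $\sqrt{a}$'s underlying ideal elements $x$ satisfy $x^{\otimes n}\in\langle a\rangle$.) Once well-definedness is in place, $\mm{supp}\circ\sqrt{-}=d$ holds by definition, and $\mm{supp}$ preserves $\bot,\top$ by axiom (a), preserves joins by axiom (c) together with $\sqrt{a}\vee\sqrt{b}=\sqrt{a\oplus b}$, and preserves meets by axiom (d) together with \cref{lem:radicalgeneration}; hence $\mm{supp}$ is a morphism of distributive lattices.

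\textbf{Uniqueness and the main obstacle.} Uniqueness is forced because $\sqrt{-}\colon\pi_0\cc{K}\to\Rad(\cc{K})^\omega$ is surjective on objects, so any two lattice maps agreeing after precomposition with $\sqrt{-}$ are equal; "essentially unique" then upgrades this to the statement that the space of such factorizations is contractible (it is a subsingleton that is inhabited). The main obstacle — though it is more bookkeeping than a genuine difficulty — is the well-definedness argument: one must show carefully that the relation "$\sqrt{a}=\sqrt{b}$ implies $d(a)=d(b)$" follows purely formally from the five support-datum axioms, which amounts to showing that $\{x\in\cc{K}\mid d(x)\le d(a)\}$ is a \emph{radical} thick tensor ideal containing $a$, so that it contains $\sqrt{a}=\sqrt{b}\ni b$. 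The thick-and-tensor-ideal part uses (b), (c)=(retracts via $d(a\oplus b)=d(a)\vee d(b)$ with $d(0)=\bot$), and (e); the radicality uses (d) in the form $d(x^{\otimes n})=d(x)$. This is precisely parallel to Balmer's original argument in \cite{balmerSpectrumPrimeIdeals2005} that the support of $\cc{K}$ is the final support datum, transported to the frame-theoretic (Hochster-dualized) setting via \cref{rec:stoneduality}, and I would cite that parallel rather than re-derive every lattice identity.
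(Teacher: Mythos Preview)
Your proposal is correct and follows essentially the same approach as the paper: both arguments establish well-definedness of $\mm{supp}(\sqrt{a}):=d(a)$ by showing that $\{x\in\cc{K}\mid d(x)\le d(a)\}$ is a thick tensor ideal (equivalently, that $a\in\langle b\rangle$ implies $d(a)\le d(b)$) via axioms (b), (d), (e), and then handle radicals using $d(x^{\otimes n})=d(x)$ from axiom (d). The only cosmetic difference is that the paper phrases the thick-ideal step as an explicit induction on the generation of $\langle b\rangle$, whereas you package it as closure properties of a sublevel set; the verification that $\mm{supp}$ respects meets and joins is identical in both.
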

\begin{proof}
  We first claim that if $a,b \in \cc{K}$ such that $a \in \langle b \rangle$, then $d(a) \leq d(b)$ in $F^{\op}$. This follows inductively; if $a' \simeq \cofib(x \to y)$ where $x, y \in \langle b \rangle$ satisfy $d(x) \wedge_{F^{\op}} d(y) \leq d(b)$, then condition (e) of \cref{def:supportdata} implies $d(a') \leq d(b)$. Furthermore, conditions (b) and (d) of the above imply that $d(\Sigma^{k} b \otimes y) \leq d(b)$ for every $k \in \bb{Z}$, $y \in \cc{K}$. Since $\langle b \rangle$ is the closure of objects of the form $\Sigma^{k}b \otimes y$ under iterated cofiber sequences, it follows that every $a \in \langle b \rangle$ must satisfy $d(a) \leq d(b)$.

  Now suppose $a, b \in \cc{K}$ satisfy $\sqrt{a} = \sqrt{b}$. Then there exists $l \in \bb{Z}$ such that $b^{l} \in \langle a \rangle$, implying that $d(b^{l}) \leq d(a)$ and thus that $d(b) \leq d(a)$, since $d(b^{l}) = d(b)$ by condition (d) of \cref{def:supportdata}. A symmetric argument implies that $d(a) \leq d(b)$ and hence that $d(b) = d(a)$. It follows that the map $d\colon \pi_{0}\cc{K} \to F^{\op}$ factors through an essentially unique map $\Rad(\cc{K})^{\simeq} \to F^{\op}$ induced from the identification of elements up to their radicals. Moreover, we have shown that the composite map $\mm{Prin})(\cc{K})^{\simeq} \to (\Rad(\cc{K})^{\omega})^{\simeq} \to F^{\op}$ respects the partial order by containment and thus the map $\Rad(\cc{K})^{\omega} \to F^{\op}$ is a map of posets. The fact that it respects meets and joins follows from \cref{ex:radsupp}. 
\end{proof}

It is easy to see that given any support datum $(F,d) \in \mm{Supp}_{\cc{K}}$ and a map $f\colon F \to L \in \mm{Frm}^{[1]}$, the object $(L, f \circ d) \in \mm{Supp}_{\cc{K}}$. This observation and \cref{prop:rad-is-universal-support} immediately furnish the following universal property for the frame $\Rad(\cc{K})^{\vee}$, using the Stone duality for distributive lattices and coherent frames discussed in \cref{rec:stoneduality}.

\begin{corollary}\label{cor:univ-prop-of-spc}
 For any $F\in \mm{Frm}$ there are equivalences of sets
  \[ \{\text{Support data for }\cc{K}\text{ valued in } F\} \simeq \Map_{\mm{DLat}}(\Rad(\cc{K})^{\omega}, F^{\op}) \simeq \Map_{\mm{Frm}}(\Rad(\cc{K})^{\vee}, F).
  \] In particular, the functor $\mm{Frm}_{\Rad(\cc{K})^{\vee}} \to \mm{Supp_{\cc{K}}}$ given by
    \[\{f\colon \Rad(\cc{K})^{\vee} \to L\} \mapsto (L, f\circ \sqrt{-})\] is an equivalence of categories.
\end{corollary}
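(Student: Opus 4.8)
The plan is to obtain both displayed bijections as formal consequences of \cref{prop:rad-is-universal-support} and the Stone duality recalled in \cref{rec:stoneduality}, and then to bootstrap the object-level bijection into an equivalence of categories. Throughout I will freely use that $\Rad(\cc{K})$ is a coherent frame with $\Rad(\cc{K})^{\omega}$ equal to the set of radicals of principal tt-ideals (\cref{prop:latticeiscoherent}), which is precisely the image of the map $\sqrt{-}\colon\pi_0\cc{K}\to\Rad(\cc{K})$.

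For the first bijection I would argue as follows. Since $\sqrt{a}$ is the radical of a \emph{principal} tt-ideal, the universal support datum of \cref{ex:radsupp} factors through the compact objects as $\sqrt{-}\colon\pi_0\cc{K}\to\Rad(\cc{K})^{\omega}\hookrightarrow(\Rad(\cc{K})^{\vee})^{\op}$. Given a morphism of distributive lattices $g\colon\Rad(\cc{K})^{\omega}\to F^{\op}$, the composite $g\circ\sqrt{-}$ is again a support datum: axioms (a) and (b) of \cref{def:supportdata} are immediate; (c) and (e) hold for $\langle-\rangle$, hence for $\sqrt{-}$ (as in \cref{ex:radsupp}), and are preserved by any order-preserving, finite-join-preserving map; and (d) is \cref{lem:radicalgeneration} combined with $g$ preserving finite meets. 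This produces a map $\Phi$ from $\Map_{\mm{DLat}}(\Rad(\cc{K})^{\omega},F^{\op})$ to the set of support data for $\cc{K}$ valued in $F$. By \cref{prop:rad-is-universal-support}, a support datum $d$ admits an essentially unique factorization $\mm{supp}\colon\Rad(\cc{K})^{\omega}\to F^{\op}$ through $\sqrt{-}$, giving a map $\Psi$ in the other direction; the identity $\mm{supp}\circ\sqrt{-}=d$ gives $\Phi\Psi=\id$, and the uniqueness clause of \cref{prop:rad-is-universal-support} gives $\Psi\Phi=\id$, so $\Phi$ is a bijection.

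For the second bijection I would invoke Stone duality directly. By \cref{rec:stoneduality}, the Hochster dual of the coherent frame $\Rad(\cc{K})$ is $\Rad(\cc{K})^{\vee}\simeq\mm{Idl}\big((\Rad(\cc{K})^{\omega})^{\op}\big)$, whose compact objects are $(\Rad(\cc{K})^{\vee})^{\omega}\simeq(\Rad(\cc{K})^{\omega})^{\op}$. Using that $\mm{Idl}$ is left adjoint to the forgetful functor $\mm{Frm}\to\mm{DLat}$ — equivalently, that every element of $\Rad(\cc{K})^{\vee}$ is a join of compact objects and a distributive-lattice map on the compacts extends uniquely along such joins — one obtains $\Map_{\mm{Frm}}(\Rad(\cc{K})^{\vee},F)\simeq\Map_{\mm{DLat}}\big((\Rad(\cc{K})^{\omega})^{\op},F\big)$, and the involution $(-)^{\op}$ of $\mm{DLat}$, which interchanges meets and joins, rewrites the right-hand side as $\Map_{\mm{DLat}}(\Rad(\cc{K})^{\omega},F^{\op})$. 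Chasing through these identifications shows that the composite of the two bijections carries a frame map $f\colon\Rad(\cc{K})^{\vee}\to L$ to the support datum $f\circ\sqrt{-}$ (using that $\sqrt{-}$ has image in $\Rad(\cc{K})^{\omega}$, i.e.\ in the compacts of $\Rad(\cc{K})^{\vee}$), so on objects this is exactly the functor in the statement.

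Finally I would upgrade this to an equivalence $\mm{Frm}_{\Rad(\cc{K})^{\vee}}\to\mm{Supp}_{\cc{K}}$: essential surjectivity is the object-level bijection just established, and for full faithfulness I would note that a morphism $f\to f'$ in $\mm{Frm}_{\Rad(\cc{K})^{\vee}}$ is a frame map $h\colon L\to L'$ with $h\circ f=f'$, whereas a morphism of the corresponding support data is a frame map $h$ with $h\circ f\circ\sqrt{-}=f'\circ\sqrt{-}$; since the image of $\sqrt{-}$ is exactly the compact objects of $\Rad(\cc{K})^{\vee}$, which generate it under joins, a frame map out of $\Rad(\cc{K})^{\vee}$ is determined by its restriction along $\sqrt{-}$, so these two conditions coincide. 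I expect the only genuine obstacle to be bookkeeping — keeping the Hochster-dual and opposite-lattice identifications straight so that both displayed $\Map$-sets are exhibited as values of literally the same natural assignment — together with the mild point of invoking the correct form of Stone duality (the adjunction $\mm{Idl}\dashv(-)$ into all of $\mm{Frm}$, not merely the equivalence $\mm{DLat}\simeq\mm{Frm^{coh}}$).
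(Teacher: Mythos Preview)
Your proposal is correct and follows essentially the same route the paper takes: the paper's argument consists of the observation (stated just before the corollary) that support data push forward along frame maps, together with \cref{prop:rad-is-universal-support} and the Stone duality of \cref{rec:stoneduality}, and you have simply unpacked this in detail. Your flag about needing the adjunction $\mm{Idl}\dashv\text{forgetful}$ into all of $\mm{Frm}$ (rather than just the equivalence $\mm{DLat}\simeq\mm{Frm^{coh}}$) is a valid point that the paper leaves implicit.
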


The characterization of $\Rad(\cc{K})^{\vee}$ as a \emph{universal support datum} above is recorded for support data with values in spatial frames in \cite{balmerSpectrumPrimeIdeals2005}, where it is phrased in the category of topological spaces. In the form above, one may extract it from \cite[Theorem 3.2.3]{KockPitsch17}. For us, the characterization above will be used to compute the maps to $\Spec \cc{K}$ in the category of locally 2-ringed topoi, on the level of underlying $\infty$-topoi. Let us first note that the adjunction of \cref{rec:0topoi} upgrades the universal property of \cref{cor:univ-prop-of-spc} from frames to $\infty$-topoi.

\begin{lemma}\label{lem:supportadjunction}
  There is an adjunction of the form
  \[ \mm{Supp}_{\cc{K}} \rightleftarrows \LTop_{\Shv(\Rad(\cc{K})^{\vee})/} 
    \]
    with fully faithful left adjoint given by $\Shv\colon \mm{Supp}_{\cc{K}} \simeq \mm{Frm}_{\Rad(\cc{K})^{\vee}/} \hookrightarrow \LTop_{\Shv(\Rad(\cc{K})^{\vee})/}$ and with right adjoint given by $\{f^{\ast}\colon \Shv(\Rad(\cc{K}))^{\vee} \to \cX\} \mapsto (\mm{Sub}(\cX), f^{\ast} \circ \sqrt{-})$.
\end{lemma}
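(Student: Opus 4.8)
The plan is to exhibit the asserted adjunction as a composite of two adjunctions, each of which has already been set up earlier in the text. The first adjunction is the equivalence of \cref{cor:univ-prop-of-spc}, which identifies $\mm{Supp}_{\cc{K}}$ with the slice category $\mm{Frm}_{\Rad(\cc{K})^{\vee}/}$ via $f \mapsto (L, f \circ \sqrt{-})$; this is an equivalence of categories, hence in particular an adjunction (indeed two, with the identity as inverse). The second ingredient is the adjunction $\Shv(-)\colon \mm{Frm} \rightleftarrows \LTop \noloc \mm{Sub}(-)$ of \cref{rec:0topoi}, whose left adjoint is fully faithful. The first step of the proof is therefore to recall that slicing a left adjoint functor under a fixed object $c$ and over its image $Lc$ preserves the adjunction: given any adjunction $L\colon \cc{A} \rightleftarrows \cc{B} \noloc R$ with unit $\eta$ and an object $c \in \cc{A}$, there is an induced adjunction $L_{c/}\colon \cc{A}_{c/} \rightleftarrows \cc{B}_{Lc/} \noloc R'$ where $R'$ sends $(Lc \to X)$ to $(c \xrightarrow{\eta_c} RLc \to RX)$, and if $L$ is fully faithful so is $L_{c/}$. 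Applying this with $L = \Shv(-)$, $R = \mm{Sub}(-)$, and $c = \Rad(\cc{K})^{\vee}$ gives an adjunction
\[
  \Shv(-)\colon \mm{Frm}_{\Rad(\cc{K})^{\vee}/} \rightleftarrows \LTop_{\Shv(\Rad(\cc{K})^{\vee})/} \noloc \mm{Sub}(-)',
\]
with fully faithful left adjoint, since $\Shv(-)$ is fully faithful by \cref{rec:0topoi}.

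The second step is to unwind the right adjoint through the identification of \cref{cor:univ-prop-of-spc}. An object of $\LTop_{\Shv(\Rad(\cc{K})^{\vee})/}$ is a morphism $f^{\ast}\colon \Shv(\Rad(\cc{K})^{\vee}) \to \cX$ in $\LTop$. Its image under $\mm{Sub}(-)'$ is, by the formula above, the composite $\Rad(\cc{K})^{\vee} \xrightarrow{\eta} \mm{Sub}(\Shv(\Rad(\cc{K})^{\vee})) \xrightarrow{\mm{Sub}(f^{\ast})} \mm{Sub}(\cX)$. Because the left adjoint $\Shv(-)$ is fully faithful, its unit $\eta\colon \Rad(\cc{K})^{\vee} \to \mm{Sub}(\Shv(\Rad(\cc{K})^{\vee}))$ is an equivalence of frames, so this composite is just $\mm{Sub}(f^{\ast})$ up to this canonical identification, i.e.\ the frame map $\Rad(\cc{K})^{\vee} \to \mm{Sub}(\cX)$ obtained by passing to subterminal objects. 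Then transporting along the equivalence $\mm{Frm}_{\Rad(\cc{K})^{\vee}/} \simeq \mm{Supp}_{\cc{K}}$ of \cref{cor:univ-prop-of-spc} turns this frame map into the support datum $(\mm{Sub}(\cX), \mm{Sub}(f^{\ast}) \circ \sqrt{-})$, which is precisely the description asserted in the statement. Composing the two adjunctions (and noting that the composite of fully faithful left adjoints is fully faithful) yields the claim, with left adjoint the composite $\mm{Supp}_{\cc{K}} \simeq \mm{Frm}_{\Rad(\cc{K})^{\vee}/} \xhookrightarrow{\Shv} \LTop_{\Shv(\Rad(\cc{K})^{\vee})/}$.

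The only genuine point requiring care — and the step I expect to be the main (minor) obstacle — is the bookkeeping around slicing the adjunction $\Shv \dashv \mm{Sub}$ under $\Rad(\cc{K})^{\vee}$: one must check that the slice of a right adjoint is still right adjoint to the corresponding slice of the left adjoint, and produce the correct formula for it involving the unit $\eta_{\Rad(\cc{K})^{\vee}}$. This is a standard categorical fact (for any adjunction $L \dashv R$ and object $c$, $L$ restricts to $\cc{A}_{c/} \to \cc{B}_{Lc/}$ with right adjoint $X \mapsto RX \times_{RLc} c$, which simplifies to postcomposition with $RX$ precisely because $\eta_c$ is invertible here), and I would either cite it or verify it in one line; no nontrivial computation is involved. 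Everything else is a direct invocation of \cref{cor:univ-prop-of-spc} and \cref{rec:0topoi}.
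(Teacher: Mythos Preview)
Your proposal is correct and matches the paper's approach exactly: the paper states this lemma without proof, prefacing it only with the remark that ``the adjunction of \cref{rec:0topoi} upgrades the universal property of \cref{cor:univ-prop-of-spc} from frames to $\infty$-topoi,'' which is precisely the composite-of-two-adjunctions argument you spell out. Your additional care in tracking the unit $\eta_{\Rad(\cc{K})^{\vee}}$ and verifying the sliced-adjunction formula is more detail than the paper provides, but entirely in line with what the authors intend.
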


The maps $\Shv(\Rad(\cc{K})^{\vee}) \to \cX$ which are of principal concern to us will arise as the underlying map of $\infty$-topoi associated to a map $\Spec \cc{K} \to (\cX, \cO) \in \LTop(\GBal)^{[1]}$ for some $\GBal$-structure on $\cX$. In this case, the associated support datum has a very explicit form. By the adjunction of \cref{thm:spec}, one has an identification

\[
  \LTop(\GBal)_{\Spec \cc{K}/} \simeq \LTop(\GBal) \times_{\twoCAlg} \twoCAlg_{\cc{K}/}
\]
i.e., the category of triples $(\cX, \cO,\eta)$ where $(\cX, \cO) \in \LTop(\GBal)$ and $\eta\colon \cc{K} \to \Gamma(\cX, \cO) \in \twoCAlg^{[1]}$. Such objects come equipped with a natural notion of support, as we now show.

\begin{construction}\label{cons:open-vanishing-adjunction}
  Let $(\cX, \cO, \eta) \in \LTop(\GBal) \times_{\twoCAlg} \twoCAlg_{\cc{K}/}$. We abusively let $\cO$ refer either to the $\GBal$ structure on $\cX$ or the associated object of $\Shv(\cX;\twoCAlg)$ under the equivalence of \cref{obs:sheafislexfunct}. Note that the sheaf associated to $\cO$ naturally lifts to a sheaf valued in $\twoCAlg_{\cc{K}/}$, using \cref{lem:globalsectionsisglobalsections} and precomposing with $\eta\colon \cc{K} \to \Gamma(\cX, \cO)$. Consider the composite
  \[
   \cX^{\op} \xrightarrow{\cO} \twoCAlg_{\cc{K}/} \xrightarrow{\ker} \Idl(\cc{K})
  \]  which sends an object $U \in \cX$ to the kernel of the associated map $\cc{K} \to \Gamma_{\GBal}((\cX, \cO)|_{U})$. For any $\infty$-category $\cc{C}$ and object $c \in \cc{C}$, the forgetful functors $\cc{C}_{c/} \to \cc{C}$ create small limits. It follows that the above functor preserves limits, since $\cO$ preserves limits and $\ker$ is a right adjoint by \cref{prop:quotientposet}. It follows that the functor above admits a left adjoint, since the restricted Yoneda map $\mm{Idl}(\cc{K}) \to \Fun(\cX, \cS)$ lands in the full subcategory $\Fun^{\colim}(\cX, \cS) \simeq \cX^{\op} \subseteq \Fun(\cX, \cS)$. We denote this adjoint pair as follows
  \begin{equation}
    \mdef{U(-)}\colon \Idl(\cc{K}) \rightleftarrows \cX^{\op} \noloc \mdef{\cc{I}(-)}
  \end{equation}
  where $U(-)$ is referred to as the \tdef{vanishing locus} functor and $\cc{I}(-)$ is the \tdef{vanishing ideal} functor. 
\end{construction}

\begin{lemma}\label{lem:vanishinglocusissubterminal}
 For any $(\cX, \cO) \in \LTop(\GBal) \times_{\twoCAlg} \twoCAlg_{\cc{K}/}$, the functor $U(-)\colon \Idl(\cc{K}) \to \cc{X}^{\op}$ has essential image in the full subcategory $\mm{Sub}(\cX)^{\op} \subseteq \cX^{\op}$ of subterminal objects.
\end{lemma}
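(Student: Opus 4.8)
The plan is to show that for every $\cc{I} \in \Idl(\cc{K})$ the object $U(\cc{I}) \in \cX$ is subterminal, i.e., that the canonical map $U(\cc{I}) \to \mathbf{1}$ is a monomorphism; equivalently, that the diagonal $U(\cc{I}) \to U(\cc{I}) \times U(\cc{I})$ is an equivalence, or that $U(\cc{I}) \simeq U(\cc{I}) \times U(\cc{I})$. The first step is to record what $U(-)$ is concretely: by \cref{cons:open-vanishing-adjunction} it is the left adjoint to the limit-preserving functor $\cc{I}(-)\colon \cX^{\op} \to \Idl(\cc{K})$ sending $U$ to $\ker(\cc{K} \to \Gamma_{\GBal}((\cX,\cO)|_{U}))$, and via the restricted Yoneda embedding $\Idl(\cc{K}) \hookrightarrow \Fun^{\colim}(\cX,\cS) \simeq \cX^{\op}$ we have that $U(\cc{I})$ corepresents, in $\cX^{\op}$, the functor $U \mapsto \Map_{\Idl(\cc{K})}(\cc{I}, \cc{I}(U))$. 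Since $\Idl(\cc{K})$ is a poset (\cref{prop:latticeiscoherent}), this mapping space is either empty or contractible, so $U(\cc{I})$ is the object of $\cX$ such that $\Map_{\cX}(U, U(\cc{I})) \simeq \{* \text{ if } \cc{I} \subseteq \cc{I}(U),\ \emptyset \text{ otherwise}\}$; this already shows $\Map_{\cX}(U,U(\cc{I}))$ is $(-1)$-truncated for representable $U$.

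The second step upgrades this to all of $\cX$. Recall that $U(\cc{I})$, being an object of $\cX$, corepresents a limit-preserving functor $\cX^{\op} \to \cS$ (this is how $\cX \simeq \Fun^{\mm{R}}(\cX^{\op},\cS)$ is realized, as in \cref{obs:sheafislexfunct}); but more usefully, every object $Z \in \cX$ is a colimit of representables, and $\Map_{\cX}(-, U(\cc{I}))\colon \cX^{\op} \to \cS$ sends colimits in $\cX$ to limits in $\cS$. A limit of $(-1)$-truncated spaces is $(-1)$-truncated, and we have just checked $\Map_{\cX}(U, U(\cc{I}))$ is $(-1)$-truncated for $U$ in the (generating) full subcategory of representables. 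Writing an arbitrary $Z \simeq \colim_{\alpha} U_{\alpha}$ with each $U_{\alpha}$ representable, we get $\Map_{\cX}(Z, U(\cc{I})) \simeq \lim_{\alpha}\Map_{\cX}(U_{\alpha}, U(\cc{I}))$, which is $(-1)$-truncated. Hence $\Map_{\cX}(-, U(\cc{I}))$ is a $\tau_{\leq -1}$-valued functor, which is precisely the defining condition of $\mm{Sub}(\cX)$ recalled in \cref{rec:0topoi}; therefore $U(\cc{I}) \in \mm{Sub}(\cX)$, and $U(-)$ factors through $\mm{Sub}(\cX)^{\op} \subseteq \cX^{\op}$ as claimed.

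Alternatively — and this may be cleaner to write — one can argue directly from the adjunction without invoking representables: the functor $\cc{I}(-)\colon \cX^{\op} \to \Idl(\cc{K})$ lands in a poset, so it factors through the posetal reflection of $\cX^{\op}$, which is exactly $\mm{Sub}(\cX)^{\op}$ (the reflective localization of $\cX$ onto $(-1)$-truncated objects, by \cref{rec:0topoi}); but $\cc{I}(-)$ preserves limits, so its left adjoint $U(-)$ restricted along $\mm{Sub}(\cX)^{\op} \hookrightarrow \cX^{\op}$ recovers the same left adjoint, and since $U(-)$ corepresents a functor that only sees $\cc{I}(-)$, its image lies in the essential image of the right adjoint to $\mm{Sub}(\cX) \hookrightarrow \cX$, i.e. in $\mm{Sub}(\cX)$.

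\textbf{Main obstacle.} The only subtle point is the passage from ``$\Map_{\cX}(U,U(\cc{I}))$ is $(-1)$-truncated for $U$ representable'' to ``$U(\cc{I})$ is a subterminal object of $\cX$'': one must be careful that $\mm{Sub}(\cX)$ is characterized by the values of $\Map_{\cX}(-,-)$ on \emph{all} objects, not just representables, but this is exactly handled by the fact that $\Map_{\cX}(-, U(\cc{I}))$ is limit-preserving together with the generation of $\cX$ under colimits by representables (and that $\tau_{\leq -1}\cS \subseteq \cS$ is closed under limits). I expect this to be a short argument once the corepresentability description of $U(\cc{I})$ from \cref{cons:open-vanishing-adjunction} and the posetality of $\Idl(\cc{K})$ are in hand.
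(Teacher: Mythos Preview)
Your argument is correct and in spirit the same as the paper's: both exploit that $\Idl(\cc{K})$ is a poset to conclude that $U(\cc{I})$ is $(-1)$-truncated. The paper phrases it slightly differently---it observes that $U(-)^{\op}\colon \Idl(\cc{K})^{\op}\to\cX$ preserves limits (being the opposite of a left adjoint), and since the diagonal $\cc{I}\to\cc{I}\times\cc{I}$ is an equivalence in the poset $\Idl(\cc{K})^{\op}$, the diagonal $U(\cc{I})\to U(\cc{I})\times U(\cc{I})$ is an equivalence in $\cX$---but this and your adjunction computation are two sides of the same coin.

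One point worth cleaning up: the distinction you draw between ``representable $U$'' and general $U\in\cX$, and the entire second step devoted to upgrading from one to the other, is a red herring. The adjunction $U(-)\dashv\cc{I}(-)$ constructed in \cref{cons:open-vanishing-adjunction} is an honest adjunction between $\Idl(\cc{K})$ and $\cX^{\op}$, so the identification
\[
\Map_{\cX}(V,\,U(\cc{I}))\;\simeq\;\Map_{\Idl(\cc{K})}(\cc{I},\,\cc{I}(V))
\]
holds for \emph{every} object $V\in\cX$, not merely for objects coming from a site. (The phrase ``restricted Yoneda'' in the construction refers to restricting along $\cc{I}(-)$, not to a subcategory of $\cX$.) Once you observe this, your first paragraph is already a complete proof, and the ``main obstacle'' you flag does not in fact arise. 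The alternative argument you sketch at the end is likewise correct but unnecessary.
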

\begin{proof}
  Since $U(-)^{\op} \colon \Idl(\cc{K})^{\op} \to \cX$ preserves limits and $\Idl(\cc{K})$ is a poset, the object $U(\cc{I}) \in \cX$ satisfies $U(\cc{I}) \simeq U(\cc{I}) \times U(\cc{I})$ via the diagonal map for every $\cc{I} \in \Idl(\cc{K})$. In particular, the representable functor associated to any $U(\cc{I})$ lies in the full subcategory $\Fun^{\lim}(\cX^{\op}, \tau_{\leq -1}\cS) \subseteq \Fun^{\lim}(\cX^{\op}, \cS) \simeq \cX$, yielding the claim. 
\end{proof}

\begin{lemma}\label{lem:vanishinglociipullback}
  Let $f\colon (\cX,\cO_{\cX}) \to (\cY, \cO_{\cY})$ be a morphism in $\LTop(\GBal)\times_{\twoCAlg} \twoCAlg_{\cc{K}/}$, i.e., a morphism of $\infty$-topoi with local $\GBal$-structure equipped with a $\cc{K}$-linear structure on the associated map on global sections. There is a commutative square of the form
  \[\xymatrix{
      \cY^{\op} \ar[d]_{U(-)} \ar[r]^{f_{\ast}} & \cX^{\op} \ar[d]^{U(-)} \\
      \Idl(\cc{K}) \ar@{=}[r] & \Idl(\cc{K}).
    }
    \]
\end{lemma}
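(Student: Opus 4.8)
The plan is to unwind both notions of "vanishing locus" and exhibit the square as a commutativity statement that follows formally from the compatibility of the $\ker$ functor with the pushforward on global sections. Recall that by \cref{cons:open-vanishing-adjunction}, for an object $(\cX,\cO_\cX)\in\LTop(\GBal)\times_{\twoCAlg}\twoCAlg_{\cc{K}/}$ the functor $U(-)\colon\Idl(\cc{K})\to\cX^{\op}$ is defined as the left adjoint to the limit-preserving composite $\cX^{\op}\xrightarrow{\cO_\cX}\twoCAlg_{\cc{K}/}\xrightarrow{\ker}\Idl(\cc{K})$, which sends $V\in\cX$ to $\ker(\cc{K}\to\Gamma_{\GBal}((\cX,\cO_\cX)|_V))$; call this right adjoint $\cc{I}_\cX(-)$, and similarly $\cc{I}_\cY(-)$ for $\cY$. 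Since passing to left adjoints is contravariantly functorial, to prove the square with the left adjoints $U(-)$ it suffices to prove the \emph{transposed} square with the right adjoints commutes, i.e. that
\[
\cc{I}_\cY(-) \simeq f_*\circ \cc{I}_\cX(-)\colon \Idl(\cc{K})\to\cY^{\op},
\]
where I am now using that $f_*\colon\cX\to\cY$ induces (on opposite categories, with the appropriate variance) the comparison of slice global sections. Concretely, one must identify, for each $\cc{J}\in\Idl(\cc{K})$, the subterminal object of $\cY$ computed by $\cc{I}_\cY(\cc{J})$ with the pushforward $f_*$ of the subterminal object $\cc{I}_\cX(\cc{J})$ of $\cX$.

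First I would reduce the statement to a pointwise check on subterminal objects using \cref{lem:vanishinglocusissubterminal}: both $U(-)$ land in $\mm{Sub}(-)^{\op}$, and $f_*$ restricted to subterminal objects is the right-adjoint functor $\mm{Sub}(\cX)\to\mm{Sub}(\cY)$ under the equivalence $\cX\simeq\Shv(\cX)$ etc.; so it is enough to prove the square of posets commutes, where the vertical maps are the $\ker$-of-restricted-sections functors and the horizontal arrows are $f_*$ and the identity. By the adjunctions this is equivalent to checking $\cc{I}_\cX(f^*V') \supseteq \cc{I}_\cY(V')$ and $\cc{I}_\cY(V') = \cc{I}_\cX(f^*V')\cap(\text{stuff})$—but since everything is a poset it collapses to a single inclusion in each direction, namely for $V'\in\mm{Sub}(\cY)$ one needs $\ker(\cc{K}\to\Gamma_{\GBal}((\cY,\cO_\cY)|_{V'})) = \ker(\cc{K}\to\Gamma_{\GBal}((\cX,\cO_\cX)|_{f^*V'}))$. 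Next I would unwind $\Gamma_{\GBal}$ via \cref{lem:globalsectionsisglobalsections}: the global sections of a local $\GBal$-structure on a topos $\cZ$ over a subterminal $W\in\cZ$ is the value $\cO|_W(\mathbf 1_{\cZ/W})$, i.e. evaluation at the terminal object of the slice. Restriction along $V'\mapsto f^*V'$ commutes with the morphism of topoi $f$ by definition of a morphism in $\LTop(\GBal)\times_\twoCAlg\twoCAlg_{\cc{K}/}$: the $\cc{K}$-linear map $\cc{O}_\cY\to f_*\cc{O}_\cX$ restricts over $V'$ to a $\cc{K}$-linear map, and on global sections this realizes $\cc{K}\to\Gamma_{\GBal}((\cY,\cO_\cY)|_{V'})\to\Gamma_{\GBal}((\cX,\cO_\cX)|_{f^*V'})$ as a factorization through the common map from $\cc{K}$. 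Crucially one must argue the second map here is \emph{conservative}, hence has trivial kernel, so that the two kernels agree—this is exactly where the locality of the $\GBal$-structure and \cref{cor:mapsonstalks} (or rather the slice version of the conservativity built into a morphism of locally $2$-ringed topoi) enter.

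The main obstacle I anticipate is the bookkeeping around the $\cc{K}$-linear slice structures: making precise that a morphism in $\LTop(\GBal)\times_\twoCAlg\twoCAlg_{\cc{K}/}$ restricts functorially to every subterminal open $V'\subseteq\cY$ and its preimage $f^*V'\subseteq\cX$, and that the induced map on $\Gamma_{\GBal}$ of the slices is a map \emph{under} $\cc{K}$ whose source-to-target comparison is conservative. Concretely, restriction to an open subtopos is itself a geometric morphism, local $\GBal$-structures pull back to local $\GBal$-structures, and conservativity is preserved under such pullback because it is detected on stalks and the points of the slice are a subset of the points of the ambient topos; this is the content one extracts from \cref{cor:mapsonstalks}. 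Once that is in hand, the equality of kernels is immediate: $x\in\cc{K}$ maps to $0$ in $\Gamma_{\GBal}((\cY,\cO_\cY)|_{V'})$ iff its further image in $\Gamma_{\GBal}((\cX,\cO_\cX)|_{f^*V'})$ is $0$ (the "if" using conservativity of the comparison, the "only if" being trivial), which is precisely $\cc{I}_\cY(V')$ read off either vertical path. Transposing back to left adjoints then yields the asserted commuting square, and I would close by remarking that the bottom edge being literally the identity is automatic since neither vertical composite nor the identification of kernels depends on $\cX$ versus $\cY$ beyond the comparison map, which has been shown not to change the kernel.
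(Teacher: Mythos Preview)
Your route differs substantively from the paper's. The paper does not unwind to a pointwise kernel comparison at all: it observes that the claimed square is obtained from a more primitive commuting square of structure sheaves
\[\xymatrix{
    \cY^{\op} \ar[d]_{\cO_{\cY}} \ar[r]^{f_{\ast}} & \cX^{\op} \ar[d]^{\cO_{\cX}} \\
    \twoCAlg_{\cc{K}/} \ar@{=}[r] & \twoCAlg_{\cc{K}/}
  }
\]
by postcomposing both verticals with $\ker$. The commutativity of this square is argued by unwinding the functoriality of the identification in \cref{obs:sheafislexfunct}, in the style of \cref{lem:globalsectionsisglobalsections}. No conservativity statement and no stalks appear.

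There is a genuine gap in your argument. You reduce the kernel equality to the assertion that the comparison map on slice global sections is conservative, and justify this via stalkwise conservativity from \cref{cor:mapsonstalks}. But stalkwise conservativity of a morphism of sheaves of $2$-rings does not imply conservativity on sections over an open unless the $\infty$-topos has enough points, which is nowhere assumed; your sentence ``conservativity is preserved under such pullback because it is detected on stalks and the points of the slice are a subset of the points of the ambient topos'' is exactly where this breaks. If you want to repair the conservativity route, the correct input is not \cref{cor:mapsonstalks} but the locality condition directly: the Cartesian square of \cref{def:omnibusgeometries}\,(b) for the generating admissible morphism $\Sp^{\omega}\to\Sp^{\omega}\{x\}$ in $\GBal$ says precisely that the fiber of $f^{*}\cO_{\cX}(\Sp^{\omega}\{x\})\to\cO_{\cY}(\Sp^{\omega}\{x\})$ over the zero section is the zero section, and this is a statement \emph{internal} to $\cY$ which therefore survives applying $\Map_{\cY}(V',-)$ for any $V'$.

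A secondary bookkeeping issue compounds this: with $f$ a morphism in $\LTop(\GBal)$, the underlying left adjoint runs $f^{*}\colon\cX\to\cY$ (see \cref{def:structuredtopoi}), so expressions like ``$f^{*}V'$'' for $V'\in\cY$ do not typecheck, and the square in the lemma already has its verticals pointing from the topos into $\Idl(\cc{K})$---it is the right-adjoint square, not something you must first transpose to reach.
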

\begin{proof}
The above claim will follow from the existence of a commutative square as below
  \[\xymatrix{
      \cY^{\op} \ar[d]_{\cO_{\cY}} \ar[r]^{f_{\ast}} & \cX^{\op} \ar[d]^{\cO_{\cX}} \\
      \twoCAlg_{\cc{K}/} \ar@{=}[r] & \twoCAlg_{\cc{K}/}
    }
  \]
  where we have abusively written $\cO_{\cX}, \cO_{\cY}$ to refer to the sheaves associated to the corresponding $\GBal$-structures under the equivalence of \cref{obs:sheafislexfunct}. This follows from a similar argument to \cref{lem:globalsectionsisglobalsections}.
\end{proof}

\begin{lemma}\label{lem:universalsupport}
  The restriction of the vanishing locus functor for $\Spec \cc{K}$ to $\Rad(\cc{K})^{\omega}$ may be identified with the Yoneda embedding
  \[
    \Yo\colon \Rad(\cc{K})^{\omega} \to \Shv(\Rad(\cc{K})^{\vee})^{\op} \subseteq \cc{P}(\Rad(\cc{K})^{\omega, \op})^{\op}
    \]
\end{lemma}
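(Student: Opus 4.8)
The plan is to unravel the definition of the vanishing locus functor $U(-)$ from \cref{cons:open-vanishing-adjunction} applied to $\Spec\cc{K}$ and identify it explicitly on the compact radical ideals. First I would recall that $U(-)\colon\Idl(\cc{K})\to\cX^{\op}$ (here $\cX = \Shv(\Rad(\cc{K})^{\vee}) \simeq \Spec\cc{K}$) is defined as the left adjoint to the composite $\cc{I}(-)\colon\cX^{\op}\xrightarrow{\cO_{\cc{K}}}\twoCAlg_{\cc{K}/}\xrightarrow{\ker}\Idl(\cc{K})$, where the first map is the $\twoCAlg_{\cc{K}/}$-valued structure sheaf of $\Spec\cc{K}$ supplied by \cref{thm:rigid_subcanonical} and \cref{lem:globalsectionsisglobalsections}. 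By \cref{lem:vanishinglocusissubterminal} the functor $U(-)$ lands in $\mm{Sub}(\cX)^{\op}$, which under \cref{rec:0topoi} is precisely $(\Rad(\cc{K})^{\vee})^{\op}$, i.e.\ the opposite of the Hochster-dual frame; on compact objects this poset is $\Rad(\cc{K})^{\omega}$ itself (viewed with the order of $\Rad(\cc{K})$). So the claim is really that the restriction of $U(-)$ to $\Rad(\cc{K})^{\omega}$, composed with the inclusion $\mm{Sub}(\Spec\cc{K})^{\op}\hookrightarrow\Spec\cc{K}^{\op}=\Shv(\Rad(\cc{K})^{\vee})^{\op}\subseteq\cc{P}(\Rad(\cc{K})^{\omega,\op})^{\op}$, is the Yoneda embedding.

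Next I would compute the right adjoint $\cc{I}(-)$ on subterminal objects. A quasicompact subterminal object of $\Spec\cc{K}\simeq\Shv(\Rad(\cc{K})^{\vee})$ corresponds via \cref{rec:0topoi} and Stone duality (\cref{rec:stoneduality}) to a compact element of the frame $\Rad(\cc{K})^{\vee}$, that is, to a radical ideal $\cc{J}\in\Rad(\cc{K})^{\omega}$ viewed as the open subset $U(\cc{J})$ of the Balmer spectrum $\Spc\cc{K}$ cut out by $\cc{J}$; restricting $\Spec\cc{K}$ to this open gives, via \cref{obs:identifyingOK}, the localization $\cc{K}\to\cc{K}/\cc{J}$, whence $\cc{I}(U(\cc{J})) = \ker(\cc{K}\to\cc{K}/\cc{J}) = \cc{J}$ (using \cref{prop:quotientposet} and rigidity via \cref{lem:rigid_radicalideal} to see $\cc{J}$ is recovered exactly, with no radical needed). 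Thus $\cc{I}(-)$, restricted to $\mm{Sub}(\Spec\cc{K})^{\omega,\op}$ and corestricted along the identification $\mm{Sub}(\Spec\cc{K})^{\omega}\simeq\Rad(\cc{K})^{\omega}$, is the identity. Taking left adjoints (and using that the adjunction $U(-)\dashv\cc{I}(-)$ restricts appropriately to compact/subterminal objects, which follows since both functors preserve the relevant colimits and limits as in \cref{cons:open-vanishing-adjunction}), $U(-)$ restricted to $\Rad(\cc{K})^{\omega}$ is the canonical inclusion $\Rad(\cc{K})^{\omega}\hookrightarrow\mm{Sub}(\Spec\cc{K})^{\op}$. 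Finally, unwinding \cref{rec:0topoi}, the inclusion of $\mm{Sub}(\Spec\cc{K})$ into $\Spec\cc{K}=\Shv(\Rad(\cc{K})^{\vee})\subseteq\cc{P}(\Rad(\cc{K})^{\omega,\op})$ sends a compact open to its representable sheaf, so the composite is exactly the Yoneda embedding $\Yo\colon\Rad(\cc{K})^{\omega}\to\Shv(\Rad(\cc{K})^{\vee})^{\op}\subseteq\cc{P}(\Rad(\cc{K})^{\omega,\op})^{\op}$, as claimed.

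The main obstacle I anticipate is the bookkeeping around Hochster duality and which poset is which: $\Rad(\cc{K})$ versus its opposite $\Rad(\cc{K})^{\vee}$, and the corresponding flip between ``open subset cut out by an ideal'' and ``closed support''. One must be careful that $U(\cc{I})$ genuinely corresponds to the \emph{open} $\{\cc{P}\mid\cc{I}\subseteq\cc{P}\}$ (equivalently the complement of $\supp$), matching the sign conventions in \cref{def:balmer} and \cref{thm:2zariski_balmerspectrum}, and that the adjunction $U(-)\dashv\cc{I}(-)$ of \cref{cons:open-vanishing-adjunction} really does restrict to an adjunction (indeed an identification) between $\Rad(\cc{K})^{\omega}$ and the compact subterminal objects — this uses that $\ker$ preserves filtered colimits (\cref{prop:quotientposet}) and that sheafification does not disturb the computation of $\cc{I}(U(\cc{J}))$ on quasicompact opens, which is where \cref{thm:rigid_subcanonical} (rigidity) is essential. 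Once the dualities are pinned down, the identification of the restricted functor with Yoneda is formal.
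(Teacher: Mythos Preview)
Your argument is correct for rigid $\cc{K}$, but it takes a genuinely different route from the paper and, as a consequence, does not establish the lemma in the generality stated. The crux is how you pin down the reverse inequality $U(\sqrt{a})\leq V_a$ (where $V_a$ denotes the quasicompact open corresponding to $\sqrt{a}$). You compute $\cc{I}(V_a)$ \emph{exactly} by invoking \cref{thm:rigid_subcanonical} to identify $\cO_{\cc{K}}(V_a)\simeq\cc{K}/\cc{J}$, whence $\cc{I}|_{\Rad(\cc{K})^{\omega}}$ is the identity and the left adjoint $U(-)$ must be the canonical inclusion. This is clean and short, but \cref{thm:rigid_subcanonical} requires rigidity; the lemma is stated for arbitrary $\cc{K}\in\twoCAlg$, and in the non-rigid case you have no control over $\cO_{\cc{K}}(V_a)$ (cf.~\cref{warning:nonrigid}), so your computation of $\cc{I}(V_a)$ breaks down.

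The paper instead argues via stalks. From the sheafification map $\widetilde{\cO}_{\cc{K}}\to\cO_{\cc{K}}$ one only gets that $\cc{K}\to\cO_{\cc{K}}(V_a)$ factors through $\cc{K}/\cc{J}$, yielding the easy inequality $V_a\leq U(\sqrt{a})$. For the other direction the paper characterizes $U(\sqrt{a})$ as the largest open on which $\sqrt{a}$ lies in the kernel, and then checks membership pointwise: a prime $\cc{P}$ lies in $U(\sqrt{a})$ only if some quasicompact neighborhood kills $a$, hence (via \cref{lem:identificationofstalks}) only if $a\in\cc{P}$. Since \cref{lem:identificationofstalks} holds for arbitrary $\cc{K}$, this argument does not need the descent theorem. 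In short: your approach trades the stalk computation for the full force of Zariski descent, which is elegant when available but buys you strictly less generality here.
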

\begin{proof}
  From \cref{lem:vanishinglocusissubterminal} and \cref{thm:2zariski_geometry}, the vanishing locus functor factors through 
  \[
    \Rad(\cc{K})^{\omega} \xrightarrow{U(-)} \Rad(\cc{K})^{\vee, \op} \subseteq \Shv(\Rad(\cc{K})^{\vee})^{\op}
  \]
  and it suffices to show that this map is equivalent to the canonical inclusion. For any $\sqrt{a} \in \Rad(\cc{K})^{\omega}$, the kernel of $\cc{K} \to \cO_{\cc{K}}(\sqrt{a})$ factors through $\cc{K} \to \widetilde{\cO}_{\cc{K}}(\sqrt{a}) \simeq \cc{K}/\cc{J}$ by \cref{obs:identifyingOK}. It follows that one has a canonical map $\sqrt{a} \to U(\sqrt{a}) \in \Rad(\cc{K})^{\vee}$ by adjunction, and the claim reduces to showing that this is an equivalence.

  Under the identification $\Rad(\cc{K})^{\vee} \simeq \Spc \cc{K}$, $U(\sqrt{a})$ may be identified as the largest open subset of $\Spc \cc{K}$ satisfying $\sqrt{a} \subseteq \ker(\cc{K} \to \cO_{\cc{K}}(U(\sqrt{a})))$. Since $\Spc \cc{K}$ is spectral, it has a basis of quasicompact open subsets. In particular, any point $x_{\cc{P}} \in \Spc \cc{K}$ satisfies $x_{\cc{P}} \in U(\cc{J})$ if and only if there is a quasicompact open subset $x_{\cc{P}} \in U' \subseteq U(\cc{J})$ satisfying $\sqrt{a} \subseteq \ker(\cc{K} \to \cO_{\cc{K}}(U))$. By \cref{lem:identificationofstalks}, such a quasicompact open subset exists only if $\sqrt{a} \subseteq \ker(\cc{K} \to \cc{K}/\cc{P})$, or if $\sqrt{a} \subset \cc{P}$. It follows that $U(\sqrt{a})$ is contained in the unique open subset of prime ideals containing $a$, which is the open subset corresponding to $\sqrt{a}$ under the Stone duality equivalence $\Rad(\cc{K})^{\omega} \simeq \cc{U}(\Spc \cc{K})^{\omega,\op}$ of \cref{rec:stoneduality}. Thus, the map $\sqrt{a} \to U(\sqrt{a})$ must be an equivalence.
\end{proof}

\begin{remark}
  The lemma above demonstrates that the construction described in \cref{cons:open-vanishing-adjunction} is Hochster dual to the adjoint appearing in the equivalence of \cite[Theorem 4.10]{balmerSpectrumPrimeIdeals2005} in the case $(\cX, \cO) = \Spec \cc{K}$.
\end{remark}

\begin{notation}
  Given any object $(\cX, \cO) \in \LTop(\GBal) \times_{\twoCAlg} \twoCAlg_{\cc{K}/}$, let $\mdef{\sigma_{\cO}}$ refer to the following composite
  \[
    \pi_{0}\cc{K}  \xrightarrow{\langle - \rangle} \Idl(\cc{K}) \xrightarrow{U(-)} \mm{Sub}(\cX)^{\op}.
  \]
  where $\langle - \rangle$ is the map sending an element $a \mapsto \langle a \rangle$, the thick tensor ideal it generates.
\end{notation}

\begin{theorem}
  For $(\cX, \cO) \in \LTop(\GBal) \times_{\twoCAlg} \twoCAlg_{\cc{K}/}$, the pair $(\mm{Sub}(\cX), \sigma_{\cO})$ is a support datum for $\cc{K}$. Furthermore, under the identification $\LTop(\GBal) \times_{\twoCAlg} \twoCAlg_{\cc{K}/} \simeq \LTop(\GBal)_{\Spec \cc{K}/}$, the underlying map of $\infty$-topoi $\Spec \cc{K} \to \cX$ corresponds to the map induced by $\sigma_{\cO}$ under the adjunction of \cref{lem:supportadjunction}. 
\end{theorem}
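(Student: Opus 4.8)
The plan is to verify the five axioms of a support datum for the pair $(\mm{Sub}(\cX), \sigma_{\cO})$ directly, using the properties of the vanishing locus functor $U(-)$ and the fact that $\langle-\rangle \colon \pi_0 \cc{K} \to \Idl(\cc{K})$ is itself ``support-like''. Concretely, I would first record that $U(-) \colon \Idl(\cc{K}) \to \mm{Sub}(\cX)^{\op}$ is a left adjoint (\cref{cons:open-vanishing-adjunction}), hence preserves colimits; in particular it sends $\langle 0 \rangle = \bot$ to $\bot_{\mm{Sub}(\cX)^{\op}}$ and joins of ideals to joins in $\mm{Sub}(\cX)^{\op}$. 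For the top element, $\langle \unit \rangle = \cc{K}$ and $\ker(\cc{K} \to \Gamma((\cX,\cO)|_\top)) = 0 \subseteq \cc{K}$ only if the relevant map is conservative; more directly, $\cc{I}(\top_{\cX})$ is the kernel of $\cc{K} \to \Gamma(\cX,\cO) \to \Gamma((\cX,\cO)|_{\top_\cX})$, and since $(\cX,\cO)|_{\top_\cX} = (\cX, \cO)$ the counit/unit yoga of the adjunction $U \dashv \cc{I}$ shows $U(\cc{K}) = \top$. Axiom (a) for $\sigma_{\cO}$ then follows by precomposing with $\langle - \rangle$, which sends $0 \mapsto \langle 0\rangle$ and $\unit \mapsto \langle \unit\rangle = \cc{K}$.

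Next I would handle axioms (b), (c), (e): these all follow formally from the corresponding identities for $\langle - \rangle \colon \pi_0\cc{K} \to \Idl(\cc{K})$ together with the colimit-preservation of $U(-)$. Indeed $\langle \Sigma a \rangle = \langle a \rangle$ gives (b); $\langle a \oplus b \rangle = \langle a \rangle \vee \langle b \rangle$ in $\Idl(\cc{K})$ (as in \cref{prop:latticeiscoherent} and \cref{ex:radsupp}) together with $U(-)$ preserving the join gives (c); and the cofiber estimate $\langle b \rangle \subseteq \langle a \rangle \vee \langle \cofib f\rangle$ for $f \colon a \to b$, combined with the order-preservation and join-preservation of $U(-)$, gives (e). The one genuinely non-formal axiom is (d): $d(a \otimes b) = d(a) \wedge d(b)$, i.e. $U(\langle a \otimes b\rangle) = U(\langle a\rangle) \wedge U(\langle b \rangle)$ in $\mm{Sub}(\cX)$, which is a meet in $\mm{Sub}(\cX)$, i.e. a \emph{limit}, not a colimit. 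I expect this to be the main obstacle, since $U(-)$ is only a left adjoint and need not preserve meets. I would argue this via \cref{lem:radicalgeneration} and the local-structure hypothesis: the key point is that $U(\langle a\rangle)$ can be computed as the largest subterminal $V$ with $a \in \cc{I}(V)$, and for a \emph{local} $\GBal$-structure the stalkwise description of \cref{cor:mapsonstalks} (combined with \cref{lem:identificationofstalks} applied after pulling back along the universal map $\Spec\cc{K} \to (\cX,\cO)$, or rather directly: a point of $\cX$ lies in $U(\langle a\rangle)$ iff the stalk, a local $2$-ring, kills $a$) lets one check the equality of subterminals pointwise, where it reduces to the statement that in a local $2$-ring $\{0\}$ is prime, so $a \otimes b = 0$ in the stalk iff $a = 0$ or $b = 0$ there. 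Alternatively, and perhaps more cleanly, I would reduce to the universal case: by the adjunction of \cref{thm:spec} any $(\cX,\cO,\eta)$ receives a canonical map from $\Spec \cc{K}$, and \cref{lem:vanishinglociipullback} shows the vanishing locus functors are compatible along $f_\ast$, so axiom (d) for $(\cX,\cO)$ follows from axiom (d) for $\Spec \cc{K}$ itself; and for $\Spec\cc{K}$, \cref{lem:universalsupport} identifies $U(-)$ on $\Rad(\cc{K})^\omega$ with the Yoneda embedding into $\Shv(\Rad(\cc{K})^\vee)$, under which meets of quasicompact opens are computed in $\Rad(\cc{K})^\vee$, and there $\langle a \otimes b\rangle \mapsto \sqrt{a \otimes b} = \sqrt a \cap \sqrt b$ by \cref{lem:radicalgeneration} — exactly the meet in the Hochster dual frame.

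For the second assertion — that under $\LTop(\GBal)\times_{\twoCAlg}\twoCAlg_{\cc{K}/} \simeq \LTop(\GBal)_{\Spec\cc{K}/}$ the underlying map $\Spec\cc{K} \to \cX$ corresponds to the map classified by $\sigma_{\cO}$ via \cref{lem:supportadjunction} — I would argue as follows. By \cref{lem:supportadjunction} the map $\Shv(\Rad(\cc{K})^\vee) \to \mm{Sub}(\cX)^{\text{-}}$-data is equivalently a morphism of frames $\Rad(\cc{K})^\vee \to \mm{Sub}(\cX)$ intertwining the two copies of $\sqrt{-}$; and a morphism of $0$-localic topoi $\Shv(\Rad(\cc{K})^\vee) \to \cX$ is determined by its effect on subterminal objects (\cref{rec:0topoi}), i.e. by the induced frame map $\Rad(\cc{K})^\vee = \mm{Sub}(\Spec\cc{K}) \to \mm{Sub}(\cX)$. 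So it suffices to check these two frame maps agree, and since $\Rad(\cc{K})^\vee$ is coherent it suffices to check agreement on compact objects $\Rad(\cc{K})^\omega$. There, \cref{lem:universalsupport} identifies $U(-)_{\Spec\cc{K}}$ with the Yoneda embedding, so that the composite $\Rad(\cc{K})^\omega \to \mm{Sub}(\Spec\cc{K}) \xrightarrow{f_\ast^{-1}} \mm{Sub}(\cX)$ (using that the underlying $f_\ast$ of a morphism in $\LTop(\GBal)_{\Spec\cc{K}/}$ induces $\mm{Sub}(\Spec\cc{K}) \to \mm{Sub}(\cX)$) is computed by \cref{lem:vanishinglociipullback} to be $U(-)_\cX$ restricted along $\langle - \rangle$ and $\sqrt{-}$, which is precisely $\sigma_{\cO}$ up to the identification $\Rad(\cc{K})^\omega \simeq \mm{Prin}(\cc{K})^{\text{rad}}$. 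Chasing the adjunction unit of \cref{thm:spec} through \cref{lem:vanishinglociipullback} to see that the frame map underlying $\Spec\cc{K} \to \cX$ really is this composite is the one bookkeeping step requiring care, but it is formal once the vanishing-locus naturality square is in hand.
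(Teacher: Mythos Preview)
Your proposal works, but the paper takes a shorter and more uniform route that you reach only as your ``alternative'' for axiom~(d). Rather than verifying the five axioms one by one, the paper observes at the outset---via \cref{lem:vanishinglociipullback} (passed to left adjoints) together with \cref{lem:universalsupport}---that $\sigma_{\cO}$ factors as the universal support datum $\sqrt{-}$ of \cref{ex:radsupp} followed by the frame map $\Rad(\cc{K})^{\vee}\to\mm{Sub}(\cX)$ underlying $\Spec\cc{K}\to\cX$. Since composing a support datum with a frame map is again a support datum (the observation just before \cref{cor:univ-prop-of-spc}), all five axioms follow in one stroke. Once you invoke this factorization for~(d), your separate checks of (a)--(c), (e) become redundant. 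For the second assertion your argument is essentially the paper's: the induced map of frames is a morphism of support data with source the universal one, so \cref{lem:supportadjunction} pins it down.

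Two concrete issues with the direct route. First, your stalkwise attack on axiom~(d) has a genuine gap: an arbitrary $\infty$-topos $\cX$ need not have enough points, so equality of subterminals cannot be tested pointwise; your reduction to $\Spec\cc{K}$ is the correct fix and is precisely the paper's move. Second, your argument for $d(\unit)=\top_{\mm{Sub}(\cX)^{\op}}$ is incomplete. You need $U(\cc{K})=\emptyset_{\cX}$, i.e., that the structure sheaf vanishes on no positive subterminal; since $\cc{K}=\top_{\Idl(\cc{K})}$ is a \emph{limit} in $\Idl(\cc{K})$, the left-adjointness of $U(-)$ does not give this, and the ``counit/unit yoga'' you sketch only computes $\cc{I}(\top_{\cX})$, not $U(\cc{K})$. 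One can extract it from the locality axiom for $\GBal$-structures (the empty cover of the zero $2$-ring forces $\cO(0)\simeq\emptyset_{\cX}$), but at that point the naturality argument is both cleaner and handles all axioms at once.
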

\begin{proof}
  From \cref{lem:universalsupport} and \cref{lem:vanishinglociipullback} one has that the function $\sigma_{\cO}$ is associated to the direct image of the $\Rad(\cc{K})^{\vee}$-valued support theory of \cref{ex:radsupp} under the induced map $\Rad(\cc{K})^{\vee} \to \mm{Sub}(\cX)$, from which it follows that $\sigma_{\cO}$ must itself be a support theory. For the identification of the underlying map $\Spec \cc{K} \to \cX \in \LTop^{[1]}$, first note that this map must induce a map of associated support theories $(\Rad(\cc{K})^{\vee}, \sigma_{\cO_{\cc{K}}}) \to (\mm{Sub}(\cX), \sigma_{\cO})$ by noting that the commutative square of \cref{lem:vanishinglociipullback} yields a commutative square of associated left adjoint functors. By \cref{lem:universalsupport}, $\sigma_{\cO_{\cc{K}}}$ may be identified with the universal support theory $\sqrt{-}$, from which the map $\Spec \cc{K} \to \cX$ is uniquely determined by \cref{lem:supportadjunction}.
\end{proof}

\begin{remark}
    In particular, for any $(\cX, \cO) \in \LTop(\GBal) \times_{\twoCAlg} \twoCAlg_{\cc{K}/}$ the structure map $\Spec \cc{K} \to \cX$ above can be identified with the unique map induced from the map of distributive lattices $U(-)\colon \Rad(\cc{K})^{\op} \to \mm{Sub}(\cX)$ using the Stone duality for distributive lattices of \cref{rec:stoneduality} and the adjunction of \cref{rec:0topoi}.
\end{remark}
\vspace{5pt}

\section{Stalk-Locality of the Telescope Conjecture}\label{sec:slp}

In this section we prove \cref{thmalph:slp}. We refer the reader to \cref{rec:smashingloc} and the ensuing discussion for a refresher on the notions invoked below. 

\begin{definition}
  Let $\cc{K} \in \twoCAlgrig$. We say that $\cc{K}$ \tdef{satisfies the telescope conjecture} if the inclusion $\Idem_{\cc{K}}^{\mm{fin}} \subseteq \Idem_{\cc{K}}$ is an equivalence.
\end{definition}

\begin{theorem}\label{thm:slp}
  Let $\cc{K} \in \twoCAlgrig$. Then $\cc{K}$ satisfies the telescope conjecture if and only if $\cc{K}/\cc{P}$ satisfies the telescope conjecture for every prime $\cc{P} \in \Rad(\cc{K})$.
\end{theorem}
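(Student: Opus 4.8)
The statement is an ``if and only if'' whose two directions are of quite different character, and I would treat them separately.

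The ``only if'' direction is elementary. Suppose $\cc{K}$ satisfies the telescope conjecture; I must show $\cc{K}/\cc{P}$ does for every prime $\cc{P}$. Since $\cc{K}$ is rigid, \cref{prop:finitelocissmashing} exhibits $\Ind(\cc{K}) \to \Ind(\cc{K}/\cc{P})$ as the smashing localization at the idempotent algebra $A_{\cc{P}}$ corresponding to $\cc{P}$ under \cref{cor:fidemtoideal}, so that $\Ind(\cc{K}/\cc{P}) \simeq \Mod_{A_{\cc{P}}}(\Ind(\cc{K}))$. An elementary manipulation with idempotent algebras — using that $A_{\cc{P}}$ is idempotent, so that the forgetful functor $\Mod_{A_{\cc{P}}}(\Ind(\cc{K})) \to \Ind(\cc{K})$ restricts to an equivalence $\Idem_{\cc{K}/\cc{P}} \simeq (\Idem_{\cc{K}})_{A_{\cc{P}}/}$ onto the over-poset — together with \cref{cor:fidemtoideal} and \cref{lem:preimagesofidealsandradicals} identifies the inclusion $\Idem_{\cc{K}/\cc{P}}^{\mm{fin}} \hookrightarrow \Idem_{\cc{K}/\cc{P}}$ with the base change of $\Idem_{\cc{K}}^{\mm{fin}} \hookrightarrow \Idem_{\cc{K}}$ along $(-)_{A_{\cc{P}}/}$. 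Equivalences are stable under base change, so this direction follows.

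For the ``if'' direction the plan is to organize both sides of the inclusion $\Idem_{\cc{K}}^{\mm{fin}} \hookrightarrow \Idem_{\cc{K}}$ into sheaves on $\Spc\cc{K}$ and compare them stalkwise. For the right-hand side: by \cref{thm:indsheaf} the assignment $U \mapsto \Ind(\cc{O}_{\cc{K}}(U))$ is a $\CAlg(\Catbig)$-valued sheaf on $\Spec\cc{K} \simeq \Shv(\Spc\cc{K})$ (this uses rigidity), and $\Idem(-)$ preserves all limits of symmetric monoidal categories — the idempotency condition is detected on the factors of a limit — so postcomposition yields a sheaf $\mathfrak{I}\colon U \mapsto \Idem_{\cc{O}_{\cc{K}}(U)}$ with $\mathfrak{I}(\Spc\cc{K}) = \Idem_{\cc{K}}$ by \cref{cor:rigidspec_fullyfaithful}. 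For the left-hand side I would instead apply the Mayer--Vietoris criterion of \cref{prop:sheavesonspec} to the functor $\Idl\colon \twoCAlgrig \to \widehat{\mm{Poset}}$: for a Zariski cover $\cc{L} \to \cc{L}_1 \times \cc{L}_2$ with kernels $\cc{I}_1,\cc{I}_2$, rigidity gives $\Idl(\cc{L})=\Rad(\cc{L})$ and $\cc{I}_1 \wedge \cc{I}_2 = \langle 0\rangle$, and \cref{lem:preimagesofidealsandradicals} rewrites the required square as $\Idl(\cc{L}) \to \Idl(\cc{L})_{\cc{I}_1/} \times_{\Idl(\cc{L})_{\cc{I}_1 \vee \cc{I}_2/}} \Idl(\cc{L})_{\cc{I}_2/}$, $\cc{J} \mapsto (\cc{J} \vee \cc{I}_1,\ \cc{J} \vee \cc{I}_2)$; this is a bijection since $\Idl(\cc{L})$ is a distributive lattice (\cref{prop:latticeiscoherent}) with $\cc{I}_1 \wedge \cc{I}_2 = \bot$, the inverse being $(\cc{J}_1,\cc{J}_2)\mapsto \cc{J}_1 \wedge \cc{J}_2$. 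This produces a subsheaf $\mathfrak{I}^{\mm{fin}} \hookrightarrow \mathfrak{I}$ with $\mathfrak{I}^{\mm{fin}}(U) = \Idl(\cc{O}_{\cc{K}}(U)) \simeq \Idem_{\cc{O}_{\cc{K}}(U)}^{\mm{fin}}$ (\cref{cor:fidemtoideal}; that this is a subpresheaf of $\mathfrak{I}$ is the ``only if'' argument applied to the Karoubi quotients $\cc{O}_{\cc{K}}(U) \to \cc{O}_{\cc{K}}(V)$), and with $\mathfrak{I}^{\mm{fin}}(\Spc\cc{K}) = \Idem_{\cc{K}}^{\mm{fin}}$.

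Finally, I would compute stalks and conclude. By \cref{lem:sheafificationandstalks} and \cref{lem:identificationofstalks}, the stalk of either sheaf at a prime $\cc{P}$ is the filtered colimit over $\{\cc{I} \in \Rad(\cc{K})^{\omega} : \cc{I} \subseteq \cc{P}\}$ of its value on $\cc{K}/\cc{I}$; since $\colim_{\cc{I} \subseteq \cc{P}}\cc{K}/\cc{I} \simeq \cc{K}/\cc{P}$ (\cref{prop:quotientposet}) and each of $\Ind$, $\Idem\circ\Ind$ and $\Idl$ commutes with this filtered colimit — the first two because filtered colimits of symmetric monoidal categories detect algebras and idempotents levelwise, the last by a short direct check — one gets $\mathfrak{I}_{\cc{P}} \simeq \Idem_{\cc{K}/\cc{P}}$ and $\mathfrak{I}^{\mm{fin}}_{\cc{P}} \simeq \Idem_{\cc{K}/\cc{P}}^{\mm{fin}}$, with $\mathfrak{I}^{\mm{fin}} \hookrightarrow \mathfrak{I}$ inducing the tautological inclusion on each stalk. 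By hypothesis this is an equivalence at every $\cc{P}$; passing to underlying sets, $\mathfrak{I}^{\mm{fin}} \hookrightarrow \mathfrak{I}$ is a morphism of sheaves of sets on the sober space $\Spc\cc{K}$ which is a stalkwise bijection, hence a bijection, and evaluating at the terminal object $\Spc\cc{K}$ gives $\Idem_{\cc{K}}^{\mm{fin}} = \Idem_{\cc{K}}$, i.e.\ $\cc{K}$ satisfies the telescope conjecture. I expect the main obstacle — and the place where rigidity is genuinely used — to be the sheaf statement for $\mathfrak{I}^{\mm{fin}}$: unlike $\mathfrak{I}$ it does not arise by pushing a limit-preserving functor through the structure sheaf, so one must check the Mayer--Vietoris gluing for radical ideals by hand (where distributivity of $\Idl$ and the vanishing of the $\otimes$-nilradical enter) and separately verify that its stalk at $\cc{P}$ is $\Idl(\cc{K}/\cc{P})$ rather than something larger.
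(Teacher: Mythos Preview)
Your proposal follows the paper's strategy closely: the ``only if'' direction via the slice identification $\Idem_{\cc{K}/\cc{I}} \simeq (\Idem_{\cc{K}})_{L_{\cc{I}}/}$, and the ``if'' direction by sheafifying $\Idem^{\mm{fin}} \hookrightarrow \Idem$ over $\Spec\cc{K}$ and comparing stalkwise. The sheaf properties (via \cref{thm:indsheaf} plus limit-preservation of $\Idem$, and via \cref{prop:sheavesonspec} plus distributivity of $\Rad$) match the lemma preceding the paper's proof of \cref{thm:slp}.

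The gap is in the stalk computation. Your justification ``filtered colimits of symmetric monoidal categories detect algebras and idempotents levelwise'' is a statement about colimits formed in $\CAlg^{\times}(\Catbig)$, and tacitly presupposes that $\Ind(\cc{K}/\cc{P})$ is the filtered colimit of the $\Ind(\cc{K}/\cc{I})$ \emph{computed in $\Catbig$}. But $\Ind$ is an equivalence onto $\CAlg(\PrLstomega)$, and colimits in $\PrL$ are in general computed as limits along right adjoints, not as colimits in $\Catbig$. The paper isolates exactly this point as \cref{lem:this-weird-thing-about-localizations-that-isnt-super-necessary-lol}: for a filtered diagram in $\PrL$ whose transition maps are all localizations (as here), the forgetful functor $\PrL \to \Catbig$ \emph{does} preserve the colimit. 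Combined with presentability of $\Idem_{\cc{K}}$ (\cref{lem:idem-is-presentable}), this lets one rewrite the stalk colimit as the intersection $\bigcap_{\cc{I} \subseteq \cc{P}}(\Idem_{\cc{K}})_{L_{\cc{I}}/}$ inside $\Idem_{\cc{K}}$, which is then identified with $(\Idem_{\cc{K}})_{L_{\cc{P}}/} \simeq \Idem_{\cc{K}/\cc{P}}$; the same mechanism handles $\Idem^{\mm{fin}}$. Your ``short direct check'' for $\Idl$ faces the same obstruction: the injectivity half --- that $\cc{J}' \vee \cc{P} = \cc{J}'' \vee \cc{P}$ forces $\cc{J}' \vee \cc{I} = \cc{J}'' \vee \cc{I}$ for some principal $\cc{I} \subseteq \cc{P}$ --- is not immediate, since arbitrary radical ideals are not compact in $\Rad(\cc{K})$. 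Contrary to your closing assessment, the sheaf property for $\mathfrak{I}^{\mm{fin}}$ is the routine part; the stalk identification is where the real work lies.
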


\begin{remark}
  Our primary interest in this result arises from the applications sketched in \cite{hrbekTelescopeConjectureHomological2025}, which demonstrates a characterization of the telescope conjecture via the use of homological residue fields under the assumptions of the \emph{Nerves-of-Steel Conjecture} and the \emph{Hereditary Stalk-Locality Principle} for a given rigid 2-ring. \cref{thm:slp} demonstrates that this latter condition is in fact automatically satisfied for all rigid 2-rings. Future work of the third author will study other stalk-locality principles for 2-rings using similar techniques to those invoked below.
\end{remark}

Both $\Idem$ and $\Idem^{\mm{fin}}$ are functorial in $\twoCAlgrig$; idempotent algebras are clearly preserved by symmetric monoidal functors, and the functoriality of $\Idem^{\mm{fin}}$ follows from this fact, \cref{cor:fidemtoideal}, and \cref{cor:basechange}. We will need the following lemma, which allows the telescope conjecture to be approached descent-theoretically.

\begin{lemma}
  Let $\cc{K} \in \twoCAlgrig$.
  \begin{enumerate}
  \item The assignment $U \mapsto \Idem_{\cO_{\cc{K}}(U)}$ extends to a sheaf of posets on $\Spec\cc{K}$.
  \item The assignment $U \mapsto \Idem_{\cO_{\cc{K}}(U)}^{\mm{fin}}$ extends to a sheaf of posets on $\Spec \cc{K}$.
  \end{enumerate}
\end{lemma}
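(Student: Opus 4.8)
The two assertions both say that a certain assignment of posets to quasicompact opens extends to a sheaf on $\Spec\cc{K}\simeq\Shv(\Spc\cc{K})$. The strategy is to invoke the reduction schema of \cref{prop:sheavesonspec} with $\cc{C}=\twoCAlgrig$ and $\cc{F}$ either the functor $\Idem_{\Ind(-)}$ or $\Idem^{\mm{fin}}_{\Ind(-)}$, together with the identification of $\widetilde{\cO}_{\cc{K}}$ with $\cc{K}/-$ from \cref{obs:identifyingOK}. This reduces us to the following Mayer--Vietoris statement: for any $\cc{L}\in\twoCAlgrig$ and any Zariski cover $\cc{L}\to\cc{L}_{1}\times\cc{L}_{2}$ with kernels $\cc{I}_{1},\cc{I}_{2}$ (so $\cc{I}_{1}\cap\cc{I}_{2}\subseteq\sqrt{0}=\{0\}$, using rigidity and \cref{lem:rigid_radicalideal}), the square
\[
\xymatrix{
\Idem_{\cc{L}} \ar[r] \ar[d] & \Idem_{\cc{L}/\cc{I}_{1}} \ar[d] \\
\Idem_{\cc{L}/\cc{I}_{2}} \ar[r] & \Idem_{\cc{L}/\langle\cc{I}_{1},\cc{I}_{2}\rangle}
}
\]
is a pullback of posets, and similarly with $\Idem^{\mm{fin}}$ in place of $\Idem$. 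Since all the posets here are subsets of $\cS_{\leq-1}$-valued, being a pullback square amounts to a concrete set-theoretic statement about which idempotent algebras are detected after restriction.

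\textbf{Key steps, part (a).} First I would recall that $\Idem_{\cc{L}}=\Idem(\Ind(\cc{L}))$ and that, by \cref{def:rec:smashingloc}, an element is a smashing localization $\Ind(\cc{L})\to\Mod_{A}(\Ind(\cc{L}))$; passing to kernels gives an injection of $\Idem_{\cc{L}}$ into the lattice of localizing tensor-ideals of $\Ind(\cc{L})$. The square of quotients $\cc{L}\to\cc{L}/\cc{I}_{i}$ is, after applying $\Ind$, the Bousfield--style recollement studied in \cref{thm:indsheaf}: there the key input was \cref{lem:intersectinglocalizingsubs}, giving $\Ind(\langle x_{1}\rangle)\cap\Ind(\langle x_{2}\rangle)=\Ind(\langle x_{1}\otimes x_{2}\rangle)$, which vanishes when $x_{1}\otimes x_{2}\simeq0$. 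Concretely: an idempotent $A\in\Idem_{\cc{L}}$ is determined by its kernel, a localizing ideal $\cc{E}\subseteq\Ind(\cc{L})$; restricting to $\cc{L}/\cc{I}_{i}$ gives the localizing ideal generated by the image of $\cc{E}$, or equivalently $\cc{E}$ intersected with $\Ind(\cc{L}/\cc{I}_{i})$ viewed inside $\Ind(\cc{L})$ via the fully faithful right adjoint. The content of the pullback square is then: (i) $\cc{E}$ is recovered from the pair $(\cc{E}\cap\Ind(\cc{L}/\cc{I}_{1}),\ \cc{E}\cap\Ind(\cc{L}/\cc{I}_{2}))$ together with the (automatically satisfied, since the poset is $(-1)$-truncated) agreement on the overlap; and (ii) any compatible pair glues to a genuine idempotent of $\cc{L}$. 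For (i) one uses that $\Ind(\cc{L})$ is itself glued from $\Ind(\cc{L}/\cc{I}_{1})$ and $\Ind(\cc{L}/\cc{I}_{2})$ over $\Ind(\cc{L}/\langle\cc{I}_{1},\cc{I}_{2}\rangle)$ (the Cartesian square \eqref{eq:indsheafmv} established in the proof of \cref{thm:indsheaf}), so a localizing ideal of the total category that restricts to localizing ideals on the pieces is the "intersection-pullback" of those restrictions. For (ii), given localizing ideals $\cc{E}_{i}\subseteq\Ind(\cc{L}/\cc{I}_{i})$ whose images in $\Ind(\cc{L}/\langle\cc{I}_{1},\cc{I}_{2}\rangle)$ agree, one forms the associated smashing idempotents $A_{1},A_{2}$, pulls them back into $\Ind(\cc{L})$ along the right adjoints, takes the fiber product $A_{1}\times_{A_{1}\otimes A_{2}}A_{2}$ using the recollement, and checks it is again an idempotent algebra of $\Ind(\cc{L})$ whose restrictions are $A_{1},A_{2}$ --- this is formally parallel to the gluing of modules carried out in \cref{thm:2MODisasheaf}, specialized to the unit module, and one verifies the idempotency relation $A\otimes(\unit\to A)$ being an equivalence by smashing the gluing square with $A$ and using that it holds on each of $A_{1},A_{2},A_{1}\otimes A_{2}$.

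\textbf{Part (b) and the main obstacle.} For the finite idempotents, the square in question is the restriction of the part-(a) square along the inclusions $\Idem^{\mm{fin}}_{\cc{L}/\cc{I}_{i}}\subseteq\Idem_{\cc{L}/\cc{I}_{i}}$; by \cref{cor:fidemtoideal} it is identified with the square
\[
\xymatrix{
\Idl(\cc{L}) \ar[r] \ar[d] & \Idl(\cc{L}/\cc{I}_{1}) \ar[d] \\
\Idl(\cc{L}/\cc{I}_{2}) \ar[r] & \Idl(\cc{L}/\langle\cc{I}_{1},\cc{I}_{2}\rangle),
}
\]
where the maps send an ideal to the ideal generated by its image. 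Here \cref{lem:preimagesofidealsandradicals} is the decisive tool: it identifies $\Idl(\cc{L}/\cc{I}_{i})\simeq\Idl(\cc{L})_{\cc{I}_{i}/}$ via preimage, compatibly with the quotient maps, and under $\cc{L}$ rigid one has $\Idl=\Rad$. Using \cref{lem:radicalgeneration} and the coherence of $\Rad(\cc{L})$, the square becomes $\Idl(\cc{L})\to\Idl(\cc{L})_{\cc{I}_{1}/}\times_{\Idl(\cc{L})_{\langle\cc{I}_{1},\cc{I}_{2}\rangle/}}\Idl(\cc{L})_{\cc{I}_{2}/}$, and one checks directly that $\cc{J}\mapsto(\cc{J}\vee\cc{I}_{1},\cc{J}\vee\cc{I}_{2})$ is a bijection onto compatible pairs, with inverse $(\cc{J}_{1},\cc{J}_{2})\mapsto\cc{J}_{1}\cap\cc{J}_{2}$, using crucially that $\cc{I}_{1}\cap\cc{I}_{2}=0$ (distributivity of the frame $\Rad(\cc{L})$ makes $\cc{J}\mapsto\cc{J}\vee\cc{I}_{i}$ behave well). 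The main obstacle I anticipate is step (ii) of part (a): verifying that the glued object $A_{1}\times_{A_{1}\otimes A_{2}}A_{2}$ is genuinely an \emph{idempotent} algebra in $\Ind(\cc{L})$ and that its module category is the pullback $\Mod_{A_{1}}\times_{\Mod_{A_{1}\otimes A_{2}}}\Mod_{A_{2}}$ --- this requires care with the symmetric monoidal structure on the gluing (Horev's theorem, \cite[Theorem B]{Horev2017}, and the compatibility of the recollement with $\otimes$), but it is essentially the unit-object case of machinery already deployed in \cref{thm:indsheaf} and \cref{thm:2MODisasheaf}, so it should go through; the non-finite case is where one must be most vigilant, since the finite case is then a clean corollary via \cref{lem:preimagesofidealsandradicals}.
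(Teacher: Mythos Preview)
Your treatment of part~(b) is essentially the paper's: via \cref{cor:fidemtoideal} the square becomes one of slices $\Idl(\cc{L})_{\cc{I}/}$, and you verify the pullback using distributivity. The paper phrases this last step as a general fact about distributive lattices (for $F\in\mm{DLat}$ and $x,y\in F$, the maps $(- \vee x,\,- \vee y)$ and $-\wedge-$ give inverse bijections $F_{x\wedge y/}\simeq F_{x/}\times_{F_{x\vee y/}}F_{y/}$), but the content is the same.

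For part~(a) you take a genuinely different route. The paper does not glue idempotents by hand at all; it simply observes that the functor $\Idem(-)\colon\CAlg(\PrL)\to\mm{Poset}$ preserves all limits (citing \cite[Corollary~9.9]{barthelDescentTensorTriangular2024a}) and applies this to the $\CAlg(\Catbig)$-valued sheaf $\cO^{\Ind}_{\cc{K}}$ already established in \cref{thm:indsheaf}. A limit-preserving functor postcomposed with a sheaf is again a sheaf, so part~(a) is a one-line consequence. This completely circumvents the obstacle you flag: there is no need to verify that the fiber product $A_{1}\times_{A_{1}\otimes A_{2}}A_{2}$ is idempotent, because the equivalence $\Ind(\cc{L})\simeq\Ind(\cc{L}/\cc{I}_{1})\times_{\Ind(\cc{L}/\langle\cc{I}_{1},\cc{I}_{2}\rangle)}\Ind(\cc{L}/\cc{I}_{2})$ already holds in $\CAlg(\Catbig)$, and $\Idem$ of a limit of symmetric monoidal $\infty$-categories is the limit of the $\Idem$'s.

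Your direct approach should succeed --- the idempotency check you sketch (restrict to each piece, use that the joint restriction is conservative) is the right idea, and amounts to reproving the special case of limit-preservation you need --- but it is more work. The paper also remarks that part~(a) can alternatively be extracted from the distributivity of the smashing spectrum established in \cite{BKS2018}, independently of \cref{thm:indsheaf}.
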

\begin{proof}
  The fact that $\Idem(\cc{C})$ is a set for any $\cc{C} \in \CAlg(\PrL)$ is shown in \cite[Theorem 3.13]{aokiSheavesspectrumAdjunction2023}. Part (a) now follows from \cref{thm:indsheaf} and the fact that the functor $\Idem(-)\colon \CAlg(\PrL) \to \mm{Poset}$ preserves all limits, see for example \cite[Corollary 9.9]{barthelDescentTensorTriangular2024a}. 
  
  From the equivalence of \cref{cor:fidemtoideal}, the restriction of $\Idem_{\cO_{\cc{K}}}^{\mm{fin}}$ to $\mm{Prin}(\cc{K}) \subseteq (\Rad(\cc{K})^{\vee})^{\op}$ may be identified with the assignment $\cc{I} \mapsto \Rad(\cc{K})_{\cc{I}/}$, which sends an inclusion $\cc{I} \subseteq \cc{J}$ to the map
  \[
    \Rad(\cc{K})_{\cc{I}/} \to \Rad(\cc{K})_{\cc{J}/} \text{ via } \cc{I}' \mapsto \langle \cc{J}, \cc{I}'\rangle. 
  \]
  By \cref{lem:qcmvsuffices}, part (b) will follow from demonstrating that this assignment preserves pullbacks. We claim that this is a general feature of distributive lattices. To this end, let $F \in \mm{DLat}$ arbitrary and $x, y \in F$. We will show that there are mutually inverse equivalences as below
  \[
    (- \vee x, - \vee y)\colon F_{x \wedge y/ } \rightleftarrows F_{x/ }\times_{F_{x \vee y/ }} F_{y/ }\noloc - \wedge -.
  \]
  Given $z \in F_{x \wedge y}$, the relation $(z \vee x) \wedge (z \vee y) = z \vee (x \wedge y) = z$ follows from the assumption of distributivity. Similarly, given $(x', y') \in F_{x/} \times_{F_{x \vee y}}F_{y/}$, the relation $(x'\wedge y') \vee x = (x' \vee x) \wedge (y'\vee x) = x' \wedge (x' \vee y) = x$ follows from distributivity and the assumption on $(x',y')$.   
\end{proof}

\begin{remark}
    Part (a) of the lemma above also follows from the distributivity of the smashing spectrum shown in \cite{BKS2018}, and is completely independent of the results of this paper. As such, the proof of \cref{thm:slp} works equally well if one wishes to restrict to the triangulated setting. 
\end{remark}

\begin{lemma}\label{lem:this-weird-thing-about-localizations-that-isnt-super-necessary-lol}
    Let $I$ be a filtered category with an initial object, and $F \colon I \to \PrL$ be a filtered diagram such that every morphism is a localization. Then the forgetful functor $\PrL \to \Catbig$ preserves the colimit of $F$.
\end{lemma}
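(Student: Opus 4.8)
The plan is to reduce to the standard fact that a filtered colimit of left adjoints that are all \emph{Bousfield localizations} can be computed after applying the forgetful functor $\PrL \to \Catbig$. First I would set up notation: write $i_{0} \in I$ for the initial object, $\cc{C}_{0} \coloneqq F(i_{0})$, and for each $i$ write $L_{i} \colon \cc{C}_{0} \to F(i)$ for the structure morphism, which by hypothesis is a localization with fully faithful right adjoint $R_{i}$. The composite $R_{i} L_{i} \colon \cc{C}_{0} \to \cc{C}_{0}$ is then a left-exact (in fact accessible) localization endofunctor of $\cc{C}_{0}$ in the sense of \cite[\S 5.5.4]{LurieHTT}, and the essential image $\cc{D}_{i} \coloneqq R_{i}(F(i)) \subseteq \cc{C}_{0}$ is a reflective subcategory. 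Since each transition map $F(i) \to F(j)$ is again a localization, the induced inclusions $\cc{D}_{j} \hookrightarrow \cc{D}_{i}$ witness $i \mapsto \cc{D}_{i}$ as a \emph{cofiltered} diagram of reflective subcategories of $\cc{C}_{0}$ (reversing variance), with all maps the evident inclusions.

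The key step is then to identify $\colim_{I} F$ in $\PrL$ with the reflective subcategory $\cc{D}_{\infty} \coloneqq \bigcap_{i \in I} \cc{D}_{i} \subseteq \cc{C}_{0}$. On one hand, the intersection of a filtered family of reflective (accessible) localizations of a presentable category is again reflective and accessible --- this is \cite[Proposition 5.5.4.18]{LurieHTT} or an easy extension of it to filtered intersections; concretely, $\cc{D}_{\infty}$ is the category of $S$-local objects for $S \coloneqq \bigcup_i S_i$ where $S_i$ is the set of maps inverted by $L_i$. On the other hand, the cone maps $\cc{C}_{0} \to F(i)$ in $\PrL$ factor through $\cc{C}_0 \to \cc{D}_{\infty}$ (the $S$-localization) composed with $\cc{D}_{\infty} \hookrightarrow \cc{D}_i \simeq F(i)$, and one checks via the universal property of the colimit --- using that a cocone on $F$ in $\PrL$ is the same as a left adjoint out of $\cc{C}_0$ inverting every $S_i$, hence inverting $S$ --- that $\cc{D}_{\infty}$ with these maps satisfies the universal property of $\colim_I F$ in $\PrL$. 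Finally, the \emph{same} argument computes the colimit in $\Catbig$: a cocone on $F$ in $\Catbig$ is a functor out of $\cc{C}_0$ together with, for each $i$, a factorization through $F(i)$; since each $L_i$ exhibits $F(i)$ as a localization of $\cc{C}_0$ (a Dwyer--Kan localization at $S_i$, as all the $R_i$ are fully faithful), $\colim_I F$ in $\Catbig$ is the Dwyer--Kan localization of $\cc{C}_0$ at $S = \bigcup_i S_i$, which, because the $S$-local objects form a reflective subcategory, coincides with $\cc{D}_{\infty}$. Thus the forgetful functor carries one colimit to the other.

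I expect the main obstacle to be the passage to the \emph{infinite} intersection $\cc{D}_{\infty}$: the cited statement \cite[Proposition 5.5.4.18]{LurieHTT} handles a single localization or finite intersections, and one must verify that a filtered intersection of accessible reflective localizations is still accessible and reflective. The cleanest route is to note that $\cc{D}_i = S_i\text{-local objects}$ with $S_i$ a (small, up to saturation) set, so $\cc{D}_\infty = S\text{-local objects}$ with $S = \bigcup_i S_i$ small since $I$ is essentially small and each $S_i$ is; then \cite[Proposition 5.5.4.15]{LurieHTT} applies directly to give that $\cc{D}_\infty$ is a presentable reflective (accessible) localization of $\cc{C}_0$. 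After that, matching the two universal properties is routine, because in both $\PrL$ and $\Catbig$ a cocone under $F$ unwinds to ``a functor on $\cc{C}_0$ inverting $\bigcup_i S_i$,'' with the only difference --- left-adjointness in $\PrL$ versus nothing in $\Catbig$ --- already being accounted for by the fact that $\cc{D}_\infty \hookrightarrow \cc{C}_0$ is reflective, so its inverse image functor $\cc{C}_0 \to \cc{D}_\infty$ is simultaneously the left adjoint and the Dwyer--Kan localization.
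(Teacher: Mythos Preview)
Your proposal is correct and follows essentially the same approach as the paper: identify the colimit with the intersection $\cc{D}_\infty=\bigcap_i\cc{D}_i\subseteq\cc{C}_0$ of the reflective subcategories, recognize this as the localization at $S=\bigcup_i S_i$ via \cite[Proposition~5.5.4.15]{LurieHTT}, and then match universal properties in $\PrL$ and $\Catbig$ via \cite[Proposition~5.2.7.12]{LurieHTT}. The only cosmetic difference is that the paper first computes the $\PrL$-colimit abstractly using the equivalence $\PrL\simeq\mathrm{Pr}^{\mathrm{R},\op}$ (obtaining the limit of the fully faithful right adjoints) and then identifies it with $\cc{D}_\infty$, whereas you construct $\cc{D}_\infty$ directly and verify both universal properties; the core argument is identical.
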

\begin{proof}
Denote the initial object of $I$ by $\emptyset$. We write $\cc{C}\coloneq F(\emptyset)$ and $L_{i} \dashv R_{i}$ for the corresponding localizations $\cc{C} \to F(i)$, given $i \in I$. From the equivalence $\PrL \simeq \Pr^{\mm{R},\op}$, the object $\cc{C}' \coloneq \varinjlim_{I} F \in \PrL$ has underlying $\infty$-category identified with $\varprojlim_{I^{\op}} F^{\op} \in \Pr^{\mm{R}}$, and the right adjoint map $R\colon \cc{C}' F^{\op} \to \cc{C} \in \Pr^{\mm{R}, [1]}$ is computed as the limit of the right adjoint functors $R_{i}\colon F(i) \to \cc{C}$. As $R$ is a limit of fully faithful functors along a diagram of fully faithful maps, it is itself fully faithful, and hence the adjoint pair $L\colon \cc{C} \rightleftarrows \cc{C}' F \noloc R$ in $\Pr^{\mm{L},[1]}$ is a localization. 

We now freely employ the language of localizations and local objects described by Bousfield, see for example \cite[\S 5.5.4]{LurieHTT}; we will also overload notation by simply writing $L$ and $L\cc{C}$ to refer to the monad $RL \colon \cc{C} \to \cc{C}$ and the image of $\cc{C}'$ under $R$, and similarly for $L_{i} \dashv R_{i}$. By \cite[Proposition 5.2.7.12]{LurieHTT}, for every $\cc{D} \in \Catbig$, composition with $L$ induces a fully faithful embedding \[\Fun(L\cc{C}, \cc{D}) \hookrightarrow \Fun(\cc{C}, \cc{D})\] consisting of exactly those functors which invert the collection $S \subseteq \cc{C}$ of morphisms such that $Lf \in \cc{C}^{[1]}$ is an equivalence. Applying \cite[Proposition 5.5.4.2, Proposition 5.5.4.15]{LurieHTT} there is a small collection $S_{0} \subseteq S$ such that $L\cc{C}$ is exactly the collection of $S_{0}$-local objects and $S$ is identified with the class of $S_{0}$-equivalences. Similarly, the cited results also imply that $L_{i}\cc{C}$ may be described as exactly the $S_{0}^{i}$-local objects for small collections of morphisms $S_{0}^{i}$; since by construction, $L\cc{C} \simeq \varprojlim_{I^{\op}} L_{i} \cc{C}$, we have that $L\cc{C} \subseteq \cc{C}$ is the full subcategory of $\bigcap_{I} S_{0}^{i}$ local objects, whence $S$ may be identified as the collection $\bigcap_{I} S_{i}$ consisting of the $\bigcap_{I} S_{0}^{i}$-equivalences. It follows that composition with $L$ induces an identification \[\Fun(L\cc{C}, \cc{D}) \simeq {\varprojlim}_{I^{\op}}\Fun(L_{i} \cc{C}, \cc{D}) \hookrightarrow \Fun(\cc{C}, \cc{D})\] for all $\cc{D} \in \Catbig$ and hence the canonical map $\varinjlim_{I} F(i) \to \cc{C}'$ in $\Catbig$ is an equivalence.
\end{proof}

\begin{lemma}\label{lem:idem-is-presentable}
    Given $\cc{C} \in \CAlg(\PrLst)$, $\Idem(\cc{C})$ is presentable.
\end{lemma}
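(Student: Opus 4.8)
The plan is to exhibit $\Idem(\cc{C})$ as an accessible localization of a presentable $\infty$-category, or — more economically — to exhibit it directly as a small poset, hence trivially presentable. Recall from \cref{rec:smashingloc} that $\Idem(\cc{C})$ is a poset, since $\Map_{\CAlg(\cc{C})}(A,-)$ takes values in $\cS_{\leq -1}$ for $A \in \Idem(\cc{C})$; thus it suffices to show $\Idem(\cc{C})$ is \emph{essentially small} and admits all small colimits. Smallness is the crux, and here I would invoke \cite[Theorem 3.13]{aokiSheavesspectrumAdjunction2023} (already cited above), which establishes that $\Idem(\cc{C})$ is a (small) set for any $\cc{C} \in \CAlg(\PrL)$; since $\cc{C} \in \CAlg(\PrLst) \subseteq \CAlg(\PrL)$, this applies. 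A poset whose underlying set is small is automatically a small poset.

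It then remains to produce small colimits in $\Idem(\cc{C})$. First, $\Idem(\cc{C})$ has an initial object, namely the unit $\unit \in \CAlg(\cc{C})$, which is idempotent since $\unit \otimes (\unit \to \unit)$ is visibly an equivalence, and it receives a unique map to any commutative algebra. For general small colimits, since $\Idem(\cc{C})$ is a poset it suffices to exhibit arbitrary coproducts (joins): given a small family $\{A_\alpha\}_{\alpha \in S}$ of idempotent algebras, form the relative tensor product $A \coloneq \bigotimes_{\alpha \in S} A_\alpha$ in $\CAlg(\cc{C})$ (with the tensor taken over $\unit$, and, for an infinite family, as the filtered colimit over finite subsets of $S$ of the finite tensor products — this colimit exists as $\CAlg(\cc{C})$ is presentable). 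One checks that $A$ is again idempotent: the map $A \otimes(\unit \to A)$ is a filtered colimit of the maps $A \otimes(\unit \to A_F)$ for finite $F$, and for finite $F$ the map $\unit \to \bigotimes_{\alpha \in F} A_\alpha$ becomes an equivalence after tensoring with any $A_\beta$, $\beta \in F$, hence after tensoring with $A$; a short induction on $|F|$ (using that each factor is idempotent and that tensoring with an idempotent kills its unit map) gives the claim. Finally one verifies that $A$ with the canonical maps $A_\alpha \to A$ is the join in the poset $\Idem(\cc{C})$: any idempotent $B$ receiving compatible maps from all $A_\alpha$ receives a map from each finite tensor $A_F$, hence from the filtered colimit $A$, and such a map is unique since $\Idem(\cc{C})$ is a poset. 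A poset with a small underlying set that admits all small joins is a complete lattice, in particular presentable.

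The main obstacle is purely bookkeeping around the idempotence of the (possibly infinite) tensor product and the identification of it with the poset-theoretic join; all the genuinely structural input — that $\Idem(\cc{C})$ is a \emph{set} and that $\CAlg(\cc{C})$ is presentable so that the relevant colimits exist — is already available, the former from \cite[Theorem 3.13]{aokiSheavesspectrumAdjunction2023} and the latter from \cite{LurieHA}. An alternative, essentially equivalent route: observe that $\Idem(\cc{C})$ is a reflective subcategory of $\CAlg(\cc{C})$ (the reflection of $B$ being $B$ itself if $B$ is idempotent, and in general obtained by a small-object argument against the maps $\unit \to A$ for $A$ idempotent), and then apply \cite[Proposition 5.5.4.15]{LurieHTT}; but since smallness is already known, the direct lattice-theoretic argument above is cleaner and I would present that one.
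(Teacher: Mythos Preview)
Your argument is correct. The route differs from the paper's in emphasis: the paper invokes \cite[Proposition~2.14, Lemma~3.15]{aokiSheavesspectrumAdjunction2023} only for \emph{accessibility} of $\Idem(\cc{C})$, and then argues cocompleteness by identifying $\Idem(\cc{C})\subseteq\CAlg(\cc{C})$ with a full subcategory of a functor category cut out by inverting a single map, a condition visibly closed under colimits. You instead appeal to the stronger input --- already used elsewhere in the paper --- that $\Idem(\cc{C})$ is an essentially small \emph{set}, after which presentability reduces to the lattice-theoretic statement that it has all joins, which you then build by hand as (filtered colimits of) tensor products in $\CAlg(\cc{C})$. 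Your approach is more elementary and makes the poset structure do the work; the paper's approach is slicker on the cocompleteness side (no explicit colimit construction) but leans on a characterization of idempotent algebras inside a functor category. Either way the two genuine inputs are the same: a citation to \cite{aokiSheavesspectrumAdjunction2023} for size/accessibility, and the presentability of $\CAlg(\cc{C})$ from \cite{LurieHA} to form the needed colimits. Your sketch of why the infinite tensor is again idempotent could be tightened (the cleanest phrasing is that $A_F\otimes A_{F'}\simeq A_{F\cup F'}$ for finite $F,F'$, whence $A\otimes A\simeq\operatorname{colim}_{F,F'}A_{F\cup F'}\simeq A$ by cofinality of the union map), but the idea is sound.
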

\begin{proof}
      Note that $\Idem(\cc{C})$ is accessible by \cite[Proposition 2.14, Lemma 3.15]{aokiSheavesspectrumAdjunction2023}. Since $\cc{C}$ is presentable, $\CAlg(\cc{C})$ must be as well; under the equivalence $\Fun^{\mm{L}}(\cS, \cc{C}) \simeq \CAlg(\cc{C})$ given by sending $F \mapsto F(\ast)$, the full subcategory $\Idem(\cc{C}) \subseteq \CAlg(\cc{C})$ corresponds to the full subcategory of functors $\Fun^{L}(\cS, \cc{C})$ which invert the map $\ast \to \ast \coprod \ast$. This latter property is clearly preserved under colimits, and thus $\Idem(\cc{C})$ admits all colimits.  
\end{proof}

\begin{proof}[Proof of \cref{thm:slp}]
  Let us first prove the only if direction, namely that if $\cc{K}$ satisfies the telescope conjecture then $\cc{K}/\cc{I}$ satisfies the telescope conjecture for every ideal $\cc{I} \in \Rad(\cc{K})$. Let $L_{\cc{I}}$ denote the finite idempotent associated to $\cc{I}$ by \cref{prop:finitelocissmashing}. We claim that the forgetful functor \[\Idem_{\cc{K}/\cc{I}}\simeq \Idem(\CAlg(\Mod_{\cc{L}_{\cc{I}}}(\Ind(\cc{K}))) \simarrow \CAlg(\Ind(\cc{K})_{L_{\cc{I}}/}\] induces an equivalence $\Idem_{\cc{K}/\cc{I}} \simeq (\Idem_{\cc{K}})_{L_{\cc{I}}/}$. As the codiagonal map $L_{\cc{I}}\otimes L_{\cc{I}} \to L_{\cc{I}}$ is an equivalence, the forgetful functor $\CAlg(\Ind(\cc{K})_{L_{\cc{I}}/} \to \CAlg(\Ind(\cc{K}))$ preserves and reflects tensor products using the relation \[L_{\cc{I}}\otimes_{L_{\cc{I}}\otimes L_{\cc{I}}} (A \otimes B) \simeq A\otimes_{L_{\cc{I}}}B\] for $A, B \in \CAlg(\Ind(\cc{K}))_{L_{\cc{I}}/}$. It follows that the image of $\Idem_{\cc{K}/\cc{I}}$ consists exactly of idempotent algebras under $L_{\cc{I}}$. The relation $\Idem^{\mm{fin}}_{\cc{K}/\cc{I}} = (\Idem_{\cc{K}})^{\mm{fin}}_{L_{\cc{I}}\unit/ }$ follows from the above and the fact that finite localizations are closed under composition, see \cref{cor:retractsandleftcancel}. If $\Idem_{\cc{K}}^{\mm{fin}} = \Idem_{\cc{K}}$ then $(\Idem_{\cc{K}})^{\mm{fin}}_{L_{\cc{I}}\unit/ } = (\Idem_{\cc{K}})_{L_{\cc{I}}\unit/ }$ which supplies the claim.

  For the other direction, note that the direction shown above implies that $\cc{K}$ satisfies the telescope conjecture if and only if the natural map of sheaves $\Idem_{\cO_{\cc{K}}}^{\mm{fin}} \to \Idem_{\cO_{\cc{K}}}$ is an equivalence. Since this may be identified with a sheaf of posets on a topological space, this map is an equivalence if and only if it is an equivalence on stalks. Let $\cc{P} \in \Rad(\cc{K})$ be a prime ideal with associated point $x_{\cc{P}} \in |\! \Spec \cc{K}|$. The argument of \cref{lem:identificationofstalks} shows that we have an identification
  \begin{equation}\label{eq:idemstalksleft}
    \Idem_{\cO_{\cc{K}}, x_{\cc{P}}} \cong {\varinjlim}_{\mm{Prin}(\cc{K})_{/\cc{P}}}\Idem_{\cc{K}/\cc{I}}
  \end{equation}
  and similarly for $\Idem^{\mm{fin}}_{\cO_{\cc{K}}}$. From the identification $\Idem_{\cc{K}/ \cc{I}} \simeq (\Idem_{\cc{K}})_{L_{\cc{I}}/}$, for any map $\cc{I} \to \cc{J} \in \mm{Prin}(\cc{K})$ we have an adjunction
  \[
    L_{\cc{I}} \otimes -\colon \Idem_{\cc{K}/\cc{I}} \rightleftarrows \Idem_{\cc{K}/\cc{J}} \noloc \mm{fgt}.
  \]
 and by \cref{lem:idem-is-presentable} the filtered diagram of the right hand side of \eqref{eq:idemstalksleft} lifts to a diagram in $\Pr^{\mm{L}}$ such that every morphism is a localization. Applying \cref{lem:this-weird-thing-about-localizations-that-isnt-super-necessary-lol} this filtered colimit may instead be computed in $\PrL$, and since $\Pr^{\mm{L}} \simeq \Pr^{\mm{R}, \op}$, we find that we may instead compute the right hand side of \eqref{eq:idemstalksleft} in $\Cat$ as a limit along the cofiltered diagram of right adjoints. We obtain the first equality below
  \[
    {\bigcap}_{\cc{I}\in \mm{Prin}(\cc{K})}(\Idem_{\cc{K}})_{L_{\cc{I}}} = (\Idem_{\cc{K}})_{L_{\cc{P}}/ } \simeq \Idem_{\cc{K}/\cc{P}}
  \]
  by noting that any idempotent $L$ which receives maps from every $L_{\cc{I}}$ for $\cc{I}\subseteq \cc{P}$ must satisfy $L \otimes x = 0$ for every $x \in \cc{P}$. Using the fully faithful embedding $\Mod\colon \Idem_{\cc{K}} \to \CAlg(\PrL)_{\Ind(\cc{K})/}$, \cref{cor:fidemtoideal} and \cref{prop:quotientposet} imply that $L$ must receive a map from $L_{\cc{P}}$. 
  
Similarly, $\Idem^{\mm{fin}}_{\cc{K}}$ is a coherent frame by \cref{prop:latticeiscoherent} and is thus presentable. The same argument as above shows that $\Idem^{\mm{fin}}_{\cO_{\cc{K}}, x_{\cc{P}}} \simeq \Idem_{\cc{K}/ \cc{P}}^{\mm{fin}}$ and that this is compatible with the comparison map $\Idem^{\mm{fin}} \to \Idem$. 
  
  We conclude that the map of sheaves $\Idem^{\mm{fin}}_{\cO_{\cc{K}}}\to \Idem_{\cO_{\cc{K}}}$ is an equivalence if and only if the induced maps on stalks $\Idem_{\cc{K}/\cc{P}}^{\mm{fin}} \subseteq \Idem_{\cc{K}/\cc{P}}$ are equalities for every prime $\cc{P} \in \Rad(\cc{K})$.
\end{proof}
\vspace{5pt}

\printbibliography

\end{document}